\documentclass[a4paper,12pt]{article}

\usepackage{amsmath}
\usepackage{amssymb}
\usepackage{amsthm}

\usepackage{amsfonts}
\usepackage{mathscinet}
\usepackage{enumerate}
\usepackage{etaremune}
\usepackage{pdfsync}
\usepackage{setspace}
\usepackage{mathrsfs}
\usepackage{stmaryrd}
\usepackage{tikz}
\usetikzlibrary{arrows,chains,matrix,positioning,scopes}
\usepackage{cite}
 \usepackage{enumitem}
 \usepackage{mathtools}

\usepackage{bookmark}
\usepackage{authblk}

\usepackage{multicol}

\newcommand {\new} {\newcommand}
\newcommand \oper[2] {\new #1 {\operatorname{#2}}}

               %Turing below
               %Turing above
%\newcommand \gpair [1] {\prec\! #1 \!\succ}	%godel pair
	%code
\newcommand \gcode[1] {\ulcorner\! #1 \!\urcorner}	%godel code
\newcommand \se [1] { \{ #1 \}}			%{x}
\newcommand\set [2]{ \{#1:#2\} }			%{x: x is}
		%restriction
\newcommand\res {\!\upharpoonright\!}		%restriction
		%restriction in superscript
		%restriction in superscript
			%function
%\new\funcmore[5]{}
%\newcommand\funcmore [5] {{#1}:\xymatrix@1{{#2}\ar[r]^>>{#4}&{#3}}}	%funcmore
		%dots

%\newcommand\centerdot [3] {#1_#2,\cdots,#1_#3}
%\newcommand\centerdota [] {#1_#2\cdots#1_#3}
			%alephs

			%omega_1

\newcommand\elem {\prec}				%elemenatary substructure
\newcommand\eqiv {\leftrightarrow}			%<-->
			%<==>
	%<x: x<100>
\newcommand\corner[1] {
  \langle #1 \rangle}		%<x>
		%<x> big

			%zero-sharp
			%zero-sword
\newcommand\power {{\mathscr{P}}}			%powerset
					%forcing relation

				%K^c

			%reals

			%sigma 1 2
			%sigma 1 2
			%sigma 1 2

			%sigma 1 3
			%pi 1 2
			%pi 1 3
			%sigma 1 4
\newcommand\coll{{\text{Coll}}}
				%L(R)
				%L[E]
	%L[E,\Sigma]
	%L[E,\Sigma]

\newcommand{\card}{\operatorname{card}}

  %< in lexicographic order
 %<= in  lexicographic order

\newcommand\concat {{^\frown}}   %concatenation
   %concatenate a single element

\oper{\Ord}{Ord}				%ordinal
\oper{\hull}{Hull}				%hull
\oper{\ppt}{ppt} 				%ppt
\newcommand{\ord} {\Ord}
\oper{\ZFC}{ZFC}				%ZFC
\oper{\rank}{rank}				%rank
\oper{\crit}{cr}					%critical point
\oper{\crt}{crt}					%critical point
\oper{\cf}{cf}					%cofinality

\oper{\height}{ht}				%height
\oper{\wfcore}{wfcore}					%wellfounded core
\oper{\core}{core}					%core

\oper{\Ult}{Ult}				%ult
\oper{\Cone}{Cone}				%cone
\oper{\dirlim}{dirlim}
\oper{\rud}{rud}		%rud closure
\oper{\const}{const}
\oper{\OD}{OD}
\oper{\final}{final}
\oper{\HYP}{HYP}
\oper{\wfp}{wfp}
\oper{\Hom}{Hom}
\new{\ult} {\Ult}
\oper{\dom}{dom}
\oper{\rep}{rep}

\newcommand{\bardom}{\dom^{*}}
\newcommand{\exdesc}{\desc^{*}}
\newcommand{\exexdesc}{\desc^{**}}

\oper{\suc}{succ}
\oper{\fac}{fac}

\oper{\Code}{Code}

\oper{\ran}{ran}
\oper{\maxdom}{maxdom}
\oper{\maxran}{maxran}

\oper{\lp}{Lp} %lower part closure

\newcommand{\Los}{{\L}o\'{s}}
\newcommand\iniseg {\vartriangleleft}		%proper initial segment
	%lengthening
		%proper lengthening
		%hod proper lengthening
\newcommand\inisegeq {\trianglelefteq}	%initial segment
	%hod initial segment
	%hod proper initial segment
			%isomorphic
  % inverse function

%-----------------------declare theorems------------------------
 \newtheorem{mytheorem}{Theorem}[section]
 
 \newtheorem{mylemma}[mytheorem]{Lemma}

 \newtheorem{mycorollary}[mytheorem]{Corollary}
  \theoremstyle{definition}\newtheorem{myexample}[mytheorem]{Example}
 \newtheorem{myquestion}[mytheorem]{Question}

 \newtheorem{myclaim}[mytheorem]{Claim}
 \theoremstyle{mydefinition}\newtheorem{remark}[mytheorem]{Remark}
 \newtheorem{mydefinition}[mytheorem]{Definition}

% \theoremstyle{remark} 		\newtheorem*{sketchofproof}{Sketch of proof}
% \theoremstyle{remark} 		\newtheorem*{idea}{Idea}

%---------------------        ------------

%==============only in this paper=================

\newcommand{\game}{{\Game}}
\newcommand{\boldpi}[1]{{\boldsymbol{\Pi}^1_{#1}}}
\newcommand{\boldsigma}[1]{{\boldsymbol{\Sigma}^1_{#1}}}

\newcommand{\bolddelta}[1]{{\boldsymbol{{\delta}}^1_{#1}}}
\newcommand{\boldDelta}[1]{{\boldsymbol{{\Delta}}^1_{#1}}}
\newcommand{\admistwo}[1]{L_{\kappa_3^{#1}}[T_2,#1]}
\newcommand{\admisfour}[1]{L_{\kappa_5^{#1}}[T_4,#1]}
\newcommand{\admistwobold}{{  \mathbb{L}_{\boldsymbol{{\delta}}^1_3}[T_2] }}

\newcommand{\admistwoboldextra}[1]{{  \mathbb{L}_{\boldsymbol{{\delta}}^1_3}[ {#1} (T_2 ) ]}}

\newcommand{\admistwoboldultratwo}[2]{{  \mathbb{L}_{\boldsymbol{{\delta}}^1_3}(j^{#1}\circ j^{#2}[T_2] ) }}

\newcommand{\WO}{{\textrm{WO}}}
\newcommand{\LO}{{\textrm{LO}}}
\newcommand{\DEF}{=_{{\textrm{DEF}}}}

\newcommand{\desc}{{\operatorname{desc}}}

\newcommand{\Diff}{\operatorname{Diff}}

\newcommand{\id}{\operatorname{id}}
\newcommand{\tree}{\operatorname{tree}}
\newcommand{\node}{\operatorname{node}}
\newcommand{\seed}{\operatorname{seed}}
\newcommand{\pred}{\operatorname{pred}}

\newcommand{\ot}{\mbox{o.t.}}

\newcommand{\sharpcode}[1]{  \left|   #1  \right|}

\newcommand{\absvalue}[1]{  \left|   #1  \right|}

\newcommand{\wocode}[1]{  \|   #1  \|}

\newcommand{\lessthanshort}[1]{{<\! #1}}

\newcommand{\comm}[1]{{}}
\newcommand{\comp}[2]{{}^{#1}\!#2}
\oper{\ucf}{ucf}					%cofinality
\oper{\sign}{sign}					%signature
\oper{\lh}{lh}
\oper{\com}{com}

\newcommand{\ooo}{^{(3)}_{\omega^{\omega^{\omega}}}}

\begin{document}

\title{The higher sharp III: an EM blueprint of $0^{3\#}$ and the level-4 Kechris-Martin}

\author{Yizheng Zhu}

\affil{Institut f\"{u}r mathematische Logik und Grundlagenforschung \\
Fachbereich Mathematik und Informatik\\
Universit\"{a}t M\"{u}nster\\
Einsteinstr. 62 \\
48149 M\"{u}nster, Germany}

\maketitle

\begin{abstract}
We establish the descriptive set theoretic representation of the mouse $M_n^{\#}$, which is called $0^{(n+1)\#}$. This part partially finishes the case $n=2$ by establishing the higher level analog of the EM blueprint definition of $0^{\#}$. From this, we prove the level-4 Kechris-Martin Theorem and deal with the case $n=3$.
\end{abstract}

\section{Introduction}
\label{sec:introduction}

This is the third part of a series starting with \cite{sharpI}. It gives an EM blueprint axiomatization of $0^{3\#}$, which is defined in \cite{sharpII}. We know that the existence of $0^{\#}$ is a large cardinal axiom. The same works for $0^{3\#}$: the existence of $0^{3\#}$ is a purely syntactical large cardinal axiom under the base theory  of  $\boldDelta{2}$-determinacy. From the EM blueprint formulation of $0^{3\#}$, we define the level-4 Martin-Solovay tree $T_4$ and prove the level-4 Kechris-Martin theorem. % To prepare for the induction into the next level, we will need a $\Delta^1_5$.

\section{A guide to Jackson's analysis}
\label{sec:guide-jacks-analys}

The main ingredient of the axiomatization of $0^{3\#}$ is a generalization of Jackson's analysis \cite{jackson_delta15,jackson_proj_ord,cabal2_intro_jackson}. This generalization clarifies the main idea of Jackson's analysis. The idea of the axiomatization of $0^{3\#}$ arises from the interplay between embeddings of mice and ultrapower maps of measures on $\bolddelta{3}$. The focus is not on the cardinality of $\bolddelta{5}$, but rather on the set-theoretic structures of different ultrapowers. 

We try to spread the idea of the generalization of Jackson's analysis which probably has not appeared in the literature. 
% The reader doesn't even have to remember the definition of trees of uniform cofinalities in \cite{sharpII} to get a vague idea. 
%We will give a couple of concrete examples.
 In one sentence,
\begin{quote}
  Jackson's analysis is the analysis of equivalence classes of continuous, order-preserving embeddings between wellorders. 
\end{quote}
This paper continues on this track towards a translation between
\begin{center}
  continuous, order-preserving embeddings between wellorders
\end{center}
and
\begin{center}
  embeddings between mice. 
\end{center}

We shall explain with examples. Assume AD throughout this section. 
Fix some notations throughout this section:
\begin{itemize}
% \item If $a_1,\dots,a_n \in \omega$, then $s_{a_1 \dots a_n} = (a_1,\dots, a_n)$ is a member of $\omega^{<\omega}$.
% \item If $a_1,\dots,a_n \in \omega$, then $q_{a_1 \dots a_n} = ( (a_1),\dots, (a_n))$ is a member of $(\omega^{<\omega})^{<\omega}$.
\item If $Z$ is finite a level-1, level $\leq 2$ or level-3 tree, then
  \begin{displaymath}
    \theta_Z : \rep(Z) \to \ot(\rep(Z)) 
  \end{displaymath}
is the associated order-preserving bijection. 
\end{itemize}

\subsection{Level-(1,2,1) factoring maps}
\label{sec:level-1-2}

Let $S = \se{(3),(3,0),(3,0,0),(2),(1),(1,2),(1,2,0),(1,1),(1,0),(0)}$ be a level-1 tree. Let $Q$ be a level $\leq 2$ tree where $\comp{1}{Q} = \emptyset$, $\dom(\comp{2}{Q}) = \se{\emptyset, ((1)),  ((0)), ((0),(0))}$, $\comp{2}{Q}(((1)))= \comp{2}{Q}(((0)))= (\se{(0)}, (0,0))$, $\comp{2}{Q}(((0),(0))) = (\se{(0),(0,0)}, -1)$. Then $\rep(S)$ consists of $(s)$ and $(s,k)$ for $s \in S$ and $k<\omega$. $\rep(Q)$ consists of:
\begin{itemize}
\item $(2,\emptyset)$,
\item $(2,(\alpha, (1)))$ for $\alpha < \omega_1$ limit,
\item $(2,(\alpha, (1), \beta, -1))$ for $\alpha < \omega_1$ limit and $\beta < \alpha$. 
\item $(2,(\alpha, (0)))$ for $\alpha < \omega_1$ limit,
\item $(2,(\alpha, (0), \beta, (0)))$ for $\beta < \alpha < \omega_1$ limits,
\item $(2,(\alpha, (0), \beta, (0), k, -1))$ for $\beta < \alpha < \omega_1$ limits and $k<\omega$,
\item $(2,(\alpha, (0), \beta, -1))$ for $\alpha < \omega_1$ limit and $\beta < \alpha$,
\item $(2,(\alpha, -1))$ for $\alpha < \omega_1$,
\end{itemize}
We would like to characterize the behaviour of
\begin{displaymath}
 \theta_{SQ} =  (\theta_Q)^{-1} \circ \theta_S : \rep(S) \to \rep(Q)
\end{displaymath}
up to an equivalence. We say that two \emph{continuous, order-preserving} maps $\theta,\theta' : \rep(S) \to \rep(Q)$ are $S$-equivalent iff $\theta(s) = \theta'(s)$ for any $s \in S$. Of course this equivalence relation is basically trivial, the equivalence classes depending only on $(\theta(s))_{s \in S}$, but its higher level analogs will  become meaningful. Then a continuous, order-preserving $\theta$ is $S$-equivalent to $\theta_{SQ}$ iff
%\begin{multicols}{2}
\begin{itemize}
\item $\theta((3)) = (2,(\omega 3, -1 ))$.
\item $\theta((3,0)) = (2,(\omega 2, (1)))$,
\item $\theta((3,0,0)) = (2,(\omega 2, (1), \omega, -1))$, 
\item $\theta((2)) = (2,(\omega 2, (0)))$,
\item $\theta((1)) = (2,(\omega 2, (0), \omega, (0)))$,
\item $\theta((1,2)) = (2,(\omega 2, (0), \omega, -1))$,
\item $\theta((1,2,0)) = (2,(\omega 2,-1))$,
\item $\theta((1,1)) = (2,(\omega, (1)))$,
\item $\theta((1,0)) = (2,(\omega, (0)))$,
\item $\theta((0)) = (2,(\omega, -1))$,
\end{itemize}
%\end{multicols}
%
Collect all the ordinals showing up in this list: $\omega 3, \omega 2, \omega$. Compose a level-1 tree $W = \se{(2),(1),(0)}$ that is respected by these ordinals: $\vec{\alpha}$ respects $W$, where $\alpha_{(2)}= \omega 3$, $\alpha_{(1)} = \omega 2$, $\alpha_{(0)} = \omega$. % Consider
% \begin{displaymath}
%   \theta_{QW}=(\theta_Q)^{-1} \circ \theta_W : \rep(W) \to \rep(Q)
% \end{displaymath}
%
Now the $S$-equivalence class of $\theta_{SQ}$ is decided by $W$ and $\vec{\alpha}$: $\theta$ is $S$-equivalent to $\theta_{SQ}$ iff $\theta$ is continuous, order-preserving and 
\begin{itemize}
\item $\theta((3)) = (2,(\alpha_{(2)}, -1 ))$.
\item $\theta((3,0)) = (2,(\alpha_{(1)}, (1)))$,
\item $\theta((3,0,0)) = (2,(\alpha_{(1)}, (1), \alpha_{(0)}, -1))$, 
\item $\theta((2)) = (2,(\alpha_{(1)}, (0)))$,
\item $\theta((1)) = (2,(\alpha_{(1)}, (0), \alpha_{(0)}, (0)))$,
\item $\theta((1,2)) = (2,(\alpha_{(1)}, (0), \alpha_{(0)}, -1))$,
\item $\theta((1,2,0)) = (2,(\alpha_{(1)},-1))$,
\item $\theta((1,1)) = (2,(\alpha_{(0)}, (1)))$,
\item $\theta((1,0)) = (2,(\alpha_{(0)}, (0)))$,
\item $\theta((0)) = (2,(\alpha_{(0)}, -1))$,
\end{itemize}
This is a combinatorial characterization of the $S$-equivalence class of $\theta_{SQ}$ depending only on $W$ and $\vec{\alpha}$. 
If $\vec{\alpha}'$ is replaced by another tuple $\vec{\alpha}'$ respecting $W$, the same expressions will define another $S$-equivalence class. 
For instance, if $\alpha'_{(0)} = \omega 8$, $\alpha'_{(1)} = \omega 9$, $\alpha'_{(2)} = \omega^2$, a member of the $S$-equivalence class decided by $\vec{\alpha}'$ is $\theta'$, where $\theta'$ is continuous, order-preserving and 
\begin{itemize}
\item $\theta((3)) = (2,(\omega^2, -1 ))$.
\item $\theta((3,0)) = (2,(\omega 9, (1)))$,
\item $\theta((3,0,0)) = (2,(\omega 9, (1), \omega 8, -1))$, 
\item $\theta((2)) = (2,(\omega 9, (0)))$,
\item $\theta((1)) = (2,(\omega 9, (0), \omega 8, (0)))$,
\item $\theta((1,2)) = (2,(\omega 9, (0), \omega 8, -1))$,
\item $\theta((1,2,0)) = (2,(\omega 9,-1))$,
\item $\theta((1,1)) = (2,(\omega 8, (1)))$,
\item $\theta((1,0)) = (2,(\omega 8, (0)))$,
\item $\theta((0)) = (2,(\omega 8, -1))$,
\end{itemize}
This template of generating $S$-equivalence classes depending on a tuple respecting $W$ will 
%
% The combinatorial characterization of $(\theta^{Q}_{W})^{-1} \circ \theta_{SQ}$ will
be abstracted into a finitary object
\begin{displaymath}
\tau
\end{displaymath}
which factors $(S,Q,W)$, to be defined in Section~\ref{sec:level-2-description}. 
 The right hand side of each equation will be abstracted into a finitary object called a $(Q,W)$-description, to be defined Section~\ref{sec:level-2-description}. 
In terms of ultrapowers, the main property of $\tau$ is that it induces a map
\begin{displaymath}
\tau^{Q,W} : \mathbb{L}_{j^S(\omega_1)} \to \mathbb{L}_{j^Q \circ j^W(\omega_1)}
\end{displaymath}
such that $\tau^{QW} \circ j^S = j^Q \circ j^W$ and $\tau^{QW}$ is elementary on any submodel of ZFC. 

% We say that  $\theta$ and $\theta'$ are $(Q,W)$-equivalent iff they are both continuous, order-preserving functions from $\rep(Q,W)$ to some linear order and they agree on points of $\rep(Q,W)$-limit order type. 

For illustration purposes, the tensor product of trees, $Q \otimes W$, to be defined in Section~\ref{sec:level-2-description}, is the level-1 tree $U$ 
(up to an isomorphism) such that $\theta_{UQ}$ is universal % for the factoring properties inducing $W$. Universality is
in the sense that
\begin{enumerate}
\item the $U$-equivalence class of $\theta_{UQ}$ has a combinatorial characterization decided by $(W, \vec{\alpha},\tau^{*})$, where $\vec{\alpha}$ respects $W$, $\tau^{*}$ factors $(U,Q,W)$, and
\item if $U'$ is another level-1 tree, $\theta' : \rep(U') \to \rep(Q)$ is continuous, order-preserving, and the $U'$-equivalence class of $\theta'$ has a combinatorial characterization decided by $(W, \vec{\alpha}', \tau')$, where $\vec{\alpha}'$ respects $W$, $\tau'$ factors $(U',Q,W)$, $\theta''$ a representative of the $U$-equivalence class decided by $(W, \vec{\alpha}', \tau^{*})$, then there is a level-1 tree factoring map $\psi$ of $U'$ into $U$ such that $\theta' (s) = \theta''(\psi(s))$ for any $s \in U'$. 
\end{enumerate}
We describe the algorithm of producing $U$. Write down all the different forms of expressions
\begin{displaymath}
  (\alpha_{w_0},  z_0,  \dots, \alpha_{w_k}, z_k)
\end{displaymath}
that belongs to $\rep(Q)$ for some $w_0,\dots, w_k \in W$ and $\vec{\alpha} = (\alpha_w)_{w \in W}$ respecting $W$. Order these expressions according to their positions in $\rep(Q)$. They are: 
\begin{itemize}
\begin{multicols}{2}
\item $ (2,(\alpha_{(2)}, (1)))$,
\item $ (2,(\alpha_{(2)}, (1), \alpha_{(1)}, -1))$,
\item $ (2,(\alpha_{(2)}, (1), \alpha_{(0)}, -1))$,
\item $ (2,(\alpha_{(2)}, (0), \alpha_{(1)}, (0)))$,
\item $ (2,(\alpha_{(2)}, (0), \alpha_{(1)}, -1))$,
\item $ (2,(\alpha_{(2)}, (0), \alpha_{(0)}, (0)))$,
\item $ (2,(\alpha_{(2)}, (0), \alpha_{(0)}, -1 ))$,
\item $ (2,(\alpha_{(2)}, (0)))$, 
\item $ (2,(\alpha_{(2)}, -1))$,
\item $ (2,(\alpha_{(1)}, (1)))$,
\item $ (2,(\alpha_{(1)}, (1), \alpha_{(0)}, -1))$,
\item $ (2,(\alpha_{(1)}, (0)))$,
\item $ (2,(\alpha_{(1)}, (0), \alpha_{(0)}, (0)))$,
\item $ (2,(\alpha_{(1)}, (0), \alpha_{(0)}, -1))$,
\item $ (2,(\alpha_{(1)}, -1))$, 
\item $ (2,(\alpha_{(0)}, (1)))$,
\item $ (2,(\alpha_{(0)}, (0)))$,
\item $ (2,(\alpha_{(0)}, -1))$,
\end{multicols}
\end{itemize}
There are 18 expressions. We let $U = \se{(0), \dots, (17)}$ be a level-1 tree of cardinality 18. Let $\alpha_{(i)} = \omega(i+1)$. Then $\theta$ is $U$-equivalent to $\theta_{UQ}$ iff
\begin{itemize}
\begin{multicols}{2}
\item $\theta((17)) =  (2,(\alpha_{(2)}, (1)))$,
\item $\theta((16)) =  (2,(\alpha_{(2)}, (1), \alpha_{(1)}, -1))$,
\item $\theta((15)) =  (2,(\alpha_{(2)}, (1), \alpha_{(0)}, -1))$,
\item $\theta((14)) =  (2,(\alpha_{(2)}, (0), \alpha_{(1)}, (0)))$,
\item $\theta((13)) =  (2,(\alpha_{(2)}, (0), \alpha_{(1)}, -1))$,
\item $\theta((12)) =  (2,(\alpha_{(2)}, (0), \alpha_{(0)}, (0)))$,
\item $\theta((11)) =  (2,(\alpha_{(2)}, (0), \alpha_{(0)}, -1 ))$,
\item $\theta((10)) =  (2,(\alpha_{(2)}, (0)))$, 
\item $\theta((9)) =  (2,(\alpha_{(2)}, -1))$,
\item $\theta((8)) =  (2,(\alpha_{(1)}, (1)))$,
\item $\theta((7)) =  (2,(\alpha_{(1)}, (1), \alpha_{(0)}, -1))$,
\item $\theta((6)) =  (2,(\alpha_{(1)}, (0)))$,
\item $\theta((5)) =  (2,(\alpha_{(1)}, (0), \alpha_{(0)}, (0)))$,
\item $\theta((4)) =  (2,(\alpha_{(1)}, (0), \alpha_{(0)}, -1))$,
\item $\theta((3)) =  (2,(\alpha_{(1)}, -1))$, 
\item $\theta((2)) =  (2,(\alpha_{(0)}, (1)))$,
\item $\theta((1)) =  (2,(\alpha_{(0)}, (0)))$,
\item $\theta((0)) =  (2,(\alpha_{(0)}, -1))$.
\end{multicols}
\end{itemize}
It is easy to verify clause (2) for this $U$. Apply universality to $S$, $\theta_{SQ}$, $\vec{\alpha}$ and $\tau$. The resulting factoring map $\psi$ of $S$ into $U$ is 
\begin{itemize}
  \begin{multicols}{3}
  \item $\psi((3)) = (9)$. 
  \item $\psi((3,0)) = (8)$,
  \item $\psi((3,0,0)) = (7)$,
  \item $\psi((2)) = (6)$,
  \item $\psi((1)) = (5)$,
  \item $\psi((1,2)) = (4)$,
  \item $\psi((1,2,0)) = (3)$,
  \item $\psi((1,1)) = (2)$,
  \item $\psi((1,0)) = (1)$,
  \item $\psi((0)) = (0)$,
  \end{multicols}
\end{itemize}
% If $U', \theta', \vec{\alpha}',\tau'$ are as in (2), the expressions which gets 

% Reverse formulas will define $\theta$ being $(Q,W)$-equivalent to $\theta^{-1}_{UQW}$. 
In Section~\ref{sec:level-2-description}, we will show that the equality
\begin{displaymath}
  U = Q \otimes W
\end{displaymath}
corresponds to the equality
\begin{displaymath}
  \mathbb{L}_{j^U(\omega_1)} = \mathbb{L}_{j^Q\circ j^W(\omega_1)}.
\end{displaymath}
% and
% \begin{displaymath}
%   j^U \res (\mathbb{L}_{\omega_1+1})= j^Q \circ j^W \res (\mathbb{L}_{\omega_1+1}).
% \end{displaymath}
For $U', \theta', \vec{\alpha}', \tau' , \psi$ as in (2), there is an induced map between ultrapowers
\begin{displaymath}
  \psi^U : \mathbb{L}_{j^{U'}(\omega_1)} \to \mathbb{L}_{j^U(\omega_1)},
\end{displaymath}
and we will show that
\begin{displaymath}
  \psi^U = (\tau')^{Q,W}.
\end{displaymath}

% Recall that $W$ was defined from $\theta_{SQ}$. 
% An effect of forming the tensor product $U = Q \otimes W$ is that it  trivializes $\theta_{SQ}$. 
% Acting $\theta_{UQW}^{-1} $ in front of $ (\theta_{W}^{Q})^{-1} \circ \theta_{SQ}$, we see that 
%  $\theta$ is $S$-equivalent to $\theta_{UQW}^{-1} \circ (\theta_{W}^{Q})^{-1} \circ \theta_{SQ}$ iff $\theta$ is continuous, order-preserving and
% Thus, the $S$-equivalent class of $(\theta_{UQW})^{-1} \circ (\theta_{W}^{Q})^{-1} \circ \theta_{SQ}$ is characterized by the level-1 tree factoring map $\psi$ from $S$ into $U$ where $\psi((0)) = (0)$, $\psi((1,0)) = (1), \psi((1,1)) = (2)$, $\psi((1,2,0)) = (3)$, etc. 

To summarize, we have given an example of a general fact on level-(1,2,1) factoring maps. Given a finite level-1 tree $S$ and a finite level-2 tree $Q$, there is another level-1 tree $W$, a tuple of ordinals $\vec{\alpha}$ respecting $W$ such that the $S$-equivalence class of $\theta_{SQ}$ has a purely combinatorial characterization, called a map $\tau$ which factors $(S,Q,W)$, and $S$ embeds into $U = Q \otimes W$ via a level-1 tree factoring map $\psi$. 
%
% so that acting $ (\theta_{W}^{Q})^{-1}$ in front combinatorializes $\theta_{SQ}$, leading to a map $\tau$ factoring $(S,Q,W)$, and the computation of $U = Q \otimes W$ produces $\theta_{UQW}^{-1}$, acting which in front  further simplifies  $ (\theta_{W}^{Q})^{-1} \circ \theta$ to the map generated by a level-1 tree factoring map $\psi$ from $S$ into $U$. 
In terms of ultrapowers, $\tau$ induces $\tau^{Q,W} : \mathbb{L}_{j^S(\omega_1)} \to \mathbb{L}_{j^Q \circ j^W(\omega_1)} = \mathbb{L}_{j^U(\omega_1)}$ such that $\tau^{Q,W} \circ j^S = j^Q \circ j^W = j^U$, and $\tau^{Q,W}$ is just the embedding induced by the measure projection of $\mu^U $ to $\mu^S$ which is generated by $\psi$. 

 \subsection{Level-(2,2,2) factoring maps}
\label{sec:level-2-2}

 Let $X$ be a level $\leq 2$ tree where $\comp{1}{X} = \se{(0),(1)}$, 
$\comp{2}{X}$ has domain $\se{\emptyset, ((0)), ((0),(0)), ((0),(1))}$ and $\comp{2}{X}(((0))) = \comp{2}{T}(((1)))$, $\comp{2}{X}(((0),(0))) = \comp{2}{X}(((0),(1)))=(\se{(0),(0,0)}, (0,0,0))$. $\rep(X)$ consists of
\begin{itemize}
\item $(2,\emptyset)$,
\item $(2,(\alpha_{(0)}, (0)))$ for $\vec{\alpha}$ respecting $\se{(0)}$, 
\item $(2,(\alpha_{(0)}, (0), \alpha_{(0,0)}, (1)))$ for $\vec{\alpha}$ respecting $\se{(0),(0,0)}$,
\item $(2,(\alpha_{(0)}, (0),  \alpha_{(0,0)}, (1), \beta, -1))$ for $\vec{\alpha}$ respecting $\se{(0),(0,0)}$ and $\beta < \alpha_{(0,0)}$ limit,
\item $(2,(\alpha_{(0)}, (0),  \alpha_{(0,0)}, (0)))$ for $\vec{\alpha}$ respecting $\se{(0),(0,0)}$,
\item $(2,(\alpha_{(0)}, (0),  \alpha_{(0,0)}, (0), \beta, -1))$ for $\vec{\alpha}$ respecting $\se{(0),(0,0)}$ and $\beta < \alpha_{(0,0)}$ limit,
\item $(2,(\alpha_{(0)}, (0), \beta, -1 ))$ for $\vec{\alpha}$ respecting $\se{(0)}$, and $\beta < \alpha_{(0)}$ limit, 
\item $(2,(\beta, -1))$ for $\beta < \omega_1$ limit, 
\item $(1, ((1)))$,
\item $(1, ((1), k))$ for $k<\omega$,
\item $(1, ((0)))$, and
\item $(1, ((0), k))$ for $k<\omega$.
\end{itemize}
If $C$ is a club in $\omega_1$, recall that  $\rep(X) \res C$ is the subset of $\rep(X)$ consisting of all the tuples in $\rep(X)$ whose ordinal entries are in $C \cup \omega$. % Let
% \begin{displaymath}
%   \theta_X^C : \rep(X) \res C \to \omega_1+1
% \end{displaymath}
% be the order-preserving bijection. 
% In the analysis of the ultrapower by the measure induced by $X$, $\theta_X$ and $\theta_X^C$ contain essentially the same information. Namely, there is a club $D$ in $\omega_1$ such that
% \begin{displaymath}
%   \theta_X \res (\rep(X) \res D) = \theta_X^C (\rep(X) \res D) . 
% \end{displaymath}
% We say that $\theta_X$ and $\theta_X^C$ are $X$-equivalent. 
%In general, % if $C$ is a club in $\omega_1$ and 
% \begin{displaymath}
%  \rep(X) \res C  \subseteq \dom(\theta) \cap \dom(\theta')
% \end{displaymath}
We say that a continuous, order preserving $\theta$ is $X$-equivalent to $\theta'$ iff there is a club $D$ in $\omega_1$ such that
\begin{displaymath}
  \theta \res (\rep(X) \res D ) =  \theta' \res (\rep(X) \res D ).
\end{displaymath}
The $X$-equivalence class of $\theta_X$ can be characterized by: $\theta$ is $X$-equivalent to $\theta_X$ iff there is a club $D \subseteq \omega_1$ such that 
\begin{itemize}
\item $\theta(2,(\alpha_{(0)}, ((0)) )) = \alpha_{(0)} 2 $, for $\vec{\alpha} \in [D]^{\se{(0)}\uparrow}$, i.e.\ $\alpha_{(0)} \in D$, 
\item $\theta(2,(\alpha_{(0)}, ((0)), \alpha_{(0,0)}, ((0)) )) = \alpha_{(0)}+\alpha_{(0,0)} 2$, for $\vec{\alpha} \in [D]^{\se{(0)}\uparrow}$, i.e.\ $\alpha_{(0,0)}<\alpha_{(0)}$, both in $D$, 
\item $\theta(2,(\alpha_{(0)}, ((0)), \alpha_{(0,0)}, ((0)) )) = \alpha_{(0)}+\alpha_{(0,0)} $, for $\vec{\alpha} \in [D]^{\se{(0)}\uparrow}$, i.e.\ $\alpha_{(0,0)}<\alpha_{(0)}$, both in $D$, 
\item $\theta(1, ((1))) = \omega 2$.
\item $\theta(1, ((0))) = \omega$.
\end{itemize}
The ordinals showing up in the above list forms a tuple that respects $X$: $\vec{\xi}$ respects $X$, where 
$\comp{2}{\xi}_{((0))} = \omega_1 2 = [\vec{\alpha} \mapsto \alpha_{(0)} 2]_{\mu^{\se{(0)}}}$, 
$\comp{2}{\xi}_{((0),(1))} = \omega_2+\omega_1 2 = [\vec{\alpha} \mapsto \alpha_{(0)}+\alpha_{(0,0)}2]_{\mu^{\se{(0),(0,0)}}}$, 
$\comp{2}{\xi}_{((0,(0)))} = \omega_2+\omega_1  = [\vec{\alpha} \mapsto \alpha_{(0)}+\alpha_{(0,0)}]_{\mu^{\se{(0),(0,0)}}}$, 
$\comp{1}{\xi}_{(1)} = \omega$,
$\comp{1}{\xi}_{(0)} = \omega$. (Recall that $\se{(0)}$, $\se{(0),(0,0)}$  are level-1 trees and $\mu^{\se{(0)}}$ is the club measure on $\omega_1$, $\mu^{\se{(0),(0,0)}}$ is the two-fold product measure of $\mu^{\se{(0)}}$.)

So far, everything is basically a restatement of the theory of level $\leq 2$ trees and their representations. 
We now introduce another tree $T$ and investigate the embeddings between $\rep(X)$ and $\rep(T)$.

Let $T$ be a level $\leq 2$ tree where $\comp{1}{T} = \se{(0)}$, $\comp{2}{T}$ has domain $\se{\emptyset, ((1)), ((1),(0)), ((0))}$ and $\comp{2}{T}(((1))) =  (\se{(0)}, (0,0))$, $\comp{2}{T}((1),(0)) = (\se{(0),(0,0)}, (0,1))$, $\comp{2}{T}((0)) = (\se{(0)}, -1)$. $\rep(T)$ consists of
\begin{itemize}
\item $(2,\emptyset)$,
\item $(2,(\alpha_{(0)}, (1)))$ for $\vec{\alpha} $ respecting $\se{(0)}$, i.e.\ $\alpha_{(0)} < \omega_1$ is a limit ordinal, 
\item $(2,(\alpha_{(0)}, (1), \alpha_{(0,0)}, (0)))$ for $\vec{\alpha}$ respecting $\se{(0),(0,0)}$, i.e. $\alpha_{(0,0)}<\alpha_{(0)} < \omega_1$ are limit ordinals, 
\item $(2,(\alpha_{(0)}, (1), \alpha_{(0,0)}, (0), \beta, -1))$ for $\vec{\alpha}$ respecting $\se{(0),(0,0)}$ and $\beta < \alpha_{(0)}$ limit,
\item $(2,(\alpha_{(0)}, (1), \beta, -1))$ for $\vec{\alpha} $ respecting $\se{(0)}$ and $\beta<\alpha_{(0)}$ limit,
\item $(2,(\alpha, (0)))$ for $\vec{\alpha} $ respecting $\se{(0)}$,
\item $(2,(\alpha, (0), k, -1))$ for $\vec{\alpha} $ respecting $\se{(0)}$ and $k<\omega$,
\item $(2,(\beta, -1))$ for $\beta < \omega_1$ limit, 
\item $(1, ((0)))$, and
\item $(1, ((0),k))$ for $k<\omega$.
\end{itemize}
Let
% \begin{displaymath}
%   \theta_T : \rep(T) \to \omega_1+1
% \end{displaymath}
% and
\begin{displaymath}
  \theta_{XT} : \rep(X) \to \rep(T)
\end{displaymath}
be the order-preserving bijection. There is a concrete way of characterizing the $X$-equivalence class of $\theta_{XT}$: A continuous, order-preserving $\theta$ is $X$-equivalent to $\theta_{XT}$ iff there is a club $D \subseteq \omega_1$ such that
\begin{itemize}
\item $\theta(2,(\alpha_{(0)}, (0))) = (2, (\alpha_{(0)}, (1), \omega, (0)))$ for $\vec{\alpha} \in [D]^{\se{(0)}\uparrow}$,
\item $\theta(2,(\alpha_{(0)}, (0), \alpha_{(0,0)}, (1))) = (2, (\alpha_{(0)}, (1), \omega, (0), \alpha_{(0,0)} 2, -1))$ for $\vec{\alpha} \in [D]^{\se{(0),(0,0)}\uparrow}$,
\item $\theta(2,(\alpha_{(0)}, (0), \alpha_{(0,0)}, (0))) = (2, (\alpha_{(0)}, (1), \omega, (0), \alpha_{(0,0)} , -1))$ for $\vec{\alpha} \in [D]^{\se{(0),(0,0)}\uparrow}$,
\item $\theta (1, ((1))) = (2, (\omega, -1))$, and 
\item $\theta(1, ((0))) = (1, ((0)))$,
\end{itemize}
In general, if
\begin{displaymath}
  \theta' : \rep(X) \to \rep(T)
\end{displaymath}
is an \emph{order-preserving, continuous} map, the $X$-equivalence class of $\theta'$ is uniquely decided by another finite level-2 tree $Q'$, a finite tuple of ordinals $\vec{\gamma}'$ respecting $Q'$ and  a finite combinatorial factoring map $\pi$. The general form is Lemma~\ref{lem:XT_division_Q} and Theorem~\ref{thm:factor_ordertype_embed_equivalent_lv2}. 

Return to our example. 
% \begin{itemize}
% \item Let $\mu$ be the club measure on $\omega_1$. 
% \end{itemize}
% The algorithm of finding such a $Q$ in the first place is by investigating the representatives modulo the iterated club measure on $\omega_1$ of functions on ordinals that arise in $\theta_{XT}$.
In the characterization of the $X$-equivalence class of $\theta_{XT}$, the ordinals 
 $\alpha$,
 $\omega$,
 $\beta 2$, and
 $\beta$
show up. The actual information about the occurrences of these ordinals are functions
\begin{itemize}
\item $\vec{\alpha} \mapsto \alpha_{(0)}$ for $\vec{\alpha} \in [D]^{\se{(0)}\uparrow}$,
\item $\vec{\alpha} \mapsto \omega$ for $\vec{\alpha} \in [D]^{\se{(0)}\uparrow}$,
\item $\vec{\alpha} \mapsto \alpha_{(0)}$ for $\vec{\alpha} \in [D]^{\se{(0),(0,0)}\uparrow}$,
\item $\vec{\alpha} \mapsto \omega$ for $\vec{\alpha} \in [D]^{\se{(0),(0,0)}\uparrow}$,
\item $\vec{\alpha} \mapsto \alpha_{(0,0)} 2$ for $\vec{\alpha} \in [D]^{\se{(0),(0,0)}\uparrow}$, 
\item $\vec{\alpha}\mapsto \alpha_{(0,0)}$ for $\vec{\alpha} \in [D]^{\se{(0),(0,0)}\uparrow}$, and
\item $* \mapsto \omega$.
\end{itemize}
Throw away the redundant coordinates in the variables of these functions and remove duplicates. The remaining is
\begin{itemize}
\item $\alpha_{(0)} \mapsto \alpha_{(0)}$,
\item $\alpha_{(0,0)} \mapsto \alpha_{(0,0)} 2$,
\item $* \mapsto \omega$.
\end{itemize}
The function $\alpha_{(0)} \mapsto \alpha_{(0)}$ is of continuous type, so we remove the continuous coordinate $\alpha_{(0)}$ by taking sup over it. The resulting function becomes 
$* \mapsto \sup_{\alpha_{(0)}<\omega_1} \alpha_{(0)} = \omega_1$.

These functions generate a tuple that respects a level $\leq 2$ tree. Let $Q$ be a level $\leq 2$ tree where $\comp{1}{Q} = \se{(0)}$, $\comp{2}{Q}$ has domain $\se{\emptyset, ((0))}$, $\comp{2}{Q}(((0))) = (\se{(0)}, (0,0))$. $\vec{\xi}$ respects $Q$ where $\comp{1}{\xi}_{(0)} = \omega$, $\comp{2}{\xi}_{\emptyset} = \omega_1$, $\comp{2}{\xi}_{((0))} = \omega_1 2 = [\vec{\alpha} \mapsto \alpha_{(0)} 2]_{\mu^{\se{(0)}}}$. 

Now the $X$-equivalence class of $\theta_{XT}$ is decided by $Q$ and $\vec{\xi}$. Fix $g \in [\omega_1]^{Q \uparrow}$ with $[g]^Q = \vec{\xi}$. Then a continuous, order-preserving $\theta$ is $X$-equivalent to $\theta_{XT}$ iff there is a club $C$ in $\omega_1$ such that 
\begin{itemize}
\item $ \theta(2, (\alpha_{(0)},(0))) =(2, (   g(2,   (   \alpha_{(0)}   , -1))     , (1),   g(1,(0)) ,  (0)  ))$ for $\vec{\alpha}\in [C]^{\se{(0)} \uparrow}$,
\item $ \theta(2, (\alpha_{(0)},(0), \alpha_{(0,0)}, (1))) =(2, (   g(2,   (   \alpha_{(0)}   , -1))     , (1),   g(1,(0)) ,  (0) ,  g(2, (\alpha_{(0)}, 0))  ,-1 ))$ for $\vec{\alpha}\in [C]^{\se{(0)} \uparrow}$,
\item $ \theta(2, (\alpha_{(0)},(0), \alpha_{(0,0)}, (0))) =(2, (   g(2,   (   \alpha_{(0)}   , -1))     , (1),   g(1,(0)) ,  (0) ,  g(2, (\alpha_{(0,0)}, -1))  ,-1 ))$ for $\vec{\alpha}\in [C]^{\se{(0)} \uparrow}$,
\item $\theta(1, ((1))) = (2, ( g(1,(0)) ,-1))$, and
\item $\theta (1, ((0))) = (1, ((0)))$. 
\end{itemize}
This is a combinatorial characterization of the $X$-equivalence class of $\theta_{XT}$ depending only on $Q$ and $\vec{\xi}$. This template of generating $X$-equivalence classes depending on a tuple respecting $Q$ will be abstracted into a finitary object
\begin{displaymath}
\pi
\end{displaymath}
which factors $(X,T,Q)$, 
%
% The combinatorial characterization of $(\theta^T_Q)^{-1}\circ\theta_{XT} $ will abstracted into a finitary object $\pi$ which factors $(X,T,Q)$, 
to be defined in Section~\ref{sec:more-level-2}. The main property of $\pi$ is that it induces a map
\begin{displaymath}
  \pi^{T,Q} : \mathbb{L}_{\bolddelta{3}}[j^X(T_2)] \to \mathbb{L}_{\bolddelta{3}}[j^T \circ j^Q(T_2)]
\end{displaymath}
such that $\pi^{T,Q} \circ j^X = j^T \circ j^Q$ and $\pi^{T,Q}$ is elementary on any submodel of ZFC.

An important ingredient in this paper is a simplification to the original Jackson's analysis. It merges the iteration of two ultrapowers into a single ultrapower by  defining the tensor product of trees. For illustration purposes, the tensor product, $T \otimes Q$, will be a tree $U$ (up to an isomorphism) such that $\theta_{UT} = \theta_T^{-1} \circ \theta_U $ is universal in the sense that
\begin{enumerate}
\item the $U$-equivalence class of $\theta_{UT} $ has a combinatorial characterization decided by $(Q, \vec{\xi}, \pi^{*})$, where $\vec{\xi}$ respects $Q$, $\tau^{*}$ factors $(U,T,Q)$, and
\item given $(U',\theta',\vec{\xi}',\pi')$, where $U'$ is another level $\leq 2$ tree, $\theta' : \rep(U') \to \rep(T)$ is continuous, order-preserving, and the $U'$-equivalence class of $\theta'$ has a combinatorial characterization decided by $(X, \vec{\xi}', \pi')$, where $\vec{\xi}'$ respects $Q$, $\pi'$ factors $(U',T,Q)$, $\theta'$ is a representative of the $U$-equivalence class decided by $(Q, \vec{\xi}', \pi')$, then there is a level $\leq 2$ tree factoring map $\psi$ of $U'$ into $U$ such that
  \begin{displaymath}
    \theta' (1, (s)) = \theta(1,(\comp{1}{\psi}(s))) 
  \end{displaymath}
for any $s \in \comp{1}{U'}$ and 
  \begin{displaymath}
    [\vec{\alpha} \mapsto \theta'(2, \vec{\alpha}  \oplus_{\comp{2}{U'}} s)]_{\comp{2}{U'}_{\tree}(s)}    =    [\vec{\alpha} \mapsto \theta(2, \vec{\alpha}  \oplus_{\comp{2}{U}} \comp{2}{\psi}(s))]_{\comp{2}{U}_{\tree}(s)}
  \end{displaymath}
for any $s \in \dom(\comp{2}{U}')$. 
\end{enumerate}
We describe the algorithm of producing $U$. Take an arbitrary $g \in [\omega_1]^{Q \uparrow}$. Observe that in the characterization of the $X$-equivalent class of $\theta_{XT}$, the right hand side of the equations are always of the form
\begin{displaymath}
  (2, (g ( f_0(\vec{\alpha})), t(0), \dots, g(f_k(\vec{\alpha})),t(k)))
\end{displaymath}
where $\vec{\alpha} = (\alpha_w)_{w \in W}$ for some level-1 tree $W$, 
$f_0(\vec{\alpha}),\dots,f_k(\vec{\alpha})$ are functions into $\rep(T)$ so that the ordinal entries of $f_i(\vec{\alpha})$ are always some $\alpha_w$, 
$t = (t(0),\dots,t(k)) \in \dom^{*}(\comp{2}{T})$, or
\begin{displaymath}
  (1, (t))
\end{displaymath}
where $t \in \comp{1}{T}$. 
We now write down all the possible expressions of such kind modulo a level-1 tree isomorphism of $W$, with the obvious restrictions:
\begin{enumerate}
\item The whole expression must be in $\rep(T)$, i.e., the ordering of $(f_0(\vec{\alpha}),\dots,f_0(\vec{\alpha}))$ is fixed according to $T$ and $(t(0),\dots,t(k))$.
\item For every $w \in W$, $\alpha_w$ must show up at least once in the expression of $(f_0(\vec{\alpha}),\dots,f_0(\vec{\alpha}))$. Otherwise we would throw away the unused coordinates.
\item The whole expression must be of discontinuous type, in the sense of \cite{sharpII}. That is, there is a club $C$ in $\omega_1$ such that for $\vec{\alpha} \in [C]^{W \uparrow}$,
  \begin{displaymath}
     (2, (g ( f_0(\vec{\alpha})), t(0), \dots, g(f_k(\vec{\alpha})),t(k))) 
  \end{displaymath}
is bigger than the $<^T$ sup of 
\begin{displaymath}
  (2, (g ( f_0(\vec{\beta})), t(0), \dots, g(f_k(\vec{\beta})),t(k))) 
\end{displaymath}
that are $<^T (2, (g ( f_0(\vec{\alpha})), t(0), \dots, g(f_k(\vec{\alpha})),t(k))) $.
Otherwise, one of the coordinates $w_{*}$ would be redundant information, and the information in the whole expression would be equivalent to the following expression depending only on $\vec{\alpha} \in [\omega_1]^{W \setminus \se{w_{*}}\uparrow}$:
\begin{displaymath}
 \sup_{<_T}  \set{(2, (g ( f_0(\vec{\beta})), t(0), \dots, g(f_k(\vec{\beta})),t(k)))}{\vec{\beta} \in [\omega_1]^{W \uparrow}\text{ extends } \vec{\alpha} }
\end{displaymath}
\end{enumerate}

These expressions naturally form a level $\leq 2$ tree $U$. 
An expression of the form $(1,(t))$ corresponds to a node $\tilde{t} \in \comp{1}{U}$.
An expression of the form $h(\vec{\alpha}) = (2,(g(f_0(\vec{\alpha})),t(0),\dots))$ corresponds to a node $\tilde{h} \in\dom(\comp{2}{U})$. The value of $\comp{2}{U}(\tilde{h})$ 
is decided by the uniform cofinality of $h$. 
If $h(\vec{\alpha})$ is expression where $\vec{\alpha}$ ranges over $[\omega_1]^{W \uparrow}$ and $h'(\vec{\alpha})$ is another expression where $\vec{\alpha}$ ranges over $[\omega_1]^{W' \uparrow}$, $W'$ is a one-node extension of $W$,  then
$\tilde{h'}$ is a one-node extension of $\tilde{h}$ iff there is a club $C$ in $\omega_1$ such that for any $\vec{\alpha} \in [C]^{W \uparrow}$,
\begin{displaymath}
  h(\vec{\alpha}) = \sup_{<^T} \set{h'(\vec{\alpha}')}{\vec{\alpha}' \text{ extends }\vec{\alpha}}.
\end{displaymath}

A representation of the isomorphism type of this level $\leq 2$ tree and the corresponding expressions are given as follows. The level $\leq 2$ tree is $U$, where $\comp{1}{U}$ has cardinality 4 and $\comp{2}{U}$ has cardinality 156.

\begin{etaremune} 
\item $\comp{2}{U}(\emptyset) = (\se{\emptyset}, (0))$

(2, $\emptyset$)

\item $\comp{2}{U_{\node}}(((11)))= (0, 0)$

(2, ($g$(2, ($\alpha_{(0)}$, (0))), (1)))

\item $\comp{2}{U_{\node}}(((11), (0)))= (0, 1)$

(2, ($g$(2, ($\alpha_{(0)}$, (0))), (1), $g$(2, ($\alpha_{(0)}$, (0), $\alpha_{(0, 0)}$, $-1$)), (0)))

\item $\comp{2}{U_{\node}}(((10)))= (0, 0)$

(2, ($g$(2, ($\alpha_{(0)}$, (0))), (1), $g$(2, ($\alpha_{(0)}$, $-1$)), (0)))

\item $\comp{2}{U_{\node}}(((9)))= (0, 0)$

(2, ($g$(2, ($\alpha_{(0)}$, (0))), (1), $g$(2, ($\alpha_{(0)}$, $-1$)), $-1$))

\item $\comp{2}{U_{\node}}(((9), (7)))= (0, 1)$

(2, ($g$(2, ($\alpha_{(0)}$, (0))), (1), $g$(2, ($\alpha_{(0, 0)}$, (0))), (0)))

\item $\comp{2}{U_{\node}}(((9), (6)))= (0, 0, 0)$

(2, ($g$(2, ($\alpha_{(0)}$, (0))), (1), $g$(2, ($\alpha_{(0, 0)}$, (0))), (0), $g$(2, ($\alpha_{(0)}$, (0), $\alpha_{(0, 0)}$, $-1$)), $-1$))

\item $\comp{2}{U_{\node}}(((9), (5)))= (0, 1)$

(2, ($g$(2, ($\alpha_{(0)}$, (0))), (1), $g$(2, ($\alpha_{(0, 0)}$, (0))), (0), $g$(2, ($\alpha_{(0)}$, $-1$)), $-1$))

\item $\comp{2}{U_{\node}}(((9), (5), (1)))= (0, 1, 0)$

(2, ($g$(2, ($\alpha_{(0)}$, (0))), (1), $g$(2, ($\alpha_{(0, 0)}$, (0))), (0), $g$(2, ($\alpha_{(0, 1)}$, (0))), $-1$))

\item $\comp{2}{U_{\node}}(((9), (5), (0)))= (0, 0, 0)$

(2, ($g$(2, ($\alpha_{(0)}$, (0))), (1), $g$(2, ($\alpha_{(0, 0)}$, (0))), (0), $g$(2, ($\alpha_{(0, 1)}$, (0), $\alpha_{(0, 0)}$, $-1$)), $-1$))

\item $\comp{2}{U_{\node}}(((9), (4)))= (0, 0, 0)$

(2, ($g$(2, ($\alpha_{(0)}$, (0))), (1), $g$(2, ($\alpha_{(0, 0)}$, (0))), $-1$))

\item $\comp{2}{U_{\node}}(((9), (4), (4)))= (0, 1)$

(2, ($g$(2, ($\alpha_{(0)}$, (0))), (1), $g$(2, ($\alpha_{(0, 0)}$, (0), $\alpha_{(0, 0, 0)}$, $-1$)), (0)))

\item $\comp{2}{U_{\node}}(((9), (4), (3)))= (0, 0, 1)$

(2, ($g$(2, ($\alpha_{(0)}$, (0))), (1), $g$(2, ($\alpha_{(0, 0)}$, (0), $\alpha_{(0, 0, 0)}$, $-1$)), (0), $g$(2, ($\alpha_{(0)}$, (0), $\alpha_{(0, 0)}$, $-1$)), $-1$))

\item $\comp{2}{U_{\node}}(((9), (4), (2)))= (0, 0, 0, 0)$

(2, ($g$(2, ($\alpha_{(0)}$, (0))), (1), $g$(2, ($\alpha_{(0, 0)}$, (0), $\alpha_{(0, 0, 0)}$, $-1$)), (0), $g$(2, ($\alpha_{(0)}$, (0), $\alpha_{(0, 0, 0)}$, $-1$)), $-1$))

\item $\comp{2}{U_{\node}}(((9), (4), (1)))= (0, 1)$

(2, ($g$(2, ($\alpha_{(0)}$, (0))), (1), $g$(2, ($\alpha_{(0, 0)}$, (0), $\alpha_{(0, 0, 0)}$, $-1$)), (0), $g$(2, ($\alpha_{(0)}$, $-1$)), $-1$))

\item $\comp{2}{U_{\node}}(((9), (4), (1), (2)))= (0, 1, 0)$

(2, ($g$(2, ($\alpha_{(0)}$, (0))), (1), $g$(2, ($\alpha_{(0, 0)}$, (0), $\alpha_{(0, 0, 0)}$, $-1$)), (0), $g$(2, ($\alpha_{(0, 1)}$, (0))), $-1$))

\item $\comp{2}{U_{\node}}(((9), (4), (1), (1)))= (0, 0, 1)$

(2, ($g$(2, ($\alpha_{(0)}$, (0))), (1), $g$(2, ($\alpha_{(0, 0)}$, (0), $\alpha_{(0, 0, 0)}$, $-1$)), (0), $g$(2, ($\alpha_{(0, 1)}$, (0), $\alpha_{(0, 0)}$, $-1$)), $-1$))

\item $\comp{2}{U_{\node}}(((9), (4), (1), (0)))= (0, 0, 0, 0)$

(2, ($g$(2, ($\alpha_{(0)}$, (0))), (1), $g$(2, ($\alpha_{(0, 0)}$, (0), $\alpha_{(0, 0, 0)}$, $-1$)), (0), $g$(2, ($\alpha_{(0, 1)}$, (0), $\alpha_{(0, 0, 0)}$, $-1$)), $-1$))

\item $\comp{2}{U_{\node}}(((9), (4), (0)))= (0, 0, 1)$

(2, ($g$(2, ($\alpha_{(0)}$, (0))), (1), $g$(2, ($\alpha_{(0, 0)}$, (0), $\alpha_{(0, 0, 0)}$, $-1$)), (0), $g$(2, ($\alpha_{(0, 0)}$, (0))), $-1$))

\item $\comp{2}{U_{\node}}(((9), (3)))= (0, 1)$

(2, ($g$(2, ($\alpha_{(0)}$, (0))), (1), $g$(2, ($\alpha_{(0, 0)}$, $-1$)), (0)))

\item $\comp{2}{U_{\node}}(((9), (2)))= (0, 0, 0)$

(2, ($g$(2, ($\alpha_{(0)}$, (0))), (1), $g$(2, ($\alpha_{(0, 0)}$, $-1$)), (0), $g$(2, ($\alpha_{(0)}$, (0), $\alpha_{(0, 0)}$, $-1$)), $-1$))

\item $\comp{2}{U_{\node}}(((9), (1)))= (0, 1)$

(2, ($g$(2, ($\alpha_{(0)}$, (0))), (1), $g$(2, ($\alpha_{(0, 0)}$, $-1$)), (0), $g$(2, ($\alpha_{(0)}$, $-1$)), $-1$))

\item $\comp{2}{U_{\node}}(((9), (1), (1)))= (0, 1, 0)$

(2, ($g$(2, ($\alpha_{(0)}$, (0))), (1), $g$(2, ($\alpha_{(0, 0)}$, $-1$)), (0), $g$(2, ($\alpha_{(0, 1)}$, (0))), $-1$))

\item $\comp{2}{U_{\node}}(((9), (1), (0)))= (0, 0, 0)$

(2, ($g$(2, ($\alpha_{(0)}$, (0))), (1), $g$(2, ($\alpha_{(0, 0)}$, $-1$)), (0), $g$(2, ($\alpha_{(0, 1)}$, (0), $\alpha_{(0, 0)}$, $-1$)), $-1$))

\item $\comp{2}{U_{\node}}(((9), (0)))= (0, 0, 0)$

(2, ($g$(2, ($\alpha_{(0)}$, (0))), (1), $g$(2, ($\alpha_{(0, 0)}$, $-1$)), (0), $g$(2, ($\alpha_{(0, 0)}$, (0))), $-1$))

\item $\comp{2}{U_{\node}}(((8)))= (0, 0)$

(2, ($g$(2, ($\alpha_{(0)}$, (0))), (1), $g$(1, ((0))), (0)))

\item $\comp{2}{U_{\node}}(((7)))= (0, 0)$

(2, ($g$(2, ($\alpha_{(0)}$, (0))), (1), $g$(1, ((0))), (0), $g$(2, ($\alpha_{(0)}$, $-1$)), $-1$))

\item $\comp{2}{U_{\node}}(((7), (0)))= (0, 0, 0)$

(2, ($g$(2, ($\alpha_{(0)}$, (0))), (1), $g$(1, ((0))), (0), $g$(2, ($\alpha_{(0, 0)}$, (0))), $-1$))

\item $\comp{2}{U_{\node}}(((6)))= -1$

(2, ($g$(2, ($\alpha_{(0)}$, (0))), (1), $g$(1, ((0))), $-1$))

\item $\comp{2}{U_{\node}}(((5)))= -1$

(2, ($g$(2, ($\alpha_{(0)}$, (0))), (0)))

\item $\comp{2}{U_{\node}}(((4)))= (0, 0)$

(2, ($g$(2, ($\alpha_{(0)}$, (0))), $-1$))

\item $\comp{2}{U_{\node}}(((4), (15)))= (0, 0, 0)$

(2, ($g$(2, ($\alpha_{(0)}$, (0), $\alpha_{(0, 0)}$, $-1$)), (1)))

\item $\comp{2}{U_{\node}}(((4), (15), (0)))= (0, 0, 1)$

(2, ($g$(2, ($\alpha_{(0)}$, (0), $\alpha_{(0, 0)}$, $-1$)), (1), $g$(2, ($\alpha_{(0)}$, (0), $\alpha_{(0, 0, 0)}$, $-1$)), (0)))

\item $\comp{2}{U_{\node}}(((4), (14)))= (0, 0, 0)$

(2, ($g$(2, ($\alpha_{(0)}$, (0), $\alpha_{(0, 0)}$, $-1$)), (1), $g$(2, ($\alpha_{(0)}$, $-1$)), (0)))

\item $\comp{2}{U_{\node}}(((4), (13)))= (0, 1)$

(2, ($g$(2, ($\alpha_{(0)}$, (0), $\alpha_{(0, 0)}$, $-1$)), (1), $g$(2, ($\alpha_{(0)}$, $-1$)), $-1$))

\item $\comp{2}{U_{\node}}(((4), (13), (10)))= (0, 0, 0)$

(2, ($g$(2, ($\alpha_{(0)}$, (0), $\alpha_{(0, 0)}$, $-1$)), (1), $g$(2, ($\alpha_{(0, 1)}$, (0))), (0)))

\item $\comp{2}{U_{\node}}(((4), (13), (9)))= (0, 2)$

(2, ($g$(2, ($\alpha_{(0)}$, (0), $\alpha_{(0, 0)}$, $-1$)), (1), $g$(2, ($\alpha_{(0, 1)}$, (0))), (0), $g$(2, ($\alpha_{(0)}$, $-1$)), $-1$))

\item $\comp{2}{U_{\node}}(((4), (13), (9), (2)))= (0, 2, 0)$

(2, ($g$(2, ($\alpha_{(0)}$, (0), $\alpha_{(0, 0)}$, $-1$)), (1), $g$(2, ($\alpha_{(0, 1)}$, (0))), (0), $g$(2, ($\alpha_{(0, 2)}$, (0))), $-1$))

\item $\comp{2}{U_{\node}}(((4), (13), (9), (1)))= (0, 1, 0)$

(2, ($g$(2, ($\alpha_{(0)}$, (0), $\alpha_{(0, 0)}$, $-1$)), (1), $g$(2, ($\alpha_{(0, 1)}$, (0))), (0), $g$(2, ($\alpha_{(0, 2)}$, (0), $\alpha_{(0, 1)}$, $-1$)), $-1$))

\item $\comp{2}{U_{\node}}(((4), (13), (9), (0)))= (0, 0, 0)$

(2, ($g$(2, ($\alpha_{(0)}$, (0), $\alpha_{(0, 0)}$, $-1$)), (1), $g$(2, ($\alpha_{(0, 1)}$, (0))), (0), $g$(2, ($\alpha_{(0, 2)}$, (0), $\alpha_{(0, 0)}$, $-1$)), $-1$))

\item $\comp{2}{U_{\node}}(((4), (13), (8)))= (0, 1, 0)$

(2, ($g$(2, ($\alpha_{(0)}$, (0), $\alpha_{(0, 0)}$, $-1$)), (1), $g$(2, ($\alpha_{(0, 1)}$, (0))), $-1$))

\item $\comp{2}{U_{\node}}(((4), (13), (8), (2)))= (0, 0, 0)$

(2, ($g$(2, ($\alpha_{(0)}$, (0), $\alpha_{(0, 0)}$, $-1$)), (1), $g$(2, ($\alpha_{(0, 1)}$, (0), $\alpha_{(0, 1, 0)}$, $-1$)), (0)))

\item $\comp{2}{U_{\node}}(((4), (13), (8), (1)))= (0, 2)$

(2, ($g$(2, ($\alpha_{(0)}$, (0), $\alpha_{(0, 0)}$, $-1$)), (1), $g$(2, ($\alpha_{(0, 1)}$, (0), $\alpha_{(0, 1, 0)}$, $-1$)), (0), $g$(2, ($\alpha_{(0)}$, $-1$)), $-1$))

\item $\comp{2}{U_{\node}}(((4), (13), (8), (1), (3)))= (0, 2, 0)$

(2, ($g$(2, ($\alpha_{(0)}$, (0), $\alpha_{(0, 0)}$, $-1$)), (1), $g$(2, ($\alpha_{(0, 1)}$, (0), $\alpha_{(0, 1, 0)}$, $-1$)), (0), $g$(2, ($\alpha_{(0, 2)}$, (0))), $-1$))

\item $\comp{2}{U_{\node}}(((4), (13), (8), (1), (2)))= (0, 1, 1)$

(2, ($g$(2, ($\alpha_{(0)}$, (0), $\alpha_{(0, 0)}$, $-1$)), (1), $g$(2, ($\alpha_{(0, 1)}$, (0), $\alpha_{(0, 1, 0)}$, $-1$)), (0), $g$(2, ($\alpha_{(0, 2)}$, (0), $\alpha_{(0, 1)}$, $-1$)), $-1$))

\item $\comp{2}{U_{\node}}(((4), (13), (8), (1), (1)))= (0, 1, 0, 0)$

(2, ($g$(2, ($\alpha_{(0)}$, (0), $\alpha_{(0, 0)}$, $-1$)), (1), $g$(2, ($\alpha_{(0, 1)}$, (0), $\alpha_{(0, 1, 0)}$, $-1$)), (0), $g$(2, ($\alpha_{(0, 2)}$, (0), $\alpha_{(0, 1, 0)}$, $-1$)), $-1$))

\item $\comp{2}{U_{\node}}(((4), (13), (8), (1), (0)))= (0, 0, 0)$

(2, ($g$(2, ($\alpha_{(0)}$, (0), $\alpha_{(0, 0)}$, $-1$)), (1), $g$(2, ($\alpha_{(0, 1)}$, (0), $\alpha_{(0, 1, 0)}$, $-1$)), (0), $g$(2, ($\alpha_{(0, 2)}$, (0), $\alpha_{(0, 0)}$, $-1$)), $-1$))

\item $\comp{2}{U_{\node}}(((4), (13), (8), (0)))= (0, 1, 1)$

(2, ($g$(2, ($\alpha_{(0)}$, (0), $\alpha_{(0, 0)}$, $-1$)), (1), $g$(2, ($\alpha_{(0, 1)}$, (0), $\alpha_{(0, 1, 0)}$, $-1$)), (0), $g$(2, ($\alpha_{(0, 1)}$, (0))), $-1$))

\item $\comp{2}{U_{\node}}(((4), (13), (7)))= (0, 0, 0)$

(2, ($g$(2, ($\alpha_{(0)}$, (0), $\alpha_{(0, 0)}$, $-1$)), (1), $g$(2, ($\alpha_{(0, 1)}$, (0), $\alpha_{(0, 0)}$, $-1$)), (0)))

\item $\comp{2}{U_{\node}}(((4), (13), (6)))= (0, 2)$

(2, ($g$(2, ($\alpha_{(0)}$, (0), $\alpha_{(0, 0)}$, $-1$)), (1), $g$(2, ($\alpha_{(0, 1)}$, (0), $\alpha_{(0, 0)}$, $-1$)), (0), $g$(2, ($\alpha_{(0)}$, $-1$)), $-1$))

\item $\comp{2}{U_{\node}}(((4), (13), (6), (2)))= (0, 2, 0)$

(2, ($g$(2, ($\alpha_{(0)}$, (0), $\alpha_{(0, 0)}$, $-1$)), (1), $g$(2, ($\alpha_{(0, 1)}$, (0), $\alpha_{(0, 0)}$, $-1$)), (0), $g$(2, ($\alpha_{(0, 2)}$, (0))), $-1$))

\item $\comp{2}{U_{\node}}(((4), (13), (6), (1)))= (0, 1, 0)$

(2, ($g$(2, ($\alpha_{(0)}$, (0), $\alpha_{(0, 0)}$, $-1$)), (1), $g$(2, ($\alpha_{(0, 1)}$, (0), $\alpha_{(0, 0)}$, $-1$)), (0), $g$(2, ($\alpha_{(0, 2)}$, (0), $\alpha_{(0, 1)}$, $-1$)), $-1$))

\item $\comp{2}{U_{\node}}(((4), (13), (6), (0)))= (0, 0, 0)$

(2, ($g$(2, ($\alpha_{(0)}$, (0), $\alpha_{(0, 0)}$, $-1$)), (1), $g$(2, ($\alpha_{(0, 1)}$, (0), $\alpha_{(0, 0)}$, $-1$)), (0), $g$(2, ($\alpha_{(0, 2)}$, (0), $\alpha_{(0, 0)}$, $-1$)), $-1$))

\item $\comp{2}{U_{\node}}(((4), (13), (5)))= (0, 1, 0)$

(2, ($g$(2, ($\alpha_{(0)}$, (0), $\alpha_{(0, 0)}$, $-1$)), (1), $g$(2, ($\alpha_{(0, 1)}$, (0), $\alpha_{(0, 0)}$, $-1$)), (0), $g$(2, ($\alpha_{(0, 1)}$, (0))), $-1$))

\item $\comp{2}{U_{\node}}(((4), (13), (4)))= (0, 0, 0)$

(2, ($g$(2, ($\alpha_{(0)}$, (0), $\alpha_{(0, 0)}$, $-1$)), (1), $g$(2, ($\alpha_{(0, 1)}$, (0), $\alpha_{(0, 0)}$, $-1$)), $-1$))

\item $\comp{2}{U_{\node}}(((4), (13), (4), (4)))= (0, 0, 1)$

(2, ($g$(2, ($\alpha_{(0)}$, (0), $\alpha_{(0, 0)}$, $-1$)), (1), $g$(2, ($\alpha_{(0, 1)}$, (0), $\alpha_{(0, 0, 0)}$, $-1$)), (0)))

\item $\comp{2}{U_{\node}}(((4), (13), (4), (3)))= (0, 0, 0, 0)$

(2, ($g$(2, ($\alpha_{(0)}$, (0), $\alpha_{(0, 0)}$, $-1$)), (1), $g$(2, ($\alpha_{(0, 1)}$, (0), $\alpha_{(0, 0, 0)}$, $-1$)), (0), $g$(2, ($\alpha_{(0)}$, (0), $\alpha_{(0, 0, 0)}$, $-1$)), $-1$))

\item $\comp{2}{U_{\node}}(((4), (13), (4), (2)))= (0, 2)$

(2, ($g$(2, ($\alpha_{(0)}$, (0), $\alpha_{(0, 0)}$, $-1$)), (1), $g$(2, ($\alpha_{(0, 1)}$, (0), $\alpha_{(0, 0, 0)}$, $-1$)), (0), $g$(2, ($\alpha_{(0)}$, $-1$)), $-1$))

\item $\comp{2}{U_{\node}}(((4), (13), (4), (2), (3)))= (0, 2, 0)$

(2, ($g$(2, ($\alpha_{(0)}$, (0), $\alpha_{(0, 0)}$, $-1$)), (1), $g$(2, ($\alpha_{(0, 1)}$, (0), $\alpha_{(0, 0, 0)}$, $-1$)), (0), $g$(2, ($\alpha_{(0, 2)}$, (0))), $-1$))

\item $\comp{2}{U_{\node}}(((4), (13), (4), (2), (2)))= (0, 1, 0)$

(2, ($g$(2, ($\alpha_{(0)}$, (0), $\alpha_{(0, 0)}$, $-1$)), (1), $g$(2, ($\alpha_{(0, 1)}$, (0), $\alpha_{(0, 0, 0)}$, $-1$)), (0), $g$(2, ($\alpha_{(0, 2)}$, (0), $\alpha_{(0, 1)}$, $-1$)), $-1$))

\item $\comp{2}{U_{\node}}(((4), (13), (4), (2), (1)))= (0, 0, 1)$

(2, ($g$(2, ($\alpha_{(0)}$, (0), $\alpha_{(0, 0)}$, $-1$)), (1), $g$(2, ($\alpha_{(0, 1)}$, (0), $\alpha_{(0, 0, 0)}$, $-1$)), (0), $g$(2, ($\alpha_{(0, 2)}$, (0), $\alpha_{(0, 0)}$, $-1$)), $-1$))

\item $\comp{2}{U_{\node}}(((4), (13), (4), (2), (0)))= (0, 0, 0, 0)$

(2, ($g$(2, ($\alpha_{(0)}$, (0), $\alpha_{(0, 0)}$, $-1$)), (1), $g$(2, ($\alpha_{(0, 1)}$, (0), $\alpha_{(0, 0, 0)}$, $-1$)), (0), $g$(2, ($\alpha_{(0, 2)}$, (0), $\alpha_{(0, 0, 0)}$, $-1$)), $-1$))

\item $\comp{2}{U_{\node}}(((4), (13), (4), (1)))= (0, 1, 0)$

(2, ($g$(2, ($\alpha_{(0)}$, (0), $\alpha_{(0, 0)}$, $-1$)), (1), $g$(2, ($\alpha_{(0, 1)}$, (0), $\alpha_{(0, 0, 0)}$, $-1$)), (0), $g$(2, ($\alpha_{(0, 1)}$, (0))), $-1$))

\item $\comp{2}{U_{\node}}(((4), (13), (4), (0)))= (0, 0, 1)$

(2, ($g$(2, ($\alpha_{(0)}$, (0), $\alpha_{(0, 0)}$, $-1$)), (1), $g$(2, ($\alpha_{(0, 1)}$, (0), $\alpha_{(0, 0, 0)}$, $-1$)), (0), $g$(2, ($\alpha_{(0, 1)}$, (0), $\alpha_{(0, 0)}$, $-1$)), $-1$))

\item $\comp{2}{U_{\node}}(((4), (13), (3)))= (0, 0, 0)$

(2, ($g$(2, ($\alpha_{(0)}$, (0), $\alpha_{(0, 0)}$, $-1$)), (1), $g$(2, ($\alpha_{(0, 1)}$, $-1$)), (0)))

\item $\comp{2}{U_{\node}}(((4), (13), (2)))= (0, 2)$

(2, ($g$(2, ($\alpha_{(0)}$, (0), $\alpha_{(0, 0)}$, $-1$)), (1), $g$(2, ($\alpha_{(0, 1)}$, $-1$)), (0), $g$(2, ($\alpha_{(0)}$, $-1$)), $-1$))

\item $\comp{2}{U_{\node}}(((4), (13), (2), (2)))= (0, 2, 0)$

(2, ($g$(2, ($\alpha_{(0)}$, (0), $\alpha_{(0, 0)}$, $-1$)), (1), $g$(2, ($\alpha_{(0, 1)}$, $-1$)), (0), $g$(2, ($\alpha_{(0, 2)}$, (0))), $-1$))

\item $\comp{2}{U_{\node}}(((4), (13), (2), (1)))= (0, 1, 0)$

(2, ($g$(2, ($\alpha_{(0)}$, (0), $\alpha_{(0, 0)}$, $-1$)), (1), $g$(2, ($\alpha_{(0, 1)}$, $-1$)), (0), $g$(2, ($\alpha_{(0, 2)}$, (0), $\alpha_{(0, 1)}$, $-1$)), $-1$))

\item $\comp{2}{U_{\node}}(((4), (13), (2), (0)))= (0, 0, 0)$

(2, ($g$(2, ($\alpha_{(0)}$, (0), $\alpha_{(0, 0)}$, $-1$)), (1), $g$(2, ($\alpha_{(0, 1)}$, $-1$)), (0), $g$(2, ($\alpha_{(0, 2)}$, (0), $\alpha_{(0, 0)}$, $-1$)), $-1$))

\item $\comp{2}{U_{\node}}(((4), (13), (1)))= (0, 1, 0)$

(2, ($g$(2, ($\alpha_{(0)}$, (0), $\alpha_{(0, 0)}$, $-1$)), (1), $g$(2, ($\alpha_{(0, 1)}$, $-1$)), (0), $g$(2, ($\alpha_{(0, 1)}$, (0))), $-1$))

\item $\comp{2}{U_{\node}}(((4), (13), (0)))= (0, 0, 0)$

(2, ($g$(2, ($\alpha_{(0)}$, (0), $\alpha_{(0, 0)}$, $-1$)), (1), $g$(2, ($\alpha_{(0, 1)}$, $-1$)), (0), $g$(2, ($\alpha_{(0, 1)}$, (0), $\alpha_{(0, 0)}$, $-1$)), $-1$))

\item $\comp{2}{U_{\node}}(((4), (12)))= (0, 0, 0)$

(2, ($g$(2, ($\alpha_{(0)}$, (0), $\alpha_{(0, 0)}$, $-1$)), (1), $g$(2, ($\alpha_{(0, 0)}$, (0))), (0)))

\item $\comp{2}{U_{\node}}(((4), (11)))= (0, 1)$

(2, ($g$(2, ($\alpha_{(0)}$, (0), $\alpha_{(0, 0)}$, $-1$)), (1), $g$(2, ($\alpha_{(0, 0)}$, (0))), (0), $g$(2, ($\alpha_{(0)}$, $-1$)), $-1$))

\item $\comp{2}{U_{\node}}(((4), (11), (1)))= (0, 1, 0)$

(2, ($g$(2, ($\alpha_{(0)}$, (0), $\alpha_{(0, 0)}$, $-1$)), (1), $g$(2, ($\alpha_{(0, 0)}$, (0))), (0), $g$(2, ($\alpha_{(0, 1)}$, (0))), $-1$))

\item $\comp{2}{U_{\node}}(((4), (11), (0)))= (0, 0, 0)$

(2, ($g$(2, ($\alpha_{(0)}$, (0), $\alpha_{(0, 0)}$, $-1$)), (1), $g$(2, ($\alpha_{(0, 0)}$, (0))), (0), $g$(2, ($\alpha_{(0, 1)}$, (0), $\alpha_{(0, 0)}$, $-1$)), $-1$))

\item $\comp{2}{U_{\node}}(((4), (10)))= (0, 0, 0)$

(2, ($g$(2, ($\alpha_{(0)}$, (0), $\alpha_{(0, 0)}$, $-1$)), (1), $g$(2, ($\alpha_{(0, 0)}$, (0))), $-1$))

\item $\comp{2}{U_{\node}}(((4), (10), (3)))= (0, 0, 1)$

(2, ($g$(2, ($\alpha_{(0)}$, (0), $\alpha_{(0, 0)}$, $-1$)), (1), $g$(2, ($\alpha_{(0, 0)}$, (0), $\alpha_{(0, 0, 0)}$, $-1$)), (0)))

\item $\comp{2}{U_{\node}}(((4), (10), (2)))= (0, 0, 0, 0)$

(2, ($g$(2, ($\alpha_{(0)}$, (0), $\alpha_{(0, 0)}$, $-1$)), (1), $g$(2, ($\alpha_{(0, 0)}$, (0), $\alpha_{(0, 0, 0)}$, $-1$)), (0), $g$(2, ($\alpha_{(0)}$, (0), $\alpha_{(0, 0, 0)}$, $-1$)), $-1$))

\item $\comp{2}{U_{\node}}(((4), (10), (1)))= (0, 1)$

(2, ($g$(2, ($\alpha_{(0)}$, (0), $\alpha_{(0, 0)}$, $-1$)), (1), $g$(2, ($\alpha_{(0, 0)}$, (0), $\alpha_{(0, 0, 0)}$, $-1$)), (0), $g$(2, ($\alpha_{(0)}$, $-1$)), $-1$))

\item $\comp{2}{U_{\node}}(((4), (10), (1), (2)))= (0, 1, 0)$

(2, ($g$(2, ($\alpha_{(0)}$, (0), $\alpha_{(0, 0)}$, $-1$)), (1), $g$(2, ($\alpha_{(0, 0)}$, (0), $\alpha_{(0, 0, 0)}$, $-1$)), (0), $g$(2, ($\alpha_{(0, 1)}$, (0))), $-1$))

\item $\comp{2}{U_{\node}}(((4), (10), (1), (1)))= (0, 0, 1)$

(2, ($g$(2, ($\alpha_{(0)}$, (0), $\alpha_{(0, 0)}$, $-1$)), (1), $g$(2, ($\alpha_{(0, 0)}$, (0), $\alpha_{(0, 0, 0)}$, $-1$)), (0), $g$(2, ($\alpha_{(0, 1)}$, (0), $\alpha_{(0, 0)}$, $-1$)), $-1$))

\item $\comp{2}{U_{\node}}(((4), (10), (1), (0)))= (0, 0, 0, 0)$

(2, ($g$(2, ($\alpha_{(0)}$, (0), $\alpha_{(0, 0)}$, $-1$)), (1), $g$(2, ($\alpha_{(0, 0)}$, (0), $\alpha_{(0, 0, 0)}$, $-1$)), (0), $g$(2, ($\alpha_{(0, 1)}$, (0), $\alpha_{(0, 0, 0)}$, $-1$)), $-1$))

\item $\comp{2}{U_{\node}}(((4), (10), (0)))= (0, 0, 1)$

(2, ($g$(2, ($\alpha_{(0)}$, (0), $\alpha_{(0, 0)}$, $-1$)), (1), $g$(2, ($\alpha_{(0, 0)}$, (0), $\alpha_{(0, 0, 0)}$, $-1$)), (0), $g$(2, ($\alpha_{(0, 0)}$, (0))), $-1$))

\item $\comp{2}{U_{\node}}(((4), (9)))= (0, 0, 0)$

(2, ($g$(2, ($\alpha_{(0)}$, (0), $\alpha_{(0, 0)}$, $-1$)), (1), $g$(2, ($\alpha_{(0, 0)}$, $-1$)), (0)))

\item $\comp{2}{U_{\node}}(((4), (8)))= (0, 1)$

(2, ($g$(2, ($\alpha_{(0)}$, (0), $\alpha_{(0, 0)}$, $-1$)), (1), $g$(2, ($\alpha_{(0, 0)}$, $-1$)), (0), $g$(2, ($\alpha_{(0)}$, $-1$)), $-1$))

\item $\comp{2}{U_{\node}}(((4), (8), (1)))= (0, 1, 0)$

(2, ($g$(2, ($\alpha_{(0)}$, (0), $\alpha_{(0, 0)}$, $-1$)), (1), $g$(2, ($\alpha_{(0, 0)}$, $-1$)), (0), $g$(2, ($\alpha_{(0, 1)}$, (0))), $-1$))

\item $\comp{2}{U_{\node}}(((4), (8), (0)))= (0, 0, 0)$

(2, ($g$(2, ($\alpha_{(0)}$, (0), $\alpha_{(0, 0)}$, $-1$)), (1), $g$(2, ($\alpha_{(0, 0)}$, $-1$)), (0), $g$(2, ($\alpha_{(0, 1)}$, (0), $\alpha_{(0, 0)}$, $-1$)), $-1$))

\item $\comp{2}{U_{\node}}(((4), (7)))= (0, 0, 0)$

(2, ($g$(2, ($\alpha_{(0)}$, (0), $\alpha_{(0, 0)}$, $-1$)), (1), $g$(2, ($\alpha_{(0, 0)}$, $-1$)), (0), $g$(2, ($\alpha_{(0, 0)}$, (0))), $-1$))

\item $\comp{2}{U_{\node}}(((4), (6)))= (0, 0, 0)$

(2, ($g$(2, ($\alpha_{(0)}$, (0), $\alpha_{(0, 0)}$, $-1$)), (1), $g$(2, ($\alpha_{(0, 0)}$, $-1$)), $-1$))

\item $\comp{2}{U_{\node}}(((4), (6), (13)))= (0, 0, 1)$

(2, ($g$(2, ($\alpha_{(0)}$, (0), $\alpha_{(0, 0)}$, $-1$)), (1), $g$(2, ($\alpha_{(0, 0, 0)}$, (0))), (0)))

\item $\comp{2}{U_{\node}}(((4), (6), (12)))= (0, 0, 0, 0)$

(2, ($g$(2, ($\alpha_{(0)}$, (0), $\alpha_{(0, 0)}$, $-1$)), (1), $g$(2, ($\alpha_{(0, 0, 0)}$, (0))), (0), $g$(2, ($\alpha_{(0)}$, (0), $\alpha_{(0, 0, 0)}$, $-1$)), $-1$))

\item $\comp{2}{U_{\node}}(((4), (6), (11)))= (0, 1)$

(2, ($g$(2, ($\alpha_{(0)}$, (0), $\alpha_{(0, 0)}$, $-1$)), (1), $g$(2, ($\alpha_{(0, 0, 0)}$, (0))), (0), $g$(2, ($\alpha_{(0)}$, $-1$)), $-1$))

\item $\comp{2}{U_{\node}}(((4), (6), (11), (2)))= (0, 1, 0)$

(2, ($g$(2, ($\alpha_{(0)}$, (0), $\alpha_{(0, 0)}$, $-1$)), (1), $g$(2, ($\alpha_{(0, 0, 0)}$, (0))), (0), $g$(2, ($\alpha_{(0, 1)}$, (0))), $-1$))

\item $\comp{2}{U_{\node}}(((4), (6), (11), (1)))= (0, 0, 1)$

(2, ($g$(2, ($\alpha_{(0)}$, (0), $\alpha_{(0, 0)}$, $-1$)), (1), $g$(2, ($\alpha_{(0, 0, 0)}$, (0))), (0), $g$(2, ($\alpha_{(0, 1)}$, (0), $\alpha_{(0, 0)}$, $-1$)), $-1$))

\item $\comp{2}{U_{\node}}(((4), (6), (11), (0)))= (0, 0, 0, 0)$

(2, ($g$(2, ($\alpha_{(0)}$, (0), $\alpha_{(0, 0)}$, $-1$)), (1), $g$(2, ($\alpha_{(0, 0, 0)}$, (0))), (0), $g$(2, ($\alpha_{(0, 1)}$, (0), $\alpha_{(0, 0, 0)}$, $-1$)), $-1$))

\item $\comp{2}{U_{\node}}(((4), (6), (10)))= (0, 0, 1)$

(2, ($g$(2, ($\alpha_{(0)}$, (0), $\alpha_{(0, 0)}$, $-1$)), (1), $g$(2, ($\alpha_{(0, 0, 0)}$, (0))), (0), $g$(2, ($\alpha_{(0, 0)}$, (0))), $-1$))

\item $\comp{2}{U_{\node}}(((4), (6), (9)))= (0, 0, 0, 0)$

(2, ($g$(2, ($\alpha_{(0)}$, (0), $\alpha_{(0, 0)}$, $-1$)), (1), $g$(2, ($\alpha_{(0, 0, 0)}$, (0))), (0), $g$(2, ($\alpha_{(0, 0)}$, (0), $\alpha_{(0, 0, 0)}$, $-1$)), $-1$))

\item $\comp{2}{U_{\node}}(((4), (6), (8)))= (0, 0, 1)$

(2, ($g$(2, ($\alpha_{(0)}$, (0), $\alpha_{(0, 0)}$, $-1$)), (1), $g$(2, ($\alpha_{(0, 0, 0)}$, (0))), (0), $g$(2, ($\alpha_{(0, 0)}$, $-1$)), $-1$))

\item $\comp{2}{U_{\node}}(((4), (6), (8), (1)))= (0, 0, 1, 0)$

(2, ($g$(2, ($\alpha_{(0)}$, (0), $\alpha_{(0, 0)}$, $-1$)), (1), $g$(2, ($\alpha_{(0, 0, 0)}$, (0))), (0), $g$(2, ($\alpha_{(0, 0, 1)}$, (0))), $-1$))

\item $\comp{2}{U_{\node}}(((4), (6), (8), (0)))= (0, 0, 0, 0)$

(2, ($g$(2, ($\alpha_{(0)}$, (0), $\alpha_{(0, 0)}$, $-1$)), (1), $g$(2, ($\alpha_{(0, 0, 0)}$, (0))), (0), $g$(2, ($\alpha_{(0, 0, 1)}$, (0), $\alpha_{(0, 0, 0)}$, $-1$)), $-1$))

\item $\comp{2}{U_{\node}}(((4), (6), (7)))= (0, 0, 0, 0)$

(2, ($g$(2, ($\alpha_{(0)}$, (0), $\alpha_{(0, 0)}$, $-1$)), (1), $g$(2, ($\alpha_{(0, 0, 0)}$, (0))), $-1$))

\item $\comp{2}{U_{\node}}(((4), (6), (7), (8)))= (0, 0, 1)$

(2, ($g$(2, ($\alpha_{(0)}$, (0), $\alpha_{(0, 0)}$, $-1$)), (1), $g$(2, ($\alpha_{(0, 0, 0)}$, (0), $\alpha_{(0, 0, 0, 0)}$, $-1$)), (0)))

\item $\comp{2}{U_{\node}}(((4), (6), (7), (7)))= (0, 0, 0, 1)$

(2, ($g$(2, ($\alpha_{(0)}$, (0), $\alpha_{(0, 0)}$, $-1$)), (1), $g$(2, ($\alpha_{(0, 0, 0)}$, (0), $\alpha_{(0, 0, 0, 0)}$, $-1$)), (0), $g$(2, ($\alpha_{(0)}$, (0), $\alpha_{(0, 0, 0)}$, $-1$)), $-1$))

\item $\comp{2}{U_{\node}}(((4), (6), (7), (6)))= (0, 0, 0, 0, 0)$

(2, ($g$(2, ($\alpha_{(0)}$, (0), $\alpha_{(0, 0)}$, $-1$)), (1), $g$(2, ($\alpha_{(0, 0, 0)}$, (0), $\alpha_{(0, 0, 0, 0)}$, $-1$)), (0), $g$(2, ($\alpha_{(0)}$, (0), $\alpha_{(0, 0, 0, 0)}$, $-1$)), $-1$))

\item $\comp{2}{U_{\node}}(((4), (6), (7), (5)))= (0, 1)$

(2, ($g$(2, ($\alpha_{(0)}$, (0), $\alpha_{(0, 0)}$, $-1$)), (1), $g$(2, ($\alpha_{(0, 0, 0)}$, (0), $\alpha_{(0, 0, 0, 0)}$, $-1$)), (0), $g$(2, ($\alpha_{(0)}$, $-1$)), $-1$))

\item $\comp{2}{U_{\node}}(((4), (6), (7), (5), (3)))= (0, 1, 0)$

(2, ($g$(2, ($\alpha_{(0)}$, (0), $\alpha_{(0, 0)}$, $-1$)), (1), $g$(2, ($\alpha_{(0, 0, 0)}$, (0), $\alpha_{(0, 0, 0, 0)}$, $-1$)), (0), $g$(2, ($\alpha_{(0, 1)}$, (0))), $-1$))

\item $\comp{2}{U_{\node}}(((4), (6), (7), (5), (2)))= (0, 0, 1)$

(2, ($g$(2, ($\alpha_{(0)}$, (0), $\alpha_{(0, 0)}$, $-1$)), (1), $g$(2, ($\alpha_{(0, 0, 0)}$, (0), $\alpha_{(0, 0, 0, 0)}$, $-1$)), (0), $g$(2, ($\alpha_{(0, 1)}$, (0), $\alpha_{(0, 0)}$, $-1$)), $-1$))

\item $\comp{2}{U_{\node}}(((4), (6), (7), (5), (1)))= (0, 0, 0, 1)$

(2, ($g$(2, ($\alpha_{(0)}$, (0), $\alpha_{(0, 0)}$, $-1$)), (1), $g$(2, ($\alpha_{(0, 0, 0)}$, (0), $\alpha_{(0, 0, 0, 0)}$, $-1$)), (0), $g$(2, ($\alpha_{(0, 1)}$, (0), $\alpha_{(0, 0, 0)}$, $-1$)), $-1$))

\item $\comp{2}{U_{\node}}(((4), (6), (7), (5), (0)))= (0, 0, 0, 0, 0)$

(2, ($g$(2, ($\alpha_{(0)}$, (0), $\alpha_{(0, 0)}$, $-1$)), (1), $g$(2, ($\alpha_{(0, 0, 0)}$, (0), $\alpha_{(0, 0, 0, 0)}$, $-1$)), (0), $g$(2, ($\alpha_{(0, 1)}$, (0), $\alpha_{(0, 0, 0, 0)}$, $-1$)), $-1$))

\item $\comp{2}{U_{\node}}(((4), (6), (7), (4)))= (0, 0, 1)$

(2, ($g$(2, ($\alpha_{(0)}$, (0), $\alpha_{(0, 0)}$, $-1$)), (1), $g$(2, ($\alpha_{(0, 0, 0)}$, (0), $\alpha_{(0, 0, 0, 0)}$, $-1$)), (0), $g$(2, ($\alpha_{(0, 0)}$, (0))), $-1$))

\item $\comp{2}{U_{\node}}(((4), (6), (7), (3)))= (0, 0, 0, 1)$

(2, ($g$(2, ($\alpha_{(0)}$, (0), $\alpha_{(0, 0)}$, $-1$)), (1), $g$(2, ($\alpha_{(0, 0, 0)}$, (0), $\alpha_{(0, 0, 0, 0)}$, $-1$)), (0), $g$(2, ($\alpha_{(0, 0)}$, (0), $\alpha_{(0, 0, 0)}$, $-1$)), $-1$))

\item $\comp{2}{U_{\node}}(((4), (6), (7), (2)))= (0, 0, 0, 0, 0)$

(2, ($g$(2, ($\alpha_{(0)}$, (0), $\alpha_{(0, 0)}$, $-1$)), (1), $g$(2, ($\alpha_{(0, 0, 0)}$, (0), $\alpha_{(0, 0, 0, 0)}$, $-1$)), (0), $g$(2, ($\alpha_{(0, 0)}$, (0), $\alpha_{(0, 0, 0, 0)}$, $-1$)), $-1$))

\item $\comp{2}{U_{\node}}(((4), (6), (7), (1)))= (0, 0, 1)$

(2, ($g$(2, ($\alpha_{(0)}$, (0), $\alpha_{(0, 0)}$, $-1$)), (1), $g$(2, ($\alpha_{(0, 0, 0)}$, (0), $\alpha_{(0, 0, 0, 0)}$, $-1$)), (0), $g$(2, ($\alpha_{(0, 0)}$, $-1$)), $-1$))

\item $\comp{2}{U_{\node}}(((4), (6), (7), (1), (2)))= (0, 0, 1, 0)$

(2, ($g$(2, ($\alpha_{(0)}$, (0), $\alpha_{(0, 0)}$, $-1$)), (1), $g$(2, ($\alpha_{(0, 0, 0)}$, (0), $\alpha_{(0, 0, 0, 0)}$, $-1$)), (0), $g$(2, ($\alpha_{(0, 0, 1)}$, (0))), $-1$))

\item $\comp{2}{U_{\node}}(((4), (6), (7), (1), (1)))= (0, 0, 0, 1)$

(2, ($g$(2, ($\alpha_{(0)}$, (0), $\alpha_{(0, 0)}$, $-1$)), (1), $g$(2, ($\alpha_{(0, 0, 0)}$, (0), $\alpha_{(0, 0, 0, 0)}$, $-1$)), (0), $g$(2, ($\alpha_{(0, 0, 1)}$, (0), $\alpha_{(0, 0, 0)}$, $-1$)), $-1$))

\item $\comp{2}{U_{\node}}(((4), (6), (7), (1), (0)))= (0, 0, 0, 0, 0)$

(2, ($g$(2, ($\alpha_{(0)}$, (0), $\alpha_{(0, 0)}$, $-1$)), (1), $g$(2, ($\alpha_{(0, 0, 0)}$, (0), $\alpha_{(0, 0, 0, 0)}$, $-1$)), (0), $g$(2, ($\alpha_{(0, 0, 1)}$, (0), $\alpha_{(0, 0, 0, 0)}$, $-1$)), $-1$))

\item $\comp{2}{U_{\node}}(((4), (6), (7), (0)))= (0, 0, 0, 1)$

(2, ($g$(2, ($\alpha_{(0)}$, (0), $\alpha_{(0, 0)}$, $-1$)), (1), $g$(2, ($\alpha_{(0, 0, 0)}$, (0), $\alpha_{(0, 0, 0, 0)}$, $-1$)), (0), $g$(2, ($\alpha_{(0, 0, 0)}$, (0))), $-1$))

\item $\comp{2}{U_{\node}}(((4), (6), (6)))= (0, 0, 1)$

(2, ($g$(2, ($\alpha_{(0)}$, (0), $\alpha_{(0, 0)}$, $-1$)), (1), $g$(2, ($\alpha_{(0, 0, 0)}$, $-1$)), (0)))

\item $\comp{2}{U_{\node}}(((4), (6), (5)))= (0, 0, 0, 0)$

(2, ($g$(2, ($\alpha_{(0)}$, (0), $\alpha_{(0, 0)}$, $-1$)), (1), $g$(2, ($\alpha_{(0, 0, 0)}$, $-1$)), (0), $g$(2, ($\alpha_{(0)}$, (0), $\alpha_{(0, 0, 0)}$, $-1$)), $-1$))

\item $\comp{2}{U_{\node}}(((4), (6), (4)))= (0, 1)$

(2, ($g$(2, ($\alpha_{(0)}$, (0), $\alpha_{(0, 0)}$, $-1$)), (1), $g$(2, ($\alpha_{(0, 0, 0)}$, $-1$)), (0), $g$(2, ($\alpha_{(0)}$, $-1$)), $-1$))

\item $\comp{2}{U_{\node}}(((4), (6), (4), (2)))= (0, 1, 0)$

(2, ($g$(2, ($\alpha_{(0)}$, (0), $\alpha_{(0, 0)}$, $-1$)), (1), $g$(2, ($\alpha_{(0, 0, 0)}$, $-1$)), (0), $g$(2, ($\alpha_{(0, 1)}$, (0))), $-1$))

\item $\comp{2}{U_{\node}}(((4), (6), (4), (1)))= (0, 0, 1)$

(2, ($g$(2, ($\alpha_{(0)}$, (0), $\alpha_{(0, 0)}$, $-1$)), (1), $g$(2, ($\alpha_{(0, 0, 0)}$, $-1$)), (0), $g$(2, ($\alpha_{(0, 1)}$, (0), $\alpha_{(0, 0)}$, $-1$)), $-1$))

\item $\comp{2}{U_{\node}}(((4), (6), (4), (0)))= (0, 0, 0, 0)$

(2, ($g$(2, ($\alpha_{(0)}$, (0), $\alpha_{(0, 0)}$, $-1$)), (1), $g$(2, ($\alpha_{(0, 0, 0)}$, $-1$)), (0), $g$(2, ($\alpha_{(0, 1)}$, (0), $\alpha_{(0, 0, 0)}$, $-1$)), $-1$))

\item $\comp{2}{U_{\node}}(((4), (6), (3)))= (0, 0, 1)$

(2, ($g$(2, ($\alpha_{(0)}$, (0), $\alpha_{(0, 0)}$, $-1$)), (1), $g$(2, ($\alpha_{(0, 0, 0)}$, $-1$)), (0), $g$(2, ($\alpha_{(0, 0)}$, (0))), $-1$))

\item $\comp{2}{U_{\node}}(((4), (6), (2)))= (0, 0, 0, 0)$

(2, ($g$(2, ($\alpha_{(0)}$, (0), $\alpha_{(0, 0)}$, $-1$)), (1), $g$(2, ($\alpha_{(0, 0, 0)}$, $-1$)), (0), $g$(2, ($\alpha_{(0, 0)}$, (0), $\alpha_{(0, 0, 0)}$, $-1$)), $-1$))

\item $\comp{2}{U_{\node}}(((4), (6), (1)))= (0, 0, 1)$

(2, ($g$(2, ($\alpha_{(0)}$, (0), $\alpha_{(0, 0)}$, $-1$)), (1), $g$(2, ($\alpha_{(0, 0, 0)}$, $-1$)), (0), $g$(2, ($\alpha_{(0, 0)}$, $-1$)), $-1$))

\item $\comp{2}{U_{\node}}(((4), (6), (1), (1)))= (0, 0, 1, 0)$

(2, ($g$(2, ($\alpha_{(0)}$, (0), $\alpha_{(0, 0)}$, $-1$)), (1), $g$(2, ($\alpha_{(0, 0, 0)}$, $-1$)), (0), $g$(2, ($\alpha_{(0, 0, 1)}$, (0))), $-1$))

\item $\comp{2}{U_{\node}}(((4), (6), (1), (0)))= (0, 0, 0, 0)$

(2, ($g$(2, ($\alpha_{(0)}$, (0), $\alpha_{(0, 0)}$, $-1$)), (1), $g$(2, ($\alpha_{(0, 0, 0)}$, $-1$)), (0), $g$(2, ($\alpha_{(0, 0, 1)}$, (0), $\alpha_{(0, 0, 0)}$, $-1$)), $-1$))

\item $\comp{2}{U_{\node}}(((4), (6), (0)))= (0, 0, 0, 0)$

(2, ($g$(2, ($\alpha_{(0)}$, (0), $\alpha_{(0, 0)}$, $-1$)), (1), $g$(2, ($\alpha_{(0, 0, 0)}$, $-1$)), (0), $g$(2, ($\alpha_{(0, 0, 0)}$, (0))), $-1$))

\item $\comp{2}{U_{\node}}(((4), (5)))= (0, 0, 0)$

(2, ($g$(2, ($\alpha_{(0)}$, (0), $\alpha_{(0, 0)}$, $-1$)), (1), $g$(1, ((0))), (0)))

\item $\comp{2}{U_{\node}}(((4), (4)))= (0, 1)$

(2, ($g$(2, ($\alpha_{(0)}$, (0), $\alpha_{(0, 0)}$, $-1$)), (1), $g$(1, ((0))), (0), $g$(2, ($\alpha_{(0)}$, $-1$)), $-1$))

\item $\comp{2}{U_{\node}}(((4), (4), (1)))= (0, 1, 0)$

(2, ($g$(2, ($\alpha_{(0)}$, (0), $\alpha_{(0, 0)}$, $-1$)), (1), $g$(1, ((0))), (0), $g$(2, ($\alpha_{(0, 1)}$, (0))), $-1$))

\item $\comp{2}{U_{\node}}(((4), (4), (0)))= (0, 0, 0)$

(2, ($g$(2, ($\alpha_{(0)}$, (0), $\alpha_{(0, 0)}$, $-1$)), (1), $g$(1, ((0))), (0), $g$(2, ($\alpha_{(0, 1)}$, (0), $\alpha_{(0, 0)}$, $-1$)), $-1$))

\item $\comp{2}{U_{\node}}(((4), (3)))= (0, 0, 0)$

(2, ($g$(2, ($\alpha_{(0)}$, (0), $\alpha_{(0, 0)}$, $-1$)), (1), $g$(1, ((0))), (0), $g$(2, ($\alpha_{(0, 0)}$, (0))), $-1$))

\item $\comp{2}{U_{\node}}(((4), (2)))= (0, 0, 0)$

(2, ($g$(2, ($\alpha_{(0)}$, (0), $\alpha_{(0, 0)}$, $-1$)), (1), $g$(1, ((0))), (0), $g$(2, ($\alpha_{(0, 0)}$, $-1$)), $-1$))

\item $\comp{2}{U_{\node}}(((4), (2), (0)))= (0, 0, 0, 0)$

(2, ($g$(2, ($\alpha_{(0)}$, (0), $\alpha_{(0, 0)}$, $-1$)), (1), $g$(1, ((0))), (0), $g$(2, ($\alpha_{(0, 0, 0)}$, (0))), $-1$))

\item $\comp{2}{U_{\node}}(((4), (1)))= -1$

(2, ($g$(2, ($\alpha_{(0)}$, (0), $\alpha_{(0, 0)}$, $-1$)), (1), $g$(1, ((0))), $-1$))

\item $\comp{2}{U_{\node}}(((4), (0)))= -1$

(2, ($g$(2, ($\alpha_{(0)}$, (0), $\alpha_{(0, 0)}$, $-1$)), (0)))

\item $\comp{2}{U_{\node}}(((3)))= (0, 0)$

(2, ($g$(2, ($\alpha_{(0)}$, $-1$)), (1)))

\item $\comp{2}{U_{\node}}(((3), (3)))= (0, 1)$

(2, ($g$(2, ($\alpha_{(0)}$, $-1$)), (1), $g$(2, ($\alpha_{(0, 0)}$, (0))), (0)))

\item $\comp{2}{U_{\node}}(((3), (3), (1)))= (0, 1, 0)$

(2, ($g$(2, ($\alpha_{(0)}$, $-1$)), (1), $g$(2, ($\alpha_{(0, 0)}$, (0))), (0), $g$(2, ($\alpha_{(0, 1)}$, (0))), $-1$))

\item $\comp{2}{U_{\node}}(((3), (3), (0)))= (0, 0, 0)$

(2, ($g$(2, ($\alpha_{(0)}$, $-1$)), (1), $g$(2, ($\alpha_{(0, 0)}$, (0))), (0), $g$(2, ($\alpha_{(0, 1)}$, (0), $\alpha_{(0, 0)}$, $-1$)), $-1$))

\item $\comp{2}{U_{\node}}(((3), (2)))= (0, 0, 0)$

(2, ($g$(2, ($\alpha_{(0)}$, $-1$)), (1), $g$(2, ($\alpha_{(0, 0)}$, (0))), $-1$))

\item $\comp{2}{U_{\node}}(((3), (2), (1)))= (0, 1)$

(2, ($g$(2, ($\alpha_{(0)}$, $-1$)), (1), $g$(2, ($\alpha_{(0, 0)}$, (0), $\alpha_{(0, 0, 0)}$, $-1$)), (0)))

\item $\comp{2}{U_{\node}}(((3), (2), (1), (2)))= (0, 1, 0)$

(2, ($g$(2, ($\alpha_{(0)}$, $-1$)), (1), $g$(2, ($\alpha_{(0, 0)}$, (0), $\alpha_{(0, 0, 0)}$, $-1$)), (0), $g$(2, ($\alpha_{(0, 1)}$, (0))), $-1$))

\item $\comp{2}{U_{\node}}(((3), (2), (1), (1)))= (0, 0, 1)$

(2, ($g$(2, ($\alpha_{(0)}$, $-1$)), (1), $g$(2, ($\alpha_{(0, 0)}$, (0), $\alpha_{(0, 0, 0)}$, $-1$)), (0), $g$(2, ($\alpha_{(0, 1)}$, (0), $\alpha_{(0, 0)}$, $-1$)), $-1$))

\item $\comp{2}{U_{\node}}(((3), (2), (1), (0)))= (0, 0, 0, 0)$

(2, ($g$(2, ($\alpha_{(0)}$, $-1$)), (1), $g$(2, ($\alpha_{(0, 0)}$, (0), $\alpha_{(0, 0, 0)}$, $-1$)), (0), $g$(2, ($\alpha_{(0, 1)}$, (0), $\alpha_{(0, 0, 0)}$, $-1$)), $-1$))

\item $\comp{2}{U_{\node}}(((3), (2), (0)))= (0, 0, 1)$

(2, ($g$(2, ($\alpha_{(0)}$, $-1$)), (1), $g$(2, ($\alpha_{(0, 0)}$, (0), $\alpha_{(0, 0, 0)}$, $-1$)), (0), $g$(2, ($\alpha_{(0, 0)}$, (0))), $-1$))

\item $\comp{2}{U_{\node}}(((3), (1)))= (0, 1)$

(2, ($g$(2, ($\alpha_{(0)}$, $-1$)), (1), $g$(2, ($\alpha_{(0, 0)}$, $-1$)), (0)))

\item $\comp{2}{U_{\node}}(((3), (1), (1)))= (0, 1, 0)$

(2, ($g$(2, ($\alpha_{(0)}$, $-1$)), (1), $g$(2, ($\alpha_{(0, 0)}$, $-1$)), (0), $g$(2, ($\alpha_{(0, 1)}$, (0))), $-1$))

\item $\comp{2}{U_{\node}}(((3), (1), (0)))= (0, 0, 0)$

(2, ($g$(2, ($\alpha_{(0)}$, $-1$)), (1), $g$(2, ($\alpha_{(0, 0)}$, $-1$)), (0), $g$(2, ($\alpha_{(0, 1)}$, (0), $\alpha_{(0, 0)}$, $-1$)), $-1$))

\item $\comp{2}{U_{\node}}(((3), (0)))= (0, 0, 0)$

(2, ($g$(2, ($\alpha_{(0)}$, $-1$)), (1), $g$(2, ($\alpha_{(0, 0)}$, $-1$)), (0), $g$(2, ($\alpha_{(0, 0)}$, (0))), $-1$))

\item $\comp{2}{U_{\node}}(((2)))= (0, 0)$

(2, ($g$(2, ($\alpha_{(0)}$, $-1$)), (1), $g$(1, ((0))), (0)))

\item $\comp{2}{U_{\node}}(((2), (0)))= (0, 0, 0)$

(2, ($g$(2, ($\alpha_{(0)}$, $-1$)), (1), $g$(1, ((0))), (0), $g$(2, ($\alpha_{(0, 0)}$, (0))), $-1$))

\item $\comp{2}{U_{\node}}(((1)))= -1$

(2, ($g$(2, ($\alpha_{(0)}$, $-1$)), (1), $g$(1, ((0))), $-1$))

\item $\comp{2}{U_{\node}}(((0)))= -1$

(2, ($g$(2, ($\alpha_{(0)}$, $-1$)), (0)))

\item $\comp{1}{U}$ has the node $(3)$

(2, ($\alpha_{(0)}$, (1)))

\item $\comp{1}{U}$ has the node $(2)$

(2, ($\alpha_{(0)}$, (0)))

\item $\comp{1}{U}$ has the node $(1)$

(2, ($\alpha_{(0)}$, $-1$))

\item $\comp{1}{U}$ has the node $(0)$

(1, ((0)))
\end{etaremune}

This algorithm of producing $U$ can be abstracted into a combinatorial one, without referring to clubs in $\omega_1$ at all. If one tries to apply universality of $U$  to ($X$, $\theta_{XT}$, $\vec{\beta}$, $\pi$), the resulting factoring map $\psi$ of $X$ into $U$ is
\begin{itemize}
\item $\psi(2,((1))) = (2,((3)))$,
\item $\psi(2,((1),(0))) = (2, ((3),(1)))$,
\item $\psi(2,((0))) = (2, ((0)))$,
\item $\psi(1,(0)) = (1,(0))$.
\end{itemize}

% The computation of $U$ will be carried out in Section~\ref{sec:more-level-2}. 
% In our example, $T \otimes Q = U$, where $\comp{1}{U} = \emptyset$, 
In Section~\ref{sec:more-level-2}, we will show that the equality
\begin{displaymath}
  U = T \otimes Q
\end{displaymath}
correspond to the equalities
\begin{displaymath}
  \mathbb{L}_{\bolddelta{3}}[j^U(T_2)] = \mathbb{L}_{\bolddelta{3}}[j^T \circ j^Q (T_2)]
\end{displaymath}
and
\begin{displaymath}
j^U = j^Q \circ j^T
\end{displaymath}

To summarize, we have given an example of a general fact on the level-($\leq 2$, $\leq 2$, $\leq 2$) factoring maps. Given finite level $\leq 2$ trees $X,T$, 
% If $\rep(X)$ and $\rep(T)$ are both wellfounded and $\theta: \rep(X) \to \rep(T)$ is an order-preserving, continuous map, then 
there is another level $\leq 2$ tree $Q$, a tuple of ordinals $\vec{\beta}$ respecting $Q$ so that the $X$-equivalence class of $\theta_{XT}$ has a purely combinatorial characterization, called a map $\pi$ which factors $(X,T,Q)$, and $X$ embeds into $U = T \otimes Q$ via a level $\leq 2$ tree factoring map $\psi$. 
% acting $(\theta^T_Q)^{-1}$ in front combinatorializes $\theta_{XT}$, leading to a map $\pi$ factoring $(X,T,Q)$, and the computation of $U = T \otimes Q$ produces $\theta_{UTQ}^{-1}$, acting which in front further simplifies $(\theta^T_Q)^{-1} \circ \theta_{XT}$ to the map generated by a level-2 tree factoring map $\psi$ from $X$ into $U$.
In terms of ultrapowers, $\pi$ induces $\pi^{T,Q} : \mathbb{L}_{\bolddelta{3}}[j^X(T_2)] \to \mathbb{L}_{\bolddelta{3}}[j^T \circ j^Q (T_2)] = \mathbb{L}_{\bolddelta{3}}[j^U(T_2)]$ such that $\pi^{T,Q} \circ j^X = j^T  \circ j^Q = j^U$, and $\pi^{T,Q}$ is just the embedding induced by the measure projection of $\mu^U$ to $\mu^X$, which is  generated by $\psi$. 

\subsection{The entire scenario up to level-3}
\label{sec:entire-scenario-up}

The rigorous definitions and proofs of this level-2 scenario and the level-3 scenario will be in Section~\ref{sec:level-2-analysis}. The central definitions form a stack, illustrated in Fig.~\ref{fig:1} In this figure, $P,W,S$ are level-1 trees, $Q,T,X$ are level $\leq 2$ trees, and $R,Y$ are level-3 trees. An arrow stands for a factoring map. A solid line stands for membership, that is, $P$ is the tree-component of an entry of $Q$, $S$ is the tree-component of an entry of $T$, etc. The stack of definitions consists of:
\begin{itemize}
\item $W$-description (defined in \cite{sharpII}), the set of $W$-descriptions is $W \cup \se{\emptyset}$, 
\item $(P,W)$-factoring map (defined in \cite{sharpII}) , a $(P,W)$-factoring map is a map $\sigma$ from $P \cup \se{\emptyset}$ to the set of $W$-descriptions,
\item $(Q,W)$-description (Definition~\ref{def:level-2_W_Q_description}), a $(Q,W)$-description $\mathbf{D}$ typically consists of a node $(2,q) \in \dom(Q)$ and a $(P,W)$-factoring map, where $P = \comp{2}{Q}_{\tree}(q)$,
\item $(S,Q,W)$-factoring map (Definition~\ref{def:factoring_2}), an $(S,Q,W)$-factoring map is a map $\tau$ from $S \cup \se{\emptyset}$ to the set of $(Q,W)$-descriptions,
\item $(T,Q,W)$-description (Definition~\ref{def:description_TQW}), a $(T,Q,W)$-description $\mathbf{C}$ typically consists of a node $(2,t) \in \dom(T)$ and a $(S,Q,W)$-factoring map, where $S = \comp{2}{T}_{\tree}(t)$,
\item $(X,T,Q)$-factoring map (Definition~\ref{def:factoring_3}), a $(X,T,Q)$-factoring map is a map $\pi$ on $\dom(X)$ which sends each $(2,x) \in \dom(X)$ to a $(T,Q,W)$-description, where $W = \comp{2}{X}_{\tree}(x)$,
\item $(Y,T,Q)$-description (Definition~\ref{def:description_TQY}), a $(Y,T,Q)$-description $\mathbf{B}$ typically consists of a node $y \in \dom(Y)$ and a $(X,T,Q)$-factoring map, where $X= Y_{\tree}(y)$,
\item $(R,Y,T)$-factoring map (Definition~\ref{def:factoring_4}), a $(R,Y,T)$-factoring map is a map $\rho$ on $\dom(R) \cup \se{\emptyset}$ which sends each $r \in \dom(R)$ to a $(Y,T,Q)$-description, where $Q = R_{\tree}(r)$. 
\end{itemize}

  \begin{figure}
    \centering
  \begin{tikzpicture}
    \matrix  (m)  [matrix of math nodes, row sep=1.5em, column sep = 4em]
{R & & Y \\
     & T &  \\
Q & & X \\
  & S & \\
P & & W \\
};
\draw[->, very thick](m-1-1) edge node[auto] {$\rho$} (m-1-3) 
(m-5-1) edge  node[auto] {$\sigma$}(m-5-3) 
(m-3-3) edge  node[auto] {$\pi$}(m-2-2) 
(m-4-2) edge  node[auto] {$\tau$}(m-3-1);
\draw[-] (m-1-1) edge (m-3-1)
 (m-3-1) edge (m-5-1)
 (m-1-3) edge (m-3-3)
 (m-3-3) edge (m-5-3)
 (m-2-2) edge (m-4-2);
\draw[dotted]  (-3,1.5) -- (6 , 1.5); 
\draw[dotted]  (-3,-.5) -- (6 , -.5); 
\node at (4.5,2) {level-3};
\node at (4.5,.4) {level-2};
\node at (4.5,-1.3) {level-1};
  \end{tikzpicture}
\caption{The stack of definitions}
\label{fig:1}
     \end{figure}
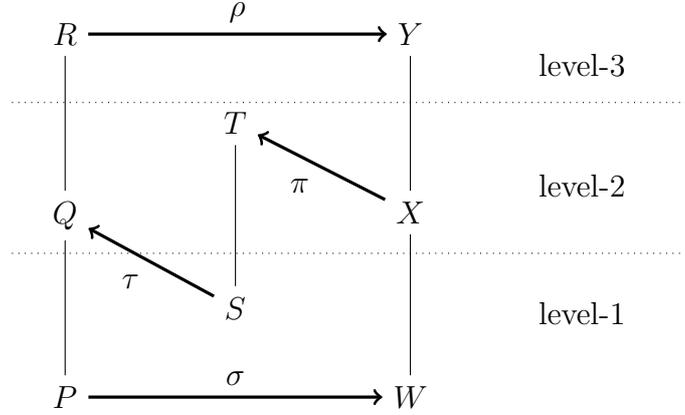

\subsection{Level-(3,3,2) factoring maps}
\label{sec:level-3-3}

We give an example on the level-(3,3,2) factoring map. 
Define two level-3 trees $R,Y$:

$\dom(R) = \se{((0)),((0),(1)),((0),(0)),((0),(0),(0))}$, 
$R(((0))) = (Q^0, (2, (0), \se{(0)}))$,
$Q^0$ is the unique level $\leq 2$ with cardinality 1, $R(((0),(1))) = (Q^{21} , (2,((0)),\se{(0)}))$,
$\comp{1}{Q}^{21} = \emptyset$, 
$\dom(\comp{2}{Q}^{21} )= \se{\emptyset, ((0))}$, 
$\comp{2}{Q}^{21}(((0))) = ( \se{(0)}, (0,0))$. 
$R(((0),(0))) = (Q^{20} , (1,((0)),\emptyset))$,
$\comp{1}{Q}^{20} = \emptyset$, 
$\dom(\comp{2}{Q}^{20} )= \se{\emptyset, ((0))}$, 
$\comp{2}{Q}^{20}(((0))) = ( \se{(0)}, -1)$. 
$R(((0),(0),(0))) = (Q^{*}, (0,-1,\emptyset)$,
 $Q^{*}$ is the unique completion of $R(((0),(0)))$. 

$\dom(Y) = \se{((3)),((2)),((1)), ((1),(0)),((0))}$, 
$Y(((3))) = Y(((0))) = (Q^0, (0,-1,\emptyset))$,
$Y(((2))) = (Q^0, (1, (0), \emptyset))$,
$Y(((1)))=(Q^0, (2, ((0)), \se{(0)}))$, 
$Y(((1),(0))) = (Q^{21}, (2, ((0,0)), \se{(0),(0,0)}))$.

$\rep(R)$ consists of
\begin{itemize}
\item $((0))$, 
 \item $((0), \comp{2}{\beta}_{((0))}, (1))$ for $\vec{\beta}$ respecting $Q^{21}$,  i.e.\ $\omega_1<\comp{2}{\beta}_{((0))}<\omega_2$ and $\cf(\comp{2}{\beta}_{((0))}) = \omega_1$,
 \item $((0), \comp{2}{\beta}_{((0))}, (1), \gamma, -1)$ for $\vec{\beta}$ respecting $Q^{21}$,  $ \sup(\sigma^W)''\beta < \gamma < \sigma^W(\beta)$, $\gamma$ limit, where $W = \se{(0),(0,0)}$, $\sigma$ factors $(\se{0},W)$, $\sigma$ is the inclusion map $\se{0} \subseteq W$,
 i.e.\ $\sigma^W ( \tau^{L[z]}(z,u_1)) = \tau^{L[z]}(z,u_2)$ for $z \in \mathbb{R}$,
 \item $((0), \comp{2}{\beta}_{((0))}, (0))$ for $\vec{\beta}$ respecting $Q^{20}$, i.e.\ $\omega_1<\comp{2}{\beta}_{((0))}<\omega_2$ and $\cf(\comp{2}{\beta}_{((0))}) = \omega$,
 \item $((0), \comp{2}{\beta}_{((0))}, (0), \comp{1}{\beta}_{(0)}, (0))$ for $\vec{\beta}$ respecting $Q^{*}$, i.e., $\omega_1<\comp{2}{\beta}_{((0))}<\omega_2$ and $\cf(\comp{2}{\beta}_{((0))}) = \omega$, $\comp{1}{\beta}_{(0)}<\omega_1$ limit,
 \item $((0), \comp{2}{\beta}_{((0))}, (0), \comp{1}{\beta}_{(0)}, (0),k, -1)$ for $\vec{\beta}$ respecting $Q^{*}$ and $k<\omega$,
 \item $((0), \comp{2}{\beta}_{((0))}, (0), \gamma, -1)$ for $\vec{\beta}$ respecting $Q^{20}$ and $\gamma < \omega_1$ limit, 
\item $((0),\gamma, -1)$ for $\omega_1<\gamma < u_2$, $\gamma$ limit.
\end{itemize}
$\rep(Y)$ consists of 
\begin{itemize}
\item $((3))$,
\item $((3), k, -1)$ for $k<\omega$,
\item $((2))$,
\item $((2),\gamma,-1)$ for $\gamma < \omega_1$ limit,
\item $((1))$,
\item $((1),  \comp{2}{\beta}_{((0))}, (0))$ for $\vec{\beta}$ respecting $Q^{21}$,  
\item $((1), \comp{2}{\beta}_{((0))}, (0), \gamma, -1)$ for $\vec{\beta}$ respecting $Q^{21}$,  $ \sup(\sigma^W)''\comp{2}{\beta}_{((0))} < \gamma < \sigma^W(\comp{2}{\beta}_{((0))})$, $\gamma$ limit,  where $W = \se{(0),(0,0)}$, $\sigma$ factors $(\se{0},W)$, $\sigma$ is the inclusion map $\se{0} \subseteq W$,
i.e.\ $\sigma^W ( \tau^{L[z]}(z,u_1)) = \tau^{L[z]}(z,u_2)$ for $z \in \mathbb{R}$,
\item $((1),\gamma, -1)$ for $\omega_1<\gamma < u_2$, $\gamma$ limit,
\item $((0))$,
\item $((0), k, -1)$ for $k<\omega$.
\end{itemize}
Then
\begin{align*}
  \theta_R &  : \rep(R) \to u_3\\
    \theta_Y&:\rep(Y) \to u_3 + \omega_1 + \omega
\end{align*}
are the order preserving bijections. Let
\begin{displaymath}
  \theta_{RY} = \theta_Y^{-1} \circ \theta_R.
\end{displaymath}
If $E$ is a club in $\omega_1$ consisting of limit ordinals, recall that  $\rep(R) \res E$ consists of 
\begin{itemize}
\item $((0))$, 
 \item $((0), \comp{2}{\beta}_{((0))}, (1))$ for  $\vec{\beta} \in [E]^{Q^{21} \uparrow}$, 
 \item $((0), \comp{2}{\beta}_{((0))}, (1), \gamma, -1)$ for  $\vec{\beta} \in [E]^{Q^{21} \uparrow}$,  $ \sup(\sigma^W)''\beta < \gamma < \sigma^W(\beta)$,  $\gamma \in j^{\se{(0),(0,0)}}(E)$, 
 \item $((0), \comp{2}{\beta}_{((0))}, (0))$  for $\vec{\beta} \in [E]^{Q^{20} \uparrow}$, 
 \item $((0), \comp{2}{\beta}_{((0))}, (0), \comp{1}{\beta}_{(0)}, (0))$ for $\vec{\beta} \in [E]^{Q^{*} \uparrow}$,
 \item $((0), \comp{2}{\beta}_{((0))}, (0), \comp{1}{\beta}_{(0)}, (0),k, -1)$ for for $\vec{\beta} \in [E]^{Q^{*} \uparrow}$ and $k<\omega$,
 \item $((0), \comp{2}{\beta}_{((0))}, (0), \gamma, -1)$ for $\vec{\beta} \in [E]^{Q^{*} \uparrow}$ and $\gamma \in E$,
\item $((0),\gamma, -1)$ for $\omega_1<\gamma < u_2$, $\gamma\in E$.
\end{itemize}
 $\rep(R) \res E$ is a closed subset of $\rep(R)$. 
%$\rep(Y) \res E$ is a closed subset of $\rep(Y)$. 
We say that a continuous, order preserving $\theta$ is equivalent to $\theta'$ iff there is a club $E$ in $\omega_1$ such that $\theta \res (\rep(R) \res E) = \theta' \res (\rep(R) \res E) $. 
Thus, $\theta$ is equivalent to $\theta_{RY}$ iff there is a club $E$ in $\omega_1$ such that $\theta \res (\rep(R) \res E)  $ is continuous, order-preserving and
\begin{itemize}
\item $\theta(((0))) = ((1))$,
\item $\theta(((0), \comp{2}{\beta}_{((0))}, (1))) = ((1), \comp{2}{\beta}_{((0))}, (0))$ for $\vec{\beta} \in [E]^{Q^{20} \uparrow}$,
\item $\theta(((0), \comp{2}{\beta}_{((0))}, (0))) = ((1),  \comp{2}{\beta}_{((0))} + \omega_1,  -1 )$  for $\vec{\beta} \in [E]^{Q^{20} \uparrow}$,
\item $\theta(((0), \comp{2}{\beta}_{((0))} , (0), \comp{1}{\beta}_{(0)}, (0))) = ((1),  \comp{2}{\beta}_{((0))} + \comp{1}{\beta}_{(0)} + \omega , -1 )$  for $\vec{\beta} \in [E]^{Q^{*} \uparrow}$,
\end{itemize}
In this characterization, the following functions show up:
\begin{itemize}
\item $\vec{\beta} \mapsto \comp{2}{\beta}_{((0))}$ for  $\vec{\beta} \in [E]^{Q^{20} \uparrow}$,
\item $\vec{\beta} \mapsto \comp{2}{\beta}_{((0))}+\omega_1$ for  $\vec{\beta} \in [E]^{Q^{20} \uparrow}$,
\item $\vec{\beta} \mapsto \comp{2}{\beta}_{((0))} + \comp{1}{\beta}_{(0)} + \omega$ for  $\vec{\beta} \in [E]^{Q^{*} \uparrow}$,
\end{itemize}
Throw away the redundant coordinates in the variables of these functions and remove duplicates. In this example, nothing is thrown away. 
 We do the following operation on the these three functions:
\begin{itemize}
\item 
The function  $\vec{\beta} \mapsto \comp{2}{\beta}_{((0))}$ represents $u_2$ modulo $\mu^{Q^{20}}$, a fact to be proved in Section~\ref{sec:level-2-analysis} using the analysis of level ($\leq 2$, 1)-descriptions (the same for the other two functions).
The representative function for $u_2$ under $\mu^{\se{(1),(0)}}$ is $\vec{\alpha} \mapsto \alpha_{(1)}$. Remove the unused coordinate $\alpha_{(0)}$. The resulting function becomes $\vec{\alpha} \mapsto \alpha_{(0)}$ for $\alpha \in [E]^{\se{(0)\uparrow}}$. It is of continuous type. Further remove the continuous coordinate $\alpha_{(0)}$ by taking sup over it. The resulting function becomes $* \mapsto \sup_{\alpha_{(0)}<\omega_1}\alpha_{(0)} = \omega_1$.
\item The function  $\vec{\beta} \mapsto \comp{2}{\beta}_{((0))}+\omega_1$ represents $u_2 + \omega_1$ modulo $\mu^{Q^{20}}$ for  $\vec{\beta} \in [E]^{Q^{20} \uparrow}$. 
The representative function for $u_2 + \omega_1$  under $\mu^{\se{(1),(0)}}$ is $\vec{\alpha} \mapsto \alpha_{(1)}+\alpha_{(0)}$. It is of continuous type. Further remove the continuous coordinate $\alpha_{(0)}$ by taking sup over it. The resulting function becomes $\vec{\alpha} \mapsto \sup_{\beta<\alpha_{(0)}} (\alpha_{(0)} + \beta) = \alpha_{(0)} 2$ for $\vec{\alpha} \in [E]^{\se{(0)\uparrow}}$. It represents $\omega_1 2$ in the $\mu^{\se{(0)}}$-ultrapower.
\item The function  $\vec{\beta} \mapsto \comp{2}{\beta}_{((0))}+  \comp{1}{\beta}_{(0)} + \omega$ represents $u_4 + \omega_1 + \omega$  modulo $\mu^{Q^{21}}$. The representative function for $u_4 + \omega_1+\omega$  under $\mu^{\se{(3),(2),(1),(0)}}$ is $\vec{\alpha} \mapsto \alpha_{(3)}+\alpha_{(0)} + \omega$. Remove the unused coordinates $\alpha_{(2)},\alpha_{(1)}$. The resulting function becomes $\vec{\alpha} \mapsto \alpha_{(1)} + \alpha_{(0)} + \omega$ for $\alpha \in [E]^{\se{(1),(0)\uparrow}}$.
It is of dis continuous type.  It represents $\omega_2 + \omega_1 + \omega $ in the $\mu^{\se{(0),(1)}}$-ultrapower.
\end{itemize}
 After all these operations, we end up with a tuple that respects a level $\leq 2$ tree. Let $T$ be a level $\leq 2$ tree where $\comp{1}{T} = \emptyset$, $\comp{2}{T}$ has domain $\se{\emptyset, ((0)), ((0),(0))}$, $\comp{2}{T}(((0))) = (\se{(0)}, (0,0))$, $\comp{2}{T}(((0),(0))) = (\se{(0),(0,0)}, -1)$. $\vec{\xi}$ respects $T$, where $\comp{2}{\xi}_{\emptyset} = \omega_1$, $\comp{2}{\xi}_{((0))} = \omega_1 2$, $\comp{2}{\xi}_{((0),(0))} = u_2+\omega_1+\omega$.

Now the $R$-equivalence class of $\theta_{RY}$ is decided by $T$ and $\vec{\xi}$. Fix $h \in [\omega_1]^{T \uparrow}$ with $[h]^T = \vec{\xi}$. Then a continuous, order-preserving $\theta$ is $R$-equivalent to $\theta_{RY}$ iff there is a club $E$ in $\omega_1$ such that 
Then, an order-preserving, continuous  $\theta$ is $R$-equivalent to $\theta_{RY}$ iff
\begin{itemize}
\item $\theta(((0))) = ((1))$,
\item $\theta(((0), \comp{2}{\beta}_{((0))}, (1))) =$
 ((1), $[ \vec{\alpha} \mapsto h$(2, ($g$(2, ($\alpha_{(0)}$, (0))), $-1$))$]_{\mu_{\vec{\alpha}}}$, (0))
 for $\vec{\beta}  = [g]^{Q^{21}}$, $g \in E^{Q^{21} \uparrow}$,
\item $\theta(((0), \comp{2}{\beta}_{((0))}, (0))) =$ 
((1), $[ \vec{\alpha} \mapsto h$(2, ($g$(2, ($\alpha_{(0)}$, (0))), (0), $g$(2, ($\alpha_{(0)}$, $-1$)), $-1$))$]_{\mu_{\vec{\alpha}}}$, $-1$)
 for $\vec{\beta}  = [g]^{Q^{20}}$, $g \in E^{Q^{20} \uparrow}$,
\item $\theta(((0), \comp{2}{\beta}_{((0))} , (0), \comp{1}{\beta}_{(0)}, (0))) = $
((1), $[ \vec{\alpha} \mapsto h$(2, ($g$(2, ($\alpha_{(0)}$, (0))), (0), $g$(1, ((0))), (0)))$]_{\mu_{\vec{\alpha}}}$, $-1$)
 for $\vec{\beta}  = [g]^{Q^{*}}$, $g \in E^{Q^{*} \uparrow}$,
\end{itemize}
Here $\mu_{\vec{\alpha}}$ is a shorthand for $\mu^W$ where $W$ is a level-1 tree consisting of nodes that appear in subscripts of $\alpha$. In all of the above examples, only $\alpha_{(0)}$ show up in each expression, so $\mu_{\vec{\alpha}} = \mu^{\se{(0)}}$ in each expression.  
This combinatorial characterization of $\theta_{RY}$ will be abstracted into a finitary object
\begin{displaymath}
\rho
\end{displaymath}
which factors $(R,Y,T)$, to be defined in Section~\ref{sec:fact-betw-level}. The main property of $\rho$ is that it induces a map
\begin{displaymath}
  \rho^{Y,T} : \mathbb{L}_{j^R(\bolddelta{3})}[j^R(T_3)] \to \mathbb{L}_{j^Y \circ j^T(\bolddelta{3})}[j^Y \circ j^T (T_3)]
\end{displaymath}
such that $\rho^{Y,T} \circ j^R = j^Y \circ j^T$ and $\rho^{Y,T}$ is elementary on any submodel of ZFC.

For illustration purposes, the tensor product, $Y \otimes T$, will be a level-3 tree $U$ (up to an isomorphism) such that $\theta_{UY}$ is universal in the sense that
\begin{enumerate}
\item the $U$-equivalence class of $\theta_{UY}$ has a combinatorial characterization decided by $(T, \vec{\xi}, \rho^{*})$, where $\vec{\beta}$ respects $Q$, $\rho^{*}$ factors $(U,Y,T)$, and
\item given $(U',\theta',\vec{\xi}',\rho)$, where $U'$ is another level-3 tree, $\theta' : \rep(U') \to \rep(T)$ is continuous, order-preserving, and the $U'$-equivalence class of $\theta'$ has a combinatorial characterization decided by $(\vec{\xi}', \rho)$, where $\vec{\xi}'$ respects $T$, $\rho$ factors $(U',Y,T)$, $\theta'$ is a representative of the $U$-equivalence class decided by $(\vec{\xi}', \rho')$, then there is a level-3 tree factoring map $\psi$ of $U'$ into $U$ such that
  \begin{displaymath}
    [s \mapsto \theta'(s)]_{U_{\tree}(s)}    =    [s \mapsto \theta'(\psi(s))]_{U_{\tree}(s)}
  \end{displaymath}
for any $s \in \dom(U')$. 
\end{enumerate}
In Section~\ref{sec:boldface-level-3_sharp}, we will show that
\begin{displaymath}
   \mathbb{L}_{j^Y \circ j^T(\bolddelta{3})}[j^Y \circ j^T (T_3)] = \mathbb{L}_{j^U(\bolddelta{3})}[j^{U \oplus T}(T_3)]
\end{displaymath}
and
\begin{displaymath}
  j^{U \oplus T} = j^Y \circ j^T,
\end{displaymath}
where $j^{U \oplus T}$ is the ultrapower map of $\mu^{U \oplus T}$, the product measure of $\mu^U$ and $\mu^T$. 

The algorithm of producing such a $U$ is quite similar to the tensor product of level ($\leq 2$, $\leq 2$) trees. Observe that in the characterization of the $R$-equivalent class of $\theta_{RY}$, the right hand side of the equation are always of the form
\begin{displaymath}
  (y(0), [\vec{\alpha} \mapsto h(g_0(\vec{\alpha}))]_{\mu_{\vec{\alpha}}}, y(1),\dots, [\vec{\alpha} \mapsto h(g_k(\vec{\alpha}))]_{\mu_{\vec{\alpha}}}, y(k))
\end{displaymath}
where
\begin{enumerate}
\item the $\mu_{\vec{\alpha}}$ in $ [\vec{\alpha} \mapsto h(g_i(\vec{\alpha}))]_{\mu_{\vec{\alpha}}}$ stands for $\mu^W$ where $W$ is the set of indices of $\alpha$ that shows up in $g_i(\vec{\alpha})$, or equivalently, $W$ is the third component of $Y_{\node}(y \res i+1)$;
\item The subscript $i$ in $g_i$ transforms $g : \rep(Q) \to \omega_1$ to $g_i : [\omega_1]^{W \uparrow} \to \rep(Y)$ (Recall that $g$ was the function with $[g]^Q = \vec{\beta}$ on the left hand side to start with). $g_i$ takes the form of either
  \begin{displaymath}
    g_i(\vec{\alpha}) = (1, (g_{i0}(\vec{\alpha})))
  \end{displaymath}
(which doesn't happen in this particular example as $\comp{1}{T} = \emptyset$)
or
  \begin{displaymath}
    g_i(\vec{\alpha}) = (2, (g_{i0}(\vec{\alpha}), t(0), \dots, g_{ik_i}(\vec{\alpha}), t(k_i)  ))
  \end{displaymath}
where each $g_{ij}$ takes the form of either 
\begin{displaymath}
  g_{ij}(\vec{\alpha}) = g (1, (q_{ij}))
\end{displaymath}
or
\begin{displaymath}
  g_{ij}(\vec{\alpha}) = g (2, (\alpha_{w_{ij0}}, q_{ij0},\dots)).
\end{displaymath}
\end{enumerate}
We need to write down all the legal expressions of such kind modulo a level $\leq 2$ isomorphism of $Q$. Legal means that:
\begin{enumerate}
\item The whole expression must be in $\rep(Y)$.
\item For every $(1,q) \in \dom(Q)$, $g(1,(q))$ must show up at least once in one of the $g_{ij}$'s. For every $(2,q) \in \dom(Q)$, there must be some $(2,q') \in \dom(Q)$ with $q \subseteq q'$, $\lh(q') = l$ such that $g(2, (\alpha_{*},q'(0),\dots, \alpha_{*},q'(l)))$ shows up at least once in one of the $g_{ij}$'s.
\item The whole expression must be of discontinuous type. That is, there is a club $E$ in $\omega_1$ such that for any $g \in E^{Q \uparrow}$,
  \begin{displaymath}
  (y(0), [\vec{\alpha} \mapsto h(g_0(\vec{\alpha}))]_{\mu_{\vec{\alpha}}}, y(1),\dots, [\vec{\alpha} \mapsto h(g_k(\vec{\alpha}))]_{\mu_{\vec{\alpha}}}, y(k))
  \end{displaymath}
is bigger than the sup of 
  \begin{displaymath}
  (y(0), [\vec{\alpha} \mapsto h(g'_0(\vec{\alpha}))]_{\mu_{\vec{\alpha}}}, y(1),\dots, [\vec{\alpha} \mapsto h(g'_k(\vec{\alpha}))]_{\mu_{\vec{\alpha}}}, y(k))
  \end{displaymath}
that are $<^Y 
  (y(0), [\vec{\alpha} \mapsto h(g_0(\vec{\alpha}))]_{\mu_{\vec{\alpha}}}, y(1),\dots, [\vec{\alpha} \mapsto h(g_k(\vec{\alpha}))]_{\mu_{\vec{\alpha}}}, y(k)) $.
\end{enumerate}

These expressions naturally form a level-3 tree $U$. Every expression $h([g]^Q)$ corresponds to a node $\tilde{h}$ in the level-3 tree $U$. 
If the domain of $h$ consists of tuples of the form $[g]^Q$, the tree component of $U(\tilde{h})$ is isomorphic to $Q$. The node component of $U(\tilde{h})$ is decided by the uniform cofinality of $h$. 
As in the level $\leq 2$ case, if $h([g]^Q),h'([g]^{Q'})$ are expressions, where $Q'$ is a one-node extension of $Q'$, then $\tilde{h'}$ is a one-node extension of $\tilde{h}$ iff there is a club $E$ in $\omega_1$ such that for any $g \in E^{Q\uparrow}$,
\begin{displaymath}
  h([g]^Q) = \sup_{<^Y} \set{h'([g']^{Q'})}{g' \text{ extends }g}.
\end{displaymath}

To rigorously prove that this works, one would need the theory of descriptions in Section~\ref{sec:level-2-analysis}. For the moment, the reader may skip the verification part only to get a feeling of what is going on and come back later after finishing Section~\ref{sec:level-2-analysis}. 

Here is a representation $U$ of $Y \otimes T$. $U$ has cardinality 147. The entries of $U$ and corresponding expressions are as follows. 

\begin{etaremune}
\item $U(((6)))$  has degree 0

( (3) )

\item $U(((5)))$  has degree 1

( (2) )

\item $U_{\tree}(((5), (0)))=$ the unique completion of $U(((5)))$

\noindent$U_{\node}((5), (0))= (1, (0, 0))$

( (2), $h$(2, ($g$(1, (0)), (0))), $-1$ )

\item $U_{\tree}(((5), (0), (0)))=$ the unique completion of $U(((5), (0)))$

\noindent$U_{\node}((5), (0), (0))= (0, -1)$

( (2), $h$(2, ($g$(1, (0)), (0), $g$(1, (0, 0)), (0))), $-1$ )

\item $U(((4)))$  has degree 2

( (1) )

\item $U_{\tree}(((4), (23)))=$ the completion of $U(((4)))$ that sends $(2, ((0)))$ to the completion of $U_{\tree}(((4)))(2, \emptyset)$  whose node component is $(0, 0)$

\noindent$U_{\node}((4), (23))= (2, ((0), (0)))$

( (1), $[ \vec{\alpha} \mapsto h$(2, ($g$(2, ($\alpha_{(0)}$, (0))), (0)))$]_{\mu_{\vec{\alpha}}}$, (0) )

\item $U_{\tree}(((4), (23), (2)))=$ the completion of $U(((4), (23)))$ that sends $(2, ((0), (0)))$ to the completion of $U_{\tree}(((4), (23)))(2, ((0)))$  whose node component is $(0, 1)$

\noindent$U_{\node}((4), (23), (2))= (0, -1)$

( (1), $[ \vec{\alpha} \mapsto h$(2, ($g$(2, ($\alpha_{(0)}$, (0))), (0)))$]_{\mu_{\vec{\alpha}}}$, (0), $[ \vec{\alpha} \mapsto h$(2, ($g$(2, ($\alpha_{(0)}$, (0))), (0), $g$(2, ($\alpha_{(0)}$, (0), $\alpha_{(0, 0)}$, (0))), (0)))$]_{\mu_{\vec{\alpha}}}$, $-1$ )

\item $U_{\tree}(((4), (23), (1)))=$ the completion of $U(((4), (23)))$ that sends $(2, ((0), (0)))$ to the completion of $U_{\tree}(((4), (23)))(2, ((0)))$  whose node component is $(0, 0, 0)$

\noindent$U_{\node}((4), (23), (1))= (0, -1)$

( (1), $[ \vec{\alpha} \mapsto h$(2, ($g$(2, ($\alpha_{(0)}$, (0))), (0)))$]_{\mu_{\vec{\alpha}}}$, (0), $[ \vec{\alpha} \mapsto h$(2, ($g$(2, ($\alpha_{(0)}$, (0))), (0), $g$(2, ($\alpha_{(0)}$, (0), $\alpha_{(0, 0)}$, (0))), (0)))$]_{\mu_{\vec{\alpha}}}$, $-1$ )

\item $U_{\tree}(((4), (23), (0)))=$ the completion of $U(((4), (23)))$ that sends $(2, ((0), (0)))$ to the completion of $U_{\tree}(((4), (23)))(2, ((0)))$  whose node component is $-1$

\noindent$U_{\node}((4), (23), (0))= (0, -1)$

( (1), $[ \vec{\alpha} \mapsto h$(2, ($g$(2, ($\alpha_{(0)}$, (0))), (0)))$]_{\mu_{\vec{\alpha}}}$, (0), $[ \vec{\alpha} \mapsto h$(2, ($g$(2, ($\alpha_{(0)}$, (0))), (0), $g$(2, ($\alpha_{(0)}$, (0), $\alpha_{(0, 0)}$, (0))), (0)))$]_{\mu_{\vec{\alpha}}}$, $-1$ )

\item $U_{\tree}(((4), (22)))=$ the completion of $U(((4)))$ that sends $(2, ((0)))$ to the completion of $U_{\tree}(((4)))(2, \emptyset)$  whose node component is $(0, 0)$

\noindent$U_{\node}((4), (22))= (0, -1)$

( (1), $[ \vec{\alpha} \mapsto h$(2, ($g$(2, ($\alpha_{(0)}$, (0))), (0)))$]_{\mu_{\vec{\alpha}}}$, (0), $[ \vec{\alpha} \mapsto h$(2, ($g$(2, ($\alpha_{(0)}$, (0))), (0), $g$(2, ($\alpha_{(0)}$, (0), $\alpha_{(0, 0)}$, $-1$)), (0)))$]_{\mu_{\vec{\alpha}}}$, $-1$ )

\item $U_{\tree}(((4), (21)))=$ the completion of $U(((4)))$ that sends $(2, ((0)))$ to the completion of $U_{\tree}(((4)))(2, \emptyset)$  whose node component is $(0, 0)$

\noindent$U_{\node}((4), (21))= (2, ((0, 0)))$

( (1), $[ \vec{\alpha} \mapsto h$(2, ($g$(2, ($\alpha_{(0)}$, (0))), (0)))$]_{\mu_{\vec{\alpha}}}$, $-1$ )

\item $U_{\tree}(((4), (21), (3)))=$ the completion of $U(((4), (21)))$ that sends $(2, ((0, 0)))$ to the completion of $U_{\tree}(((4), (21)))(2, \emptyset)$  whose node component is $(0, 0)$

\noindent$U_{\node}((4), (21), (3))= (0, -1)$

( (1), $[ \vec{\alpha} \mapsto h$(2, ($g$(2, ($\alpha_{(0)}$, (0))), (0), $g$(2, ($\alpha_{(0)}$, (0, 0))), (0)))$]_{\mu_{\vec{\alpha}}}$, $-1$ )

\item $U_{\tree}(((4), (21), (2)))=$ the completion of $U(((4), (21)))$ that sends $(2, ((0, 0)))$ to the completion of $U_{\tree}(((4), (21)))(2, \emptyset)$  whose node component is $(0, 0)$

\noindent$U_{\node}((4), (21), (2))= (2, ((0, 0), (0)))$

( (1), $[ \vec{\alpha} \mapsto h$(2, ($g$(2, ($\alpha_{(0)}$, (0))), (0), $g$(2, ($\alpha_{(0)}$, (0, 0))), $-1$))$]_{\mu_{\vec{\alpha}}}$, (0) )

\item $U_{\tree}(((4), (21), (2), (2)))=$ the completion of $U(((4), (21), (2)))$ that sends $(2, ((0, 0), (0)))$ to the completion of $U_{\tree}(((4), (21), (2)))(2, ((0, 0)))$  whose node component is $(0, 1)$

\noindent$U_{\node}((4), (21), (2), (2))= (0, -1)$

( (1), $[ \vec{\alpha} \mapsto h$(2, ($g$(2, ($\alpha_{(0)}$, (0))), (0), $g$(2, ($\alpha_{(0)}$, (0, 0))), $-1$))$]_{\mu_{\vec{\alpha}}}$, (0), $[ \vec{\alpha} \mapsto h$(2, ($g$(2, ($\alpha_{(0)}$, (0))), (0), $g$(2, ($\alpha_{(0)}$, (0, 0), $\alpha_{(0, 0)}$, (0))), (0)))$]_{\mu_{\vec{\alpha}}}$, $-1$ )

\item $U_{\tree}(((4), (21), (2), (1)))=$ the completion of $U(((4), (21), (2)))$ that sends $(2, ((0, 0), (0)))$ to the completion of $U_{\tree}(((4), (21), (2)))(2, ((0, 0)))$  whose node component is $(0, 0, 0)$

\noindent$U_{\node}((4), (21), (2), (1))= (0, -1)$

( (1), $[ \vec{\alpha} \mapsto h$(2, ($g$(2, ($\alpha_{(0)}$, (0))), (0), $g$(2, ($\alpha_{(0)}$, (0, 0))), $-1$))$]_{\mu_{\vec{\alpha}}}$, (0), $[ \vec{\alpha} \mapsto h$(2, ($g$(2, ($\alpha_{(0)}$, (0))), (0), $g$(2, ($\alpha_{(0)}$, (0, 0), $\alpha_{(0, 0)}$, (0))), (0)))$]_{\mu_{\vec{\alpha}}}$, $-1$ )

\item $U_{\tree}(((4), (21), (2), (0)))=$ the completion of $U(((4), (21), (2)))$ that sends $(2, ((0, 0), (0)))$ to the completion of $U_{\tree}(((4), (21), (2)))(2, ((0, 0)))$  whose node component is $-1$

\noindent$U_{\node}((4), (21), (2), (0))= (0, -1)$

( (1), $[ \vec{\alpha} \mapsto h$(2, ($g$(2, ($\alpha_{(0)}$, (0))), (0), $g$(2, ($\alpha_{(0)}$, (0, 0))), $-1$))$]_{\mu_{\vec{\alpha}}}$, (0), $[ \vec{\alpha} \mapsto h$(2, ($g$(2, ($\alpha_{(0)}$, (0))), (0), $g$(2, ($\alpha_{(0)}$, (0, 0), $\alpha_{(0, 0)}$, (0))), (0)))$]_{\mu_{\vec{\alpha}}}$, $-1$ )

\item $U_{\tree}(((4), (21), (1)))=$ the completion of $U(((4), (21)))$ that sends $(2, ((0, 0)))$ to the completion of $U_{\tree}(((4), (21)))(2, \emptyset)$  whose node component is $(0, 0)$

\noindent$U_{\node}((4), (21), (1))= (0, -1)$

( (1), $[ \vec{\alpha} \mapsto h$(2, ($g$(2, ($\alpha_{(0)}$, (0))), (0), $g$(2, ($\alpha_{(0)}$, (0, 0))), $-1$))$]_{\mu_{\vec{\alpha}}}$, (0), $[ \vec{\alpha} \mapsto h$(2, ($g$(2, ($\alpha_{(0)}$, (0))), (0), $g$(2, ($\alpha_{(0)}$, (0, 0), $\alpha_{(0, 0)}$, $-1$)), (0)))$]_{\mu_{\vec{\alpha}}}$, $-1$ )

\item $U_{\tree}(((4), (21), (0)))=$ the completion of $U(((4), (21)))$ that sends $(2, ((0, 0)))$ to the completion of $U_{\tree}(((4), (21)))(2, \emptyset)$  whose node component is $-1$

\noindent$U_{\node}((4), (21), (0))= (0, -1)$

( (1), $[ \vec{\alpha} \mapsto h$(2, ($g$(2, ($\alpha_{(0)}$, (0))), (0), $g$(2, ($\alpha_{(0)}$, (0, 0))), (0)))$]_{\mu_{\vec{\alpha}}}$, $-1$ )

\item $U_{\tree}(((4), (20)))=$ the completion of $U(((4)))$ that sends $(2, ((0)))$ to the completion of $U_{\tree}(((4)))(2, \emptyset)$  whose node component is $(0, 0)$

\noindent$U_{\node}((4), (20))= (0, -1)$

( (1), $[ \vec{\alpha} \mapsto h$(2, ($g$(2, ($\alpha_{(0)}$, (0))), (0), $g$(2, ($\alpha_{(0)}$, $-1$)), (0)))$]_{\mu_{\vec{\alpha}}}$, $-1$ )

\item $U_{\tree}(((4), (19)))=$ the completion of $U(((4)))$ that sends $(2, ((0)))$ to the completion of $U_{\tree}(((4)))(2, \emptyset)$  whose node component is $(0, 0)$

\noindent$U_{\node}((4), (19))= (2, ((1)))$

( (1), $[ \vec{\alpha} \mapsto h$(2, ($g$(2, ($\alpha_{(0)}$, (0))), (0), $g$(2, ($\alpha_{(0)}$, $-1$)), $-1$))$]_{\mu_{\vec{\alpha}}}$, (0) )

\item $U_{\tree}(((4), (19), (1)))=$ the completion of $U(((4), (19)))$ that sends $(2, ((1)))$ to the completion of $U_{\tree}(((4), (19)))(2, \emptyset)$  whose node component is $(0, 0)$

\noindent$U_{\node}((4), (19), (1))= (0, -1)$

( (1), $[ \vec{\alpha} \mapsto h$(2, ($g$(2, ($\alpha_{(0)}$, (0))), (0), $g$(2, ($\alpha_{(0)}$, $-1$)), $-1$))$]_{\mu_{\vec{\alpha}}}$, (0), $[ \vec{\alpha} \mapsto h$(2, ($g$(2, ($\alpha_{(0)}$, (0))), (0), $g$(2, ($\alpha_{(0, 0)}$, (1))), (0)))$]_{\mu_{\vec{\alpha}}}$, $-1$ )

\item $U_{\tree}(((4), (19), (0)))=$ the completion of $U(((4), (19)))$ that sends $(2, ((1)))$ to the completion of $U_{\tree}(((4), (19)))(2, \emptyset)$  whose node component is $-1$

\noindent$U_{\node}((4), (19), (0))= (0, -1)$

( (1), $[ \vec{\alpha} \mapsto h$(2, ($g$(2, ($\alpha_{(0)}$, (0))), (0), $g$(2, ($\alpha_{(0)}$, $-1$)), $-1$))$]_{\mu_{\vec{\alpha}}}$, (0), $[ \vec{\alpha} \mapsto h$(2, ($g$(2, ($\alpha_{(0)}$, (0))), (0), $g$(2, ($\alpha_{(0, 0)}$, (1))), (0)))$]_{\mu_{\vec{\alpha}}}$, $-1$ )

\item $U_{\tree}(((4), (18)))=$ the completion of $U(((4)))$ that sends $(2, ((0)))$ to the completion of $U_{\tree}(((4)))(2, \emptyset)$  whose node component is $(0, 0)$

\noindent$U_{\node}((4), (18))= (0, -1)$

( (1), $[ \vec{\alpha} \mapsto h$(2, ($g$(2, ($\alpha_{(0)}$, (0))), (0), $g$(2, ($\alpha_{(0)}$, $-1$)), $-1$))$]_{\mu_{\vec{\alpha}}}$, (0), $[ \vec{\alpha} \mapsto h$(2, ($g$(2, ($\alpha_{(0)}$, (0))), (0), $g$(2, ($\alpha_{(0, 0)}$, (0))), (0)))$]_{\mu_{\vec{\alpha}}}$, $-1$ )

\item $U_{\tree}(((4), (17)))=$ the completion of $U(((4)))$ that sends $(2, ((0)))$ to the completion of $U_{\tree}(((4)))(2, \emptyset)$  whose node component is $(0, 0)$

\noindent$U_{\node}((4), (17))= (2, ((0, 0)))$

( (1), $[ \vec{\alpha} \mapsto h$(2, ($g$(2, ($\alpha_{(0)}$, (0))), (0), $g$(2, ($\alpha_{(0)}$, $-1$)), $-1$))$]_{\mu_{\vec{\alpha}}}$, (0), $[ \vec{\alpha} \mapsto h$(2, ($g$(2, ($\alpha_{(0)}$, (0))), (0), $g$(2, ($\alpha_{(0, 0)}$, (0))), $-1$))$]_{\mu_{\vec{\alpha}}}$, $-1$ )

\item $U_{\tree}(((4), (17), (1)))=$ the completion of $U(((4), (17)))$ that sends $(2, ((0, 0)))$ to the completion of $U_{\tree}(((4), (17)))(2, \emptyset)$  whose node component is $(0, 0)$

\noindent$U_{\node}((4), (17), (1))= (0, -1)$

( (1), $[ \vec{\alpha} \mapsto h$(2, ($g$(2, ($\alpha_{(0)}$, (0))), (0), $g$(2, ($\alpha_{(0)}$, $-1$)), $-1$))$]_{\mu_{\vec{\alpha}}}$, (0), $[ \vec{\alpha} \mapsto h$(2, ($g$(2, ($\alpha_{(0)}$, (0))), (0), $g$(2, ($\alpha_{(0, 0)}$, (0, 0))), (0)))$]_{\mu_{\vec{\alpha}}}$, $-1$ )

\item $U_{\tree}(((4), (17), (0)))=$ the completion of $U(((4), (17)))$ that sends $(2, ((0, 0)))$ to the completion of $U_{\tree}(((4), (17)))(2, \emptyset)$  whose node component is $-1$

\noindent$U_{\node}((4), (17), (0))= (0, -1)$

( (1), $[ \vec{\alpha} \mapsto h$(2, ($g$(2, ($\alpha_{(0)}$, (0))), (0), $g$(2, ($\alpha_{(0)}$, $-1$)), $-1$))$]_{\mu_{\vec{\alpha}}}$, (0), $[ \vec{\alpha} \mapsto h$(2, ($g$(2, ($\alpha_{(0)}$, (0))), (0), $g$(2, ($\alpha_{(0, 0)}$, (0, 0))), (0)))$]_{\mu_{\vec{\alpha}}}$, $-1$ )

\item $U_{\tree}(((4), (16)))=$ the completion of $U(((4)))$ that sends $(2, ((0)))$ to the completion of $U_{\tree}(((4)))(2, \emptyset)$  whose node component is $(0, 0)$

\noindent$U_{\node}((4), (16))= (0, -1)$

( (1), $[ \vec{\alpha} \mapsto h$(2, ($g$(2, ($\alpha_{(0)}$, (0))), (0), $g$(2, ($\alpha_{(0)}$, $-1$)), $-1$))$]_{\mu_{\vec{\alpha}}}$, (0), $[ \vec{\alpha} \mapsto h$(2, ($g$(2, ($\alpha_{(0)}$, (0))), (0), $g$(2, ($\alpha_{(0, 0)}$, $-1$)), (0)))$]_{\mu_{\vec{\alpha}}}$, $-1$ )

\item $U_{\tree}(((4), (15)))=$ the completion of $U(((4)))$ that sends $(2, ((0)))$ to the completion of $U_{\tree}(((4)))(2, \emptyset)$  whose node component is $(0, 0)$

\noindent$U_{\node}((4), (15))= (1, (0))$

( (1), $[ \vec{\alpha} \mapsto h$(2, ($g$(2, ($\alpha_{(0)}$, (0))), (0), $g$(2, ($\alpha_{(0)}$, $-1$)), $-1$))$]_{\mu_{\vec{\alpha}}}$, $-1$ )

\item $U_{\tree}(((4), (15), (0)))=$ the unique completion of $U(((4), (15)))$

\noindent$U_{\node}((4), (15), (0))= (0, -1)$

( (1), $[ \vec{\alpha} \mapsto h$(2, ($g$(2, ($\alpha_{(0)}$, (0))), (0), $g$(1, ((0))), (0)))$]_{\mu_{\vec{\alpha}}}$, $-1$ )

\item $U_{\tree}(((4), (14)))=$ the completion of $U(((4)))$ that sends $(2, ((0)))$ to the completion of $U_{\tree}(((4)))(2, \emptyset)$  whose node component is $(0, 0)$

\noindent$U_{\node}((4), (14))= (2, ((0), (0)))$

( (1), $[ \vec{\alpha} \mapsto h$(2, ($g$(2, ($\alpha_{(0)}$, (0))), $-1$))$]_{\mu_{\vec{\alpha}}}$, (0) )

\item $U_{\tree}(((4), (14), (26)))=$ the completion of $U(((4), (14)))$ that sends $(2, ((0), (0)))$ to the completion of $U_{\tree}(((4), (14)))(2, ((0)))$  whose node component is $(0, 1)$

\noindent$U_{\node}((4), (14), (26))= (2, ((0), (0, 0)))$

( (1), $[ \vec{\alpha} \mapsto h$(2, ($g$(2, ($\alpha_{(0)}$, (0))), $-1$))$]_{\mu_{\vec{\alpha}}}$, (0), $[ \vec{\alpha} \mapsto h$(2, ($g$(2, ($\alpha_{(0)}$, (0), $\alpha_{(0, 0)}$, (0))), (0)))$]_{\mu_{\vec{\alpha}}}$, $-1$ )

\item $U_{\tree}(((4), (14), (26), (2)))=$ the completion of $U(((4), (14), (26)))$ that sends $(2, ((0), (0, 0)))$ to the completion of $U_{\tree}(((4), (14), (26)))(2, ((0)))$  whose node component is $(0, 1)$

\noindent$U_{\node}((4), (14), (26), (2))= (0, -1)$

( (1), $[ \vec{\alpha} \mapsto h$(2, ($g$(2, ($\alpha_{(0)}$, (0))), $-1$))$]_{\mu_{\vec{\alpha}}}$, (0), $[ \vec{\alpha} \mapsto h$(2, ($g$(2, ($\alpha_{(0)}$, (0), $\alpha_{(0, 0)}$, (0))), (0), $g$(2, ($\alpha_{(0)}$, (0), $\alpha_{(0, 0)}$, (0, 0))), (0)))$]_{\mu_{\vec{\alpha}}}$, $-1$ )

\item $U_{\tree}(((4), (14), (26), (1)))=$ the completion of $U(((4), (14), (26)))$ that sends $(2, ((0), (0, 0)))$ to the completion of $U_{\tree}(((4), (14), (26)))(2, ((0)))$  whose node component is $(0, 0, 0)$

\noindent$U_{\node}((4), (14), (26), (1))= (0, -1)$

( (1), $[ \vec{\alpha} \mapsto h$(2, ($g$(2, ($\alpha_{(0)}$, (0))), $-1$))$]_{\mu_{\vec{\alpha}}}$, (0), $[ \vec{\alpha} \mapsto h$(2, ($g$(2, ($\alpha_{(0)}$, (0), $\alpha_{(0, 0)}$, (0))), (0), $g$(2, ($\alpha_{(0)}$, (0), $\alpha_{(0, 0)}$, (0, 0))), (0)))$]_{\mu_{\vec{\alpha}}}$, $-1$ )

\item $U_{\tree}(((4), (14), (26), (0)))=$ the completion of $U(((4), (14), (26)))$ that sends $(2, ((0), (0, 0)))$ to the completion of $U_{\tree}(((4), (14), (26)))(2, ((0)))$  whose node component is $-1$

\noindent$U_{\node}((4), (14), (26), (0))= (0, -1)$

( (1), $[ \vec{\alpha} \mapsto h$(2, ($g$(2, ($\alpha_{(0)}$, (0))), $-1$))$]_{\mu_{\vec{\alpha}}}$, (0), $[ \vec{\alpha} \mapsto h$(2, ($g$(2, ($\alpha_{(0)}$, (0), $\alpha_{(0, 0)}$, (0))), (0), $g$(2, ($\alpha_{(0)}$, (0), $\alpha_{(0, 0)}$, (0, 0))), (0)))$]_{\mu_{\vec{\alpha}}}$, $-1$ )

\item $U_{\tree}(((4), (14), (25)))=$ the completion of $U(((4), (14)))$ that sends $(2, ((0), (0)))$ to the completion of $U_{\tree}(((4), (14)))(2, ((0)))$  whose node component is $(0, 1)$

\noindent$U_{\node}((4), (14), (25))= (0, -1)$

( (1), $[ \vec{\alpha} \mapsto h$(2, ($g$(2, ($\alpha_{(0)}$, (0))), $-1$))$]_{\mu_{\vec{\alpha}}}$, (0), $[ \vec{\alpha} \mapsto h$(2, ($g$(2, ($\alpha_{(0)}$, (0), $\alpha_{(0, 0)}$, (0))), (0), $g$(2, ($\alpha_{(0)}$, (0), $\alpha_{(0, 0)}$, $-1$)), (0)))$]_{\mu_{\vec{\alpha}}}$, $-1$ )

\item $U_{\tree}(((4), (14), (24)))=$ the completion of $U(((4), (14)))$ that sends $(2, ((0), (0)))$ to the completion of $U_{\tree}(((4), (14)))(2, ((0)))$  whose node component is $(0, 1)$

\noindent$U_{\node}((4), (14), (24))= (2, ((0, 0)))$

( (1), $[ \vec{\alpha} \mapsto h$(2, ($g$(2, ($\alpha_{(0)}$, (0))), $-1$))$]_{\mu_{\vec{\alpha}}}$, (0), $[ \vec{\alpha} \mapsto h$(2, ($g$(2, ($\alpha_{(0)}$, (0), $\alpha_{(0, 0)}$, (0))), (0), $g$(2, ($\alpha_{(0)}$, (0), $\alpha_{(0, 0)}$, $-1$)), $-1$))$]_{\mu_{\vec{\alpha}}}$, $-1$ )

\item $U_{\tree}(((4), (14), (24), (3)))=$ the completion of $U(((4), (14), (24)))$ that sends $(2, ((0, 0)))$ to the completion of $U_{\tree}(((4), (14), (24)))(2, \emptyset)$  whose node component is $(0, 0)$

\noindent$U_{\node}((4), (14), (24), (3))= (0, -1)$

( (1), $[ \vec{\alpha} \mapsto h$(2, ($g$(2, ($\alpha_{(0)}$, (0))), $-1$))$]_{\mu_{\vec{\alpha}}}$, (0), $[ \vec{\alpha} \mapsto h$(2, ($g$(2, ($\alpha_{(0)}$, (0), $\alpha_{(0, 0)}$, (0))), (0), $g$(2, ($\alpha_{(0)}$, (0, 0))), (0)))$]_{\mu_{\vec{\alpha}}}$, $-1$ )

\item $U_{\tree}(((4), (14), (24), (2)))=$ the completion of $U(((4), (14), (24)))$ that sends $(2, ((0, 0)))$ to the completion of $U_{\tree}(((4), (14), (24)))(2, \emptyset)$  whose node component is $(0, 0)$

\noindent$U_{\node}((4), (14), (24), (2))= (2, ((0, 0), (0)))$

( (1), $[ \vec{\alpha} \mapsto h$(2, ($g$(2, ($\alpha_{(0)}$, (0))), $-1$))$]_{\mu_{\vec{\alpha}}}$, (0), $[ \vec{\alpha} \mapsto h$(2, ($g$(2, ($\alpha_{(0)}$, (0), $\alpha_{(0, 0)}$, (0))), (0), $g$(2, ($\alpha_{(0)}$, (0, 0))), $-1$))$]_{\mu_{\vec{\alpha}}}$, $-1$ )

\item $U_{\tree}(((4), (14), (24), (2), (2)))=$ the completion of $U(((4), (14), (24), (2)))$ that sends $(2, ((0, 0), (0)))$ to the completion of $U_{\tree}(((4), (14), (24), (2)))(2, ((0, 0)))$  whose node component is $(0, 1)$

\noindent$U_{\node}((4), (14), (24), (2), (2))= (0, -1)$

( (1), $[ \vec{\alpha} \mapsto h$(2, ($g$(2, ($\alpha_{(0)}$, (0))), $-1$))$]_{\mu_{\vec{\alpha}}}$, (0), $[ \vec{\alpha} \mapsto h$(2, ($g$(2, ($\alpha_{(0)}$, (0), $\alpha_{(0, 0)}$, (0))), (0), $g$(2, ($\alpha_{(0)}$, (0, 0), $\alpha_{(0, 0)}$, (0))), (0)))$]_{\mu_{\vec{\alpha}}}$, $-1$ )

\item $U_{\tree}(((4), (14), (24), (2), (1)))=$ the completion of $U(((4), (14), (24), (2)))$ that sends $(2, ((0, 0), (0)))$ to the completion of $U_{\tree}(((4), (14), (24), (2)))(2, ((0, 0)))$  whose node component is $(0, 0, 0)$

\noindent$U_{\node}((4), (14), (24), (2), (1))= (0, -1)$

( (1), $[ \vec{\alpha} \mapsto h$(2, ($g$(2, ($\alpha_{(0)}$, (0))), $-1$))$]_{\mu_{\vec{\alpha}}}$, (0), $[ \vec{\alpha} \mapsto h$(2, ($g$(2, ($\alpha_{(0)}$, (0), $\alpha_{(0, 0)}$, (0))), (0), $g$(2, ($\alpha_{(0)}$, (0, 0), $\alpha_{(0, 0)}$, (0))), (0)))$]_{\mu_{\vec{\alpha}}}$, $-1$ )

\item $U_{\tree}(((4), (14), (24), (2), (0)))=$ the completion of $U(((4), (14), (24), (2)))$ that sends $(2, ((0, 0), (0)))$ to the completion of $U_{\tree}(((4), (14), (24), (2)))(2, ((0, 0)))$  whose node component is $-1$

\noindent$U_{\node}((4), (14), (24), (2), (0))= (0, -1)$

( (1), $[ \vec{\alpha} \mapsto h$(2, ($g$(2, ($\alpha_{(0)}$, (0))), $-1$))$]_{\mu_{\vec{\alpha}}}$, (0), $[ \vec{\alpha} \mapsto h$(2, ($g$(2, ($\alpha_{(0)}$, (0), $\alpha_{(0, 0)}$, (0))), (0), $g$(2, ($\alpha_{(0)}$, (0, 0), $\alpha_{(0, 0)}$, (0))), (0)))$]_{\mu_{\vec{\alpha}}}$, $-1$ )

\item $U_{\tree}(((4), (14), (24), (1)))=$ the completion of $U(((4), (14), (24)))$ that sends $(2, ((0, 0)))$ to the completion of $U_{\tree}(((4), (14), (24)))(2, \emptyset)$  whose node component is $(0, 0)$

\noindent$U_{\node}((4), (14), (24), (1))= (0, -1)$

( (1), $[ \vec{\alpha} \mapsto h$(2, ($g$(2, ($\alpha_{(0)}$, (0))), $-1$))$]_{\mu_{\vec{\alpha}}}$, (0), $[ \vec{\alpha} \mapsto h$(2, ($g$(2, ($\alpha_{(0)}$, (0), $\alpha_{(0, 0)}$, (0))), (0), $g$(2, ($\alpha_{(0)}$, (0, 0), $\alpha_{(0, 0)}$, $-1$)), (0)))$]_{\mu_{\vec{\alpha}}}$, $-1$ )

\item $U_{\tree}(((4), (14), (24), (0)))=$ the completion of $U(((4), (14), (24)))$ that sends $(2, ((0, 0)))$ to the completion of $U_{\tree}(((4), (14), (24)))(2, \emptyset)$  whose node component is $-1$

\noindent$U_{\node}((4), (14), (24), (0))= (0, -1)$

( (1), $[ \vec{\alpha} \mapsto h$(2, ($g$(2, ($\alpha_{(0)}$, (0))), $-1$))$]_{\mu_{\vec{\alpha}}}$, (0), $[ \vec{\alpha} \mapsto h$(2, ($g$(2, ($\alpha_{(0)}$, (0), $\alpha_{(0, 0)}$, (0))), (0), $g$(2, ($\alpha_{(0)}$, (0, 0))), (0)))$]_{\mu_{\vec{\alpha}}}$, $-1$ )

\item $U_{\tree}(((4), (14), (23)))=$ the completion of $U(((4), (14)))$ that sends $(2, ((0), (0)))$ to the completion of $U_{\tree}(((4), (14)))(2, ((0)))$  whose node component is $(0, 1)$

\noindent$U_{\node}((4), (14), (23))= (0, -1)$

( (1), $[ \vec{\alpha} \mapsto h$(2, ($g$(2, ($\alpha_{(0)}$, (0))), $-1$))$]_{\mu_{\vec{\alpha}}}$, (0), $[ \vec{\alpha} \mapsto h$(2, ($g$(2, ($\alpha_{(0)}$, (0), $\alpha_{(0, 0)}$, (0))), (0), $g$(2, ($\alpha_{(0)}$, $-1$)), (0)))$]_{\mu_{\vec{\alpha}}}$, $-1$ )

\item $U_{\tree}(((4), (14), (22)))=$ the completion of $U(((4), (14)))$ that sends $(2, ((0), (0)))$ to the completion of $U_{\tree}(((4), (14)))(2, ((0)))$  whose node component is $(0, 1)$

\noindent$U_{\node}((4), (14), (22))= (2, ((1)))$

( (1), $[ \vec{\alpha} \mapsto h$(2, ($g$(2, ($\alpha_{(0)}$, (0))), $-1$))$]_{\mu_{\vec{\alpha}}}$, (0), $[ \vec{\alpha} \mapsto h$(2, ($g$(2, ($\alpha_{(0)}$, (0), $\alpha_{(0, 0)}$, (0))), (0), $g$(2, ($\alpha_{(0)}$, $-1$)), $-1$))$]_{\mu_{\vec{\alpha}}}$, $-1$ )

\item $U_{\tree}(((4), (14), (22), (1)))=$ the completion of $U(((4), (14), (22)))$ that sends $(2, ((1)))$ to the completion of $U_{\tree}(((4), (14), (22)))(2, \emptyset)$  whose node component is $(0, 0)$

\noindent$U_{\node}((4), (14), (22), (1))= (0, -1)$

( (1), $[ \vec{\alpha} \mapsto h$(2, ($g$(2, ($\alpha_{(0)}$, (0))), $-1$))$]_{\mu_{\vec{\alpha}}}$, (0), $[ \vec{\alpha} \mapsto h$(2, ($g$(2, ($\alpha_{(0)}$, (0), $\alpha_{(0, 0)}$, (0))), (0), $g$(2, ($\alpha_{(0, 0)}$, (1))), (0)))$]_{\mu_{\vec{\alpha}}}$, $-1$ )

\item $U_{\tree}(((4), (14), (22), (0)))=$ the completion of $U(((4), (14), (22)))$ that sends $(2, ((1)))$ to the completion of $U_{\tree}(((4), (14), (22)))(2, \emptyset)$  whose node component is $-1$

\noindent$U_{\node}((4), (14), (22), (0))= (0, -1)$

( (1), $[ \vec{\alpha} \mapsto h$(2, ($g$(2, ($\alpha_{(0)}$, (0))), $-1$))$]_{\mu_{\vec{\alpha}}}$, (0), $[ \vec{\alpha} \mapsto h$(2, ($g$(2, ($\alpha_{(0)}$, (0), $\alpha_{(0, 0)}$, (0))), (0), $g$(2, ($\alpha_{(0, 0)}$, (1))), (0)))$]_{\mu_{\vec{\alpha}}}$, $-1$ )

\item $U_{\tree}(((4), (14), (21)))=$ the completion of $U(((4), (14)))$ that sends $(2, ((0), (0)))$ to the completion of $U_{\tree}(((4), (14)))(2, ((0)))$  whose node component is $(0, 1)$

\noindent$U_{\node}((4), (14), (21))= (0, -1)$

( (1), $[ \vec{\alpha} \mapsto h$(2, ($g$(2, ($\alpha_{(0)}$, (0))), $-1$))$]_{\mu_{\vec{\alpha}}}$, (0), $[ \vec{\alpha} \mapsto h$(2, ($g$(2, ($\alpha_{(0)}$, (0), $\alpha_{(0, 0)}$, (0))), (0), $g$(2, ($\alpha_{(0, 0)}$, (0))), (0)))$]_{\mu_{\vec{\alpha}}}$, $-1$ )

\item $U_{\tree}(((4), (14), (20)))=$ the completion of $U(((4), (14)))$ that sends $(2, ((0), (0)))$ to the completion of $U_{\tree}(((4), (14)))(2, ((0)))$  whose node component is $(0, 1)$

\noindent$U_{\node}((4), (14), (20))= (2, ((0, 0)))$

( (1), $[ \vec{\alpha} \mapsto h$(2, ($g$(2, ($\alpha_{(0)}$, (0))), $-1$))$]_{\mu_{\vec{\alpha}}}$, (0), $[ \vec{\alpha} \mapsto h$(2, ($g$(2, ($\alpha_{(0)}$, (0), $\alpha_{(0, 0)}$, (0))), (0), $g$(2, ($\alpha_{(0, 0)}$, (0))), $-1$))$]_{\mu_{\vec{\alpha}}}$, $-1$ )

\item $U_{\tree}(((4), (14), (20), (1)))=$ the completion of $U(((4), (14), (20)))$ that sends $(2, ((0, 0)))$ to the completion of $U_{\tree}(((4), (14), (20)))(2, \emptyset)$  whose node component is $(0, 0)$

\noindent$U_{\node}((4), (14), (20), (1))= (0, -1)$

( (1), $[ \vec{\alpha} \mapsto h$(2, ($g$(2, ($\alpha_{(0)}$, (0))), $-1$))$]_{\mu_{\vec{\alpha}}}$, (0), $[ \vec{\alpha} \mapsto h$(2, ($g$(2, ($\alpha_{(0)}$, (0), $\alpha_{(0, 0)}$, (0))), (0), $g$(2, ($\alpha_{(0, 0)}$, (0, 0))), (0)))$]_{\mu_{\vec{\alpha}}}$, $-1$ )

\item $U_{\tree}(((4), (14), (20), (0)))=$ the completion of $U(((4), (14), (20)))$ that sends $(2, ((0, 0)))$ to the completion of $U_{\tree}(((4), (14), (20)))(2, \emptyset)$  whose node component is $-1$

\noindent$U_{\node}((4), (14), (20), (0))= (0, -1)$

( (1), $[ \vec{\alpha} \mapsto h$(2, ($g$(2, ($\alpha_{(0)}$, (0))), $-1$))$]_{\mu_{\vec{\alpha}}}$, (0), $[ \vec{\alpha} \mapsto h$(2, ($g$(2, ($\alpha_{(0)}$, (0), $\alpha_{(0, 0)}$, (0))), (0), $g$(2, ($\alpha_{(0, 0)}$, (0, 0))), (0)))$]_{\mu_{\vec{\alpha}}}$, $-1$ )

\item $U_{\tree}(((4), (14), (19)))=$ the completion of $U(((4), (14)))$ that sends $(2, ((0), (0)))$ to the completion of $U_{\tree}(((4), (14)))(2, ((0)))$  whose node component is $(0, 1)$

\noindent$U_{\node}((4), (14), (19))= (0, -1)$

( (1), $[ \vec{\alpha} \mapsto h$(2, ($g$(2, ($\alpha_{(0)}$, (0))), $-1$))$]_{\mu_{\vec{\alpha}}}$, (0), $[ \vec{\alpha} \mapsto h$(2, ($g$(2, ($\alpha_{(0)}$, (0), $\alpha_{(0, 0)}$, (0))), (0), $g$(2, ($\alpha_{(0, 0)}$, $-1$)), (0)))$]_{\mu_{\vec{\alpha}}}$, $-1$ )

\item $U_{\tree}(((4), (14), (18)))=$ the completion of $U(((4), (14)))$ that sends $(2, ((0), (0)))$ to the completion of $U_{\tree}(((4), (14)))(2, ((0)))$  whose node component is $(0, 1)$

\noindent$U_{\node}((4), (14), (18))= (1, (0))$

( (1), $[ \vec{\alpha} \mapsto h$(2, ($g$(2, ($\alpha_{(0)}$, (0))), $-1$))$]_{\mu_{\vec{\alpha}}}$, (0), $[ \vec{\alpha} \mapsto h$(2, ($g$(2, ($\alpha_{(0)}$, (0), $\alpha_{(0, 0)}$, (0))), (0), $g$(2, ($\alpha_{(0, 0)}$, $-1$)), $-1$))$]_{\mu_{\vec{\alpha}}}$, $-1$ )

\item $U_{\tree}(((4), (14), (18), (0)))=$ the unique completion of $U(((4), (14), (18)))$

\noindent$U_{\node}((4), (14), (18), (0))= (0, -1)$

( (1), $[ \vec{\alpha} \mapsto h$(2, ($g$(2, ($\alpha_{(0)}$, (0))), $-1$))$]_{\mu_{\vec{\alpha}}}$, (0), $[ \vec{\alpha} \mapsto h$(2, ($g$(2, ($\alpha_{(0)}$, (0), $\alpha_{(0, 0)}$, (0))), (0), $g$(1, ((0))), (0)))$]_{\mu_{\vec{\alpha}}}$, $-1$ )

\item $U_{\tree}(((4), (14), (17)))=$ the completion of $U(((4), (14)))$ that sends $(2, ((0), (0)))$ to the completion of $U_{\tree}(((4), (14)))(2, ((0)))$  whose node component is $(0, 0, 0)$

\noindent$U_{\node}((4), (14), (17))= (2, ((0), (0, 0)))$

( (1), $[ \vec{\alpha} \mapsto h$(2, ($g$(2, ($\alpha_{(0)}$, (0))), $-1$))$]_{\mu_{\vec{\alpha}}}$, (0), $[ \vec{\alpha} \mapsto h$(2, ($g$(2, ($\alpha_{(0)}$, (0), $\alpha_{(0, 0)}$, (0))), (0)))$]_{\mu_{\vec{\alpha}}}$, $-1$ )

\item $U_{\tree}(((4), (14), (17), (2)))=$ the completion of $U(((4), (14), (17)))$ that sends $(2, ((0), (0, 0)))$ to the completion of $U_{\tree}(((4), (14), (17)))(2, ((0)))$  whose node component is $(0, 1)$

\noindent$U_{\node}((4), (14), (17), (2))= (0, -1)$

( (1), $[ \vec{\alpha} \mapsto h$(2, ($g$(2, ($\alpha_{(0)}$, (0))), $-1$))$]_{\mu_{\vec{\alpha}}}$, (0), $[ \vec{\alpha} \mapsto h$(2, ($g$(2, ($\alpha_{(0)}$, (0), $\alpha_{(0, 0)}$, (0))), (0), $g$(2, ($\alpha_{(0)}$, (0), $\alpha_{(0, 0)}$, (0, 0))), (0)))$]_{\mu_{\vec{\alpha}}}$, $-1$ )

\item $U_{\tree}(((4), (14), (17), (1)))=$ the completion of $U(((4), (14), (17)))$ that sends $(2, ((0), (0, 0)))$ to the completion of $U_{\tree}(((4), (14), (17)))(2, ((0)))$  whose node component is $(0, 0, 0)$

\noindent$U_{\node}((4), (14), (17), (1))= (0, -1)$

( (1), $[ \vec{\alpha} \mapsto h$(2, ($g$(2, ($\alpha_{(0)}$, (0))), $-1$))$]_{\mu_{\vec{\alpha}}}$, (0), $[ \vec{\alpha} \mapsto h$(2, ($g$(2, ($\alpha_{(0)}$, (0), $\alpha_{(0, 0)}$, (0))), (0), $g$(2, ($\alpha_{(0)}$, (0), $\alpha_{(0, 0)}$, (0, 0))), (0)))$]_{\mu_{\vec{\alpha}}}$, $-1$ )

\item $U_{\tree}(((4), (14), (17), (0)))=$ the completion of $U(((4), (14), (17)))$ that sends $(2, ((0), (0, 0)))$ to the completion of $U_{\tree}(((4), (14), (17)))(2, ((0)))$  whose node component is $-1$

\noindent$U_{\node}((4), (14), (17), (0))= (0, -1)$

( (1), $[ \vec{\alpha} \mapsto h$(2, ($g$(2, ($\alpha_{(0)}$, (0))), $-1$))$]_{\mu_{\vec{\alpha}}}$, (0), $[ \vec{\alpha} \mapsto h$(2, ($g$(2, ($\alpha_{(0)}$, (0), $\alpha_{(0, 0)}$, (0))), (0), $g$(2, ($\alpha_{(0)}$, (0), $\alpha_{(0, 0)}$, (0, 0))), (0)))$]_{\mu_{\vec{\alpha}}}$, $-1$ )

\item $U_{\tree}(((4), (14), (16)))=$ the completion of $U(((4), (14)))$ that sends $(2, ((0), (0)))$ to the completion of $U_{\tree}(((4), (14)))(2, ((0)))$  whose node component is $(0, 0, 0)$

\noindent$U_{\node}((4), (14), (16))= (0, -1)$

( (1), $[ \vec{\alpha} \mapsto h$(2, ($g$(2, ($\alpha_{(0)}$, (0))), $-1$))$]_{\mu_{\vec{\alpha}}}$, (0), $[ \vec{\alpha} \mapsto h$(2, ($g$(2, ($\alpha_{(0)}$, (0), $\alpha_{(0, 0)}$, (0))), (0), $g$(2, ($\alpha_{(0)}$, (0), $\alpha_{(0, 0)}$, $-1$)), (0)))$]_{\mu_{\vec{\alpha}}}$, $-1$ )

\item $U_{\tree}(((4), (14), (15)))=$ the completion of $U(((4), (14)))$ that sends $(2, ((0), (0)))$ to the completion of $U_{\tree}(((4), (14)))(2, ((0)))$  whose node component is $(0, 0, 0)$

\noindent$U_{\node}((4), (14), (15))= (2, ((0, 0)))$

( (1), $[ \vec{\alpha} \mapsto h$(2, ($g$(2, ($\alpha_{(0)}$, (0))), $-1$))$]_{\mu_{\vec{\alpha}}}$, (0), $[ \vec{\alpha} \mapsto h$(2, ($g$(2, ($\alpha_{(0)}$, (0), $\alpha_{(0, 0)}$, (0))), (0), $g$(2, ($\alpha_{(0)}$, (0), $\alpha_{(0, 0)}$, $-1$)), $-1$))$]_{\mu_{\vec{\alpha}}}$, $-1$ )

\item $U_{\tree}(((4), (14), (15), (3)))=$ the completion of $U(((4), (14), (15)))$ that sends $(2, ((0, 0)))$ to the completion of $U_{\tree}(((4), (14), (15)))(2, \emptyset)$  whose node component is $(0, 0)$

\noindent$U_{\node}((4), (14), (15), (3))= (0, -1)$

( (1), $[ \vec{\alpha} \mapsto h$(2, ($g$(2, ($\alpha_{(0)}$, (0))), $-1$))$]_{\mu_{\vec{\alpha}}}$, (0), $[ \vec{\alpha} \mapsto h$(2, ($g$(2, ($\alpha_{(0)}$, (0), $\alpha_{(0, 0)}$, (0))), (0), $g$(2, ($\alpha_{(0)}$, (0, 0))), (0)))$]_{\mu_{\vec{\alpha}}}$, $-1$ )

\item $U_{\tree}(((4), (14), (15), (2)))=$ the completion of $U(((4), (14), (15)))$ that sends $(2, ((0, 0)))$ to the completion of $U_{\tree}(((4), (14), (15)))(2, \emptyset)$  whose node component is $(0, 0)$

\noindent$U_{\node}((4), (14), (15), (2))= (2, ((0, 0), (0)))$

( (1), $[ \vec{\alpha} \mapsto h$(2, ($g$(2, ($\alpha_{(0)}$, (0))), $-1$))$]_{\mu_{\vec{\alpha}}}$, (0), $[ \vec{\alpha} \mapsto h$(2, ($g$(2, ($\alpha_{(0)}$, (0), $\alpha_{(0, 0)}$, (0))), (0), $g$(2, ($\alpha_{(0)}$, (0, 0))), $-1$))$]_{\mu_{\vec{\alpha}}}$, $-1$ )

\item $U_{\tree}(((4), (14), (15), (2), (2)))=$ the completion of $U(((4), (14), (15), (2)))$ that sends $(2, ((0, 0), (0)))$ to the completion of $U_{\tree}(((4), (14), (15), (2)))(2, ((0, 0)))$  whose node component is $(0, 1)$

\noindent$U_{\node}((4), (14), (15), (2), (2))= (0, -1)$

( (1), $[ \vec{\alpha} \mapsto h$(2, ($g$(2, ($\alpha_{(0)}$, (0))), $-1$))$]_{\mu_{\vec{\alpha}}}$, (0), $[ \vec{\alpha} \mapsto h$(2, ($g$(2, ($\alpha_{(0)}$, (0), $\alpha_{(0, 0)}$, (0))), (0), $g$(2, ($\alpha_{(0)}$, (0, 0), $\alpha_{(0, 0)}$, (0))), (0)))$]_{\mu_{\vec{\alpha}}}$, $-1$ )

\item $U_{\tree}(((4), (14), (15), (2), (1)))=$ the completion of $U(((4), (14), (15), (2)))$ that sends $(2, ((0, 0), (0)))$ to the completion of $U_{\tree}(((4), (14), (15), (2)))(2, ((0, 0)))$  whose node component is $(0, 0, 0)$

\noindent$U_{\node}((4), (14), (15), (2), (1))= (0, -1)$

( (1), $[ \vec{\alpha} \mapsto h$(2, ($g$(2, ($\alpha_{(0)}$, (0))), $-1$))$]_{\mu_{\vec{\alpha}}}$, (0), $[ \vec{\alpha} \mapsto h$(2, ($g$(2, ($\alpha_{(0)}$, (0), $\alpha_{(0, 0)}$, (0))), (0), $g$(2, ($\alpha_{(0)}$, (0, 0), $\alpha_{(0, 0)}$, (0))), (0)))$]_{\mu_{\vec{\alpha}}}$, $-1$ )

\item $U_{\tree}(((4), (14), (15), (2), (0)))=$ the completion of $U(((4), (14), (15), (2)))$ that sends $(2, ((0, 0), (0)))$ to the completion of $U_{\tree}(((4), (14), (15), (2)))(2, ((0, 0)))$  whose node component is $-1$

\noindent$U_{\node}((4), (14), (15), (2), (0))= (0, -1)$

( (1), $[ \vec{\alpha} \mapsto h$(2, ($g$(2, ($\alpha_{(0)}$, (0))), $-1$))$]_{\mu_{\vec{\alpha}}}$, (0), $[ \vec{\alpha} \mapsto h$(2, ($g$(2, ($\alpha_{(0)}$, (0), $\alpha_{(0, 0)}$, (0))), (0), $g$(2, ($\alpha_{(0)}$, (0, 0), $\alpha_{(0, 0)}$, (0))), (0)))$]_{\mu_{\vec{\alpha}}}$, $-1$ )

\item $U_{\tree}(((4), (14), (15), (1)))=$ the completion of $U(((4), (14), (15)))$ that sends $(2, ((0, 0)))$ to the completion of $U_{\tree}(((4), (14), (15)))(2, \emptyset)$  whose node component is $(0, 0)$

\noindent$U_{\node}((4), (14), (15), (1))= (0, -1)$

( (1), $[ \vec{\alpha} \mapsto h$(2, ($g$(2, ($\alpha_{(0)}$, (0))), $-1$))$]_{\mu_{\vec{\alpha}}}$, (0), $[ \vec{\alpha} \mapsto h$(2, ($g$(2, ($\alpha_{(0)}$, (0), $\alpha_{(0, 0)}$, (0))), (0), $g$(2, ($\alpha_{(0)}$, (0, 0), $\alpha_{(0, 0)}$, $-1$)), (0)))$]_{\mu_{\vec{\alpha}}}$, $-1$ )

\item $U_{\tree}(((4), (14), (15), (0)))=$ the completion of $U(((4), (14), (15)))$ that sends $(2, ((0, 0)))$ to the completion of $U_{\tree}(((4), (14), (15)))(2, \emptyset)$  whose node component is $-1$

\noindent$U_{\node}((4), (14), (15), (0))= (0, -1)$

( (1), $[ \vec{\alpha} \mapsto h$(2, ($g$(2, ($\alpha_{(0)}$, (0))), $-1$))$]_{\mu_{\vec{\alpha}}}$, (0), $[ \vec{\alpha} \mapsto h$(2, ($g$(2, ($\alpha_{(0)}$, (0), $\alpha_{(0, 0)}$, (0))), (0), $g$(2, ($\alpha_{(0)}$, (0, 0))), (0)))$]_{\mu_{\vec{\alpha}}}$, $-1$ )

\item $U_{\tree}(((4), (14), (14)))=$ the completion of $U(((4), (14)))$ that sends $(2, ((0), (0)))$ to the completion of $U_{\tree}(((4), (14)))(2, ((0)))$  whose node component is $(0, 0, 0)$

\noindent$U_{\node}((4), (14), (14))= (0, -1)$

( (1), $[ \vec{\alpha} \mapsto h$(2, ($g$(2, ($\alpha_{(0)}$, (0))), $-1$))$]_{\mu_{\vec{\alpha}}}$, (0), $[ \vec{\alpha} \mapsto h$(2, ($g$(2, ($\alpha_{(0)}$, (0), $\alpha_{(0, 0)}$, (0))), (0), $g$(2, ($\alpha_{(0)}$, $-1$)), (0)))$]_{\mu_{\vec{\alpha}}}$, $-1$ )

\item $U_{\tree}(((4), (14), (13)))=$ the completion of $U(((4), (14)))$ that sends $(2, ((0), (0)))$ to the completion of $U_{\tree}(((4), (14)))(2, ((0)))$  whose node component is $(0, 0, 0)$

\noindent$U_{\node}((4), (14), (13))= (2, ((1)))$

( (1), $[ \vec{\alpha} \mapsto h$(2, ($g$(2, ($\alpha_{(0)}$, (0))), $-1$))$]_{\mu_{\vec{\alpha}}}$, (0), $[ \vec{\alpha} \mapsto h$(2, ($g$(2, ($\alpha_{(0)}$, (0), $\alpha_{(0, 0)}$, (0))), (0), $g$(2, ($\alpha_{(0)}$, $-1$)), $-1$))$]_{\mu_{\vec{\alpha}}}$, $-1$ )

\item $U_{\tree}(((4), (14), (13), (1)))=$ the completion of $U(((4), (14), (13)))$ that sends $(2, ((1)))$ to the completion of $U_{\tree}(((4), (14), (13)))(2, \emptyset)$  whose node component is $(0, 0)$

\noindent$U_{\node}((4), (14), (13), (1))= (0, -1)$

( (1), $[ \vec{\alpha} \mapsto h$(2, ($g$(2, ($\alpha_{(0)}$, (0))), $-1$))$]_{\mu_{\vec{\alpha}}}$, (0), $[ \vec{\alpha} \mapsto h$(2, ($g$(2, ($\alpha_{(0)}$, (0), $\alpha_{(0, 0)}$, (0))), (0), $g$(2, ($\alpha_{(0, 0)}$, (1))), (0)))$]_{\mu_{\vec{\alpha}}}$, $-1$ )

\item $U_{\tree}(((4), (14), (13), (0)))=$ the completion of $U(((4), (14), (13)))$ that sends $(2, ((1)))$ to the completion of $U_{\tree}(((4), (14), (13)))(2, \emptyset)$  whose node component is $-1$

\noindent$U_{\node}((4), (14), (13), (0))= (0, -1)$

( (1), $[ \vec{\alpha} \mapsto h$(2, ($g$(2, ($\alpha_{(0)}$, (0))), $-1$))$]_{\mu_{\vec{\alpha}}}$, (0), $[ \vec{\alpha} \mapsto h$(2, ($g$(2, ($\alpha_{(0)}$, (0), $\alpha_{(0, 0)}$, (0))), (0), $g$(2, ($\alpha_{(0, 0)}$, (1))), (0)))$]_{\mu_{\vec{\alpha}}}$, $-1$ )

\item $U_{\tree}(((4), (14), (12)))=$ the completion of $U(((4), (14)))$ that sends $(2, ((0), (0)))$ to the completion of $U_{\tree}(((4), (14)))(2, ((0)))$  whose node component is $(0, 0, 0)$

\noindent$U_{\node}((4), (14), (12))= (0, -1)$

( (1), $[ \vec{\alpha} \mapsto h$(2, ($g$(2, ($\alpha_{(0)}$, (0))), $-1$))$]_{\mu_{\vec{\alpha}}}$, (0), $[ \vec{\alpha} \mapsto h$(2, ($g$(2, ($\alpha_{(0)}$, (0), $\alpha_{(0, 0)}$, (0))), (0), $g$(2, ($\alpha_{(0, 0)}$, (0))), (0)))$]_{\mu_{\vec{\alpha}}}$, $-1$ )

\item $U_{\tree}(((4), (14), (11)))=$ the completion of $U(((4), (14)))$ that sends $(2, ((0), (0)))$ to the completion of $U_{\tree}(((4), (14)))(2, ((0)))$  whose node component is $(0, 0, 0)$

\noindent$U_{\node}((4), (14), (11))= (2, ((0, 0)))$

( (1), $[ \vec{\alpha} \mapsto h$(2, ($g$(2, ($\alpha_{(0)}$, (0))), $-1$))$]_{\mu_{\vec{\alpha}}}$, (0), $[ \vec{\alpha} \mapsto h$(2, ($g$(2, ($\alpha_{(0)}$, (0), $\alpha_{(0, 0)}$, (0))), (0), $g$(2, ($\alpha_{(0, 0)}$, (0))), $-1$))$]_{\mu_{\vec{\alpha}}}$, $-1$ )

\item $U_{\tree}(((4), (14), (11), (1)))=$ the completion of $U(((4), (14), (11)))$ that sends $(2, ((0, 0)))$ to the completion of $U_{\tree}(((4), (14), (11)))(2, \emptyset)$  whose node component is $(0, 0)$

\noindent$U_{\node}((4), (14), (11), (1))= (0, -1)$

( (1), $[ \vec{\alpha} \mapsto h$(2, ($g$(2, ($\alpha_{(0)}$, (0))), $-1$))$]_{\mu_{\vec{\alpha}}}$, (0), $[ \vec{\alpha} \mapsto h$(2, ($g$(2, ($\alpha_{(0)}$, (0), $\alpha_{(0, 0)}$, (0))), (0), $g$(2, ($\alpha_{(0, 0)}$, (0, 0))), (0)))$]_{\mu_{\vec{\alpha}}}$, $-1$ )

\item $U_{\tree}(((4), (14), (11), (0)))=$ the completion of $U(((4), (14), (11)))$ that sends $(2, ((0, 0)))$ to the completion of $U_{\tree}(((4), (14), (11)))(2, \emptyset)$  whose node component is $-1$

\noindent$U_{\node}((4), (14), (11), (0))= (0, -1)$

( (1), $[ \vec{\alpha} \mapsto h$(2, ($g$(2, ($\alpha_{(0)}$, (0))), $-1$))$]_{\mu_{\vec{\alpha}}}$, (0), $[ \vec{\alpha} \mapsto h$(2, ($g$(2, ($\alpha_{(0)}$, (0), $\alpha_{(0, 0)}$, (0))), (0), $g$(2, ($\alpha_{(0, 0)}$, (0, 0))), (0)))$]_{\mu_{\vec{\alpha}}}$, $-1$ )

\item $U_{\tree}(((4), (14), (10)))=$ the completion of $U(((4), (14)))$ that sends $(2, ((0), (0)))$ to the completion of $U_{\tree}(((4), (14)))(2, ((0)))$  whose node component is $(0, 0, 0)$

\noindent$U_{\node}((4), (14), (10))= (0, -1)$

( (1), $[ \vec{\alpha} \mapsto h$(2, ($g$(2, ($\alpha_{(0)}$, (0))), $-1$))$]_{\mu_{\vec{\alpha}}}$, (0), $[ \vec{\alpha} \mapsto h$(2, ($g$(2, ($\alpha_{(0)}$, (0), $\alpha_{(0, 0)}$, (0))), (0), $g$(2, ($\alpha_{(0, 0)}$, $-1$)), (0)))$]_{\mu_{\vec{\alpha}}}$, $-1$ )

\item $U_{\tree}(((4), (14), (9)))=$ the completion of $U(((4), (14)))$ that sends $(2, ((0), (0)))$ to the completion of $U_{\tree}(((4), (14)))(2, ((0)))$  whose node component is $(0, 0, 0)$

\noindent$U_{\node}((4), (14), (9))= (1, (0))$

( (1), $[ \vec{\alpha} \mapsto h$(2, ($g$(2, ($\alpha_{(0)}$, (0))), $-1$))$]_{\mu_{\vec{\alpha}}}$, (0), $[ \vec{\alpha} \mapsto h$(2, ($g$(2, ($\alpha_{(0)}$, (0), $\alpha_{(0, 0)}$, (0))), (0), $g$(2, ($\alpha_{(0, 0)}$, $-1$)), $-1$))$]_{\mu_{\vec{\alpha}}}$, $-1$ )

\item $U_{\tree}(((4), (14), (9), (0)))=$ the unique completion of $U(((4), (14), (9)))$

\noindent$U_{\node}((4), (14), (9), (0))= (0, -1)$

( (1), $[ \vec{\alpha} \mapsto h$(2, ($g$(2, ($\alpha_{(0)}$, (0))), $-1$))$]_{\mu_{\vec{\alpha}}}$, (0), $[ \vec{\alpha} \mapsto h$(2, ($g$(2, ($\alpha_{(0)}$, (0), $\alpha_{(0, 0)}$, (0))), (0), $g$(1, ((0))), (0)))$]_{\mu_{\vec{\alpha}}}$, $-1$ )

\item $U_{\tree}(((4), (14), (8)))=$ the completion of $U(((4), (14)))$ that sends $(2, ((0), (0)))$ to the completion of $U_{\tree}(((4), (14)))(2, ((0)))$  whose node component is $-1$

\noindent$U_{\node}((4), (14), (8))= (2, ((0), (0, 0)))$

( (1), $[ \vec{\alpha} \mapsto h$(2, ($g$(2, ($\alpha_{(0)}$, (0))), $-1$))$]_{\mu_{\vec{\alpha}}}$, (0), $[ \vec{\alpha} \mapsto h$(2, ($g$(2, ($\alpha_{(0)}$, (0), $\alpha_{(0, 0)}$, (0))), (0)))$]_{\mu_{\vec{\alpha}}}$, $-1$ )

\item $U_{\tree}(((4), (14), (8), (2)))=$ the completion of $U(((4), (14), (8)))$ that sends $(2, ((0), (0, 0)))$ to the completion of $U_{\tree}(((4), (14), (8)))(2, ((0)))$  whose node component is $(0, 1)$

\noindent$U_{\node}((4), (14), (8), (2))= (0, -1)$

( (1), $[ \vec{\alpha} \mapsto h$(2, ($g$(2, ($\alpha_{(0)}$, (0))), $-1$))$]_{\mu_{\vec{\alpha}}}$, (0), $[ \vec{\alpha} \mapsto h$(2, ($g$(2, ($\alpha_{(0)}$, (0), $\alpha_{(0, 0)}$, (0))), (0), $g$(2, ($\alpha_{(0)}$, (0), $\alpha_{(0, 0)}$, (0, 0))), (0)))$]_{\mu_{\vec{\alpha}}}$, $-1$ )

\item $U_{\tree}(((4), (14), (8), (1)))=$ the completion of $U(((4), (14), (8)))$ that sends $(2, ((0), (0, 0)))$ to the completion of $U_{\tree}(((4), (14), (8)))(2, ((0)))$  whose node component is $(0, 0, 0)$

\noindent$U_{\node}((4), (14), (8), (1))= (0, -1)$

( (1), $[ \vec{\alpha} \mapsto h$(2, ($g$(2, ($\alpha_{(0)}$, (0))), $-1$))$]_{\mu_{\vec{\alpha}}}$, (0), $[ \vec{\alpha} \mapsto h$(2, ($g$(2, ($\alpha_{(0)}$, (0), $\alpha_{(0, 0)}$, (0))), (0), $g$(2, ($\alpha_{(0)}$, (0), $\alpha_{(0, 0)}$, (0, 0))), (0)))$]_{\mu_{\vec{\alpha}}}$, $-1$ )

\item $U_{\tree}(((4), (14), (8), (0)))=$ the completion of $U(((4), (14), (8)))$ that sends $(2, ((0), (0, 0)))$ to the completion of $U_{\tree}(((4), (14), (8)))(2, ((0)))$  whose node component is $-1$

\noindent$U_{\node}((4), (14), (8), (0))= (0, -1)$

( (1), $[ \vec{\alpha} \mapsto h$(2, ($g$(2, ($\alpha_{(0)}$, (0))), $-1$))$]_{\mu_{\vec{\alpha}}}$, (0), $[ \vec{\alpha} \mapsto h$(2, ($g$(2, ($\alpha_{(0)}$, (0), $\alpha_{(0, 0)}$, (0))), (0), $g$(2, ($\alpha_{(0)}$, (0), $\alpha_{(0, 0)}$, (0, 0))), (0)))$]_{\mu_{\vec{\alpha}}}$, $-1$ )

\item $U_{\tree}(((4), (14), (7)))=$ the completion of $U(((4), (14)))$ that sends $(2, ((0), (0)))$ to the completion of $U_{\tree}(((4), (14)))(2, ((0)))$  whose node component is $-1$

\noindent$U_{\node}((4), (14), (7))= (0, -1)$

( (1), $[ \vec{\alpha} \mapsto h$(2, ($g$(2, ($\alpha_{(0)}$, (0))), $-1$))$]_{\mu_{\vec{\alpha}}}$, (0), $[ \vec{\alpha} \mapsto h$(2, ($g$(2, ($\alpha_{(0)}$, (0), $\alpha_{(0, 0)}$, (0))), (0), $g$(2, ($\alpha_{(0)}$, (0), $\alpha_{(0, 0)}$, $-1$)), (0)))$]_{\mu_{\vec{\alpha}}}$, $-1$ )

\item $U_{\tree}(((4), (14), (6)))=$ the completion of $U(((4), (14)))$ that sends $(2, ((0), (0)))$ to the completion of $U_{\tree}(((4), (14)))(2, ((0)))$  whose node component is $-1$

\noindent$U_{\node}((4), (14), (6))= (2, ((0, 0)))$

( (1), $[ \vec{\alpha} \mapsto h$(2, ($g$(2, ($\alpha_{(0)}$, (0))), $-1$))$]_{\mu_{\vec{\alpha}}}$, (0), $[ \vec{\alpha} \mapsto h$(2, ($g$(2, ($\alpha_{(0)}$, (0), $\alpha_{(0, 0)}$, (0))), (0), $g$(2, ($\alpha_{(0)}$, (0), $\alpha_{(0, 0)}$, $-1$)), $-1$))$]_{\mu_{\vec{\alpha}}}$, $-1$ )

\item $U_{\tree}(((4), (14), (6), (3)))=$ the completion of $U(((4), (14), (6)))$ that sends $(2, ((0, 0)))$ to the completion of $U_{\tree}(((4), (14), (6)))(2, \emptyset)$  whose node component is $(0, 0)$

\noindent$U_{\node}((4), (14), (6), (3))= (0, -1)$

( (1), $[ \vec{\alpha} \mapsto h$(2, ($g$(2, ($\alpha_{(0)}$, (0))), $-1$))$]_{\mu_{\vec{\alpha}}}$, (0), $[ \vec{\alpha} \mapsto h$(2, ($g$(2, ($\alpha_{(0)}$, (0), $\alpha_{(0, 0)}$, (0))), (0), $g$(2, ($\alpha_{(0)}$, (0, 0))), (0)))$]_{\mu_{\vec{\alpha}}}$, $-1$ )

\item $U_{\tree}(((4), (14), (6), (2)))=$ the completion of $U(((4), (14), (6)))$ that sends $(2, ((0, 0)))$ to the completion of $U_{\tree}(((4), (14), (6)))(2, \emptyset)$  whose node component is $(0, 0)$

\noindent$U_{\node}((4), (14), (6), (2))= (2, ((0, 0), (0)))$

( (1), $[ \vec{\alpha} \mapsto h$(2, ($g$(2, ($\alpha_{(0)}$, (0))), $-1$))$]_{\mu_{\vec{\alpha}}}$, (0), $[ \vec{\alpha} \mapsto h$(2, ($g$(2, ($\alpha_{(0)}$, (0), $\alpha_{(0, 0)}$, (0))), (0), $g$(2, ($\alpha_{(0)}$, (0, 0))), $-1$))$]_{\mu_{\vec{\alpha}}}$, $-1$ )

\item $U_{\tree}(((4), (14), (6), (2), (2)))=$ the completion of $U(((4), (14), (6), (2)))$ that sends $(2, ((0, 0), (0)))$ to the completion of $U_{\tree}(((4), (14), (6), (2)))(2, ((0, 0)))$  whose node component is $(0, 1)$

\noindent$U_{\node}((4), (14), (6), (2), (2))= (0, -1)$

( (1), $[ \vec{\alpha} \mapsto h$(2, ($g$(2, ($\alpha_{(0)}$, (0))), $-1$))$]_{\mu_{\vec{\alpha}}}$, (0), $[ \vec{\alpha} \mapsto h$(2, ($g$(2, ($\alpha_{(0)}$, (0), $\alpha_{(0, 0)}$, (0))), (0), $g$(2, ($\alpha_{(0)}$, (0, 0), $\alpha_{(0, 0)}$, (0))), (0)))$]_{\mu_{\vec{\alpha}}}$, $-1$ )

\item $U_{\tree}(((4), (14), (6), (2), (1)))=$ the completion of $U(((4), (14), (6), (2)))$ that sends $(2, ((0, 0), (0)))$ to the completion of $U_{\tree}(((4), (14), (6), (2)))(2, ((0, 0)))$  whose node component is $(0, 0, 0)$

\noindent$U_{\node}((4), (14), (6), (2), (1))= (0, -1)$

( (1), $[ \vec{\alpha} \mapsto h$(2, ($g$(2, ($\alpha_{(0)}$, (0))), $-1$))$]_{\mu_{\vec{\alpha}}}$, (0), $[ \vec{\alpha} \mapsto h$(2, ($g$(2, ($\alpha_{(0)}$, (0), $\alpha_{(0, 0)}$, (0))), (0), $g$(2, ($\alpha_{(0)}$, (0, 0), $\alpha_{(0, 0)}$, (0))), (0)))$]_{\mu_{\vec{\alpha}}}$, $-1$ )

\item $U_{\tree}(((4), (14), (6), (2), (0)))=$ the completion of $U(((4), (14), (6), (2)))$ that sends $(2, ((0, 0), (0)))$ to the completion of $U_{\tree}(((4), (14), (6), (2)))(2, ((0, 0)))$  whose node component is $-1$

\noindent$U_{\node}((4), (14), (6), (2), (0))= (0, -1)$

( (1), $[ \vec{\alpha} \mapsto h$(2, ($g$(2, ($\alpha_{(0)}$, (0))), $-1$))$]_{\mu_{\vec{\alpha}}}$, (0), $[ \vec{\alpha} \mapsto h$(2, ($g$(2, ($\alpha_{(0)}$, (0), $\alpha_{(0, 0)}$, (0))), (0), $g$(2, ($\alpha_{(0)}$, (0, 0), $\alpha_{(0, 0)}$, (0))), (0)))$]_{\mu_{\vec{\alpha}}}$, $-1$ )

\item $U_{\tree}(((4), (14), (6), (1)))=$ the completion of $U(((4), (14), (6)))$ that sends $(2, ((0, 0)))$ to the completion of $U_{\tree}(((4), (14), (6)))(2, \emptyset)$  whose node component is $(0, 0)$

\noindent$U_{\node}((4), (14), (6), (1))= (0, -1)$

( (1), $[ \vec{\alpha} \mapsto h$(2, ($g$(2, ($\alpha_{(0)}$, (0))), $-1$))$]_{\mu_{\vec{\alpha}}}$, (0), $[ \vec{\alpha} \mapsto h$(2, ($g$(2, ($\alpha_{(0)}$, (0), $\alpha_{(0, 0)}$, (0))), (0), $g$(2, ($\alpha_{(0)}$, (0, 0), $\alpha_{(0, 0)}$, $-1$)), (0)))$]_{\mu_{\vec{\alpha}}}$, $-1$ )

\item $U_{\tree}(((4), (14), (6), (0)))=$ the completion of $U(((4), (14), (6)))$ that sends $(2, ((0, 0)))$ to the completion of $U_{\tree}(((4), (14), (6)))(2, \emptyset)$  whose node component is $-1$

\noindent$U_{\node}((4), (14), (6), (0))= (0, -1)$

( (1), $[ \vec{\alpha} \mapsto h$(2, ($g$(2, ($\alpha_{(0)}$, (0))), $-1$))$]_{\mu_{\vec{\alpha}}}$, (0), $[ \vec{\alpha} \mapsto h$(2, ($g$(2, ($\alpha_{(0)}$, (0), $\alpha_{(0, 0)}$, (0))), (0), $g$(2, ($\alpha_{(0)}$, (0, 0))), (0)))$]_{\mu_{\vec{\alpha}}}$, $-1$ )

\item $U_{\tree}(((4), (14), (5)))=$ the completion of $U(((4), (14)))$ that sends $(2, ((0), (0)))$ to the completion of $U_{\tree}(((4), (14)))(2, ((0)))$  whose node component is $-1$

\noindent$U_{\node}((4), (14), (5))= (0, -1)$

( (1), $[ \vec{\alpha} \mapsto h$(2, ($g$(2, ($\alpha_{(0)}$, (0))), $-1$))$]_{\mu_{\vec{\alpha}}}$, (0), $[ \vec{\alpha} \mapsto h$(2, ($g$(2, ($\alpha_{(0)}$, (0), $\alpha_{(0, 0)}$, (0))), (0), $g$(2, ($\alpha_{(0)}$, $-1$)), (0)))$]_{\mu_{\vec{\alpha}}}$, $-1$ )

\item $U_{\tree}(((4), (14), (4)))=$ the completion of $U(((4), (14)))$ that sends $(2, ((0), (0)))$ to the completion of $U_{\tree}(((4), (14)))(2, ((0)))$  whose node component is $-1$

\noindent$U_{\node}((4), (14), (4))= (2, ((1)))$

( (1), $[ \vec{\alpha} \mapsto h$(2, ($g$(2, ($\alpha_{(0)}$, (0))), $-1$))$]_{\mu_{\vec{\alpha}}}$, (0), $[ \vec{\alpha} \mapsto h$(2, ($g$(2, ($\alpha_{(0)}$, (0), $\alpha_{(0, 0)}$, (0))), (0), $g$(2, ($\alpha_{(0)}$, $-1$)), $-1$))$]_{\mu_{\vec{\alpha}}}$, $-1$ )

\item $U_{\tree}(((4), (14), (4), (1)))=$ the completion of $U(((4), (14), (4)))$ that sends $(2, ((1)))$ to the completion of $U_{\tree}(((4), (14), (4)))(2, \emptyset)$  whose node component is $(0, 0)$

\noindent$U_{\node}((4), (14), (4), (1))= (0, -1)$

( (1), $[ \vec{\alpha} \mapsto h$(2, ($g$(2, ($\alpha_{(0)}$, (0))), $-1$))$]_{\mu_{\vec{\alpha}}}$, (0), $[ \vec{\alpha} \mapsto h$(2, ($g$(2, ($\alpha_{(0)}$, (0), $\alpha_{(0, 0)}$, (0))), (0), $g$(2, ($\alpha_{(0, 0)}$, (1))), (0)))$]_{\mu_{\vec{\alpha}}}$, $-1$ )

\item $U_{\tree}(((4), (14), (4), (0)))=$ the completion of $U(((4), (14), (4)))$ that sends $(2, ((1)))$ to the completion of $U_{\tree}(((4), (14), (4)))(2, \emptyset)$  whose node component is $-1$

\noindent$U_{\node}((4), (14), (4), (0))= (0, -1)$

( (1), $[ \vec{\alpha} \mapsto h$(2, ($g$(2, ($\alpha_{(0)}$, (0))), $-1$))$]_{\mu_{\vec{\alpha}}}$, (0), $[ \vec{\alpha} \mapsto h$(2, ($g$(2, ($\alpha_{(0)}$, (0), $\alpha_{(0, 0)}$, (0))), (0), $g$(2, ($\alpha_{(0, 0)}$, (1))), (0)))$]_{\mu_{\vec{\alpha}}}$, $-1$ )

\item $U_{\tree}(((4), (14), (3)))=$ the completion of $U(((4), (14)))$ that sends $(2, ((0), (0)))$ to the completion of $U_{\tree}(((4), (14)))(2, ((0)))$  whose node component is $-1$

\noindent$U_{\node}((4), (14), (3))= (0, -1)$

( (1), $[ \vec{\alpha} \mapsto h$(2, ($g$(2, ($\alpha_{(0)}$, (0))), $-1$))$]_{\mu_{\vec{\alpha}}}$, (0), $[ \vec{\alpha} \mapsto h$(2, ($g$(2, ($\alpha_{(0)}$, (0), $\alpha_{(0, 0)}$, (0))), (0), $g$(2, ($\alpha_{(0, 0)}$, (0))), (0)))$]_{\mu_{\vec{\alpha}}}$, $-1$ )

\item $U_{\tree}(((4), (14), (2)))=$ the completion of $U(((4), (14)))$ that sends $(2, ((0), (0)))$ to the completion of $U_{\tree}(((4), (14)))(2, ((0)))$  whose node component is $-1$

\noindent$U_{\node}((4), (14), (2))= (2, ((0, 0)))$

( (1), $[ \vec{\alpha} \mapsto h$(2, ($g$(2, ($\alpha_{(0)}$, (0))), $-1$))$]_{\mu_{\vec{\alpha}}}$, (0), $[ \vec{\alpha} \mapsto h$(2, ($g$(2, ($\alpha_{(0)}$, (0), $\alpha_{(0, 0)}$, (0))), (0), $g$(2, ($\alpha_{(0, 0)}$, (0))), $-1$))$]_{\mu_{\vec{\alpha}}}$, $-1$ )

\item $U_{\tree}(((4), (14), (2), (1)))=$ the completion of $U(((4), (14), (2)))$ that sends $(2, ((0, 0)))$ to the completion of $U_{\tree}(((4), (14), (2)))(2, \emptyset)$  whose node component is $(0, 0)$

\noindent$U_{\node}((4), (14), (2), (1))= (0, -1)$

( (1), $[ \vec{\alpha} \mapsto h$(2, ($g$(2, ($\alpha_{(0)}$, (0))), $-1$))$]_{\mu_{\vec{\alpha}}}$, (0), $[ \vec{\alpha} \mapsto h$(2, ($g$(2, ($\alpha_{(0)}$, (0), $\alpha_{(0, 0)}$, (0))), (0), $g$(2, ($\alpha_{(0, 0)}$, (0, 0))), (0)))$]_{\mu_{\vec{\alpha}}}$, $-1$ )

\item $U_{\tree}(((4), (14), (2), (0)))=$ the completion of $U(((4), (14), (2)))$ that sends $(2, ((0, 0)))$ to the completion of $U_{\tree}(((4), (14), (2)))(2, \emptyset)$  whose node component is $-1$

\noindent$U_{\node}((4), (14), (2), (0))= (0, -1)$

( (1), $[ \vec{\alpha} \mapsto h$(2, ($g$(2, ($\alpha_{(0)}$, (0))), $-1$))$]_{\mu_{\vec{\alpha}}}$, (0), $[ \vec{\alpha} \mapsto h$(2, ($g$(2, ($\alpha_{(0)}$, (0), $\alpha_{(0, 0)}$, (0))), (0), $g$(2, ($\alpha_{(0, 0)}$, (0, 0))), (0)))$]_{\mu_{\vec{\alpha}}}$, $-1$ )

\item $U_{\tree}(((4), (14), (1)))=$ the completion of $U(((4), (14)))$ that sends $(2, ((0), (0)))$ to the completion of $U_{\tree}(((4), (14)))(2, ((0)))$  whose node component is $-1$

\noindent$U_{\node}((4), (14), (1))= (0, -1)$

( (1), $[ \vec{\alpha} \mapsto h$(2, ($g$(2, ($\alpha_{(0)}$, (0))), $-1$))$]_{\mu_{\vec{\alpha}}}$, (0), $[ \vec{\alpha} \mapsto h$(2, ($g$(2, ($\alpha_{(0)}$, (0), $\alpha_{(0, 0)}$, (0))), (0), $g$(2, ($\alpha_{(0, 0)}$, $-1$)), (0)))$]_{\mu_{\vec{\alpha}}}$, $-1$ )

\item $U_{\tree}(((4), (14), (0)))=$ the completion of $U(((4), (14)))$ that sends $(2, ((0), (0)))$ to the completion of $U_{\tree}(((4), (14)))(2, ((0)))$  whose node component is $-1$

\noindent$U_{\node}((4), (14), (0))= (1, (0))$

( (1), $[ \vec{\alpha} \mapsto h$(2, ($g$(2, ($\alpha_{(0)}$, (0))), $-1$))$]_{\mu_{\vec{\alpha}}}$, (0), $[ \vec{\alpha} \mapsto h$(2, ($g$(2, ($\alpha_{(0)}$, (0), $\alpha_{(0, 0)}$, (0))), (0), $g$(2, ($\alpha_{(0, 0)}$, $-1$)), $-1$))$]_{\mu_{\vec{\alpha}}}$, $-1$ )

\item $U_{\tree}(((4), (14), (0), (0)))=$ the unique completion of $U(((4), (14), (0)))$

\noindent$U_{\node}((4), (14), (0), (0))= (0, -1)$

( (1), $[ \vec{\alpha} \mapsto h$(2, ($g$(2, ($\alpha_{(0)}$, (0))), $-1$))$]_{\mu_{\vec{\alpha}}}$, (0), $[ \vec{\alpha} \mapsto h$(2, ($g$(2, ($\alpha_{(0)}$, (0), $\alpha_{(0, 0)}$, (0))), (0), $g$(1, ((0))), (0)))$]_{\mu_{\vec{\alpha}}}$, $-1$ )

\item $U_{\tree}(((4), (13)))=$ the completion of $U(((4)))$ that sends $(2, ((0)))$ to the completion of $U_{\tree}(((4)))(2, \emptyset)$  whose node component is $(0, 0)$

\noindent$U_{\node}((4), (13))= (2, ((0, 0)))$

( (1), $[ \vec{\alpha} \mapsto h$(2, ($g$(2, ($\alpha_{(0)}$, (0))), $-1$))$]_{\mu_{\vec{\alpha}}}$, (0), $[ \vec{\alpha} \mapsto h$(2, ($g$(2, ($\alpha_{(0)}$, (0), $\alpha_{(0, 0)}$, $-1$)), (0)))$]_{\mu_{\vec{\alpha}}}$, $-1$ )

\item $U_{\tree}(((4), (13), (3)))=$ the completion of $U(((4), (13)))$ that sends $(2, ((0, 0)))$ to the completion of $U_{\tree}(((4), (13)))(2, \emptyset)$  whose node component is $(0, 0)$

\noindent$U_{\node}((4), (13), (3))= (0, -1)$

( (1), $[ \vec{\alpha} \mapsto h$(2, ($g$(2, ($\alpha_{(0)}$, (0))), $-1$))$]_{\mu_{\vec{\alpha}}}$, (0), $[ \vec{\alpha} \mapsto h$(2, ($g$(2, ($\alpha_{(0)}$, (0), $\alpha_{(0, 0)}$, $-1$)), (0), $g$(2, ($\alpha_{(0)}$, (0, 0))), (0)))$]_{\mu_{\vec{\alpha}}}$, $-1$ )

\item $U_{\tree}(((4), (13), (2)))=$ the completion of $U(((4), (13)))$ that sends $(2, ((0, 0)))$ to the completion of $U_{\tree}(((4), (13)))(2, \emptyset)$  whose node component is $(0, 0)$

\noindent$U_{\node}((4), (13), (2))= (2, ((0, 0), (0)))$

( (1), $[ \vec{\alpha} \mapsto h$(2, ($g$(2, ($\alpha_{(0)}$, (0))), $-1$))$]_{\mu_{\vec{\alpha}}}$, (0), $[ \vec{\alpha} \mapsto h$(2, ($g$(2, ($\alpha_{(0)}$, (0), $\alpha_{(0, 0)}$, $-1$)), (0), $g$(2, ($\alpha_{(0)}$, (0, 0))), $-1$))$]_{\mu_{\vec{\alpha}}}$, $-1$ )

\item $U_{\tree}(((4), (13), (2), (2)))=$ the completion of $U(((4), (13), (2)))$ that sends $(2, ((0, 0), (0)))$ to the completion of $U_{\tree}(((4), (13), (2)))(2, ((0, 0)))$  whose node component is $(0, 1)$

\noindent$U_{\node}((4), (13), (2), (2))= (0, -1)$

( (1), $[ \vec{\alpha} \mapsto h$(2, ($g$(2, ($\alpha_{(0)}$, (0))), $-1$))$]_{\mu_{\vec{\alpha}}}$, (0), $[ \vec{\alpha} \mapsto h$(2, ($g$(2, ($\alpha_{(0)}$, (0), $\alpha_{(0, 0)}$, $-1$)), (0), $g$(2, ($\alpha_{(0)}$, (0, 0), $\alpha_{(0, 0)}$, (0))), (0)))$]_{\mu_{\vec{\alpha}}}$, $-1$ )

\item $U_{\tree}(((4), (13), (2), (1)))=$ the completion of $U(((4), (13), (2)))$ that sends $(2, ((0, 0), (0)))$ to the completion of $U_{\tree}(((4), (13), (2)))(2, ((0, 0)))$  whose node component is $(0, 0, 0)$

\noindent$U_{\node}((4), (13), (2), (1))= (0, -1)$

( (1), $[ \vec{\alpha} \mapsto h$(2, ($g$(2, ($\alpha_{(0)}$, (0))), $-1$))$]_{\mu_{\vec{\alpha}}}$, (0), $[ \vec{\alpha} \mapsto h$(2, ($g$(2, ($\alpha_{(0)}$, (0), $\alpha_{(0, 0)}$, $-1$)), (0), $g$(2, ($\alpha_{(0)}$, (0, 0), $\alpha_{(0, 0)}$, (0))), (0)))$]_{\mu_{\vec{\alpha}}}$, $-1$ )

\item $U_{\tree}(((4), (13), (2), (0)))=$ the completion of $U(((4), (13), (2)))$ that sends $(2, ((0, 0), (0)))$ to the completion of $U_{\tree}(((4), (13), (2)))(2, ((0, 0)))$  whose node component is $-1$

\noindent$U_{\node}((4), (13), (2), (0))= (0, -1)$

( (1), $[ \vec{\alpha} \mapsto h$(2, ($g$(2, ($\alpha_{(0)}$, (0))), $-1$))$]_{\mu_{\vec{\alpha}}}$, (0), $[ \vec{\alpha} \mapsto h$(2, ($g$(2, ($\alpha_{(0)}$, (0), $\alpha_{(0, 0)}$, $-1$)), (0), $g$(2, ($\alpha_{(0)}$, (0, 0), $\alpha_{(0, 0)}$, (0))), (0)))$]_{\mu_{\vec{\alpha}}}$, $-1$ )

\item $U_{\tree}(((4), (13), (1)))=$ the completion of $U(((4), (13)))$ that sends $(2, ((0, 0)))$ to the completion of $U_{\tree}(((4), (13)))(2, \emptyset)$  whose node component is $(0, 0)$

\noindent$U_{\node}((4), (13), (1))= (0, -1)$

( (1), $[ \vec{\alpha} \mapsto h$(2, ($g$(2, ($\alpha_{(0)}$, (0))), $-1$))$]_{\mu_{\vec{\alpha}}}$, (0), $[ \vec{\alpha} \mapsto h$(2, ($g$(2, ($\alpha_{(0)}$, (0), $\alpha_{(0, 0)}$, $-1$)), (0), $g$(2, ($\alpha_{(0)}$, (0, 0), $\alpha_{(0, 0)}$, $-1$)), (0)))$]_{\mu_{\vec{\alpha}}}$, $-1$ )

\item $U_{\tree}(((4), (13), (0)))=$ the completion of $U(((4), (13)))$ that sends $(2, ((0, 0)))$ to the completion of $U_{\tree}(((4), (13)))(2, \emptyset)$  whose node component is $-1$

\noindent$U_{\node}((4), (13), (0))= (0, -1)$

( (1), $[ \vec{\alpha} \mapsto h$(2, ($g$(2, ($\alpha_{(0)}$, (0))), $-1$))$]_{\mu_{\vec{\alpha}}}$, (0), $[ \vec{\alpha} \mapsto h$(2, ($g$(2, ($\alpha_{(0)}$, (0), $\alpha_{(0, 0)}$, $-1$)), (0), $g$(2, ($\alpha_{(0)}$, (0, 0))), (0)))$]_{\mu_{\vec{\alpha}}}$, $-1$ )

\item $U_{\tree}(((4), (12)))=$ the completion of $U(((4)))$ that sends $(2, ((0)))$ to the completion of $U_{\tree}(((4)))(2, \emptyset)$  whose node component is $(0, 0)$

\noindent$U_{\node}((4), (12))= (0, -1)$

( (1), $[ \vec{\alpha} \mapsto h$(2, ($g$(2, ($\alpha_{(0)}$, (0))), $-1$))$]_{\mu_{\vec{\alpha}}}$, (0), $[ \vec{\alpha} \mapsto h$(2, ($g$(2, ($\alpha_{(0)}$, (0), $\alpha_{(0, 0)}$, $-1$)), (0), $g$(2, ($\alpha_{(0)}$, $-1$)), (0)))$]_{\mu_{\vec{\alpha}}}$, $-1$ )

\item $U_{\tree}(((4), (11)))=$ the completion of $U(((4)))$ that sends $(2, ((0)))$ to the completion of $U_{\tree}(((4)))(2, \emptyset)$  whose node component is $(0, 0)$

\noindent$U_{\node}((4), (11))= (2, ((1)))$

( (1), $[ \vec{\alpha} \mapsto h$(2, ($g$(2, ($\alpha_{(0)}$, (0))), $-1$))$]_{\mu_{\vec{\alpha}}}$, (0), $[ \vec{\alpha} \mapsto h$(2, ($g$(2, ($\alpha_{(0)}$, (0), $\alpha_{(0, 0)}$, $-1$)), (0), $g$(2, ($\alpha_{(0)}$, $-1$)), $-1$))$]_{\mu_{\vec{\alpha}}}$, $-1$ )

\item $U_{\tree}(((4), (11), (1)))=$ the completion of $U(((4), (11)))$ that sends $(2, ((1)))$ to the completion of $U_{\tree}(((4), (11)))(2, \emptyset)$  whose node component is $(0, 0)$

\noindent$U_{\node}((4), (11), (1))= (0, -1)$

( (1), $[ \vec{\alpha} \mapsto h$(2, ($g$(2, ($\alpha_{(0)}$, (0))), $-1$))$]_{\mu_{\vec{\alpha}}}$, (0), $[ \vec{\alpha} \mapsto h$(2, ($g$(2, ($\alpha_{(0)}$, (0), $\alpha_{(0, 0)}$, $-1$)), (0), $g$(2, ($\alpha_{(0, 0)}$, (1))), (0)))$]_{\mu_{\vec{\alpha}}}$, $-1$ )

\item $U_{\tree}(((4), (11), (0)))=$ the completion of $U(((4), (11)))$ that sends $(2, ((1)))$ to the completion of $U_{\tree}(((4), (11)))(2, \emptyset)$  whose node component is $-1$

\noindent$U_{\node}((4), (11), (0))= (0, -1)$

( (1), $[ \vec{\alpha} \mapsto h$(2, ($g$(2, ($\alpha_{(0)}$, (0))), $-1$))$]_{\mu_{\vec{\alpha}}}$, (0), $[ \vec{\alpha} \mapsto h$(2, ($g$(2, ($\alpha_{(0)}$, (0), $\alpha_{(0, 0)}$, $-1$)), (0), $g$(2, ($\alpha_{(0, 0)}$, (1))), (0)))$]_{\mu_{\vec{\alpha}}}$, $-1$ )

\item $U_{\tree}(((4), (10)))=$ the completion of $U(((4)))$ that sends $(2, ((0)))$ to the completion of $U_{\tree}(((4)))(2, \emptyset)$  whose node component is $(0, 0)$

\noindent$U_{\node}((4), (10))= (0, -1)$

( (1), $[ \vec{\alpha} \mapsto h$(2, ($g$(2, ($\alpha_{(0)}$, (0))), $-1$))$]_{\mu_{\vec{\alpha}}}$, (0), $[ \vec{\alpha} \mapsto h$(2, ($g$(2, ($\alpha_{(0)}$, (0), $\alpha_{(0, 0)}$, $-1$)), (0), $g$(2, ($\alpha_{(0, 0)}$, (0))), (0)))$]_{\mu_{\vec{\alpha}}}$, $-1$ )

\item $U_{\tree}(((4), (9)))=$ the completion of $U(((4)))$ that sends $(2, ((0)))$ to the completion of $U_{\tree}(((4)))(2, \emptyset)$  whose node component is $(0, 0)$

\noindent$U_{\node}((4), (9))= (2, ((0, 0)))$

( (1), $[ \vec{\alpha} \mapsto h$(2, ($g$(2, ($\alpha_{(0)}$, (0))), $-1$))$]_{\mu_{\vec{\alpha}}}$, (0), $[ \vec{\alpha} \mapsto h$(2, ($g$(2, ($\alpha_{(0)}$, (0), $\alpha_{(0, 0)}$, $-1$)), (0), $g$(2, ($\alpha_{(0, 0)}$, (0))), $-1$))$]_{\mu_{\vec{\alpha}}}$, $-1$ )

\item $U_{\tree}(((4), (9), (1)))=$ the completion of $U(((4), (9)))$ that sends $(2, ((0, 0)))$ to the completion of $U_{\tree}(((4), (9)))(2, \emptyset)$  whose node component is $(0, 0)$

\noindent$U_{\node}((4), (9), (1))= (0, -1)$

( (1), $[ \vec{\alpha} \mapsto h$(2, ($g$(2, ($\alpha_{(0)}$, (0))), $-1$))$]_{\mu_{\vec{\alpha}}}$, (0), $[ \vec{\alpha} \mapsto h$(2, ($g$(2, ($\alpha_{(0)}$, (0), $\alpha_{(0, 0)}$, $-1$)), (0), $g$(2, ($\alpha_{(0, 0)}$, (0, 0))), (0)))$]_{\mu_{\vec{\alpha}}}$, $-1$ )

\item $U_{\tree}(((4), (9), (0)))=$ the completion of $U(((4), (9)))$ that sends $(2, ((0, 0)))$ to the completion of $U_{\tree}(((4), (9)))(2, \emptyset)$  whose node component is $-1$

\noindent$U_{\node}((4), (9), (0))= (0, -1)$

( (1), $[ \vec{\alpha} \mapsto h$(2, ($g$(2, ($\alpha_{(0)}$, (0))), $-1$))$]_{\mu_{\vec{\alpha}}}$, (0), $[ \vec{\alpha} \mapsto h$(2, ($g$(2, ($\alpha_{(0)}$, (0), $\alpha_{(0, 0)}$, $-1$)), (0), $g$(2, ($\alpha_{(0, 0)}$, (0, 0))), (0)))$]_{\mu_{\vec{\alpha}}}$, $-1$ )

\item $U_{\tree}(((4), (8)))=$ the completion of $U(((4)))$ that sends $(2, ((0)))$ to the completion of $U_{\tree}(((4)))(2, \emptyset)$  whose node component is $(0, 0)$

\noindent$U_{\node}((4), (8))= (0, -1)$

( (1), $[ \vec{\alpha} \mapsto h$(2, ($g$(2, ($\alpha_{(0)}$, (0))), $-1$))$]_{\mu_{\vec{\alpha}}}$, (0), $[ \vec{\alpha} \mapsto h$(2, ($g$(2, ($\alpha_{(0)}$, (0), $\alpha_{(0, 0)}$, $-1$)), (0), $g$(2, ($\alpha_{(0, 0)}$, $-1$)), (0)))$]_{\mu_{\vec{\alpha}}}$, $-1$ )

\item $U_{\tree}(((4), (7)))=$ the completion of $U(((4)))$ that sends $(2, ((0)))$ to the completion of $U_{\tree}(((4)))(2, \emptyset)$  whose node component is $(0, 0)$

\noindent$U_{\node}((4), (7))= (1, (0))$

( (1), $[ \vec{\alpha} \mapsto h$(2, ($g$(2, ($\alpha_{(0)}$, (0))), $-1$))$]_{\mu_{\vec{\alpha}}}$, (0), $[ \vec{\alpha} \mapsto h$(2, ($g$(2, ($\alpha_{(0)}$, (0), $\alpha_{(0, 0)}$, $-1$)), (0), $g$(2, ($\alpha_{(0, 0)}$, $-1$)), $-1$))$]_{\mu_{\vec{\alpha}}}$, $-1$ )

\item $U_{\tree}(((4), (7), (0)))=$ the unique completion of $U(((4), (7)))$

\noindent$U_{\node}((4), (7), (0))= (0, -1)$

( (1), $[ \vec{\alpha} \mapsto h$(2, ($g$(2, ($\alpha_{(0)}$, (0))), $-1$))$]_{\mu_{\vec{\alpha}}}$, (0), $[ \vec{\alpha} \mapsto h$(2, ($g$(2, ($\alpha_{(0)}$, (0), $\alpha_{(0, 0)}$, $-1$)), (0), $g$(1, ((0))), (0)))$]_{\mu_{\vec{\alpha}}}$, $-1$ )

\item $U_{\tree}(((4), (6)))=$ the completion of $U(((4)))$ that sends $(2, ((0)))$ to the completion of $U_{\tree}(((4)))(2, \emptyset)$  whose node component is $-1$

\noindent$U_{\node}((4), (6))= (2, ((0, 0)))$

( (1), $[ \vec{\alpha} \mapsto h$(2, ($g$(2, ($\alpha_{(0)}$, (0))), (0)))$]_{\mu_{\vec{\alpha}}}$, $-1$ )

\item $U_{\tree}(((4), (6), (3)))=$ the completion of $U(((4), (6)))$ that sends $(2, ((0, 0)))$ to the completion of $U_{\tree}(((4), (6)))(2, \emptyset)$  whose node component is $(0, 0)$

\noindent$U_{\node}((4), (6), (3))= (0, -1)$

( (1), $[ \vec{\alpha} \mapsto h$(2, ($g$(2, ($\alpha_{(0)}$, (0))), (0), $g$(2, ($\alpha_{(0)}$, (0, 0))), (0)))$]_{\mu_{\vec{\alpha}}}$, $-1$ )

\item $U_{\tree}(((4), (6), (2)))=$ the completion of $U(((4), (6)))$ that sends $(2, ((0, 0)))$ to the completion of $U_{\tree}(((4), (6)))(2, \emptyset)$  whose node component is $(0, 0)$

\noindent$U_{\node}((4), (6), (2))= (2, ((0, 0), (0)))$

( (1), $[ \vec{\alpha} \mapsto h$(2, ($g$(2, ($\alpha_{(0)}$, (0))), (0), $g$(2, ($\alpha_{(0)}$, (0, 0))), $-1$))$]_{\mu_{\vec{\alpha}}}$, (0) )

\item $U_{\tree}(((4), (6), (2), (2)))=$ the completion of $U(((4), (6), (2)))$ that sends $(2, ((0, 0), (0)))$ to the completion of $U_{\tree}(((4), (6), (2)))(2, ((0, 0)))$  whose node component is $(0, 1)$

\noindent$U_{\node}((4), (6), (2), (2))= (0, -1)$

( (1), $[ \vec{\alpha} \mapsto h$(2, ($g$(2, ($\alpha_{(0)}$, (0))), (0), $g$(2, ($\alpha_{(0)}$, (0, 0))), $-1$))$]_{\mu_{\vec{\alpha}}}$, (0), $[ \vec{\alpha} \mapsto h$(2, ($g$(2, ($\alpha_{(0)}$, (0))), (0), $g$(2, ($\alpha_{(0)}$, (0, 0), $\alpha_{(0, 0)}$, (0))), (0)))$]_{\mu_{\vec{\alpha}}}$, $-1$ )

\item $U_{\tree}(((4), (6), (2), (1)))=$ the completion of $U(((4), (6), (2)))$ that sends $(2, ((0, 0), (0)))$ to the completion of $U_{\tree}(((4), (6), (2)))(2, ((0, 0)))$  whose node component is $(0, 0, 0)$

\noindent$U_{\node}((4), (6), (2), (1))= (0, -1)$

( (1), $[ \vec{\alpha} \mapsto h$(2, ($g$(2, ($\alpha_{(0)}$, (0))), (0), $g$(2, ($\alpha_{(0)}$, (0, 0))), $-1$))$]_{\mu_{\vec{\alpha}}}$, (0), $[ \vec{\alpha} \mapsto h$(2, ($g$(2, ($\alpha_{(0)}$, (0))), (0), $g$(2, ($\alpha_{(0)}$, (0, 0), $\alpha_{(0, 0)}$, (0))), (0)))$]_{\mu_{\vec{\alpha}}}$, $-1$ )

\item $U_{\tree}(((4), (6), (2), (0)))=$ the completion of $U(((4), (6), (2)))$ that sends $(2, ((0, 0), (0)))$ to the completion of $U_{\tree}(((4), (6), (2)))(2, ((0, 0)))$  whose node component is $-1$

\noindent$U_{\node}((4), (6), (2), (0))= (0, -1)$

( (1), $[ \vec{\alpha} \mapsto h$(2, ($g$(2, ($\alpha_{(0)}$, (0))), (0), $g$(2, ($\alpha_{(0)}$, (0, 0))), $-1$))$]_{\mu_{\vec{\alpha}}}$, (0), $[ \vec{\alpha} \mapsto h$(2, ($g$(2, ($\alpha_{(0)}$, (0))), (0), $g$(2, ($\alpha_{(0)}$, (0, 0), $\alpha_{(0, 0)}$, (0))), (0)))$]_{\mu_{\vec{\alpha}}}$, $-1$ )

\item $U_{\tree}(((4), (6), (1)))=$ the completion of $U(((4), (6)))$ that sends $(2, ((0, 0)))$ to the completion of $U_{\tree}(((4), (6)))(2, \emptyset)$  whose node component is $(0, 0)$

\noindent$U_{\node}((4), (6), (1))= (0, -1)$

( (1), $[ \vec{\alpha} \mapsto h$(2, ($g$(2, ($\alpha_{(0)}$, (0))), (0), $g$(2, ($\alpha_{(0)}$, (0, 0))), $-1$))$]_{\mu_{\vec{\alpha}}}$, (0), $[ \vec{\alpha} \mapsto h$(2, ($g$(2, ($\alpha_{(0)}$, (0))), (0), $g$(2, ($\alpha_{(0)}$, (0, 0), $\alpha_{(0, 0)}$, $-1$)), (0)))$]_{\mu_{\vec{\alpha}}}$, $-1$ )

\item $U_{\tree}(((4), (6), (0)))=$ the completion of $U(((4), (6)))$ that sends $(2, ((0, 0)))$ to the completion of $U_{\tree}(((4), (6)))(2, \emptyset)$  whose node component is $-1$

\noindent$U_{\node}((4), (6), (0))= (0, -1)$

( (1), $[ \vec{\alpha} \mapsto h$(2, ($g$(2, ($\alpha_{(0)}$, (0))), (0), $g$(2, ($\alpha_{(0)}$, (0, 0))), (0)))$]_{\mu_{\vec{\alpha}}}$, $-1$ )

\item $U_{\tree}(((4), (5)))=$ the completion of $U(((4)))$ that sends $(2, ((0)))$ to the completion of $U_{\tree}(((4)))(2, \emptyset)$  whose node component is $-1$

\noindent$U_{\node}((4), (5))= (0, -1)$

( (1), $[ \vec{\alpha} \mapsto h$(2, ($g$(2, ($\alpha_{(0)}$, (0))), (0), $g$(2, ($\alpha_{(0)}$, $-1$)), (0)))$]_{\mu_{\vec{\alpha}}}$, $-1$ )

\item $U_{\tree}(((4), (4)))=$ the completion of $U(((4)))$ that sends $(2, ((0)))$ to the completion of $U_{\tree}(((4)))(2, \emptyset)$  whose node component is $-1$

\noindent$U_{\node}((4), (4))= (2, ((1)))$

( (1), $[ \vec{\alpha} \mapsto h$(2, ($g$(2, ($\alpha_{(0)}$, (0))), (0), $g$(2, ($\alpha_{(0)}$, $-1$)), $-1$))$]_{\mu_{\vec{\alpha}}}$, (0) )

\item $U_{\tree}(((4), (4), (1)))=$ the completion of $U(((4), (4)))$ that sends $(2, ((1)))$ to the completion of $U_{\tree}(((4), (4)))(2, \emptyset)$  whose node component is $(0, 0)$

\noindent$U_{\node}((4), (4), (1))= (0, -1)$

( (1), $[ \vec{\alpha} \mapsto h$(2, ($g$(2, ($\alpha_{(0)}$, (0))), (0), $g$(2, ($\alpha_{(0)}$, $-1$)), $-1$))$]_{\mu_{\vec{\alpha}}}$, (0), $[ \vec{\alpha} \mapsto h$(2, ($g$(2, ($\alpha_{(0)}$, (0))), (0), $g$(2, ($\alpha_{(0, 0)}$, (1))), (0)))$]_{\mu_{\vec{\alpha}}}$, $-1$ )

\item $U_{\tree}(((4), (4), (0)))=$ the completion of $U(((4), (4)))$ that sends $(2, ((1)))$ to the completion of $U_{\tree}(((4), (4)))(2, \emptyset)$  whose node component is $-1$

\noindent$U_{\node}((4), (4), (0))= (0, -1)$

( (1), $[ \vec{\alpha} \mapsto h$(2, ($g$(2, ($\alpha_{(0)}$, (0))), (0), $g$(2, ($\alpha_{(0)}$, $-1$)), $-1$))$]_{\mu_{\vec{\alpha}}}$, (0), $[ \vec{\alpha} \mapsto h$(2, ($g$(2, ($\alpha_{(0)}$, (0))), (0), $g$(2, ($\alpha_{(0, 0)}$, (1))), (0)))$]_{\mu_{\vec{\alpha}}}$, $-1$ )

\item $U_{\tree}(((4), (3)))=$ the completion of $U(((4)))$ that sends $(2, ((0)))$ to the completion of $U_{\tree}(((4)))(2, \emptyset)$  whose node component is $-1$

\noindent$U_{\node}((4), (3))= (0, -1)$

( (1), $[ \vec{\alpha} \mapsto h$(2, ($g$(2, ($\alpha_{(0)}$, (0))), (0), $g$(2, ($\alpha_{(0)}$, $-1$)), $-1$))$]_{\mu_{\vec{\alpha}}}$, (0), $[ \vec{\alpha} \mapsto h$(2, ($g$(2, ($\alpha_{(0)}$, (0))), (0), $g$(2, ($\alpha_{(0, 0)}$, (0))), (0)))$]_{\mu_{\vec{\alpha}}}$, $-1$ )

\item $U_{\tree}(((4), (2)))=$ the completion of $U(((4)))$ that sends $(2, ((0)))$ to the completion of $U_{\tree}(((4)))(2, \emptyset)$  whose node component is $-1$

\noindent$U_{\node}((4), (2))= (2, ((0, 0)))$

( (1), $[ \vec{\alpha} \mapsto h$(2, ($g$(2, ($\alpha_{(0)}$, (0))), (0), $g$(2, ($\alpha_{(0)}$, $-1$)), $-1$))$]_{\mu_{\vec{\alpha}}}$, (0), $[ \vec{\alpha} \mapsto h$(2, ($g$(2, ($\alpha_{(0)}$, (0))), (0), $g$(2, ($\alpha_{(0, 0)}$, (0))), $-1$))$]_{\mu_{\vec{\alpha}}}$, $-1$ )

\item $U_{\tree}(((4), (2), (1)))=$ the completion of $U(((4), (2)))$ that sends $(2, ((0, 0)))$ to the completion of $U_{\tree}(((4), (2)))(2, \emptyset)$  whose node component is $(0, 0)$

\noindent$U_{\node}((4), (2), (1))= (0, -1)$

( (1), $[ \vec{\alpha} \mapsto h$(2, ($g$(2, ($\alpha_{(0)}$, (0))), (0), $g$(2, ($\alpha_{(0)}$, $-1$)), $-1$))$]_{\mu_{\vec{\alpha}}}$, (0), $[ \vec{\alpha} \mapsto h$(2, ($g$(2, ($\alpha_{(0)}$, (0))), (0), $g$(2, ($\alpha_{(0, 0)}$, (0, 0))), (0)))$]_{\mu_{\vec{\alpha}}}$, $-1$ )

\item $U_{\tree}(((4), (2), (0)))=$ the completion of $U(((4), (2)))$ that sends $(2, ((0, 0)))$ to the completion of $U_{\tree}(((4), (2)))(2, \emptyset)$  whose node component is $-1$

\noindent$U_{\node}((4), (2), (0))= (0, -1)$

( (1), $[ \vec{\alpha} \mapsto h$(2, ($g$(2, ($\alpha_{(0)}$, (0))), (0), $g$(2, ($\alpha_{(0)}$, $-1$)), $-1$))$]_{\mu_{\vec{\alpha}}}$, (0), $[ \vec{\alpha} \mapsto h$(2, ($g$(2, ($\alpha_{(0)}$, (0))), (0), $g$(2, ($\alpha_{(0, 0)}$, (0, 0))), (0)))$]_{\mu_{\vec{\alpha}}}$, $-1$ )

\item $U_{\tree}(((4), (1)))=$ the completion of $U(((4)))$ that sends $(2, ((0)))$ to the completion of $U_{\tree}(((4)))(2, \emptyset)$  whose node component is $-1$

\noindent$U_{\node}((4), (1))= (0, -1)$

( (1), $[ \vec{\alpha} \mapsto h$(2, ($g$(2, ($\alpha_{(0)}$, (0))), (0), $g$(2, ($\alpha_{(0)}$, $-1$)), $-1$))$]_{\mu_{\vec{\alpha}}}$, (0), $[ \vec{\alpha} \mapsto h$(2, ($g$(2, ($\alpha_{(0)}$, (0))), (0), $g$(2, ($\alpha_{(0, 0)}$, $-1$)), (0)))$]_{\mu_{\vec{\alpha}}}$, $-1$ )

\item $U_{\tree}(((4), (0)))=$ the completion of $U(((4)))$ that sends $(2, ((0)))$ to the completion of $U_{\tree}(((4)))(2, \emptyset)$  whose node component is $-1$

\noindent$U_{\node}((4), (0))= (1, (0))$

( (1), $[ \vec{\alpha} \mapsto h$(2, ($g$(2, ($\alpha_{(0)}$, (0))), (0), $g$(2, ($\alpha_{(0)}$, $-1$)), $-1$))$]_{\mu_{\vec{\alpha}}}$, $-1$ )

\item $U_{\tree}(((4), (0), (0)))=$ the unique completion of $U(((4), (0)))$

\noindent$U_{\node}((4), (0), (0))= (0, -1)$

( (1), $[ \vec{\alpha} \mapsto h$(2, ($g$(2, ($\alpha_{(0)}$, (0))), (0), $g$(1, ((0))), (0)))$]_{\mu_{\vec{\alpha}}}$, $-1$ )

\item $U(((3)))$  has degree 2

( (1), $[ \vec{\alpha} \mapsto h$(2, ($g$(2, ($\alpha_{(0)}$, $-1$)), (0)))$]_{\mu_{\vec{\alpha}}}$, (0) )

\item $U_{\tree}(((3), (1)))=$ the completion of $U(((3)))$ that sends $(2, ((0)))$ to the completion of $U_{\tree}(((3)))(2, \emptyset)$  whose node component is $(0, 0)$

\noindent$U_{\node}((3), (1))= (0, -1)$

( (1), $[ \vec{\alpha} \mapsto h$(2, ($g$(2, ($\alpha_{(0)}$, $-1$)), (0)))$]_{\mu_{\vec{\alpha}}}$, (0), $[ \vec{\alpha} \mapsto h$(2, ($g$(2, ($\alpha_{(0)}$, $-1$)), (0), $g$(2, ($\alpha_{(0, 0)}$, (0))), (0)))$]_{\mu_{\vec{\alpha}}}$, $-1$ )

\item $U_{\tree}(((3), (0)))=$ the completion of $U(((3)))$ that sends $(2, ((0)))$ to the completion of $U_{\tree}(((3)))(2, \emptyset)$  whose node component is $-1$

\noindent$U_{\node}((3), (0))= (0, -1)$

( (1), $[ \vec{\alpha} \mapsto h$(2, ($g$(2, ($\alpha_{(0)}$, $-1$)), (0)))$]_{\mu_{\vec{\alpha}}}$, (0), $[ \vec{\alpha} \mapsto h$(2, ($g$(2, ($\alpha_{(0)}$, $-1$)), (0), $g$(2, ($\alpha_{(0, 0)}$, (0))), (0)))$]_{\mu_{\vec{\alpha}}}$, $-1$ )

\item $U(((2)))$  has degree 0

( (1), $[ \vec{\alpha} \mapsto h$(2, ($g$(2, ($\alpha_{(0)}$, $-1$)), (0)))$]_{\mu_{\vec{\alpha}}}$, (0), $[ \vec{\alpha} \mapsto h$(2, ($g$(2, ($\alpha_{(0)}$, $-1$)), (0), $g$(2, ($\alpha_{(0, 0)}$, $-1$)), (0)))$]_{\mu_{\vec{\alpha}}}$, $-1$ )

\item $U(((1)))$  has degree 1

( (1), $[ \vec{\alpha} \mapsto h$(2, ($g$(2, ($\alpha_{(0)}$, $-1$)), (0)))$]_{\mu_{\vec{\alpha}}}$, $-1$ )

\item $U_{\tree}(((1), (0)))=$ the unique completion of $U(((1)))$

\noindent$U_{\node}((1), (0))= (0, -1)$

( (1), $[ \vec{\alpha} \mapsto h$(2, ($g$(2, ($\alpha_{(0)}$, $-1$)), (0), $g$(1, ((0))), (0)))$]_{\mu_{\vec{\alpha}}}$, $-1$ )

\item $U(((0)))$  has degree 0

( (0) )

\end{etaremune}

This algorithm of producing $U$ can be abstracted into a combinatorial one, without referring to clubs in $\omega_1$ at all.

Applying universality of $U$ to $(R,\theta_{RY},\vec{\xi}, \rho)$ results in the level-3 factoring map $\psi$ of $R$ into $U$:
\begin{itemize}
\item $\psi(((0))) = ((4))$,
\item $\psi(((0),(1))) = ((4),(14))$,
\item $\psi(((0),(0))) = ((4),(0))$,
\item $\psi(((0),(0),(0))) = ((4),(0),(0))$.
\end{itemize}

To summarize, we have given an example of a general fact on the level-(3, $\leq 2$, $\leq 2$) factoring maps. Given finite level-3 trees $R,Y$, there is a level $\leq 2$ tree $T$, a tuple of ordinals $\vec{\xi}$ respecting $T$ so that the $R$-equivalence class of $\theta_{RY}$ has a purely combinatorial characterization, called a map $\rho$ which factors $(R,Y,T)$, and $R$ embeds into $U = Y \otimes T$ via a level-3 factoring map $\psi$. In terms of ultrapowers, $\rho$ induces $\rho^{Y,T}: \mathbb{L}_{j^R(\bolddelta{3})}[j^R(T_3)] \to \mathbb{L}_{j^Y(\bolddelta{3})}[j^Y \circ j^T(T_3)]$ such that $\rho^{Y,T} \circ j^R = j^Y \circ j^T = j^{U \oplus T}$, and $\rho^{Y,T}$ is just the embedding induced by the measure projection of $\mu^U$ to $\mu^R$, which is generated by $\psi$. 

\section{More on the level-1 analysis}
\label{sec:level-1-analysis}

In this section, we assume $\boldpi{1}$-determinacy. Recall some useful lemmas in \cite{sharpII} on the continuity and uniform cofinality of functions in level-1 ultrapowers. 

\begin{mylemma}\label{lem:j_sigma_continuity}
Suppose $\sigma : \se{1,\ldots,n} \to \se{1,\ldots,n'}$ is order preserving, $\beta< u_{n+1}$. Put $\sigma(0) = 0$. Then
  $j^{\sigma}(\beta) \neq j^{\sigma}_{\sup}(\beta)$ iff for some $k$, $\cf^{\mathbb{L}}(\beta) = u_k$ and $\sigma(k)>\sigma(k-1)+1$.  If $\cf^{\mathbb{L}}({\beta}) =u_{k}$ and $\sigma(k)>\sigma(k-1)+1$, then $j^{\sigma}_{\sup}(\beta) = j ^{\sigma_k} \circ j^{\tau_k}_{\sup}(\beta)$, where $\sigma=\sigma_k \circ \tau_k$, $\sigma_k(i) = \sigma(i)$ for $1 \leq i < k$, $\sigma_k(k) = \sigma(k-1)+1$, $\sigma_k(i) = \sigma(i-1)$ for $k < i \leq n+1$. 
\end{mylemma}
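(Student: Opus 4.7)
The plan is to analyse $j^{\sigma}(\beta)$ and $j^{\sigma}_{\sup}(\beta)$ via canonical representing functions and to localise the failure of continuity at the gap positions of $\sigma$. Fix $f : [\omega_1]^{n} \to \omega_1$ with $[f]_{\mu^n}=\beta$, where $\mu^n$ denotes the $n$-fold product of the club measure on $\omega_1$. By the uniform-cofinality analysis for level-$1$ functions recalled from \cite{sharpII}, when $\cf^{\mathbb{L}}(\beta)=u_k$ the function $f$ may be chosen so that its uniform cofinality is carried by the $k$-th coordinate, with a canonical cofinal sequence along it. Writing $\pi^{\sigma}(\vec{\alpha})=(\alpha_{\sigma(1)},\ldots,\alpha_{\sigma(n)})$, one has $j^{\sigma}(\beta)=[f\circ\pi^{\sigma}]_{\mu^{n'}}$, and the uniform cofinality of $f\circ\pi^{\sigma}$ is now carried by the $\sigma(k)$-th coordinate of the $\mu^{n'}$-variables.

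For the $(\Leftarrow)$ direction, assume that for every $k$ with $\cf^{\mathbb{L}}(\beta)=u_k$ one has $\sigma(k)=\sigma(k-1)+1$. The canonical cofinal approximation to $f$ along the $k$-th coordinate transfers under $\pi^{\sigma}$ to a cofinal approximation to $f\circ\pi^{\sigma}$ along the $\sigma(k)$-th coordinate; crucially, no intermediate coordinate between $\alpha_{\sigma(k-1)}$ and $\alpha_{\sigma(k)}$ is available in the image of $\pi^{\sigma}$, so each approximant is of the form $g\circ\pi^{\sigma}$ for some $g$ with $[g]<\beta$. Hence $j^{\sigma}(\beta)=j^{\sigma}_{\sup}(\beta)$ (the countable-cofinality case being automatic by the $\omega$-closure of these embeddings).

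For the $(\Rightarrow)$ direction, suppose $\cf^{\mathbb{L}}(\beta)=u_k$ and $\sigma(k)>\sigma(k-1)+1$. Solving $\sigma=\sigma_k\circ\tau_k$ forces $\tau_k:\se{1,\ldots,n}\to\se{1,\ldots,n+1}$ to be the ``skip-$k$'' map ($\tau_k(i)=i$ for $i<k$ and $\tau_k(i)=i+1$ for $i\geq k$), and $\pi^{\sigma}=\pi^{\tau_k}\circ\pi^{\sigma_k}$ yields the functorial factorisation $j^{\sigma}=j^{\sigma_k}\circ j^{\tau_k}$. I first verify $j^{\tau_k}_{\sup}(\beta)<j^{\tau_k}(\beta)$: the canonical representative in $\mu^{n+1}$ of the sup ordinal, obtained by taking the sup over the ``new'' $k$-th coordinate below $\alpha_k$, lies strictly below the representative $f\circ\pi^{\tau_k}$ of $j^{\tau_k}(\beta)$ yet dominates $j^{\tau_k}(\gamma)$ for every $\gamma<\beta$. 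Second, $\sigma_k$ has no gap at $k$ by construction ($\sigma_k(k)=\sigma(k-1)+1=\sigma_k(k-1)+1$), and $j^{\tau_k}_{\sup}(\beta)$ has $\mathbb{L}$-cofinality $u_k$ (being the supremum along a cofinal sequence of length $\cf^{\mathbb{L}}(\beta)=u_k$), so applying the $(\Leftarrow)$ direction to $\sigma_k$ at $j^{\tau_k}_{\sup}(\beta)$ shows that $j^{\sigma_k}$ is continuous there. Consequently
\[
j^{\sigma}_{\sup}(\beta)\;=\;j^{\sigma_k}\bigl(j^{\tau_k}_{\sup}(\beta)\bigr)\;<\;j^{\sigma_k}\bigl(j^{\tau_k}(\beta)\bigr)\;=\;j^{\sigma}(\beta),
\]
yielding both the failure of continuity and the stated formula.

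The main obstacle is pinning down the $\mathbb{L}$-cofinality of $j^{\tau_k}_{\sup}(\beta)$ and the explicit shape of its canonical representative in $\mu^{n+1}$, so as to legitimately apply the $(\Leftarrow)$ direction to $\sigma_k$ at this ordinal. Once the cofinality calculation is in hand via the uniform-cofinality conventions of \cite{sharpII}, the composition identity is immediate.
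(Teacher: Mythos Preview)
The paper does not prove this lemma at all: it is stated in Section~\ref{sec:level-1-analysis} under the heading ``Recall some useful lemmas in \cite{sharpII},'' with no proof given here. So there is no paper proof to compare against; your sketch is being judged on its own merits.

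Your outline is the standard one and is essentially correct. Two minor points. First, your direction labels are swapped: what you call ``$(\Leftarrow)$'' proves $\neg B\Rightarrow\neg A$, i.e.\ the forward direction $A\Rightarrow B$, and vice versa. This does no mathematical harm since your internal cross-reference (``applying the $(\Leftarrow)$ direction to $\sigma_k$'') is consistent with your own labelling. Second, and more substantively, you correctly isolate the one nontrivial step --- showing $\cf^{\mathbb{L}}(j^{\tau_k}_{\sup}(\beta))=u_k$ --- but then defer it to ``the uniform-cofinality conventions of \cite{sharpII}.'' Since the lemma you are proving is itself recalled from \cite{sharpII}, this is mildly circular as written. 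The honest way to close this is to exhibit the representative of $j^{\tau_k}_{\sup}(\beta)$ in $\mu^{n+1}$ directly: it is the function
\[
(\alpha_1,\ldots,\alpha_{n+1})\;\longmapsto\;f(\alpha_1,\ldots,\alpha_{k-1},\alpha_k,\alpha_{k+2},\ldots,\alpha_{n+1}),
\]
i.e.\ $f$ evaluated with the new coordinate $\alpha_k$ substituted for the old $k$-th argument (which has now shifted to position $k+1$). Its uniform cofinality is visibly carried by the $k$-th coordinate, giving $\cf^{\mathbb{L}}=u_k$ as needed. With that representative in hand, your factorisation $j^{\sigma}_{\sup}(\beta)=j^{\sigma_k}_{\sup}(j^{\tau_k}_{\sup}(\beta))=j^{\sigma_k}(j^{\tau_k}_{\sup}(\beta))$ goes through exactly as you wrote.
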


\begin{mylemma}
  \label{lem:level_2_uniform_cofinality}
  Suppose $(P^{-}, p)$ is a partial level $\leq 1$ tree  whose completion is $P$. $\sigma, \sigma'$ both factor $(P,W)$. $\sigma $ and $\sigma'$ agree on $P^{-}$, $\sigma'(p)$ is the $\prec^W$-predecessor of $\sigma(p)$. Then for any $\beta < j^{P^{-}}(\omega_1)$ such that $\cf^{\mathbb{L}}(\beta) = \seed^{P^{-}}_{p^{-}}$, 
\begin{displaymath}
  \sigma^W \circ j^{P^{-},P}_{\sup}(\beta) =   ({\sigma}')^W_{\sup} \circ j^{P^{-},P} (\beta).
\end{displaymath}
\end{mylemma}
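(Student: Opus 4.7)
\medskip

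\noindent\textbf{Proof plan.} The plan is to reduce both sides to explicit ultrapower representatives and then apply Lemma~\ref{lem:j_sigma_continuity} to translate ``take sup in the $P$-coordinate'' into ``shift by one in the $\prec^W$-order''. Pick a representing function $f$ for $\beta$, so that $f : [\omega_1]^{P^{-}\uparrow} \to \omega_1$ has the property that, along the $p^{-}$-coordinate, $f$ is continuous with uniform cofinality $\seed^{P^{-}}_{p^{-}}$. Then $j^{P^{-},P}(\beta)$ is represented (against $\mu^{P}$) by the function $\tilde f$ obtained by pre-composing $f$ with the projection $[\omega_1]^{P \uparrow} \to [\omega_1]^{P^{-} \uparrow}$ that forgets the $p$-coordinate, while $j^{P^{-},P}_{\sup}(\beta)$ is the $\mu^{P}$-ultrapower representative of $\vec\alpha \mapsto \sup \{ f(\vec\alpha \res P^{-}) \}$ taken as $\alpha_p$ ranges below the value forced by the position of $p$ in $P$.

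First I would compute the right-hand side. Since $\sigma' = \sigma$ on $P^{-}$ and $(\sigma')^W_{\sup}$ only changes things in the $p$-coordinate (where $\sigma'(p)$ is the immediate $\prec^W$-predecessor of $\sigma(p)$), the sup that $(\sigma')^W_{\sup} \circ j^{P^{-},P}$ takes of $\tilde f$ is exactly a sup of values indexed by tuples $\vec\alpha \in [\omega_1]^{W \uparrow}$ whose $\sigma'(p)$-entry ranges up to the $\sigma(p)$-entry. Next I would compute the left-hand side. Here $\sigma^W$ pulls the representation over to $W$ using $\sigma$, and then the ``sup'' built into $j^{P^{-},P}_{\sup}$ becomes a sup in the $\sigma(p)$-coordinate. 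The key point is that, because $\sigma'(p) = $ the $\prec^W$-predecessor of $\sigma(p)$ and because $f$ has uniform cofinality $\seed^{P^{-}}_{p^{-}}$ at $p^{-}$ (so no hidden discontinuities appear when $\sigma$ moves things into $W$), Lemma~\ref{lem:j_sigma_continuity} identifies both sups: the jump from $\sigma'(p)$ to $\sigma(p)$ in $W$ is precisely the one continuity/discontinuity witness produced on the $P$-side when $p$ is added to $P^{-}$.

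More concretely, I would split into cases according to whether $p$ is a sup-type or discrete-type node in $P$ (i.e.\ whether $\sigma(p)$ is a sup coordinate or an immediate successor in $W$). In the discrete case both sides collapse to $\sigma^W(\tilde f) = (\sigma')^W(\tilde f)$, using that $\sigma$ and $\sigma'$ agree on $P^{-}$ and that adding $p$ does not change the value of $f$. In the sup-type case, Lemma~\ref{lem:j_sigma_continuity} applied to the order-preserving map $W \to W$ that replaces $\sigma(p)$ by $\sigma'(p)+1$ gives the explicit formula $j^\sigma_{\sup}(\beta) = j^{\sigma_k} \circ j^{\tau_k}_{\sup}(\beta)$, and one checks that this is the same ordinal as $(\sigma')^W_{\sup} \circ j^{P^{-},P}(\beta)$ after identifying the factored map $\sigma_k \circ \tau_k$ with the composition coming from $(P^{-},p) \to P \to W$.

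The main obstacle will be bookkeeping: writing out the factoring-map composition so that the ``immediate $\prec^W$-predecessor'' hypothesis on $\sigma'(p)$ is exactly the hypothesis of Lemma~\ref{lem:j_sigma_continuity} (the clause ``$\sigma(k) > \sigma(k-1)+1$'') after one collapses all the coordinates that $\sigma$ and $\sigma'$ agree on. Once the two factorings are lined up, matching the two sups is essentially immediate from the uniform-cofinality hypothesis on $\beta$ and the continuity statement of Lemma~\ref{lem:j_sigma_continuity}.
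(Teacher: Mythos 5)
This lemma is recalled from the earlier part of the series (\cite{sharpII}); the present paper gives no proof of it, so there is no argument in the text to compare yours against. Your overall plan is the right one: pass to representing functions, identify level-1 trees with $\prec$-intervals $\{1,\dots,n\}$, and reduce the displayed identity to the explicit continuity calculus of Lemma~\ref{lem:j_sigma_continuity}. Your description of the $\mu^P$-representative of $j^{P^{-},P}_{\sup}(\beta)$ is also correct.

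Two steps are wrong as written, though, and both suggest you have not quite located where the hypothesis on $\sigma'$ actually enters. Your "discrete vs.\ sup-type" split is vacuous: since $\sigma'$ must itself factor $(P,W)$, it is injective and $\prec$-order-preserving, and together with $\sigma'\res P^{-}=\sigma\res P^{-}$ and $\sigma'(p)=\pred_{\prec^W}(\sigma(p))$ this forces $\sigma(p)$ to sit at least two $\prec^W$-steps above the image of the $\prec^P$-predecessor of $p$ (respectively $\sigma(p)\succ^W\min(\prec^W)$ if $p$ is $\prec^P$-minimal). After the identification with $\{1,\dots,n\}$ and $\{1,\dots,n'\}$ this is exactly the clause $\sigma(k)>\sigma(k-1)+1$ at the $\prec^P$-position $k$ of $p$; and since $\cf^{\mathbb{L}}(j^{P^{-},P}_{\sup}(\beta))=\cf^{\mathbb{L}}(\beta)=u_k$ (the $\prec^{P^{-}}$-position of $p^{-}$ is $k$), you are always in the discontinuous case of Lemma~\ref{lem:j_sigma_continuity}. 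There is no discrete case to handle. Relatedly, "the order-preserving map $W\to W$ that replaces $\sigma(p)$ by $\sigma'(p)+1$" is a no-op, because $\sigma'(p)+1=\sigma(p)$ by the predecessor hypothesis. The decomposition that Lemma~\ref{lem:j_sigma_continuity} actually hands you sends the new coordinate to $\sigma(q)+1$, where $q$ is the $\prec^P$-predecessor of $p$ (not $\sigma'(p)+1$), and the real content of the proof is lining up that factored map, applied to the canonical $\tau_k$-sup, against both sides of the identity. That matching is exactly what your sketch leaves out, so the step as written would not survive being spelled out.
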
 

\begin{mylemma}
  \label{lem:level_2_uniform_cofinality_another}
  Suppose $(P, p)$ is a partial level $\leq 1$ tree, $\sigma$ factors $(P,W)$.  Suppose $\beta < j^{P^{-}}(\omega_1)$ and  either
  \begin{enumerate}
  \item $p=-1$, $P^{+} = P$, $\sigma'=\sigma$,  $\cf^{\mathbb{L}}(\beta) = \omega$, or 
  \item $p \neq -1$, $P^{+}$ is the completion of $(P,p)$, $\sigma'$ factors $(P^{+},W)$, $\sigma = \sigma' \res P$,  $\sigma'(p)$ is the $\prec^W$-predecessor of $\sigma(p^{-})$,  $\cf^{\mathbb{L}}(\beta) = \seed^{P}_{p^{-}}$.
  \end{enumerate}
Then
\begin{displaymath}
  \sigma^W (\beta) =   ({\sigma}')^W_{\sup} \circ j^{P,P^{+}} (\beta).
\end{displaymath}
\end{mylemma}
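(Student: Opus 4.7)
The plan is to mirror the proof of the preceding Lemma~\ref{lem:level_2_uniform_cofinality} by splitting on the two cases and reducing each to the continuity/discontinuity dichotomy of Lemma~\ref{lem:j_sigma_continuity}. First I would unfold $\sigma^W$ (resp.\ $(\sigma')^W_{\sup}$) as an iterated composition of primitive $j^{\sigma}$-maps indexed by the level-$1$ tree structure of $P$ (resp.\ $P^{+}$), locate the unique coordinate at which a discontinuity or extra ultrapower step can appear, and apply Lemma~\ref{lem:j_sigma_continuity} there while treating every other step as a commuting piece.

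In case~1, $P^{+}=P$ and $\sigma'=\sigma$ make $j^{P,P^{+}}$ the identity, so the claim collapses to $\sigma^W(\beta)=\sigma^W_{\sup}(\beta)$. The node $p=-1$ pins $\sigma$ to the terminal slot, so the corresponding segment of $\sigma$ satisfies $\sigma(k)=\sigma(k-1)+1$ at the relevant index; combined with $\cf^{\mathbb{L}}(\beta)=\omega=u_1$, the criterion of Lemma~\ref{lem:j_sigma_continuity} certifies continuity at that coordinate, while the other factoring steps contribute nothing discontinuous because $\beta$'s cofinality does not match their indices. In case~2, $P^{+}$ is the completion of $(P,p)$, so $j^{P,P^{+}}$ is a one-step ultrapower determined by the seed $\seed^{P}_{p^{-}}$. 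Now $\sigma$ and $\sigma'$ agree off $p$, and the single new value $\sigma'(p)$ is the $\prec^W$-predecessor of $\sigma(p^{-})$, so $\sigma'$ has a jump of width exactly one at $p$. Applying Lemma~\ref{lem:j_sigma_continuity} at this coordinate, with $\cf^{\mathbb{L}}(\beta)=\seed^{P}_{p^{-}}$, one sees that $(\sigma')^W_{\sup}$ evaluated on $j^{P,P^{+}}(\beta)$ reproduces exactly $\sigma^W(\beta)$: the ultrapower step supplied by $j^{P,P^{+}}$ is absorbed by the sup, reflecting the fact that $\sigma$ and $\sigma'$ send the critical seed to adjacent $\prec^W$-positions. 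Lemma~\ref{lem:level_2_uniform_cofinality} can then be invoked to compare $\sigma$ and $\sigma'$ on the common part of their domains and discharge the remaining coordinates mechanically.

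The main obstacle will be the bookkeeping of coordinates: aligning the indexing of $\sigma$ on $P$ with that of $\sigma'$ on $P^{+}$ through $j^{P,P^{+}}$, and confirming that the discontinuity test of Lemma~\ref{lem:j_sigma_continuity} fires at exactly one coordinate — precisely the one whose seed matches $\cf^{\mathbb{L}}(\beta)$. Once one convinces oneself that every other coordinate of the composition either trivially commutes with $j^{P,P^{+}}$ or carries $\beta$ continuously (by cofinality mismatch), the remaining verification is a one-step primitive check, and the statement reduces to the uniform cofinality lemmas already at hand.
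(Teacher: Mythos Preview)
The paper does not prove this lemma here; it is quoted from \cite{sharpII}. Your overall strategy --- reduce to the continuity criterion of Lemma~\ref{lem:j_sigma_continuity} --- is the right one, but the execution has real gaps.

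In Case~1 your reasoning is garbled: $\omega \neq u_1$ (in this paper $u_1=\omega_1$ and $u_0=\omega$), and ``$p=-1$ pins $\sigma$ to the terminal slot'' is meaningless --- $p=-1$ says nothing about $\sigma$. The correct observation is simply that the criterion in Lemma~\ref{lem:j_sigma_continuity} requires $\cf^{\mathbb{L}}(\beta)=u_k$ for some $k\ge 1$, which fails when $\cf^{\mathbb{L}}(\beta)=\omega$; hence $\sigma^W$ is automatically continuous at $\beta$. In Case~2 you are missing the central identity and applying Lemma~\ref{lem:j_sigma_continuity} at the wrong ordinal. First observe that since $\sigma=\sigma'\circ\iota$ (where $\iota:P\hookrightarrow P^{+}$), one has $(\sigma')^W\circ j^{P,P^{+}}=\sigma^W$; so the claim reduces to showing $(\sigma')^W$ is \emph{continuous at $j^{P,P^{+}}(\beta)$}, not at $\beta$. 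By elementarity $\cf^{\mathbb{L}}(j^{P,P^{+}}(\beta))=\seed^{P^{+}}_{p^{-}}$. Now the hypothesis that $\sigma'(p)$ is the $\prec^W$-predecessor of $\sigma(p^{-})$, together with $\sigma'$ being order-preserving, forces $p$ to be the $\prec^{P^{+}}$-predecessor of $p^{-}$; hence at the position of $p^{-}$ in $P^{+}$ the map $\sigma'$ has step exactly $1$, and Lemma~\ref{lem:j_sigma_continuity} gives continuity. That finishes the argument --- Lemma~\ref{lem:level_2_uniform_cofinality} is not needed and does not apply (there $\sigma,\sigma'$ both factor $(P,W)$, which is not the present setup).
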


Suppose $P,W$ are level-1 trees, $\sigma$ factors $(P,W)$. Given a tuple $\vec{\alpha} =(\alpha_w)_{w \in W} \in [\omega_1]^{W \uparrow}$,  define
\begin{displaymath}
  \vec{\alpha}_{\sigma} = (\alpha_{\sigma,p})_{p \in P } \in [\omega_1]^{P \uparrow}
\end{displaymath}
where  $\alpha_{\sigma,p} = \alpha_{\sigma(p)}$. If $W$ is finite, put $\seed_{\sigma} ^W =   (\seed^W_{\sigma(p)})_{p \in P}$, i.e., $\seed_{\sigma}^W$ is represented modulo $\mu^W$ by the function  $\vec{\alpha}\mapsto \vec{\alpha}_{\sigma}$.

If $P,W$ are finite, $\sigma$ factors $(P,W)$,  then for any $A \in \mu^P$, for $\mu^W$-a.e.\ $\vec{\alpha}$, $\vec{\alpha}_{\sigma} \in A$. Thus, for any $A \in \mu^P$, 
$ \seed_{\sigma}^W \in j^W(A)$. % This is shown by \Los{} and that for $\mu^W$-a.e.\ $\vec{\alpha}$, $(\alpha_{\sigma(e)})_{e \in P} \in A$. The assumption that $\sigma$ preserves the $<_{BK}$-order plays a key role here.
Thus, we can unambiguously define
  \begin{displaymath}
    \sigma^{W} : \mathbb{L} \to \mathbb{L}
  \end{displaymath}
by $\sigma^{W} ( j^P(h)(\seed^P)) = j^W (h)(  \seed^{W}_{\sigma} )$. $\sigma^{W}$ is the unique map such that for any $z \in \mathbb{R}$, $\sigma^{W}$ is elementary from $L[z]$ to $L[z]$, $\sigma^W \circ j^P = j^W$, and
for any $ p \in P$, $ \sigma^{W}\circ p^P = \sigma(p)^W$. Define $\sigma^W_{\sup} (\beta) = \sup (\sigma^W)''\beta$. 

Suppose  the signature of $\beta$ is $(u_{l_i})_{i < m}$, the approximation sequence of $\beta$ is $( \gamma_i)_{i \leq  m} $. For $i \leq m$, let $\tau_{i,m} : \se{ 1, \dots,i+1} \to \se{l_0,\dots,l_i}$ be order preserving. For $i < k < m$, let  $\tau_{i,k} = \tau_{k,m}^{-1} \circ \tau_{i,m}$. A straightforward analysis on the representative functions of $\beta$ yields the following:
\begin{enumerate}
\item For $i < k  < m$, $j^{\tau_{i,k}}_{\sup} ( \gamma_i ) < \gamma_k < j^{\tau_{i,k}}(\gamma_i)$. 
\item For $i < m$, $j^{\tau_{i,m}} _{\sup}(\gamma_i) \leq  \gamma_m < j^{\tau_{i,m}}(\gamma_i)$.
\item For $i < m$, $j^{\tau_i,m}_{\sup}(\gamma_i) = \gamma_m$ iff  $i=m-1$ and $\beta$ is essentially continuous.
\item $\beta = j^{\tau_{m,m}}(\gamma_{m})$. 
\end{enumerate}
 The next lemma is  a version of the ``converse direction''. In its statement, the inequality $j^{\pi}_{\sup}(\gamma) < \gamma' < j^{\pi}(\gamma)$ forces $\pi$ to move  the signature of $\gamma$ to a proper initial segment of that of $\gamma'$, and forces the approximation sequence of $\gamma$ to be a proper initial segment of that of $\gamma'$.  
 % It will be useful in the analysis of descriptions and tree factoring maps in Sections~\ref{sec:level-2-description}-\ref{sec:fact-betw-level}, which eventually justifies the axiomatization of $0^{3\#}$ in Section~\ref{sec:syntactical_3sharp}. 
 The proof is again based on an analysis of the representative function of $\gamma$ and $\gamma'$.

\begin{mylemma}
  \label{lem:ordinal_division_blocks}
Suppose $A$ is a finite subset of $\omega$. 
% Suppose $c < \omega$, $\se{c} \subseteq A \subsetneq \se{1,\ldots,c}$. 
Let $\pi : \se{1,\ldots,\card(A)} \to A$ be order preserving. 
Suppose that $ \gamma < u_{\card(A)+1}$ and $j^{\pi}_{\sup} (\gamma) < \gamma' < j^{\pi}(\gamma)$. 
Let $(u_{l_k})_{k<v}, (\gamma_k)_{k\leq v}$, $(P,\vec{p})$ be the signature, approximation sequence and potential partial level $\leq 1$ tower induced by $\gamma$ respectively. Let  $(u_{l'_k})_{k<v'}, (\gamma'_k)_{k\leq v'}$, $(P',\vec{p}')$ be the signature, approximation sequence and potential partial level $\leq 1$ tower induced by $\gamma'$ respectively.  Let $\cf^{\mathbb{L}}(\gamma) = u_{l_{*}}$. %, $\cf^{\mathbb{L}}(\gamma') = u_{d_{*}'}$.
Then
\begin{enumerate}
\item $v < v'$, $(\pi(l_k), \gamma_k) = (l_k',\gamma_k')$. $\gamma$ is essentially discontinuous $\to \gamma_v = \gamma_{v'}$.  $\gamma$ is essentially continuous$\to\gamma_v < \gamma_{v'}$. 
\item $l'_{k} \notin  A$ for $v\leq k < v'$.
% \item $\gamma$ is essentially discontinuous.
%\item  $\beta$ is of discontinuous type $u_{\pi_m(d_{m,*})}$. 
\item For any $k <v$,  $l'_{v} < \pi(l_{k}) \eqiv l_{*} \leq  l_{k}$.
\item $P$ is a proper subtree of $P'$ and $\vec{p}$ is an initial segment of $\vec{p}'$. 
% \item The potential partial level $\leq 1$ tower induced by $\gamma$ is a proper initial segment of that induced by $\gamma'$. 
\end{enumerate}
Moreover, if $ \gamma' < \gamma'' < j^{\pi}(\gamma)$ and $(\gamma_k'')_{k \leq  v''}$ is the approximation sequence of $\gamma''$, then $\gamma_{v}'<\gamma_{v}''$.
\end{mylemma}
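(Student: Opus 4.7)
The plan is to work directly from representative functions, using the machinery of signatures and approximation sequences already developed for level-1 ordinals together with Lemma~\ref{lem:j_sigma_continuity}. Pick a representative $f : [\omega_1]^{\{1,\dots,\card(A)\}\uparrow} \to \omega_1$ for $\gamma$ modulo $\mu^{\{1,\dots,\card(A)\}}$ whose essential-coordinate dependence is precisely the signature $(u_{l_k})_{k<v}$; then $j^{\pi}(\gamma)$ is represented modulo $\mu^{A}$ by $\vec{\alpha}\mapsto f(\vec{\alpha}\circ\pi)$ and has signature $(u_{\pi(l_k)})_{k<v}$, with approximation sequence obtained by pushing $(\gamma_k)_{k \le v}$ forward along the order-preserving inclusions. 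The induced tower of $j^{\pi}(\gamma)$ is literally $(P,\vec{p})$ reindexed via $\pi$.

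Now apply Lemma~\ref{lem:j_sigma_continuity}: the inequality $j^{\pi}_{\sup}(\gamma) < j^{\pi}(\gamma)$ holds precisely because $\cf^{\mathbb{L}}(\gamma) = u_{l_*}$ and $\pi(l_*) > \pi(l_*-1) + 1$, and moreover $j^{\pi}_{\sup}(\gamma) = j^{\pi_{l_*}}\circ j^{\tau_{l_*}}_{\sup}(\gamma)$, which amounts to "closing the gap" between $\pi(l_*-1)$ and $\pi(l_*)$ in the indexing. Any $\gamma' \in (j^{\pi}_{\sup}(\gamma), j^{\pi}(\gamma))$ must then be represented modulo some $\mu^{A''}$ with $A \subseteq A''$, and the strictness on both sides forces $A'' \setminus A$ to contribute essential coordinates lying in the interval $(\pi(l_*-1), \pi(l_*))$: any new essential coordinate $\ge \pi(l_*)$ would push $\gamma'$ beyond $j^{\pi}(\gamma)$, while purely shrinking the cofinality at $\pi(l_*)$ would place $\gamma'$ at or below $j^{\pi}_{\sup}(\gamma)$. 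This immediately gives (2), and tracing which $l_k$ satisfy $\pi(l_k) > l'_v$ vs.\ $\pi(l_k) < l'_v$ yields $l'_v < \pi(l_k) \Leftrightarrow l_k \ge l_*$, which is (3).

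For (1) and (4), note that since $\gamma' < j^{\pi}(\gamma)$ and the essential coordinates of $j^{\pi}(\gamma)$ beyond position $l'_v$ are exactly $\pi(l_k)$ for $l_k \ge l_*$, the representative of $\gamma'$ must agree with that of $j^{\pi}(\gamma)$ on those upper coordinates; hence the signature entries $l'_k$ for $k > v$ matching the coordinates in $A$ are exactly the $\pi(l_k)$ with $l_k \ge l_*$, while the approximation values at those positions match $\gamma_k$. Below position $v$, the initial segment of the approximation sequence of $\gamma'$ is forced to agree with that of $j^{\pi}(\gamma)$, which gives $(\pi(l_k), \gamma_k) = (l'_k, \gamma'_k)$ for $k<v$. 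The essentially discontinuous/continuous dichotomy of $\gamma$ at the top level then produces $\gamma_v = \gamma'_{v'}$ vs.\ $\gamma_v < \gamma'_{v'}$ by unwinding the defining clauses (2)-(3) of the paper's convention. The tower statement (4) follows since the partial level $\leq 1$ tower is determined by the approximation sequence together with the seed structure, and the freshly inserted node $l'_v \in (\pi(l_*-1), \pi(l_*))$ is an honest end-extension of the initial data. For the moreover clause, given $\gamma' < \gamma'' < j^{\pi}(\gamma)$, both ordinals share the structure of $j^{\pi}(\gamma)$ on the coordinates indexed by $A$ past position $v$, so their approximation sequences agree through level $v-1$ and differ at level $v$; comparing representatives on the newly relevant coordinate in $(\pi(l_*-1), \pi(l_*))$ gives $\gamma'_v < \gamma''_v$.

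The main obstacle will be the level-$v$ bookkeeping — rigorously establishing that the new essential coordinate $l'_v$ is unique (not, say, a pair of coordinates) and that it lies strictly inside the gap interval rather than at its endpoint. This requires carefully invoking Lemma~\ref{lem:level_2_uniform_cofinality} and Lemma~\ref{lem:level_2_uniform_cofinality_another} to rule out the degenerate case where $\gamma'$ could be represented already modulo $\mu^{A}$ (giving $\gamma' = j^{\pi}(\delta)$ for some $\delta < \gamma$, which is impossible by hypothesis) and to verify continuity/discontinuity at the fresh level based on the seed of the newly added node.
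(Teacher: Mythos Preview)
Your overall approach---analyze representative functions and invoke Lemma~\ref{lem:j_sigma_continuity}---is exactly what the paper does; its proof is literally the single sentence ``based on an analysis of the representative function of $\gamma$ and $\gamma'$.''

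However, your detailed execution contains a structural confusion about the shape of $\gamma'$'s signature beyond position $v$. You write that ``the signature entries $l'_k$ for $k > v$ matching the coordinates in $A$ are exactly the $\pi(l_k)$ with $l_k \ge l_*$,'' but this is backwards: part (2) asserts that \emph{none} of the $l'_k$ for $v \le k < v'$ lie in $A$. The correct picture is that the signature of $\gamma'$ agrees with that of $j^{\pi}(\gamma)$ on positions $k<v$ (giving $l'_k=\pi(l_k)$, which is part (1)), and then all subsequent entries $l'_v,\dots,l'_{v'-1}$ avoid $A$ entirely---they fill gaps of $A$, not match its elements. Relatedly, your closing worry about establishing that ``the new essential coordinate $l'_v$ is unique (not, say, a pair of coordinates)'' suggests you expect $v'=v+1$; nothing forces this, and the lemma allows $\gamma'$ to introduce several new signature entries, with part (3) constraining only the first one $l'_v$.

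Your claim that the new coordinates lie in $(\pi(l_*-1),\pi(l_*))$ is also too strong and conflates $l_*$ (an element of $\{1,\dots,\card(A)\}$ determined by $\cf^{\mathbb{L}}(\gamma)$, not necessarily one of the $l_k$) with an index in the signature. Part (3) says only that $l'_v$ sits below exactly those $\pi(l_k)$ with $l_k\ge l_*$; it does not pin $l'_v$ to a single gap. Once you straighten out this picture---$(\gamma'_k)_{k\le v}$ extends $(\gamma_k)_{k<v}$ with the continuity adjustment at level $v$, and the tail uses coordinates outside $A$---the argument via representative functions goes through as you outline.
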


\section{More on the level-2 analysis}
\label{sec:level-2-analysis}

We recall a useful corollary to Silver's dichotomy on $\boldpi{3}$-equivalence relations in \cite{sharpII}. 

Recall the lemmas in \cite{sharpII} on the definability of respecting a level $\leq 2$ tree. 

\begin{mylemma}\label{lem:respect_lv2_measure_one_set}
  Suppose $Q$ is a finite level $\leq 2$ tree,  $C \in \mu_{\mathbb{L}}$ is a club. Then $\vec{\beta} \in [C]^{Q \uparrow}$ iff there exist $f \in \omega_1^{Q \uparrow}$ and $E \in \mu_{\mathbb{L}}$ such that $\vec{\beta} = [f]^Q$ and  for any $q \in \comp{1}{Q}$, $\comp{1}{f}(q)$ is a limit point of $C$;
 for any $q \in \dom(\comp{2}{Q})$, for any $\vec{\alpha} \in [E]^{\comp{2}{Q}_{\tree}(q)\uparrow}$, $\comp{2}{f}_q(\vec{\alpha})$ is a limit point of $C$.
\end{mylemma}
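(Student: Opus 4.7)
The two directions translate between the ordinal and function formulations of $[C]^{Q\uparrow}$, using $\Los$'s theorem coordinate-by-coordinate combined with finite-intersection closure of $\mu_{\mathbb{L}}$.

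For the easier direction $(\Leftarrow)$: given $f$ and $E$, set $\vec{\beta} = [f]^Q$. Each $\comp{1}{\beta}_q = \comp{1}{f}(q)$ is by hypothesis a limit point of $C$. For $q \in \dom(\comp{2}{Q})$ with $P = \comp{2}{Q}_{\tree}(q)$, applying $\Los$ to $\mu^P$ on the $\mu^P$-measure-one set $[E]^{P\uparrow}$ shows $\comp{2}{\beta}_q = [\comp{2}{f}_q]^P$ is a limit point of $j^P(C)$, while the cofinality/continuity type dictated by $\comp{2}{Q}_{\node}(q)$ is already baked into $f \in \omega_1^{Q\uparrow}$. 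Together with the strict-increase and coherence conditions encoded in $\omega_1^{Q\uparrow}$, this is precisely the definition of $\vec{\beta} \in [C]^{Q\uparrow}$.

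For the substantive direction $(\Rightarrow)$: I would proceed node by node. The level-1 components $\comp{1}{\beta}_q$ ($q \in \comp{1}{Q}$) are by definition limit points of $C$; set $\comp{1}{f}(q) = \comp{1}{\beta}_q$. For each $q \in \dom(\comp{2}{Q})$ with $P_q = \comp{2}{Q}_{\tree}(q)$, the level-2 representation theory of \cite{sharpII} furnishes a representing function $\comp{2}{f}_q$ with $[\comp{2}{f}_q]^{P_q} = \comp{2}{\beta}_q$; $\Los$ applied to the hypothesis that $\comp{2}{\beta}_q$ is (essentially) a limit point of $j^{P_q}(C)$ yields a club $E_q \in \mu_{\mathbb{L}}$ on which $\comp{2}{f}_q(\vec{\alpha})$ is a limit point of $C$ for all $\vec{\alpha} \in [E_q]^{P_q \uparrow}$. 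Then $E = \bigcap_{q \in \dom(\comp{2}{Q})} E_q \in \mu_{\mathbb{L}}$ works uniformly, $\dom(Q)$ being finite.

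The main subtlety is ensuring the assembled $f$ actually lies in $\omega_1^{Q\uparrow}$, i.e.\ that the individual representatives interact correctly with the tree structure: the uniform cofinality of each $\comp{2}{f}_q$ must match $\comp{2}{Q}_{\node}(q)$, and the ordering and continuity/discontinuity relations among the various $\comp{2}{f}_q$ (and the constants $\comp{1}{f}(q')$) must be compatible with the tree order on $Q$. Since membership in $\omega_1^{Q\uparrow}$ is a finite conjunction of measure-one conditions on the $\comp{2}{f}_q$ (inherited by the $\vec{\beta}$-side from $\vec{\beta} \in [C]^{Q\uparrow}$ via $\Los$), one further thinning of $E$ absorbs the remaining conditions, and the resulting pair $(f,E)$ witnesses the right-hand side.
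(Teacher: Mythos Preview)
You have the difficulty of the two directions reversed. In the notation of \cite{sharpII} used here, $\vec{\beta}\in [C]^{Q\uparrow}$ means by definition that $\vec{\beta}=[g]^Q$ for some $g\in C^{Q\uparrow}$, i.e.\ for some order-preserving continuous $g:\rep(Q)\to C$ taking \emph{all} its values in $C$, not just the values indexed by $\dom(Q)$. Your $(\Leftarrow)$ argument treats the characterisation in Lemma~\ref{lem:Q_respecting} as though it were the definition; but that lemma is a \emph{consequence} of Lemma~\ref{lem:respect_lv2_measure_one_set}, and invoking it here is circular. The genuine content of $(\Leftarrow)$ is: given $f\in\omega_1^{Q\uparrow}$ whose values $\comp{2}{f}_q(\vec{\alpha})$ lie only in $C'$ (limit points of $C$), produce a $g\in C^{Q\uparrow}$ with $[g]^Q=[f]^Q$. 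This requires re-assigning the values of $f$ at the intermediate nodes of $\rep(Q)$ (those of the form $(2,\vec{\alpha}\oplus q\concat(-1))$, etc.) so that they land in $C$ while preserving order and continuity. That is exactly why the paper records Claim~\ref{claim:respect_lv2_distance}: one shows that for suitably thinned $E'$, the interval $C\cap(\comp{2}{f}_{q^{*}}(\vec{\alpha}),\comp{2}{f}_q(\vec{\alpha}))$ has order type $\alpha_{p_q^{-}}$, so there is room in $C$ to place the intermediate values correctly. Your Łoś-plus-thinning sketch does not produce this $g$. The level-3 analogue, Lemma~\ref{lem:respect_lv3_measure_one_set}, makes the point explicit: its proof begins ``The nontrivial direction is $\Leftarrow$'' and immediately invokes the parallel Claim~\ref{claim:respect_lv3_distance}.

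Conversely, the $(\Rightarrow)$ direction is the easy one: a witness $g\in C^{Q\uparrow}$ for $\vec{\beta}$ already serves as the desired $f$, and continuity forces the values $\comp{2}{g}_q(\vec{\alpha})$, being suprema of earlier $g$-values in $C$, to lie in $C'$; one then shrinks to a club $E$ on which this holds uniformly. There is no need to build $f$ coordinate-by-coordinate from $\vec{\beta}$ via Łoś as you propose.
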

 In the proof of Lemma \ref{lem:respect_lv2_measure_one_set}, the following claim is used. Suppose $f \in \omega_1^{Q \uparrow}$ and  $E \in \mu_{\mathbb{L}}$ are as given. For $q \in \dom(\comp{2}{Q})\setminus\se{\emptyset}$,  let $\comp{2}{Q}(q)= (P_q,p_q)$, and let $q^{*}$ be the $<_{BK}$-maximum of $\comp{2}{Q}\{q,-\}$.  
\begin{myclaim}
  \label{claim:respect_lv2_distance}
  There is $E' \in \mu_{\mathbb{L}}$ such that $E' \subseteq E$ and for any $q \in \dom(\comp{2}{Q})\setminus\se{\emptyset}$, 
for any $\vec{\alpha} \in [E']^{P_q\uparrow}$, if $p_q \neq -1$ then 
  $C \cap (\comp{2}{f}_{q^{*}}(\vec{\alpha}), \comp{2}{f}_q (\vec{\alpha}))$ has order type $\alpha_{p_q^{-}}$.
\end{myclaim}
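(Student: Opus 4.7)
The plan is to prove the order-type statement one node at a time and then form $E' = E \cap \bigcap_q E_q$, where $q$ ranges over the finitely many elements of $\dom(\comp{2}{Q}) \setminus \se{\emptyset}$ with $p_q \neq -1$. Finiteness of the index set guarantees $E' \in \mu_{\mathbb{L}}$.

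Fix such a $q$. Because $q$ extends $q^{*}$ by adjoining the single new level-1 node $p_q$, the tree $P_q$ is the completion of the partial level-$\leq 1$ tree $(P_{q^*}, p_q)$ and $p_q^{-}$ is the $\prec^{P_q}$-predecessor of $p_q$. Feeding this data and the factoring maps $\sigma, \sigma'$ of $(P_{q^*}, W)$ and $(P_q, W)$ determined by $\vec{\alpha}$ into clause (2) of Lemma~\ref{lem:level_2_uniform_cofinality_another} produces an intermediate ordinal $\beta_{\vec{\alpha}} < j^{P_{q^*}}(\omega_1)$ with $\cf^{\mathbb{L}}(\beta_{\vec{\alpha}}) = \seed^{P_{q^*}}_{p_q^{-}}$ (so $\alpha_{p_q^{-}}$ in the ultrapower) such that
\[
  \comp{2}{f}_{q^{*}}(\vec{\alpha}) = \sigma^{W}(\beta_{\vec{\alpha}}) \qquad \text{and} \qquad \comp{2}{f}_{q}(\vec{\alpha}) = (\sigma')^{W}_{\sup}\!\bigl(j^{P_{q^*}, P_q}(\beta_{\vec{\alpha}})\bigr).
\]
In particular, $\comp{2}{f}_q(\vec{\alpha})$ is the supremum of a canonical order-preserving continuous map $h_{\vec{\alpha}} : \alpha_{p_q^{-}} \to u_\omega$, whose successive values approach $\comp{2}{f}_q(\vec{\alpha})$ from below, while $\comp{2}{f}_{q^{*}}(\vec{\alpha})$ sits below all but a bounded initial segment of the range of $h_{\vec{\alpha}}$.

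To pass from cofinality to exact order type of the trace of $C$, I refine $E$ as follows. The function $\vec{\alpha} \mapsto \min\{\xi < \alpha_{p_q^{-}} : h_{\vec{\alpha}}(\xi) > \comp{2}{f}_{q^{*}}(\vec{\alpha})\}$ is $\mu^{P_q}$-a.e.\ bounded below $\alpha_{p_q^{-}}$; a pressing-down argument on $\mu^{P_q}$ (combined with closing $E$ under the witnesses produced in the proof of Lemma~\ref{lem:respect_lv2_measure_one_set}) yields a club $E_q \in \mu_{\mathbb{L}}$, $E_q \subseteq E$, on which $h_{\vec{\alpha}}(0) > \comp{2}{f}_{q^{*}}(\vec{\alpha})$. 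The closedness of $C$, the continuity of $h_{\vec{\alpha}}$ at limit $\xi$, and the hypothesis that $\comp{2}{f}_q(\vec{\alpha})$ is a limit point of $C$ together force every $c \in C \cap (\comp{2}{f}_{q^{*}}(\vec{\alpha}), \comp{2}{f}_q(\vec{\alpha}))$ to equal $h_{\vec{\alpha}}(\xi)$ for a unique $\xi < \alpha_{p_q^{-}}$, giving the claimed order type.

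The main obstacle lies in the last bijectivity step: ruling out stray $C$-points strictly between consecutive values $h_{\vec{\alpha}}(\xi)$ and $h_{\vec{\alpha}}(\xi+1)$. This requires the approximation-sequence analysis of Lemma~\ref{lem:ordinal_division_blocks}: one reads off that the successor gap in the $h_{\vec{\alpha}}$-enumeration has $L$-cofinality strictly below $\alpha_{p_q^{-}}$, hence a further node-wise refinement of $E_q$ (again producing a measure-one set by the same Fodor-style recipe) excludes any element of $C$ from lying inside such a gap. Performing this refinement uniformly in $\xi$ using the homogeneity of $\mu^{P_q}$, and then intersecting over the finitely many $q$, delivers the desired $E'$.
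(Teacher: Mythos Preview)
The paper does not supply its own proof of this claim; it is stated as a fact recalled from \cite{sharpII} and used inside the proof of Lemma~\ref{lem:respect_lv2_measure_one_set}, so there is nothing in the present paper to compare against.

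Your proposal, however, has a genuine category error. Lemmas~\ref{lem:level_2_uniform_cofinality_another} and~\ref{lem:ordinal_division_blocks} concern ordinals in the interval $[u_1,u_\omega)$ arising as images under maps of the form $\sigma^W$, where $\sigma$ factors level-1 trees $(P,W)$. The setting of Claim~\ref{claim:respect_lv2_distance} is entirely different: $f \in \omega_1^{Q\uparrow}$ is an order-preserving continuous function from $\rep(Q)$ into $\omega_1$, and the values $\comp{2}{f}_q(\vec{\alpha})$, $\comp{2}{f}_{q^*}(\vec{\alpha})$ are \emph{countable} ordinals. There is no ambient level-1 tree $W$ in the claim; your phrase ``the factoring maps $\sigma,\sigma'$ of $(P_{q^*},W)$ and $(P_q,W)$ determined by $\vec{\alpha}$'' introduces a $W$ with no referent, and your map $h_{\vec{\alpha}}:\alpha_{p_q^-}\to u_\omega$ has the wrong codomain. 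Worse, the conclusion of clause~(2) of Lemma~\ref{lem:level_2_uniform_cofinality_another} is the \emph{equality} $\sigma^W(\beta)=(\sigma')^W_{\sup}\circ j^{P,P^+}(\beta)$; feeding your $\beta_{\vec{\alpha}}$ into it would force your two displayed identities to give $\comp{2}{f}_{q^*}(\vec{\alpha})=\comp{2}{f}_q(\vec{\alpha})$, which is plainly false. The lemma does not ``produce'' the intermediate ordinal $\beta_{\vec{\alpha}}$ you need; it takes such a $\beta$ as input.

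The argument that works here is more elementary and does not touch the level-$\leq 2$ description machinery. Since $f$ is order-preserving and continuous on $(\rep(Q),<^Q)$ and $p_q\neq -1$, the point $(2,\vec{\alpha}\oplus_{\comp{2}{Q}}q)$ is the $<^Q$-supremum of the points obtained by inserting an extra coordinate at $p_q$ below $\alpha_{p_q^-}$, with $(2,\vec{\alpha}\oplus_{\comp{2}{Q}}q^*)$ sitting immediately below this block; continuity of $f$ transports this structure to $(\comp{2}{f}_{q^*}(\vec{\alpha}),\comp{2}{f}_q(\vec{\alpha}))$. One then compares the function $\vec{\alpha}\mapsto\ot\bigl(C\cap(\comp{2}{f}_{q^*}(\vec{\alpha}),\comp{2}{f}_q(\vec{\alpha}))\bigr)$, which lies in $\mathbb{L}$, against $\vec{\alpha}\mapsto\alpha_{p_q^-}$ and uses normality of $\mu^{P_q}$ together with the hypothesis that both endpoints are limit points of $C$ to force agreement on a $\mu^{P_q}$-measure-one set.
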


\begin{mylemma}
  \label{lem:level_2_Q_respecting_function_sort}
  Suppose $Q$ is a finite level $\leq 2$ tree, $\comp{2}{Q}(q) = (P_q, p_q)$ for $q \in \dom(Q)$, $E \in \mu_{\mathbb{L}}$ is a club. Suppose $f : \rep(Q) \res E \to \omega_1+1$ satisfies
  \begin{enumerate}
  \item $f \res( \se{1} \times  \rep(\comp{1}{Q}))$ is continuous, order preserving;
  \item if $q \in \dom(\comp{2}{Q})$, then the potential partial level $\leq 1$ tower induced by $\comp{2}{f}_{q}$ is $\comp{2}{Q}[q]$, the approximation sequence of $\comp{2}{f}_q$ is $(\comp{2}{f}_{q \res i})_{i \leq \lh(q)}$,
and the uniform cofinality of $\comp{2}{f}_q$ on $[E]^{P_q\uparrow}$ is witnessed by $\comp{2}{f}_{q \concat (-1)}$, i.e., if $\vec{\alpha} \in [E]^{P_q \uparrow}$, then $\comp{2}{f}_q(\vec{\alpha}) = \sup\{ \comp{2}{f}_{q \concat (-1)}(\vec{\alpha} \concat (\beta)) : \vec{\alpha} \concat (\beta) \in \rep(\comp{2}{Q}) \res E \}$, and the map $\vec{\beta} \mapsto \comp{2}{f}_{q \concat (-1)}(\vec{\alpha} \concat (\beta))$ is continuous, order preserving;
  \item if $a,b \in \comp{2}{Q}\se{q}$ and $a < _{BK} b$, then $[f_{q \concat (a)}]_{\mu^{P_{q \concat (a)}}} < [f_{q \concat (b)}]_{\mu^{P_{q \concat (b)}}}$. 
  \end{enumerate}
Then there is $E' \in \mu_{\mathbb{L}}$ such that $E' \subseteq E$ and $f \res (\rep(Q) \res E')$ is order preserving.
\end{mylemma}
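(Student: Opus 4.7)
The plan is to find a measure-one refinement $E' \subseteq E$ on which $f$ is order-preserving, by verifying $f(x) < f(y)$ for $x <^Q y$ in $\rep(Q) \res E'$ through a case analysis on the types of $x$ and $y$. First, I would observe that conditions (2) and (3) together certify that the tuple $\vec{\xi} = ([\comp{2}{f}_q]_{\mu^{P_q}})_{q \in \dom(\comp{2}{Q})}$ combined with the values $(\comp{1}{f}(s))_{s \in \rep(\comp{1}{Q})}$ respects $Q$ in the sense of Lemma~\ref{lem:respect_lv2_measure_one_set}: condition (2) identifies the potential partial level $\leq 1$ tower and approximation sequence of $\comp{2}{f}_q$ correctly, while condition (3) gives the horizontal ordering among siblings. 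Applying Lemma~\ref{lem:respect_lv2_measure_one_set} together with Claim~\ref{claim:respect_lv2_distance} to this tuple yields $E' \in \mu_{\mathbb{L}}$, $E' \subseteq E$, such that for every $q \in \dom(\comp{2}{Q})\setminus\{\emptyset\}$ with $p_q \neq -1$ and every $\vec\alpha \in [E']^{P_q\uparrow}$, the interval $(\comp{2}{f}_{q^{*}}(\vec\alpha), \comp{2}{f}_q(\vec\alpha)) \cap E'$ has order-type exactly $\alpha_{p_q^{-}}$, where $q^{*}$ is the $<_{BK}$-predecessor sibling of $q$.

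With $E'$ in hand, verify the order-preservation by cases on $x <^Q y$. Level-1 versus level-1 is immediate from condition (1). For level-2 elements $x = (2, \vec\alpha \oplus_{\comp{2}{Q}} q)$ and $y = (2, \vec\alpha' \oplus_{\comp{2}{Q}} q)$ with the same node $q$, the order among tuples in $[E']^{P_q\uparrow}$ is induced by $<^{P_q}$, and condition (2)---specifically, that the potential tower and approximation sequence of $\comp{2}{f}_q$ coincide with $\comp{2}{Q}[q]$ and the uniform cofinality is witnessed by $\comp{2}{f}_{q\concat(-1)}$ via a continuous order-preserving map---ensures $\comp{2}{f}_q$ is order-preserving on $[E']^{P_q\uparrow}$, after at most one more measure-one shrinking. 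For level-1 versus level-2, or level-2 versus level-2 with comparable nodes $q \subsetneq q'$, the interval ordering deduced from the sup-witness property in condition (2) places $f(y)$ in the appropriate gap above or below $f(x)$; the spacing from Claim~\ref{claim:respect_lv2_distance} guarantees this gap is nonempty on $E'$.

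For the remaining case, level-2 versus level-2 with nodes $q$ and $q'$ sharing a maximal common prefix $q_0$ and diverging into siblings $q_0 \concat (a) \inisegeq q$ and $q_0 \concat (b) \inisegeq q'$ with $a <_{BK} b$, condition (3) applied at $q_0$ gives $[\comp{2}{f}_{q_0 \concat (a)}]_{\mu^{P_{q_0 \concat (a)}}} < [\comp{2}{f}_{q_0 \concat (b)}]_{\mu^{P_{q_0 \concat (b)}}}$. Combining this with Lemma~\ref{lem:j_sigma_continuity} to control the continuity/discontinuity jumps of $\comp{2}{f}_q$ and $\comp{2}{f}_{q'}$ above their prefixes, together with the spacing from Claim~\ref{claim:respect_lv2_distance} on $E'$, promotes the ultrapower inequality to a pointwise one for $\vec\alpha, \vec\alpha' \in [E']^{\cdot\uparrow}$. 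Taking a diagonal intersection of the countably many measure-one sets arising in the verifications yields a single $E'$ that works uniformly.

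The hard part will be this last case: the bookkeeping needed to align the ultrapower-level sibling inequality at $q_0$ (supplied by condition (3)) with the uniform cofinality data at deeper nodes along the branches leading to $q$ and $q'$ (supplied by condition (2)), and then to certify that the order-type $\alpha_{p_q^{-}}$ gaps from Claim~\ref{claim:respect_lv2_distance} are large enough to absorb the extensions $\comp{2}{f}_{q}(\vec\alpha)$ and $\comp{2}{f}_{q'}(\vec\alpha')$ without overlap. This requires tracking, for each divergence pattern in $\dom(\comp{2}{Q})$, which coordinates of $\vec\alpha, \vec\alpha'$ are shared and which are free, and invoking Lemma~\ref{lem:j_sigma_continuity} at the correct signature block to ensure that the sup in condition (2) is realized strictly below the next sibling value on $E'$.
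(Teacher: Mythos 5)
The key difficulty here is that Claim~\ref{claim:respect_lv2_distance} is stated for $f \in \omega_1^{Q\uparrow}$, i.e.\ for a function that is \emph{already} order preserving on $\rep(Q)\res E$; it is a claim inside the proof of Lemma~\ref{lem:respect_lv2_measure_one_set}, where $f$ is presupposed to be such a witness. Your proposal applies it to the given $f$ of the present lemma, whose order preservation on a measure-one set is exactly what you are trying to establish. So the spacing property you extract from Claim~\ref{claim:respect_lv2_distance} is not available to you at the time you invoke it; obtaining it is essentially equivalent to the conclusion. The remark that ``conditions (2) and (3) certify $\vec{\xi}$ respects $Q$'' does not break the circle: that $\vec{\xi}$ respects $Q$ gives you, via Lemma~\ref{lem:respect_lv2_measure_one_set}, \emph{some} order-preserving $f'\in\omega_1^{Q\uparrow}$ with $[f']^Q = \vec{\xi}$, but your argument never shows that $f$ and $f'$ agree on a measure-one set; and for the $q\concat(-1)$ coordinates (the uniform cofinality witnesses) this agreement is not automatic, since two continuous order-preserving functions with the same pointwise supremum can differ pointwise.

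The expected route, visible in this paper's proof of the level-3 analogue (Lemma~\ref{lem:level_3_R_respecting_function_sort}, which explicitly refers back to the present lemma), is different. For the hard sibling case $a <_{BK} b$ at a common node $q_0$, one defines a threshold function: for each $\vec\beta$ respecting $P_{q_0}$ and each legal next coordinate $\gamma$, let $g(\vec\beta\concat(\gamma))$ be the least $\gamma'$ such that all sibling evaluations $\comp{2}{f}_{q_0\concat(a)}$ at coordinates $\leq\gamma$ are below all sibling evaluations $\comp{2}{f}_{q_0\concat(b)}$ at coordinates $\geq\gamma'$. The ultrapower inequality from condition (3) shows $g$ is a.e.\ bounded below the relevant seed, and the level-1 version of the boundedness machinery (cf.\ Lemma~\ref{lem:level_2_desc_cofinal_in_next}) gives a function $h:\omega_1\to\omega_1$ in $\mathbb{L}$ with $g < j^{P}(h)(\cdot)$ a.e.; closing a club under $h$ then promotes the ultrapower inequality to a pointwise one on $\rep(Q)\res E'$. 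This step is what your proposal is missing, and Lemma~\ref{lem:j_sigma_continuity} by itself does not supply it. I would rework the argument along these lines rather than trying to import Claim~\ref{claim:respect_lv2_distance}.
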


\begin{mylemma}
  \label{lem:Q_respecting}
  Suppose that $Q$ is a finite level $\leq 2$ tree and $\vec{\beta}= (\comp{d}{\beta}_q)_{(d,q)\in \dom(Q)}$ is a tuple of ordinals in $u_{\omega}$. Then $\vec{\beta}$ respects $Q$ iff all of the following holds:
  \begin{enumerate}
  \item $(\comp{1}{\beta}_q)_{q \in \comp{1}{Q}} $ respects $\comp{1}{Q}$. 
  \item For any $q \in \dom(\comp{2}{Q})$, the potential partial level $\leq 1$ tower induced by $\comp{2}{\beta}_q$ is $Q[q]$, and the  approximation sequence of $\comp{2}{\beta}_q$ is  $(\comp{2}{\beta}_{q \res l})_{  l \leq \lh(q)}$.
    \item If $a,b\in \comp{2}{Q}\se{q}$ and $a<_{BK}b$ then $\comp{2}{\beta}_{q \concat (a)} < \comp{2}{\beta}_{q \concat (b)}$.
  \end{enumerate}
Moreover, if $C \in \mu_{\mathbb{L}}$ is a club, then $\vec{\beta} \in [C]^{Q\uparrow}$ iff $\vec{\beta}$ respects $Q$ and letting $C'$ be the set of limit points of $C$, then $\comp{1}{\beta}_q \in C'$ for $ q \in \comp{1}{Q}$, $\comp{2}{\beta}_q \in j^{\comp{2}{Q}_{\tree}(q)} (C')$ for $q \in \dom(\comp{2}{Q})$.
\end{mylemma}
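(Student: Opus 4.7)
The plan is to prove the two biconditionals by tightly coupling the definition of ``$\vec{\beta}$ respects $Q$'' with the representative-function regularity already packaged into Lemmas~\ref{lem:respect_lv2_measure_one_set} and~\ref{lem:level_2_Q_respecting_function_sort}. Forward direction first: given $\vec{\beta}$ respecting $Q$, pick a witness $f \in \omega_1^{Q\uparrow}$ with $[f]^Q = \vec{\beta}$. Condition~(1) comes from restricting to $\comp{1}{Q}$, since $[f \res (\se{1}\times\rep(\comp{1}{Q}))]^{\comp{1}{Q}} = (\comp{1}{\beta}_q)_{q \in \comp{1}{Q}}$. Condition~(2) follows from clause~(2) of Lemma~\ref{lem:level_2_Q_respecting_function_sort} and \Los's theorem: the uniform cofinality and approximation-sequence data of $\comp{2}{f}_q$ on a $\mu^{\comp{2}{Q}_{\tree}(q)}$-measure one set compute, after taking the equivalence class, exactly the potential partial level~$\leq 1$ tower $Q[q]$ and the sequence $(\comp{2}{\beta}_{q \res l})_{l \leq \lh(q)}$. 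Condition~(3) is a straightforward application of \Los's theorem to the pointwise strict inequality $\comp{2}{f}_{q\concat(a)}(\vec{\alpha}) < \comp{2}{f}_{q\concat(b)}(\vec{\alpha})$ which holds on a $\mu^{\comp{2}{Q}_{\tree}(q)}$-measure one set when $a <_{BK} b$.

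For the backward direction, assume (1), (2), (3). The strategy is to manufacture a witness $f$ by induction along $<_{BK}$ on $\dom(Q)$. For $(1,q) \in \comp{1}{Q}$, set $\comp{1}{f}(q) = \comp{1}{\beta}_q$; hypothesis~(1) ensures $\comp{1}{f}$ is order-preserving into $\omega_1$. For each $(2,q) \in \dom(\comp{2}{Q})$, hypothesis~(2) says $\comp{2}{\beta}_q$ has potential partial level~$\leq 1$ tower $Q[q]$ and approximation sequence $(\comp{2}{\beta}_{q \res l})_{l \leq \lh(q)}$, so we may choose a representative function $\comp{2}{f}_q : [\omega_1]^{\comp{2}{Q}_{\tree}(q)\uparrow} \to \omega_1$ whose signature, approximation sequence, and uniform-cofinality witness (via $\comp{2}{f}_{q\concat(-1)}$ when $-1 \in \comp{2}{Q}\se{q}$) realize that tower. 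The crucial observation is that the resulting family $f$ satisfies precisely clauses~(1)--(3) of Lemma~\ref{lem:level_2_Q_respecting_function_sort}: its clause~(1) is our~(1); its clause~(2) is our~(2) plus the explicit choice of uniform-cofinality witnesses; its clause~(3) is our~(3). The conclusion of that lemma then furnishes a club $E' \in \mu_{\mathbb{L}}$ on which $f$ is order-preserving, i.e., $f \in [\omega_1]^{Q\uparrow}$ and $[f]^Q = \vec{\beta}$, so $\vec{\beta}$ respects $Q$.

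For the moreover part, let $C' \subseteq C$ be the set of limit points. If $\vec{\beta} \in [C]^{Q\uparrow}$, then by Lemma~\ref{lem:respect_lv2_measure_one_set} some witnessing $f$ has $\comp{1}{f}(q)$ a limit point of $C$ for every $q \in \comp{1}{Q}$ and, for every $q \in \dom(\comp{2}{Q})$, $\comp{2}{f}_q(\vec{\alpha})$ a limit point of $C$ for $\mu^{\comp{2}{Q}_{\tree}(q)}$-a.e.\ $\vec{\alpha}$; pushing forward gives $\comp{1}{\beta}_q \in C'$ and $\comp{2}{\beta}_q \in j^{\comp{2}{Q}_{\tree}(q)}(C')$. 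Conversely, given the club conditions, modify the inductive construction of $f$ in the backward direction so that the chosen representative functions land in $C'$-points $\mu$-a.e., using Claim~\ref{claim:respect_lv2_distance} to ensure that the required order-type gaps between consecutive values are realized inside $C$; Lemma~\ref{lem:level_2_Q_respecting_function_sort} then supplies the club $E'$ on which $f$ witnesses $\vec{\beta} \in [C]^{Q\uparrow}$.

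The main obstacle is the inductive construction of $\comp{2}{f}_q$ in the backward direction: one must verify that the prescribed signature and approximation sequence dictated by $Q[q]$ are compatible with the continuous/discontinuous dichotomy at the top coordinate, and that the chosen $\comp{2}{f}_{q\concat(-1)}$ genuinely witnesses the uniform cofinality of $\comp{2}{f}_q$. Once the book-keeping is right, the two recalled lemmas do all the heavy lifting and the rest is verification.
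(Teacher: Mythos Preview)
The paper does not actually prove this lemma: it is listed among several results explicitly recalled from \cite{sharpII} without proof. So there is no ``paper's own proof'' to compare against here.

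Your approach is the natural one and is essentially correct. The forward direction is routine, and the key insight for the backward direction---that conditions (1)--(3) of the lemma are precisely the hypotheses of Lemma~\ref{lem:level_2_Q_respecting_function_sort}, so that lemma manufactures the required order-preservation---is exactly right. The moreover part via Lemma~\ref{lem:respect_lv2_measure_one_set} is also the intended route.

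One small correction: in the backward construction of $f$, the induction should proceed along $\subseteq$ (i.e., on $\lh(q)$), not along $<_{BK}$. The coherence you need is that the approximation sequence of $\comp{2}{f}_q$ equals $(\comp{2}{f}_{q\res l})_{l\leq\lh(q)}$ as \emph{functions} (up to a measure-one set), not merely that their ultrapower classes agree. Since condition~(2) gives agreement of approximation sequences at the level of $\vec{\beta}$, you must choose each $\comp{2}{f}_q$ so that its approximation sequence is literally the already-constructed $(\comp{2}{f}_{q\res l})_l$; inducting on length makes this possible, whereas $<_{BK}$ does not respect the initial-segment relation. This is a bookkeeping fix, not a gap in the strategy.
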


\begin{mylemma}
  \label{lem:respecting_level_2_tree_is_Delta13}
  The relation ``$Q$ is a finite level $\leq 2$ tree $\wedge$ $\vec{\beta}$ respects $Q$'' is $\Delta^1_3$.
\end{mylemma}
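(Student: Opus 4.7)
The plan is to reduce to the explicit characterization in Lemma \ref{lem:Q_respecting} and verify that each of its three clauses is $\Delta^1_3$, then close under finite conjunction. The syntactic part ``$Q$ is a finite level $\leq 2$ tree'' is a decidable predicate on the finite combinatorial data of $Q$, so the substantive content lies in the respecting condition itself.

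First I would fix a standard real coding of ordinals $\beta < u_\omega$, for instance via the canonical $\Delta^1_2$ prewellorder of length $u_\omega$ inherited from the level-1 description analysis. Under $\boldpi{1}$-determinacy, the basic predicates needed — ``$\cf^{\mathbb{L}}(\beta) = u_k$'', ``$\beta = j^{\pi}(\gamma)$'' and ``$\beta = j^{\sigma}_{\sup}(\gamma)$'' for order preserving $\pi,\sigma$ between finite index sets — are each $\Delta^1_2$ in codes of $\beta,\gamma$, using sharps together with the level-1 uniform cofinality calculus recalled in Lemmas \ref{lem:j_sigma_continuity}--\ref{lem:level_2_uniform_cofinality_another}. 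Clauses (1) and (3) of Lemma \ref{lem:Q_respecting} are then visibly $\Delta^1_2$: the former is the level-1 respecting predicate for $\comp{1}{Q}$, essentially a Boolean combination of cofinality assertions, and the latter is a conjunction of order comparisons between ordinals $\comp{2}{\beta}_{q\concat(a)}$ and $\comp{2}{\beta}_{q\concat(b)}$ in $u_\omega$.

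Clause (2) carries the main content. It asks, for each $q \in \dom(\comp{2}{Q})$, that the potential partial level $\leq 1$ tower induced by $\comp{2}{\beta}_q$ coincide with $Q[q]$ and that its approximation sequence be exactly $(\comp{2}{\beta}_{q \res l})_{l \leq \lh(q)}$. By the structural description of approximation sequences in Lemma \ref{lem:ordinal_division_blocks}, this unfolds into finitely many requirements: the signature of each $\comp{2}{\beta}_q$ is a designated tuple of $u_k$'s read off from $Q[q]$, and the interlacing inequalities $j^{\pi_l}_{\sup}(\comp{2}{\beta}_{q \res l}) < \comp{2}{\beta}_{q \res (l+1)} < j^{\pi_l}(\comp{2}{\beta}_{q \res l})$ hold for the shift maps $\pi_l$ determined by $Q$. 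Each such ingredient is $\Delta^1_2$ in the real codes by the previous paragraph. Conjoining the finitely many requirements indexed by $\dom(Q)$, and absorbing the single real-quantifier cost of the code-versus-ordinal translation, yields a $\Delta^1_3$ definition.

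The main obstacle I expect is the uniformity of the cofinality and ultrapower computations in $L[x]$ across the cone of reals $x$ that code the given tuple $\vec{\beta}$: one must verify that the value of $\cf^{L[x]}(\beta)$ and of $j^{\pi}(\gamma)$, $j^\sigma_{\sup}(\gamma)$ is independent of the particular code on a cone and can be read off in a uniform way, so that one existential quantifier over codes together with one universal check on the cone produce matching $\Sigma^1_3$ and $\Pi^1_3$ definitions. This should follow directly from the level-1 and level-2 analysis machinery in \cite{sharpII}, but it is the step that pins the projective complexity at exactly $3$ rather than higher.
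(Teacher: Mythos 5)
The paper does not prove this lemma here: it is one of the results recalled from \cite{sharpII} (the surrounding text reads ``Recall the lemmas in \cite{sharpII} on the definability of respecting a level $\leq 2$ tree''), so there is no internal proof to compare against. Your plan --- reduce via Lemma~\ref{lem:Q_respecting} to a finite Boolean combination of conditions on the individual coordinates $\comp{d}{\beta}_q$, then bound the complexity of each --- is the natural and correct one, and the placement of Lemma~\ref{lem:Q_respecting} immediately before the statement strongly suggests that is the intended route.

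Two of your complexity claims need repair, although the final answer comes out right. First, there is no $\Delta^1_2$ prewellorder of length $u_\omega$: since $\bolddelta{2} = u_2 < u_\omega$, every $\boldDelta{2}$ prewellordering has length strictly below $u_2$, so the ``canonical $\Delta^1_2$ prewellorder of length $u_\omega$'' you appeal to does not exist. The correct coding is via level-1 sharp codes $(\gcode{\tau}, z^{\#})$, and the set of such codes ($\WO_\omega$) is $\Pi^1_2$, not $\Delta^1_2$. Second, the basic predicates you list --- ``$\cf^{\mathbb{L}}(\beta)=u_k$'', ``$\beta = j^{\pi}(\gamma)$'', order comparisons --- are not $\Delta^1_2$ when $\beta,\gamma$ are presented by separate sharp codes: given $(\gcode{\tau}, z^{\#})$ and $(\gcode{\tau'}, (z')^{\#})$, comparing $\tau^{L[z]}(\vec{u})$ with $(\tau')^{L[z']}(\vec{u})$ requires $(z\oplus z')^{\#}$, which is not computable from $z^{\#}$ together with $(z')^{\#}$. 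Introducing the joint sharp costs one real quantifier over a $\Pi^1_2$ matrix, and this is precisely what pins the complexity at $\Delta^1_3$: ``$\exists u\,(u$ is $(z_1\oplus\cdots\oplus z_n)^{\#} \wedge u$ decides all the conditions affirmatively$)$'' is $\Sigma^1_3$, ``$\forall u\,(u$ is $(z_1\oplus\cdots\oplus z_n)^{\#} \to \cdots)$'' is $\Pi^1_3$, and these agree by uniqueness of the sharp. Your closing paragraph gestures at this under the heading of ``code-versus-ordinal translation'', but the genuine source of the extra quantifier is the joint sharp of the parameter reals, not the translation step itself. With these corrections the argument closes: the relation is the conjunction of ``each $v_i\in\WO_\omega$'' (which is $\Pi^1_2\subseteq\Delta^1_3$) with the $\Delta^1_3$ condition above, and such a conjunction is $\Delta^1_3$.
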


\begin{mylemma}
  \label{lem:unique_level_2_tree_represent}
  Suppose $Q$ and $Q'$ are  level $\leq 2$ trees with the same domain.  Suppose $\vec{\beta}$ respects both $Q$ and $Q'$. Then $Q = Q'$. 
\end{mylemma}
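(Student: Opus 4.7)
The plan is to reduce the claim to Lemma~\ref{lem:Q_respecting}, which characterizes ``$\vec{\beta}$ respects $Q$'' entirely in terms of data intrinsically computable from $\vec{\beta}$, so that $Q$ itself is recoverable from the tuple. The argument will be a direct bookkeeping of which component of $Q$ is pinned down by which clause of that characterization.

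First, I would observe that the assumption $\dom(Q) = \dom(Q')$ already gives $\comp{1}{Q} = \comp{1}{Q'}$ and $\dom(\comp{2}{Q}) = \dom(\comp{2}{Q'})$, since $\dom(Q)$ is by definition the disjoint union $\{1\}\times \comp{1}{Q} \,\sqcup\, \{2\}\times\dom(\comp{2}{Q})$. Thus the only possible discrepancy between $Q$ and $Q'$ lives in the values $\comp{2}{Q}(q) = (P_q, p_q)$ versus $\comp{2}{Q'}(q) = (P_q', p_q')$ for $q \in \dom(\comp{2}{Q})$.

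Next, apply Lemma~\ref{lem:Q_respecting}(2) to both $Q$ and $Q'$: for every $q \in \dom(\comp{2}{Q})$, the potential partial level $\leq 1$ tower induced by the single ordinal $\comp{2}{\beta}_q$ is $Q[q]$, and by the same token is $Q'[q]$. But the induced tower is a function of $\comp{2}{\beta}_q$ alone (this is what ``induced'' means, as laid out in the level-1 analysis of Section~\ref{sec:level-1-analysis} and in particular in the uniqueness of the signature and approximation sequence attached to any ordinal below $u_\omega$). Hence $Q[q] = Q'[q]$ for every $q$. Unpacking what $Q[q]$ records, this yields $P_q = P_q'$ (the tree component) together with $p_q = p_q'$ (the node component, read off from whether $\comp{2}{\beta}_q$ is essentially continuous or discontinuous and, in the discontinuous case, from its position among the possible extensions in the level-1 tower, via Lemma~\ref{lem:ordinal_division_blocks}). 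The approximation-sequence clause of Lemma~\ref{lem:Q_respecting}(2), which identifies the initial segment $(\comp{2}{\beta}_{q \res l})_{l \leq \lh(q)}$ as the approximation sequence, similarly fixes the way smaller nodes embed into $Q[q]$ consistently for both trees.

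Assembling these equalities over all $q \in \dom(\comp{2}{Q})$ gives $\comp{2}{Q} = \comp{2}{Q'}$, and combined with $\comp{1}{Q} = \comp{1}{Q'}$ forced by the domain, we get $Q = Q'$. There is no serious obstacle here: the content of the lemma is a uniqueness statement, and the real work was already done in Lemma~\ref{lem:Q_respecting}, which establishes that respecting $Q$ is equivalent to a tuple of conditions that read off $Q$ from $\vec{\beta}$. The only point to handle carefully is verifying that ``the potential partial level $\leq 1$ tower induced by an ordinal'' is genuinely a function of the ordinal, independent of any ambient tree, which is exactly the uniqueness of signatures/approximation sequences recalled at the start of Section~\ref{sec:level-2-analysis}.
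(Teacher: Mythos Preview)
Your proposal is correct. The paper does not supply a proof for this lemma; it is listed among several results recalled from \cite{sharpII} without argument. Your route through Lemma~\ref{lem:Q_respecting} is exactly the intended one: the characterization there makes the potential partial level $\leq 1$ tower $Q[q]$ an intrinsic invariant of the ordinal $\comp{2}{\beta}_q$, so two trees with the same domain respected by the same tuple must agree on every $\comp{2}{Q}(q)$.
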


\begin{mylemma}\label{lem:beta_q_unambiguous}
Suppose $Q$ is a level $\leq 2$ tree. 
Suppose $\vec{\beta} = (\comp{d}{\beta}_q)_{(d,q) \in \dom(Q)} \in [\omega_1]^{Q \uparrow}$, $(2,\mathbf{q})=(2,(q,P, \vec{p})) \in \desc({Q})$ is of continuous type, $P^{-} = Q_{\tree}(q^{-})$,  then $\comp{2}{\beta}_{\mathbf{q}} =j^{P^{-},P}_{\sup}(\comp{2}{\beta}_{q^{-}})$.
\end{mylemma}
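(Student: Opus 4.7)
The plan is to unfold the definition of the description-indexed ordinal $\comp{2}{\beta}_{\mathbf{q}}$ from \cite{sharpII} and match it directly with the continuous uniform-cofinality embedding $j^{P^{-},P}_{\sup}$, invoking Lemma~\ref{lem:level_2_uniform_cofinality_another}. First I would invoke Lemma~\ref{lem:Q_respecting} to record that $\comp{2}{\beta}_{q^{-}}$ has potential partial level $\leq 1$ tower exactly $Q[q^{-}]=(P^{-},\vec{p}^{\,-})$ and approximation sequence $(\comp{2}{\beta}_{q^{-}\res l})_{l\le \lh(q^{-})}$. In particular $\comp{2}{\beta}_{q^{-}}$ is represented modulo $\mu^{P^{-}}$ by some $f^{-}\in\omega_1^{P^{-}\uparrow}$, and $\cf^{\mathbb{L}}(\comp{2}{\beta}_{q^{-}})=\seed^{P^{-}}_{(p^{-})^{-}}$.

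Next I would unpack what ``$(2,\mathbf{q})=(2,(q,P,\vec{p}))$ of continuous type'' means: by definition the tower $(P,\vec{p})$ is the one-node extension of $(P^{-},\vec{p}^{\,-})$ obtained by appending the ``sup'' marker $-1$, and the ordinal $\comp{2}{\beta}_{\mathbf{q}}$ assigned to such a description by $\vec{\beta}$ is, by construction, the $\mu^{P}$-class of the function obtained by ``reading $f^{-}$ along the new slot'', i.e.\ of $\vec{\alpha}\mapsto \sup\{f^{-}(\vec{\alpha}\res P^{-}) \cdot (\cdot)\}$ where the last coordinate supplies the continuous witness. On the other hand, $j^{P^{-},P}_{\sup}(\comp{2}{\beta}_{q^{-}})=\sup\{j^{P^{-},P}(\delta):\delta<\comp{2}{\beta}_{q^{-}}\}$. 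Using \L{}o\'{s}'s theorem for $\mu^{P}$ and the fact that the seed added by $P$ over $P^{-}$ has exactly the matching cofinality, this sup is represented modulo $\mu^{P}$ by the same function derived from $f^{-}$. Formally this equality is the content of Lemma~\ref{lem:level_2_uniform_cofinality_another}(2) applied with $W=P$, $\sigma$ the identity factoring $(P^{-},P)$ viewed through the inclusion $P^{-}\subseteq P^{+}=P$, and $\sigma'$ the obvious one-node extension; the ``$-1$'' slot of $\vec{p}$ corresponds exactly to the hypothesis $\sigma'(p)=$ the $\prec^{W}$-predecessor of $\sigma(p^{-})$.

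The two expressions for $\comp{2}{\beta}_{\mathbf{q}}$ and $j^{P^{-},P}_{\sup}(\comp{2}{\beta}_{q^{-}})$ thereby have identical $\mu^{P}$-representatives, hence are equal. The main obstacle is purely bookkeeping: aligning the level-2 tree notation (descriptions of continuous type, the stratification by $Q[q]$, and the operational definition of $\comp{2}{\beta}_{\mathbf{q}}$ given in \cite{sharpII}) with the level-1 framework of Lemma~\ref{lem:level_2_uniform_cofinality_another}, in particular identifying the ``added $-1$'' in the tower on the level-2 side with the predecessor-shift in the factoring map on the level-1 side. Once this dictionary is in place, the lemma is a direct corollary; no new combinatorics is needed.
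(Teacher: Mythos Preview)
The paper does not give a proof of this lemma here; it is listed among several results explicitly recalled from \cite{sharpII}. So there is no in-paper argument to compare against, and the assessment is purely about whether your proposal is correct.

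Your overall strategy is right: fix $f\in\omega_1^{Q\uparrow}$ with $[f]^Q=\vec{\beta}$, use that $\comp{2}f_{q^-}(\vec\alpha)=\sup_\gamma \comp{2}f_{q}(\vec\alpha{}^\frown\gamma)$ (the uniform-cofinality clause in the definition of $\omega_1^{Q\uparrow}$), and then do a \L{}o\'s/normality computation comparing $[\comp{2}f_q]_{\mu^P}$ with $j^{P^-,P}_{\sup}([\comp{2}f_{q^-}]_{\mu^{P^-}})$. That direct computation is exactly what is needed, and the sentence you wrote just before invoking Lemma~\ref{lem:level_2_uniform_cofinality_another} already contains it.

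The problem is your claim that ``formally this equality is the content of Lemma~\ref{lem:level_2_uniform_cofinality_another}(2).'' With your substitutions (lemma's $P$ is our $P^-$, lemma's $W=P^+$ is our $P$, $\sigma$ the inclusion $P^-\hookrightarrow P$, $\sigma'=\id_P$), the hypothesis $\sigma'(p)=\pred_{\prec^W}(\sigma(p^-))$ does hold, but the conclusion $\sigma^W(\beta)=(\sigma')^W_{\sup}\circ j^{P,P^+}(\beta)$ becomes $j^{P^-,P}(\beta)=\id_{\sup}(j^{P^-,P}(\beta))$, a tautology. It does not produce $j^{P^-,P}_{\sup}$ anywhere. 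Lemma~\ref{lem:level_2_uniform_cofinality_another} is about how a factoring map $\sigma^W$ decomposes through a \emph{different} factoring map $\sigma'$; it is used later (e.g.\ in the proof of Lemma~\ref{lem:QW_description_extension_another} and Theorem~\ref{thm:jQ_LT2_regularity}) in genuinely two-map situations, not here. Also, the ``$-1$'' is the last entry of $q$, not of $\vec{p}$; the extra node $p_{k-1}$ of $P$ over $P^-$ is a genuine tree node, not a $-1$ marker.

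So: drop the citation to Lemma~\ref{lem:level_2_uniform_cofinality_another} and instead write out the two-line \L{}o\'s argument you already sketched. For $\geq$: given $\delta=[g]_{\mu^{P^-}}<\comp{2}\beta_{q^-}$, pick $\gamma(\vec\alpha)$ with $g(\vec\alpha)<\comp{2}f_q(\vec\alpha{}^\frown\gamma(\vec\alpha))$ and use normality of the new coordinate to get $g(\vec\alpha'\!\upharpoonright\! P^-)<\comp{2}f_q(\vec\alpha')$ for $\mu^P$-a.e.\ $\vec\alpha'$. For $\leq$: $\comp{2}f_q(\vec\alpha')<\comp{2}f_{q^-}(\vec\alpha'\!\upharpoonright\! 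P^-)$ pointwise gives $[\comp{2}f_q]_{\mu^P}<j^{P^-,P}(\comp{2}\beta_{q^-})$, and the $\geq$ direction then forces equality with the sup.
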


\begin{mycorollary}
  \label{coro:Delta13_pwo_computable_in_LT2}
      Assume $\boldDelta{2}$-determinacy. Let $x \in \mathbb{R}$. If $\leq^{*}$ is a $\Delta^1_3(x)$ prewellordering on $\mathbb{R}$ and $A $ is a $\Sigma^1_3(x)$ subset of $\mathbb{R}$, then $\sharpcode{\leq^{*}}$ and $\set{ \wocode{y}_{\leq^{*}}}{y \in A}$ are both in $\admistwo{M_1^{\#}(x)}$ and  $\Delta_1$-definable over $\admistwo{M_1^{\#}(x)}$ from parameters in $\se{T_2,M_1^{\#}(x)}$.
\end{mycorollary}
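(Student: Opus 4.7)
The plan is to combine Silver's $\boldpi{3}$-dichotomy, recalled above from \cite{sharpII}, with the tree representation of $\Sigma^1_3(x)$ relations via $T_2$ and $M_1^\#(x)$, and to close off using the admissibility of $\admistwo{M_1^\#(x)}$.

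First, I would apply Silver's dichotomy to the $\Delta^1_3(x)$ equivalence relation $y \sim z \Leftrightarrow y \leq^{*} z \wedge z \leq^{*} y$. A perfect set of pairwise $\not\sim$ reals is impossible, as $\leq^{*}$ would wellorder it, producing a $\boldDelta{3}$ wellorder of an uncountable set of reals, which under $\boldDelta{2}$-determinacy does not exist. Hence $\sim$ has only countably many classes, so in particular $\sharpcode{\leq^{*}}<\omega_1$ and each norm $\wocode{y}_{\leq^{*}}$ is a countable ordinal.

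Second, I would use tree representations of $\Sigma^1_3(x)$ relations that are absolute into $\admistwo{M_1^\#(x)}$. Under $\boldDelta{2}$-determinacy, every $\Sigma^1_3(x)$ set is of the form $p[U]$ for some tree $U$ constructible from $T_2$ and $x$, and $M_1^\#(x)$ supplies the $\boldpi{3}$-correctness needed to recover wellfoundedness of sections $U_y$ inside $\admistwo{M_1^\#(x)}$ $\Delta_1$-uniformly in the parameters $\se{T_2,M_1^\#(x)}$. Applying this to $\leq^{*}$ and $\not\leq^{*}$, the norm $\wocode{y}_{\leq^{*}}$ is expressible as the rank of the wellfounded tree witnessing $\set{z}{z <^{*} y}$, while the complementary tree for $\not\leq^{*}$ certifies that no smaller ordinal suffices, yielding the $\Pi_1$-clause. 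By admissibility, this $\Sigma_1$-recursion terminates below $\kappa_3^{M_1^\#(x)}$, so each $\wocode{y}_{\leq^{*}} < \kappa_3^{M_1^\#(x)}$.

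Third, $\Sigma_1$-replacement in $\admistwo{M_1^\#(x)}$ finishes the argument. The length $\sharpcode{\leq^{*}} = \sup_{y\in \mathbb{R}} \wocode{y}_{\leq^{*}}$ is a $\Sigma_1$-supremum below $\kappa_3^{M_1^\#(x)}$, hence $\Delta_1$-definable by the same formula and a member of $\admistwo{M_1^\#(x)}$. The image set $\set{\wocode{y}_{\leq^{*}}}{y \in A}$ is the $\Sigma_1$-image of the tree-represented $A$ under the $\Delta_1$ map $y \mapsto \wocode{y}_{\leq^{*}}$, hence a bounded subset of $\kappa_3^{M_1^\#(x)}$ lying in $\admistwo{M_1^\#(x)}$. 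The main obstacle is the absoluteness step above: one must verify that the $T_2$-based tree representations of $\Sigma^1_3(x)$ relations, together with their wellfoundedness and rank functions, are $\Delta_1$-recoverable uniformly inside $\admistwo{M_1^\#(x)}$ from $\se{T_2,M_1^\#(x)}$, which is where the interaction between $T_2$ (the Martin-Solovay tree giving the $\boldpi{2}$-scale) and $M_1^\#(x)$ (the mouse supplying $\boldpi{3}$-correctness under $\boldDelta{2}$-determinacy) must be carefully coordinated.
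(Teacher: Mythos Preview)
Your first step contains a fatal error: the conclusion $\sharpcode{\leq^{*}}<\omega_1$ is false in general. A $\Delta^1_3(x)$ prewellordering can have length anywhere up to $\bolddelta{3}$; for instance, the prewellordering induced by a $\Pi^1_2$-norm on a $\Pi^1_2$-complete set is $\Delta^1_3$ and has length $\bolddelta{2}=u_2>\omega_1$. The point of the corollary is precisely that $\sharpcode{\leq^{*}}<\kappa_3^{M_1^{\#}(x)}$, which is an ordinal strictly between $u_\omega$ and $\bolddelta{3}$, not that the length is countable. Your Silver-dichotomy argument also fails at the contradiction step: $\boldDelta{2}$-determinacy alone does \emph{not} rule out a $\boldDelta{3}$ wellordering of a perfect set of reals---that would require much more determinacy. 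So the perfect-set horn of the dichotomy is not excluded by your reasoning, and in any case the countably-many-classes horn is simply not the one that obtains.

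The paper does not prove this corollary here; it recalls it from \cite{sharpII}, where Silver's dichotomy for $\boldpi{3}$ equivalence relations is used, but not to bound the number of classes by $\omega$. Rather, the thin $\boldpi{3}$ dichotomy gives that the classes of a thin $\boldpi{3}$ equivalence relation are each $\Delta^1_3$, uniformly, and one then combines this with the Kechris--Martin analysis and $\Sigma^1_3$-boundedness to place the norm values $\Delta_1$-definably inside $\admistwo{M_1^{\#}(x)}$. Your second and third paragraphs gesture at the right structure (tree representations via $T_2$, admissibility), but they rest on the incorrect first step and never actually carry out the boundedness argument that replaces it.
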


% Recall that for a level-2 tree $Q$, a tuple $\vec{\beta}$ \emph{respects} $Q$ iff $\vec{\beta} = [f]^Q$ for some $f \in \omega_1^{Q \uparrow}$; $\vec{\beta}$ \emph{weakly respects} $Q$ iff $\beta_{\emptyset} = \omega_1$ and for any $q \in \dom(Q) \setminus\se{\emptyset}$, $\beta_q < j^{Q_{\tree}(q^{-}), Q_{\tree}(q)}(\beta_{q^{-}})$. 

\begin{mylemma}
  \label{lem:Pi13_closed_under_club_measure}
   Assume $\boldDelta{2}$-determinacy. If $P$, $W$ are finite level-1 trees, $A \subseteq [\omega_1]^{W \uparrow}\times \mathbb{R}$ is $\Pi^1_3$ (or $\Sigma^1_3$, $\Delta^1_3$ resp.), then so are
   \begin{align*}
          B &= \set{x }{ \text{for $\mu^W$-a.e.\ }\vec{\alpha}, (\vec{\alpha},x) \in A }, \\
     C&= \set{ (\vec{\beta}, x) }{\vec{\beta} \in j^P(A_x)},
   \end{align*}
where $A_x = \set{\vec{\alpha}}{(\vec{\alpha},x) \in A}$.
\end{mylemma}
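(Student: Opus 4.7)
The plan is to reduce both assertions to a common form and then proceed by induction. First I would observe that the second assertion about membership in $j^P(A_x)$ reduces to the first: by \L{}o\'{s}'s theorem, if $\vec{\beta} = [f]^P$ for a function $f$ coded uniformly from $\vec{\beta}$, then $\vec{\beta} \in j^P(A_x)$ iff for $\mu^P$-a.e.\ $\vec{\alpha}$, $f(\vec{\alpha}) \in A_x$. Since the passage from a code for $\vec{\beta}$ (respecting $P$, via Lemma~\ref{lem:respecting_level_2_tree_is_Delta13}'s level-1 analog and Lemma~\ref{lem:respect_lv2_measure_one_set}) to a representative $f$ can be arranged projectively and uniformly, this becomes a $(\forall_{\mu^P}\vec{\alpha})$-quantifier over a modified $\Pi^1_3$ set, i.e.\ the form handled by part (B).

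For part (B), I would induct on $\card(W)$. The base case $W = \{(0)\}$ asks that the club-measure quantifier $(\forall_{\mu^{\{(0)\}}}\alpha)$ on $\omega_1$ preserves $\Pi^1_3$. The idea is: a $\Pi^1_3$ predicate $A(\alpha,x)$ admits a $\Pi^1_3$-scale (Moschovakis), whose norms are $\Delta^1_3$ prewellorderings bounded by $\bolddelta{3}$. By Corollary~\ref{coro:Delta13_pwo_computable_in_LT2}, these norms are uniformly coded in $\admistwo{M_1^{\#}(x)}$ as $\Sigma_1$-definable objects from $\{T_2, M_1^{\#}(x)\}$. Thus $(\forall_\mu \alpha)\, A(\alpha,x)$ becomes the assertion ``there is a club $C \subseteq \omega_1$ in $\admistwo{M_1^{\#}(x)}$ with $C \subseteq A_x$'', a $\Sigma_1$-statement over $\admistwo{M_1^{\#}(x)}$, and hence $\Pi^1_3$ in $x$ by the standard translation between admissibility over $M_1^{\#}(x)$ and lightface $\Pi^1_3(x)$ (this is the level-3 Kechris--Martin-type reduction).

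For the inductive step, I would use a Fubini-style decomposition of $\mu^W$: pick the $<_{BK}$-minimal leaf $w$ of $W$, let $W^- = W \setminus \{w\}$, and note (cf.\ Lemma~\ref{lem:Q_respecting} in the level-1 case) that integrating against $\mu^W$ factors as integrating first against $\mu^{W^-}$ and then, for each $\vec{\alpha}^- \in [\omega_1]^{W^-\uparrow}$, against the club measure on $\omega_1$ restricted to the interval dictated by $W$'s ordering of $w$ relative to $W^-$. Hence $(\forall_{\mu^W}\vec{\alpha})A(\vec{\alpha},x)$ unfolds as $(\forall_{\mu^{W^-}}\vec{\alpha}^-)(\forall_{\mu^{\{(0)\}}}\alpha_w \text{ in the relevant cone})\,A(\vec{\alpha}^-\frown\alpha_w,x)$. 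Each of the two quantifiers preserves $\Pi^1_3$, by the inductive hypothesis and the base case respectively. The $\Sigma^1_3$ case is dual under complementation, and $\Delta^1_3$ follows from both.

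The main obstacle is the base case: uniformly, in the real $x$, producing a witnessing club $C_x \in \admistwo{M_1^{\#}(x)}$ with $C_x \subseteq A_x$ whenever $(\forall_\mu \alpha)A(\alpha,x)$, and doing so with sufficient uniformity that the \emph{existence} of such $C_x$ can be folded into $\Pi^1_3(x)$. Here the level-3 Kechris--Martin analysis, together with the $\Pi^1_3$-scale for $A$ and the $\Sigma_1$-uniformity in Corollary~\ref{coro:Delta13_pwo_computable_in_LT2}, do the work: the canonical club $C_x$ one obtains is that of $\Sigma_1$-stable ordinals of the relevant admissible set, and its membership in $A_x$ is checked using the scale's norms, which live inside the same admissible structure.
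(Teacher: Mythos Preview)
Your reduction of $C$ to $B$ via \Los{} matches the paper. For $B$, however, the paper takes a much shorter route that handles all finite $W$ at once with no induction and no scales: one simply observes that
\[
x\in B \iff \exists y~\forall\vec{\alpha}\in[\omega_1]^{W\uparrow}\bigl(\vec{\alpha}\text{ are $y$-admissibles}\to(\vec{\alpha},x)\in A\bigr),
\]
and likewise $x\notin B$ iff $\exists y~\forall\vec{\alpha}\bigl(\vec{\alpha}\text{ are $y$-admissibles}\to(\vec{\alpha},x)\notin A\bigr)$. The point is that the $y$-admissibles form an explicit $\mathbb{L}$-club, and conversely any $\mathbb{L}$-club $C\in L[z]$ witnessing $A_x\in\mu^W$ has the $z$-admissibles eventually inside it. Kechris--Martin then says the inner $\forall\vec{\alpha}$ quantifier does not raise the complexity of either the $\Pi^1_3$ or the $\Sigma^1_3$ matrix, and the outer $\exists y$ is an ordinary real quantifier. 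No Fubini decomposition is needed.

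Your base case has a genuine gap. You assert that $(\forall_\mu\alpha)A(\alpha,x)$ is equivalent to the existence of a club $C\subseteq A_x$ \emph{in the fixed structure} $\admistwo{M_1^{\#}(x)}$, but you do not justify the forward direction: the $\mathbb{L}$-club witnessing $A_x\in\mu$ lives in $L[z]$ for some $z$ that need bear no relation to $x$. Your proposed canonical witness---the $\Sigma_1$-stable ordinals of $\admistwo{M_1^{\#}(x)}$---has no evident reason to lie inside $A_x$; saying that ``membership is checked using the scale's norms'' does not explain why a $\Sigma_1$-stable $\alpha$ should satisfy the $\Pi^1_3$ predicate $A(\alpha,x)$. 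The gap can be closed by a separate argument that the club filter is an ultrafilter on the subsets of $\omega_1$ lying in $\admistwo{M_1^{\#}(x)}$, but once you see the $y$-admissibles trick the whole detour through scales and stable ordinals is unnecessary.
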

\begin{proof}
  $x \in B$ iff $\exists y ~\forall \vec{\alpha}\in [\omega_1]^{W\uparrow} (\vec{\alpha} \text{ are $y$-admissibles}\to (\vec{\alpha},x) \in A)$.   $x \notin B$ iff $\exists y~ \forall \vec{\alpha} \in [\omega_1]^{W \uparrow} (\vec{\alpha} \text{ are $y$-admissibles}\to (\vec{\alpha},x) \notin A)$. The quantifier $\forall \vec{\alpha} \in [\omega_1]^{W \uparrow}$ does not increase the complexity due to Kechris-Martin \cite{Kechris_Martin_II}. The complexity of $C$ follows from that of $B$ and \Los{}. 
\end{proof}
A purely descriptive set theoretic proof of Lemma~\ref{lem:Pi13_closed_under_club_measure} is given in \cite[Lemma 4.40]{jackson_handbook}.

\begin{mylemma}
  \label{lem:Pi13_lift_by_level_1_tree}
   Assume $\boldDelta{2}$-determinacy.  Suppose $Q$ is a finite level $\leq 2$ tree. If $A \subseteq \omega_1 \times \mathbb{R}$ is $\Pi^1_3$ (or $\Sigma^1_3$, $\Delta^1_3$ resp.), then so is 
\begin{displaymath}
     B = \set{ (\vec{\beta}, x) }{  \vec{\beta} \in [A_x]^{{Q}\uparrow}},
   \end{displaymath}
% uniformly in $Q$, 
where $A_x = \set{\alpha}{(\alpha,x) \in A}$.
\end{mylemma}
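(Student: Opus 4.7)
The plan is to mimic the pattern of Lemma~\ref{lem:Pi13_closed_under_club_measure} but unpack the definition of $[A_x]^{Q\uparrow}$ level-by-level so that it reduces to a finite conjunction of known pointclass-preservation facts.

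First I would invoke the characterization from Lemma~\ref{lem:Q_respecting}: $\vec{\beta} \in [A_x]^{Q\uparrow}$ is equivalent to the conjunction of (a) $\vec{\beta}$ respects $Q$, (b) $\comp{1}{\beta}_q \in A_x'$ for every $q \in \comp{1}{Q}$, and (c) $\comp{2}{\beta}_q \in j^{\comp{2}{Q}_{\tree}(q)}(A_x')$ for every $q \in \dom(\comp{2}{Q})$, where $A_x'$ denotes the set of limit points of $A_x$ (which is definable from $A_x$ by a bounded $\omega_1$ quantifier and hence of the same complexity class). Clause (a) is $\Delta^1_3$ uniformly by Lemma~\ref{lem:respecting_level_2_tree_is_Delta13}, so it contributes nothing to the complexity.

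Next I would handle (b) and (c). Clause (b) is a finite conjunction of instances of ``$(\gamma,x) \in A$'' (with $\gamma = \comp{1}{\beta}_q$), which is of the same complexity as $A$. For clause (c), for each fixed $q$ with $P_q = \comp{2}{Q}_{\tree}(q)$ a finite level-1 tree, I apply Lemma~\ref{lem:Pi13_closed_under_club_measure} to the set $A \subseteq \omega_1 \times \mathbb{R}$, viewed as a subset of $[\omega_1]^{\{(0)\}\uparrow}\times \mathbb{R}$ via the trivial level-1 tree on one node: the lemma then gives that $\{(\gamma,x) : \gamma \in j^{P_q}(A_x')\}$ has the same complexity as $A$. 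Substituting $\gamma = \comp{2}{\beta}_q$ (which is a coordinate of $\vec{\beta}$) preserves complexity, so (c) is a finite conjunction of sets of the same complexity as $A$.

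Finally, the finite conjunction of sets in $\Pi^1_3$ (resp.\ $\Sigma^1_3$, $\Delta^1_3$) with the $\Delta^1_3$ condition (a) is again in the same pointclass, which yields the conclusion. The only subtle point — which I expect to be the main obstacle — is verifying that clause (c) really is covered by Lemma~\ref{lem:Pi13_closed_under_club_measure}: one must check that $\comp{2}{\beta}_q \in j^{P_q}(A_x')$, read as a relation in $(\vec{\beta},x)$, can be written with the ordinal coordinate $\comp{2}{\beta}_q$ as the input to the $j^{P_q}$ operation, which amounts to noticing that the function $(\vec{\beta},x) \mapsto (\comp{2}{\beta}_q, x)$ is $\Delta^1_3$-recursive and composing; this is routine once the definability of respecting from Lemma~\ref{lem:respecting_level_2_tree_is_Delta13} is in hand.
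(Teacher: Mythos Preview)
Your proposal is correct and follows essentially the same approach as the paper: decompose $\vec{\beta}\in [A_x]^{Q\uparrow}$ via Lemma~\ref{lem:Q_respecting} into the $\Delta^1_3$ respecting condition plus membership of each coordinate in (an $j^{P_q}$-image of) $A_x'$, then appeal to Lemma~\ref{lem:respecting_level_2_tree_is_Delta13} and Lemma~\ref{lem:Pi13_closed_under_club_measure}. The paper's proof is just a terser statement of exactly these steps.
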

\begin{proof}
% Suppose $\vec{\beta} = (\comp{d}{\beta}_q)_{(d,q) \in \dom(Q)} = (\sharpcode{(\gcode{\comp{d}{\tau}_q},(\comp{d}{x}_q)^{\#})})_{(d,q) \in \dom(Q)}$.
%
%Similarly to the proof of Lemma~\ref{lem:level_2_respect_property_of_order}, 
%
Put $A_x'=$the set of limit points of $A_x$. 
By Lemma~\ref{lem:Q_respecting},  $\vec{\beta} \in [A_x]^{Q\uparrow}$ iff $\vec{\beta}$ respects $Q$ and for any $q \in \comp{1}{Q}$, $\comp{1}{\beta}_q \in A_x'$ , for any $q \in \dom(\comp{2}{Q})$, $\comp{2}{\beta}_q \in j^{Q_{\tree}(q)}(A_x')$. 
% there is a club $C \in \mu_{\mathbb{L}}$ such that for all 

% $(i_1,\ldots,i_k) \in \dom(Q)$, letting $Q(i_1,\ldots,i_k) = (P,t)$,  letting $C'$ be the set of limit points of $C$, for all $\vec{\alpha}=(\alpha_i)_{0 \leq i < k}\in [C']^{P\uparrow}$, 
% \begin{displaymath}
%   \sup \{ \tau^{L[x]}(\alpha_0,i_1,\ldots,\alpha_{k-1},i_k,\beta): \vec{\alpha} \concat \beta \in [C]^{P \cup t \uparrow}\}   
% \end{displaymath}
% is a limit point of $A_x$.
 Now apply Lemma~\ref{lem:Pi13_closed_under_club_measure} and Lemma~\ref{lem:respecting_level_2_tree_is_Delta13}.
  %It is not hard to prove that for $D \subseteq \omega_1$, $[D]^{{Q}} = [\omega_1]^{Q\uparrow} \setminus [\omega_1 \setminus D]^{Q \uparrow}$.
\end{proof}

\begin{mylemma}
  \label{lem:Pi13_closed_under_level_2_measure}
Assume $\boldDelta{2}$-determinacy. Suppose $Q$ is a finite level $\leq 2$ tree. If $A \subseteq [\omega_1]^{Q \uparrow} \times \mathbb{R}$ is $\Pi^1_3$ (or $\Sigma^1_3$, $\Delta^1_3$ resp.), then so is 
  \begin{displaymath}
     B = \set{  x }{ \text{for $\mu^Q$-a.e.\ }\vec{\beta}, (\vec{\beta} ,x) \in A},
   \end{displaymath}
% uniformly in $Q$. 
\end{mylemma}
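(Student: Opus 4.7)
The plan is to reduce the statement to the level-1 analog Lemma~\ref{lem:Pi13_closed_under_club_measure}, following the same pattern as its proof. First, I would rewrite the ``$\mu^Q$-a.e.'' quantifier as an existential over canonically-coded club witnesses:
\begin{displaymath}
  x \in B \iff \exists y \in \mathbb{R}\; \forall \vec{\beta} \in [\omega_1]^{Q\uparrow} \bigl( \vec{\beta} \in [C_y]^{Q \uparrow} \to (\vec{\beta}, x) \in A\bigr),
\end{displaymath}
where $C_y$ denotes the set of $y$-admissible ordinals. This rephrasing is valid because every club in $\mu_{\mathbb{L}}$ contains some $C_y$ and conversely each $C_y$ is itself in $\mu_{\mathbb{L}}$.

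Next, I would use Lemma~\ref{lem:Q_respecting} to decompose the inner membership condition $\vec{\beta} \in [C_y]^{Q \uparrow}$ into a conjunction of three types of clauses: (i) $\vec{\beta}$ respects $Q$, which is $\Delta^1_3$ by Lemma~\ref{lem:respecting_level_2_tree_is_Delta13}; (ii) for each $q \in \comp{1}{Q}$, $\comp{1}{\beta}_q$ is a limit point of $C_y$, which is arithmetic in $y$ and $\comp{1}{\beta}_q$; and (iii) for each $q \in \dom(\comp{2}{Q})$, $\comp{2}{\beta}_q \in j^{Q_{\tree}(q)}(C_y')$, which is $\Delta^1_3$ by Lemma~\ref{lem:Pi13_lift_by_level_1_tree}. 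The implication to $(\vec{\beta},x) \in A$ is then $\Pi^1_3$ (resp.\ $\Sigma^1_3$, $\Delta^1_3$) by assumption on $A$.

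The final and essential step is to show that the universal quantifier $\forall \vec{\beta} \in [\omega_1]^{Q\uparrow}$, which ranges over tuples of ordinals bounded by $u_\omega = \bolddelta{3}$, preserves $\Pi^1_3$. This is where I would invoke Kechris--Martin at the level-2 stage: each $\comp{2}{\beta}_q$ is represented modulo a level-1 measure $\mu^{P_q}$ by an $\omega_1$-valued function, so by \Los{} the outer quantifier over $\comp{2}{\beta}_q$ can be reduced to a $\mu^{P_q}$-quantifier over $[\omega_1]^{P_q \uparrow}$, which is absorbed by Lemma~\ref{lem:Pi13_closed_under_club_measure}. Iterating over the finitely many level-2 entries of $\vec{\beta}$ and combining with an ordinary Kechris--Martin $\omega_1$-quantifier for the level-1 components gives $B \in \Pi^1_3$, and the $\Sigma^1_3$ and $\Delta^1_3$ versions follow by complementation and intersection.

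The hard part is verifying cleanly that the \Los-reduction of the quantifier over $\comp{2}{\beta}_q < u_\omega$ to a quantifier over $[\omega_1]^{P_q \uparrow}$ is uniform in the other coordinates of $\vec{\beta}$, so that it can be handled by one application of Lemma~\ref{lem:Pi13_closed_under_club_measure} rather than requiring a new induction. This uniformity is what the description calculus developed earlier in this section is designed to supply, and I would leverage it to carry out the representation step all at once, reading $\vec{\beta}$ as $[f]^Q$ for $f \in \omega_1^{Q \uparrow}$ and quantifying over $f$ (which, by Lemma~\ref{lem:respect_lv2_measure_one_set}, is the correct level of quantification).
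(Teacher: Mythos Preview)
Your setup matches the paper's, but the quantifier bookkeeping goes wrong in two places.

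First, the claim that ``by \Los{} the outer quantifier over $\comp{2}{\beta}_q$ can be reduced to a $\mu^{P_q}$-quantifier'' is not correct: \Los{} relates a \emph{single} element $[f]_{\mu^{P_q}}$ to the $\mu^{P_q}$-a.e.\ behaviour of $f$; it does not convert a universal quantifier into an almost-everywhere quantifier. No such reduction is needed anyway. Each entry $\comp{d}{\beta}_q$ of $\vec{\beta}$ is an ordinal below $u_\omega$ (not $\bolddelta{3}$, incidentally), so $\forall \vec{\beta}$ is just a finite block of ordinal quantifiers over $u_\omega$, which Kechris--Martin absorbs directly into $\Pi^1_3$ (and into $\Sigma^1_3$). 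That is all the paper does: having invoked Lemma~\ref{lem:Pi13_lift_by_level_1_tree} for the complexity of $\vec{\beta}\in[C_y]^{Q\uparrow}$, it implicitly reuses the Kechris--Martin step from the proof of Lemma~\ref{lem:Pi13_closed_under_club_measure}. There is no need to represent $\vec{\beta}$ as $[f]^Q$ or to appeal to the description calculus.

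Second, and more seriously, you only record the equivalence for $x\in B$. For $A\in\Pi^1_3$ that equivalence has the form $\exists y\,[\Pi^1_3]$, which is only $\Sigma^1_4$; the outer real quantifier $\exists y$ does not preserve $\Pi^1_3$. The paper also writes down the companion equivalence $x\notin B \iff \exists y\,\forall\vec{\beta}\,(\vec{\beta}\in[C_y]^{Q\uparrow}\to(\vec{\beta},x)\notin A)$, valid because $\mu^Q$ is an $\mathbb{L}$-ultrafilter. For $A\in\Pi^1_3$ it is this \emph{second} equivalence that lands correctly: the implication is $\Sigma^1_3$, the $\forall\vec{\beta}$ keeps it $\Sigma^1_3$ by Kechris--Martin, and $\exists y$ keeps it $\Sigma^1_3$, so $\neg B\in\Sigma^1_3$ and hence $B\in\Pi^1_3$. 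You need both displayed equivalences, choosing whichever one puts the $\Sigma^1_3$ side under the $\exists y$.
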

\begin{proof}
Let $C = \set{(y,\alpha)}{\alpha <\omega_1 \wedge \alpha \text{ is $y$-admissible}}$. $C$ is $\Delta^1_3$. Then
$x \in B$ iff  $\exists y ~ \forall \vec{\beta}~ (\vec{\beta} \in [C_{y}]^{Q \uparrow} \to (\vec{\beta},x) \in A)$. $x \notin B$ iff  $\exists y ~ \forall \vec{\beta}~ (\vec{\beta} \in [C_{y}]^{Q \uparrow} \to (\vec{\beta},x) \notin A)$. 
Use  Lemma~\ref{lem:Pi13_lift_by_level_1_tree}.
\end{proof}

\begin{mycorollary}
  \label{coro:delta13_bounds_ultrapowers}
  Assume $\boldDelta{2}$-determinacy. Suppose $Q$ is a finite level $\leq 2$ tree. Then $j^Q(\alpha)< \bolddelta{3}$ for any $\alpha < \bolddelta{3}$. $j^Q(T_2) \in \admistwobold$.
\end{mycorollary}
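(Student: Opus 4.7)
The plan is to apply the standard template that bounds ultrapowers of ordinals below a projective ordinal using $\Pi^1_3$-in-the-codes definability. The only new input is Lemma~\ref{lem:Pi13_closed_under_level_2_measure}, which says that $\mu^Q$-measure-one sets (for $Q$ a finite level $\leq 2$ tree) are $\Pi^1_3$-in-the-codes, exactly as the club measure on $\omega_1$ is. So the classical Kunen argument for bounding $j_{\mu_{\mathbb{L}}}(\alpha)$ below $\bolddelta{3}$ lifts verbatim to $j^Q$.

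For the first assertion, fix $\alpha < \bolddelta{3}$ and pick $x \in \mathbb{R}$ together with a $\Delta^1_3(x)$ prewellorder $\leq^{*}$ on $\mathbb{R}$ of length $\geq \alpha$. Using $\boldpi{3}$-uniformization (which follows from $\boldDelta{2}$-determinacy), I would set up a $\Delta^1_3$ coding of functions $f : [\omega_1]^{Q \uparrow} \to \operatorname{field}(\leq^{*})$ by reals, $z \mapsto f_z$, and define
\[
z_0 \leq^{**} z_1 \iff \{ \vec{\beta} \in [\omega_1]^{Q \uparrow} : f_{z_0}(\vec{\beta}) \leq^{*} f_{z_1}(\vec{\beta}) \} \in \mu^Q .
\]
By Lemma~\ref{lem:Pi13_closed_under_level_2_measure}, the relation $\leq^{**}$ is $\Delta^1_3(x)$, hence its length is $< \bolddelta{3}$. \Los{}'s theorem, together with the fact that every element of the $\mu^Q$-ultrapower of $\leq^{*}$ is realized by some $z$, shows that this length majorizes $j^Q(\alpha)$, giving $j^Q(\alpha) < \bolddelta{3}$.

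For the second assertion, recall that $T_2$ is the level-2 Martin-Solovay tree on $\omega \times u_{\omega}$ and lies in $\admistwobold$. By the first assertion $j^Q(u_{\omega}) < \bolddelta{3}$, so $j^Q(T_2)$ is a tree on $\omega \times j^Q(u_{\omega}) \subseteq \omega \times \bolddelta{3}$. Each entry of $j^Q(T_2)$ is an equivalence class $(s,[g]_{\mu^Q})$ for some $g$ uniformly definable from $T_2$, and the membership predicate is $\Delta^1_3$-in-the-codes. Corollary~\ref{coro:Delta13_pwo_computable_in_LT2} then yields that $j^Q(T_2)$, together with the indexing of its ordinal entries, is $\Delta_1$-definable over $\admistwobold$ from $T_2$ and $M_1^{\#}$, placing $j^Q(T_2)$ in $\admistwobold$.

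The principal obstacle is the surjectivity step in the first assertion: one must show every element of $j^Q(\alpha)$ is realized by a real code, so that $\ot(\leq^{**}) \geq j^Q(\alpha)$ actually holds. This is exactly the point at which $\boldpi{3}$-uniformization is essential, used to select, uniformly and definably in $\vec{\beta}$, a witnessing real for the prescribed value $f(\vec{\beta})$ in the prewellorder $\leq^{*}$. Once this coding is in place, the rest of the argument is formal bookkeeping with Lemma~\ref{lem:Pi13_closed_under_level_2_measure} and Corollary~\ref{coro:Delta13_pwo_computable_in_LT2}.
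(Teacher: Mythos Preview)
Your proposal is correct and follows essentially the same route as the paper: invoke Lemma~\ref{lem:Pi13_closed_under_level_2_measure} to exhibit $j^Q(\alpha)$ as the length of a $\boldDelta{3}$ prewellordering, and invoke Corollary~\ref{coro:Delta13_pwo_computable_in_LT2} for $j^Q(T_2)\in\admistwobold$. The paper's own proof is a two-line sketch citing exactly these two results; you have simply unpacked the first citation, correctly isolating the one nontrivial point (surjectivity of the real coding onto the functions in $\admistwobold$) and handling it via $\boldpi{3}$-uniformization.
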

\begin{proof}
  By Lemma~\ref{lem:Pi13_closed_under_level_2_measure}, for any $\alpha< \bolddelta{3}$, $j^Q(\alpha)$ is the length of a $\boldDelta{3}$ prewellordering on $\mathbb{R}$. $j^Q(T_2) \in \admistwo{M_1^{\#}}$ by Corollary~\ref{coro:Delta13_pwo_computable_in_LT2}.
\end{proof}

Corollary~\ref{coro:delta13_bounds_ultrapowers} is the effective version of \cite[Corollary 3.9]{sol_delta13coding}. Actually, $j^Q(T_2)$ is $\Delta^1_3$ in the sharp codes, a fact to be shown in Section~\ref{sec:approximation-LT3-LT2}.

For $Q$  a finite level $\leq 2$ tree, % , put  $\admistwoboldextra{j^Q} = \bigcup_{x \in \mathbb{R}, \alpha< \bolddelta{3}} L _{\alpha}[j^Q(T_2),x]$. 
by Corollary~\ref{coro:delta13_bounds_ultrapowers}, $\admistwoboldextra{j^Q} = \ult(\admistwobold,\mu^Q)$.% The fact $j^Q(\kappa_3^x)< \bolddelta{3}$ can be seen by establishing a $\boldDelta{3}$ prewellordering on $\mathbb{R}$ of length $j^Q(\kappa_3^x)$ directly from the $\mu^Q$-ultrapower definition, as shown in the next few lemmas. 
 % Similar notations will be adopted for iterated ultrapowers. For instance, if $W$ is a finite level-1 tree and $Q$ is a finite level-2 tree, then $\admistwoboldextra{j^Q \circ j^W} = \bigcup_{x \in \mathbb{R}, \alpha < \bolddelta{3}}L_{\alpha}(j^Q \circ j^W(T_2),x)$. 
%  and $j^Q \res $. 
 % is the ultrapower of $\mu^Q$ acting on the admissibly closed structure $\admistwoboldextra{j^W}$.

% If $\vec{\beta}$ respects $Q$, the order of the different $\comp{d}{\beta}_{\mathbf{q}}$'s for  $Q$-descriptions and extended $Q$-descriptions $(d,\mathbf{q})$  is effective decided by $Q$. % Under $\boldDelta{2}$-determinacy, we shall define a $\admistwobold$-measure $\mu^Q$ in Section~\ref{sec:level-2-embedding}. The map $\vec{\beta} \mapsto \comp{d}{\beta}_{\mathbf{q}}$ will represent modulo $\mu^Q$ a $\admistwoboldextra{j^Q}$-regular cardinal. 

Suppose $Q$ is a finite level $\leq 2$ tree. Suppose $(d,\mathbf{q})\in \exdesc(Q)$, and if $d=2$ then $\mathbf{q} = (q,P, \vec{p})$. Put
\begin{displaymath}
  \llbracket(d,\mathbf{q}) \rrbracket _Q =
  \begin{cases}
    \wocode{(1, (\mathbf{q}))}_{<^Q} & \text{ if } d=1, \\
    [\vec{\alpha} \mapsto \wocode{ (2, \vec{\alpha} \oplus_{\comp{2}{Q}} q)}_{<^Q}]_{\mu^P} & \text{ if }d=2.
  \end{cases}
\end{displaymath}
To save ink, put $\llbracket d, \mathbf{q} \rrbracket_Q  = \llbracket (d,\mathbf{q}) \rrbracket_Q$. 
If in addition, $d=2$ and $\mathbf{q}\in \desc(Q)$ of discontinuous type, put  $\llbracket 2,q \rrbracket_Q = \llbracket 2, \mathbf{q} \rrbracket_Q$. 
It is easy to compute $\llbracket d,\mathbf{q} \rrbracket_Q$ only from the syntactics.
\begin{mydefinition}\label{def:ordinal_assignment}
  To every ordinal $ \xi < \omega^{\omega^{\omega}}$ (ordinal arithmetic), we assign $\widehat{\xi}$ as follows:
  \begin{enumerate}
     \item $\widehat{0} = 0$.
  \item $\widehat{1} = \omega$.
    % \item If $n<\omega$, then $\widehat{\omega^{\omega^n}} = u_{n+1}$.
  \item If $0 < \eta = \omega^{n_1}+ \dots + \omega^{n_k} < \omega^{\omega}$, $\omega>n_1 \geq \dots \geq n_k $ in the Cantor normal form, then $\widehat{\omega^{\eta}} = u_{n_1+1} \cdot \dots \cdot u_{n_k+1}$.
  \item If $0 < \xi = \omega^{\eta_1}+ \dots + \omega^{\eta_k}$, $\omega^{\omega} > \eta_1\geq \dots \geq \eta_k$ in the Cantor normal form, then $\widehat{\xi} = \widehat{\omega^{\eta_1}}+\dots+ \widehat{\omega^{\eta_k}}$.
  \end{enumerate}
\end{mydefinition}

Then
\begin{displaymath}
    \set{\widehat{\xi}}{0 < \xi < \omega^{\omega^{\omega}}} = \{   \llbracket d,\mathbf{q} \rrbracket_Q :~  Q \text{ finite level  $\leq 2$ tree} ,
(d,\mathbf{q}) \in \exdesc(Q) \}
\end{displaymath}
and the relation $\widehat{\xi} =  \llbracket d,\mathbf{q} \rrbracket_Q $ is effective. The ordering among different $\llbracket d,\mathbf{q} \rrbracket_Q$ can be computed in the following concrete way. Put $\corner{1,\mathbf{q}} = (1,\mathbf{q})$. For $\mathbf{q} = (q,P,\vec{p})$, $k=\lh(q)$, $\vec{p}=(p_i)_{i < \lh(\vec{p})}$, put 
\begin{displaymath}
  \corner{ 2,\mathbf{q} }=
  \begin{cases}
     (2,  \wocode{ p_0 }_{\prec^P}, q(0), \dots, \wocode{ p_{k-2}}_{\prec^P}, q(k-2), -1) \\
  \qquad\text{ if }\mathbf{q} \in \desc(\comp{2}{Q}) \text{ of continuous type},\\
    (2,  \wocode{ p_0} _{\prec^P}, q(0), \dots, \wocode{p_{k-1}}_{\prec^P}, q(k-1), -1) \\
 \qquad\text{ if }\mathbf{q} \in \desc(\comp{2}{Q}) \text{ of discontinuous type},\\
    (2,  \wocode{ p_0} _{\prec^P}, q(0), \dots, \wocode{p_{k-1}}_{\prec^P}, q(k-1), \wocode{p_k}_{\prec^P}) \\
 \qquad\text{ if } \mathbf{q} \notin \desc(\comp{2}{Q}) .
  \end{cases}
\end{displaymath}
Define 
\begin{displaymath}
  (d,\mathbf{q}) \prec (d',\mathbf{q}')
\end{displaymath}
iff $\corner{d,\mathbf{q}} <_{BK} \corner{d',\mathbf{q}'} $. Define 
\begin{displaymath}
  (d,\mathbf{q}) \sim (d',\mathbf{q}')
\end{displaymath}
iff $\corner{d,\mathbf{q}}  = \corner{d',\mathbf{q}'} $.
Then for any  finite level $\leq 2$ tree $Q$, $ \llbracket d,\mathbf{q} \rrbracket_Q <  \llbracket d',\mathbf{q}' \rrbracket_Q $ iff $  (d,\mathbf{q}) \prec (d',\mathbf{q}')$; $ \llbracket d,\mathbf{q} \rrbracket_Q =  \llbracket d',\mathbf{q}' \rrbracket_Q $ iff $  (d,\mathbf{q}) \sim (d',\mathbf{q}')$. In fact, $(d,\mathbf{q}) \sim (d',\mathbf{q}')$ iff either $(d,\mathbf{q}) = (d',\mathbf{q}')$ or 
 $\se{(d,\mathbf{q}), (d',\mathbf{q}')} = \se{(2,(\emptyset,\emptyset,(0))), (2, ((-1), \se{(0)}, ((0))))}$.

Define $\prec^Q = \prec \res \exdesc(Q)$, $\sim^Q = \sim \res \exdesc(Q)$. 
It is also easy to verify the next lemma on the order of the entries of $\vec{\beta}$ which respects $Q$.
\begin{mylemma}
  \label{lem:level_2_beta_q_order}
  Suppose $Q$ is a level $\leq 2$ tree and $\vec{\beta}$ respects $Q$. Suppose   $(d,\mathbf{q}), (d',\mathbf{q}')\in \exdesc(Q)$. 
  Then    $\comp{d}{\beta}_{\mathbf{q}} < \comp{d'}{\beta}_{\mathbf{q}'}$ iff $(d,\mathbf{q}) \prec^Q (d,\mathbf{q}')$;  $\comp{d}{\beta}_{\mathbf{q}} = \comp{d'}{\beta}_{\mathbf{q}'}$ iff $(d,\mathbf{q}) \sim^Q (d,\mathbf{q}')$.
\end{mylemma}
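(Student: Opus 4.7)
The strategy is to reduce everything to a \Los-style computation on representative functions. First, by Lemma~\ref{lem:Q_respecting} combined with Lemma~\ref{lem:respect_lv2_measure_one_set}, I can fix a club $C \in \mu_{\mathbb{L}}$ and a function $f \in \omega_1^{Q\uparrow}$ such that $\vec{\beta} = [f]^Q$, $\comp{1}{f}(q)$ is a limit point of $C$ for each $q \in \comp{1}{Q}$, and $\comp{2}{f}_q(\vec{\alpha})$ is a limit point of $C$ for each $q \in \dom(\comp{2}{Q})$ and each $\vec{\alpha} \in [E]^{\comp{2}{Q}_{\tree}(q)\uparrow}$, where $E$ is the associated measure-one set. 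Using Lemma~\ref{lem:beta_q_unambiguous}, for every $\mathbf{q} = (q, P, \vec{p})$ of continuous type, $\comp{2}{\beta}_{\mathbf{q}}$ is unambiguously determined as $j^{P^-,P}_{\sup}(\comp{2}{\beta}_{q^-})$, so the quantities $\comp{d}{\beta}_{\mathbf{q}}$ are all well-defined on $\exdesc(Q)$.

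Next I would do a case split according to $(d, d') \in \{1,2\}^2$. When $d = d' = 1$, the claim reduces to the tautology that the ordering on $\comp{1}{f}(\mathbf{q})$ is governed by the $<^Q$-ordering on $\comp{1}{Q}$, which is precisely the Brouwer-Kleene ordering encoded by $\corner{1,\mathbf{q}}$. When $d = d' = 2$ with $\mathbf{q}, \mathbf{q}'$ in $\desc(Q)$ (non-extended), I form the product tree $P \otimes P'$ (or use the common refinement if $P = P'$) and compare the representative functions $\vec{\alpha} \mapsto \wocode{(2, \vec{\alpha} \oplus_{\comp{2}{Q}} q)}_{<^Q}$ pointwise on a $\mu^{P \otimes P'}$-measure-one set; the $<^Q$ order of $\vec{\alpha} \oplus_{\comp{2}{Q}} q$ versus $\vec{\alpha}' \oplus_{\comp{2}{Q}} q'$ coincides exactly with the Brouwer-Kleene comparison of $\corner{2,\mathbf{q}}$ and $\corner{2,\mathbf{q}'}$ by construction. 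The mixed case $d \neq d'$ is handled similarly: $\corner{1,\mathbf{q}}$ begins with the symbol $1$ and $\corner{2,\mathbf{q}'}$ with $2$, so the $<_{BK}$ comparison is determined by the first coordinate, matching the fact that level-$1$ and level-$2$ nodes of $Q$ are ordered the same way in $<^Q$.

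The extended cases $\mathbf{q} \in \exdesc(Q) \setminus \desc(Q)$ require the most care: here the extra coordinate $\wocode{p_k}_{\prec^P}$ appears in $\corner{2,\mathbf{q}}$. These correspond to descriptions that are ``virtual'' in the sense that $\comp{2}{\beta}_{\mathbf{q}}$ is represented by a function that continues past a limit using the $p_k$-th sub-tower; but after throwing away coordinates not in $\dom(\comp{2}{Q})$ the $<^Q$-comparison still tracks the nested position within the completions of $P$, which is again exactly what the additional $\wocode{p_k}$ entry records lexicographically. The remaining task is to verify the degenerate identification in $\sim^Q$: the pair $\se{(2,(\emptyset,\emptyset,(0))), (2, ((-1), \se{(0)}, ((0))))}$ both name the ordinal $\omega$ (one via the constant $\omega$ function, the other via the first non-trivial node), and this equality is immediate from the definition of $\llbracket 2, \mathbf{q}\rrbracket_Q$.

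The main obstacle I anticipate is bookkeeping the extended-description case and the degenerate $\sim^Q$-equivalence, since the two sides of the equality come from syntactically very different objects. Once those are aligned, the remaining content is a direct translation between the Brouwer-Kleene ordering on the codes $\corner{d,\mathbf{q}}$ and the ultrapower ordering on representative functions via \Los, building on Lemmas~\ref{lem:Q_respecting}, \ref{lem:respect_lv2_measure_one_set}, and \ref{lem:beta_q_unambiguous}.
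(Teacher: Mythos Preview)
Your approach is essentially what the paper has in mind; the paper itself does not give a proof and simply remarks that the lemma ``is also easy to verify,'' so your case analysis via representative functions and \Los\ is a reasonable elaboration of that verification.

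One small correction: in the degenerate $\sim^Q$-equivalence, the pair $(2,(\emptyset,\emptyset,((0))))$ and $(2,((-1),\se{(0)},((0))))$ do not both name $\omega$ but rather $\omega_1$. The first is the constant description, for which $\comp{2}{\beta}_{\emptyset}=\omega_1$ by convention; the second is of continuous type, and Lemma~\ref{lem:beta_q_unambiguous} gives $\comp{2}{\beta}_{\mathbf{q}}=j^{\emptyset,\se{(0)}}_{\sup}(\omega_1)=\sup_{\alpha<\omega_1}j^{\se{(0)}}(\alpha)=\omega_1$, since the club-measure ultrapower map fixes countable ordinals. This does not affect your argument structurally, but the value should be corrected. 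Also, your talk of forming ``the product tree $P\otimes P'$'' for two level-$1$ trees is slightly informal; what you actually need is a common level-$1$ tree into which both factor, which is immediate but worth stating precisely.
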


For level $\leq 2$ trees $Q,X$, we say that $\pi: \dom(X) \to \dom(Q)$ \emph{factors} $(X,Q)$ iff putting $(d, \comp{d}{\pi}(x)) = \pi(d,x)$ for $(d,x) \in \dom(X)$, 
\begin{enumerate}
\item $\comp{1}{\pi}$ factors $(\comp{1}{X},\comp{1}{Q})$;
\item if $x \in \dom(\comp{2}{X})$ then $\comp{2}{X}(x) = \comp{2}{Q}(\comp{2}{\pi}(x))$;
\item if $x ,x' \in \dom(\comp{2}{X})$, then $x <_{BK} x' \to \comp{2}{\pi}(x)<_{BK} \comp{2}{\pi}(x')$, $x \subseteq x' \to \comp{2}{\pi}(x) \subseteq \comp{2}{\pi}(x')$. 
\end{enumerate}
For $d \in \se{1,2}$, $\comp{d}{\pi}$ has this fixed meaning if $\pi$ factors $(Q,X)$. Extend the definition of $\comp{2}{\pi}$ on $\bardom(\comp{2}{Q})$ and $\desc(\comp{2}{Q})$ is the following natural way: if $q  \concat (-1)\in \bardom(\comp{2}{Q})$, define $\comp{2}{\pi}(q \concat (-1)) = \pi(q) \concat (-1)$;
if $ \mathbf{q} = (q,P,\vec{p}) \in \desc(\comp{2}{Q})$, define $\comp{2}{\pi} (\mathbf{q}) = (\comp{2}{\pi}(q), P,\vec{p})$. If $\vec{\beta} = (\comp{d}{\beta}_q)_{(d,q) \in \dom(Q)} \in [\omega_1]^{Q \uparrow}$, put $\vec{\beta}_{\pi} = (\comp{d}{\beta}_{\pi,x})_{(d,x) \in \dom(X)} \in [\omega_1]^{X \uparrow}$ where $\comp{d}{\beta}_{\pi,x} = \comp{d}{\beta}_{\comp{d}{\pi}(x)}$.

% The main use of the level-2 partition property of $\omega_1$, defined in\cite{sharpII}, is the following variant.
% \begin{myproposition}
%   \label{prop:partition}
% Suppose that $X,T$ are finite level $\leq 2$ trees and $\pi$ factors $(X,Q)$. Suppose $A \in \mathbb{L}_{\bolddelta{3}}[T_2]$. Then there is a club $C$ such that either
% \begin{itemize}
% \item for any $f \in C^{T \uparrow}$, $([f]^T,  [f \res ] )$
% \end{itemize}
% \end{myproposition}

\subsection{Level-2 description analysis}
\label{sec:level-2-description}

If $Q$ is a level-2 tree, $\mathbf{q}=(q,P, \vec{p} ) \in \desc({Q})$, $\lh(q)=k$, $\vec{p}=(p_i)_{i < \lh(\vec{p})}$, % $\lh(q) = k+1$, $Q_{\node}(q \res l) = (1,p_l)$, 
$\sigma$ is a function whose domain contains $P$, we put
\begin{displaymath}
 \sigma \oplus \mathbf{q} =  \sigma \oplus_Q q = (\sigma(p_0), q(0), \ldots,\sigma(p_{k-1}), q(k-1)).
\end{displaymath}

\begin{mydefinition}
  \label{def:level-2_W_Q_description}
  Suppose $W$ is a finite level-1 tree  and suppose $Q$ is a level $\leq 2$ tree. A \emph{$(Q,W)$-description} is of the form
  \begin{displaymath}
    \mathbf{D} = (d,(\mathbf{q},\sigma)) 
  \end{displaymath}
such that either
\begin{enumerate}
\item $d= 1$, $\mathbf{q} \in\comp{1}{Q}$, $\sigma=\emptyset$, or
\item $d=2$, $\mathbf{q} =(q,P, \vec{p})\in \desc(\comp{2}{Q})$, $\sigma$ factors $(P,W)$. 
\end{enumerate}
 $\desc(Q,W)$ is the set of $(Q,W)$-descriptions.   A $(Q,*)$-description is a $(Q,W')$-description for some finite level-1 tree $W'$. $\desc(Q,*)$ is the set of $(Q,*)$-descriptions.  We sometimes abbreviate $(d, \mathbf{q}, \sigma) $ for  $(d, (\mathbf{q}, \sigma)) \in \desc(Q,W)$ without confusion. 

Suppose now $\mathbf{D}=(d,\mathbf{q},\sigma)$ and if $d=2$, then $\mathbf{q} = (q,P, \vec{p})$, $\vec{p} = (p_i)_{i < \lh(\vec{p})}$, $\lh(q) = k$. 
 The \emph{degree} of $\mathbf{D}$ is $d$. 
The \emph{signature} of $\mathbf{D}$ is 
\begin{displaymath}
  \sign(\mathbf{D})=
  \begin{cases}
    \emptyset & \text{ if }d = 1,\\
    (\sigma(p_i))_{i < k} & \text{ if } d=2.
  \end{cases}
\end{displaymath}
$\mathbf{D}$ is \emph{of continuous type} iff $d=2$ and $\mathbf{q}$ is of continuous type; otherwise, $\mathbf{D} $ is \emph{of discontinuous type}.
The \emph{uniform cofinality} of $\mathbf{D}$ is
\begin{displaymath}
  \ucf(\mathbf{D}) =
  \begin{cases}
    -1 &  \text{if } d=1 \vee (d=2 \wedge \ucf(P,\vec{p}) = -1),\\
   \sigma(\ucf(P,\vec{p})) & \text{if } d=2 \wedge \ucf(P,\vec{p}) \neq -1.
  \end{cases}
\end{displaymath}
The \emph{$*$-signature} of $\mathbf{D}$ is 
\begin{displaymath}
  \sign_{*}(\mathbf{D})=
  \begin{cases}
    ((1, \mathbf{q})) & \text{ if }d = 1,\\
    ((2, q \res i))_{1 \leq i \leq k-1} & \text{ if } d=2, q \text{ of continuous type},\\
    ((2, q \res i))_{1 \leq i \leq k} & \text{ if } d=2, q \text{ of discontinuous type}.
  \end{cases}
\end{displaymath}
 $\mathbf{D}$ is \emph{of  $*$-$W$-continuous type} iff either $d=1$ or ($d=2$ and ($\ucf(P,\vec{p}) \neq -1 \wedge \sigma(\ucf(P,\vec{p})) \neq \min(\prec^W)$) implies $ \pred_{\prec^W}(\sigma(\ucf(P,\vec{p}))) \in \ran(\sigma)$).
Otherwise, $\mathbf{D}$ is \emph{of $*$-$W$-discontinuous type}. 
The \emph{$*$-$W$-uniform cofinality} of $\mathbf{D}$ is
\begin{displaymath}
  \ucf_{*}^W(\mathbf{D})
\end{displaymath}
defined as follows. If $d=1$, then $\ucf_{*}^W (\mathbf{D}) = (1, \mathbf{q})$.
 If $d=2$, $q$ is of continuous type, 
    \begin{enumerate}
    \item if $\mathbf{D}$ is of $*$-$W$-continuous type, then $\ucf_{*}^W(\mathbf{D}) = (2,( q^{-}, P\setminus \se{p_{k-1}}, \vec{p}))$;
    \item if $\mathbf{D}$ is of $*$-$W$-discontinuous type, then $\ucf_{*}^W(\mathbf{D}) = (2, (q^{-}, P, \vec{p}))$.
    \end{enumerate}
If $d=2$,  $q$ is of discontinuous type,
    \begin{enumerate}
    \item if $\mathbf{D}$ is of $*$-$W$-continuous type, then $\ucf_{*}^W(\mathbf{D}) = (2,\mathbf{q})$;
    \item if $\mathbf{D}$ is of $*$-$W$-discontinuous type, then $\ucf_{*}^W(\mathbf{D}) = (2,(q, P\cup \se{p_k}, \vec{p})$.
    \end{enumerate}

 The \emph{constant $(Q,*)$-description} is $(2,(\emptyset,\emptyset, ((0))), \sigma_0)$ where $\sigma_0$ is the unique that factors $(\emptyset,*)$, i.e.,  $\sigma_0(\emptyset) = \emptyset$. 
\end{mydefinition}

Note that if $\mathbf{D} \in \desc(Q,W)$ and $W$ is a subtree of $W'$, then $\mathbf{D} \in \desc(Q,W')$, but $\ucf_{*}^W(\mathbf{D})$ could be different from $\ucf_{*}^{W'}(\mathbf{D})$.
% For $q \in \dom(\comp{2}{Q})$, let $q_Q$ be the longest initial segment of $q$ such that $Q(q_Q)$ is of degree 1 and let $P^Q_q = Q_{\tree}(q_Q)$. 
If $Q$ is finite, there are in total
\begin{displaymath}
\card(\comp{1}{Q} )  + \sum_{q\in \dom(\comp{2}{Q})} {\card(W) \choose \lh(q)} +  \sum_{\comp{2}{Q}(q) \text{ of degree 1}}{\card(W) \choose \lh(q)+1} 
\end{displaymath}
many $(Q,W)$-descriptions.   We shall establish an exact correspondence between  $\desc(Q,W)$ and uniform indiscernibles $\leq j^Q \circ j^W (\omega_1)$.
%The constant $(Q,W)$-description is $((\emptyset,\se{\emptyset}), \sigma)$, where $\sigma(\emptyset) = \emptyset$. It is of type 1.

Suppose $\mathbf{D} = (d,\mathbf{q},\sigma) \in \desc(Q,W)$, and if $d=2$, then  $\mathbf{q}=(q,P,\vec{p})$, $\vec{p}=(p_i)_{i < \lh(\vec{p})}$, $\lh(q) = k$. For $g \in\omega_1^{Q \uparrow}$, let
\begin{displaymath}
  g_{\mathbf{D}}^W : [\omega_1]^{W \uparrow} \to \omega_1+1
\end{displaymath}
be the function as follows:  if $d=1$, then $g^W_{\mathbf{D}}(\vec{\alpha}) = {}^1[g]^{{Q}}_{{\mathbf{q}}}$ when $\min(\vec{\alpha}) > {}^1[g]^{{Q}}_{{\mathbf{q}}}$, $g^W_{\mathbf{D}} (\vec{\alpha}) = \wocode{ (1, (q)) }_{<^Q} $ otherwise\footnote{the split in definition is insignificant, only to ensure Lemma~\ref{lem:level_2_desc_order}.}; if $d=2$, then $g^W_{\mathbf{D}} (\vec{\alpha}) = \comp{2}{g}_q (\vec{\alpha}_{\sigma} )$ (Recall the definition of $\vec{\alpha}_{\sigma}$ in Section~\ref{sec:level-1-analysis}).  In particular, 
if $\mathbf{D}$ is the constant $(Q,*)$-description, then $g_{\mathbf{D}}^W$ is the constant function with value $\omega_1$. Clearly, 
the signature of $g^W_{\mathbf{D}}$ is  $\sign(\mathbf{D})$; $\mathbf{D}$ is of continuous type iff $g^W_{\mathbf{D}}$ is essentially continuous; the  uniform cofinality of $g_{\mathbf{D}}^W$ is $\ucf(\mathbf{D})$. Suppose  additionally that $Q$ is finite. Let
\begin{displaymath}
  \id_{\mathbf{D}}^{Q,W}
\end{displaymath}
be the function  $[g]^Q\mapsto [g_{\mathbf{D}}^W]_{\mu^W}$, or equivalently,  $\vec{\beta}\mapsto \sigma^{W} (  \comp{d}{\beta}_{\mathbf{q}})$, where $\emptyset^W$ is interpreted as $j^W$. Clearly, the signature of $\id^{Q,W}_{\mathbf{D}}$ is $\sign_{*}^W(\mathbf{D})$; $\id^{Q,W}_{\mathbf{D}}$ is essentially continuous iff $\mathbf{D}$ is of $*$-$W$-continuous type; the uniform cofinality of $\id^{Q,W}_{\mathbf{D}}$ is $\ucf_{*}^W(\mathbf{D})$.
Let
\begin{displaymath}
  \seed_{\mathbf{D}}^{Q,W} \in \admistwoboldultratwo{Q}{W}
\end{displaymath}
be the element represented modulo $\mu^Q$ by $\id^{Q,W}_{\mathbf{D}}$. In particular, if $d=1$ then $\seed^{Q,W}_{\mathbf{D}} = \seed^Q_{(1,\mathbf{q})}$; if $d=2$, $P = W$ and $\sigma =\id_P$, then $\seed^{Q,W}_{\mathbf{D}} = \seed^Q_{(2, \mathbf{q})}$.
By \Los{}, if $\mathbf{D}$ is not the constant $(Q,*)$-description, for any $A \in \mu_{\mathbb{L}}$, $\seed_{\mathbf{D}}^{Q,W} \in j^Q\circ j^W(A)$. Thus, we can define
\begin{displaymath}
  \mathbf{D}^{Q,W}:  \admistwoboldextra{j_{\mu_{\mathbb{L}}}} \to \admistwoboldultratwo{Q}{W} 
\end{displaymath}
by sending  $j_{\mu_{\mathbb{L}}}(h)(\omega_1)$ to $j^Q\circ j^{W} (h) (\seed_{\mathbf{D}}^{Q,W})$. 

If $Q$ is a level-2 tree, $\mathbf{q} = (q,P,\vec{p}) \in \desc(Q)$, $l \leq \lh(q)$, define
\begin{displaymath}
  \mathbf{q} \res l = (q \res l,  \set{p_i}{i < l}, (p_i)_{i \leq l}).
\end{displaymath}
which is a $Q$-description. 
If $\mathbf{D} = (2, \mathbf{q}, \sigma) \in \desc(Q,*)$, $\mathbf{q} = (q, P, \vec{p})$, $l \leq \lh(q)$, define
\begin{displaymath}
  \lh(\mathbf{D}) = \lh (\mathbf{q})
\end{displaymath}
and
\begin{displaymath}
  \mathbf{D} \res l = (2, \mathbf{q} \res l ,  \sigma \res \set{p_i}{ i < l})
\end{displaymath}
which is a $(Q,*)$-description. % Define $\mathbf{D}^{-} = \mathbf{D} \res \lh(q)-1$. 
Define
\begin{displaymath}
  \mathbf{D} \iniseg \mathbf{D}'
\end{displaymath}
 iff  $\mathbf{D} = \mathbf{D}' \res l$ for some $l < \lh(\mathbf{D}')$. Define $\mathbf{D}^{-} = \mathbf{D} \res \lh(\mathbf{D})-1$. Define $\iniseg^{Q,W} = \iniseg \res \desc(Q,W)$.

 The ordering of $\seed_{\mathbf{D}}^{Q,W}$ is definable in the following concrete way. 
% If $\mathbf{q} = 1$, let $\corner{\mathbf{D}} = (1,\mathbf{q})$. 
 %If $d=1$, put $\corner{\mathbf{D}} = (1, \mathbf{q})$. If $d=2$, put
Put 
\begin{displaymath}
  \corner{\mathbf{D}}=
  \begin{cases}
    (1,\mathbf{q}) & \text{if } d =1, \\
    (2, \sigma \oplus \mathbf{q} ) & \text{if } d = 2.
  \end{cases}
\end{displaymath}
Define
\begin{displaymath}
\mathbf{D} \prec \mathbf{D}'
\end{displaymath}
 iff $\corner{\mathbf{D}}<_{BK} \corner{\mathbf{D}'}$, the ordering on subcoordinates in $ \omega^{<\omega} $ again according to $<_{BK}$. 
For example, the constant $(Q,*)$-description $\mathbf{D}_0$ is the $\prec$-maximum, and we have $\corner{\mathbf{D}_0} = (2,\emptyset)$. 
When $1 \leq \card(\comp{1}{Q}) < \aleph_0$, the $\prec$-least $(Q,*)$-description is $( 1, q, \emptyset)$, where $q$ is the $<_{BK}$-least node in $\comp{1}{Q}$. 
When $W \neq \emptyset$,  the $\prec$-least $(Q,W)$-description of degree 2 is $\mathbf{D}_W=(2, ((-1),\se{(0)}, ( ( 0) )), \sigma_W)$, where $\sigma_W((0))=$the $<_{BK}$-least node in ${W}$, and we have $\corner{\mathbf{D}_W} =(2, (\sigma_W(1),-1))$. Define $\prec^{Q,W} = \prec \res \desc(Q,W)$. %Then $\llbracket \mathbf{D} \rrbracket^{Q,W} <  \llbracket \mathbf{D}' \rrbracket^{Q,W}$ iff $\mathbf{D} \prec^{Q,W} \mathbf{D}'$; $\llbracket \mathbf{D} \rrbracket^{Q,W} = \llbracket \mathbf{D}' \rrbracket^{Q,W}$ iff $\mathbf{D} = \mathbf{D}'$.
 $\prec^{Q,W}$ exactly determines  the order of the $\seed_{\mathbf{D}}^{Q,W}$'s, as in the following lemma. It is parallel to Lemma~\ref{lem:level_2_beta_q_order}. 
 \begin{mylemma}
   \label{lem:level_2_desc_order}
   Suppose   $\mathbf{D},\mathbf{D}' \in \desc(Q,W)$ and  $\mathbf{D} \prec ^{Q,W} \mathbf{D}'$. Then
   \begin{enumerate}
   \item For any $g \in \omega_1^{Q \uparrow}$, for any $\vec{\alpha} \in \omega_1^{W \uparrow}$, $g^W_{\mathbf{D}}(\vec{\alpha}) < g^W_{\mathbf{D}'}(\vec{\alpha})$.
   \item Suppose $Q$ is finite. Then $\seed_{\mathbf{D}}^{Q,W} < \seed_{\mathbf{D}'}^{Q,W}$. Moreover,  for any $\beta < u_2$, $\mathbf{D}^{Q,W}(\beta) < \seed_{\mathbf{D}'}^{Q,W}$.
   \end{enumerate}
 \end{mylemma}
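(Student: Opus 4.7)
The plan is to prove part (1) by case analysis on the degrees $d, d'$ of $\mathbf{D}, \mathbf{D}'$, then to derive part (2) by iterated ultrapower limits. The guiding principle is that the $\prec$-ordering on descriptions, being $<_{BK}$ on the corner sequences $\corner{\mathbf{D}}$, is engineered to match exactly the $<^Q$-ordering on $\rep(Q)$ that governs the monotonicity of $g \in \omega_1^{Q \uparrow}$, so the inequality $g^W_{\mathbf{D}}(\vec{\alpha}) < g^W_{\mathbf{D}'}(\vec{\alpha})$ reduces to one of a handful of basic order-preservations.

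For part (1), I split on $(d, d')$. When $d = d' = 1$, $\mathbf{D} \prec \mathbf{D}'$ unfolds to $q <_{BK} q'$ in $\comp{1}{Q}$, giving $\comp{1}{g}(q) < \comp{1}{g}(q')$ directly from $g \in \omega_1^{Q\uparrow}$; the two-branch definition of $g^W_{\mathbf{D}}$ (which returns $\wocode{(1,(q))}_{<^Q}$ when $\min(\vec{\alpha}) \leq \comp{1}{g}(q)$) is exactly the device the footnote advertises for preserving the strict inequality across all the subcases. When $d < d'$, all level-1 nodes of $\rep(Q)$ sit $<^Q$-below all level-2 nodes, so both $\comp{1}{g}(q)$ and the small code $\wocode{(1,(q))}_{<^Q}$ fall below $\comp{2}{g}_{q'}(\vec{\alpha}_{\sigma'}')$. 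The main case is $d = d' = 2$: here $\sigma \oplus \mathbf{q} <_{BK} \sigma' \oplus \mathbf{q}'$ as sequences interleaving $W$-nodes (in the $\sigma$-positions) with $\comp{2}{Q}$-node tags (in the $q$-positions). Because $\vec{\alpha}$ respects $W$, the entrywise substitution $\sigma(p) \mapsto \alpha_{\sigma(p)}$ is $<_{BK}$-preserving, so $\vec{\alpha}_{\sigma} \oplus_{\comp{2}{Q}} q <_{BK} \vec{\alpha}'_{\sigma'} \oplus_{\comp{2}{Q}} q'$ as ordinal-tagged sequences in $\rep(Q)$; this is precisely the $<^Q$-comparison of the two $\rep(Q)$-entries fed into $g$, and monotonicity of $g$ closes the case.

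Part (2) then follows: taking $\mu^W$- then $\mu^Q$-limits of the pointwise inequality of part (1) yields $\seed_{\mathbf{D}}^{Q,W} < \seed_{\mathbf{D}'}^{Q,W}$. For the moreover clause, write $\beta = [h]_{\mu_{\mathbb{L}}}$ for some $h:\omega_1 \to \omega_1$; then $\mathbf{D}^{Q,W}(\beta)$ is represented modulo $\mu^Q$ by $g \mapsto [\vec{\alpha} \mapsto h(g^W_{\mathbf{D}}(\vec{\alpha}))]_{\mu^W}$. By Lemma~\ref{lem:respect_lv2_measure_one_set}, applied to the club of $h$-closed ordinals, for $\mu^Q$-a.e.\ $g$ and $\mu^W$-a.e.\ $\vec{\alpha}$ the ordinal $g^W_{\mathbf{D}'}(\vec{\alpha})$ is itself $h$-closed; together with the pointwise bound from part (1), this gives $h(g^W_{\mathbf{D}}(\vec{\alpha})) < g^W_{\mathbf{D}'}(\vec{\alpha})$, and hence $\mathbf{D}^{Q,W}(\beta) < \seed_{\mathbf{D}'}^{Q,W}$.

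The main obstacle I anticipate is in the $d = d' = 2$ case: the corner sequences may have different lengths, involve different trees $P, P'$ and towers $\vec{p}, \vec{p}'$, and descriptions can be of continuous or discontinuous type, so the first BK-disagreement can fall in a $\sigma$-position, a $q$-position, or correspond to one sequence being a proper initial segment of the other. Each subcase needs to feed cleanly into the $<^Q$-comparison of the two arguments to $g$, with particular care when the two descriptions agree on every component except in the tower data or in the continuous-versus-discontinuous distinction, where the interplay between $\sigma$-factorings and partial-tower extensions demands the most careful bookkeeping.
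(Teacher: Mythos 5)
Your proposal is correct and takes essentially the same route as the paper. For part~(1) the paper simply writes ``simple computation,'' and your case analysis on $(d,d')$ is precisely the routine verification being deferred: the $\prec$-ordering on corner sequences is designed to align with the $<^Q$-ordering on $\rep(Q)$, so order-preservation of $g$ closes each case. For part~(2), the paper first observes that $\mathbf{D}^{Q,W}(\omega_1) = \seed_{\mathbf{D}}^{Q,W}$ (so the ``moreover'' clause already subsumes the main inequality, whereas you derive the main inequality separately by \L o\'s and then handle the ``moreover'' clause on its own — a cosmetic difference). The paper's club is $E$ with $h(\alpha) < \min(E \setminus (\alpha+1))$ for $\alpha \in E$, whereas you take the club of $h$-closed ordinals; both ensure that once $g^W_{\mathbf{D}}(\vec{\alpha}) < g^W_{\mathbf{D}'}(\vec{\alpha})$ and $g$ takes values in the club, applying $h$ to the smaller value stays below the larger, so the two choices are interchangeable. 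One small superfluity: you invoke Lemma~\ref{lem:respect_lv2_measure_one_set} to get that $g^W_{\mathbf{D}'}(\vec{\alpha})$ lands in the club, but since $g \in E^{Q\uparrow}$ already means $g$ takes values in $E$ on $\rep(Q)$, and $g^W_{\mathbf{D}'}(\vec{\alpha}) = \comp{2}{g}_{q'}(\vec{\alpha}_{\sigma'})$ is such a value, no lemma is needed there.
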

 \begin{proof}
1. Simple computation. 

2.   Note that $\mathbf{D}^{Q,W}(\omega_1) = \seed_{\mathbf{D}}^{Q,W}$.  
We directly prove the ``moreover'' part. % Let $\mathbf{D}=(2,\mathbf{q},\sigma)$, $\mathbf{q}=(q,P)$, $\mathbf{D}' = (2,\mathbf{q}',\sigma')$, $\mathbf{q}' = (q',P')$. 
We are given $\beta = j_{\mu_{\mathbb{L}}}(h)(\omega_1)$, where $h$ is a function into $\omega_1$. 
Let $E \in \mu_{\mathbb{L}}$ such that for any $\alpha \in E$, $h(\alpha)< \min (E \setminus \alpha+1)$.
We have $\mathbf{D}^{Q,W}(\beta)= j^Q\circ j^W(h) (\seed_{\mathbf{D}}^{Q,W})$. By \Los{}, it suffices to show that for any $g \in E^{Q \uparrow}$,  $j^W(h) ( [g^W_{\mathbf{D}}]_{\mu^W}) <  [g_{\mathbf{D}'}^W]_{\mu^W}$.  
%Let $E' \in \mu_{\mathbb{L}}$ such that  $g \res (\rep(Q) \res E') \subseteq E$. 
By \Los{} again, it suffices to show that for any $\vec{\alpha} \in [\omega_1]^{W \uparrow}$, $h ( g_{\mathbf{D}}^W  (  \vec{\alpha})) <  g _{\mathbf{D}'}^W(  \vec{\alpha}  )$. 
We already  know that % for any $\vec{\alpha} \in [E']^{W \uparrow}$, 
$g_{\mathbf{D}}^W  (  \vec{\alpha}), g_{\mathbf{D}'}^W  (  \vec{\alpha})  \in E$. By our choice of $E$, it suffices to show that $g_{\mathbf{D}}^W  (  \vec{\alpha})<g_{\mathbf{D}'}^W  (  \vec{\alpha})$. This is exactly part 1.
 \end{proof}

 Suppose $W$ is a level-1 proper subtree of $W'$, $W'$ is finite, $w \in W\cup \se{\emptyset}$, $w' \in W'\setminus W$. Define
 \begin{displaymath}
   w = w' \res W
 \end{displaymath}
iff $w' <_{BK}w $ and $\set{w^{*} \in W}{ w' <_{BK} w^{*} <_{BK} w  } = \emptyset$. 

The $\res W$ operator inherits the following trivial continuity property.
\begin{mylemma}
\label{lem:W_desc_extension}
Suppose $W$ is a level-1 proper subtree of $W'$, $W'$ is finite, $w \in W \cup \se{\emptyset}$, $w' \in W'\setminus W$, $w= w'\res W$. 
Suppose $C \in \mu_{\mathbb{L}}$ is a club, $C'$ is the set of limit points of $C$. 
Then for any $\vec{\alpha} \in [C']^{W \uparrow}$,
\begin{displaymath}
  \alpha_w = \sup \set{\beta_{w'}}{ \vec{\beta} \in [C]^{W'\uparrow}, \vec{\beta} \text{ extends }\vec{\alpha}}.
\end{displaymath}
\end{mylemma}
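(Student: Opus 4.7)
The plan is to prove the two inequalities separately. The upper bound is immediate from the defining property of ``respecting'': since $w' <_{BK} w$, any $\vec{\beta}$ respecting $W'$ has $\beta_{w'} < \beta_w$, and because $\vec{\beta}$ extends $\vec{\alpha}$ we have $\beta_w = \alpha_w$ (using the convention $\beta_\emptyset = \omega_1$ when $w = \emptyset$). Hence $\sup\{\beta_{w'}\} \leq \alpha_w$.

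For the reverse direction, fix $\gamma_0 < \alpha_w$; the aim is to construct $\vec{\beta} \in [C]^{W' \uparrow}$ extending $\vec{\alpha}$ with $\beta_{w'} > \gamma_0$. Applying the ``moreover'' clause of Lemma~\ref{lem:Q_respecting} to $\vec{\alpha} \in [C']^{W \uparrow}$ (treating $C'$ as the club in that lemma) yields $\alpha_u \in C''$ for every $u \in W$, where $C''$ denotes the set of limit points of $C'$. In particular $\alpha_w \in C''$ (with the convention $\alpha_\emptyset = \omega_1 \in C''$), so $C'$ is cofinal below $\alpha_w$.

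To produce the extension, partition $W' \setminus W$ according to the BK-gap in $W \cup \{\emptyset\}$ into which each node falls: for each consecutive BK-pair $(u^-, u^+)$ with $u^- \in W$ (or ``nothing'' at the BK-minimum) and $u^+ \in W \cup \{\emptyset\}$, the finitely many nodes of $W' \setminus W$ in that gap are assigned strictly increasing values from $C' \cap (\alpha_{u^-}, \alpha_{u^+})$, where $\alpha_{u^-}$ is interpreted as $0$ if $u^-$ is absent. This is possible because $\alpha_{u^+} \in C''$ makes $C'$ cofinal below $\alpha_{u^+}$, allowing any finite increasing sequence to be extracted. For the specific gap ending at $\alpha_w$, arrange $\beta_{w'} > \gamma_0$, which is possible since $C' \cap (\gamma_0, \alpha_w)$ is cofinal in $\alpha_w$. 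The converse direction of the ``moreover'' clause of Lemma~\ref{lem:Q_respecting} then confirms $\vec{\beta} \in [C]^{W' \uparrow}$. The only subtlety is simply verifying the finite combinatorics of inserting the $W' \setminus W$ nodes in each gap while respecting BK order, which reduces to the fact that a cofinal subset of a club admits arbitrarily long finite increasing insertions.
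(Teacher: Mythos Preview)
Your proof is correct and spells out precisely the argument the paper leaves implicit: the paper labels this lemma a ``trivial continuity property'' and provides no proof. The only cosmetic point is that Lemma~\ref{lem:Q_respecting} is phrased for level~$\leq 2$ trees, so what you are really using is its level-1 specialization (that membership in $[C]^{W\uparrow}$ amounts to respecting $W$ with each coordinate a limit point of $C$); citing that directly would be cleaner, but the substance is exactly right.
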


% Suppose  $Q,W,W'$ are finite.
Suppose $W$ is a proper level-1 subtree of $W'$.
For $\mathbf{D}  \in \desc(Q,W)$ and $\mathbf{D}' \in \desc(Q,W') \setminus \desc(Q,W)$,  define
\begin{displaymath}
  \mathbf{D} = \mathbf{D}' \res (Q,W)
\end{displaymath}
iff  $\mathbf{D}' \prec \mathbf{D}$ and $\{\mathbf{D}^{*} \in \desc(Q,W) : \mathbf{D}' \prec \mathbf{D}^{*} \prec \mathbf{D}\} = \emptyset$. Thus, $  \mathbf{D} = \mathbf{D}' \res (Q,W)$ iff both $\mathbf{D},\mathbf{D}'$ are of degree 2 and letting $\mathbf{D} = (2, (q,P, \vec{p}), \sigma)$, $\mathbf{D}' = (2, (q',P', \vec{p}'),\sigma')$, $\lh(q) = k$, $\vec{p}=(p_i)_{i < \lh(\vec{p})}$,  then either
\begin{enumerate}
\item $q$ is of continuous type (hence $\lh(\vec{p}) = k$), 
% letting $P^{-} = P \setminus \se{ p_{k-1} }$, $\mathbf{D}^{-} = (2, (q^{-}, P^{-}, \vec{p}), \sigma\res P^{-})$, then
  $\mathbf{D}^{-} \iniseg \mathbf{D}'$, $\sigma(p_{k-1}) =  \sigma'(p_{k-1}) \res W$, or 
\item $q$ is of discontinuous type (hence $\lh(\vec{p}) = k+1$), $\mathbf{D} \iniseg \mathbf{D}'$, $\sigma(p_k^{-}) = \sigma'( p_k) \res W$. 
\end{enumerate}

As a corollary to Lemma~\ref{lem:W_desc_extension}, the $\res (Q,W)$ operator inherits the following continuity property. 
% , which will be useful in Section~\ref{sec:fact-betw-level}. 
 % The proof is straightforward. 
\begin{mylemma}
  \label{lem:QW_description_extension}
  Suppose  $W$ is a proper subtree of $W'$, $\mathbf{D} \in \desc(Q,W)$, $\mathbf{D}' \in \desc(Q,W')$,   $\mathbf{D}= \mathbf{D}' \res (Q,W)$.  
Suppose $C \in \mu_{\mathbb{L}}$ is a club, $C'$ is the set of limit points of $C$. 
Then for any $g \in \omega_1^{Q \uparrow}$, for any $\vec{\alpha} \in [C']^{W \uparrow}$,
\begin{displaymath}
  g^W_{\mathbf{D}}(\vec{\alpha}) = \sup \{g_{\mathbf{D}'}^{W'}(\vec{\beta}): \vec{\beta} \in [C]^{W' \uparrow} ,\vec{\beta} \text{ extends } \vec{\alpha}\} .
\end{displaymath}
\end{mylemma}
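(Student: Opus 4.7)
The plan is to reduce the claim to Lemma~\ref{lem:W_desc_extension} by unpacking the definition of $\mathbf{D} = \mathbf{D}' \res (Q,W)$, doing a case split on whether the node component of $\mathbf{D}$ is of continuous or discontinuous type, and invoking the continuity of the representative function $\comp{2}{g}_q$ from Lemma~\ref{lem:level_2_Q_respecting_function_sort}. By the characterization of $\mathbf{D}' \res (Q,W)$ recalled just above the lemma, both $\mathbf{D}$ and $\mathbf{D}'$ are of degree $2$, so we write $\mathbf{D} = (2,(q,P,\vec{p}),\sigma)$ with $\lh(q)=k$, $\vec{p}=(p_i)_{i<\lh(\vec{p})}$, and $\mathbf{D}' = (2,(q',P',\vec{p}'),\sigma')$, and note that $g^W_{\mathbf{D}}(\vec{\alpha}) = \comp{2}{g}_q(\vec{\alpha}_\sigma)$ and $g^{W'}_{\mathbf{D}'}(\vec\beta) = \comp{2}{g}_{q'}(\vec\beta_{\sigma'})$.

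First I would treat Case~1, where $q$ is of continuous type, so $\lh(\vec{p})=k$. Then $\mathbf{D}^{-} \iniseg \mathbf{D}'$ and $\sigma(p_{k-1}) = \sigma'(p_{k-1}) \res W$; in particular $\sigma'$ extends $\sigma \res \{p_0,\ldots,p_{k-2}\}$. Since $g$ respects $Q$ on a club inside $C$, Lemma~\ref{lem:level_2_Q_respecting_function_sort} applies, so $\comp{2}{g}_q$ is order-preserving and continuous in its last coordinate with uniform cofinality witnessed by $\comp{2}{g}_{q \concat (-1)}$; combining this with Lemma~\ref{lem:W_desc_extension} applied to the pair $\sigma(p_{k-1}) = \sigma'(p_{k-1}) \res W$ yields
\[
  \comp{2}{g}_q(\vec{\alpha}_\sigma) \;=\; \sup\bigl\{\,\comp{2}{g}_{q \concat (-1)}\bigl((\alpha_{\sigma(p_0)},\ldots,\alpha_{\sigma(p_{k-2})},\beta_{\sigma'(p_{k-1})},\gamma)\bigr) : \vec\beta \in [C]^{W'\uparrow}\text{ extends }\vec\alpha,\; \gamma < \beta_{\sigma'(p_{k-1})}\bigr\}.
\]
The extra coordinates of $\comp{2}{g}_{q'}$ beyond position $k$, and the $\gamma$-coordinate above, all stay strictly below $\beta_{\sigma'(p_{k-1})}$ by Lemma~\ref{lem:Q_respecting} applied to the tower structure of $q'$; together with the fact that $\mathbf{D}$ is the $\prec$-least $(Q,W)$-description strictly above $\mathbf{D}'$, this reorganizes the sup on the right into $\sup\{g^{W'}_{\mathbf{D}'}(\vec\beta) : \vec\beta \in [C]^{W'\uparrow},\ \vec\beta\text{ extends }\vec\alpha\}$, as required.

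Next I would handle Case~2, where $q$ is of discontinuous type, so $\lh(\vec{p}) = k+1$. Then $\mathbf{D} \iniseg \mathbf{D}'$ and $\sigma(p_k^{-}) = \sigma'(p_k) \res W$, where $p_k^-$ is the $\prec^P$-predecessor of $p_k$ in the extended sense used in Definition~\ref{def:level-2_W_Q_description}. The argument is parallel to Case~1, except that now the continuity is invoked at the ``next'' coordinate beyond $q$: one observes that $g^{W'}_{\mathbf{D}'}(\vec\beta)$ sits strictly between $\comp{2}{g}_q(\vec{\alpha}_\sigma)$ and the next $\prec$-successor in $\desc(Q,W)$, and that sending $\beta_{\sigma'(p_k)}$ to its sup over extensions $\vec\beta \supseteq \vec\alpha$ in $[C]^{W'\uparrow}$ recovers $\alpha_{\sigma(p_k^-)}$ by Lemma~\ref{lem:W_desc_extension}; this recovers $\comp{2}{g}_q(\vec{\alpha}_\sigma)$ on the nose using the discontinuous-type continuity clause of Lemma~\ref{lem:level_2_Q_respecting_function_sort}(2) at the relevant level of the tower.

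The main obstacle is the bookkeeping of exactly which coordinate is being varied in each subcase and verifying that ``excess'' coordinates of $\mathbf{D}'$ (when $\lh(\mathbf{D}') > \lh(\mathbf{D})$) do not contribute to the supremum; this is essentially forced by the minimality clause $\{\mathbf{D}^{*} \in \desc(Q,W) : \mathbf{D}' \prec \mathbf{D}^{*} \prec \mathbf{D}\} = \emptyset$ in the definition of $\mathbf{D} = \mathbf{D}' \res (Q,W)$, together with Lemma~\ref{lem:level_2_desc_order}, but writing this out cleanly is the bulk of the work.
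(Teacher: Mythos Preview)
Your approach is essentially the paper's: the paper states the lemma as an immediate corollary to Lemma~\ref{lem:W_desc_extension} with no further details, and your case split on whether $q$ is of continuous or discontinuous type, followed by an appeal to that lemma at the relevant coordinate, is exactly how one fills in those details.

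One correction: you invoke Lemma~\ref{lem:level_2_Q_respecting_function_sort} to obtain that $\comp{2}{g}_q$ is order-preserving and continuous in its last coordinate, but that lemma runs in the opposite direction --- it \emph{assumes} the signature/continuity properties of the component functions and \emph{concludes} global order-preservation on a smaller club. What you actually need is immediate from the hypothesis $g \in \omega_1^{Q \uparrow}$: by definition $g$ is already continuous and order-preserving on $\rep(Q)$, and the continuity of $\comp{2}{g}_q(\vec{\gamma})$ in the relevant coordinate is read off directly from that. So drop the reference to Lemma~\ref{lem:level_2_Q_respecting_function_sort} and just cite the definition of $\omega_1^{Q\uparrow}$; the rest of your argument goes through. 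The bookkeeping about excess coordinates that you flag at the end is genuinely routine once you have Lemma~\ref{lem:level_2_desc_order} (which bounds $g^{W'}_{\mathbf{D}'}(\vec{\beta})$ strictly below $g^{W}_{\mathbf{D}}(\vec{\alpha})$) together with Lemma~\ref{lem:W_desc_extension} at the single ``new'' coordinate $\sigma'(p_{k-1})$ or $\sigma'(p_k)$.
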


Suppose $Q$ is a proper subtree of $Q'$, both finite. For $(d,\mathbf{q}) \in \exdesc(Q)$, $(d',\mathbf{q}' )\in \exdesc(Q') \setminus \exdesc(Q)$, define 
\begin{displaymath}
 (d, \mathbf{q})= (d',\mathbf{q}') \res Q
\end{displaymath}
iff $(d',\mathbf{q}' )\prec (d, \mathbf{q})$ and $\{(d^{*}, \mathbf{q}^{*}) \in \exdesc(Q) : (d',\mathbf{q}' )\prec (d^{*}, \mathbf{q}^{*}) \prec (d,\mathbf{q})\} = \emptyset$. 
Thus, % letting $\comp{2}{Q}^r$ be the level-1 tree $\set{a\in \omega^{<\omega}}{r \concat (a) \in \dom(\comp{2}{Q})}$ for any $r \in \dom(\comp{2}{Q})$, then 
$ (d, \mathbf{q}) =  (d',\mathbf{q}') \res Q$ iff either 
\begin{enumerate}
\item $d=d'=1$, $\mathbf{q}= \mathbf{q}' \res {\comp{1}{Q}}$, or
\item  $d'=1$, $\emptyset  = \mathbf{q}'  \res {\comp{1}{Q}}$, $d=2$, $\mathbf{q} \in \{ ((-1), \se{(0)}, ((0))), (\emptyset,\emptyset,(0))\}$, or
\item $d=d'=2$, letting $\mathbf{q} = (q,P, \vec{p})$, $\vec{p} = (p_i)_{i < \lh(\vec{p})}$, $\mathbf{q} = (q',P', \vec{p}')$, $\vec{p}' =  (p_i')_{i < \lh(\vec{p}')}$,  $\lh(q) = k$, then either
  \begin{enumerate}
  \item $\mathbf{q} \in \desc(Q)$ is of continuous type, $k \geq 2$, $(P,\vec{p}) = (P', \vec{p}'\res k)$, $(q^{-})^{-} \subsetneq q'$, $q(k-2) = q'(k-2) \res \comp{2}{Q}\se{(q^{-})^{-}}$, or 
  \item $\mathbf{q} \in \desc(Q)$ is of discontinuous type, $k \geq 2$, $(P,\vec{p}\res k) = (P', \vec{p}'\res k)$, $(q^{-})^{-} =( q')^{-}$, $q(k-1) = q'(k-1) \res \comp{2}{Q}\se{q^{-}}$, or
  \item $\mathbf{q} \notin \desc(Q)$, $q \subsetneq q'$, $\emptyset  =  q'(k) \res \comp{2}{Q}\se{q^{-}}$.
  \end{enumerate}
\end{enumerate}

As a corollary to Lemma~\ref{lem:level_2_beta_q_order} and Lemma~\ref{lem:Q_respecting}, the $\res Q$ operator inherits the following continuity property. 

\begin{mylemma}
  \label{lem:Q_desc_extension}
  Suppose $C \in \mu_{\mathbb{L}}$ is a club.  Let $\eta \in C'$ iff  $C \cap \eta$ has order type $\eta$. 
Suppose $Q$ is a proper subtree of $Q'$, $Q,Q'$ are finite, $(d,\mathbf{q})\in \exdesc(Q)$ , $(d',\mathbf{q}')\in \exdesc(Q')$,   $(d,\mathbf{q}) = (d',\mathbf{q}') \res Q$. Then for any $\vec{\beta} \in [C']^{Q \uparrow}$,
      \begin{displaymath}
       {\comp{d}{\beta}}_{\mathbf{q}} = \sup \set{{\comp{d'}{\gamma}}_{\mathbf{q}'}}{ \vec{\gamma} \in [C]^{Q'\uparrow}, \vec{\gamma} \text{ extends } \vec{\beta}}.
      \end{displaymath}
\end{mylemma}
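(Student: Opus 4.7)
The plan is to prove the two inequalities separately: the upper bound $\comp{d'}{\gamma}_{\mathbf{q}'} \leq \comp{d}{\beta}_{\mathbf{q}}$ for all admissible $\vec{\gamma}$, and then the realization of the sup by suitable choices of $\vec{\gamma}$. The upper bound is essentially free: the definition of $(d,\mathbf{q}) = (d',\mathbf{q}') \res Q$ requires $(d',\mathbf{q}') \prec (d,\mathbf{q})$, so whenever $\vec{\gamma} \in [C]^{Q' \uparrow}$ extends $\vec{\beta}$, Lemma~\ref{lem:level_2_beta_q_order} gives $\comp{d'}{\gamma}_{\mathbf{q}'} < \comp{d}{\gamma}_{\mathbf{q}} = \comp{d}{\beta}_{\mathbf{q}}$, where the last equality holds because $(d,\mathbf{q}) \in \exdesc(Q)$ only involves coordinates of $\vec{\beta}$.

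For the sup realization, the plan is to split into the three cases of the definition of $\res Q$ and in each case produce, for any $\xi < \comp{d}{\beta}_{\mathbf{q}}$, an extension $\vec{\gamma} \in [C]^{Q'\uparrow}$ with $\xi < \comp{d'}{\gamma}_{\mathbf{q}'}$. In case~1 (both descriptions of degree $1$), the claim reduces to a purely level-1 statement: $\mathbf{q} = \mathbf{q}' \res \comp{1}{Q}$ in the sense of level-1 trees, the value $\comp{1}{\beta}_{\mathbf{q}}$ is a limit point of $C'$ hence a limit of $C$, and we can freely pick $\comp{1}{\gamma}_{\mathbf{q}'}$ to be any sufficiently large element of $C$ below $\comp{1}{\beta}_{\mathbf{q}}$ (while filling in the remaining nodes of $\comp{1}{Q'} \setminus \comp{1}{Q}$ to respect $Q'$). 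This is the direct tree-analog of Lemma~\ref{lem:W_desc_extension}. Case~2 is the boundary: $\comp{2}{\beta}_{\mathbf{q}}$ equals either $\omega$ or $\omega_1$ (according to Definition~\ref{def:ordinal_assignment} applied to the two minimal descriptions), which is a sup of values realizable by level-1 entries $\comp{1}{\gamma}_{\mathbf{q}'}$ as $\mathbf{q}'$ ranges below everything in $\comp{1}{Q}$ in $<_{BK}$.

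Case~3 is the main case. By Lemma~\ref{lem:beta_q_unambiguous} and the analysis of the approximation sequence, one identifies $\comp{2}{\beta}_{\mathbf{q}}$ with a value of the form $j^{P^{-},P}_{\sup}(\comp{2}{\beta}_{q^{-}})$ (subcase~(a), continuous type), or the $P$-side representative of the suitable function induced by $\comp{2}{\beta}_{q^{-}}$ (subcase~(b), discontinuous type), or the corresponding $\comp{2}{\beta}_{q}$ itself (subcase~(c), $\mathbf{q} \notin \desc(Q)$). In each subcase the unique new node of $Q'$ not in $Q$ is, by inspection of the definition, precisely the extra node needed to make $\mathbf{q}'$ one step further out along the tower governing $\mathbf{q}$. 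Combined with Lemma~\ref{lem:j_sigma_continuity} and Lemmas~\ref{lem:level_2_uniform_cofinality} and~\ref{lem:level_2_uniform_cofinality_another}, which characterize the uniform cofinality of the one-step-sup appearing on the right-hand side, the problem reduces to showing that we can make the coordinate of $\vec{\gamma}$ at the new node of $Q'$ range cofinally in its allowed interval inside $C$, with the remaining new coordinates of $\vec{\gamma}$ chosen any way that respects $Q'$ (which is possible by Lemma~\ref{lem:Q_respecting} applied to the set of limit points of $C$).

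The main obstacle will be the bookkeeping in subcase~(c), where $\mathbf{q} \notin \desc(Q)$ but $q \subsetneq q'$ with $q'(k) \res \comp{2}{Q}\se{q^{-}} = \emptyset$: here one must verify simultaneously that (i) the ``tree'' component $P'$ of $\mathbf{q}'$ extends $P$ correctly, (ii) the uniform cofinality pointer $p_k$ of $\mathbf{q}$ corresponds to the right position in the signature of $\comp{2}{\gamma}_{\mathbf{q}'}$, and (iii) the extension coordinates can be chosen in $C$ with the required order type (via Claim~\ref{claim:respect_lv2_distance} to guarantee the relevant gaps inside $C$ have the prescribed order types). Once these are lined up, an application of Lemma~\ref{lem:level_2_uniform_cofinality_another} finishes each subcase by identifying $\comp{2}{\beta}_{\mathbf{q}}$ as the supremum, over such $\vec{\gamma}$, of $\comp{2}{\gamma}_{\mathbf{q}'}$.
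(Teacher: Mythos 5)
Your overall structure matches what the paper indicates: the paper states this lemma without proof, remarking only that it is "a corollary to Lemma~\ref{lem:level_2_beta_q_order} and Lemma~\ref{lem:Q_respecting}" and that the $\leq$ direction uses the thickness condition on $C'$, and you correctly split into the trivial $\geq$ direction (via $\prec$ and Lemma~\ref{lem:level_2_beta_q_order}) and the sup-realization via a case split on the concrete characterization of $\res Q$. Two small points are worth flagging. First, your toolbox for the hard direction is slightly misaligned: Lemmas~\ref{lem:j_sigma_continuity}, \ref{lem:level_2_uniform_cofinality}, and \ref{lem:level_2_uniform_cofinality_another} are about the factor maps $\sigma^W$ and $\sigma^W_{\sup}$, and in the paper they do the heavy lifting in Lemma~\ref{lem:QW_description_extension_another}, not here; for the pure-$Q$ statement the right machinery is Lemma~\ref{lem:Q_respecting} (which tells you exactly what signature/approximation/uniform-cofinality conditions a putative $\vec{\gamma}$ coordinate must satisfy) combined with Lemma~\ref{lem:beta_q_unambiguous} and the Lemma~\ref{lem:ordinal_division_blocks}-style analysis of approximation sequences, with the $C'$ hypothesis supplying the room to place the new coordinate. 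Second, in your Case~2 the description $\mathbf{q}$ is the constant one, so $\comp{2}{\beta}_{\mathbf{q}}=\omega_1$ (not $\omega$), and $\mathbf{q}'$ is the new degree-1 node lying $<_{BK}$-\emph{above} all of $\comp{1}{Q}$, not below. Neither of these affects the soundness of the plan, but they should be corrected before writing out the details.
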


In the proof of Lemma~\ref{lem:Q_desc_extension}, the construction of $\vec{\gamma}$ that witnesses the $\leq$ direction relies on the assumption that $\eta \in C'$ iff $C\cap \eta$ has order type $\eta$.

Suppose $Q$ is a proper subtree of $Q'$, both finite. 
% Suppose  $(Q,(d,q^{+}))$ is a  partial  level $\leq 2$ subtree one of whose completions is  $Q'$. 
For
 $\mathbf{D} \in \desc(Q,W)$, $\mathbf{D}'  \in \desc(Q',W)\setminus \desc(Q,W)$.  Define  
\begin{displaymath}
  \mathbf{D} = \mathbf{D}' \res (Q,W)
\end{displaymath}
iff  $\mathbf{D}' \prec \mathbf{D}$ and $\{\mathbf{D}^{*} \in \desc(Q,W) : \mathbf{D}' \prec \mathbf{D}^{*} \prec \mathbf{D}\} = \emptyset$.   Thus, putting $\mathbf{D} = (d,\mathbf{q},\sigma)$, $\mathbf{D} = (d',\mathbf{q}',\sigma')$, % $\comp{2}{Q}^r =\set{a \in \omega^{<\omega}}{ r \concat (a) \in \dom(\comp{2}{Q})}$ for $r \in \dom(\comp{2}{Q})$, 
$\mathbf{D}= \mathbf{D}' \res (Q,W)$ iff either 
  \begin{enumerate}
  \item $q$ is of continuous type (hence $\lh(\vec{p}) = k$),  and either
    \begin{enumerate}
    \item $\mathbf{D}$ is of $*$-$W$-discontinuous type, $\mathbf{D}^{-} \iniseg \mathbf{D}'$, $\sigma'(p_{k-1}) = \pred_{\prec^W}(\sigma(p_{k-1}))$, $q'(k-1) \res \comp{2}{Q}\se{q^{-}} = \emptyset$, or
    \item $\mathbf{D}$ is of $*$-$W$-continuous type, $(\mathbf{D}^{-})^{-} \iniseg \mathbf{D}' $, $q'(k-2) \res \comp{2}{Q}\se{(q^{-})^{-}} = q(k-2)$, or
    \end{enumerate}
  \item $q$ is of continuous type (hence $\lh(\vec{p}) = k+1$), and either 
    \begin{enumerate}
    \item $\mathbf{D}$ is of $*$-$W$-discontinuous type, $\mathbf{D} \iniseg \mathbf{D}'$, $\sigma'(p_{k}) = \pred_{\prec^W}(\sigma(p_{k}^{-}))$, $q'(k) \res \comp{2}{Q}\se{q} = \emptyset$, or
    \item $\mathbf{D}$ is of $*$-$W$-continuous type, $\mathbf{D}^{-} \iniseg \mathbf{D}' $, $q'(k-1) \res \comp{2}{Q}\se{q^{-}} = q(k-1)$.
    \end{enumerate}
\end{enumerate}

% As a corollary to Lemma~\ref{lem:PQ_factor_extension_another} and Lemma~\ref{lem:Q_respecting},
The $\res (Q,W)$ relation inherits the following continuity property.

\begin{mylemma}
  \label{lem:QW_description_extension_another}
Suppose $C \in \mu_{\mathbb{L}}$ is a club.  Let $\eta \in C'$ iff $\eta \in C$ and $C \cap \eta$ has order type $\eta$. 
Suppose $Q$ is a proper subtree of $Q'$, both finite, $\mathbf{D} =(d,\mathbf{q},\sigma)\in \desc(Q,W)$, $\mathbf{D}' =(d',\mathbf{q}',\sigma')\in \desc(Q',W)$,   $\mathbf{D} = \mathbf{D}' \res (Q,W)$. Then for any $\vec{\beta} \in [C']^{Q \uparrow}$,
      \begin{displaymath}
       \sigma^{W}({\comp{d}{\beta}}_{\mathbf{q}}) = \sup \set{({\sigma}')^{W}({\comp{d}{\gamma}}_{\mathbf{q}'})}{ \vec{\gamma} \in [C]^{Q'\uparrow}, \vec{\gamma} \text{ extends } \vec{\beta}}.
      \end{displaymath}
\end{mylemma}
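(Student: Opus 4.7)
The plan is to proceed by case analysis on the four clauses in the definition of $\mathbf{D} = \mathbf{D}' \res (Q,W)$, according as $\mathbf{q}$ is of continuous or discontinuous type and $\mathbf{D}$ is of $*$-$W$-continuous or $*$-$W$-discontinuous type. In all cases the strategy is to disentangle the contribution of the $Q \subsetneq Q'$ extension (handled by Lemma~\ref{lem:Q_desc_extension}) from the contribution of $\sigma \subseteq \sigma'$ as factoring maps into $W$ (handled by Lemmas~\ref{lem:j_sigma_continuity}--\ref{lem:level_2_uniform_cofinality_another}).

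In the two $*$-$W$-continuous cases (1b) and (2b), the extension from $\sigma$ to $\sigma'$ introduces no new $\prec^W$-predecessor: the added signature value, if any, is either already in $\ran(\sigma)$ or is reached continuously in the sense of Lemma~\ref{lem:j_sigma_continuity}, so $\sigma^W$ and $(\sigma')^W$ agree on the ordinals $\comp{d'}{\gamma}_{\mathbf{q}'}$ that will appear as witnesses. Here the content of $\mathbf{D} = \mathbf{D}' \res (Q,W)$ reduces to the $\res Q$ relation $(d, \mathbf{q}) = (d', \mathbf{q}') \res Q$, and I apply Lemma~\ref{lem:Q_desc_extension} to $\comp{d}{\beta}_{\mathbf{q}}$ directly; then I push $\sigma^W$ through the resulting supremum using elementarity of $\sigma^W$ on each $L[z]$.

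In the two $*$-$W$-discontinuous cases (1a) and (2a), the signature genuinely jumps: $\sigma'(p_{k-1}) = \pred_{\prec^W}(\sigma(p_{k-1}))$ in case (1a), and $\sigma'(p_k) = \pred_{\prec^W}(\sigma(p_k^{-}))$ in case (2a), while simultaneously $\mathbf{q}'$ extends $\mathbf{q}$ (or $\mathbf{q}^{-}$) by the minimal continuation enforced by $q'(k-1)\res \comp{2}{Q}\{q^{-}\} = \emptyset$ (respectively $q'(k)\res \comp{2}{Q}\{q\} = \emptyset$). I invoke Lemma~\ref{lem:level_2_uniform_cofinality_another} to obtain the identity
\[
\sigma^W(\comp{d}{\beta}_{\mathbf{q}}) \;=\; \sup\bigl\{(\sigma')^W(\eta) : \eta < j^{P,P^{+}}(\comp{d}{\beta}_{\mathbf{q}^{-}}),\ \cf^{\mathbb{L}}(\eta) = \seed^{P}_{p_{k-1}^{-}}\bigr\},
\]
using Lemma~\ref{lem:beta_q_unambiguous} to handle the continuous-type subcase where $\comp{d}{\beta}_{\mathbf{q}} = j^{P^{-},P}_{\sup}(\comp{d}{\beta}_{\mathbf{q}^{-}})$. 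It then suffices to show that the right-hand sup is realized, cofinally and only, by values $\comp{d'}{\gamma}_{\mathbf{q}'}$ with $\vec{\gamma} \in [C]^{Q'\uparrow}$ extending $\vec{\beta}$; this is where the $\res \comp{2}{Q}\{\cdot\} = \emptyset$ clause of the definition plays its role, via Lemma~\ref{lem:Q_desc_extension} applied to the pair $(Q, Q')$.

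The main obstacle will be matching the uniform cofinality data supplied by the two continuity lemmas: the $\seed^P_{p_{k-1}^{-}}$-cofinality requirement coming from Lemma~\ref{lem:level_2_uniform_cofinality_another} must coincide with the cofinality type of the assignment $\vec{\gamma} \mapsto \comp{d'}{\gamma}_{\mathbf{q}'}$ restricted to $[C]^{Q'\uparrow}$-extensions of $\vec{\beta}$. The hypothesis that $\eta \in C'$ iff $\eta \in C$ and $C \cap \eta$ has order type $\eta$ is exactly what, via Claim~\ref{claim:respect_lv2_distance}, guarantees that $C$ contains enough room inside each relevant interval to support $Q'$-respecting tuples of arbitrarily large order type; without it the witnesses $\vec{\gamma}$ could fail to land in $[C]^{Q'\uparrow}$. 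Once the cofinality matching is verified, \Los{} converts the pointwise comparison into the displayed identity.
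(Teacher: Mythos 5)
Your overall strategy — decompose the $\res(Q,W)$ relation into the $Q$-side extension (handled by Lemma~\ref{lem:Q_desc_extension}) and the $W$-side shift of the factoring map (handled by a uniform-cofinality lemma) — is the same decomposition the paper uses, but you have misallocated the two uniform-cofinality lemmas, and that matters.

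The paper's proof splits along two independent dichotomies: whether $\mathbf{q}$ is continuous or discontinuous, and whether $\mathbf{D}$ is $*$-$W$-continuous or $*$-$W$-discontinuous. In the $*$-$W$-continuous subcases, nothing happens on the $W$-side and Lemma~\ref{lem:Q_desc_extension} alone suffices, as you say. In the $*$-$W$-discontinuous subcases, the two continuity types call for \emph{different} lemmas, not the single one you propose. When $\mathbf{q}$ is of continuous type (Subcase 1.1), one first writes $\comp{2}{\beta}_{\mathbf{q}} = j^{P^{-},P}_{\sup}(\comp{2}{\beta}_{q^{-}})$ via Lemma~\ref{lem:beta_q_unambiguous}, applies Lemma~\ref{lem:Q_desc_extension} to the $\res Q$ relation $(2,(q'',P,\vec{p}\concat(p''))) = (2,(q^{-},P,\vec{p}))\res Q$ where $q''=q'\res k$, and then closes the gap with Lemma~\ref{lem:level_2_uniform_cofinality} — the one about removing a node from $P$ — using $\cf^{\mathbb{L}}(\comp{2}{\beta}_{q^{-}}) = \seed^{P^{-}}_{p_{k-1}^{-}}$. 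When $\mathbf{q}$ is of discontinuous type (Subcase 2.1), one instead passes to the completion $P^{+}$ of $(P,p_k)$, applies Lemma~\ref{lem:Q_desc_extension} to $(2,(q'',P^{+},\vec{p})) = (2,(q,P^{+},\vec{p}))\res Q$ where $q'' = q'\res k+1$, and uses Lemma~\ref{lem:level_2_uniform_cofinality_another} — adding a node to $P$ — with cofinality $\seed^{P}_{p_k^{-}}$ (or $\omega$ if $p_k=-1$). You try to get both from Lemma~\ref{lem:level_2_uniform_cofinality_another} alone, and the displayed identity you write down, with $P^{+}$, $\mathbf{q}^{-}$ and cofinality $\seed^{P}_{p_{k-1}^{-}}$ all appearing simultaneously, does not correspond to the hypothesis or conclusion of either lemma. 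In particular Lemma~\ref{lem:level_2_uniform_cofinality_another}'s conclusion $\sigma^W(\beta)=(\sigma')^W_{\sup}\circ j^{P,P^{+}}(\beta)$ has no cofinality restriction on the approximating ordinals $\eta$, and its cofinality hypothesis is on $\beta$ itself, not on $j^{P^-,P}_{\sup}$ of something.

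Two smaller gaps: you do not address the $d=1$ and $d\leq 1$, $d'=1$ cases (which reduce to Lemma~\ref{lem:W_desc_extension}), and you do not mention the easy $\geq$ direction, which is an application of Lemma~\ref{lem:level_2_desc_order}. Your appeal to Claim~\ref{claim:respect_lv2_distance} to justify the $C'$ hypothesis is also a detour — that hypothesis is already absorbed into Lemma~\ref{lem:Q_desc_extension}, which you are invoking anyway, so nothing additional is needed on that front.
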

\begin{proof}
The  $\geq$ direction follows from Lemma~\ref{lem:level_2_desc_order}. We show the $\leq $ direction.   When $d=d'=1$, both sides are equal to $\comp{d}{\beta}_{\mathbf{q}}$ by Lemma~\ref{lem:W_desc_extension}. When $d=2\wedge d'=1$, both sides are equal to $\omega_1$ by Lemma~\ref{lem:W_desc_extension} again. Suppose now $d=d'=2$. Let  $\mathbf{q} = (q,P, \vec{p})$, $\vec{p} = (p_i)_{i < \lh(\vec{p})}$, $\mathbf{q} = (q',P', \vec{p}')$, $\vec{p}' =  (p_i')_{i < \lh(\vec{p}')}$,  $\lh(q) = k$. 
% $P^{-} = P \setminus \se{ p _k}$. 
 % Let $q_{*}$ be the shortest initial segment of $q'$ such that $q_{*} \notin \dom(\comp{2}{Q})$ and let $(e_{*},p_{*})= \comp{2}{Q}_{\node}(q_{*}^{-})$. 

Case 1: $q$ is of continuous type. 

Subcase 1.1: $\mathbf{D}$ is of $*$-$W$-discontinuous type.

Let $P^{-} =  P \setminus \se{ p_{k-1}}$. So  $ \comp{2}{Q}(q^{-}) =(P^{-}, p_{k-1}) $.
Let $q'' = q' \res k$,  $\sigma'' = \sigma' \res P$, $p''=\comp{2}{Q}_{\node}(q'')$. Then $(2, (q'', P, \vec{p}\concat (p''))) = (2, (q^{-},P, \vec{p})) \res Q$. By Lemma~\ref{lem:Q_desc_extension}, 
\begin{displaymath}
  j^{P^{-},P} (\comp{2}{\beta}_{q^{-}}) = \sup \set{\comp{2}{\gamma}_{{q}''}}{\vec{\gamma} \in [C]^{Q'\uparrow}, \vec{\gamma} \text{ extends } \vec{\beta}}.
\end{displaymath}

% By Lemma~\ref{lem:beta_q_unambiguous} and Lemma~\ref{lem:Q_respecting} [strengthen it!],

It suffices to show that
\begin{displaymath}
  \sigma^W \circ j^{P^{-},P}_{\sup}(\comp{2}{\beta}_{q^{-}}) =   ({\sigma}'')^W_{\sup} \circ j^{P^{-},P} (\comp{2}{\beta}_{q^{-}}).
\end{displaymath}
This is exactly Lemma~\ref{lem:level_2_uniform_cofinality}, using the fact   $\cf^{\mathbb{L}}(\comp{2}{\beta}_{q^{-}}) = \seed^{P^{-}}_{p_{k-1}^{-}}$. 

Subcase 1.2: $\mathbf{D}$ is of $*$-$W$-continuous type.

Simply use Lemma~\ref{lem:Q_desc_extension}.

Case 2. $q$ is of discontinuous type. 

Subcase 2.1:  $\mathbf{D}$ is of $*$-$W$-discontinuous type.
Let $P^{+}$ be the completion of $(P,p_k)$ if $p_k\neq -1$, $P^{+} = P$ if $p_k = -1$. 
Let $q''=q' \res k+1$, $\sigma'' = \sigma' \res P^{+}$. Then $(2, (q'', P^{+}, \vec{p}))=(2, (q, {P}^{+}, \vec{p})) \res Q$. By Lemma~\ref{lem:Q_desc_extension},
\begin{displaymath}
  j^{P,P^{+}} (\comp{2}{\beta}_q) = \sup \set{\comp{2}{\gamma}_{\mathbf{q}'}}{\vec{\gamma} \in [C]^{Q'\uparrow}, \vec{\gamma} \text{ extends } \vec{\beta}}.
\end{displaymath}
 It remains to show 
\begin{displaymath}
{\sigma}^W (\comp{2}{\beta}_{q}) =   ({\sigma}'')^W_{\sup} \circ j^{P,P^{+}} (\comp{2}{\beta}_{q}).
\end{displaymath}
This is exactly Lemma~\ref{lem:level_2_uniform_cofinality_another}, using the fact $\cf^{\mathbb{L}}(\comp{2}{\beta}_{q}) = \seed^P_{p_k^{-}}$ when $p_k\neq -1$, $\cf^{\mathbb{L}}(\comp{2}{\beta}_{q}) = \omega$ when $p_k = -1$.  

Subcase 2.2: $\mathbf{D}$ is of $*$-$W$-continuous type.

Simply use Lemma~\ref{lem:Q_desc_extension}.

\end{proof}

\begin{mydefinition}
  \label{def:factoring_2}
   Suppose $S$ is a finite regular level-1 tree and  $Q$ is a level $\leq 2$ tree.  Suppose $\tau : S \cup \se{\emptyset}\to \desc(Q,*)$ is a function. Then $\tau$ factors $(S,Q,*)$ iff
  \begin{enumerate}
  \item $\tau(\emptyset)$ is the constant $(Q,*)$-description.
  \item If $s \prec^{{S}} s'$, then ${\tau}(s) \prec {\tau}(s')$. 
  \end{enumerate}
For a level-1 tree $W$, $\tau$ factors $(S,Q,W)$ iff $\tau$ factors $(S,Q,*)$ and $\ran(\tau) \subseteq \desc(Q,{W})$. In particular, if every $\tau(s)$ is of degree 1, then $\tau$ factors $(S,Q,\emptyset)$.

 \begin{myexample}
   Let $S,Q,W$ be as in Section~\ref{sec:level-1-2}. There is a $(S,Q,W)$-factoring map $\tau$ characterizing the $S$-equivalence class of $\theta_{SQ}$. We have for instance,
   \begin{itemize}
   \item $\tau ((3)) = ( 2 ,  (-1,\se{(0)},((0))), \sigma )$, where $\sigma((0)) = (2)$.
   \item $\tau((1)) = (2, ( (0,0),  \se{(0),(0,0)}, ((0),(0,0))), \sigma'  ))$, where $\sigma'((0)) = (1)$, $\sigma'((0,0)) = (0)$.
   \end{itemize}
 \end{myexample}

If $S$ is a level-1 tree, then
\begin{displaymath}
  \id_{*,S}
\end{displaymath}
factors $(S,Q^0,S)$, where $\id_{*,S} (s)  =  (2, ( (-1), \se{(0)}, ((0))), \sigma_s)$, $\sigma_s(0) = s$. 

Suppose $\tau$ factors $(S,Q,W)$.
For  $g \in \omega_1^{Q \uparrow}$, let
\begin{displaymath}
  g_{\tau}^W : [\omega_1]^{W \uparrow} \to [\omega_1]^{S \uparrow}
\end{displaymath}
be the function sending $\vec{\alpha}$ to $ (g_{\tau(s)}^W(\vec{\alpha}))_{s \in \dom(S) } $.    Lemma~\ref{lem:level_2_desc_order} ensures that $g^W_{\tau}$ is indeed a function into $[\omega_1]^{S \uparrow}$. In particular, $g_{\id_{*,S}}^S$ is the identity map on $[\omega_1]^{S \uparrow}$. 
\begin{displaymath}
  \id^{Q,W}_{\tau} 
\end{displaymath}
is the map sending $[g]^Q$ to $[g^W_{\tau}]_{\mu^W}$. So $\id^{Q,W}_{\tau} (\vec{\beta}) = (\id^{Q,W}_{\tau (s )}(\vec{\beta}))_{s \in S}$.
Put 
\begin{displaymath}
  \seed_{\tau}^{Q,W} =  [\id^{Q,W}_{\tau}]_{\mu^Q}
\end{displaymath}
% where % $e_{\tau}^{Q,W}(\vec{\beta}) = ({\sigma_s}^W (\beta_{\mathbf{q}_s}))_{s \in \dom(S)}$, where $\tau(s) = (\mathbf{q}_s,\sigma_s)$, or equivalently,
% $e_{\tau}^{Q,W}([g]^Q)= [g_{\tau}^W]_{\mu^W}$. 
 By Lemma~\ref{lem:level_2_desc_order} and \Los{},  for any $A \in \mu^S$, $\seed^{Q,W}_{\tau} \in j^Q \circ j^W(A)$. Hence, we can unambiguously define
\begin{displaymath}
  \tau^{Q,W} : \admistwoboldextra{j^S} \to \admistwoboldextra{j^Q \circ j^W}
\end{displaymath}
by sending $j^S(h)(\seed^S) $ to $ j^Q\circ j^W (h)  ( \seed_{\tau}^{Q,W} )$. $\tau^{Q,W}$ is the unique map such that for any $z \in \mathbb{R}$, 
 $\tau^{Q,W}$ is elementary from $L_{\kappa_3^z}[j^S(T_2),z]$ into $ L_{\kappa_3^z}[j^Q\circ j^W(T_2), z]$ and
 for any $s\in S$, $ \tau^{Q,W} \circ s^S = \tau(s)^{Q,W}$.
% In particular, taking $s=\emptyset$, $\tau^{Q,W} \circ j^S = j^Q \circ j^W$.
\end{mydefinition}

\begin{mylemma}
  \label{lem:level_2_desc_cofinal_in_next}
Suppose $Q,W$ are finite. 
  \begin{enumerate}
  \item If  $\mathbf{D} = \min(\prec^{Q,W})$, then $\seed_{\mathbf{D}}^{Q,W} = \omega_1$. 
Hence $\mathbf{D}^{Q,W}$ is the identity on $\omega_1+1$.
  \item 
  If  $\mathbf{E}= \pred_{\prec^{Q,W}}(\mathbf{D})$, then $(\mathbf{E}^{Q,W})'' u_2 $ is a cofinal subset of  $\seed_{\mathbf{D}}^{Q,W}$. %If $\mathbf{D}$ is the $\prec$-maximum and $\mathbf{D}'$ is the $\prec$-mininum, then $\ran(j^Q \circ {j}^W) = \ran( \mathbf{\mathbf{D}}^{Q,W} )$ is cofinal in $\seed_{\mathbf{D}' }^{Q,W}$.
  \end{enumerate}
\end{mylemma}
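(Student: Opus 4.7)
The plan is to handle Parts (1) and (2) separately, in both cases using a direct combinatorial analysis of $\prec^{Q,W}$ together with the uniform cofinality lemmas from Section~\ref{sec:level-1-analysis}.

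For Part (1), I would first identify $\min(\prec^{Q,W})$ explicitly. Since $\prec^{Q,W}$ is induced by $<_{BK}$ on $\corner{\mathbf{D}}$ and every degree-1 description is $\prec$-below every degree-2 description, the minimum is $(1, q_{\min}, \emptyset)$ with $q_{\min} = \min_{<_{BK}}(\comp{1}{Q})$ whenever $\comp{1}{Q}$ is nonempty; otherwise it is the $\prec$-least degree-2 description whose $\sigma \oplus \mathbf{q}$ is $<_{BK}$-minimal, which is determined combinatorially by the shape of $\comp{2}{Q}$ and $W$. In each case $\id^{Q,W}_{\mathbf{D}}(\vec{\beta})$ reduces to the smallest relevant coordinate of $\vec{\beta}$ (e.g.\ $\comp{1}{\beta}_{q_{\min}}$ in the first subcase, since $\sigma^W = j^W$ when $\sigma = \emptyset$ and this coordinate is already $<\omega_1$). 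By Lemma~\ref{lem:Q_respecting}, this function takes values cofinally in $\omega_1$ over $\mu^Q$-measure-one sets, hence represents $\omega_1$ modulo $\mu^Q$. The ``hence'' statement then follows because $\mathbf{D}^{Q,W}(\beta) = j^Q \circ j^W(\beta) = \beta$ for $\beta < \omega_1$ by elementarity on the constant $\beta$, while $\mathbf{D}^{Q,W}(\omega_1) = \seed^{Q,W}_{\mathbf{D}} = \omega_1$.

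For Part (2), I would reduce the cofinality statement to a pointwise identity via \Los{}, then invoke the uniform cofinality lemmas. Write $\mathbf{D} = (d, \mathbf{q}, \sigma)$ and $\mathbf{E} = (d', \mathbf{q}', \sigma')$. Since by Lemma~\ref{lem:level_2_desc_order}(2) we already have $(\mathbf{E}^{Q,W})'' u_2 \subseteq \seed^{Q,W}_{\mathbf{D}}$, it suffices to establish
\begin{displaymath}
   \sigma^W(\comp{d}{\beta}_{\mathbf{q}}) = \sup\{j^W(h)(({\sigma}')^W(\comp{d'}{\beta}_{\mathbf{q}'})) : h:\omega_1 \to \omega_1\}
\end{displaymath}
for $\mu^Q$-a.e.\ $\vec{\beta}$, from which \Los{} yields $\seed^{Q,W}_{\mathbf{D}} = \sup(\mathbf{E}^{Q,W})'' u_2$. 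The identification of $\mathbf{E}$ and the pointwise identity split into cases according to the structure of $\mathbf{D}$: (a) $d=d'=1$, where $\mathbf{E}$ is given by the $<_{BK}$-predecessor of $\mathbf{q}$ inside $\comp{1}{Q}$ and the identity is immediate since all relevant values are $<\omega_1$ and fixed by $j^W$; (b) $d=2$ with $\mathbf{D}$ of $*$-$W$-continuous type, where $\mathbf{E}$ is obtained by shortening $\mathbf{q}$ in the manner recorded by $\ucf_{*}^W(\mathbf{D})$, and the identity reduces to Lemma~\ref{lem:level_2_uniform_cofinality} applied to $\sigma, \sigma'$; (c) $d=2$ with $\mathbf{D}$ of $*$-$W$-discontinuous type, where $\mathbf{E}$ replaces $\sigma$ at its last active node by a $\prec^W$-predecessor or appends a new seed to $(P,\vec{p})$, and Lemma~\ref{lem:level_2_uniform_cofinality_another} delivers the required sup expression.

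The main obstacle will be the combinatorial bookkeeping that identifies $\mathbf{E}$ uniquely and matches it to the correct continuity lemma. The BK-predecessor of $\corner{\mathbf{D}}$ interacts nontrivially with the continuous/discontinuous dichotomies for both $\mathbf{q}$ (inside $\comp{2}{Q}$) and $\mathbf{D}$ (inside $W$), and getting this bookkeeping right is the bulk of the work; once $\mathbf{E}$ is correctly isolated, the cofinality identity is a routine application of Lemma~\ref{lem:j_sigma_continuity} together with Lemma~\ref{lem:level_2_uniform_cofinality} or \ref{lem:level_2_uniform_cofinality_another}, and Lemma~\ref{lem:level_2_desc_order}(1) supplies the pointwise bound needed for \Los{}.
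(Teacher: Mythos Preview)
Your reduction in both parts assumes that a supremum commutes with the $\mu^Q$-ultrapower, and that commutation is exactly the nontrivial content of the lemma. The paper supplies it via the level-2 partition property of $\omega_1$; your outline bypasses this step, and that is a genuine gap.

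For Part~(1): the claim ``takes values cofinally in $\omega_1$ \dots\ hence represents $\omega_1$'' is a non sequitur. An unbounded function into $\omega_1$ represents some ordinal $\ge\omega_1$ in the $\mu^Q$-ultrapower; showing it is exactly $\omega_1$ is a Fodor-type statement for $\mu^Q$ (any $g\in\admistwobold$ that is regressive against the minimum coordinate is $\mu^Q$-bounded), and Lemma~\ref{lem:Q_respecting} does not give this. In the case $\comp{1}{Q}=\emptyset$ the situation is clearer still: a direct computation shows $\id^{Q,W}_{\mathbf D_{\min}}$ is the constant function $\omega_1$ (since $\comp{2}{\beta}_{((-1),\{(0)\},((0)))}=\omega_1$ and $\sigma_W^{\,W}(\omega_1)=\seed^W_{\min_{\prec^W}W}=\omega_1$), so $\seed^{Q,W}_{\mathbf D_{\min}}=j^Q(\omega_1)$ and one must prove $j^Q(\omega_1)=\omega_1$. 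The paper does this by adjoining one node to $Q$, partitioning $\omega_1^{Q'\uparrow}$ by a comparison with $g$, and extracting a bound from the homogeneous club.

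For Part~(2): the pointwise identity you aim for does not yield the cofinality conclusion via \Los{}, because the supremum over all $h:\omega_1\to\omega_1$ is not first-order. From $g(\vec\xi)<\sup_h j^W(h)\bigl((\sigma')^W(\comp{d'}{\xi}_{\mathbf q'})\bigr)$ for each $\vec\xi$ you get, for each $\vec\xi$, some $h_{\vec\xi}$; you need a \emph{single} $h$ working $\mu^Q$-almost everywhere. The paper obtains this uniform $h$ by the same mechanism as in Part~(1): extend $Q$ to $Q'$ along a node tailored to $\mathbf E$, partition according to whether $\tau^W({}^2[f]^{Q'}_{r'})\le g([f\restriction\rep(Q)]^Q)$, and read off $h_0$ from the homogeneous club using Lemma~\ref{lem:QW_description_extension_another}. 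Lemmas~\ref{lem:level_2_uniform_cofinality} and~\ref{lem:level_2_uniform_cofinality_another} on their own do not produce this uniform witness.
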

\begin{proof}
%  We first prove the second half. 
%Put $\mathbf{D} = ((q,P),\sigma)$ and $\mathbf{D}' = ((q',P'), \sigma')$. 
We only prove the case when $\comp{1}{Q} = {\emptyset}$. The general case takes an analogous additional argument. 

Case 1: $W= \emptyset$. 

The only $(Q,W)$-description is the constant $(Q,*)$-description $\mathbf{D}_0$. 
We only have to prove part 1. 
 For any $x \in \mathbb{R}$,  $\mathbf{D}_0^{Q,W}=j^Q \circ j^W$ is elementary from $L_{\kappa_3^x}[T_2,x]$ into $L_{\kappa_3^x}[j^Q \circ j^W (T_2), x]$. It follows that $\mathbf{D}_0^{Q,W} \res \omega_1 $ is the identity map. It remains to show that $\seed_{\mathbf{D}_0}^{Q,W} = \omega_1$. 
We already know that $\seed_{\mathbf{D}_0}^{Q,W} =j^Q(\omega_1)$. 
Suppose $[g]_{\mu^Q} < j^Q(\omega_1)$ and we try to show that $[g]_{\mu^Q} \leq \omega_1$. 
Let $Q'$ be the completion of the partial level $\leq 2$ tree $(Q,(1,(0),\emptyset))$. %So $(1, (0)) \iniseg_2^Q (2, (\emptyset, \emptyset,\emptyset))$.
% For $f \in [\omega_1]^{Q' \uparrow}$, put $[f]^Q = [f \res \rep(Q)]^Q$.
Let $\mathbf{D}' = (1, (0), \emptyset)  \in \desc(Q',W)$. Then $\mathbf{D}_0 =  \mathbf{D}'  \res (Q,W)$. 
We partition functions $f \in \omega_1^{Q'\uparrow} $  according to whether ${}^1[f]^{Q'}_{(0)} \leq g ( [f \res \rep(Q)]^Q)$. By the level-2 partition property of $\omega_1$ in \cite{sharpII}, we obtain a club $C \in \mu_{\mathbb{L}}$ which is homogeneous for this property. Let  $\eta \in C'$ iff $\eta \in C$ and $C \cap \eta$ has order type $\eta$. %be the set of additively closed limit points of $C$. 
If the homogeneous side satisfies ${}^1[f]^{Q'}_{(0)} > g([f \res \rep(Q)]^Q)$, we let $\alpha_0 =$the $\omega$-th element of $C$, and so 
every $f \in [C']^{Q \uparrow}$ is extendable to $f' \in C^{Q' \uparrow}$  so that ${}^1[f']^{Q'}_{(0)} = \alpha_0$. Therefore, for every $\vec{\xi} \in [C']^{Q \uparrow}$, $g(\vec{\xi}) < \alpha_0$. Hence by \Los{},  $[g]_{\mu^Q} < j^Q(\alpha_0) = \alpha_0$ and we are done. If the homogeneous side satisfies ${}^1[f]^{Q'}_{(0)} \leq g([f\res \rep(Q)]^Q)$, then by Lemma~\ref{lem:QW_description_extension_another}, $\omega_1 = {}^2[f\res \rep(Q)]^Q_{\emptyset} \leq g([f\res \rep(Q)]^Q)$,  
contradicting to the assumption on $g$. 

Case 2: $W \neq \emptyset$. 

 We firstly prove part 1.   The $\prec^{Q,W}$-minimum is 
 $\mathbf{D} _0= (2, \mathbf{q},  \sigma)$, where $\mathbf{q}=( (-1)  , \se{(0)}, ((0)))$, ${\sigma}((0)) $ is the $<_{BK}$-least node in $W$. 
$\seed_{\mathbf{D}_0}^{Q,W}$ is represented modulo $\mu^Q$ by the function that sends $\vec{\beta}$ to ${\sigma}^W(\beta_{\mathbf{q}}) = {\sigma}^W(\omega_1) = \omega_1$. Hence, $\seed_{\mathbf{D}_0}^{Q,W} = j^Q(\omega_1)$. 
 Work with the same $Q'$ as in Case 1 and argue with the same partition arguments. 

Next, we prove part 2. Let $\mathbf{D} = (2,\mathbf{q},\sigma)$, $\mathbf{q} = (q,P, \vec{p})$,  $\mathbf{E} = (2, \mathbf{r}, \tau)$, $\mathbf{r} = (r, Z, \vec{z})$. 
Then $q \neq (-1)$.

Subcase 2.1: $r$ is of discontinuous type. 

Let $Q'$ be the level $\leq 2$ tree extending $Q$ such that $\dom(Q') \setminus \dom(Q) = \se{(2, {r}')}$,  $r' = r ^{-}\concat (a)$, $\emptyset = a \res \comp{2}{Q}\se{r^{-}}$, $Q'(r') = Q(r)$.
 Let  $\mathbf{r}' = (r',Z,\vec{z})$, $\mathbf{E}'=(2,\mathbf{r}', \tau)$. Then $\mathbf{D}= \mathbf{E}' \res (Q,W)$. 
Our partition arguments will be based on $Q'$.

Suppose  $[g]_{\mu^Q} < \seed_{\mathbf{D}}^{Q,W}$ and we seek $\eta_0< u_2$ such that $[g]_{\mu^Q} < \mathbf{E}^{Q,W}  (\eta_0) $. 
We partition  functions $f\in \omega_1^{Q' \uparrow}$ according to whether $\tau^{W}({}^2 [f]^{Q'}_{r'})  \leq g([f \res \rep(Q)]^Q)$. By the level-2 partition property of $\omega_1$ in \cite{sharpII}, we obtain a club $C \in \mu_{\mathbb{L}}$ which is homogeneous for this property. 
Let  $\eta \in C'$ iff $\eta \in C$ and $C \cap \eta $ has order type $\eta$.  If the homogeneous side satisfies  $\tau^{W}({}^2 [f]^{Q'}_{r'})  > g([f \res \rep(Q)]^Q)$, we let $\eta_0 =j_{\mu_{\mathbb{L}}}(h_0)(\omega_1)$, where  $h_0(\alpha)= \min(C' \setminus (\alpha+1))$. This allows us to extend every $f \in (C')^{Q\uparrow}$ to $f' \in C^{Q' \uparrow}$ so that ${}^2[f']^{Q'}_{r'} = j^{P} (h_0)( [f]^Q_{{r}} )$. Therefore, for every $\vec{\xi} \in [C']^{Q \uparrow}$, $g(\vec{\xi}) < \tau ^W(  j^{Z}(h_0) ( \xi_{{r}})) = j^W(h_0) (\tau^W(\xi_{{r}}))$. Hence $[g]_{\mu^Q} < j^Q \circ j^W (h_0) (\seed_{\mathbf{E}}^{Q,W})  = \mathbf{E}^{Q,W} ( j_{\mu_{\mathbb{L}}}(h_0)(\omega_1) )$. Hence $[g]_{\mu^Q} < \mathbf{E}^{Q,W}(\eta_0)$. % On the other hand, $\mathbf{\mathbf{D}}^{Q,W}(\eta_0) < \mathbf{\mathbf{D}}^{Q,W}(u_2) \leq \seed_{\mathbf{D}'}^{Q,W}$ by Lemma~\ref{lem:level_2_desc_order}. 
If the homogeneous side satisfies $\tau^{W}({}^2 [f]^{Q'}_{r'})  \leq g([f \res \rep(Q)]^Q)$, then by Lemma~\ref{lem:QW_description_extension_another}, $\sigma^W ({}^2[f]^Q_q) \leq g([f \res \rep(Q)]^Q)$. This contradicts our assumption on $g$. 

Subcase 2.2: $r$ is of continuous type.

Let $Q'$ be the level $\leq 2$ tree extending $Q$ such that $\dom(Q') \setminus \dom(Q) = \se{(2, {r}')}$,  
$r' = (r^{-})^{-} \concat (a)$, $\emptyset  = a \res \comp{2}{Q} \se{(r^{-})^{-}} $,  $Q'(r') = Q(r^{-})$.  Let  $\mathbf{r}' = (r' \concat (-1) ,Z,\vec{z})$, $\mathbf{E}'=(2,\mathbf{r}', \tau)$. Then $\mathbf{D} = \mathbf{E}' \res (Q,W)$. The rest is similar to Subcase 2.1. 
\end{proof}

At this point, it is convenient to label the nodes of a tree of uniform cofinalities using arbitrary sets instead of elements in $\omega^{<\omega}$ and $(\omega^{<\omega})^{<\omega}$. 
Suppose  $Q$ is a level $\leq 2$ tree and $W$ is a level-1 tree. \emph{A representation of $Q \otimes W$} is a pair $(S,\tau)$ such that $S$ is a level-1 tree, $\tau$ factors $(S,Q,W)$, $\ran(\tau) = \desc(Q,W)$, and $s \prec^S s'$ iff  $\tau(s) \prec^{Q,W} \tau(s')$. Representations of $Q \otimes W$ are clearly mutually isomorphic. We shall informally regard
\begin{displaymath}
Q \otimes W = \desc(Q,W) \setminus \se{\text{the constant $(Q,W)$-description}}
\end{displaymath}
as a ``level-1 tree'' by identifying it with $S$ via $\tau$. 
We put $\seed^{Q \otimes W}_{\mathbf{D}} = \seed^S_{\tau^{-1}(\mathbf{D})}$ for $\mathbf{D} \in \desc(Q,W)$. If $\tau'$ factors $(S',Q,W)$, then $\tau'$ also factors ``level-1 trees'' $(S',Q \otimes W)$, and $(\tau')^{Q \otimes W}$ makes sense. That is, $(\tau')^{Q \otimes W} = (\tau^{-1} \circ \tau')^S$, where $\tau^{-1} \circ \tau'$ factors $(S',S)$. The identity map $ \id_{Q \otimes W} : \mathbf{D} \mapsto \mathbf{D}$ factors $(Q \otimes W, Q,W)$. 
If $Q$ is a subtree of $Q'$ and $W$ is a subtree of $W'$, then $Q \otimes W$ is regarded as a subtree of $Q' \otimes W'$, and the map  $j^{Q \otimes W, Q' \otimes W'}$ makes sense. In other words, let $(S,\tau)$ be a representation of $Q \otimes W$ and $(S',\tau')$ be a representation of $Q' \otimes W'$ such that $S$ is a subtree of $S'$ and $\tau \subseteq \tau'$, then $j^{Q \otimes W, Q' \otimes W'} = j^{S,S'}$. If $\pi$ factors level $\leq 2$ trees $(Q,T)$, then
\begin{displaymath}
  \pi \otimes W
\end{displaymath}
factors level-1 trees $(Q \otimes W, T \otimes W)$, where $\pi (d, \mathbf{q}, \sigma) = (d, \comp{d}{\pi}(\mathbf{q}), \sigma)$.

\begin{mylemma}
  \label{lem:factor_SQW}
Suppose $Q$ is a finite level $\leq 2$ tree, $W$ is a  finite level-1 tree.
\begin{enumerate}
\item If $\mathbf{D} \in \desc(Q,W)$, then $\seed^{Q \otimes W}_{\mathbf{D}} = \seed^{Q,W}_{\mathbf{D}}$. 
\item  $(\id_{Q \otimes W})^{Q,W} $ is identity on $j^Q \circ j^W (\omega_1+1)$.
\item If $S$ is a level-1 tree, $\tau$ factors $(S,Q,W)$, then $\tau^{Q,W} = \tau^{Q\otimes W}$.
\end{enumerate}
\end{mylemma}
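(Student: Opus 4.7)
The plan is to prove parts (1), (2), (3) in order, with (1) carrying essentially all of the content. Fix a representation $(S,\tau)$ of $Q \otimes W$, so $\tau$ is a bijection from $S$ onto $\desc(Q,W)$ that transports $\prec^S$ onto $\prec^{Q,W}$.

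I will prove (1) by induction on $\mathbf{D}$ along $\prec^{Q,W}$, jointly with the statement that $\mathbf{D}^{Q,W}$ and $(\tau^{-1}(\mathbf{D}))^S$ agree as ordinal-valued maps on $u_2$. For the base case, $\mathbf{D}=\min(\prec^{Q,W})$ gives $\seed^{Q,W}_{\mathbf{D}}=\omega_1$ by Lemma~\ref{lem:level_2_desc_cofinal_in_next}(1), while the analogous level-1 fact gives $\seed^S_{\tau^{-1}(\mathbf{D})}=\omega_1$ for the $\prec^S$-minimum; both induced maps are the identity on $u_2$. For the successor step, let $\mathbf{E}=\pred_{\prec^{Q,W}}(\mathbf{D})$ and $e=\tau^{-1}(\mathbf{E})$. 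Lemma~\ref{lem:level_2_desc_cofinal_in_next}(2) says $(\mathbf{E}^{Q,W})''u_2$ is cofinal in $\seed^{Q,W}_{\mathbf{D}}$, and the parallel level-1 statement says $(e^S)''u_2$ is cofinal in $\seed^S_{\tau^{-1}(\mathbf{D})}$. By the inductive hypothesis these two images coincide pointwise, so the two suprema agree, giving $\seed^{Q,W}_{\mathbf{D}}=\seed^S_{\tau^{-1}(\mathbf{D})}$. Pointwise agreement of the new pair of maps on $u_2$ then follows by unpacking the defining formula $\mathbf{D}^{Q,W}(j_{\mu_{\mathbb{L}}}(h)(\omega_1))=j^Q\circ j^W(h)(\seed^{Q,W}_{\mathbf{D}})$ and its $S$-analog, using elementarity into $L[x]$ for $x\in\mathbb{R}$.

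Parts (2) and (3) then fall out essentially formally. For (2), $(\id_{Q \otimes W})^{Q,W}$ sends each generator $\seed^{Q\otimes W}_{\mathbf{D}}$ to $\seed^{Q,W}_{\mathbf{D}}$, which by (1) is the same ordinal; since these generators together with the action on the ground model $\mathbb{L}$ determine the whole map, $(\id_{Q\otimes W})^{Q,W}$ is the identity on $j^Q\circ j^W(\omega_1+1)=j^S(\omega_1+1)$. For (3), given $\tau'$ factoring $(S',Q,W)$, both $(\tau')^{Q,W}$ and $(\tau')^{Q \otimes W}$ send each $\seed^{S'}_{s}$ to the common value $\seed^{Q,W}_{\tau'(s)}=\seed^{Q\otimes W}_{\tau'(s)}$, and both commute with the ambient ultrapower on $\mathbb{L}$, so they coincide everywhere.

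The main obstacle is verifying pointwise agreement of $\mathbf{E}^{Q,W}$ and $e^S$ on $u_2$ during the successor step, rather than merely equality of their sups. This is the substantive step that justifies treating $Q\otimes W$ as a genuine level-1 tree whose ultrapower map equals $j^Q\circ j^W$, and it will use the continuity lemmas~\ref{lem:QW_description_extension_another} and~\ref{lem:QW_description_extension} to align the $(Q,W)$-description recursion with the direct level-1 recursion on $S$ at each ordinal $\eta<u_2$.
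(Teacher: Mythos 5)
Your induction along $\prec^{Q,W}$, strengthened to carry the pointwise agreement of $\mathbf{D}^{Q,W}$ and $(\tau^{-1}(\mathbf{D}))^S$ on $u_2$, is essentially the same argument as the paper's: the paper's phrasing instead maintains the single equivalent invariant that $(\id_{Q\otimes W})^{Q,W}$ is the identity on $u_{l+2}$, but the two bookkeeping schemes encode the same content and both rest on Lemma~\ref{lem:level_2_desc_cofinal_in_next} for the cofinality step. One small correction to your closing paragraph: the pointwise-agreement step does not need Lemmas~\ref{lem:QW_description_extension} or~\ref{lem:QW_description_extension_another} --- once the seed equality $\seed^{Q,W}_{\mathbf{D}} = \seed^S_{\tau^{-1}(\mathbf{D})}$ is in hand, agreement on $u_2$ is a direct \L{}o\'{s}/elementarity computation (exactly the unpacking you sketch in the body), while those continuity lemmas are already consumed in the proof of Lemma~\ref{lem:level_2_desc_cofinal_in_next} itself.
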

\begin{proof}
Let  $\mathbf{D}_0,\ldots,\mathbf{D}_m$ enumerate $\desc(Q,W)$ in the $\prec^{Q,W}$-ascending order. 
We prove by induction on $l \leq m$ that $\seed^{Q \otimes W}_{\mathbf{D}_i} = \seed^{Q,W}_{\mathbf{D}_i}$ for any $i \leq l$.

% 1 and 2 are proved by a simultaneous induction on $l$. 
Suppose $l=0$. The fact $\seed_{\mathbf{D}_0}^{Q,W} = \omega_1$ follows  from  Lemma~\ref{lem:level_2_desc_cofinal_in_next}. % Suppose $\tau$ factors $(S,Q,W)$ and $\mathbf{D}_0 \in \ran(\tau)$. Then $\tau(s_0)=\mathbf{D}_0$, $s_0$ is the $<_{BK}$-minimum node in $S$, so that ${s_0}^S(\omega_1) = \omega_1$. Then $\tau^{Q,W}(\omega_1) = \tau^{Q,W} ( {s_0}^S (\omega_1)) = \mathbf{D}_0^{Q,W}(\omega_1)=\omega_1$, the last equality by Lemma~\ref{lem:level_2_desc_cofinal_in_next}. 
% Every element in $u_2$ is of the form $\theta^{L[z]}(\omega_1)$. Thanks to the enough amount of elementarity of $\tau^{Q,W}$, $^{Q,W} \res u_2$ is the identity map.

Suppose the induction hypothesis holds at $l<m$. 
That is, $(\mathbf{D}_l)^{Q,W} (\omega_1) = \seed_{\mathbf{D}_l}^{Q,W} = u_{l+1}$ for $l<m$. By \Los{}, $(\id_{Q \otimes W})^{Q,W}  $ is identity on $u_{l+2}$.
But $((\mathbf{D}_l)^{Q,W})'' u_2$ is a cofinal subset of $\seed_{\mathbf{D}_{l+1}}^{Q,W}$ by Lemma~\ref{lem:level_2_desc_cofinal_in_next}. 
Hence, $\seed^{Q,W}_{\mathbf{D}_{l+1}} = u_{l+2}$. This proves part 1. Parts 2-3 are immediate corollaries. 
\end{proof}

% Part 4 of Lemma~\ref{lem:factor_SQW} is the level-2 version of Lemma~\ref{lem:j_sigma_continuity}.

% As a corollary to Lemma~\ref{lem:factor_SQW} and an effective transformation from $\mathbf{D} \in \desc(Q,W)$ to $(d,\mathbf{q}) \in \exdesc(Q)$ with $\seed^{Q,W}_{\mathbf{D}} = \seed^Q_{(d,\mathbf{q})}$, we obtain a representation of the uniform indiscernibles below $u_{\omega}$ as extended $Q$-descriptions.

$\mathbf{D} \in \desc(Q,W)$ is \emph{direct} iff either $\mathbf{D}$ is of degree 1 or $\mathbf{D} $ is of the form $(2,(q, P, \vec{p}) , \id_P)$. Lemma~\ref{lem:factor_SQW} has the following corollary on representations of uniform indiscernibles in the $\mu^Q$-ultrapower.
\begin{mylemma}
  \label{lem:Q_desc_uniform_indis}
  Suppose $Q$ is a finite level $\leq 2$ tree. Then
  \begin{displaymath}
    \set{u_n}{1 \leq n < \omega} = \set{\seed^{Q,W}_{\mathbf{D}}}{W \text{ finite, } \mathbf{D} \in \desc(Q,W) \text{ is direct}}.
  \end{displaymath}
\end{mylemma}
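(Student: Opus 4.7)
The plan is to prove the two containments separately. For the direction $\supseteq$, I appeal directly to Lemma~\ref{lem:factor_SQW}, whose proof in fact establishes something stronger than stated: enumerating $\desc(Q,W) = \{\mathbf{D}_0 \prec^{Q,W} \cdots \prec^{Q,W} \mathbf{D}_m\}$ in $\prec^{Q,W}$-ascending order, one has $\seed^{Q,W}_{\mathbf{D}_l} = u_{l+1}$ for every $l \leq m$. Hence every seed $\seed^{Q,W}_\mathbf{D}$, whether or not $\mathbf{D}$ is direct, already lies in $\{u_n : n \geq 1\}$.

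For the harder direction $\subseteq$, I would argue by induction on $n \geq 1$ that each $u_n$ is realized as $\seed^{Q,W}_\mathbf{D}$ for some finite level-1 tree $W$ and some direct $\mathbf{D} \in \desc(Q,W)$. The base case $n=1$ is handled by taking the $\prec^{Q,W}$-minimum of $\desc(Q,W)$, which is direct after a judicious choice of $W$: if $\comp{1}{Q} \neq \emptyset$, take $W = \emptyset$ and $\mathbf{D} = (1, q_{\min}, \emptyset)$ with $q_{\min}$ the $<_{BK}$-least node of $\comp{1}{Q}$; if $\comp{1}{Q} = \emptyset$, take $W = \{(0)\}$ and $\mathbf{D} = (2,((-1), \{(0)\}, ((0))), \id_{\{(0)\}})$. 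In either case Lemma~\ref{lem:level_2_desc_cofinal_in_next}(1) yields $\seed^{Q,W}_\mathbf{D} = \omega_1 = u_1$.

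For the inductive step, suppose a direct $\mathbf{D}^* \in \desc(Q, W^*)$ realizes $u_n$. My plan is to enlarge $W^*$ to some $W^{**}$ so that the immediate $\prec^{Q,W^{**}}$-successor of $\mathbf{D}^*$ inside $\desc(Q,W^{**})$ is itself direct; Lemma~\ref{lem:factor_SQW} then gives $u_{n+1}$ as its seed. The enlargement is prescribed by the four-case description of the relation $\mathbf{D} = \mathbf{D}' \res (Q,W)$ together with the continuity formula of Lemma~\ref{lem:QW_description_extension_another}: one adjoins a single node to $W^*$ at the location forcing $\sigma = \id$ in the successor, or one lengthens the $Q$-component $P$ of $\mathbf{D}^*$ in parallel with a node added to $W^*$, using the dichotomy between $*$-$W$-continuous and $*$-$W$-discontinuous type.

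The main obstacle is the combinatorial case analysis ensuring that the enlargement can always be chosen to yield a direct successor. Among the many $(Q,W^{**})$-descriptions freshly inserted by growing $W^*$, the one occupying the slot immediately $\prec^{Q,W^{**}}$-above $\mathbf{D}^*$ is generically of the form $(2,\mathbf{q},\sigma)$ with $\sigma \neq \id$, since most factoring maps are not identities. The correct enlargement must track the shape of $\mathbf{D}^*$ (degree $1$ versus $2$, continuous versus discontinuous, $*$-$W$-continuous versus $*$-$W$-discontinuous) and insert a node in $W^{**}$ producing a $\sigma$ that agrees with $\id_{P^{**}}$ on the relevant subtree. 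The explicit successor formulas of Lemma~\ref{lem:QW_description_extension_another}, together with the degree-by-degree classification of $(Q,W)$-descriptions, provide the bookkeeping needed to identify the correct one-node enlargement in each subcase.
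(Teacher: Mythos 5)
Your $\supseteq$ direction is fine and matches the paper's intent: the proof of Lemma~\ref{lem:factor_SQW} shows $\seed^{Q,W}_{\mathbf{D}_l} = u_{l+1}$ for the $\prec^{Q,W}$-ascending enumeration, so every seed (direct or not) is a uniform indiscernible.

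Your $\subseteq$ direction, however, has a genuine gap, and it is not merely unaddressed bookkeeping. Your inductive step presupposes that after enlarging $W^*$ to $W^{**}$, the direct description $\mathbf{D}^*$ still realizes $u_n$, so that its $\prec^{Q,W^{**}}$-successor will realize $u_{n+1}$. But $\seed^{Q,W}_{\mathbf{D}^*}$ is \emph{not} invariant in $W$: for direct $\mathbf{D}^* = (2,(q,P^*,\vec{p}^*),\id_{P^*})$ one has $\seed^{Q,W^{**}}_{\mathbf{D}^*} = j^{Q\otimes W^*,\,Q\otimes W^{**}}\bigl(\seed^{Q,W^*}_{\mathbf{D}^*}\bigr)$ by Lemma~\ref{lem:jQ_move_factor_maps}, which in general is strictly above $u_n$ if the new node you add to $W^*$ causes fresh $(Q,W^{**})$-descriptions to slot $\prec^{Q,W^{**}}$-below $\mathbf{D}^*$. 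So the claim that the immediate successor of $\mathbf{D}^*$ in $\desc(Q,W^{**})$ realizes $u_{n+1}$ does not follow from Lemma~\ref{lem:factor_SQW}; you would first need $\mathbf{D}^*$ to remain in position $n$, which requires the enlargement to insert nodes only $\prec$-above, and simultaneously you need exactly one insertion and for that insertion to be direct. These three requirements can conflict, and your appeal to the case formulas of Lemma~\ref{lem:QW_description_extension_another} does not show they can always be met by a single one-node enlargement. You flag this as the ``main obstacle'' but then assert that the bookkeeping ``provides'' the right enlargement without exhibiting it; that is precisely the step that a proof must supply. As written this is a plan, not a proof, and the plan as formulated would need a different inductive invariant (e.g., tracking the entire set of positions occupied by direct descriptions in $\prec^{Q,W}$ order rather than a single $\mathbf{D}^*$) to have a chance of closing.
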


For $(d,q) \in \dom(Q)$, define
\begin{displaymath}
  \cf^Q(d,q) =
  \begin{cases}
    0  & \text{ if } d=1 \vee (d=2 \wedge \comp{2}{Q}(q) \text{ of degree 0}),\\
    1  & \text{ if } d=2 \wedge \ucf(\comp{2}{Q}(q)) = \min(\prec^{\comp{2}{Q}_{\tree}(q)}),\\
   2   & \text{ otherwise}. 
  \end{cases}
\end{displaymath}
By Lemma~\ref{lem:level_2_desc_cofinal_in_next}, if $\vec{\beta}$ respects $Q$, then
\begin{displaymath}
  \cf^{\admistwobold}(\comp{d}{\beta}_q) = u_{\cf^Q(d,q)}
\end{displaymath}
where $u_0 = \omega$.

\subsection{Approximations of $S_3$ in \texorpdfstring{$\admistwobold$}{}}
\label{sec:approximation-LT3-LT2}

\begin{mylemma}
  \label{lem:jQ_move_factor_maps}
  Suppose $Q$ is a level $\leq 2$ tree, $W$ is a level-1 subtree of $W'$, all trees are finite. % , $S$ is a level-1 subtree of $S'$, all trees are finite,  $\tau$ factors $(S,Q,W)$, $\tau'$ factors $(S',Q,W')$, $\ran(\tau) = \desc(Q,W)$, $\ran(\tau') = \desc(Q,W')$, $\tau \subseteq \tau'$. 
 Then $j^Q( j^{W,W'}\res j^W(\omega_1+1)) = j^{Q \otimes W,Q \otimes W'}\res (j^{Q \otimes W}(\omega_1+1))$, and hence $j^Q (j^{W,W'}_{\sup} \res j^W(\omega_1+1)) = j^{Q \otimes W,Q \otimes W'}_{\sup} (j^{Q \otimes W}(\omega_1+1))$ by sufficient elementarity of $j^Q$.
\end{mylemma}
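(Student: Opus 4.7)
The plan is to reduce the equality to agreement on a set of generators (the seeds of $(Q,W)$-descriptions) and then propagate by elementarity. First, Lemma~\ref{lem:factor_SQW}(2) identifies $j^{Q \otimes W}(\omega_1+1) = j^Q \circ j^W(\omega_1+1)$ as sets of ordinals, with $\seed^{Q \otimes W}_{\mathbf{D}} = \seed^{Q,W}_{\mathbf{D}}$ for each $\mathbf{D} \in \desc(Q,W)$, and likewise for $W'$. Since $W \subseteq W'$ implies $\desc(Q,W) \subseteq \desc(Q,W')$ (any $\sigma \colon P \to W$ with $P \subseteq W$ also factors $(P,W')$), the ``level-1 tree'' $Q \otimes W$ is a subtree of $Q \otimes W'$, and $j^{Q \otimes W, Q \otimes W'}$ is the corresponding level-1 factor embedding. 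Under these identifications, the two maps
\[
f_1 := j^Q(j^{W,W'} \res j^W(\omega_1+1)), \qquad f_2 := j^{Q \otimes W, Q \otimes W'} \res j^{Q \otimes W}(\omega_1+1)
\]
have matching domain and codomain, and it suffices to show $f_1 = f_2$.

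Next, I verify that $f_1(\seed^{Q,W}_{\mathbf{D}}) = \seed^{Q,W'}_{\mathbf{D}} = f_2(\seed^{Q,W}_{\mathbf{D}})$ for every $\mathbf{D} \in \desc(Q,W)$. The value of $f_2$ is immediate from the action of a level-1 factor embedding on seeds combined with Lemma~\ref{lem:factor_SQW}(1). For $f_1$, writing $\mathbf{D} = (d, \mathbf{q}, \sigma)$: the case $d = 1$ is trivial because $\seed^{Q,W}_{\mathbf{D}} = \seed^Q_{(1,\mathbf{q})} < j^Q(\omega_1)$ is represented modulo $\mu^Q$ by a function into $\omega_1$ on which $j^{W,W'}$ acts as the identity. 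The case $d = 2$ rests on the compatibility identity $\sigma^{W'} = j^{W,W'} \circ \sigma^W$, valid for any $\sigma$ regarded as factoring both $(P,W)$ and $(P,W')$; once this is in hand,
\[
f_1(\seed^{Q,W}_{\mathbf{D}}) = [\vec{\beta} \mapsto j^{W,W'}(\sigma^W(\comp{2}{\beta}_{\mathbf{q}}))]_{\mu^Q} = [\vec{\beta} \mapsto \sigma^{W'}(\comp{2}{\beta}_{\mathbf{q}})]_{\mu^Q} = \seed^{Q,W'}_{\mathbf{D}}.
\]
The compatibility identity follows from the uniqueness characterization of $\sigma^{W'}$ as the $L[z]$-elementary map satisfying $\sigma^{W'} \circ j^P = j^{W'}$ and $\sigma^{W'} \circ p^P = \sigma(p)^{W'}$, combined with $j^{W,W'} \circ j^W = j^{W'}$ and $j^{W,W'}(\seed^W_{\sigma(p)}) = \seed^{W'}_{\sigma(p)}$.

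Finally, I propagate agreement to all of $j^{Q \otimes W}(\omega_1+1)$. Every $\xi < j^{Q \otimes W}(\omega_1) = j^Q \circ j^W(\omega_1)$ has the form $\xi = j^Q \circ j^W(h)(\vec{s})$ for some $L[z]$-definable $h$ (with $z \in \mathbb{R}$) and a tuple $\vec{s}$ of seeds. Elementarity of $j^Q(j^{W,W'})$ (from elementarity of $j^{W,W'}$ inside $L[z]$ and of $j^Q$) gives $f_1(\xi) = j^Q \circ j^{W'}(h)(f_1(\vec{s}))$, and elementarity of $j^{Q \otimes W, Q \otimes W'}$ gives $f_2(\xi) = j^{Q \otimes W'}(h)(f_2(\vec{s})) = j^Q \circ j^{W'}(h)(f_2(\vec{s}))$ under the identification; agreement on $\vec{s}$ yields $f_1(\xi) = f_2(\xi)$. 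The top ordinal $j^{Q \otimes W}(\omega_1)$ is handled directly, since both maps send it to $j^Q \circ j^{W'}(\omega_1) = j^{Q \otimes W'}(\omega_1)$. The moreover clause follows by taking $\sup$-images of the equation $j^Q(j^{W,W'})''\xi = (j^{Q \otimes W, Q \otimes W'})''\xi$ for each $\xi$ and invoking the sufficient elementarity of $j^Q$ to identify $j^Q(g_{\sup})$ with $j^Q(g)_{\sup}$. The only nontrivial ingredient is the compatibility identity $\sigma^{W'} = j^{W,W'} \circ \sigma^W$; everything else is formal bookkeeping within the framework already established in this section.
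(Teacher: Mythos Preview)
Your proof is correct and follows essentially the same approach as the paper: reduce to agreement on the seeds $\seed^{Q,W}_{\mathbf{D}}$ via Lemma~\ref{lem:factor_SQW}, verify agreement there using \Los{} and the compatibility identity $j^{W,W'}\circ\sigma^W=\sigma^{W'}$, and conclude by elementarity. The paper's version is terser---it simply says ``since $j^Q(j^{W,W'})$ is elementary from $L[z]$ to $L[z]$ for any $z\in\mathbb{R}$, it suffices to show'' agreement on seeds---whereas you spell out the propagation step and the justification of the compatibility identity explicitly, but the substance is identical.
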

\begin{proof}
  % Recall that $j^Q(j^W(\omega_1)) = j^S(\omega_1)$ and $j^Q(j^{W'}(\omega_1)) = j^{S'}(\omega_1)$ by Lemma~\ref{lem:factor_SQW}. 
  By Lemma~\ref{lem:factor_SQW}, $\seed^{Q,W}_{\mathbf{D}} = \seed^{Q \otimes W}_{\mathbf{D}}$ for $\mathbf{D} \in \desc(Q,W)$, and similarly for $W'$. So $j^{Q \otimes W, Q \otimes W'}(\seed^{Q,W}_{\mathbf{D}}) = \seed^{Q,W'}_{\mathbf{D}}$ for $\mathbf{D} \in \desc(Q,W)$.
since $j^Q(j^{W,W'})$ is elementary from $L[z]$ to $L[z]$ for any $z \in \mathbb{R}$, it suffices to show that % for any $s \in S$,  $\mathbf{D}=(\mathbf{q}, \sigma) \in \desc(Q,W)$,
$j^Q(j^{W,W'} \res j^W(\omega_1+1)) (\seed_{\mathbf{D}}^{Q,W}) = j^{Q \otimes W,Q \otimes W'}(\seed_{\mathbf{D}}^{Q,W})$ for any $\mathbf{D} \in \desc(Q,W)$. Fix $\mathbf{D} \in \desc(Q,W)$. Suppose the typical case when $\mathbf{D} = (2, \mathbf{q}, \sigma)$ is of degree 2. 
  Then by \Los{}, 
\begin{align*}
  j^Q (j^{W,W'} \res j^W(\omega_1+1)) (\seed_{\mathbf{D}}^{Q,W}) & =  j^Q (j^{W,W'} \res j^W(\omega_1+1)) ([\vec{\xi} \mapsto {\sigma}^W(\comp{2}{\xi}_{\mathbf{q}})]_{\mu^Q}) \\
  & = [\vec{\xi} \mapsto j^{W,W'}\circ {\sigma}^W(\comp{2}{\xi}_{\mathbf{q}})]_{\mu^Q} \\
  & =  [\vec{\xi} \mapsto { \sigma}^{W'}(\comp{2}{\xi}_{\mathbf{q}})]_{\mu^Q} \\ 
  & = \seed_{\mathbf{D}}^{Q,W'} \\
  & = j^{Q \otimes W,Q \otimes W'}(\seed_{\mathbf{D}}^{Q,W}).
\end{align*}
\end{proof}

\begin{mylemma}
  \label{lem:jQ_move_factor_maps_another}
  Suppose $\pi$ factors finite level $\leq 2$ trees $(Q,T)$ and $W$ is a finite level-1 tree, % ,  $S$ is a subtree of $S'$,   $\tau$ factors $(S,Q,W)$,
  all trees are finite. % $\tau'$ factors $(S',Q',W)$, $\ran(\tau) = \desc(Q,W)$, $\ran(\tau') = \desc(Q',W)$. $\tau \subseteq \tau'$.
Then $\pi^T \res j^Q\circ j^W(\omega_1+1) = (\pi \otimes W)^{T\otimes W} \res j^{Q \otimes W}(\omega_1+1)$.
\end{mylemma}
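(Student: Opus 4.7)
The plan is to reduce the claimed equality of embeddings to agreement on the generating seeds, and then to verify this by a direct \Los{} computation. By Lemma~\ref{lem:factor_SQW}, the common domain
\[
  j^{Q \otimes W}(\omega_1+1) = j^Q \circ j^W(\omega_1+1)
\]
is generated, under the level-1 ultrapower map $j^{Q \otimes W}$, by the seeds $\seed^{Q \otimes W}_{\mathbf{D}} = \seed^{Q,W}_{\mathbf{D}}$ as $\mathbf{D}$ ranges over $\desc(Q,W)$. Both $\pi^T$ and $(\pi \otimes W)^{T \otimes W}$ are elementary from $L[z]$ to $L[z]$ for every real $z$, so it suffices to verify the desired equality on each such seed.

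First I would identify the image of $\mathbf{D} = (d,\mathbf{q},\sigma) \in \desc(Q,W)$ under the factoring map $\pi \otimes W$ of level-1 trees $(Q \otimes W, T \otimes W)$: by definition $(\pi \otimes W)(\mathbf{D}) = (d, \comp{d}{\pi}(\mathbf{q}), \sigma)$, and this lies in $\desc(T,W)$ because $\pi$ preserves the tree-components of entries of $Q$. Lemma~\ref{lem:factor_SQW}(1) applied to the pair $(T,W)$ then yields
\[
  (\pi \otimes W)^{T \otimes W}(\seed^{Q \otimes W}_{\mathbf{D}}) = \seed^{T \otimes W}_{(\pi \otimes W)(\mathbf{D})} = \seed^{T,W}_{(\pi \otimes W)(\mathbf{D})}.
\]

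Next I would compute the left-hand side directly via \Los{}. In the typical degree-$2$ case $\mathbf{D} = (2,\mathbf{q},\sigma)$ with $\mathbf{q}=(q,P,\vec{p})$, the seed is $\seed^{Q,W}_{\mathbf{D}} = [\vec{\beta} \mapsto \sigma^{W}(\comp{2}{\beta}_{\mathbf{q}})]_{\mu^Q}$. The factoring map $\pi$ induces on the $\mu^Q$-ultrapower the measure projection sending $[f]_{\mu^Q}$ to $[\vec{\xi} \mapsto f(\vec{\xi}_{\pi})]_{\mu^T}$, and the identification $(\vec{\xi}_{\pi})_{\mathbf{q}} = \comp{2}{\xi}_{\comp{2}{\pi}(\mathbf{q})}$ holds by construction. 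Hence
\[
  \pi^T(\seed^{Q,W}_{\mathbf{D}}) = [\vec{\xi} \mapsto \sigma^{W}(\comp{2}{\xi}_{\comp{2}{\pi}(\mathbf{q})})]_{\mu^T} = \seed^{T,W}_{(\pi \otimes W)(\mathbf{D})},
\]
which matches the right-hand side. The degree-$1$ case is immediate, since then $\sigma = \emptyset$ and both sides reduce to $\seed^{T}_{(1,\comp{1}{\pi}(\mathbf{q}))}$.

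The main obstacle is bookkeeping rather than content: one must be careful that the identification of $Q \otimes W$ with a bona fide level-1 tree via any fixed representation $(S,\tau)$ makes $\pi \otimes W$ into a genuine level-1 factoring map, and that the two viewpoints on $\seed^{Q \otimes W}_{\mathbf{D}}$---as the seed attached to a node of the level-1 tree $Q \otimes W$, and as the seed $\seed^{Q,W}_{\mathbf{D}}$ attached to the $(Q,W)$-description $\mathbf{D}$---coincide, which is precisely Lemma~\ref{lem:factor_SQW}(1). Once this setup is in place, the \Los{} computation above is a one-line check and directly parallels the proof of Lemma~\ref{lem:jQ_move_factor_maps}.
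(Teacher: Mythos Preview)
Your proof is correct and follows essentially the same approach as the paper's own argument. The paper's proof is a one-liner---``Apply Lemma~\ref{lem:factor_SQW} and use the fact that $\pi^T(\seed^{Q,W}_{\mathbf{D}}) = \seed^{T,W}_{(\pi\otimes W)(\mathbf{D})}$ for $\mathbf{D}\in\desc(Q,W)$''---and you have simply unpacked both parts: the reduction to seeds via Lemma~\ref{lem:factor_SQW} and elementarity, and the \Los{} computation verifying the key seed identity.
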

\begin{proof}
Apply Lemma~\ref{lem:factor_SQW} and use the fact that $\pi^T (\seed^{Q,W}_{\mathbf{D}}) = \seed^{T,W}_{\pi \otimes W(\mathbf{D})}$ for $\mathbf{D} \in \desc(Q,W)$. 
\end{proof}

\begin{mylemma}
  \label{lem:jQ_move_T_2}
Suppose $Q$ is a finite level $\leq 2$ tree. Then
  \begin{enumerate}
  \item $j^{Q} \res \set{u_n}{n < \omega}$ is $\Delta^1_3$, uniformly in $Q$.
  \item $j^Q (u_{\omega}) = u_{\omega}$.
  \item $j^{Q} \res u_{\omega} $ is $\Delta^1_3$, uniformly in $Q$. 
  \item Suppose $P,P'$ are finite level-1 trees and  $\pi$ factors $(P,P')$.  Then $j^{Q} ( \pi^{P'} \res u_{\omega})$ is $\Delta^1_3$, uniformly in $(Q,P,P',\pi)$.
  \item $j^{Q} ( T_2) $ is $\Delta^1_3$, uniformly in $Q$. 
  \end{enumerate}
\end{mylemma}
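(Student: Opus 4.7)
The plan is to prove the five parts sequentially, with the main engine being the description analysis of uniform indiscernibles developed in Lemma~\ref{lem:Q_desc_uniform_indis} and Lemma~\ref{lem:factor_SQW}.

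For Part 1, I will use Lemma~\ref{lem:Q_desc_uniform_indis} applied to the trivial level-$\leq 2$ tree $Q^0$ to obtain, for each $n \geq 1$, a finite level-1 tree $W_n$ and a direct description $\mathbf{D}_n \in \desc(Q^0, W_n)$ with $u_n = \seed^{Q^0, W_n}_{\mathbf{D}_n}$; the assignment $n \mapsto (W_n, \mathbf{D}_n)$ is effective. Via the canonical inclusion $Q^0 \hookrightarrow Q$, the same $\mathbf{D}_n$ is a direct $(Q, W_n)$-description, and unpacking the definition of $\seed$ together with elementarity of $j^Q$ yields $j^Q(u_n) = \seed^{Q, W_n}_{\mathbf{D}_n}$. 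Lemma~\ref{lem:factor_SQW} identifies this with $u_{m(n,Q)}$, where $m(n,Q)$ is one plus the $\prec^{Q, W_n}$-rank of $\mathbf{D}_n$, effectively computable from $n$ and $Q$. Combined with the $\Delta^1_3$ sharp-code representation of uniform indiscernibles (a consequence of Corollary~\ref{coro:Delta13_pwo_computable_in_LT2}), this gives Part 1. Part 2 is then immediate: each $j^Q(u_n) = u_{m(n,Q)} < u_\omega$; since $\cf(u_\omega) = \omega$ and $j^Q$ is strictly increasing on ordinals, $m(n,Q)$ is unbounded in $n$, so $j^Q(u_\omega) = \sup_n j^Q(u_n) = u_\omega$.

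For Part 3, every nonzero $\xi < u_\omega$ equals $\widehat{\xi'}$ for a unique $\xi' < \omega^{\omega^\omega}$ by Definition~\ref{def:ordinal_assignment}, namely a finite ordinal-arithmetic expression built from uniform indiscernibles $u_{n_1}, \dots, u_{n_r}$. By elementarity, $j^Q(\xi)$ is the same expression with each $u_{n_i}$ replaced by $j^Q(u_{n_i})$, which is $\Delta^1_3$ uniformly by Part 1. For Part 4, the map $\pi^{P'} \res u_\omega$ acts on each $\widehat{\xi'}$ by a syntactic shift of the description encoding $\xi'$ (obtained from the defining equation $\pi^{P'} \circ j^P = j^{P'}$ applied coordinatewise), so its $j^Q$-image is obtained by applying Part 3 termwise; the commutation of $j^Q$ with this description is the level-1 specialization of Lemma~\ref{lem:jQ_move_factor_maps_another}. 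This is $\Delta^1_3$ uniformly in $(Q, P, P', \pi)$.

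Part 5 follows because the Martin-Solovay tree $T_2$ is a tree on $\omega \times u_\omega$ whose nodes are tuples of ordinals below $u_\omega$ and whose tree-ordering is definable from $\pi^{P'}$-type maps between level-1 ultrapowers. Applying $j^Q$ replaces each ordinal coordinate by its image under $j^Q \res u_\omega$ and each auxiliary map by its image under $j^Q$, both of which are $\Delta^1_3$ by Parts 3 and 4. The main obstacle is the bookkeeping in Part 1: making the identity $j^Q(\seed^{Q^0, W}_{\mathbf{D}}) = \seed^{Q, W}_{\mathbf{D}}$ rigorous requires careful tracking of how the trivial embedding $Q^0 \hookrightarrow Q$ lifts to $\mu^Q$-ultrapower representations of seeds. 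A secondary subtlety is matching the precise definition of $T_2$ against the tools of Parts 3 and 4 in Part 5; I expect this to reduce cleanly once the level-1 analogue of Lemma~\ref{lem:jQ_move_factor_maps_another} is in hand.
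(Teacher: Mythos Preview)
Your approach to Parts 1, 2, and 5 is correct and matches the paper's route via Lemma~\ref{lem:factor_SQW}. The identity $j^Q(u_n) = \seed^{Q,W_n}_{\mathbf{D}_n}$ that you extract is exactly right: the representative function $\id^{Q,W_n}_{\mathbf{D}_n}$ is constant with value $\sigma^{W_n}(\omega_1) = u_n$, so its $\mu^Q$-class is $j^Q(u_n)$.

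There is, however, a genuine error in Part 3. You claim that every nonzero $\xi < u_\omega$ equals $\widehat{\xi'}$ for some $\xi' < \omega^{\omega^\omega}$, but this is false. By Definition~\ref{def:ordinal_assignment}, the range of $\widehat{\cdot}$ consists only of finite sums of finite products of uniform indiscernibles (and $0$, $\omega$); in particular every $\widehat{\xi'}$ with $\xi' > 0$ is a limit ordinal. An ordinal like $\omega_1 + 1$, or any countable successor ordinal $\geq \omega$, is below $u_\omega$ but not of this form. So your ``finite ordinal-arithmetic expression built from uniform indiscernibles'' covers only a sparse subset of $u_\omega$.

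The paper's fix is to use level-1 sharp codes instead: every $\xi < u_\omega$ is $\tau^{L[z]}(z, u_1, \dots, u_n)$ for some real $z$ and $\mathcal{L}$-Skolem term $\tau$. Since $j^Q$ is elementary on $L[z]$ and fixes $z$, one gets $j^Q(\xi) = \tau^{L[z]}(z, j^Q(u_1), \dots, j^Q(u_n))$, which is $\Delta^1_3$ in the sharp codes by Part 1. Your idea of ``apply elementarity termwise to an expression in uniform indiscernibles'' is exactly the right intuition; it just needs the real parameter $z$ to cover all ordinals. The same correction applies to your Part 4, where you again route through $\widehat{\xi'}$; once Part 3 is fixed via sharp codes, Part 4 follows by the same elementarity argument, and the paper packages this as Lemma~\ref{lem:jQ_move_factor_maps} (not Lemma~\ref{lem:jQ_move_factor_maps_another}, which concerns level-$\leq 2$ factoring maps rather than level-1).
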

\begin{proof}
1 and 2. By Lemma~\ref{lem:factor_SQW}.

3. $j^{Q} (\tau^{L[z]} (u_1,\ldots,u_n)) = \tau^{L[z]} ( j^{Q}(u_1),\ldots,j^{Q}(u_n))$. 

4. By Lemma~\ref{lem:jQ_move_factor_maps}.

5. by 4. 
\end{proof}

The following lemma refines Corollary~\ref{coro:delta13_bounds_ultrapowers}.
\begin{mylemma}
  \label{lem:level_2_embedding_bounded_by_delta13}
  Assume $\boldDelta{2}$-determinacy. Suppose $x\in \mathbb{R}$. Then 
%for any % level-1 tree $P$, $j^P ( \kappa_3^x) = \kappa_3^x$;
for any finite level $\leq 2$ tree $Q$, $j^Q ( \kappa_3^x,\lambda_3^x) = (\kappa_3^x, \lambda_3^x)$. Moreover,  $S_3 \res \kappa_3^x$ and $S_3 \res \lambda_3^x$ are both uniformly $\Delta_1$-definable over $\admistwo{x}$ from $\se{T_2,x}$. 
\end{mylemma}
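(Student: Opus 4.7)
The plan is to leverage Lemma~\ref{lem:jQ_move_T_2} together with Corollary~\ref{coro:Delta13_pwo_computable_in_LT2} to bring the action of $j^Q$ on ordinals below $\kappa_3^x$ inside the admissible structure $\admistwo{x}$, and then extract the two fixed-point statements by combining admissibility with the elementarity of $j^Q$. The definability of $S_3 \res \kappa_3^x$ and $S_3 \res \lambda_3^x$ will then fall out of the same analysis, since $S_3$ is assembled from the ultrapower trees $j^Q(T_2)$ as $Q$ ranges over finite level $\leq 2$ trees.

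First I would verify a uniform definability step: for each finite level $\leq 2$ tree $Q$, the tree $j^Q(T_2)$ and the restriction $j^Q \res u_\omega$ are $\Delta^1_3$ uniformly in $Q$ by Lemma~\ref{lem:jQ_move_T_2}(3)--(5). Relativizing the coding gives $\Delta^1_3(x)$-ness, and applying the relativized form of Corollary~\ref{coro:Delta13_pwo_computable_in_LT2} places these objects inside $\admistwo{x}$ as objects uniformly $\Delta_1$-definable over $\admistwo{x}$ from $\{T_2, x\}$, with $Q$ appearing only as a free finitary parameter. Since every $\alpha < \bolddelta{3}$ is represented by a well-ordering built from $T_2$, this transfers to $\Delta_1$-definability over $\admistwo{x}$ of the function $\alpha \mapsto j^Q(\alpha)$ on every proper initial segment of $\kappa_3^x$. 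For the fixed-point $j^Q(\kappa_3^x) = \kappa_3^x$: given any $\alpha < \kappa_3^x$, the restriction $j^Q \res \alpha$ is a set in $\admistwo{x}$, so by $\Sigma_1$-admissibility its range is bounded in $\kappa_3^x$; hence $j^Q {}'' \kappa_3^x \subseteq \kappa_3^x$ and $j^Q(\kappa_3^x) \geq \kappa_3^x$. For the reverse inequality, the elementarity of $j^Q$ from $L[T_2,x]$ to $L[j^Q(T_2),x]$ identifies $j^Q(\kappa_3^x)$ with the least admissibility ordinal of the target; since $j^Q(T_2) \in L_{\kappa_3^x}[T_2,x]$ is uniformly constructible from $T_2,x$ below $\kappa_3^x$, the two admissibility ordinals coincide. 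The identical argument, with the analogous defining property of $\lambda_3^x$, yields $j^Q(\lambda_3^x) = \lambda_3^x$.

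For the $\Delta_1$-definability of $S_3 \res \kappa_3^x$ and $S_3 \res \lambda_3^x$, recall that $S_3$ is assembled from the ultrapower trees $j^Q(T_2)$ via the commuting system of Lemma~\ref{lem:jQ_move_factor_maps} and Lemma~\ref{lem:jQ_move_factor_maps_another}. Its initial segment below $\kappa_3^x$ (respectively $\lambda_3^x$) is a directed limit over the corresponding initial segments of the $j^Q(T_2)$'s, each of which is $\Delta_1$-definable over $\admistwo{x}$ from $\{T_2,x\}$ uniformly in $Q$ by the first step; the assembly itself is $\Sigma_1$ and its complement is $\Sigma_1$ by the same means, giving $\Delta_1$. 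The main obstacle in this plan is the first step: precisely translating ``$\Delta^1_3(x)$'' into ``$\Delta_1$ over $\admistwo{x}$ from $\{T_2,x\}$'' uniformly in the finite parameter $Q$, which requires a careful invocation of Corollary~\ref{coro:Delta13_pwo_computable_in_LT2} in its relativized form and the explicit translation between ordinals $<\bolddelta{3}$ and the well-orderings arising from $T_2$-representations.
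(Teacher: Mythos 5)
Your overall skeleton matches the paper's: show $j^Q(T_2)$ lives inside $\admistwo{x}$, use elementarity to identify $j^Q(\kappa_3^x)$ with the admissibility ordinal of $L[j^Q(T_2),x]$ and conclude the fixed-point claims, and then establish $\Delta_1$-definability of $j^{Q,Q'}\res\kappa_3^x$ by pushing the \Los{}-style representation of ordinals from $u_\omega$-parameters. However, there is a genuine gap in your very first step, and it is not a cosmetic one.

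You write that relativizing to $\Delta^1_3(x)$ and applying Corollary~\ref{coro:Delta13_pwo_computable_in_LT2} ``places these objects inside $\admistwo{x}$.'' But that corollary does not do that: it places a $\Delta^1_3(x)$ prewellordering inside $\admistwo{M_1^{\#}(x)}$ and gives $\Delta_1$-definability from $\se{T_2, M_1^{\#}(x)}$, not from $\se{T_2,x}$. For an arbitrary real $x$ this is strictly weaker than what the lemma statement needs, so your route does not reach the target. The correct route, which the paper takes, is to use the \emph{concrete form} of Lemma~\ref{lem:jQ_move_T_2} rather than its bare $\Delta^1_3$-ness: parts 1--3 give an explicit, recursive-in-$Q$ description of $j^Q\res\se{u_n}$ together with the transfer formula $j^Q(\tau^{L[z]}(u_1,\dots,u_n)) = \tau^{L[z]}(j^Q(u_1),\dots,j^Q(u_n))$, and parts 4--5 propagate this to $j^Q(T_2)$. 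This exhibits $j^Q(T_2)$ as constructible from $T_2$ alone at a low level of the $L[T_2,x]$-hierarchy, so it lies in $\admistwo{x}$ uniformly in $Q$ with no appeal to $M_1^{\#}(x)$. Once you replace your Corollary~\ref{coro:Delta13_pwo_computable_in_LT2}-based step with this direct computation, the rest of your argument (elementarity identifying $j^Q(\kappa_3^x)$ with the target admissibility ordinal, the $\lambda_3^x$ analogue, and the \Los{}-style $\Sigma_1$-and-$\Pi_1$ definition of $j^{Q,Q'}$) goes through and matches the paper.

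One further minor note: your intermediate observation that $j^Q\res\alpha \in \admistwo{x}$ has range bounded in $\kappa_3^x$ by admissibility, hence $j^Q{}''\kappa_3^x\subseteq\kappa_3^x$, is correct but irrelevant to the direction you attach it to; the inequality $j^Q(\kappa_3^x)\geq\kappa_3^x$ is immediate from monotonicity of $j^Q$ on ordinals, and the bounded-range fact bears on $\sup j^Q{}''\kappa_3^x$ rather than on $j^Q(\kappa_3^x)$ itself. It can simply be dropped.
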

\begin{proof}
By elementarity,  $j^Q(\kappa_3^x)$ is the least $\gamma$ for which  $L_{\gamma}[j^Q(T_2),x]$ is admissible. But % $j^P(T_2) \in L_{\kappa_3^x}[T_2,x]$ according to Lemma~\ref{lem:jP_move_T_2} and
$j^Q(T_2) \in \admistwo{x}$ by Lemma~\ref{lem:jQ_move_T_2}. Consequently, % $L_{\kappa_3^x}[j^P(T_2),x]$ and 
$L_{\kappa_3^x}[j^Q(T_2),x]$ is admissible. Since % $j^P$ and 
$j^Q$ is non-decreasing on ordinals, we must have $j^Q(\kappa_3^x) = \kappa_3^x$. Similarly, $\lambda_3^x$, being the sup of the ordinals $\Delta_1$-definable over $\admistwo{x}$ from $\se{T_2,x}$, is also fixed by $j^Q$.

To define $S_3 \res \kappa_3^x$, it is of course enough to establish a uniformly $\Delta_1$ definition of $j^{Q,Q'}\res \kappa_3^x$ over $\admistwo{x}$, for $Q$ a level $\leq 2$ subtree of $Q'$. 
 Note that every element of $\admistwo{x}$ is $\Delta_1$-definable over $\admistwo{x}$ from $u_{\omega}\cup\se{T_2,x}$, and hence by \Los{}, every ordinal in $j^Q(\admistwo{x})$ is $\Delta_1$-definable over $j^Q(\admistwo{x})$ from parameters in $u_{\omega}\cup\se{j^Q(T_2),x}$. The lemma  follows immediately from Lemma~\ref{lem:level_2_embedding_bounded_by_delta13} and \Los{}:
 \begin{displaymath}
   j^{Q,Q'}(\gamma) = \gamma'
 \end{displaymath}
iff for some $\xi<\kappa_3^x$, some $\Sigma_1$-formula $\varphi$, and some ordinal $\alpha<u_{\omega}$,
\begin{displaymath}
  L_{\xi}[j^{Q}(T_2),x] \models \forall \delta~ (\delta=\gamma \eqiv \varphi(\delta,j^Q(T_2),x,\alpha)),
\end{displaymath}
and 
\begin{displaymath}
  L_{\xi}[j^{Q'}(T_2),x] \models \forall \delta~ (\delta=\gamma' \eqiv \varphi(\delta,j^{Q'}(T_2),x,j^{Q,Q'}(\alpha))).
\end{displaymath}
This is a $\Sigma_1$ definition of $j^{Q,Q'}(\gamma) = \gamma'$ over $\admistwo{x}$ from $\se{T_2,x}$. In a similar way, we can write down a $\Sigma_1$ definition of $j^{Q,Q'}(\gamma) \neq \gamma'$.  The definition of $S_3 \res \lambda_3^x$ is similar. 
\end{proof}

In light of Lemma~\ref{lem:level_2_embedding_bounded_by_delta13}, $L_{\kappa_3^x}[S_3 \res \kappa_3^x]$ is regarded as the ``lightface core'' of $\admistwo{x}$, analogous to $L_{\omega_1^x}$ versus $L_{\omega_1^x}[x]$. In parallel to Guaspari-Kechris-Sacks in  \cite{guaspari_thesis,kechris_analytical_I,sacks_countable_adm_1976}, if  $C_3$ is the largest countable $\Pi^1_3$ set of reals, then $x \in C_3$ iff $x \in L_{\kappa_3^x}[S_3 \res \kappa_3^x]$ iff $x \in L_{\lambda_3^x}[S_3 \res \lambda_3^x]$. A related result about $C_3$ is in \cite{guaspari_harrington_C3} which follows the same line. 
An inner model theoretic characterization of $C_3$ is still unknown.

Recall that the set of  uncountable $\mathbb{L}$-regular cardinals below $u_{\omega}$ is $\set{u_n}{1 \leq n < \omega}$. 
The scenario in the AD world suggests that the set of uncountable $\admistwobold{}$-regular cardinals should be $\se{u_1,u_2}$. For  a  finite level $\leq 2$ tree $Q$, by Lemma~\ref{lem:jQ_move_T_2}, $\mathbb{L}_{u_{\omega}} \subseteq \admistwoboldextra{j^Q} \subseteq \admistwobold$, so the set of  uncountable $ \admistwoboldextra{j^Q}$-regular cardinals should be $\set{u_n}{n \in A}$ for some set $\se{1,2} \subseteq A \subseteq \omega\setminus 1$. Which $u_n$ is $\admistwoboldextra{j^Q}$-regular? The answer to this is an abstraction of Jackson's uniform cofinality analysis on functions $F: [\omega_1]^{Q \uparrow} \to \bolddelta{3}$ that lie in $\admistwobold{}$, originally in \cite{jackson_delta15}. In particular, we confirm that the set of uncountable $\admistwobold{}$-regular cardinals is indeed $\se{u_1,u_2}$.

\begin{mytheorem}
  \label{thm:jQ_LT2_regularity}
Suppose $Q$ is a finite level $\leq 2$ tree, $W$ is a finite level-1 tree. 
  Suppose $\mathbf{D}=(d,\mathbf{q},\sigma) \in \desc(Q,W)$. % is not the constant $(Q,*)$-description
Then
\begin{displaymath}
\cf^{\admistwoboldextra{j^Q}} (\seed^{Q,W}_{\mathbf{D}})= {\seed}^{Q}_{\ucf^W_2(\mathbf{D})}.
\end{displaymath}
In particular, the set of  uncountable $\admistwoboldextra{j^Q}$-regular cardinals is exactly 
\begin{displaymath}
 \set{ \seed^Q_{(d,\mathbf{q})}}{(d,\mathbf{q}) \in \exdesc(Q) \text{ is regular}}.
\end{displaymath}
\end{mytheorem}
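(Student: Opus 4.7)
The plan is to compute the cofinality by pinning down both an upper and a lower bound that match, in parallel with the uniform-cofinality analysis already carried out for the combinatorial representing function $\id^{Q,W}_{\mathbf{D}}: \vec{\beta}\mapsto \sigma^W(\comp{d}{\beta}_{\mathbf{q}})$ of $\seed^{Q,W}_{\mathbf{D}}$ modulo $\mu^Q$. By Lemma~\ref{lem:factor_SQW} the ordinal $\seed^{Q,W}_{\mathbf{D}}$ is a uniform indiscernible of $\admistwoboldextra{j^Q}$, so its cofinality there can be read off from a canonical representing function. Set $\mathbf{E}=\ucf_{*}^{W}(\mathbf{D})$; the target is $\cf^{\admistwoboldextra{j^Q}}(\seed^{Q,W}_{\mathbf{D}})=\seed^{Q}_{\mathbf{E}}$.

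First I would establish the upper bound $\cf^{\admistwoboldextra{j^Q}}(\seed^{Q,W}_{\mathbf{D}})\leq \seed^{Q}_{\mathbf{E}}$ by exhibiting an explicit cofinal sequence. The case $d=1$ is immediate since $\seed^{Q,W}_{\mathbf{D}}=\seed^{Q}_{(1,\mathbf{q})}=\seed^{Q}_{\mathbf{E}}$. For $d=2$, split according to the two dichotomies in Definition~\ref{def:level-2_W_Q_description}: continuous vs.\ discontinuous type of $\mathbf{q}$, and $*$-$W$-continuous vs.\ $*$-$W$-discontinuous type of $\mathbf{D}$. In the two $*$-$W$-continuous sub-cases, Lemma~\ref{lem:level_2_uniform_cofinality} (when $\mathbf{q}$ is of continuous type) or Lemma~\ref{lem:level_2_uniform_cofinality_another} (when $\mathbf{q}$ is of discontinuous type) provides the cofinalizing identity $\sigma^W(\comp{2}{\beta}_{\mathbf{q}})=(\sigma')^{W}_{\sup}\circ j^{\cdot,\cdot}(\comp{2}{\beta}_{\mathbf{q}^-})$; by \Los{} this lifts to the statement that the image $(\mathbf{E}^{Q,W})''\seed^{Q}_{\mathbf{E}}$ is cofinal in $\seed^{Q,W}_{\mathbf{D}}$ inside $\admistwoboldextra{j^Q}$. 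In the two $*$-$W$-discontinuous sub-cases, the $\prec^W$-predecessor appearing in the definition of $\ucf_{*}^W$ supplies the witness directly, through the inner sup over a one-node $W$-extension.

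Next I would prove the matching lower bound by adapting the partition argument that drove Lemma~\ref{lem:level_2_desc_cofinal_in_next}. Given $\gamma<\seed^{Q}_{\mathbf{E}}$ and a map $F:\gamma\to \seed^{Q,W}_{\mathbf{D}}$ in $\admistwoboldextra{j^Q}$, represent $F$ modulo $\mu^Q$ by some $F^{*}$; form the one-node extension $Q'$ of $Q$ tailored to $\mathbf{E}$ (chosen exactly so that $\mathbf{D}=\mathbf{E}^{+}\res(Q,W)$ for the corresponding extension $\mathbf{E}^{+}\in\desc(Q',W)$), and apply the level-2 partition property of $\omega_1$ from \cite{sharpII} to the partition of $f\in \omega_1^{Q'\uparrow}$ according as $\tau^W({}^2[f]^{Q'}_{r'})\leq F^{*}([f\res\rep(Q)]^Q)$. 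On the ``bounded'' homogeneous side one concludes $[F^{*}]_{\mu^Q}<\mathbf{E}^{Q,W}(\eta_0)<\seed^{Q,W}_{\mathbf{D}}$ for a suitable $\eta_0<u_2$, witnessing boundedness of $F$; on the other side, Lemma~\ref{lem:QW_description_extension_another} and the identification $\mathbf{D}=\mathbf{E}^{+}\res(Q,W)$ force $\sigma^W({}^2[f]^Q_q)$ below $F^{*}$, contradicting the assumption that $F^{*}$ takes values below $\seed^{Q,W}_{\mathbf{D}}$. The ``in particular'' clause then reads off from the main statement: $\mathbf{D}$ is a fixed point of $\ucf_{*}^{W}$ exactly when $\mathbf{D}$ is direct, and by Lemma~\ref{lem:Q_desc_uniform_indis} these are precisely the $(d,\mathbf{q})\in\exdesc(Q)$ that are regular.

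The principal obstacle will be the bookkeeping required to verify, uniformly across the four $(\mathbf{q},*$-$W)$-sub-cases, that the extension $Q'$ built from $\mathbf{E}$ really does satisfy $\mathbf{D}=\mathbf{E}^{+}\res(Q,W)$ for some one-node extension $\mathbf{E}^{+}$, so that Lemma~\ref{lem:QW_description_extension_another} applies. The most delicate instance is $\mathbf{q}$ of continuous type together with $\mathbf{D}$ of $*$-$W$-continuous type: then the continuity of $\id^{Q,W}_{\mathbf{D}}$ is the composite of level-2 continuity at $\mathbf{q}$ (governed by $j^{P\setminus\{p_{k-1}\},P}_{\sup}$) and of $W$-continuity of $\sigma$ at $\sigma(p_{k-1})$, and one must match this composite against the single description $(2,(q^{-},P\setminus\{p_{k-1}\},\vec{p}))$ prescribed by $\ucf_{*}^{W}$, using the exact definition of $\prec^{Q,W}$ to check that no other description slots in between.
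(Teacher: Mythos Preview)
Your upper-bound step is essentially what the paper does in its first half, and the four-sub-case analysis via Lemmas~\ref{lem:level_2_uniform_cofinality}--\ref{lem:level_2_uniform_cofinality_another} is exactly right. The difference is that the paper phrases this step not as an inequality $\cf(\seed^{Q,W}_{\mathbf{D}}) \leq \seed^Q_{\mathbf{E}}$ but as the equality of cofinalities $\cf^{\admistwoboldextra{j^Q}}(\seed^{Q,W}_{\mathbf{D}}) = \cf^{\admistwoboldextra{j^Q}}(\seed^Q_{\mathbf{E}})$, obtained by showing pointwise that $\cf^{\admistwobold}(\sigma^W(\comp{2}{\beta}_{\mathbf{q}}))$ equals the $\admistwobold$-cofinality of the ordinal representing $\seed^Q_{\mathbf{E}}$, and then applying \Los{}. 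This costs nothing extra over your argument and immediately reduces the whole theorem to the regularity of $\seed^Q_{(d,\mathbf{q})}$ for regular $(d,\mathbf{q}) \in \exdesc(Q)$.

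Your lower-bound step, however, has a gap. First a type mismatch: $\mathbf{E} = \ucf_*^W(\mathbf{D})$ lies in $\exdesc(Q)$, not in $\desc(Q,W)$, so expressions like ``$\mathbf{E}^{Q,W}$'' and ``$\mathbf{D} = \mathbf{E}^+ \res (Q,W)$ for $\mathbf{E}^+\in\desc(Q',W)$'' do not parse against the definitions as written. More substantively, the one-node extension plus Lemma~\ref{lem:level_2_desc_cofinal_in_next}-style partition you sketch shows that a \emph{particular} approximating family is cofinal in $\seed^{Q,W}_{\mathbf{D}}$---which is again an upper bound, not a lower one. To rule out a short cofinal map $F:\gamma\to\seed^{Q,W}_{\mathbf{D}}$ with $\gamma<\seed^Q_{\mathbf{E}}$ you must simultaneously control the domain $\gamma$ and the range. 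The paper does this (for the regularity statement, to which it has already reduced) via a \emph{two}-node extension $Q'=Q_1\cup Q_2$: one new node $q'$ sitting just below the target seed and one new node $r'$ sitting just below the purported smaller cofinality, then partitioning $f\in\omega_1^{Q'\uparrow}$ according to whether $g([f\res\rep(Q)]^Q)({}^2[f]^{Q'}_{\mathbf{r}'}) < {}^2[f]^{Q'}_{\mathbf{q}'}$. Either homogeneous side, combined with Lemma~\ref{lem:Q_desc_extension}, yields a contradiction. Your one-node setup provides no handle on the domain side, so the argument as outlined cannot close.
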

\begin{proof}
Put $(d,\mathbf{r}) = \ucf^W_2(\mathbf{D})$. 
Firstly, we prove  $\cf^{\admistwoboldextra{j^Q}} (\seed^{Q,W}_{\mathbf{D}})= \cf^{\admistwoboldextra{j^Q}} (\seed^{Q}_{(d,\mathbf{r})})$. There is nothing to prove for $d=1$. Suppose now $d=2$.

Case 1: $q$ is of continuous type. 

Subcase 1.1: $\mathbf{D}$ is of $*$-$W$--continuous type.

In this case, $\sigma$ is continuous at $\comp{2}{\beta}_{\mathbf{q}}$. For any $\vec{\beta} \in [\omega_1]^{Q \uparrow}$, $\sigma^W(\comp{2}{\beta}_{\mathbf{q}}) = \sup (\sigma^W \circ j^{P^{-},P})'' (\comp{2}{\beta}_{q^{-}})$. So $\cf^{\mathbb{L}_{\kappa_3}[T_2]}(\sigma^W(\comp{2}{\beta}_{\mathbf{q}})) = \cf^{\mathbb{L}_{\kappa_3}[T_2]}(\comp{2}{\beta}_{q^{-}}) $. Note that ${\seed}^{Q}_{(2,\mathbf{r})}$ is represented by the function $\vec{\beta} \mapsto \comp{2}{\beta}_{q^{-}}$. By \Los{}, $\cf^{\mathbb{L}_{\kappa_3}(j^Q(T_2))}(\seed^{Q,W}_{\mathbf{D}}) = \cf^{\mathbb{L}_{\kappa_3}(j^Q(T_2))}(\seed^{Q}_{(2,\mathbf{r})})$.

Subcase 1.2:  $\mathbf{D}$ is of $*$-$W$--discontinuous type.

Then $\pred_{\prec^W}(\sigma(p_{k-1}))$ exists and is not in $\ran( \sigma)$. Put $P^{-}  =  P \setminus \se{p_{k-1}}$. Let $\sigma'$ factor $(P,W)$ where $\sigma$ and $\sigma'$ agree on $P^{-}$ and $\sigma'(p_{k-1}) = \pred_{\prec^W}(\sigma(p_{k-1}))$. By Lemma~\ref{lem:level_2_uniform_cofinality}, $\sigma^W(\comp{2}{\beta}_{\mathbf{q}}) = (\sigma')^W_{\sup} (j^{P^{-}, P}(\comp{2}{\beta}_{q^{-}}))$. So \\%[....]
$\cf^{\mathbb{L}_{\kappa_3}[T_2]}(\sigma^W(\comp{2}{\beta}_{\mathbf{q}})) = \cf^{\mathbb{L}_{\kappa_3}[T_2]}(j^{P^{-}, P}(\comp{2}{\beta}_{q^{-}})) $. Note that ${\seed}^{Q}_{(2,\mathbf{r})}$ is represented by the function $\vec{\beta} \mapsto j^{P^{-},P}(\comp{2}{\beta}_{q^{-}})$. 
By \Los{}, $\cf^{\mathbb{L}_{\kappa_3}(j^Q(T_2))}(\seed^{Q,W}_{\mathbf{D}}) = \cf^{\mathbb{L}_{\kappa_3}(j^Q(T_2))}(\seed^{Q}_{(2,\mathbf{r})})$. 

Case 2: $q$ is of discontinuous type. 

Subcase 2.1: $\mathbf{D}$ is of $*$-$W$--continuous type. 

Then  $\sigma$ is continuous at $\comp{2}{\beta}_{\mathbf{q}}$. Proceed as in Subcase 1.1. 

Subcase 2.2: $\mathbf{D}$ is of $*$-$W$--discontinuous type. 

Then $\pred_{\prec^W}(\sigma(p_k^{-}))$ exists and is not in $\ran(\sigma)$. Put $P^{+} = P \cup \se{ p_k }$. 
Let $\sigma'$ factor $(P^{+},W)$ where $\sigma'\supseteq \sigma$ and $\sigma'(p_k)=\pred_{\prec^W}(\sigma(p_k^{-}))$. 
By Lemma~\ref{lem:level_2_uniform_cofinality_another}, 
$\sigma^W(\comp{2}{\beta}_{\mathbf{q}}) = (\sigma')^W_{\sup}(j^{P,P^{+}}(\comp{2}{\beta}_{\mathbf{q}}))$. Proceed as in Subcase 1.2. 

Note that by Lemma~\ref{lem:Q_desc_uniform_indis}, each $u_n$ ($1 \leq n<\omega$) is of the form $\seed^{Q,W}_{\mathbf{D}}$ for some finite $W$  and $\mathbf{D} \in \desc(Q,W)$. In summary, we have proved that every  $\admistwoboldextra{j^Q}$-regular cardinal must be of the form $\seed^Q_{(d,\mathbf{q})}$, where $(d,\mathbf{q})\in \desc^{*}(Q)$ is regular. 

Secondly, we prove that if  $(d,\mathbf{q})\in \desc^{*}(Q)$ is regular, then $\seed^Q_{(d,\mathbf{q})}$ is regular in $\admistwoboldextra{j^Q}$.  
% We prove by induction on $\prec^{Q}$. 

% If $d=1$, then $\seed^Q_{1, \mathbf{q}}$ is represented modulo $\mu^Q$ by $\vec{\beta}\mapsto \comp{1}{\beta}_{\mathbf{q}}$. For any $z \in \mathbb{R}$, let $C_z$ be the set of countable $z$-indiscernibles. Then for any $\vec{\beta} \in [C_z]^{Q \uparrow}$, $\comp{1}{\beta}_{\mathbf{q}} \in C_z$ is regular in $\admistwo{z}$. By \Los{}, $\seed^{Q}_{1, \mathbf{q}}$ is regular in $L_{\kappa_3^z}[j^Q(T_2), z]$. Hence $\seed^{Q}_{1, \mathbf{q}}$ is regular in $\admistwoboldextra{j^Q}$. 

% Suppose from now on $d=2$. 

% Case 1. 

Suppose towards a contradiction that $\cf^{\admistwoboldextra{j^Q}} (\seed^Q_{(d, \mathbf{q})})  =  \seed^Q_{(e, \mathbf{r})}$, where $(e, \mathbf{r}) \prec^Q (d, \mathbf{q})$. Let $g \in \admistwoboldextra{j^Q}$ be a cofinal map from $\seed^Q_{(e, \mathbf{r})}$ into $\seed^Q_{(d, \mathbf{q})}$.  Let $g = [G]_{\mu^Q}$, where $G \in \admistwobold$. By \Los{}, for $\mu^Q$-a.e.\ $\vec{\beta}$, $g(\vec{\beta}) \in \admistwobold$ is a cofinal map from $\comp{e}{\beta}_{\mathbf{r}}$ into $\comp{d}{\beta}_{\mathbf{q}}$. 

We prove the case when $d= e=2$, the other cases being similar. Put $\mathbf{r} = (r, Z, \vec{z})$. Let $Q_1$ be a level $\leq 2$ tree which extends $Q$ such that $\dom(Q') \setminus \dom(Q) = \se{(2, q')}$, and
\begin{enumerate}
\item if $(2, \mathbf{q}) \in \desc(Q)$, then $q' = q ^{-}\concat (a)$, $\emptyset = a \res \comp{2}{Q}\se{q^{-}}$, $Q'(q') = Q(q)$;
\item if $(2, \mathbf{q}) \notin \desc(Q)$, then $q' = q \concat (a)$, $\emptyset = a \res \comp{2}{Q}\se{q }$,  $Q'_{\tree}(q') = P$.
\end{enumerate}
 Let $Q_2$ be the level $\leq 2$ tree defined in a similar way with $(q,q')$ replaced by $(r,r')$. Let $Q'$ be the tree extending both $Q_1$ and $Q_2$ and $\dom(Q') = \dom(Q_1) \cup \dom(Q_2)$. 
Put $\mathbf{q}' = (q',P,\vec{p}) $, $\mathbf{r}' = (r', Z, \vec{z})$. So $\mathbf{q}' = \mathbf{q} \res Q_2$, $\mathbf{r}'  = \mathbf{r} \res Q_1$. 
We partition functions $f \in \omega_1^{Q' \uparrow}$ according to whether $g([f \res \rep(Q)]^Q) ( {}^2[f] ^{Q'}_{\mathbf{r}'}) < {}^2 [f]^{Q'}_{\mathbf{q}'}$. By the level-2 partition property of $\omega_1$ in \cite{sharpII}, we obtain a club $C \in \mu_{\mathbb{L}}$ which is homogeneous for this property. 
Let $\eta \in C'$ iff $\eta \in C$ and $C \cap \eta$ has order type $\eta$. Let $\eta \in C''$ iff $\eta \in C'$ and $C' \cap \eta$ has order type $\eta$. 
If the homogeneous side satisfies $g([f \res \rep(Q)]^Q) ( {}^2[f] ^Q_{\mathbf{r}'}) < {}^2[f]^Q_{\mathbf{q}'}$, then every function $f \in (C'')^{Q \uparrow}$ extends to some $f' \in (C')^{Q_1 \uparrow}$ by Lemma~\ref{lem:Q_respecting}, and $\set{{}^2[f'']^{Q'}_{\mathbf{r}'}}{\exists f'' \in C^{Q'\uparrow} (f' \subseteq f'')}$ is cofinal in ${}^2[f]^{Q}_{\mathbf{r}}$ by Lemma~\ref{lem:Q_desc_extension}. Hence,  $\sup (g([f \res \rep(Q)]^Q)) ''( {}^2[f] ^Q_{\mathbf{r}}) \leq  {}^2[f']^{Q_1}_{\mathbf{q}'}< {}^2[f]^{Q}_{\mathbf{q}}$, contradicting to our assumption. 
If the homogeneous side satisfies $g([f \res \rep(Q)]^Q) ( {}^2[f] ^Q_{\mathbf{r}'}) \geq [f]^Q_{\mathbf{q}'}$, a similar arguments yields  $\sup (g([f \res \rep(Q)]^Q)) ''( {}^2[f] ^Q_{\mathbf{r}}) > {}^2[f]^{Q}_{\mathbf{q}}$, contradiction again.
\end{proof}

% For $(d,\mathbf{q}) \in \desc(Q)$, let $\sigma_{d,\mathbf{q}}$ be the unique such that $(d,\mathbf{q},\sigma) \in \desc(Q,W)$ and  $\ran(\sigma) \setminus\se{\emptyset}= \set{(i)}{i < n}$ for some $n<\omega$.

It is easy to deduce the following corollary using \Los{}:
\begin{mycorollary}
  \label{coro:Q_uniform_cofinality}
  Suppose $\beta < \bolddelta{3}$ is a limit ordinal. Then $\beta$ has $Q$-uniform cofinality $(d,\mathbf{q})$ iff $\cf^{\admistwoboldextra{j^Q}}(\beta) = \seed^{Q}_{(d,\mathbf{q})}$. In particular, the $Q$-uniform cofinality of $\beta$ exists and is unique. 
\end{mycorollary}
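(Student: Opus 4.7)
The plan is to deduce this as a direct application of \Los{} to Theorem~\ref{thm:jQ_LT2_regularity}, which identifies the uncountable regular cardinals of $\admistwoboldextra{j^Q}$ with the $\seed^Q_{(d,\mathbf{q})}$ for regular $(d,\mathbf{q}) \in \exdesc(Q)$. Recall from \cite{sharpII} that $\beta$ has $Q$-uniform cofinality $(d,\mathbf{q})$ precisely when $\beta$ admits a representative $F : [\omega_1]^{Q \uparrow} \to \bolddelta{3}$ modulo $\mu^Q$ such that for $\mu^Q$-a.e.\ $\vec{\beta}$, there is an order-preserving, continuous, cofinal map $h_{\vec{\beta}} : \comp{d}{\beta}_{\mathbf{q}} \to F(\vec{\beta})$, with the whole assignment $\vec{\beta} \mapsto h_{\vec{\beta}}$ lying in $\admistwobold$.

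First I would handle the forward direction. Assume $\beta$ has $Q$-uniform cofinality $(d,\mathbf{q})$, witnessed by $F$ and the family $\{h_{\vec{\beta}}\}$ as above. Applying \Los{} to $\mu^Q$, the map $h := [\vec{\beta} \mapsto h_{\vec{\beta}}]_{\mu^Q}$ lies in $\admistwoboldextra{j^Q}$ and, by elementarity, is an order-preserving, continuous, cofinal map from $[\vec{\beta} \mapsto \comp{d}{\beta}_{\mathbf{q}}]_{\mu^Q} = \seed^Q_{(d,\mathbf{q})}$ into $\beta = [F]_{\mu^Q}$. Hence $\cf^{\admistwoboldextra{j^Q}}(\beta) \leq \seed^Q_{(d,\mathbf{q})}$. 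The existence of such a cofinal, order-preserving, continuous map with domain of the stated form forces $(d,\mathbf{q})$ to be regular in the sense of Theorem~\ref{thm:jQ_LT2_regularity}, and then Theorem~\ref{thm:jQ_LT2_regularity} gives equality.

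Conversely, suppose $\cf^{\admistwoboldextra{j^Q}}(\beta) = \seed^Q_{(d,\mathbf{q})}$; by Theorem~\ref{thm:jQ_LT2_regularity} the right-hand side is a regular cardinal of $\admistwoboldextra{j^Q}$ and $(d,\mathbf{q}) \in \exdesc(Q)$ is regular. Fix an order-preserving, continuous, cofinal $h : \seed^Q_{(d,\mathbf{q})} \to \beta$ in $\admistwoboldextra{j^Q}$, and write $h = [\vec{\beta} \mapsto h_{\vec{\beta}}]_{\mu^Q}$ with $\vec{\beta} \mapsto h_{\vec{\beta}}$ in $\admistwobold$. Applying \Los{} again, for $\mu^Q$-a.e.\ $\vec{\beta}$ the map $h_{\vec{\beta}}$ is an order-preserving, continuous, cofinal map from $\comp{d}{\beta}_{\mathbf{q}}$ into $F(\vec{\beta})$, where $F$ represents $\beta$. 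This is exactly a witness that $\beta$ has $Q$-uniform cofinality $(d,\mathbf{q})$.

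Existence and uniqueness then follow at once: for any limit $\beta < \bolddelta{3}$, the ordinal $\cf^{\admistwoboldextra{j^Q}}(\beta)$ exists and is uniquely determined, and by Theorem~\ref{thm:jQ_LT2_regularity} it equals $\seed^Q_{(d,\mathbf{q})}$ for a unique regular $(d,\mathbf{q})$ (since distinct regular elements of $\exdesc(Q)$ give distinct seeds, by Lemma~\ref{lem:level_2_beta_q_order} applied in the ultrapower). The only subtle point, and thus the main bookkeeping obstacle, is verifying that the pointwise notion of ``order-preserving, continuous, cofinal'' at $\mu^Q$-a.e.\ $\vec{\beta}$ is preserved under \Los{} into the corresponding notion in $\admistwoboldextra{j^Q}$; this is routine but requires matching the continuity clauses recorded in the level-1 analysis of Section~\ref{sec:level-1-analysis}.
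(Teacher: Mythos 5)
Your proposal is correct and matches the paper's intended argument: the paper states only that the corollary ``is easy to deduce\ldots using Łoś'' from Theorem~\ref{thm:jQ_LT2_regularity}, which is exactly what you carry out. One small remark: the paper's Definition~\ref{def:level_2_function_sign_etc} of $Q$-uniform cofinality only requires the pointwise maps $\gamma \mapsto G(\vec{\beta},\gamma)$ to be order-preserving with supremum $F(\vec{\beta})$, not continuous, so the continuity clause you flag as a ``subtle'' bookkeeping obstacle is not actually part of what needs to be transferred across Łoś; dropping it makes the argument cleaner without changing anything essential.
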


If $\pi$ factors finite level $\leq 2$ trees $(Q,T)$, then $\pi^T(u_n) = u_m \to \pi^T_{\sup} (u_{n+1}) = u_{m+1} $. 
Therefore, the continuity of $\pi^T$ is decided by $\pi^T \res \set{u_n}{n<\omega}$. If $(d,\mathbf{q}) \in \exdesc(Q)$ is regular, $(\pi, T)$ is \emph{continuous at} $(d,\mathbf{q})$ 
iff one of the following holds:
\begin{enumerate}
\item $d=1$, either $\comp{1}{\pi}(\mathbf{q}) = \min (\prec^{\comp{1}{T}})$ or $\pred_{\prec^{\comp{1}{T}}} (\comp{1}{\pi}(q)) \in \ran(\comp{1}{\pi})$.
\item $d=2$, $\mathbf{q} = (\emptyset,\emptyset,((0)))$, either $\comp{1}{T} = \emptyset$ or $\max(\prec^{\comp{1}{T}}) \in \ran(\comp{1}{\pi})$.
\item $d=2$, $\mathbf{q} = (q,P,\vec{p}) \in \desc(\comp{2}{Q})$, $q \neq \emptyset$,  and letting $t' = \max_{<_{BK}} \comp{2}{T}\se{\comp{2}{\pi}(q), -}$, then either $t' = \comp{2}{\pi}(q^{-}) \concat (-1)$ or $t' \in \ran(\comp{2}{\pi})$.
\item $d=2$, $\mathbf{q} = (q,P,\vec{p}) \notin \desc(\comp{2}{Q})$, and letting $a = \max_{<_{BK}} (\comp{2}{T}\se{\comp{2}{\pi}(q)} \cup \se{-1})$, then either $a=-1$ or $\comp{2}{\pi}(q) \concat (a) \in \ran(\comp{2}{\pi})$.
\end{enumerate}
Thus, $(\pi, T)$ is continuous at $(d,\mathbf{q})$ iff $\pi^T$ is continuous at $\seed^Q_{(d,\mathbf{q})}$. We obtain the following lemma discussing the continuity behavior of $\pi^T$. It is the level-2 version of Lemma~\ref{lem:j_sigma_continuity}.

\begin{mylemma}
  \label{lem:level_2_factoring_continuity}
  Suppose $\pi$ factors finite level $\leq 2$ trees $(Q,T)$, $\gamma < \bolddelta{3}$ is a limit ordinal, $\cf^{\admistwoboldextra{j^Q}}(\gamma) = \seed^Q_{(d,\mathbf{q})}$, $(d,\mathbf{q}) \in \exdesc(Q)$ is regular. Then
  \begin{enumerate}
  \item $\pi^T(\gamma) = \pi^T_{\sup} (\gamma)$ iff $(\pi, T)$ is continuous at $(d,\mathbf{q})$.
  \item Suppose $(\pi, T)$ is not continuous at $(d,\mathbf{q})$. Let $Q^{+}$ be a level $\leq 2$ tree and let  $\pi^{+}$ factor $(Q^{+}, \pi)$ so that $Q^{+} $ extends $Q$, $\pi^{+} $ extends $\pi$, and 
    \begin{enumerate}
    \item if $d=1$, then $\dom(Q^{+}) \setminus \dom(Q) = \se{(1, \mathbf{q}^{+})}$, $\mathbf{q} = \mathbf{q}^{+} \res \comp{1}{Q} $,  
$\comp{1}{\pi}^{+} (\mathbf{q}^{+}) = \suc_{\prec^{\comp{1}{T}}} (\comp{1}{\pi}(\pred_{{\comp{1}{Q}}}(\mathbf{q})))$;
    \item if $d=2$ and $\mathbf{q} = (\emptyset,\emptyset, ((0)))$, then $\dom(Q^{+} ) \setminus \dom(Q)= \se{ (1, q^{+})}$, $\emptyset =q^{+} \res \comp{1}{Q}$, $\comp{1}{\pi}^{+}(q^{+}) = \min_{\prec^{\comp{1}{T}}} \set{a}{\forall r \in \dom(\comp{1}{Q})~\comp{1}{\pi}(r) \prec^{\comp{1}{T}} a}$;
    \item if $d=2$ and  $\mathbf{q} = (q,P,\vec{p}) \in \desc(\comp{2}{Q})$, then $\dom(Q^{+}) \setminus \dom(Q) = \se{(2, q^{+})}$, $q^{+} = \max_{<_{BK}} \comp{2}{Q}^{+}\se{q, -}$, and $\comp{2}{\pi}^{+}(q^{+}) = \comp{2}{\pi}(q^{-}) \concat (a)$, $a=
\min_{<_{BK}} 
\set{b}{ \forall r \in \comp{2}{Q}(q,-)\setminus \se{q^{-} \concat (-1)}~  \comp{2}{\pi}(r) <_{BK} \comp{2}{\pi}(q^{-}) \concat (b)}$;
    \item if $d=2$ and  $\mathbf{q} = (q,P,\vec{p}) \notin \desc(\comp{2}{Q})$, then $\dom(Q^{+}) \setminus \dom(Q) = \se{(2, q^{+})}$, $q^{+} = q \concat (
\max_{<_{BK}} \comp{2}{Q}^{+}\se{q})$, 
$\comp{2}{\pi}^{+}(q^{+}) = \comp{2}{\pi}(q) \concat (a)$, 
$a= \min_{<_{BK}} \set{b}{ \forall  c \in \comp{2}{Q}\se{q} ~ \comp{2}{\pi}(q \concat (c)) <_{BK} \comp{2}{\pi}(q) \concat (b)}$.
    \end{enumerate}
Then $\pi^T_{\sup}(\gamma) = (\pi^{+})^T \circ j^{Q,Q^{+}}_{\sup} (\gamma)$.
  \end{enumerate}
\end{mylemma}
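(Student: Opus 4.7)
The plan is to reduce both parts to the single question: when is $\pi^{T}$ continuous at the seed ordinal $\seed^{Q}_{(d,\mathbf{q})}$ itself? Since $\cf^{\admistwoboldextra{j^Q}}(\gamma)=\seed^{Q}_{(d,\mathbf{q})}$, we may choose a continuous, strictly increasing, cofinal map $f\in\admistwoboldextra{j^Q}$ from $\seed^{Q}_{(d,\mathbf{q})}$ into $\gamma$, whose image is cofinal in $\gamma$. By elementarity of $\pi^{T}$ (on $L_{\kappa_3^{x}}[j^{Q}(T_{2}),x]$ for each real $x$, via the usual $\Los$ argument that defines $\pi^{T}$), the image $\pi^{T}(f)$ is continuous and strictly increasing from $\pi^{T}(\seed^{Q}_{(d,\mathbf{q})})$ into $\pi^{T}(\gamma)$, with cofinal image. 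Hence continuity of $\pi^{T}$ at $\gamma$ is equivalent to continuity of $\pi^{T}$ at $\seed^{Q}_{(d,\mathbf{q})}$, and the value of $\pi^{T}_{\sup}(\gamma)$ is determined by $\pi^{T}_{\sup}(\seed^{Q}_{(d,\mathbf{q})})$ together with $\pi^{T}(f)$.

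For part (1) I would compute $\pi^{T}(\seed^{Q}_{(d,\mathbf{q})})$ and $\sup (\pi^{T})''\seed^{Q}_{(d,\mathbf{q})}$ directly. Since $\seed^{Q}_{(d,\mathbf{q})}$ is represented modulo $\mu^{Q}$ by $\vec{\beta}\mapsto\comp{d}{\beta}_{\mathbf{q}}$, \Los{} gives $\pi^{T}(\seed^{Q}_{(d,\mathbf{q})})=\seed^{T}_{\pi(d,\mathbf{q})}$. By Lemma~\ref{lem:Q_desc_uniform_indis} and Theorem~\ref{thm:jQ_LT2_regularity}, the uniform indiscernibles of $\admistwoboldextra{j^{Q}}$ are exactly $\{\seed^{Q}_{(d',\mathbf{q}')}:(d',\mathbf{q}')\in\exdesc(Q)\text{ regular}\}$, and $\pi^{T}$ sends $\seed^{Q}_{(d',\mathbf{q}')}$ to $\seed^{T}_{\pi(d',\mathbf{q}')}$. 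Consequently $\sup (\pi^{T})''\seed^{Q}_{(d,\mathbf{q})}=\sup\{\seed^{T}_{\pi(d',\mathbf{q}')}:(d',\mathbf{q}')\prec^{Q}(d,\mathbf{q})\text{ regular}\}$, which equals $\seed^{T}_{(e,\mathbf{r})}$ for $(e,\mathbf{r})$ the greatest regular element of $\exdesc(T)$ lying $\prec^{T}$-below $\pi(d,\mathbf{q})$ and below or in $\ran(\pi)$ in the combinatorial sense. Comparing this with $\pi(d,\mathbf{q})$ itself, and unpacking the four syntactic cases (degree~$1$; degree~$2$ with $\mathbf{q}=(\emptyset,\emptyset,((0)))$; $\mathbf{q}\in\desc(\comp{2}{Q})$; and $\mathbf{q}\notin\desc(\comp{2}{Q})$), one sees that the two ordinals coincide iff the combinatorial predecessor of $\pi(d,\mathbf{q})$ in $\prec^{T}$ already lies in $\ran(\pi)$ (or $\pi(d,\mathbf{q})$ is $\prec^{T}$-minimal), which is exactly the definition of $(\pi,T)$ being continuous at $(d,\mathbf{q})$.

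For part (2), the constructed $(Q^{+},\pi^{+})$ factors as $\pi=\pi^{+}\circ\iota$ where $\iota:Q\hookrightarrow Q^{+}$ is the inclusion; by functoriality, $\pi^{T}=(\pi^{+})^{T}\circ\iota^{T}=(\pi^{+})^{T}\circ j^{Q,Q^{+}}$, so
\begin{displaymath}
\pi^{T}_{\sup}(\gamma)=\sup_{\xi<\gamma}(\pi^{+})^{T}(j^{Q,Q^{+}}(\xi))=\sup_{\eta<j^{Q,Q^{+}}_{\sup}(\gamma)}(\pi^{+})^{T}(\eta),
\end{displaymath}
the second equality because $j^{Q,Q^{+}}{}''\gamma$ is cofinal in $j^{Q,Q^{+}}_{\sup}(\gamma)$. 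So it suffices to show that $(\pi^{+})^{T}$ is continuous at $j^{Q,Q^{+}}_{\sup}(\gamma)$. By the level-2 uniform cofinality analysis (Lemmas~\ref{lem:level_2_uniform_cofinality} and~\ref{lem:level_2_uniform_cofinality_another}, via \Los{}), $j^{Q,Q^{+}}_{\sup}(\gamma)$ has cofinality $\seed^{Q^{+}}_{(d,\widetilde{\mathbf{q}})}$ for the regular description $(d,\widetilde{\mathbf{q}})\in\exdesc(Q^{+})$ obtained by using the new node $(d,\mathbf{q}^{+})$ to extend $\mathbf{q}$ below. The construction of $\comp{d}{\pi}^{+}(\mathbf{q}^{+})$ is precisely tailored so that the combinatorial predecessor of $\pi^{+}(d,\widetilde{\mathbf{q}})$ in $\prec^{T}$ lies in $\ran(\pi^{+})$; hence $(\pi^{+},T)$ is continuous at $(d,\widetilde{\mathbf{q}})$, and part (1) applied to $(\pi^{+},T)$ gives the required continuity.

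The main technical obstacle I anticipate is the case analysis in part (1), specifically identifying the combinatorial predecessor of $\pi(d,\mathbf{q})$ in $\prec^{T}$ and matching it against the four cases of the definition of continuity. The bookkeeping in the degree-$2$ cases, especially when $\mathbf{q}$ is of continuous type (so one must track predecessors across the $q\mapsto q^{-}$ relation and across $\comp{2}{Q}\se{q^{-},-}$), requires the careful setup that \S\ref{sec:level-2-description} provides with $\prec^{Q,W}$ and the $\res(Q,W)$ operator; once those are in place the matching is essentially mechanical. Part (2) then follows once part (1) is established, via the composition identity and the cofinality computation above.
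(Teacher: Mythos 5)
Your proposal is consistent with the paper's (unstated) argument: the paper presents this lemma with no proof, deriving it from the preceding observation that $(\pi,T)$ is continuous at $(d,\mathbf{q})$ iff $\pi^T$ is continuous at $\seed^Q_{(d,\mathbf{q})}$ and describing it as the level-2 analogue of Lemma~\ref{lem:j_sigma_continuity}; your reduction via a normal cofinal map $f:\seed^Q_{(d,\mathbf{q})}\to\gamma$, the seed computation for part~(1), and the composition $\pi^T=(\pi^+)^T\circ j^{Q,Q^+}$ plus the cofinality identification of $j^{Q,Q^+}_{\sup}(\gamma)$ for part~(2) is exactly the intended filling-in. The only loose point is the phrase ``using the new node $(d,\mathbf{q}^+)$ to extend $\mathbf{q}$ below'' — in case~(b) the new node has degree $1$ while $(d,\mathbf{q})$ has degree $2$, so the description $(e,\widetilde{\mathbf{q}})$ must be read case-by-case rather than uniformly — but this falls squarely within the bookkeeping obstacle you already flagged.
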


If $\pi$ factors finite level $\leq 2$ trees $(Q,T)$ and $(\pi,T)$ is discontinuous at $(d, \mathbf{q})$, then $\pred(\pi, T, (d,\mathbf{q}))$ is a node in $\dom(T)$ defined as follows:
\begin{enumerate}
\item If $d=1$, then $\pred(\pi, T, (d,\mathbf{q})) = (1, \pred_{\prec^{\comp{1}{T}}}( \comp{1}{\pi}(\mathbf{q})))$.
\item If $d=2$ and $\mathbf{q} = (\emptyset,\emptyset, ((0)))$, then $\pred(\pi, T, (d,\mathbf{q})) = (1, \max_{<_{BK}}\comp{1}{T})$.
\item If $d=2$ and $\mathbf{q} = (q, P, \vec{p}) \in \desc(Q)$, $q \neq \emptyset$, then $\pred(\pi,{T},(d,\mathbf{q})) = (2, \max_{<_{BK}} \comp{2}{T}\se{\comp{2}{\pi}(q),-})$.
\item If $d=2$ and $\mathbf{q} = (q, P, \vec{p}) \notin \desc(Q)$, $q \neq \emptyset$, then $\pred(\pi, T, (d,\mathbf{q})) = (2, q \concat (a))$, $a = \max_{<_{BK}} \comp{2}{T}\se{\comp{2}{\pi}(q)}$.
\end{enumerate}
If $(2,\mathbf{q}) = (2, (q,P,\vec{p})) \in \desc(Q)$ then put $\pred(\pi,T, (2,q)) = \pred(\pi,T, (2, \mathbf{q}))$. The next lemma is the level-2 version of Lemma~\ref{lem:level_2_uniform_cofinality}, whose proof is similar. 

\begin{mylemma}
  \label{lem:continuous_decomp_level_2}
Suppose $(Q^{-}, (d,q,P))$ is a partial level $\leq 2$ tree, $T$ is a finite level $\leq 2$ tree, $Q$ is a completion of $(Q^{-}, (d,q,P))$, $\pi$ factors $(Q,T)$, and  $(\pi,T)$ is discontinuous at $(d,q)$. Let $\tau$ factor $(Q,T)$ where $\tau$ and $\pi$ agree on $\dom(Q) \setminus \se{(d,q)}$, $\tau(d,q) = \pred(\pi,T, (d,q))$. Suppose $\cf^{\admistwoboldextra{j^{Q^{-}}}}(\gamma) = \seed^{Q^{-}}_{\ucf(Q^{-},(d,q,P))}$. Then
  \begin{displaymath}
    \pi^T \circ j^{Q^{-},Q}_{\sup} (\gamma) = \tau^T_{\sup} \circ j^{Q^{-},Q} (\gamma) .
  \end{displaymath}
\end{mylemma}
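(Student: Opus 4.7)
\medskip

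The plan is to model this proof on the level-1 analog, Lemma~\ref{lem:level_2_uniform_cofinality}, but carried out one level up, using the description framework together with \Los{}'s theorem to reduce to a pointwise statement that can be verified directly from the definition of $\pred(\pi,T,(d,q))$.

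First, I would use Corollary~\ref{coro:Q_uniform_cofinality} together with the assumption on $\cf^{\admistwoboldextra{j^{Q^{-}}}}(\gamma)$ to represent $\gamma$ modulo $\mu^{Q^{-}}$ by a function $h \in \admistwobold$ which, for $\mu^{Q^{-}}$-a.e.\ $\vec{\beta}$, gives an ordinal $h(\vec{\beta})<\bolddelta{3}$ whose uniform cofinality is the single entry $\comp{d'}{\beta}_{\mathbf{r}}$ corresponding to $(d',\mathbf{r}) = \ucf(Q^{-},(d,q,P))$. Then $j^{Q^{-},Q}(\gamma)$ and $j^{Q^{-},Q}_{\sup}(\gamma)$ are represented modulo $\mu^{Q}$ by $\vec{\beta} \mapsto h(\vec\beta\res\dom(Q^{-}))$ interpreted against either $\comp{d}{\beta}_{(d,q)}$ itself (for $j^{Q^{-},Q}$) or the sup of $h(\cdot)$ taken over the new coordinate (for $j^{Q^{-},Q}_{\sup}$). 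Concretely, if $\delta_{\vec\beta}$ denotes $h(\vec\beta\res\dom(Q^{-}))$, then $j^{Q^{-},Q}_{\sup}(\gamma)$ is represented modulo $\mu^Q$ by $\vec\beta\mapsto\sup\set{\delta_{\vec\beta'}}{\vec\beta'\res\dom(Q^{-}) = \vec\beta\res\dom(Q^{-}),\ \comp{d}{\beta'}_{(d,q)}<\comp{d}{\beta}_{(d,q)}}$; I would verify this reduction via Lemma~\ref{lem:Q_desc_extension} applied to the extension $Q^{-}\subseteq Q$.

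Second, I would apply $\pi^T$ and $\tau^T$ by \Los{} and reduce to verifying the following pointwise identity for $\mu^T$-a.e.\ $\vec\eta$: the sup of $\tau^T(h)(\vec\eta_\tau)$ taken along the coordinate of $\tau(d,q) = \pred(\pi,T,(d,q))$ equals the sup of $\pi^T(h)(\vec\eta_\pi)$ along the coordinate of $\pi(d,q)$. The key point is that $(\pi,T)$ being discontinuous at $(d,q)$ means the seed indexed by $\pred(\pi,T,(d,q))$ in $T$ is not in the image of $\pi$ and sits immediately $\prec^T$-below the seed indexed by $\pi(d,q)$; combined with Theorem~\ref{thm:jQ_LT2_regularity} (identifying $\admistwoboldextra{j^T}$-cofinalities of description-seeds), this says exactly that the $\pred(\pi,T,(d,q))$-seed has the same cofinality as the uniform-cofinality node of $(d,q,P)$ transported through $\pi$, which is what we need.

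Third, I would break into the four cases parallel to the definition of $\pred(\pi,T,(d,q))$: (i) $d=1$; (ii) $d=2$, $\mathbf{q}=(\emptyset,\emptyset,((0)))$; (iii) $d=2$, $\mathbf{q}\in\desc(Q)$; (iv) $d=2$, $\mathbf{q}\notin\desc(Q)$. In each case the verification is essentially a bookkeeping exercise: the structural role of $\ucf(Q^{-},(d,q,P))$ on the source side is mirrored on the target side by the structural role of $\pred(\pi,T,(d,q))$, so the two representative functions compute the same sup modulo $\mu^T$. The main obstacle will be case~(iii) when $q$ is of continuous type, where one has to use Lemma~\ref{lem:continuous_decomp_level_2}'s level-1 predecessor, Lemma~\ref{lem:level_2_uniform_cofinality}, fiber-wise inside the $\comp{2}{Q}_{\tree}(q^-)$-ultrapower, to move the sup past the continuous $\sigma^W$-operation when the description on the target side sits immediately below a continuous-type description; this is the step that precisely parallels the level-1 proof and that the statement of the lemma is designed to abstract.
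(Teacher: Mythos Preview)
Your overall strategy is exactly what the paper intends: it states only that the proof is ``similar'' to that of Lemma~\ref{lem:level_2_uniform_cofinality}, and modeling the argument on the level-1 case via \Los{} and a pointwise reduction is the right idea. A couple of points where your write-up overshoots or is imprecise:

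\textbf{Case count.} Your four cases come from the full definition of $\pred(\pi,T,(d,\mathbf{q}))$, but here $(d,q)$ is a \emph{node} of $\dom(Q)$ added in the completion, so the associated description always lies in $\desc(\comp{2}{Q})$ with $q\neq\emptyset$. Thus your cases (ii) and (iv) are vacuous; only $d=1$ and $d=2$ with $\mathbf{q}\in\desc(\comp{2}{Q})$ occur. For the same reason the phrase ``when $q$ is of continuous type'' in your case~(iii) does not arise: for $q\in\dom(\comp{2}{Q})$ the description $(q,P,\vec{p})$ is always of discontinuous type.

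\textbf{Step 2 formulation.} Writing ``the sup of $\tau^T(h)(\vec\eta_\tau)$ along the coordinate of $\tau(d,q)$'' conflates the representative $h$ of $\gamma$ with the map $\tau^T$. The cleaner bookkeeping is: put $\pi^-=\pi\res\dom(Q^-)=\tau\res\dom(Q^-)$, so that $\pi^T\circ j^{Q^-,Q}=\tau^T\circ j^{Q^-,Q}=(\pi^-)^T$; then the identity reduces to comparing how $\pi^T$ acts on $j^{Q^-,Q}_{\sup}(\gamma)$ versus how the sup is pushed forward under $\tau^T$, and this is exactly where the adjacency of $\tau(d,q)=\pred(\pi,T,(d,q))$ to $\pi(d,q)$ in $T$ plays the role that the $\prec^W$-predecessor plays in Lemma~\ref{lem:level_2_uniform_cofinality}. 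Invoking that lemma fiber-wise (on the level-1 trees inside the $\comp{2}{T}$-components) is the right move in the $d=2$ case.
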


The level-2 version of Lemma~\ref{lem:level_2_uniform_cofinality_another} is similarly proved. 

\begin{mylemma}
  \label{lem:continuous_decomp_level_2_another}
Suppose $(Q, (d,q,P))$ is a partial level $\leq 2$ tree, $\ucf(Q,(d,q,P)) = (d^{*}, \mathbf{q}^{*})$,
 $T$ is a finite level $\leq 2$ tree, $\pi$ factors $(Q,T)$, and  $(\pi,T)$ is discontinuous at $(d^{*},\mathbf{q}^{*})$. Let 
  $Q^{+}$ be a completion of $(Q, (d,q,P))$ and let
$\tau$ factor $(Q^{+},T)$ so that $\tau$ extends $\pi$,  $\tau(d,q) = \pred(\pi,T, (d^{*},\mathbf{q}^{*}))$. Suppose $\cf^{\admistwoboldextra{j^{Q}}}(\gamma) = \seed^{Q}_{(d^{*},\mathbf{q}^{*})}$. Then
  \begin{displaymath}
    \pi^T (\gamma) = \tau^T_{\sup} \circ j^{Q,Q^{+}} (\gamma) .
  \end{displaymath}
\end{mylemma}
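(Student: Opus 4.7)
The plan is to follow the same scheme as Lemma~\ref{lem:continuous_decomp_level_2}, which is itself the level-2 lift of the level-1 Lemma~\ref{lem:level_2_uniform_cofinality_another}. The only new ingredient here compared to Lemma~\ref{lem:continuous_decomp_level_2} is that we attach a brand new node $(d,q)$ to $Q$ rather than shifting an existing one, mirroring the first clause of Lemma~\ref{lem:level_2_uniform_cofinality_another} which attaches a new point to the level-1 tree $P$.

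First I would use Corollary~\ref{coro:Q_uniform_cofinality} together with the analysis underlying Theorem~\ref{thm:jQ_LT2_regularity} to pick a representative $G \in \admistwobold{}$ of $\gamma$ modulo $\mu^Q$ so that, for $\mu^Q$-a.e.\ $\vec{\beta} \in [\omega_1]^{Q \uparrow}$, the pointwise value $G(\vec{\beta})$ is realized as a continuous, order preserving supremum of values $G^{*}(\vec{\beta}, \alpha)$ indexed by $\alpha < \comp{d^*}{\beta}_{\mathbf{q}^*}$, with $G^{*} \in \admistwobold{}$ witnessing both the uniform cofinality and its continuity.

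Next, I would unwind $\pi^T$ and $\tau^T \circ j^{Q,Q^+}$ via \Los{} so that both ordinals become represented modulo $\mu^T$ by functions obtained from $G$ and $G^{*}$ by substituting the appropriate projections of $\vec{\eta} \in [\omega_1]^{T \uparrow}$. Concretely,
\begin{displaymath}
  \pi^T(\gamma) = [\vec{\eta} \mapsto G(\vec{\eta}_{\pi})]_{\mu^T},
\end{displaymath}
and, since $\tau \res \dom(Q) = \pi$,
\begin{displaymath}
  \tau^T \circ j^{Q,Q^+}(\gamma) = \bigl[\vec{\eta} \mapsto \sup\{G^{*}(\vec{\eta}_{\pi}, \alpha) : \alpha < (\vec{\eta}_{\tau})_{(d,q)}\}\bigr]_{\mu^T}.
\end{displaymath}
Thus the lemma reduces modulo $\mu^T$ to a pointwise identity depending only on what $\pi$ and $\tau$ do to the single new coordinate tied to $(d, q)$.

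The hypothesis that $(\pi, T)$ is discontinuous at $(d^*, \mathbf{q}^*)$ together with the choice $\tau(d, q) = \pred(\pi, T, (d^*, \mathbf{q}^*))$ sets up precisely the ``$\sigma$ versus $\sigma'$'' pattern of Lemma~\ref{lem:level_2_uniform_cofinality_another}: the node $\tau(d, q)$ is the immediate $<_{BK}$-predecessor (in the appropriate subtree of $T$) of the would-be image of the cofinality coordinate under $\pi$, and does not lie in the range of $\pi$. Invoking Lemma~\ref{lem:level_2_uniform_cofinality_another} pointwise, in the form needed for each of the four clauses in the definition of $\pred(\pi, T, (d^*, \mathbf{q}^*))$, yields the required pointwise equality, after which one last application of \Los{} assembles the identity $\pi^T(\gamma) = \tau^T_{\sup} \circ j^{Q,Q^+}(\gamma)$. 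The main obstacle I expect is organizing the case split on $(d^*, \mathbf{q}^*)$---degree $1$; degree $2$ with $\mathbf{q}^* = (\emptyset,\emptyset,((0)))$; degree $2$ with $\mathbf{q}^* \in \desc(\comp{2}{Q})$; degree $2$ with $\mathbf{q}^* \notin \desc(\comp{2}{Q})$---so that in each case the rewriting really does reduce to a single application of Lemma~\ref{lem:level_2_uniform_cofinality_another} at the correct level-1 subtree of $T$; the remaining verifications then go through as in the proof of Lemma~\ref{lem:continuous_decomp_level_2}.
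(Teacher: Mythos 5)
The paper itself gives no proof of this lemma---it says only ``The level-2 version of Lemma~\ref{lem:level_2_uniform_cofinality_another} is similarly proved''---so there is nothing to compare against except whether your sketch would survive being fleshed out. The high-level intuition is sound, but there are two places where the argument as written would not go through.

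First, your explicit \Los{} formula is wrong, or at least unjustified. With $\tau \res \dom(Q) = \pi$, the ultrapower identity $\tau^T \circ j^{Q,Q^+}(\gamma) = [\vec\eta \mapsto G(\vec\eta_\pi)]_{\mu^T} = \pi^T(\gamma)$ is immediate and contentless; if you instead mean $\tau^T_{\sup}$, then the claimed representative $\vec\eta \mapsto \sup\{G^*(\vec\eta_\pi,\alpha):\alpha<(\vec\eta_\tau)_{(d,q)}\}$ is \emph{pointwise strictly smaller} than $G(\vec\eta_\pi)$ (the bound $(\vec\eta_\tau)_{(d,q)}$ lies strictly below the uniform-cofinality bound $(\vec\eta_\pi)_{(d^*,\mathbf{q}^*)}$ on a measure-one set), and it is a nontrivial claim---not a formal \Los{} rewriting---that the two functions nevertheless represent the same ordinal modulo $\mu^T$. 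The operation $\tau^T_{\sup}(\delta) = \sup(\tau^T)''\delta$ is a supremum taken over ordinals of the ultrapower, and it is simply not computed by taking a pointwise supremum of representative functions. That equivalence is the whole content of the lemma and cannot be asserted as a substitution.

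Second, the ``pointwise invocation of Lemma~\ref{lem:level_2_uniform_cofinality_another}'' does not make sense: that lemma governs ordinals $\beta < j^{P^-}(\omega_1)$ acted on by level-1 factoring maps $\sigma^W$, whereas the pointwise values $G(\vec\eta_\pi)$ here are arbitrary ordinals below $\bolddelta{3}$ on which $\sigma^W$-type maps do not act. The clean route is to stay at the level of the ultrapower model: show that $j^{Q,Q^+}(\gamma)$ has $Q^+$-uniform cofinality at the description of the new node $(d,q)$ (which is the $\prec^{Q^+}$-predecessor of $\ucf^*(Q,(d,q,P))$ by construction, together with Lemma~\ref{lem:level_2_desc_cofinal_in_next} and Corollary~\ref{coro:Q_uniform_cofinality}), verify that the choice $\tau(d,q) = \pred(\pi,T,(d^*,\mathbf{q}^*))$ makes $(\tau,T)$ \emph{continuous} at that description exactly because $(\pi,T)$ was \emph{discontinuous} at $(d^*,\mathbf{q}^*)$, and then invoke Lemma~\ref{lem:level_2_factoring_continuity}. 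The case split you anticipate on the four clauses defining $\pred(\pi,T,\cdot)$ belongs in the verification of that continuity, not in a pointwise appeal to the level-1 lemma. (Also a minor slip in your opening sentence: Lemma~\ref{lem:continuous_decomp_level_2} is the lift of Lemma~\ref{lem:level_2_uniform_cofinality}, while the target lemma here is the lift of Lemma~\ref{lem:level_2_uniform_cofinality_another}.)
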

Note that in Lemma~\ref{lem:continuous_decomp_level_2_another}, the completion $Q^{+}$ is decided by $\pred(\pi,T, (d^{*}, \mathbf{q}^{*}))$. There is no freedom in choosing $Q^{+}$.

In the same spirit as Lemmas~\ref{lem:respect_lv2_measure_one_set}-\ref{lem:unique_level_2_tree_represent} we will obtain a concrete way of deciding whether a tuple $\vec{\gamma}$ respects a level-3 tree $R$.

\begin{mydefinition}
  \label{def:level_2_function_sign_etc}
  Suppose $Q$ is a finite level $\leq 2$ tree, $\overrightarrow{(d,q)} = ((d_i, q_i))_{1 \leq i < k}$ is a distinct enumeration of a subset of $Q$ and such that $\set{q_i}{d_i = 2}\cup \se{\emptyset}$ forms a tree on $\omega^{<\omega}$. Suppose $F : [\omega_1]^{Q \uparrow} \to \bolddelta{3}$ is a function which lies is $\admistwobold$.
  \begin{enumerate}[series=F]
  \item The \emph{signature} of $F$ is $\overrightarrow{(d,q)}$ iff there is $E \in \mu_{\mathbb{L}}$ such that
    \begin{enumerate}
    \item for any $\vec{\beta}, \vec{\gamma} \in [E]^{Q \uparrow}$, if $(\comp{d_0}{\gamma}_{q_0}, \ldots, \comp{d_{k-1}}{\gamma}_{q_{k-1}}) <_{BK} (\comp{d_0}{\beta}_{q_0}, \ldots, \comp{d_{k-1}}{\beta}_{q_{k-1}})$ then $f(\vec{\beta}) < f(\vec{\gamma})$;
    \item for any $\vec{\beta}, \vec{\gamma} \in [E]^{Q \uparrow}$, if $(\comp{d_0}{\gamma}_{q_0}, \ldots, \comp{d_{k-1}}{\gamma}_{q_{k-1}}) = (\comp{d_0}{\beta}_{q_0}, \ldots, \comp{d_{k-1}}{\beta}_{q_{k-1}})$ then $f(\vec{\beta}) = f(\vec{\gamma})$.
    \end{enumerate}
  \end{enumerate}
Clearly the signature of $F$ exists and is unique. In particular, $F$ is constant on a $\mu^Q$-measure one set iff the signature of $F$ is $\emptyset$. % If $R$ is a level-3 tree, $H \in (\bolddelta{3})^{R \uparrow}$, then for any $r \in \dom(R)$, the signature of $H_r$ is $(R_{\node}(r \res i))_{1 \leq i < \lh(r)}$.
Suppose the signature of $F$ is  $\overrightarrow{(d,q)} = ((d_i, q_i))_{1 \leq i < k}$.
\begin{enumerate}[resume*=F]
\item 
$F$ is \emph{essentially continuous} iff for $\mu^Q$-a.e.\ $\vec{\beta}$, $F(\vec{\beta}) = \sup\set{F(\vec{\gamma})}{\vec{\gamma} \in [\omega_1]^{Q \uparrow}, (\comp{d_0}{\gamma}_{q_0}, \ldots, \comp{d_{k-1}}{\gamma}_{q_{k-1}}) <_{BK} (\comp{d_0}{\beta}_{q_0}, \ldots, \comp{d_{k-1}}{\beta}_{q_{k-1}})}$. Otherwise, $F$ is \emph{essentially discontinuous}.
\end{enumerate}
Put $[\omega_1]^{Q \uparrow (0,-1)} =  [\omega_1]^{Q \uparrow} \times \omega $. For 
 $(d,\mathbf{q}) \in \exdesc(Q)$ regular, put $[\omega_1]^{Q \uparrow(d, \mathbf{q})}=
 \set{(\vec{\beta},\gamma)}{\vec{\beta} \in [\omega_1]^{Q \uparrow} , \gamma < \comp{d}{\beta}_{\mathbf{q}}}$.
 \begin{enumerate}[resume*=F]
 \item 
For $(d,\mathbf{q}) $ either $(0,-1)$ or in $ \exdesc(Q)$ regular, 
say that the \emph{uniform cofinality} of $F$ is $\ucf(F)  =  (d,\mathbf{q})$ iff 
 there is  $G : [\omega_1]^{Q\uparrow (d,\mathbf{q}) } \to \bolddelta{3}$ such that $G \in \admistwobold$ and for any for $\mu^Q$-a.e.\ $\vec{\beta}$,   $F(\vec{\beta}) = \sup \{G(\vec{\beta}, \gamma) :  (\vec{\beta}, \gamma) \in [\omega_1]^{Q \uparrow (d,\mathbf{q})}\}$ and the function $\gamma \mapsto G(\vec{\beta}, \gamma)$ is order preserving.
\item The \emph{cofinality} of $F$ is
 \begin{displaymath}
   \cf(F) =
   \begin{cases}
     0 &\text{if } \ucf(F) = (0,-1), \\
     1 &\text{if }\ucf(F) = (1, q), q = \min(\prec^{\comp{1}{Q}}), \\
     2 &\text{otherwise.}
   \end{cases}
 \end{displaymath}
\end{enumerate}
Let $(X_i, (d_i,x_i, W_i)) \concat (X_k)$ be the partial level $\leq 2$ tower of continuous type and let $\pi$ factor $(X_k, Q)$ such that $\pi(d_i, x_i) = (d_i, q_i)$ for each $1 \leq i < k$.
\begin{enumerate}[resume*=F]
\item The \emph{potential partial level $\leq 2$ tower} induced by $F$ is
  \begin{enumerate}
  \item $(X_k, (d_i,x_i, W_i)_{1 \leq i < k})$, if $F$ is essentially continuous;
  \item $(X_k, (d_i,x_i,W_i)_{1 \leq i < k} \concat (0,-1,\emptyset))$, if $F$ is essentially discontinuous and has uniform cofinality $(0,-1)$;
  \item $(X_k, (d_i,x_i,W_i)_{1 \leq i < k} \concat (1, x^{+},\emptyset))$, if $F$ is essentially discontinuous and has uniform cofinality $(1,q_{*})$, $(X_k, (1, x^{+}, \emptyset))$ is a partial level $\leq 2$ tree, $\pi(1, (x^{+})^{-}) = (1, q_{*})$;
  \item $(X_k, (d_i,x_i,W_i)_{1 \leq i < k}\concat (2,x^{+},P_{*}))$, if $F$ is essentially discontinuous and has uniform cofinality $(2, \mathbf{q}_{*})$, $\mathbf{q}_{*} = (q_{*}, P_{*}, \vec{p}_{*})$, $(X_k, (2, x^{+}, P_{*}))$ is a partial level $\leq 2$ tree with uniform cofinality $(d_{*},\mathbf{x}_{*})$ and $\comp{2}{\pi}(\mathbf{x}_{*}) = \mathbf{q}_{*}$.
  \end{enumerate}
\item   The \emph{approximation sequence} of $F$ is $( F_i)_{1 \leq i \leq k}$ where $F_i $ is a function on $[\omega_1]^{X_i\uparrow}$, $F_i(\vec{\beta}) = \sup \{F(\vec{\gamma}):$ $ \vec{\gamma} \in [\omega_1]^{Q \uparrow}, (\comp{d_1}{\gamma}_{q_1}, \ldots, \comp{d_{i-1}}{\gamma}_{q_{i-1}}) = (\comp{d_1}{\beta}_{x_1}, \ldots, \comp{d_{i-1}}{\beta}_{x_{i-1}}) \}$ for $1 \leq i \leq k$.
\end{enumerate}
This is the end of Definition~\ref{def:level_2_function_sign_etc}.
\end{mydefinition}

The existence and uniqueness of the uniform cofinality of $F$ will be proved in Section~\ref{sec:approximation-LT3-LT2}. In particular, if $R$ is a level-3 tree, $H \in (\bolddelta{3})^{R \uparrow}$, then for any $r \in \dom(R)$, $H_r$ has signature $(R_{\node}(r \res i))_{1 \leq i < \lh(r)}$, 
is essentially discontinuous,  has uniform cofinality $\ucf(R(r))$ and cofinality $\cf(R(r))$,  induces the potential partial level $\leq 2$ tower $R[r]$, and $(H_{r \res i})_{1 \leq i \leq \lh(r)}$ is the approximation sequence of $H_r$. Again, all the relevant properties of $F$ depends only on the value of $F$ on a $\mu^Q$-measure one set. We will thus be free to say the signature, etc.\ of $F$ when $F$ is defined on a $\mu^Q$-measure one set.

\begin{mydefinition}
  \label{def:ordinal_long_division_level_3}
  Suppose $\omega_1 \leq \gamma < \bolddelta{3}$ is a limit ordinal. Suppose $Q$ is a finite level $\leq 2$ tree,  $\gamma = [F]_{\mu^Q}$,  the signature of $F$ is $((d_i,q_i))_{1 \leq i < k}$, the approximation sequence of $F$ is $(F_i)_{1 \leq i \leq k}$. Then:
  \begin{enumerate}
  \item The \emph{$Q$-signature} of $\beta$ is $({(d_i,q_i)})_{1 \leq i < k}$.
  \item The \emph{$Q$-approximation sequence} of $\gamma$ is $([F_i]_{\mu^Q})_{1 \leq i \leq k}$.
  \item $\gamma$ is \emph{$Q$-essentially continuous} iff $F$ is essentially continuous.
  \item The \emph{$Q$-uniform cofinality} of $\gamma$ is $\omega$ if $F$ has uniform cofinality $(0,-1)$, $\seed^Q_{(d,\mathbf{q})}$ if $f$ has uniform cofinality $(d,\mathbf{q}) \in \exdesc(Q)$.
  \item The \emph{$Q$-potential partial level $\leq 2$ tower induced by $\gamma$} is the potential partial level $\leq 2$ tower induced by $F$.
  \end{enumerate}

\end{mydefinition}

In Section~\ref{sec:approximation-LT3-LT2}, we will show that all the relevant properties in Definition~\ref{def:ordinal_long_division_level_3} are  independent of the choice of $F$ (but depends on $Q$ of course). We will also show that the $Q$-uniform cofinality of $\gamma$ is exactly $\cf^{\admistwoboldextra{j^Q}}(\gamma)$, and we will show that $\cf^{\admistwobold}(\gamma) = u_{\cf(F)}$, where we set $u_0 = \omega$.

\begin{mydefinition}
  \label{def:typical_level_3_tree}
We fix the notations for all the level-3 trees of cardinality $1$. 
 For $d \in \se{0,1,2}$, $\dom(R^{d}) = \se{((0))}$ and $R^d ((0))$ is of degree $d$.
\end{mydefinition}
Therefore, the order type of $\rep(R^d)$ is $u_d$, where $u_0 = \omega$.

Recall that a \emph{partial level $\leq 2$ tower of discontinuous type} is a nonempty 
finite sequence $(Q_i,(d_i,q_i,P_i))_{1 \leq i \leq k}$ such that $\card(Q_1)=1 $, each $(Q_i,(d_i,q_i,P_i))$ is a partial level $\leq 2$ tree, and each $Q_{i+1}$ is a completion of $(Q_i,(d_i,q_i,P_i))$. We say that: Its \emph{signature} is $(d_i,q_i)_{1 \leq i < k}$. 
Its \emph{uniform cofinality} is $\ucf(Q_k, (d_k,q_k, P_k))$. 
Recall that a \emph{partial level $\leq 2$ tower of continuous type} is $(Q_i,(d_i,q_i,P_i))_{1 \leq i < k} \concat (Q_{*})$ such that either $k=0 \wedge Q_{*}$ is the level $\leq 2$ tree of cardinality 1 or $(Q_i,(d_i,q_i,P_i))_{1 \leq i < k}$ is a partial level $\leq 2$  tower of discontinuous type $\wedge Q_{*}$ is a completion of $(Q_{k-1}, (d_{k-1},q_{k-1},P_{k-1}))$. 
Its \emph{signature} is $(d_i,q_i)_{1 \leq i < k}$. If $k>0$, its \emph{uniform cofinality} is $(1, q_{k-1})$ if $d_{k-1}=1$,  $(2, (q_{k-1}, P, \vec{p}))$ if $d_{k-1} = 2$ and $\comp{2}{Q}[q_{k-1}] = (P,\vec{p})$. 
For notational convenience, the information of a partial level $\leq 2$ tower is compressed into a potential partial level $\leq 2$ tower. 
% If $\vec{q} = (q_i)_{1 \leq i \leq k}$ is a tuple, we pretend that $q_0$ is defined and follow the convention that $\lh(\vec{q}) = k+1$. 
A \emph{potential partial level $\leq 2$ tower} is $(Q_{*},\overrightarrow{(d,q,P)}) = (Q_{*}, (d_i, q_i, P_i)_{1 \leq i \leq \lh(\vec{q})})$ such that for some level $\leq 2$ tower $\vec{Q} = (Q_i)_{1 \leq i \leq k}$, either $Q_{*} = Q_k$ $\wedge$ $(\vec{Q}, \overrightarrow{(d,q,P)})$ is a partial level $\leq 2$ tower of discontinuous type or $(\vec{Q}, \overrightarrow{(d,q,P)}) \concat (Q_{*})$ is a partial level $\leq 2$ tower of continuous type. The signature, (dis-)continuity type, uniform cofinality of $(Q_{*},\overrightarrow{(d,q,P)})$ are defined according to the partial level $\leq 2$ tree generating $(Q_{*}, \overrightarrow{(d,q,P)})$. 
\begin{displaymath}
  \ucf(Q_{*}, \overrightarrow{(d,q,P)})
\end{displaymath}
denotes the uniform cofinality of $(Q_{*}, \overrightarrow{(d,q,P)})$. Clearly,  a potential partial level $\leq 2$ tower $(Q_{*},\overrightarrow{(d,q,P)})$ is of continuous type iff $\card(Q_{*}) = \lh(\vec{q})$, of discontinuous type iff $\card(Q_{*}) = \lh(\vec{q})-1$.

The properties of a tuple  respecting $R$ is decided by the signature, approximation sequence and relative ordering of its entries, in a parallel way to the level-2 case. It is handled in \cite{jackson_delta15}. We state the results in our language. % The proofs are rather routine. 

%In parallel to Lemma~\ref{lem:respect_lv2_measure_one_set}, we have
\begin{mylemma}
  \label{lem:respect_lv3_measure_one_set}
  Suppose $R$ is a finite level-3 tree, $B \in \admistwobold$ is a closed set of ordinals. Then $\vec{\gamma} \in [B]^{R \uparrow}$ iff there is $F\in (\bolddelta{3})^{R \uparrow}$ and $E \in \mu_{\mathbb{L}}$ such that $\vec{\gamma} = [F]^R$ and for any $r \in \dom(R)$, for any $\vec{\beta} \in [E]^{R(r)\uparrow}$, $F_r (\vec{\beta})$ is a limit point of $B$.
\end{mylemma}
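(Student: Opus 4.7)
The plan is to adapt the strategy from Lemma~\ref{lem:respect_lv2_measure_one_set} one level up. On the right-hand side we replace a club $C \in \mu_{\mathbb{L}}$ by a closed $B \in \admistwobold$, the level-1 tree $W$ by a level $\leq 2$ tree $Q_r = R_{\tree}(r)$, and the representing function $f : \rep(Q) \to \omega_1$ by a family $F = (F_r)_{r \in \dom(R)}$ of functions into $\bolddelta{3}$. The role played by the interval $C \cap (\comp{2}{f}_{q^{*}}(\vec{\alpha}), \comp{2}{f}_q(\vec{\alpha}))$ in Claim~\ref{claim:respect_lv2_distance} will be played one level up by closed subsets of $B$ sitting between consecutive $F_{r^{*}}(\vec{\beta})$ and $F_r(\vec{\beta})$.

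For the forward direction, suppose $\vec{\gamma} \in [B]^{R \uparrow}$. First I would pick any $F$ with $\vec{\gamma} = [F]^R$ whose components $F_r$ have the signatures, approximation sequences and uniform cofinalities dictated by $R(r)$; existence of such $F$ is guaranteed by the definition of respecting $R$ together with Definition~\ref{def:level_2_function_sign_etc}. By the definition of $[B]^{R \uparrow}$, each $\gamma_r$ lies in the appropriate lift $j^{Q_r}(B')$ where $B'$ is the limit points of $B$; Łoś then gives $F_r(\vec{\beta}) \in B'$ for $\mu^{Q_r}$-a.e.\ $\vec{\beta}$. Unwinding $\mu^{Q_r}$ via Lemma~\ref{lem:Q_respecting}, this ``a.e.'' condition translates, for each of the finitely many $r$, to a set $E_r \in \mu_{\mathbb{L}}$ such that $F_r(\vec{\beta})$ is a limit point of $B$ for every $\vec{\beta} \in [E_r]^{Q_r \uparrow}$. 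Taking $E = \bigcap_{r \in \dom(R)} E_r$ and then closing under the set of its own limit points (the analog of replacing $C$ by the club $C'$ used in Lemmas~\ref{lem:Q_desc_extension} and~\ref{lem:QW_description_extension_another}) gives the required $E$.

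For the backward direction, suppose such $F$ and $E$ are given. I would first verify that $\vec{\gamma}$ respects $R$: the level-3 analog of Lemma~\ref{lem:Q_respecting} reduces this to checking that each $F_r$ has the prescribed signature, approximation sequence, and uniform cofinality (these are inherent in the hypothesis, possibly after shrinking $E$ once more so that the level-3 analog of Lemma~\ref{lem:level_2_Q_respecting_function_sort} applies to each $F_r$), together with the strict monotonicity $[F_{r}]_{\mu^{Q_{r}}} < [F_{r'}]_{\mu^{Q_{r'}}}$ whenever $r <_{BK} r'$. Once $\vec{\gamma}$ respects $R$, the fact that each $\gamma_r$ lies in $j^{Q_r}(B')$ is immediate from Łoś, since $F_r(\vec{\beta}) \in B'$ for all $\vec{\beta} \in [E]^{Q_r \uparrow}$ and $[E]^{Q_r \uparrow}$ is $\mu^{Q_r}$-measure one.

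The main obstacle is finding a single $E \in \mu_{\mathbb{L}}$ that simultaneously handles every $r$. Because $\dom(R)$ is finite and each requirement is $\mu_{\mathbb{L}}$-measure one, a finite intersection suffices; the delicate part is ensuring that the ``distance'' information between consecutive entries (the level-3 counterpart of Claim~\ref{claim:respect_lv2_distance}, needed to witness that the potential partial level $\leq 2$ towers induced by the various $F_r$ agree with $R[r]$) is preserved after intersection. This is handled by the same ``close $E$ under its own limit points'' trick used in the level-2 analysis, exploiting that $B$ itself is already closed and so limit-point-ness under $F_r$ propagates to subtuples lying in the thinned-out $E$.
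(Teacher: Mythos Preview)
Your backward direction has a genuine gap. You conclude that $\vec{\gamma}$ respects $R$ (which is immediate from $\vec{\gamma}=[F]^R$ with $F\in(\bolddelta{3})^{R\uparrow}$) and that each $\gamma_r\in j^{Q_r}(B')$ by \Los{}, and then stop. But $\vec{\gamma}\in[B]^{R\uparrow}$ means, by definition, that there exists a single order-preserving $F':\rep(R)\to B$ with $[F']^R=\vec{\gamma}$. The given $F$ need not be such a witness: it maps into $\bolddelta{3}$, and while the values $F_r(\vec{\beta})$ at nodes $r\in\dom(R)$ land in $B'$, the values at the intermediate points of $\rep(R)$ (the $r\concat(-1)$-type elements) need not lie in $B$ at all. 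The implication from ``respects $R$ and each $\gamma_r\in j^{Q_r}(B')$'' to ``$\vec{\gamma}\in[B]^{R\uparrow}$'' is precisely the content of Lemma~\ref{lem:R_respect}, which the paper derives as a \emph{corollary} of the present lemma; invoking it here is circular.

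The paper constructs the witness $F'$ explicitly. The key step is the level-3 distance claim (Claim~\ref{claim:respect_lv3_distance}): after shrinking $E$ to some $E'\in\mu_{\mathbb{L}}$, the interval $B\cap(F_{r^{*}}(\vec{\beta}),F_r(\vec{\beta}))$ has order type exactly $\comp{d_r}{\beta}_{\mathbf{q}_{r,*}}$ where $(d_r,\mathbf{q}_{r,*})=\ucf(R[r])$. This is what allows one to recursively define $F'$ on all of $\rep(R)$, sending each intermediate element to the appropriate member of $B$. You do mention the distance claim in your final paragraph, but you misidentify both its purpose and its proof. Its purpose is not to verify that the towers induced by the $F_r$ agree with $R[r]$ --- that is already built into the hypothesis $F\in(\bolddelta{3})^{R\uparrow}$ --- but to build $F'$. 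And its proof is not ``close $E$ under its own limit points''; it requires Theorem~\ref{thm:jQ_LT2_regularity} on the regularity of $\seed^{Q_r}_{(d_r,\mathbf{q}_{r,*})}$ in $\admistwoboldextra{j^{Q_r}}$ together with a cofinality argument in that model, which is a genuinely new ingredient absent at level~$2$.
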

\begin{proof}
The nontrivial direction is $\Leftarrow$.  Suppose $F \in (\bolddelta{3})^{R \uparrow}$ and  $E \in \mu_{\mathbb{L}}$ are as given. For $r \in \dom(R)$, let $R(r) = (Q_r, (d_r,q_r,P_r))$, and let $r^{*}$ be the $<_{BK}$-greatest member of $R\se{r,-}$.  In parallel to Claim~\ref{claim:respect_lv2_distance}, by Theorem~\ref{thm:jQ_LT2_regularity} and cofinality considerations in $\admistwoboldextra{j^{Q_r}}$, we have 
\begin{myclaim}
  \label{claim:respect_lv3_distance}
  There is $E' \in \mu_{\mathbb{L}}$ such that $E' \subseteq E$ and for any $r \in \dom(R)$, 
for any $\vec{\beta} \in [E']^{P_q\uparrow}$,
\begin{enumerate}
\item if $d_r = 1$ then $B \cap (F_{r^{*}}(\vec{\beta}), F_r (\vec{\beta}))$ has order type $\comp{1}{\beta}_{q_r^{-}}$;
\item if $d_r = 2$, $\ucf(R[r] )= (2,   \mathbf{q}_{r,*})$, $ \mathbf{q}_{r,*}=(q_{r,*}, P_{r,*},\vec{p}_{r,*})$, then $B \cap(F_{r^{*}}(\vec{\beta}), F_r (\vec{\beta})) $ has order type $\comp{2}{\beta}_{q_{r,*}}$. 
\end{enumerate}
\end{myclaim}
The rest proceeds as in the proof of Lemma~\ref{lem:respect_lv2_measure_one_set}. 
% Similarly to the proof of Lemma~\ref{lem:respect_lv2_measure_one_set}, it suffices to show that if $E \in \mu_{\mathbb{L}}$,  $\eta \in E'$ iff $E \cap \eta$ has order type $\eta$,   $\theta: \rep(R) \res E \to \rep(R)$ is an order preserving bijection, then $\theta $ is the identity map on $\rep(R) \res E'$. 
\end{proof}

\begin{mylemma}
  \label{lem:level_3_R_respecting_function_sort}
  Suppose $R$ is a finite level-3 tree, $R[r] = (Q_r, (d_r,q_r,P_r))$ for $r \in \dom(R)$, $E \in \mu_{\mathbb{L}}$ is a club. Suppose $f : \rep(R) \res E \to \bolddelta{3}$ satisfies
  \begin{enumerate}
  \item if $r \in \dom(R)$, then the $Q_r$-potential partial level $\leq 2$ tower induced by $F_r$ is $R[r]$, the approximation sequence of $F_r$ is $(F_{r \res i})_{1 \leq i \leq \lh(q)}$,
and the uniform cofinality of $F_r$ on $[E]^{Q_r\uparrow}$ is witnessed by $F_{r \concat (-1)}$, i.e., if $\vec{\beta} \in [E]^{Q_r \uparrow}$, then $F_r(\vec{\beta}) = \sup\{ F_{r \concat (-1)}(\vec{\beta} \concat (\gamma)) : \vec{\beta} \concat (\gamma) \in \rep(R) \res E \}$, and the map $\vec{\beta} \mapsto F_{r \concat (-1)}(\vec{\beta} \concat (\gamma))$ is continuous, order preserving.
  \item if $R_{\tree}(r \concat (a)) = R_{\tree}( r \concat (b))$, and $a < _{BK} b$, then $[F_{r \concat (a)}]_{\mu^{Q_{r \concat (a)}}} < [F_{r \concat (b)}]_{\mu^{Q_{r \concat (b)}}}$. 
  \end{enumerate}
Then there is $E' \in \mu_{\mathbb{L}}$ such that $E' \subseteq E$ and $f \res (\rep(R) \res E')$ is order preserving.
\end{mylemma}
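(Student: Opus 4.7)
My plan is to mimic the proof of Lemma~\ref{lem:level_2_Q_respecting_function_sort} one level up, using the level-2 analysis of this section as a black box. The argument reduces to a pairwise comparison of points $\vec{x}, \vec{y} \in \rep(R) \res E$, which I handle by splitting into three cases and then intersecting the finitely many resulting measure-one clubs.

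\emph{First}, I sort each node separately. For each $r \in \dom(R)$, hypothesis (1) pins down the full level-2 structure of $F_r : [\omega_1]^{Q_r \uparrow} \to \bolddelta{3}$: its $Q_r$-signature, approximation sequence $(F_{r \res i})_{1 \leq i \leq \lh(r)}$, $Q_r$-potential partial tower $R[r]$, and the witness of its $Q_r$-uniform cofinality by $F_{r \concat (-1)}$. Running an argument parallel to Lemma~\ref{lem:level_2_Q_respecting_function_sort}, but with values in $\bolddelta{3}$ rather than $\omega_1+1$ and relying on Corollary~\ref{coro:Q_uniform_cofinality} and Theorem~\ref{thm:jQ_LT2_regularity} to locate the discontinuities and cofinalities correctly, I obtain a club $E_r \in \mu_{\mathbb{L}}$, $E_r \subseteq E$, on which $F_r$ is order-preserving on $\rep(Q_r) \res E_r$.

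\emph{Second}, I align sibling orderings. Hypothesis (2) says $[F_{r \concat (a)}]_{\mu^{Q_{r \concat (a)}}} < [F_{r \concat (b)}]_{\mu^{Q_{r \concat (b)}}}$ whenever $R_{\tree}(r \concat (a)) = R_{\tree}(r \concat (b))$ and $a <_{BK} b$. A standard \Los\ argument (invoking Lemma~\ref{lem:Pi13_closed_under_level_2_measure} to remain within the $\boldDelta{2}$-determinacy framework) produces a club on which the strict pointwise inequality $F_{r \concat (a)}(\vec{\beta}) < F_{r \concat (b)}(\vec{\beta})$ holds for every appropriate $\vec{\beta}$. Intersect these, together with the $E_r$ from the first step, into a single $E_1 \in \mu_{\mathbb{L}}$.

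\emph{Third}, and this is where the main work lies, I handle the cross-depth comparisons between $(r, \vec{\beta})$ and $(r', \vec{\beta}')$ lying in $\rep(R) \res E_1$ when one is a proper initial segment of the other in the BK-ordering on $\rep(R)$. The approximation clause in hypothesis (1) gives $F_{r'\res i}(\vec{\beta} \res i) = \sup\{F_{(r'\res i)\concat(-1)}(\cdot \concat (\gamma))\}$, and combined with the cofinality analysis of the ambient potential partial tower $R[r']$, this forces the correct strict inequality between $F_r(\vec{\beta})$ and $F_{r'}(\vec{\beta}')$ on a further club $E' \subseteq E_1$. The main obstacle is precisely this bookkeeping: I must verify that for tuples drawn from the final club $E'$, the sup has not yet been attained by any $F_{r'}(\vec{\beta}')$ we actually encounter, which amounts to controlling the uniform cofinalities along the entire approximation chain using Lemma~\ref{lem:continuous_decomp_level_2} and Lemma~\ref{lem:continuous_decomp_level_2_another}. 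With $E'$ the intersection of the finitely many clubs produced, a case analysis on the BK comparison in $\rep(R)$ then shows $f \res (\rep(R) \res E')$ is order-preserving.
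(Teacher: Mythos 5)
Your three-step decomposition (sort each node; align sibling orderings; handle cross-depth comparisons) is the right overall framework and does echo the level-2 argument, but there is a genuine gap in Step 2, and it is precisely the content the paper's proof is devoted to.

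The pointwise sibling inequality $F_{r\concat(a)}(\vec{\beta}) < F_{r\concat(b)}(\vec{\beta})$ (for $a <_{BK} b$) that you extract via \Los{} from hypothesis (2) is not enough to sort $\rep(R)$. The $<^R$-order can pair a tuple ending in $(\gamma, a)$ against one ending in $(\gamma', b)$ with $\gamma < \gamma'$ and $a >_{BK} b$; the inequality then needed is $F_{r\concat(a)}(\vec{\beta}\concat(\gamma)) < F_{r\concat(b)}(\vec{\beta}\concat(\gamma'))$, which runs against the grain of the pointwise comparison (at equal inputs the order is reversed, since $[F_{r\concat(a)}] > [F_{r\concat(b)}]$). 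Hypothesis (2) only tells you that the gap between the two siblings is eventually overcome by the growth of $F_{r\concat(b)}$, and this ``eventually'' must be turned into a club condition. The paper does this by introducing $g(\vec{\beta}\concat(\gamma))$, the least $\gamma'$ past which all sibling inequalities become correct, noting that $g(\vec{\xi}) < \comp{d_r}{\xi}_{\mathbf{q}_{r,*}}$ for $\mu^{Q^{+}}$-a.e.\ $\vec{\xi}$ (where $\ucf(R[r]) = (d_r, \mathbf{q}_{r,*})$ and $Q^{+}$ ranges over the finitely many completions of $R(r)$), and then --- crucially --- invoking Lemma~\ref{lem:level_2_desc_cofinal_in_next} to dominate $g$ by $j^{P_r}(h)(\comp{d_r}{\xi}_{q_r})$ for some $h : \omega_1 \to \omega_1$ in $\mathbb{L}$. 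The output club $E''$ is then the set of $\eta \in E'$ satisfying $h''(\eta \cap E') \subseteq \eta$. This boundedness-plus-closure step is what your Step 2 lacks; simply intersecting the pointwise clubs with the per-node sorting clubs does not yield order preservation. By contrast, the cross-depth comparisons you single out as the main obstacle in Step 3 are dispatched in the paper by the remark ``Similarly to the proof of Lemma~\ref{lem:level_2_Q_respecting_function_sort}'' --- they are not where the new work is.
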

\begin{proof}
Put $\ucf(R[r]) = (d_r, \mathbf{q}_{r,*})$, and if $d_r=2$ then $\mathbf{q}_{r,*} = (q_{r,*}, P_{r,*},\vec{p}_{r,*})$.
  We know  by assumption that  for $\mu^{Q_r}$-a.e.\ $\vec{\beta}$, $F_r ( \vec{\beta}) = \sup \{F_{r \concat (a)}(\vec{\beta} \concat (\gamma)) :  r \concat (a) \in \dom(R) ,  \gamma < \comp{d_r}{\beta}_{q_{r,*}}\}$. Fix for the moment $r$ such that $d_r \neq 0$. Similarly to the proof of Lemma~\ref{lem:level_2_Q_respecting_function_sort}, we need a club $E' \in \mu_{\mathbb{L}}$ such that for any $\vec{\beta} \in [E']^{Q_r\uparrow}$, for any $\gamma< \gamma'$ both in $j^{P_r}(E')$, if $R_{\tree}(r \concat (a)) = R _{\tree} (r \concat (b) )$ then $F_{r \concat (a)}(\vec{\beta} \concat (\gamma))< F_{r \concat (b)} (\beta \concat (\gamma'))$. 

If $\vec{\beta}$ respects $Q_r$ and  $\vec{\beta}\concat (\gamma)$ respects $R(r)$, let $g(\vec{\beta}\concat (\gamma))$ be the least $\gamma'$ satisfying that whenever $r \concat (a) , r \concat (b)\in \dom(R)$, $\delta\leq \gamma$, $\delta' \geq \gamma'$, $\vec{\beta} \concat (\delta)$ respects $R(r \concat (a))$, $\vec{\beta} \concat (\delta')$ respects $R(r \concat (b))$,  we have $F_{r \concat (a)}(\vec{\beta} \concat (\delta)) < F_{r \concat (b)}(\vec{\beta} \concat (\delta'))$. 
If  $Q^{+}$ is a completion of $R(r)$, then for $\mu^{Q^{+}}$-a.e.\ $\vec{\xi}$,  $g(\vec{\xi}) < \comp{d_r}{\xi}_{\mathbf{q}_{r,*}}$. 
By Lemma~\ref{lem:level_2_desc_cofinal_in_next}, there is $h^{Q^{+}} : \omega_1 \to \omega_1$ and $E^{Q^{+}} \in \mu_{\mathbb{L}}$ such that $h \in \mathbb{L}$ and for any $\vec{\xi} \in [E^{Q^{+}}]^{Q^{+}\uparrow}$, $g(\vec{\xi}) < j^{P_r}(h^{Q^{+}}) (\comp{d_r}{\xi}_{q_r}) $. There are only finitely many completions of $R(r)$. Let $h : \omega_1 \to \omega_1$ where $h(\alpha) = \sup \{h^{Q^{+}}(\alpha) : Q^{+}$ is a completion of $R(r)\}$. Let $E' =  \cap \{E^{Q^{+}}: Q^{+}$ is a completion of $R(r)\}$. Let $\eta \in E'' $ iff $h''(\eta \cap E')\subseteq \eta$. $E''$ is as desired. 
\end{proof}

% The relatively nontrivial parts uses Lemma~\ref{lem:respect_lv3_measure_one_set}. 

As corollaries of Lemmas~\ref{lem:respect_lv3_measure_one_set} and~\ref{lem:level_3_R_respecting_function_sort}, we obtain:
\begin{mylemma}
  \label{lem:R_respect}
  Suppose that $R$ is a level-3 tree and $\vec{\gamma} = (\gamma_r)_{r \in \dom(R)}$ is a tuple of ordinals in $\bolddelta{3}$. Then $\vec{\gamma}$ respects $R$ iff the following holds:
  \begin{enumerate}
  \item For any $r \in \dom(R)$, the $R_{\tree}(r)$-potential partial level $\leq 2$ tower induced by $\gamma_r$ is $R[r]$, and the $R_{\tree}(r)$-approximation sequence of $\gamma_r$ is $(\gamma_{r \res l})_{1 \leq  l \leq  \lh(r)}$.
    \item If $R_{\tree}(r \concat (a)) = R_{\tree} (r \concat (b))$ and $a<_{BK}b$ then $\gamma_{r \concat (a)} <\gamma_{r \concat (b)}$.
  \end{enumerate}
Moreover, if $B \in \admistwobold$ is a closed set, $B'$ is the set of limit points of $B$, then $\vec{\gamma} \in [B]^{R \uparrow}$ iff $\vec{\gamma}$ respects $R$ and for each $r \in \dom(R)$, $\gamma_r \in j^{R_{\tree}(r)}(B')$.
\end{mylemma}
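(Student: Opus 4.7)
My plan is to prove the biconditional in both directions using the two preceding lemmas, then peel off the ``moreover'' clause as a refinement. The whole argument mirrors the level-2 version (Lemma~\ref{lem:Q_respecting}) one level higher, with level-1 ultrapowers replaced by level~$\leq 2$ ultrapowers into $\admistwobold$.

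For the forward direction ($\Rightarrow$), I would unpack the definition of respecting $R$: there is some $F \in (\bolddelta{3})^{R \uparrow}$ with $\vec{\gamma} = [F]^R$. By the remark immediately following Definition~\ref{def:level_2_function_sign_etc}, for each $r \in \dom(R)$ the component $F_r : [\omega_1]^{Q_r \uparrow} \to \bolddelta{3}$ has signature $(R_{\node}(r \res i))_{1 \leq i < \lh(r)}$, induces the $Q_r$-potential partial level $\leq 2$ tower $R[r]$, and has approximation sequence $(F_{r \res i})_{1 \leq i \leq \lh(r)}$. Pushing these properties through $[\,\cdot\,]_{\mu^{Q_r}}$ using Definition~\ref{def:ordinal_long_division_level_3} delivers clause~(1). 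Clause~(2) is immediate from the sibling ordering requirement built into $F \in (\bolddelta{3})^{R \uparrow}$ together with \Los{}.

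For the backward direction ($\Leftarrow$), the main work is to assemble a single witnessing function $F \in (\bolddelta{3})^{R \uparrow}$ from the ordinal data. I would proceed by induction on $\lh(r)$ to choose representatives $F_r^0$ modulo $\mu^{Q_r}$ of each $\gamma_r$ so that $F_r^0$ has the prescribed signature, potential partial level $\leq 2$ tower $R[r]$, and approximation sequence $(F_{r \res i}^0)_{1 \leq i \leq \lh(r)}$; at the step where $\ucf(R[r])$ is $(d,\mathbf{q}_*)$ regular, I use the witness $G$ supplied by the uniform cofinality clause of Definition~\ref{def:level_2_function_sign_etc} so that the approximation from below by $F_{r \concat (-1)}^0$ is built in. Clause~(2) on sibling ordering is propagated to the representatives. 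At this point the $F_r^0$'s collectively define a map on $\rep(R) \res E_0$ for some $E_0 \in \mu_{\mathbb{L}}$, but monotonicity across the whole tree is only $\mu$-a.e.\ between each pair. Invoking Lemma~\ref{lem:level_3_R_respecting_function_sort} then yields a club $E' \in \mu_{\mathbb{L}}$ on which the amalgamated $F = \bigcup_r F_r^0 \res (\rep(R) \res E')$ is order-preserving. Thus $F \in (\bolddelta{3})^{R \uparrow}$ and $\vec{\gamma} = [F]^R$, as required.

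For the ``moreover'' clause, the direction ($\Rightarrow$) is exactly Lemma~\ref{lem:respect_lv3_measure_one_set} applied to the closed set $B$: if $\vec{\gamma} \in [B]^{R \uparrow}$, then taking the witnessing $F$ whose values lie in limit points of $B$ on a $\mu$-measure-one set, each $\gamma_r = [F_r]_{\mu^{Q_r}}$ lies in $j^{Q_r}(B')$ by \Los{}. Conversely, given $\vec{\gamma}$ respecting $R$ with $\gamma_r \in j^{Q_r}(B')$ for every $r$, I take the $F$ built above and shrink $E'$ further to enforce $F_r(\vec{\beta}) \in B'$ for all $\vec{\beta} \in [E']^{Q_r \uparrow}$ and all $r$ (possible since $\gamma_r \in j^{Q_r}(B')$ and $\dom(R)$ is finite), and then apply Lemma~\ref{lem:respect_lv3_measure_one_set} in its ``if'' direction to conclude $\vec{\gamma} \in [B]^{R \uparrow}$. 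I expect the main technical obstacle to be the inductive construction of the coherent representatives $F_r^0$: one must pick the approximating functions realizing the prescribed uniform cofinality in such a way that the ``outer'' approximation sequence $(F_{r \res i}^0)_i$ aligns with the choices already made at shorter levels, which is where Definition~\ref{def:level_2_function_sign_etc} and the Jackson-style decomposition of $\gamma_r$ via its $Q_r$-potential partial level $\leq 2$ tower have to be deployed in exactly the right order.
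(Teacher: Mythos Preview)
Your proposal is correct and matches the paper's intended approach: the paper states this lemma immediately after Lemmas~\ref{lem:respect_lv3_measure_one_set} and~\ref{lem:level_3_R_respecting_function_sort} with the sentence ``As corollaries of Lemmas~\ref{lem:respect_lv3_measure_one_set} and~\ref{lem:level_3_R_respecting_function_sort}, we obtain:'' and gives no further proof. You have simply fleshed out the natural details of that deduction, including the inductive construction of coherent representatives and the appeal to \Los{}, exactly as the paper expects the reader to do.
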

In particular, if $\vec{\gamma}$ respects $R$, then $\cf^{\admistwobold}(\gamma) = u_{\cf(R(r))}$ for $r \in \dom(R)$, where $u_0 = \omega$.

\begin{mylemma}
  \label{lem:unique_level_3_tree_represent}
  Suppose $R$ and $R'$ are  level-3 trees  with the same domain. Suppose $\vec{\gamma}$ respects both $R$ and $R'$. Then $R = R'$. 
\end{mylemma}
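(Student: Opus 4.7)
The plan is to argue by induction on $\lh(r)$ for $r \in \dom(R) = \dom(R')$ that $R(r) = R'(r)$; summing over all $r$ then yields $R = R'$. This is the direct analog of the inductive proof of Lemma~\ref{lem:unique_level_2_tree_represent}, with Lemma~\ref{lem:R_respect} playing the role that Lemma~\ref{lem:Q_respecting} played at level $2$.

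Fix $r$ of length $k$ and assume inductively that $R(r \res i) = R'(r \res i)$ for every $i < k$. This inductive hypothesis forces $R[r \res (k-1)] = R'[r \res (k-1)]$, so that $R_{\tree}(r)$ and $R'_{\tree}(r)$ are both completions of the single partial level $\leq 2$ tree $(R_{\tree}(r \res (k-1)), R_{\node}(r \res (k-1)))$. It therefore suffices to show that the added node $R_{\node}(r) = (d_r, q_r, P_r)$ is determined by $\gamma_r$ together with the induction data, since then $R_{\tree}(r) = R'_{\tree}(r)$ as the unique completion adjoining this node. By Lemma~\ref{lem:R_respect}, $R[r]$ and $R'[r]$ are the $R_{\tree}(r)$- and $R'_{\tree}(r)$-potential partial level $\leq 2$ towers induced by $\gamma_r$, with common approximation sequence $(\gamma_{r \res l})_{1 \leq l \leq k}$ already fixed by induction. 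By Corollary~\ref{coro:Q_uniform_cofinality} and Theorem~\ref{thm:jQ_LT2_regularity}, the $R_{\tree}(r)$-uniform cofinality of $\gamma_r$ is $\cf^{\admistwoboldextra{j^{R_{\tree}(r)}}}(\gamma_r) = \seed^{R_{\tree}(r)}_{(d_r, \mathbf{q}_{r,*})}$ with $(d_r, \mathbf{q}_{r,*}) = \ucf(R[r])$, and analogously for $R'$.

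To compare these two uniform cofinalities, I will pass to a common refining level $\leq 2$ tree $Q^{**}$ extending both $R_{\tree}(r)$ and $R'_{\tree}(r)$ (by taking a joint completion of the common base tree that simultaneously accommodates both candidate added nodes) and compute $\cf^{\admistwoboldextra{j^{Q^{**}}}}(\gamma_r) = \seed^{Q^{**}}_{(d, \mathbf{q})}$. Transferring this invariant back along the inclusions $R_{\tree}(r), R'_{\tree}(r) \hookrightarrow Q^{**}$, using Lemmas~\ref{lem:continuous_decomp_level_2} and~\ref{lem:continuous_decomp_level_2_another} to track the continuity behavior of the associated factoring maps on $\gamma_r$, forces the two candidate added nodes to be identified inside $Q^{**}$, hence equal over the common base. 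This yields $(d_r, q_r, P_r) = (d'_r, q'_r, P'_r)$, closing the induction. The main obstacle will be this transfer step: one must verify that the factoring maps from $R_{\tree}(r)$ and $R'_{\tree}(r)$ into $Q^{**}$ can be chosen to extend the identity on the common subtree and that the $Q^{**}$-uniform cofinality of $\gamma_r$ pulls back consistently to the two candidate regular descriptions, so that uniqueness of uniform cofinality inside each subultrapower forces those descriptions to coincide.
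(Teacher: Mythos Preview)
Your inductive strategy on $\lh(r)$ is the right one and matches what the paper intends (the lemma is stated without proof as a corollary of Lemma~\ref{lem:R_respect}, in parallel to how Lemma~\ref{lem:unique_level_2_tree_represent} follows from Lemma~\ref{lem:Q_respecting}). However, the inductive step as written contains two genuine gaps.

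First, the sentence ``It therefore suffices to show that the added node $R_{\node}(r)=(d_r,q_r,P_r)$ is determined\dots since then $R_{\tree}(r)=R'_{\tree}(r)$ as the unique completion adjoining this node'' misidentifies the logical dependency. The completion $R_{\tree}(r)$ of $R(r^{-})=(Q^{-},(d_{r^{-}},q_{r^{-}},P_{r^{-}}))$ adjoins the node described by $R_{\node}(r^{-})$, not $R_{\node}(r)$; and when $d_{r^{-}}=2$ this completion is \emph{not} unique --- it is parametrized by the choice of $p={}^{2}R_{\tree}(r)_{\node}(q_{r^{-}})$. The next-step datum $R_{\node}(r)$ does not in general encode $p$: if $q_r^{-}\neq q_{r^{-}}$ then $P_r$ is a completion of ${}^{2}Q^{-}(q_r^{-})$, which carries no information about $p$. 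So one cannot recover $R_{\tree}(r)$ from $R_{\node}(r)$; one must first pin down the completion datum $p$, and only afterwards read off $R_{\node}(r)$ from the $R_{\tree}(r)$-uniform cofinality of $\gamma_r$.

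Second, the ``common refining level $\leq 2$ tree $Q^{**}$ extending both $R_{\tree}(r)$ and $R'_{\tree}(r)$'' does not exist: the two candidate completions, if distinct, disagree on the value at the shared node $(2,q_{r^{-}})$, so no level $\leq 2$ tree contains both as subtrees. A tree with two separate new nodes admits factoring maps from $Q$ and $Q'$ but not inclusions, and then the transfer of cofinality data you outline is not along inclusions and does not directly force the two candidate descriptions to coincide.

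What actually fixes the completion is the following. By Lemma~\ref{lem:R_respect}, the $Q$-approximation sequence of $\gamma_r$ has $\gamma_{r^{-}}$ at position $k-1$ and $\gamma_r$ is $Q$-essentially discontinuous; this places $\gamma_r$ in the open interval $\bigl(j^{Q^{-},Q}_{\sup}(\gamma_{r^{-}}),\,j^{Q^{-},Q}(\gamma_{r^{-}})\bigr)$. For distinct completions $Q,Q'$ of the fixed partial tree $(Q^{-},(d_{r^{-}},q_{r^{-}},P_{r^{-}}))$ these intervals are disjoint --- this is the level-$\leq 2$ analog of the division Lemma~\ref{lem:ordinal_division_blocks}, obtainable from the continuity analysis in Lemmas~\ref{lem:level_2_factoring_continuity}--\ref{lem:continuous_decomp_level_2_another}. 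Hence the completion, and then $R_{\node}(r)$ via Corollary~\ref{coro:Q_uniform_cofinality}, is uniquely determined.
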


% Finally, we note down a lemma concerning maps on ordinals of discontinuous types. They will play a key role in the analysis of level-3 factoring maps. 

\subsection{Factoring maps between level-2 trees}
\label{sec:more-level-2}

\begin{mydefinition}
  \label{def:contraction}
  Suppose $I<\omega$. Suppose for each $i < I$, $\bar{J}_i \leq J_i < \omega$ and $A_i = (a_{i,j})_{\bar{J}_i \leq j < J_i}$ is a finite sequence of sets. Then the \emph{contraction} of $(A_i)_{i < I}$ is $(b_k)_{k < K}$ such that
  \begin{enumerate}
  \item $\set{a_{i,j}}{i<I, \bar{J}_i\leq j<J_i} = \set{ b_k}{ k < K}$.
  \item For each $k < K$, letting $(i^k,j^k)$ be the $<_{BK}$-least $(i,j)$ such that $a_{i,j} = b_k$, then the map $k \mapsto (i^k, j^k)$ is order preserving with respect to $<$ and $<_{BK}$.
  \end{enumerate}
\end{mydefinition}

\begin{mydefinition}
  \label{def:description_TQW}
  Suppose $T,Q$ are level $\leq 2$ trees. 
A \emph{$(T,Q,-1)$-description} is just a $(T, \comp{1}{Q})$-description.
 Suppose  $(W,\vec{w}) $ is a potential partial level $\leq 1$ tower of discontinuous type, $\vec{w} = (w_i)_{i \leq m}$.  If $m=0$, the only  \emph{$(T,Q,(W,\vec{w}))$-description} is $(2, ( (\emptyset,\emptyset,((0))), \tau))$, where $\tau$ factors $(\emptyset,Q,\emptyset)$, which is called \emph{the constant $(T,Q, *)$-description}. If $m>0$, a \emph{$(T,Q,({W},\vec{w}))$-description} is of the form
  \begin{displaymath}
    \mathbf{C} =(2,(\mathbf{t},\tau))
  \end{displaymath}
such that
\begin{enumerate}
\item $\mathbf{t}\in \desc(\comp{2}{T})$ and $\mathbf{t} \neq (\emptyset,\emptyset,(0))$. Let $\mathbf{t} = (t,S, \vec{s})$, $\lh(t)=k$, $\vec{s} = (s_i)_{i < \lh(\vec{s})}$.
\item $\tau$ factors $(S, Q, W)$.
\item The contraction of $(\sign(\tau(s_i)))_{i< k}$ is $(w_i)_{i < m}$.
\item If $t$ is of continuous type and $w_{m-1}$ does not appear in the contraction of $(\sign(\tau(s_i)))_{i < k-1} \concat (\sign(\tau(s_{k-1})^{-}))$ then $\tau(s_{k-1})$ is of discontinuous type. 
\item Either $\ucf(S,\vec{s}) = w_m = -1$ or $\ucf_1(\tau(\ucf(S,\vec{s}))) =  \ucf(W,\vec{w})$. 
\end{enumerate}

  A $(T,Q,*)$-description is either a $(T,Q,-1)$-description or a
$(T,Q,({W}',\vec{w}'))$-description for some potential partial level $\leq 1$ tower $(W',\vec{w}')$ of discontinuous type.
For a level-1 tree $W$, a $(T,Q, W)$-description is a $(T,Q,({W},\vec{w}'))$-description for some $\vec{w}'$.  $\desc(T,Q,-1)$, $\desc(T,Q,(W, \vec{w}))$,   $\desc(T,Q,*)$, $\desc(T,Q,W)$ denote the sets of relevant descriptions. 
 We sometimes abbreviate $(d, \mathbf{t}, \tau) $ for  $(d, (\mathbf{t}, \tau)) \in \desc(T,Q, *)$ without confusion. 

Recalling our notation of $Q \otimes W$, we may regard $\desc(T,Q,-1)\subseteq \desc(T, \emptyset)$, $\desc(T,Q,W) \subseteq \desc(T,Q \otimes W)$. $T \otimes (Q \otimes W)$ is also a ``level-1 tree'', whose nodes consist of non-constant $(T, Q \otimes W)$-descriptions, so that $\desc(T,Q \otimes W) = \desc(T \otimes (Q \otimes W))$. 
Every non-constant $(T,Q,W)$-description is a member of $T \otimes (Q \otimes W)$. The constant $(T,Q,*)$-description $\mathbf{C}_0$ is regarded as the constant $T \otimes (Q \otimes W)$-description, to make sense of $\seed^{T \otimes (Q \otimes W)}_{\mathbf{C}_0}$. 
The \emph{degree} of $(d,\mathbf{t},\tau) \in \desc(T,Q,*)$ is $d$. 
 % $\mathbf{C}=(2,\mathbf{t},\tau)$ is \emph{$W$-full} iff $\tau$ is $W$-full. 
 In fact, if $\mathbf{C} \in \desc(T,Q,*)$ is of degree 2, then there is a unique potential partial level $\leq 1$ tower $(W,\vec{w})$ for which
% $(\vec{W},\vec{w})=(W_i,w_i)_{i \leq m}$ for which
$\mathbf{C} \in \desc(T,Q,({W},\vec{w}))$. 

Suppose now  $\mathbf{C} = (d, \mathbf{t}, \tau) \in \desc(T,Q, *)$, and if $d=2$, then $\mathbf{C} \in \desc(T, Q, (W, \vec{w}))$, $\mathbf{t} = (t, S, \vec{s})$, $\lh(t) = k$, $\vec{s} = (s_i)_{i < \lh(\vec{s})}$, $\vec{w} = (w_i)_{i \leq m}$. 
If $d=1$ and the signature of $\mathbf{C}$ regarded as a $(T, \comp{1}{Q})$ is $(q_i)_{i < l}$, then the \emph{signature} of $\mathbf{C}$ is $((1,q_i))_{i < l}$. 
If $d=2$, the  \emph{signature} of $\mathbf{C}$ is
\begin{displaymath}
\sign(\mathbf{C}) =
\text{the contraction of } (\sign_{*}^W(\tau(s_i)))_{i < k} .
\end{displaymath}
$\mathbf{C}$ is \emph{of continuous type} iff $d=2$, $t$ is of continuous type, and $\tau(s_{k-1})$ is of  $*$-$W$-continuous type; $\mathbf{C}$ is \emph{of discontinuous type} otherwise. 
% The constant $(T,Q,*)$-description of degree 1 is $(1,\emptyset,\emptyset)$. 
The \emph{uniform cofinality} of $\mathbf{C}=(d,\mathbf{t},\tau)$ is
\begin{displaymath}
  \ucf(\mathbf{C})
\end{displaymath}
defined as follows. If $ d = 1 $ and $\ucf_{\mathbf{C}} = q_*$ regarding $\mathbf{C}$  as a $(T,\comp{1}{Q})$-description, then $\ucf_{\mathbf{C}} = (0,-1)$ when $q_{*} = -1$, $\ucf_{\mathbf{C}} = (1,q_{*})$ otherwise. 
 If $ d = 2$ then %then we split into two cases. 
 \begin{enumerate}
 \item if $\ucf(S,\vec{s}) = -1$,  then $\ucf (\mathbf{C}) = (0,-1)$;
   \item if  $\ucf(S,\vec{s}) = s_{*} \neq -1$,  then $\ucf(\mathbf{C}) = \ucf_{*}^W(\tau(s_{*}))$.
   \end{enumerate}
If $w_m\neq -1$, $\mathbf{C}$ is said to be of \emph{plus-discontinuous type}, and put
\begin{displaymath}
  \ucf^{+}(\mathbf{C}) = \ucf_{*}^{W^{+}}(\tau(\ucf(S,\vec{s}))),
\end{displaymath}
where $W^{+}$ is the completion of $(W, w_m)$. 
The \emph{$*$-signature} of  $\mathbf{C}$ is
\begin{displaymath}
  \sign_{*}(\mathbf{C}) =
  \begin{cases}
    ((1, \mathbf{t})) & \text{ if } d=1,\\
    ((2, t \res i))_{1 \leq i \leq k-1} & \text{ if } d=2, t \text{ is of continuous type,}\\
    ((2, t \res i))_{1 \leq i \leq k} & \text{ if } d=2, t \text{ is of discontinuous type.}\\    
  \end{cases}
\end{displaymath}
If $d = 1$, $\mathbf{C}$ is \emph{of $*$-$Q$-continuous type} iff $\mathbf{C}$ is of $*$-$\comp{1}{Q}$-continuous type regarded as a $(T, \comp{1}{Q})$-description.
If $d = 2$, $\mathbf{C}$ is \emph{of $*$-$Q$-continuous type} iff $\ucf(S,\vec{s}) \neq -1\wedge \tau(\ucf(S,\vec{s})) \neq \min(\prec^{Q,W}))$ implies $\pred_{\prec^{Q,W}}(\tau(\ucf(S,\vec{s}))) \in \ran( \tau )$.  $\mathbf{C}$ is  \emph{of $*$-$Q$-discontinuous type} iff $\mathbf{C}$ is not of $*$-$Q$-continuous type. 
The \emph{$*$-$Q$-uniform cofinality} of $\mathbf{C}$ is
\begin{displaymath}
  \ucf^Q_{*}(\mathbf{C})
\end{displaymath}
defined as follows. If $d=1$, then $\ucf^Q_{*}(\mathbf{C})$ is defined just by treating $\mathbf{C}$ as a $(T, \comp{1}{Q})$-description. If $d=2$, $t$ is of continuous type,
\begin{enumerate}
\item if $\mathbf{C}$ is of $*$-$Q$-continuous type, then $\ucf^Q_{*}(\mathbf{C}) = (2, (t^{-}, S \setminus \se{s_{k-1}}, \vec{s}))$; 
\item if $\mathbf{C}$ is of $*$-$Q$-discontinuous type, then $\ucf^Q_{*}(\mathbf{C}) = (2, (t^{-}, S, \vec{s}))$.
\end{enumerate}
If $d=2$, $t$ is of discontinuous type,
\begin{enumerate}
\item if $\mathbf{C}$ is of $*$-$Q$-continuous type, then $\ucf^Q_{*}(\mathbf{C}) = (2, \mathbf{t})$; 
\item if $\mathbf{C}$ is of $*$-$Q$-discontinuous type, then $\ucf^Q_{*}(\mathbf{C}) = (2, (t, S\cup \se{s_k}, \vec{s}))$.
\end{enumerate}
\end{mydefinition}

For $h  \in \omega_1^{T \uparrow} $, if $\mathbf{C} = (1,\mathbf{t},\emptyset)$ is a $(T,Q,-1)$-description,  then
\begin{displaymath}
  h^{Q}_{\mathbf{C}} : [\omega_1]^{Q \uparrow} \to \omega_1+1
\end{displaymath}
is defined simply by regarding $\mathbf{C}$ as a $(T, \comp{1}{Q})$-description; if $W$ is a (possibly empty) level-1 tree, $\mathbf{C} = (2, \mathbf{t},\tau)$  is a % $W$-full
 $(T,Q,W)$-description, $\mathbf{t} = (t,S, \vec{s})$, 
then
\begin{displaymath}
  h_{\mathbf{C}} ^{Q}: [\omega_1]^{Q \uparrow }\to j^{W}(\omega_1).
\end{displaymath}
is the function that sends $[g]^Q$ to $[\comp{2}{h}_{\mathbf{t}} \circ g^W_{\tau}]_{\mu^W}$. Note here that $\comp{2}{h}_{\mathbf{t}} \circ g^W_{\tau}$ has signature $\sign(W,\vec{w})$, is essentially discontinuous, and has uniform cofinality $\ucf(W,\vec{w})$. 
In either case, when $Q$ is finite,  we have the following: the signature of $h^Q_{\mathbf{C}}$ is $\sign(\mathbf{C})$; $h^Q_{\mathbf{C}}$ is essentially continuous iff $\mathbf{C}$ is of continuous type; the uniform cofinality of $h^Q_{\mathbf{C}}$ is $\ucf(\mathbf{C})$. If $W^{+}$ is the completion of $(W,w_m)$, then $j^{W,W^{+}}\circ h^Q_{\mathbf{C}}$ is of discontinuous type and has cofinality $\ucf^{+}(\mathbf{C})$. 
Moreover, $\ran(h^Q_{\mathbf{C}}) \subseteq \ran(h)$ if $d=1$, $\ran(h^Q_{\mathbf{C}}) \subseteq j^W(\ran(h))$ if $d=2$. 
When $T,Q$ are both finite, $\mathbf{C} = (d,\mathbf{t},\tau) \in \desc(T,Q,*)$,
\begin{displaymath}
  \id_{\mathbf{C}}^{T,Q}
\end{displaymath}
is the function  $[h]^T \mapsto [h_{\mathbf{C}}^{Q}]_{\mu^Q}$, or equivalently,  $\vec{\xi} \mapsto \comp{1}{\xi}_{\mathbf{t}}$ when $d=1$, $\vec{\xi} \mapsto \tau^{Q,{W}}(\comp{2}{\xi}_{\mathbf{t}})$ when $d=2$ and $\mathbf{C} \in \desc(T,Q, W)$. The signature of $\id^{T,Q}_{\mathbf{C}}$ is $\sign_{*}^Q(\mathbf{C})$; $\id^{T,Q}_{\mathbf{C}}$ is essentially continuous iff $\mathbf{C}$ is of $*$-$Q$-continuous type; the uniform cofinality of $\id^{T,Q}_{\mathbf{C}}$ is $\ucf_{*}^Q(\mathbf{C})$. 
\begin{displaymath}
  \seed_{\mathbf{C}}^{T,Q} \in \admistwoboldextra{j^T \circ j^Q}
\end{displaymath}
is the element represented modulo $\mu^T$ by $\id^{T,Q}_{\mathbf{C}}$. Using \Los{}, it is clear that if $d=1$, then for any $A \in \mu_{\mathbb{L}}$, $\seed^{T,Q}_{\mathbf{C}} \in j^T \circ j^Q(A)$; if $d=2$, then for any $A \in \mu^{(W,\vec{w})}$, $\seed_{\mathbf{C}}^{T,Q} \in j^T \circ j^Q(A)$. Using Lemma~\ref{lem:factor_SQW}, we can see that $\seed^{T,Q}_{\mathbf{C}} \in  \set{u_n}{n<\omega}$, and $\seed^{T,Q}_{\mathbf{C}}$ can be computed in the following concrete way:
\begin{itemize}
\item If $d=1$, then %regarding $\mathbf{C}$ as a $(T,\emptyset)$-description of degree 1 and as a node in the ``level-1 tree'' $T \otimes \emptyset$,
 $\seed^{T,Q}_{\mathbf{C}} = \seed^{T,\emptyset}_{\mathbf{C}} = \seed^{T \otimes \emptyset}_{\mathbf{C}}$.
\item If $d=2$ and $\mathbf{C} \in \desc(T,Q,W)$, then 
%regarding $\mathbf{C}$ as a $(T, Q \otimes W)$-description of degree 2 and as a node in the ``level-1 tree'' $T \otimes (Q \otimes W)$, then
 $\seed^{T,Q}_{\mathbf{C}} = \seed^{T, Q \otimes W}_{\mathbf{C}} = \seed^{T \otimes (Q \otimes W)}_{\mathbf{C}}$. 
\end{itemize}

If $\mathbf{C} = (1, \mathbf{t}, \emptyset)$, let
\begin{displaymath}
  \mathbf{C}^{T,Q} : \admistwoboldextra{ j_{\mu_{\mathbb{L}}}} \to \admistwoboldextra{j^T \circ j^Q }
\end{displaymath}
where $\mathbf{C}^{T,Q} ( j_{\mu_{\mathbb{L}}}(h)(\omega_1) ) = j ^{ T } \circ j^Q (\seed^{T,Q}_{\mathbf{C}}) $. 
If $\mathbf{C} = (2,\mathbf{t},\tau)$, let
\begin{displaymath}
  \mathbf{C}^{T,Q} : \admistwoboldextra{ j ^{(W,\vec{w})}} \to \admistwoboldextra{j^T \circ j^Q }
\end{displaymath}
where  $\mathbf{C}^{T,Q} ( j^{(W,\vec{w})}(h)(\seed^{(W,\vec{w})}) ) = j ^{ T } \circ j^Q (\seed^{T,Q}_{\mathbf{C}}) $. 

If $\mathbf{C} = (2, \mathbf{t}, \tau)$, $\mathbf{t} = (t, S, \vec{s})$, define
\begin{displaymath}
  \lh( \mathbf{C}) = \lh( \mathbf{t} )
\end{displaymath}
and
\begin{displaymath}
  \mathbf{C} \res l = (2, \mathbf{t} \res l, \tau \res \set{s_i}{i < l}).
\end{displaymath}
Define
\begin{displaymath}
  \mathbf{C} \iniseg \mathbf{C}'
\end{displaymath}
iff $\mathbf{C} = \mathbf{C}' \res l$ for some $l < \lh(\mathbf{C}')$. Define $\mathbf{C}^{-} = \mathbf{C} \res \lh(\mathbf{C})-1$

Let 
\begin{displaymath}
  \corner{\mathbf{C}}   =
  \begin{cases}
    (1,\mathbf{t})& \text{ if } d=1, \\
    (2, \tau \oplus \mathbf{t})& \text{ if } d=2.
  \end{cases}
\end{displaymath}
Define
\begin{displaymath}
  \mathbf{C} \prec \mathbf{C}'
\end{displaymath}
iff $\corner{\mathbf{C}} <_{BK} \corner{\mathbf{C}'}$, the ordering on subcoordinates in $\desc(Q,*)\cup \desc(Q',*)$ according to $\prec$ acting on $\desc(Q,*)\cup \desc(Q',*)$.
The constant  $(T,Q,*)$-description,  $\mathbf{C}_0$,  is the $\prec$-greatest, and we have $\corner{\mathbf{C}_0} =(2,\emptyset)$. Define $\prec^{T,Q} = \prec \res \desc(T,Q, *)$. %Then $\llbracket \mathbf{C} \rrbracket^{T,Q,W} <  \llbracket \mathbf{C}' \rrbracket^{T,Q,W}$ iff $\mathbf{C} \prec^{T,Q} \mathbf{C}'$; $\llbracket \mathbf{C} \rrbracket^{T,Q,W} = \llbracket \mathbf{C}' \rrbracket^{T,Q,W}$ iff $\mathbf{C} = \mathbf{C}'$.
$\prec^{T,Q}$ inherits the following ordering property as a corollary  to Lemma~\ref{lem:level_2_desc_order}.
\begin{mylemma}
  \label{lem:TQW_desc_order}
Suppose $(\vec{W},\vec{w}) = (W_i,w_i)_{i \leq m}$ is a partial level $\leq 1$ tower. 
 Suppose $\mathbf{C}=(2,\mathbf{t},\tau)\in \desc(T,Q,W_k)$, $\mathbf{C}' =(2,\mathbf{t}',\tau')\in \desc(T,Q,W_{k'})$, $k\leq m$, $k' \leq m$, 
 $\mathbf{C} \prec^{T,Q} \mathbf{C}'$. Then
  for any $h \in \omega_1^{T \uparrow}$, for any $g \in \omega_1^{Q \uparrow}$, for any $\vec{\alpha} \in \omega_1^{W_m\uparrow}$, $h_{\mathbf{t}}\circ g^{W_m}_{\tau}(\vec{\alpha}) < h_{\mathbf{t}'} \circ g^{W_m}_{\tau'}(\vec{\alpha})$.
\end{mylemma}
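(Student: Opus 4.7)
The plan is to unwrap $h_{\mathbf{t}} \circ g^{W_m}_{\tau}(\vec{\alpha})$ as a value of $\comp{2}{h}_t$ at an appropriate tuple and then reduce the ordering question to a $<_{BK}$-comparison inside $\rep(T)$. Write $\mathbf{t} = (t, S, \vec{s})$ with $\vec{s} = (s_i)_{i < \lh(\vec{s})}$, $\lh(t) = k$, and similarly $\mathbf{t}' = (t', S', \vec{s}')$ with $\lh(t') = k'$. Since $W_k, W_{k'}$ are subtrees of $W_m$, both $\tau$ and $\tau'$ factor into $(Q, W_m)$. Set $\beta_s = g^{W_m}_{\tau(s)}(\vec{\alpha})$ for $s \in S$ and $\beta'_{s'} = g^{W_m}_{\tau'(s')}(\vec{\alpha})$ for $s' \in S'$. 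Applying Lemma~\ref{lem:level_2_desc_order} to the $(Q,W_m)$-descriptions in $\ran(\tau)$ shows $\vec{\beta} = (\beta_s)_{s \in S} \in [\omega_1]^{S \uparrow}$, and similarly $\vec{\beta}' \in [\omega_1]^{S' \uparrow}$. Unwinding the definition of $h_{\mathbf{t}}$, we obtain
\[
  h_{\mathbf{t}} \circ g^{W_m}_{\tau}(\vec{\alpha}) = \comp{2}{h}_t(\vec{\beta}), \qquad h_{\mathbf{t}'} \circ g^{W_m}_{\tau'}(\vec{\alpha}) = \comp{2}{h}_{t'}(\vec{\beta}'),
\]
and the $<$-order of such values, for $h \in \omega_1^{T \uparrow}$, is controlled by the $<_{BK}$-order of the $\rep(T)$-tuples $(2, \beta_{s_0}, t(0), \dots, \beta_{s_{k-1}}, t(k-1))$ and $(2, \beta'_{s'_0}, t'(0), \dots, \beta'_{s'_{k'-1}}, t'(k'-1))$.

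It therefore suffices to show that these two $\rep(T)$-tuples inherit the $<_{BK}$-order from the corners. By hypothesis, $\corner{\mathbf{C}} = (2, \tau(s_0), t(0), \dots, \tau(s_{k-1}), t(k-1)) <_{BK} (2, \tau'(s'_0), t'(0), \dots, \tau'(s'_{k'-1}), t'(k'-1)) = \corner{\mathbf{C}'}$, where the subcoordinate order at description positions is $\prec$ on $\desc(Q,*)$ and at node positions is $<_{BK}$ on $\omega^{<\omega}$. Let $i$ be the least position at which the two corner tuples disagree. If the disagreement is at a description coordinate with $\tau(s_i) \prec \tau'(s'_i)$, then Lemma~\ref{lem:level_2_desc_order}(1) yields $\beta_{s_i} < \beta'_{s'_i}$, while for $j < i$ the coincident description coordinates force $\beta_{s_j} = \beta'_{s'_j}$ and the node coordinates $t(j) = t'(j)$ agree; the BK-comparison of the $\rep(T)$-tuples therefore matches that of the corners. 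If the disagreement is at a node coordinate $t(j)$ vs $t'(j)$, then the preceding $\beta$'s and nodes in the $\rep(T)$-tuples already coincide, and the comparison reduces to $t(j) <_{BK} t'(j)$ (or vice versa), again matching the corners. The initial-segment subcase (one of $\corner{\mathbf{C}}, \corner{\mathbf{C}'}$ is a proper initial segment of the other) is handled similarly: the shorter $\rep(T)$-tuple is a proper initial segment of the longer one, and the $<_{BK}$-convention on truncated tuples agrees on both sides.

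The principal obstacle is the careful bookkeeping of the initial-segment subcase in combination with the continuous/discontinuous type distinction among descriptions: one must verify that the terminating entries of $\corner{\mathbf{C}}$ and $\corner{\mathbf{C}'}$ (i.e.\ the last $t(k-1)$, $t'(k'-1)$, together with the type of $\tau(s_{k-1})$ or $\tau'(s'_{k'-1})$) are arranged by the definition of $\prec$ so that the corresponding last entries in the $\rep(T)$-tuples continue to respect $<_{BK}$. Once this is checked, we obtain $\comp{2}{h}_t(\vec{\beta}) < \comp{2}{h}_{t'}(\vec{\beta}')$, which is precisely the desired inequality.
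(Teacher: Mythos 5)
Your proof takes exactly the route the paper intends: the paper states this lemma without any detailed proof, calling it a one-line ``corollary to Lemma~\ref{lem:level_2_desc_order},'' and your argument is the natural unwinding of that. Two small points. First, the right-hand sides of your displayed identities should be $\comp{2}{h}_{\mathbf{t}}(\vec{\beta})$ and $\comp{2}{h}_{\mathbf{t}'}(\vec{\beta}')$, indexed by the full $\comp{2}{T}$-descriptions rather than the bare nodes $t,t'$: when $\mathbf{t}$ is of continuous type the value is a supremum, not a direct $h$-evaluation at a single $\rep(T)$-tuple, so the comparison is not purely positional in $\rep(T)$. Second, that is precisely the case analysis you flag and defer at the end, and it is the real content of the statement — the corner $\corner{\cdot}$ and $\prec^{T,Q}$ are engineered so that continuous-type suprema interleave correctly with direct evaluations, in parallel with what Lemma~\ref{lem:level_2_beta_q_order} records about $\prec^Q$ and $\sim^Q$. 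Since the paper also elides this verification, your write-up sits at a compatible level of detail; a fully rigorous version would carry out the terminal-coordinate bookkeeping by the continuity/discontinuity subcases.
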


Suppose $(\vec{W},\vec{w}) = (W_i,w_i)_{i \leq m}$ is a potential partial level-1 tower and $m>0$. If $\mathbf{C} = (2, \mathbf{t}, \tau)\in \desc(T,Q, (W_m,\vec{w}))$, $\mathbf{t} = (t, S,\vec{s})$, $\bar{m}< m$, then
\begin{displaymath}
  \mathbf{C} \res  (T, Q, W_{\bar{m}})  \in \desc(T,Q, (W_{\bar{m}}, (w_i)_{i \leq \bar{m}}))
\end{displaymath}
is defined by the following: letting $l $ be the least such that $ \tau (s_l) \notin \desc(Q, W_{\bar{m}})$, and letting $\mathbf{D} = \tau(s_l) \res (Q ,W_{\bar{m}})$,  then
\begin{enumerate}
\item if $\mathbf{D} \neq \tau (s_l^{-})$, then $ \mathbf{C} \res  (T, Q, W_{\bar{m}})= (2, \mathbf{t} \res l \concat (-1), \bar{\tau})$, where $\bar{\tau}$ and $\tau$ agree on $ \comp{2}{T}_{\tree}(t \res l)$, $\bar{\tau}(s_l) = \mathbf{D}$;
\item if $\mathbf{D} = \tau(s_l^{-})$, then $ \mathbf{C} \res  (T, Q, W_{\bar{m}})= (2, \mathbf{t} \res l, \tau \res \comp{2}{T}_{\tree}(t \res l))$.
\end{enumerate}
% By Definition~\ref{def:description_TQW}, if $t\res l+1$ is of continuous type, then $\bar{\tau}(s_l) \iniseg_1^{Q, W_{\bar{m}}} \tau(s_l)$; if $t \res l+1$ is of discontinuous type, then ${\tau}(s_{l}^{-}) \iniseg_1^{Q, W_{\bar{m}}} \tau(s_l)$.
%  Define $\mathbf{C}^{-} = \mathbf{C} \res m-1$. 
As a corollary to Lemma~\ref{lem:QW_description_extension}, the $\res  (T, Q, W_{\bar{m}})$ operator inherits  the following continuity property.
\begin{mylemma}
  \label{lem:TQW_description_extension}
Suppose $T,Q$ are finite level $\leq 2$ trees,  $W$ is a level-1 proper subtree of $W'$. 
Suppose $E \in \mu_{\mathbb{L}}$ is a club, $E'$ is the set of limit points of $E$. 
% Suppose  $(W_i,w_i)_{i \leq m'}$ is a finite partial level-1 tower, $m<  m'$. %, $Q$ is a level $\leq 2$ subtree of $Q'$. 
  % Suppose $\mathbf{C} ,\mathbf{C}'\in \desc(T,Q,*)$,
Suppose
  $\mathbf{C}=(2,\mathbf{t},\tau) \in \desc(T,Q, W)$,   $\mathbf{C}'=(2,\mathbf{t}',\tau') \in \desc(T,Q, W')$, $\mathbf{C}  = \mathbf{C}' \res  (T, Q, W_{\bar{m}})$.
  Then   for any $h \in \omega_1^{T \uparrow}$, for any $g \in \omega_1^{Q \uparrow}$, for any $\vec{\alpha} \in [E']^{W\uparrow}$,
    \begin{displaymath}
      h_{\mathbf{t}} \circ g_{\tau}^{W}(\vec{\alpha}) = \sup \{ h_{\mathbf{t}'} \circ g_{\tau'}^{W'}(\vec{\beta}) : \vec{\beta} \in [E]^{W'\uparrow}, \vec{\beta} \text{ extends } \vec{\alpha}\}.
    \end{displaymath}
Hence, the signature and approximation sequence of $h_{\mathbf{t}} \circ g^W_{\tau}$ are proper initial segments of those of  $h_{\mathbf{t}'} \circ g^{W'}_{\tau'}$ respectively. 
\end{mylemma}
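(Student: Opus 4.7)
My plan is to treat the two inequalities separately. The $\geq$ direction reduces immediately to Lemma~\ref{lem:TQW_desc_order}: a direct inspection of the two clauses in the definition of $\mathbf{C} = \mathbf{C}' \res (T,Q,W)$ shows that $\mathbf{C}' \prec^{T,Q} \mathbf{C}$, and since $h_{\mathbf{t}} \circ g^W_\tau(\vec\alpha)$ depends only on $\vec\alpha$ (not on $\vec\beta$), Lemma~\ref{lem:TQW_desc_order} forces $h_{\mathbf{t}'} \circ g^{W'}_{\tau'}(\vec\beta) < h_{\mathbf{t}} \circ g^W_\tau(\vec\alpha)$ for every $\vec\beta \in [E]^{W' \uparrow}$ extending $\vec\alpha$.

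The substantive content lies in the $\leq$ direction, which I plan to carry out by splitting on the two clauses in the definition of $\mathbf{C}' \res (T,Q,W)$. Write $\mathbf{t}' = (t', S', \vec{s}\,')$, let $l$ be the least index with $\tau'(s'_l) \notin \desc(Q,W)$, and put $\mathbf{D} = \tau'(s'_l) \res (Q,W)$. In both clauses, Lemma~\ref{lem:QW_description_extension} supplies the ``inner'' continuity identity
\begin{displaymath}
g^W_{\mathbf{D}}(\vec\alpha) = \sup\bigl\{ g^{W'}_{\tau'(s'_l)}(\vec\beta) : \vec\beta \in [E]^{W' \uparrow},\ \vec\beta \text{ extends } \vec\alpha\bigr\}.
\end{displaymath}
The task is then to push this sup through the outer evaluation $\comp{2}{h}_{\mathbf{t}}$.

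In the first clause ($\mathbf{D} \neq \tau'(s'_{l-1})$), $\mathbf{t} = \mathbf{t}' \res l \concat (-1)$, and the defining property of $h \in \omega_1^{T \uparrow}$ given by Lemma~\ref{lem:level_2_Q_respecting_function_sort}(2) says precisely that $\comp{2}{h}_{\mathbf{t}}$ witnesses the uniform cofinality of $\comp{2}{h}_{\mathbf{t}' \res l}$ as a continuous, order-preserving map in its final argument. Feeding the inner continuity into this outer continuity — noting that the $\mathbb{L}$-cofinality of $g^W_{\mathbf{D}}(\vec\alpha)$ matches $\ucf(\mathbf{t})$ by the compatibility clause (5) of Definition~\ref{def:description_TQW} together with Theorem~\ref{thm:jQ_LT2_regularity} — yields the required sup identity. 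In the second clause ($\mathbf{D} = \tau'(s'_{l-1})$), $\mathbf{t} = \mathbf{t}' \res l$ is strictly shorter than $\mathbf{t}'$, and the uniform cofinality of $\comp{2}{h}_{\mathbf{t}}$ at its final argument is witnessed by the full $\comp{2}{h}_{\mathbf{t}'}$ with its extra trailing coordinate. As $\vec\beta$ extends $\vec\alpha$, this extra coordinate $g^{W'}_{\tau'(s'_l)}(\vec\beta)$ runs cofinally below $g^W_{\tau'(s'_{l-1})}(\vec\alpha)$ by Lemma~\ref{lem:QW_description_extension}, and Lemma~\ref{lem:level_2_Q_respecting_function_sort}(2) again transfers this into the desired sup identity.

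The closing claim on signatures and approximation sequences being proper initial segments drops out of the sup identity by inspection: the identity exhibits $h_{\mathbf{t}} \circ g^W_\tau$ as precisely the penultimate entry in the approximation sequence of $h_{\mathbf{t}'} \circ g^{W'}_{\tau'}$. I expect the main obstacle to be the combinatorial bookkeeping of verifying that the compatibility clauses (3)--(5) in Definition~\ref{def:description_TQW} really do match the uniform cofinality of $\comp{2}{h}_{\mathbf{t}}$ at its final argument to the $\mathbb{L}$-cofinality type of $g^W_{\mathbf{D}}(\vec\alpha)$ in both clauses, so that Lemma~\ref{lem:level_2_Q_respecting_function_sort}(2) applies cleanly. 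Once this alignment is pinned down, passing the sup through $\comp{2}{h}_{\mathbf{t}}$ is automatic.
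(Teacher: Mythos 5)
The paper offers no written proof for this lemma; it merely flags it as ``a corollary to Lemma~\ref{lem:QW_description_extension},'' so your task is to reconstruct what is being left to the reader, and your overall scaffolding is the natural one: $\geq$ from the ordering lemma~\ref{lem:TQW_desc_order} (correct, since the two-clause definition of $\mathbf{C} = \mathbf{C}' \res (T,Q,W)$ does imply $\mathbf{C}' \prec^{T,Q} \mathbf{C}$ upon unwinding the Brouwer--Kleene comparison of the $\corner{\cdot}$-codes, and $h_{\mathbf{t}} \circ g^W_\tau(\vec\alpha)$ can be read as a $W'$-function constant on extensions), and $\leq$ from Lemma~\ref{lem:QW_description_extension} for the inner coordinate together with the coherence of $h$ as an element of $\omega_1^{T\uparrow}$.

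The gap is in the ``feeding through'' step. In your second clause you assert that the uniform cofinality of $\comp{2}{h}_{\mathbf{t}}$ (for $\mathbf{t} = \mathbf{t}' \res l$) is witnessed by ``the full $\comp{2}{h}_{\mathbf{t}'}$.'' That is not what Lemma~\ref{lem:level_2_Q_respecting_function_sort}(2) provides: the witness is $\comp{2}{h}_{(t' \res l)\concat(-1)}$, a \emph{one}-coordinate extension of $\comp{2}{h}_{t'\res l}$, and $\mathbf{t}'$ itself is a possibly much longer node with $t'(l) \neq -1$. The passage from $\comp{2}{h}_{(t'\res l)\concat(-1)}$ to $\comp{2}{h}_{t'}$ needs the full approximation-sequence structure of $h$ (the clause ``the approximation sequence of $\comp{2}{f}_q$ is $(\comp{2}{f}_{q\res i})_{i \leq \lh(q)}$'' in the same lemma), and it must be applied once per intermediate length. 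Symmetrically, your first clause derives $\comp{2}{h}_{t'\res l}(\vec\gamma') = \sup_{\vec\beta}\comp{2}{h}_{\mathbf{t}}(\vec\gamma' \concat g^{W'}_{\tau'(s'_l)}(\vec\beta))$ from the witness property plus inner continuity, but that is a \emph{different} identity from the one you need, which has $\comp{2}{h}_{\mathbf{t}}$ on the left evaluated at the full tuple $g^W_\tau(\vec\alpha)$ and $\comp{2}{h}_{\mathbf{t}'}$ on the right. Throughout, you treat the variation in $\vec\beta$ as if only the $s'_l$-coordinate of $g^{W'}_{\tau'}(\vec\beta)$ were moving; but every $s'_i$-coordinate for $i \geq l$ moves with $\vec\beta$, and the argument must justify why the sup of $\comp{2}{h}_{t'}$ over this particular cofinal family of tuples realizes the sup over all admissible extensions. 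Finally, the identification $s_l^- = s'_{l-1}$ that you make silently in parsing the definition of $\res(T,Q,W)$ needs justifying (the $\vec{s}'$-enumeration is an enumeration of a level-1 tree, not necessarily a chain, so $s_l^-$ is the tree-predecessor rather than automatically the previous index). None of this means the approach is wrong, but as written the Clause 2 witness is misidentified and the multi-coordinate bookkeeping that the closing sentence of the lemma is about has been elided rather than proved.
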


% Suppose  $(Q,q^{+})$ is a partial level $\leq 2$ tree whose completion is $Q^{+}$, $q_{*} = (q^{+})^{-}$.
\begin{mydefinition}
  \label{def:factoring_3}
Suppose $X,T,Q$ are level $\leq 2$ trees. Suppose $\pi : \dom(X) \to \desc(T,Q,*)$ is a function. $\pi$ is said to \emph{factor} $(X,T,{Q})$ iff
  \begin{enumerate}
  \item If $(1,x) \in \dom(X)$, then $\pi(1,x) \in \desc(T,Q,-1) \cup \desc(T,Q,\comp{2}{X}[\emptyset])$. 
  \item  If $(2,x) \in \dom({X})$, then  $\pi(2,x) \in \desc(T,Q,  \comp{2}{X}[x])$.   
  \item   $\pi(2,\emptyset)$ is the constant $(T,Q,*)$-description.  
  \item For any $(d,x), (d',x') \in \dom(X)$, if $(d,x) <_{BK}(d', x')$ then ${\pi(d,x)}\prec^{T,Q} {\pi(d',x')}$. 
  \item   For any $x \in \dom(\comp{2}{X})\setminus\se{\emptyset}$, $\pi(2,x^{-}) =  \pi(2,x) \res  (T, Q, \comp{2}{X}_{\tree}(x^{-}))$.
  \end{enumerate}
$\pi$ is said to \emph{factor} $(X,T,*)$ iff $\pi$ factors $(X,T,Q')$ for some level $\leq 2$ tree $Q'$. 
\end{mydefinition}

 \begin{myexample}
   Let $X,T,Q$ be as in Section~\ref{sec:level-2-2}. There is a $(X,T,Q)$-factoring map $\pi$ characterizing the $X$-equivalence class of $\theta_{XT}$. We have for instance,
   \begin{itemize}
   \item $\pi (2, ((0))) = ( 2 , \comp{2}{T}[((1),(0))], \tau )$, where $\tau((0)) = (2, (-1,\se{(0)},((0))), (0) \mapsto (0))$, $\tau(1) = (1, (0))$.
   \item $\pi(1,(1)) =  (2, (-1,\se{(0)},(0)), \tau' )$, where $\tau'((0)) = (1,(0))$.
   \item $\pi(1,(0)) = (1, ((0)))$.
   \end{itemize}
 \end{myexample}

Suppose $T$ is a level $\leq 2$ tree.
\begin{displaymath}
\id_{T,*}
\end{displaymath}
factors $(T,T,Q^0)$ where $\id_{T,*}(1,t) = (1,t,\emptyset)$, $\id_{T,*}(2,t) = (2,(t,S,\vec{s}), \id_{*,S})$ when $\comp{2}{T}[t] = (S,\vec{s})$. % is the level $\leq 2$ tree isomorphism between $T$ and $T \otimes Q^0$ that sends $(1, t)$ to $(1, t, \emptyset)$,   $(2, t)$ to $(2,(t, S, \vec{s}), \id_{*, S})$ for $\comp{2}{T}[t] = (S, \vec{s})$.
\begin{displaymath}
\id_{*, T}
\end{displaymath}
factors $(T,Q^0,T)$ where $\id_{*,T}(1,t) = (2, \mathbf{q}_0, \tau^1_t)$, $\mathbf{q} _0 = ((-1), \se{(0)}, ((0)))$, $\tau^1_t$ factors $(\se{(0)}, T, \emptyset)$, $\tau^1_t ((0)) = (1, t, \emptyset)$,  $\id_{*,T}(2, t) = (2, \mathbf{q}_0, \tau^2_t)$ when $\comp{2}{T}[t] = (S,\vec{s})$,  $\tau^2_t$ factors $(\se{(0)}, T, S)$, $\tau^2_t((0)) = (2, (t, S, \vec{s}), \id_S)$. 
% If $Q$ is a level $\leq 2$ tree, then $\id_{T,*}$ factors $(T, T \otimes Q)$ and $\id_{*, T}$ factors $(T, Q \otimes T)$.

If $\pi\neq \emptyset$ factors $(X,T,Q)$, $T$ is $\Pi^1_2$-wellfounded and $h \in \omega_1^{T \uparrow}$, let
\begin{displaymath}
  h_{\pi} ^{Q}: [\omega_1]^{Q \uparrow} \to [\omega_1]^{X \uparrow}
\end{displaymath}
be the function that sends $\vec{\beta}$ to $(h_{\pi(d,x)} ^{Q}(\vec{\beta}))_{(d,x)\in \dom(X)}$. 
The fact that $h^Q_{\pi}(\vec{\beta}) \in [\omega_1]^{X \uparrow}$ follows from Lemmas~\ref{lem:Q_respecting},~\ref{lem:TQW_desc_order}-\ref{lem:TQW_description_extension}. % , and the relevant properties of $h_{\mathbf{C}}^Q$ for $\mathbf{C} \in \desc(T,Q,*)$.
Moreover, for any $\vec{\beta} \in [\omega_1]^{Q\uparrow}$, $h^Q_{\pi}(\vec{\beta}) \in [\ran(h)]^{X \uparrow}$.
In particular, if $Q=X$ then $h^{Q}_{\id_{*,X}}$ is the identity function on $[\omega_1]^{X \uparrow}$.
 If $T$ is finite, let
 \begin{displaymath}
   \id^{T,Q}_{\pi} 
 \end{displaymath}
be the function  $[h]^T \mapsto [h^Q_{\pi}]_{\mu^Q}$, or equivalently,  $\vec{\xi} \mapsto (\id^{T,Q}_{\pi(d,x)} (\vec{\xi}))_{(d,x) \in \dom(X)}$. Let
\begin{displaymath}
  \seed_{\pi}^{T,Q} =[\id^{T,Q}_{\pi}]_{\mu^T}.
%\in \admistwoboldultratwo{T}{Q}
\end{displaymath}
By \Los{} and Lemmas~\ref{lem:Q_respecting},~\ref{lem:TQW_desc_order}-\ref{lem:TQW_description_extension},  it is clear that for any $A \in \mu^X$, $\seed_{\pi}^{T,Q} \in j^T \circ j^Q (A) $.
 Define
\begin{displaymath}
  \pi^{T,Q} : \admistwoboldextra{j^X} \to \admistwoboldextra{j^{T } \circ j^Q}
\end{displaymath}
by sending $ j^X(h)(\seed^X)$ to $j^T\circ j^Q (h) ( \seed^{T,Q}_{\pi})$.

Suppose $T,Q$ are both level $\leq 2$ trees.  \emph{A representation of $T \otimes Q$} is a pair $(X,\pi)$ such that
\begin{enumerate}
\item $X$ is a level $\leq 2$ tree;
\item $\pi$ factors $(X,T,Q)$;
\item $\ran(\pi) = \desc(T,Q,*)$;
\item $(d,x) <_{BK} (d',x')$ iff $\pi(d,x) \prec^{T,Q} \pi(d',x')$.
%\item $x \subsetneq x'$ iff $\pi(2,x) \iniseg_1^{T,Q} \pi(2,x')$.
\end{enumerate}
% The existence of $(X,\pi)$ follows from the fact that if $\mathbf{C} \in \desc(T,Q, (W_i,w_i)_{i \leq m})$, then there is a unique sequence $(\mathbf{C}_i)_{i \leq m}$ such that $\mathbf{C}_0$ is the constant $(T,Q,*)$-description, $\mathbf{C}_m = \mathbf{C}$, and $\mathbf{C}_i \iniseg_1^{T,Q} \mathbf{C}_{i+1}$ for any $i$.
Representations of $T \otimes Q$ are clearly mutually isomorphic. We shall regard
\begin{displaymath}
T \otimes Q
\end{displaymath}
itself as a ``level $\leq 2$ tree'' whose level-$d$ component is $(\mathbf{t},\tau)$ for which $(d, (\mathbf{t}, \tau)) \in \desc(T, Q, *)$, and whose level-2 component sends $(\mathbf{t}, \tau)$ to $(W, w_m)$ if $(2, (\mathbf{t}, \tau)) \in \desc(T, Q, (W, (w_i)_{i \leq m}))$. 
In this way, $\pi$ is a ``level $\leq 2$ tree isomorphism'' between $X$ and $T \otimes Q$. All the relevant terminologies of level $\leq 2$ trees carry over to $T \otimes Q$ in the obvious ways. In particular, if $W$ is a finite level-1 tree, a $(T \otimes Q, W)$-description takes one of the following forms (recall that $(d,\mathbf{t}, \tau)$ is simply an abbreviation of  $(d, (\mathbf{t}, \tau))$):
\begin{enumerate}
\item $(1,(\mathbf{t},\emptyset), \emptyset)$ for $(1, \mathbf{t}, \emptyset) \in \desc(T, Q, -1)$;
\item $(2, ((\mathbf{t}, \tau), Z, \vec{z}), \psi)$ for $(2, \mathbf{t}, \tau) \in \desc(T, Q, (Z, \vec{z}))$ and $\psi$ factoring $(Z, W)$;
\item $(2, ((\mathbf{t}, \tau) \concat(-1) , Z^{+}, \vec{z}), \psi)$ for $(2, \mathbf{t}, \tau) \in \desc(T, Q, (Z,\vec{z}))$, $\vec{z} = (z_i)_{i \leq l}$, $Z^{+} = Z \cup \se{z_l}$ and $\psi$ factoring $(Z^{+}, W)$. 
\end{enumerate}
$(T \otimes Q ) \otimes W$ is thus regarded as a ``level-1 tree'' whose nodes consists of non-constant $(T \otimes Q, W)$-descriptions. There is a natural isomorphism
\begin{displaymath}
\iota_{T,Q,W}
\end{displaymath}
between ``level-1 trees'' $(T \otimes Q) \otimes W$ and $T \otimes (Q \otimes W)$, defined as follows.
\begin{enumerate}
\item $\iota_{T,Q,W}(1, (\mathbf{t}, \emptyset) , \emptyset) = (1, \mathbf{t}, \emptyset)$. 
\item If $(2, \mathbf{t}, \tau) \in \desc(T,Q,(Z,\vec{z}))$, $\mathbf{t} = (t, S, \vec{s})$, $\psi$ factors $(Z, W)$, define $\iota_{T,Q,W}(2, ((\mathbf{t}, \tau), Z, \vec{z}), \psi) = (2, \mathbf{t}, (Q \otimes \psi) \circ  \tau)$. 
% , where $\psi * \tau$ factors $(S, Q, W)$, $\psi*\tau (s) = (d_s, \mathbf{q}_s, \psi \circ \sigma_s)$ if $\tau(s) = (d_s, (\mathbf{q}_s, \sigma_s))$.
\item If $(2, \mathbf{t}, \tau) \in \desc(T,Q,(Z,\vec{z}))$, $\mathbf{t} = (t, S, \vec{s})$, $\vec{z} = (z_i)_{i \leq l}$, $\vec{s} = (s_i)_{i \leq k}$, $Z^{+} = Z \cup \se{z_l}$,  $\psi$  factors $(Z^{+},W)$,
  \begin{enumerate}
  \item if $t$ is of discontinuous type,
define $\iota_{T,Q,W}(2, ((\mathbf{t}, \tau) \concat (-1), Z^{+}, \vec{z}), \psi) = (2,\mathbf{t} \concat (-1), \psi*_0\tau)$, where $\psi *_0 \tau$ factors $(S\cup \se{s_k}, Q, W)$,    $\psi *_0 \tau$ extends $ (Q \otimes \psi) \circ  \tau$, $\psi *_0 \tau (s_k) = (2,  \mathbf{q}_0  , \sigma)$, $\mathbf{q}_0 = ((-1), \se{(0)}, ((0)))$, $\sigma((0))  = \psi(z_l)$;
\item if $t$ is of continuous type, define $\iota_{T,Q,W} (2, ((\mathbf{t}, \tau) \concat (-1), Z^{+}, \vec{z}), \psi) =  (2, \mathbf{t}, \psi *_1 \tau)$, where $\psi *_1 \tau$ factors $(S, Q, W)$, $\psi *_1 \tau$ extends $ (Q \otimes \psi) \circ  (\tau\res  (S \setminus\se{s_k}))$, $\psi *_1 \tau (s_k) = (2, \mathbf{q} \concat (-1), \sigma^{+})$ where $\tau(s_k) = (2, \mathbf{q}, \sigma)$, $\mathbf{q} = (q, P, (p_i)_{i \leq m})$, $\sigma^{+} $ extends $\sigma$, $\sigma^{+} (p_m) = \psi( z_l)$. 
  \end{enumerate}
\end{enumerate}
The reason why $\iota_{T,Q,W}$ is a surjection is the following. Suppose $\mathbf{C} \in T \otimes (Q \otimes W)$ is of degree 2. Put $\mathbf{C} = (2, \mathbf{t}, \tau)$, $\mathbf{t} = (t, S, \vec{s})$, $\vec{s} = (s_i)_{i < \lh(\vec{s})}$, $k = \lh(t)$. Let $(w_i)_{i < m}$ be the contraction of $(\sign(\tau(s_i)))_{i < k}$. Then $w_0$ is the $<_{BK}$-maximum of $\set{w_i}{ i < m}$. Let $(Z,\vec{z})=(Z,(z_i)_{i < m})$ be the potential partial level $\leq 1$ tower of continuous type and $\psi : Z \to W$ be the level-1 tree isomorphism such that $\psi (z_i) = w_i$ for any $i < m$. If $\mathbf{t}$ is of continuous type, $\tau(s_{k-1})$ is of continuous type, but $w_{m-1}$ does not appear in the contraction of $(\sign(\tau(s_i)))_{i < k-1} \concat (  \sign(\tau(s_{k-1})^{-}))$, then
\begin{displaymath}
\mathbf{C} = \iota_{T,Q,W} (2, ( ( \mathbf{t}, \tau) \concat (-1), Z, \vec{z}  ) , \psi).
\end{displaymath}
Otherwise,
\begin{displaymath}
  \mathbf{C} = \iota_{T,Q,W} (2, ((\mathbf{t}, \tau) , Z, \vec{z} \concat (z_{*})), \psi),
\end{displaymath}
where $(Z,z_{*})$ is a partial level $\leq 1$ tree, $z_{*} = -1$ if $\ucf(S, \vec{s}) = -1$, $z_{*}^{-} = \ucf_1(\tau(\ucf(S,\vec{s})))$ if $\ucf(S,\vec{s}) \neq -1$. $\iota_{T,Q,W}$ justifies the associativity of the $\otimes$ operator acting on level $(\leq 2, \leq 2, 1)$-trees.

\begin{myexample}
  Let $T,Q$ be as in Section~\ref{sec:level-2-2}. $U = T \otimes Q$ has cardinality 160. $\comp{1}{U}$ has cardinality 4. $\comp{2}{U}$ has
  \begin{itemize}
  \item 1 node of length 0, which is of degree 1,
  \item 12 nodes of length 1, 8 of which are of degree 1,
  \item 31 nodes of length 2, 29 of which are of degree 1,
  \item 52 nodes of length 3, all of which are of degree 1,
  \item 45 nodes of length 4, all of which are of degree 1,
  \item 15 nodes of length 5, all of which are of degree 1,
  \end{itemize}
Let $W$ be a level-1 tree of cardinality 4. Then $(T \otimes Q) \otimes W$ has cardinality
\begin{multline*}
  4 + {4 \choose 0} + 12{4 \choose 1} + 31 {4 \choose 2}  + 52 {4 \choose 3}  + 45 {4 \choose 4} \\+ 
{4 \choose 1} + 8{4 \choose 2} + 29 {4 \choose 3}  + 52 {4 \choose 4} -1  = 711.
\end{multline*}
(the constant description does not count for a node in $(T \otimes Q) \otimes W$, so minus one in the end)
On the other hand, $Q \otimes W$ has cardinality 
\begin{displaymath}
 1 +  {4 \choose 0} + {4 \choose 1}+
{4 \choose 1} + {4 \choose 2}  - 1 = 15,
\end{displaymath}
and thus $T \otimes (Q \otimes W)$ has cardinality
\begin{displaymath}
  1 + {15 \choose 0} + 2 {15 \choose 1}  +  {15 \choose 2} +  {15 \choose 1} + {15 \choose 2} + {15 \choose 3}- 1 = 711.
\end{displaymath}
\end{myexample}

The identity function $\id_{T \otimes Q} $ factors $(T \otimes Q, T, Q)$. By definitions and Lemmas~\ref{lem:factor_SQW},~\ref{lem:beta_q_unambiguous},
\begin{displaymath}
  (\id_{T \otimes Q})^{T,Q} (\seed^{(T \otimes Q) \otimes W}_{\mathbf{C}^{*}}) = \seed^{T \otimes (Q \otimes W)}_{\iota_{T,Q,W}(\mathbf{C}^{*})}  
\end{displaymath}
for any $\mathbf{C}^{*} \in (T \otimes Q) \otimes W$. Hence, $(\id_{T \otimes Q})^{T,Q} (u_n) = u_n$ for any $n<\omega$. As $(\id_{T \otimes Q})^{T,Q}$ is elementary from $L_{\kappa_3^x}[j^{T \otimes Q}(T_2),x]$ to $L_{\kappa_3^x}[j^T \circ j^Q(T_2), x]$ for any $x \in \mathbb{R}$, $(\id_{T \otimes Q})^{T,Q}$ is the identity map on $\admistwobold$. 

Suppose $\pi$ factors level $\leq 2$ trees $(X,T)$ and $Q$ is another level $\leq 2$ tree.
\begin{itemize}
\item $ \pi \otimes Q$
factors $(X \otimes Q, T \otimes Q)$, defined as follows: $\pi \otimes Q (d,\mathbf{x}, \tau) = (d, \comp{d}{\pi}(\mathbf{x}) , \tau)$.
\item $Q \otimes \pi$ factors $(Q \otimes X, Q \otimes T)$, defined as follows: $Q \otimes \pi  (d,\mathbf{q}, \tau) = (d, \mathbf{q},  (\pi \otimes W) \circ  \tau)$, where $\tau$ factors $(P, X \otimes W)$.
\end{itemize}

We effectively obtain the following lemma which reduces finite iterations of level $\leq 2$ ultrapowers to a single level $\leq 2$ ultrapower. The proof is in parallel to Lemma~\ref{lem:jQ_move_factor_maps}. 

\begin{mylemma}
  \label{lem:level_2_ultrapower_iteration_reduce}
  Suppose $X, T,Q$ are finite level $\leq 2$ trees. Then
  \begin{enumerate}
  \item $j^T \circ j^Q  =  j^{T \otimes Q}$.
  \item  $\pi$ factors $(X,T,Q)$ iff $\pi$ factors $(X, T \otimes Q)$. If $\pi$ factors $(X,T,Q)$ then $\pi^{T,Q} = \pi^{T \otimes Q}$.
  \item If $\pi$ factors $X,T$, then
    \begin{enumerate}
    \item $j^Q ( \pi ^T \res a )  =  (Q \otimes \pi)^{Q \otimes T} \res j^Q(a)$ for any $a \in \admistwobold$;
    \item  $\pi^T  \res \admistwoboldextra{j^{X \otimes Q}} = (\pi \otimes Q)^{T \otimes Q}$.
    \end{enumerate}
  \end{enumerate}
\end{mylemma}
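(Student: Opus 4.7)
The plan is to reduce all three parts to a single key fact: seed matching, i.e.\ $\seed^{T,Q}_{\mathbf{C}} = \seed^{T \otimes Q}_{\mathbf{C}} = u_{l+1}$ whenever $\mathbf{C}$ is the $l$-th element of $\desc(T,Q,*)$ in the $\prec^{T,Q}$-ascending enumeration. Once this is in place, parts 1--3 follow by routine computation.

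For part 1, enumerate $\desc(T,Q,*)$ as $\mathbf{C}_0 \prec^{T,Q} \mathbf{C}_1 \prec^{T,Q} \cdots$ and prove by induction on $l$ that $\seed^{T,Q}_{\mathbf{C}_l} = u_{l+1}$. The base case is immediate: the $\prec^{T,Q}$-minimum is represented by a function whose value is constantly $\omega_1$, so $\seed^{T,Q}_{\mathbf{C}_0}=\omega_1$. The inductive step requires the analog of Lemma~\ref{lem:level_2_desc_cofinal_in_next} at the $(T,Q,*)$ level: if $\mathbf{E}=\pred_{\prec^{T,Q}}(\mathbf{C})$, then $(\mathbf{E}^{T,Q})''u_2$ is cofinal in $\seed^{T,Q}_{\mathbf{C}}$. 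Its proof is a direct adaptation: extend $T$ (or $Q$) by one extra node $t'$ (respectively $q'$) that inserts a description strictly between $\mathbf{E}$ and $\mathbf{C}$, apply the level-2 partition property of $\omega_1$ from \cite{sharpII} to a function $f\in\omega_1^{T'\uparrow}$ (respectively $\omega_1^{Q'\uparrow}$) comparing $\tau^W({}^2[f]^{T'}_{t'})$ against the value of the representing function, then use Lemma~\ref{lem:QW_description_extension_another} (or Lemma~\ref{lem:Q_desc_extension}) to contradict the ``wrong'' homogeneous side. This matches Lemma~\ref{lem:factor_SQW} applied to $T \otimes Q$, which gives $\seed^{T \otimes Q}_{\mathbf{C}_l} = u_{l+1}$. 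With seed matching, both $j^T\circ j^Q$ and $j^{T\otimes Q}$ are elementary from $L[z]$ to $L[z]$ for any $z\in\mathbb{R}$, fix $\omega_1$, and send each uniform indiscernible to the same uniform indiscernible (use Lemma~\ref{lem:Q_desc_uniform_indis} to recognize every $u_n$ as a direct seed on both sides); since every element of $\admistwobold$ is $\Delta_1$-definable over $\admistwobold$ from $T_2$ and $u_\omega$, the two maps coincide.

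Part 2 is a bookkeeping reconciliation. The identification of $T \otimes Q$ with the level $\leq 2$ tree whose nodes are non-constant $(T,Q,*)$-descriptions sets up a bijection between level-1 (resp.\ level-2) nodes; the degree, tree-component and $<_{BK}$ order on $T \otimes Q$ precisely track the degree of a description, the potential level $\leq 1$ tower $(W,\vec{w})$ attached to it, and the $\prec^{T,Q}$-order respectively. Under this identification the five clauses defining a $(X,T,Q)$-factoring map unwind to exactly the conditions defining a $(X,T\otimes Q)$-factoring map, with clause (5) (the $\res(T,Q,W)$ continuity) corresponding to the requirement $\comp{2}{X}(x^-) = \comp{2}{(T\otimes Q)}(\comp{2}{\pi}(x^-))$ for descendants. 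The equality $\pi^{T,Q}=\pi^{T\otimes Q}$ then follows because both send $j^X(h)(\seed^X)$ to the same element, by \Los{} and seed matching applied to the representing function $\vec\xi\mapsto(\id^{T,Q}_{\pi(d,x)}(\vec\xi))_{(d,x)\in\dom(X)}$.

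For part 3, (a) is the level-($\leq 2,\leq 2$) analog of Lemma~\ref{lem:jQ_move_factor_maps} and (b) is the analog of Lemma~\ref{lem:jQ_move_factor_maps_another}; both arguments transplant verbatim once seed matching is available. Concretely, for (a), given $a=j_{\mu_{\mathbb{L}}}(h)(\alpha) \in \admistwobold$, unwind $j^Q(\pi^T\!\restriction\! a)(\seed)$ using \Los{} and the definition of $(Q\otimes\pi)$; the representing function reduces to $\vec\xi\mapsto \pi^T \circ h$ composed with the canonical reshuffle, matching $(Q\otimes\pi)^{Q\otimes T}\!\restriction\! j^Q(a)$ on each seed. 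For (b), compute $\pi^T(\seed^{X\otimes Q}_{\mathbf{C}})$ for $\mathbf{C}\in X\otimes Q$; seed matching identifies this with $\seed^{T\otimes Q}_{(\pi\otimes Q)(\mathbf{C})}$ by tracking the representing functions of the iterated description.

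The main obstacle is purely the successor-step cofinality lemma used in the seed-matching induction of part 1. The partition argument itself is direct, but the case split on continuous/discontinuous type of $t$ (in $\mathbf{C}=(2,(t,S,\vec s),\tau)$), of $\tau(s_{k-1})$ (the $*$-$W$-continuity), and of the potential level-1 tower $(W,\vec w)$, must be bookkept carefully to select the correct tree extension and the correct description $\mathbf{D}'$ with $\mathbf{D}=\mathbf{D}'\res(T,Q,W)$ playing the role of the ``one-step up'' witness. Once that combinatorial step is set up in parallel with Lemma~\ref{lem:level_2_desc_cofinal_in_next}, the remainder of the lemma is formal.
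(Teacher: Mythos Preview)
Your approach is correct but takes a longer route than the paper for part~1. You propose to reprove the analog of Lemma~\ref{lem:level_2_desc_cofinal_in_next} at the $(T,Q,*)$ level from scratch via partition arguments and then run a seed-matching induction. The paper instead avoids any new partition argument entirely: it constructs the explicit ``level-1 tree'' isomorphism $\iota_{T,Q,W}$ between $(T\otimes Q)\otimes W$ and $T\otimes(Q\otimes W)$, and then observes (in the paragraph immediately preceding the lemma) that by Lemma~\ref{lem:factor_SQW} and a direct unfolding of the definitions,
\[
(\id_{T\otimes Q})^{T,Q}\bigl(\seed^{(T\otimes Q)\otimes W}_{\mathbf{C}^{*}}\bigr)=\seed^{T\otimes(Q\otimes W)}_{\iota_{T,Q,W}(\mathbf{C}^{*})}.
\]
Both sides are the same $u_n$ since $\iota_{T,Q,W}$ is an order isomorphism and Lemma~\ref{lem:factor_SQW} already gives $\seed^{U\otimes V}_{\mathbf{D}}=u_{l+1}$ for any level-$(\le2,1)$ pair. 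Hence $(\id_{T\otimes Q})^{T,Q}$ fixes every $u_n$ and, being elementary on each $L_{\kappa_3^x}[\cdot,x]$, is the identity; part~1 follows immediately. The point is that $\seed^{T,Q}_{\mathbf{C}}=\seed^{T\otimes(Q\otimes W)}_{\mathbf{C}}$ is a \emph{direct} application of Lemma~\ref{lem:factor_SQW} once you regard $\mathbf{C}\in\desc(T,Q,W)\subseteq\desc(T,Q\otimes W)$, so no new cofinality lemma is needed.

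Your treatment of parts~2 and~3 matches the paper's (it says only ``the proof is in parallel to Lemma~\ref{lem:jQ_move_factor_maps}''). What your route buys is self-containment---you never need the associativity isomorphism---at the cost of redoing a nontrivial case split (continuous/discontinuous type of $t$, of $\tau(s_{k-1})$, and the choice of which tree to extend). The paper's route buys economy: the entire lemma becomes a corollary of the combinatorics of $\iota_{T,Q,W}$ plus the already-established Lemma~\ref{lem:factor_SQW}.
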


Suppose $T,Q, U$ are level $\leq 2$ trees. There is a natural ``level $\leq 2$ tree isomorphism''
\begin{displaymath}
\iota_{T,Q,U}
\end{displaymath}
between $(T \otimes Q) \otimes U$ and $T \otimes (Q \otimes U)$ defined as follows. Suppose $\mathbf{B} \in \desc(T \otimes Q, U, *)$.
\begin{enumerate}
\item If $\mathbf{B}= (1,( t, \emptyset), \emptyset)$, $\mathbf{C}= (1, t,  \emptyset) \in \desc(T, Q, -1)$, then $\mathbf{C} \in \desc(T, Q \otimes U, -1)$ and $\iota_{T,Q,U}(\mathbf{B}) = \mathbf{C}$. 
\item If $\mathbf{B} = (2, (  (\mathbf{t}, \tau), Z, \vec{z}) , \psi) \in \desc(T \otimes Q, U, W)$, $\mathbf{C} =(2, \mathbf{t}, \tau)\in \desc(T, Q, (Z, \vec{z}))$, $\mathbf{t} = (t, S, \vec{s})$, $\psi$ factors $(Z, U, W)$, then $\iota_{T,Q,U}(\mathbf{B}) =(2,  \mathbf{t}, \iota^{-1}_{Q,U,W}\circ  (Q \otimes \psi) \circ  \tau)$.   % where $\psi * \tau$ factors $(S, Q \otimes U, W)$, $\psi* \tau (s) =\iota_{Q,U,W}^{-1} (d_s,( \mathbf{q}_s, \psi\circ\sigma_s))$ if $\tau(s) = (d_s, \mathbf{q}_s, \sigma_s)$. 
\item If $ \mathbf{B}= (2, ((\mathbf{t}, \tau)\concat(-1), Z^{+}, \vec{z}), \psi) \in \desc(T \otimes Q, U, W)$, $\mathbf{C} = (2, \mathbf{t}, \tau) \in \desc(T,Q, (Z, \vec{z}))$, $\mathbf{t} = (t, S, (s_i)_{i \leq k})$, $\vec{z} = (z_i)_{i \leq l}$, 
  \begin{enumerate}
  \item if $t$ is of discontinuous type, then $\iota_{T,Q,U}( \mathbf{B}) =(2,  \mathbf{t} \concat(-1), \psi * _0\tau)$, where $\psi*_0 \tau$ factors $(S \cup \se{s_k}, Q \otimes U, W)$, $\psi*_0 \tau$ extends $\iota^{-1}_{Q,U,W}\circ  (Q \otimes \psi) \circ  \tau$, $\psi*_0 \tau (s_k) =\iota_{Q,U,W}^{-1}(2, \mathbf{q}_0, \sigma)$, $\mathbf{q}_0 = ((-1), \se{(0)}, ((0)))$, $\sigma((0)) =\psi( z_l)$.
  \item if $t$ is of continuous type, then $\iota_{T,Q,U}(\mathbf{B}) = (2, \mathbf{t}, \psi *_1 \tau)$, where $\psi *_1 \tau $ factors $(S, Q \otimes U , W)$, $\psi * _1 \tau$ extends $\iota^{-1}_{Q,U,W}\circ  (Q \otimes \psi) \circ   (\tau \res (S \setminus \se{s_k}))$, $\psi *_1 \tau(s_k )= \iota_{Q,U,W}^{-1} (2, \mathbf{q} \concat (-1), \sigma^{+})$ where $\tau(s_k) = (2, \mathbf{q}, \sigma)$, $\mathbf{q} = (q,P, (p_i)_{i \leq m})$, $\sigma^{+}$ extends $\sigma$, $\sigma^{+}(p_m) = \psi(z_l)$. 
  \end{enumerate}
\end{enumerate}
 $\iota_{T,Q,U}$ justifies the associativity of the $\otimes$ operator acting on level $(\leq 2, \leq 2, \leq 2)$ trees. 

 \begin{myexample}
   Let $Q= U = Q^{21}$, $\comp{1}{T} = \se{(0)}$, $\comp{2}{T} = \comp{1}{Q}^{20}$. A representation of $T \otimes Q$ is $X$, and $\pi_X$ is a "level $\leq 2$ tree isomorphism" between $X$ and  $T \otimes Q$, given as follows, where
$    \langle\langle \mathbf{C} \rangle\rangle = \langle \mathbf{C} \rangle$ if $\mathbf{C}$ has degree 1, and 
 for $\mathbf{C} = (2, \mathbf{t},\tau)$,
  \begin{displaymath}
    \langle\langle \mathbf{C} \rangle\rangle = (2,\langle \tau \rangle \oplus \mathbf{t}),
  \end{displaymath}
where $ \langle \tau \rangle (\cdot) = \langle \tau (\cdot)\rangle$.
\begin{itemize}\item $\comp{2}{X}_{\node}(((2)))= -1$

$\langle\langle\pi_X((2))\rangle\rangle=$ (2, ((2, ((0), (0))), (0)))

\item $\comp{2}{X}_{\node}(((1)))= (0, 0)$

$\langle\langle\pi_X((1))\rangle\rangle=$ (2, ((2, ((0), (0))), $-1$))

\item $\comp{2}{X}_{\node}(((1), (0)))= -1$

$\langle\langle\pi_X((1), (0))\rangle\rangle=$ (2, ((2, ((0), (0), (0, 0), $-1$)), (0)))

\item $\comp{2}{X}_{\node}(((0)))= -1$

$\langle\langle\pi_X((0))\rangle\rangle=$ (2, ((2, ((0), $-1$)), (0)))

\item $\comp{1}{X}$ contains (0)

$\langle\langle\pi_X(0)\rangle\rangle=$ (1, ((0)))

\end{itemize}A representation of $Q \otimes U$ is $M$, and $\pi_M$ is a "level $\leq 2$ tree isomorphism" between $M$ and  $Q \otimes U$, given as follows:
\begin{itemize}\item $\comp{2}{M}_{\node}(((3)))= (0, 0)$

$\langle\langle\pi_M((3))\rangle\rangle=$ (2, ((2, ((0), (0))), (0)))

\item $\comp{2}{M}_{\node}(((2)))= (0, 0)$

$\langle\langle\pi_M((2))\rangle\rangle=$ (2, ((2, ((0), (0))), (0), (2, ((0), $-1$)), $-1$))

\item $\comp{2}{M}_{\node}(((2), (0)))= (0, 0, 0)$

$\langle\langle\pi_M((2), (0))\rangle\rangle=$ (2, ((2, ((0), (0))), (0), (2, ((0, 0), (0))), $-1$))

\item $\comp{2}{M}_{\node}(((1)))= (0, 0)$

$\langle\langle\pi_M((1))\rangle\rangle=$ (2, ((2, ((0), (0))), $-1$))

\item $\comp{2}{M}_{\node}(((1), (3)))= (0, 0, 0)$

$\langle\langle\pi_M((1), (3))\rangle\rangle=$ (2, ((2, ((0), (0), (0, 0), $-1$)), (0)))

\item $\comp{2}{M}_{\node}(((1), (2)))= (0, 1)$

$\langle\langle\pi_M((1), (2))\rangle\rangle=$ (2, ((2, ((0), (0), (0, 0), $-1$)), (0), (2, ((0), $-1$)), $-1$))

\item $\comp{2}{M}_{\node}(((1), (2), (1)))= (0, 1, 0)$

$\langle\langle\pi_M((1), (2), (1))\rangle\rangle=$ (2, ((2, ((0), (0), (0, 0), $-1$)), (0), (2, ((0, 1), (0))), $-1$))

\item $\comp{2}{M}_{\node}(((1), (2), (0)))= (0, 0, 0)$

$\langle\langle\pi_M((1), (2), (0))\rangle\rangle=$ (2, ((2, ((0), (0), (0, 0), $-1$)), (0), (2, ((0, 1), (0), (0, 0), $-1$)), $-1$))

\item $\comp{2}{M}_{\node}(((1), (1)))= (0, 0, 0)$

$\langle\langle\pi_M((1), (1))\rangle\rangle=$ (2, ((2, ((0), (0), (0, 0), $-1$)), (0), (2, ((0, 0), (0))), $-1$))

\item $\comp{2}{M}_{\node}(((1), (0)))= (0, 0, 0)$

$\langle\langle\pi_M((1), (0))\rangle\rangle=$ (2, ((2, ((0), (0), (0, 0), $-1$)), (0), (2, ((0, 0), $-1$)), $-1$))

\item $\comp{2}{M}_{\node}(((1), (0), (0)))= (0, 0, 0, 0)$

$\langle\langle\pi_M((1), (0), (0))\rangle\rangle=$ (2, ((2, ((0), (0), (0, 0), $-1$)), (0), (2, ((0, 0, 0), (0))), $-1$))

\item $\comp{2}{M}_{\node}(((0)))= (0, 0)$

$\langle\langle\pi_M((0))\rangle\rangle=$ (2, ((2, ((0), $-1$)), (0)))

\item $\comp{2}{M}_{\node}(((0), (0)))= (0, 0, 0)$

$\langle\langle\pi_M((0), (0))\rangle\rangle=$ (2, ((2, ((0), $-1$)), (0), (2, ((0, 0), (0))), $-1$))

\end{itemize}Both $(T \otimes Q) \otimes U$ and $T \otimes (Q \otimes U)$ can be represented by the tree $N$. Let $\psi$ be the "level $\leq 2$ tree isomorphism" between $N$ and  $X \otimes U$ and let $\psi'$ be the "level $\leq 2$ tree isomorphism" between $N$ and  $T \otimes M$. Then $N$ and $\psi$, $\psi'$ are as follows:
\begin{etaremune}
\item $\comp{2}{N}_{\node}(((8)))= -1$

$\langle\langle\psi((8))\rangle\rangle=$ (2, ((2, ((0), (0))), (2)))

$\langle\langle\psi'((8))\rangle\rangle=$ (2, ((2, ((0), (3))), (0)))

\item $\comp{2}{N}_{\node}(((7)))= (0, 0)$

$\langle\langle\psi((7))\rangle\rangle=$ (2, ((2, ((0), (0))), (1)))

$\langle\langle\psi'((7))\rangle\rangle=$ (2, ((2, ((0), (3))), $-1$))

\item $\comp{2}{N}_{\node}(((7), (0)))= -1$

$\langle\langle\psi((7), (0))\rangle\rangle=$ (2, ((2, ((0), (0))), (1), (2, ((0), (0), (0, 0), $-1$)), (0)))

$\langle\langle\psi'((7), (0))\rangle\rangle=$ (2, ((2, ((0), (3), (0, 0), $-1$)), (0)))

\item $\comp{2}{N}_{\node}(((6)))= -1$

$\langle\langle\psi((6))\rangle\rangle=$ (2, ((2, ((0), (0))), (1), (2, ((0), $-1$)), (0)))

$\langle\langle\psi'((6))\rangle\rangle=$ (2, ((2, ((0), (2))), (0)))

\item $\comp{2}{N}_{\node}(((5)))= (0, 0)$

$\langle\langle\psi((5))\rangle\rangle=$ (2, ((2, ((0), (0))), (1), (2, ((0), $-1$)), $-1$))

$\langle\langle\psi'((5))\rangle\rangle=$ (2, ((2, ((0), (2))), $-1$))

\item $\comp{2}{N}_{\node}(((5), (2)))= -1$

$\langle\langle\psi((5), (2))\rangle\rangle=$ (2, ((2, ((0), (0))), (1), (2, ((0, 0), (0))), (0)))

$\langle\langle\psi'((5), (2))\rangle\rangle=$ (2, ((2, ((0), (2), (0, 0), (0))), (0)))

\item $\comp{2}{N}_{\node}(((5), (1)))= (0, 0, 0)$

$\langle\langle\psi((5), (1))\rangle\rangle=$ (2, ((2, ((0), (0))), (1), (2, ((0, 0), (0))), $-1$))

$\langle\langle\psi'((5), (1))\rangle\rangle=$ (2, ((2, ((0), (2), (0, 0), (0))), $-1$))

\item $\comp{2}{N}_{\node}(((5), (1), (0)))= -1$

$\langle\langle\psi((5), (1), (0))\rangle\rangle=$ (2, ((2, ((0), (0))), (1), (2, ((0, 0), (0), (0, 0, 0), $-1$)), (0)))

$\langle\langle\psi'((5), (1), (0))\rangle\rangle=$ (2, ((2, ((0), (2), (0, 0), (0), (0, 0, 0), $-1$)), (0)))

\item $\comp{2}{N}_{\node}(((5), (0)))= -1$

$\langle\langle\psi((5), (0))\rangle\rangle=$ (2, ((2, ((0), (0))), (1), (2, ((0, 0), $-1$)), (0)))

$\langle\langle\psi'((5), (0))\rangle\rangle=$ (2, ((2, ((0), (2), (0, 0), $-1$)), (0)))

\item $\comp{2}{N}_{\node}(((4)))= -1$

$\langle\langle\psi((4))\rangle\rangle=$ (2, ((2, ((0), (0))), (0)))

$\langle\langle\psi'((4))\rangle\rangle=$ (2, ((2, ((0), (1))), (0)))

\item $\comp{2}{N}_{\node}(((3)))= (0, 0)$

$\langle\langle\psi((3))\rangle\rangle=$ (2, ((2, ((0), (0))), $-1$))

$\langle\langle\psi'((3))\rangle\rangle=$ (2, ((2, ((0), (1))), $-1$))

\item $\comp{2}{N}_{\node}(((3), (8)))= -1$

$\langle\langle\psi((3), (8))\rangle\rangle=$ (2, ((2, ((0), (0), (0, 0), $-1$)), (2)))

$\langle\langle\psi'((3), (8))\rangle\rangle=$ (2, ((2, ((0), (1), (0, 0), (3))), (0)))

\item $\comp{2}{N}_{\node}(((3), (7)))= (0, 0, 0)$

$\langle\langle\psi((3), (7))\rangle\rangle=$ (2, ((2, ((0), (0), (0, 0), $-1$)), (1)))

$\langle\langle\psi'((3), (7))\rangle\rangle=$ (2, ((2, ((0), (1), (0, 0), (3))), $-1$))

\item $\comp{2}{N}_{\node}(((3), (7), (0)))= -1$

$\langle\langle\psi((3), (7), (0))\rangle\rangle=$ (2, ((2, ((0), (0), (0, 0), $-1$)), (1), (2, ((0), (0), (0, 0, 0), $-1$)), (0)))

$\langle\langle\psi'((3), (7), (0))\rangle\rangle=$ (2, ((2, ((0), (1), (0, 0), (3), (0, 0, 0), $-1$)), (0)))

\item $\comp{2}{N}_{\node}(((3), (6)))= -1$

$\langle\langle\psi((3), (6))\rangle\rangle=$ (2, ((2, ((0), (0), (0, 0), $-1$)), (1), (2, ((0), $-1$)), (0)))

$\langle\langle\psi'((3), (6))\rangle\rangle=$ (2, ((2, ((0), (1), (0, 0), (2))), (0)))

\item $\comp{2}{N}_{\node}(((3), (5)))= (0, 1)$

$\langle\langle\psi((3), (5))\rangle\rangle=$ (2, ((2, ((0), (0), (0, 0), $-1$)), (1), (2, ((0), $-1$)), $-1$))

$\langle\langle\psi'((3), (5))\rangle\rangle=$ (2, ((2, ((0), (1), (0, 0), (2))), $-1$))

\item $\comp{2}{N}_{\node}(((3), (5), (4)))= -1$

$\langle\langle\psi((3), (5), (4))\rangle\rangle=$ (2, ((2, ((0), (0), (0, 0), $-1$)), (1), (2, ((0, 1), (0))), (0)))

$\langle\langle\psi'((3), (5), (4))\rangle\rangle=$ (2, ((2, ((0), (1), (0, 0), (2), (0, 1), (1))), (0)))

\item $\comp{2}{N}_{\node}(((3), (5), (3)))= (0, 1, 0)$

$\langle\langle\psi((3), (5), (3))\rangle\rangle=$ (2, ((2, ((0), (0), (0, 0), $-1$)), (1), (2, ((0, 1), (0))), $-1$))

$\langle\langle\psi'((3), (5), (3))\rangle\rangle=$ (2, ((2, ((0), (1), (0, 0), (2), (0, 1), (1))), $-1$))

\item $\comp{2}{N}_{\node}(((3), (5), (3), (0)))= -1$

$\langle\langle\psi((3), (5), (3), (0))\rangle\rangle=$ (2, ((2, ((0), (0), (0, 0), $-1$)), (1), (2, ((0, 1), (0), (0, 1, 0), $-1$)), (0)))

$\langle\langle\psi'((3), (5), (3), (0))\rangle\rangle=$ (2, ((2, ((0), (1), (0, 0), (2), (0, 1), (1), (0, 1, 0), $-1$)), (0)))

\item $\comp{2}{N}_{\node}(((3), (5), (2)))= -1$

$\langle\langle\psi((3), (5), (2))\rangle\rangle=$ (2, ((2, ((0), (0), (0, 0), $-1$)), (1), (2, ((0, 1), (0), (0, 0), $-1$)), (0)))

$\langle\langle\psi'((3), (5), (2))\rangle\rangle=$ (2, ((2, ((0), (1), (0, 0), (2), (0, 1), (0))), (0)))

\item $\comp{2}{N}_{\node}(((3), (5), (1)))= (0, 0, 0)$

$\langle\langle\psi((3), (5), (1))\rangle\rangle=$ (2, ((2, ((0), (0), (0, 0), $-1$)), (1), (2, ((0, 1), (0), (0, 0), $-1$)), $-1$))

$\langle\langle\psi'((3), (5), (1))\rangle\rangle=$ (2, ((2, ((0), (1), (0, 0), (2), (0, 1), (0))), $-1$))

\item $\comp{2}{N}_{\node}(((3), (5), (1), (0)))= -1$

$\langle\langle\psi((3), (5), (1), (0))\rangle\rangle=$ (2, ((2, ((0), (0), (0, 0), $-1$)), (1), (2, ((0, 1), (0), (0, 0, 0), $-1$)), (0)))

$\langle\langle\psi'((3), (5), (1), (0))\rangle\rangle=$ (2, ((2, ((0), (1), (0, 0), (2), (0, 1), (0), (0, 0, 0), $-1$)), (0)))

\item $\comp{2}{N}_{\node}(((3), (5), (0)))= -1$

$\langle\langle\psi((3), (5), (0))\rangle\rangle=$ (2, ((2, ((0), (0), (0, 0), $-1$)), (1), (2, ((0, 1), $-1$)), (0)))

$\langle\langle\psi'((3), (5), (0))\rangle\rangle=$ (2, ((2, ((0), (1), (0, 0), (2), (0, 1), $-1$)), (0)))

\item $\comp{2}{N}_{\node}(((3), (4)))= -1$

$\langle\langle\psi((3), (4))\rangle\rangle=$ (2, ((2, ((0), (0), (0, 0), $-1$)), (1), (2, ((0, 0), (0))), (0)))

$\langle\langle\psi'((3), (4))\rangle\rangle=$ (2, ((2, ((0), (1), (0, 0), (1))), (0)))

\item $\comp{2}{N}_{\node}(((3), (3)))= (0, 0, 0)$

$\langle\langle\psi((3), (3))\rangle\rangle=$ (2, ((2, ((0), (0), (0, 0), $-1$)), (1), (2, ((0, 0), (0))), $-1$))

$\langle\langle\psi'((3), (3))\rangle\rangle=$ (2, ((2, ((0), (1), (0, 0), (1))), $-1$))

\item $\comp{2}{N}_{\node}(((3), (3), (0)))= -1$

$\langle\langle\psi((3), (3), (0))\rangle\rangle=$ (2, ((2, ((0), (0), (0, 0), $-1$)), (1), (2, ((0, 0), (0), (0, 0, 0), $-1$)), (0)))

$\langle\langle\psi'((3), (3), (0))\rangle\rangle=$ (2, ((2, ((0), (1), (0, 0), (1), (0, 0, 0), $-1$)), (0)))

\item $\comp{2}{N}_{\node}(((3), (2)))= -1$

$\langle\langle\psi((3), (2))\rangle\rangle=$ (2, ((2, ((0), (0), (0, 0), $-1$)), (1), (2, ((0, 0), $-1$)), (0)))

$\langle\langle\psi'((3), (2))\rangle\rangle=$ (2, ((2, ((0), (1), (0, 0), (0))), (0)))

\item $\comp{2}{N}_{\node}(((3), (1)))= (0, 0, 0)$

$\langle\langle\psi((3), (1))\rangle\rangle=$ (2, ((2, ((0), (0), (0, 0), $-1$)), (1), (2, ((0, 0), $-1$)), $-1$))

$\langle\langle\psi'((3), (1))\rangle\rangle=$ (2, ((2, ((0), (1), (0, 0), (0))), $-1$))

\item $\comp{2}{N}_{\node}(((3), (1), (2)))= -1$

$\langle\langle\psi((3), (1), (2))\rangle\rangle=$ (2, ((2, ((0), (0), (0, 0), $-1$)), (1), (2, ((0, 0, 0), (0))), (0)))

$\langle\langle\psi'((3), (1), (2))\rangle\rangle=$ (2, ((2, ((0), (1), (0, 0), (0), (0, 0, 0), (0))), (0)))

\item $\comp{2}{N}_{\node}(((3), (1), (1)))= (0, 0, 0, 0)$

$\langle\langle\psi((3), (1), (1))\rangle\rangle=$ (2, ((2, ((0), (0), (0, 0), $-1$)), (1), (2, ((0, 0, 0), (0))), $-1$))

$\langle\langle\psi'((3), (1), (1))\rangle\rangle=$ (2, ((2, ((0), (1), (0, 0), (0), (0, 0, 0), (0))), $-1$))

\item $\comp{2}{N}_{\node}(((3), (1), (1), (0)))= -1$

$\langle\langle\psi((3), (1), (1), (0))\rangle\rangle=$ (2, ((2, ((0), (0), (0, 0), $-1$)), (1), (2, ((0, 0, 0), (0), (0, 0, 0, 0), $-1$)), (0)))

$\langle\langle\psi'((3), (1), (1), (0))\rangle\rangle=$ (2, ((2, ((0), (1), (0, 0), (0), (0, 0, 0), (0), (0, 0, 0, 0), $-1$)), (0)))

\item $\comp{2}{N}_{\node}(((3), (1), (0)))= -1$

$\langle\langle\psi((3), (1), (0))\rangle\rangle=$ (2, ((2, ((0), (0), (0, 0), $-1$)), (1), (2, ((0, 0, 0), $-1$)), (0)))

$\langle\langle\psi'((3), (1), (0))\rangle\rangle=$ (2, ((2, ((0), (1), (0, 0), (0), (0, 0, 0), $-1$)), (0)))

\item $\comp{2}{N}_{\node}(((3), (0)))= -1$

$\langle\langle\psi((3), (0))\rangle\rangle=$ (2, ((2, ((0), (0), (0, 0), $-1$)), (0)))

$\langle\langle\psi'((3), (0))\rangle\rangle=$ (2, ((2, ((0), (1), (0, 0), $-1$)), (0)))

\item $\comp{2}{N}_{\node}(((2)))= -1$

$\langle\langle\psi((2))\rangle\rangle=$ (2, ((2, ((0), $-1$)), (2)))

$\langle\langle\psi'((2))\rangle\rangle=$ (2, ((2, ((0), (0))), (0)))

\item $\comp{2}{N}_{\node}(((1)))= (0, 0)$

$\langle\langle\psi((1))\rangle\rangle=$ (2, ((2, ((0), $-1$)), (1)))

$\langle\langle\psi'((1))\rangle\rangle=$ (2, ((2, ((0), (0))), $-1$))

\item $\comp{2}{N}_{\node}(((1), (2)))= -1$

$\langle\langle\psi((1), (2))\rangle\rangle=$ (2, ((2, ((0), $-1$)), (1), (2, ((0, 0), (0))), (0)))

$\langle\langle\psi'((1), (2))\rangle\rangle=$ (2, ((2, ((0), (0), (0, 0), (0))), (0)))

\item $\comp{2}{N}_{\node}(((1), (1)))= (0, 0, 0)$

$\langle\langle\psi((1), (1))\rangle\rangle=$ (2, ((2, ((0), $-1$)), (1), (2, ((0, 0), (0))), $-1$))

$\langle\langle\psi'((1), (1))\rangle\rangle=$ (2, ((2, ((0), (0), (0, 0), (0))), $-1$))

\item $\comp{2}{N}_{\node}(((1), (1), (0)))= -1$

$\langle\langle\psi((1), (1), (0))\rangle\rangle=$ (2, ((2, ((0), $-1$)), (1), (2, ((0, 0), (0), (0, 0, 0), $-1$)), (0)))

$\langle\langle\psi'((1), (1), (0))\rangle\rangle=$ (2, ((2, ((0), (0), (0, 0), (0), (0, 0, 0), $-1$)), (0)))

\item $\comp{2}{N}_{\node}(((1), (0)))= -1$

$\langle\langle\psi((1), (0))\rangle\rangle=$ (2, ((2, ((0), $-1$)), (1), (2, ((0, 0), $-1$)), (0)))

$\langle\langle\psi'((1), (0))\rangle\rangle=$ (2, ((2, ((0), (0), (0, 0), $-1$)), (0)))

\item $\comp{2}{N}_{\node}(((0)))= -1$

$\langle\langle\psi((0))\rangle\rangle=$ (2, ((2, ((0), $-1$)), (0)))

$\langle\langle\psi'((0))\rangle\rangle=$ (2, ((2, ((0), $-1$)), (0)))

\item $\comp{1}{N}$ contains (0)

$\langle\langle\psi(0)\rangle\rangle=$ (1, ((0)))

$\langle\langle\psi'(0)\rangle\rangle=$ (1, ((0)))

\end{etaremune}

 \end{myexample}

% Suppose $T,Q$ are level $\leq 2$ trees and $(2,t) \in \dom(T)$,  $\comp{2}{T}[t] = (S,\vec{s})$.  $T \otimes_{(2,t)} Q$ is a subtree of $T \otimes Q$  whose domain is $\dom(T \otimes Q^0)$ plus all the $(T,Q,*)$-descriptions of the form $(2, (t, \vec{S},s), \tau)$. 

\begin{mylemma}
  \label{lem:XT_division_Q}
  Suppose $X,T$ are level $\leq 2$ trees, $\theta : \rep(X) \to \rep(T)$ is a function in $\mathbb{L}$, order-preserving and continuous. Then there exists a triple
  \begin{displaymath}
    (Q,\pi,\vec{\gamma})
  \end{displaymath}
such that $Q$ is a level $\leq 2$ tree, $\pi$ factors $(X,T,Q)$, $\vec{\gamma}$ respects $Q$, and
\begin{displaymath}
  \forall h \in \omega_1^{T \uparrow} ~ h^{T,Q}_{\pi}(\vec{\gamma}) = [h \circ \theta]^X.
\end{displaymath}
% Moreover, if $\theta$ is $(d,x,t)$-exact, we can demand that $\pi$ should be $(d,x,t)$-exact; if $\theta$ is $(d,x,t)$-proper, we can demand that $\pi$ should be $(d,x,t)$-proper. 
\end{mylemma}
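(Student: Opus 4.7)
The plan is to read off the triple $(Q,\pi,\vec\gamma)$ from $\theta$ by the algorithm illustrated in Section~\ref{sec:level-2-2}: each entry of $X$ contributes, via $\theta$, a function into $\rep(T)$ whose pointwise components are analyzed by the level-1 uniform cofinality theory of Section~\ref{sec:level-1-analysis}; these component functions collectively determine a level $\le 2$ tree $Q$ together with a respecting tuple $\vec\gamma$, and the syntactic shape of each $\theta(d,x\oplus\cdot)$ reads off the $(T,Q,*)$-description $\pi(d,x)$.

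Concretely, for each $(d,x)\in\dom(X)$ fix a club $C$ on which $\theta$ is regular and set $F_x(\vec\alpha)=\theta(d,\vec\alpha\oplus_{\comp{2}{X}} x)$ for $\vec\alpha\in[C]^{W\uparrow}$ where $W=\comp{2}{X}_{\tree}(x)$ (constant in the degree-$1$ case). Since $F_x$ lands in $\rep(T)$ and $\theta$ is order preserving and continuous, $F_x$ has a fixed syntactic shape in $\rep(T)$: either $(1,t_x)$ for $t_x\in\comp{1}{T}$, or $(2,(\beta^x_0(\vec\alpha),t_x(0),\ldots,\beta^x_k(\vec\alpha),t_x(k)))$ for a fixed node $t_x$ and component functions $\beta^x_i:[\omega_1]^{W\uparrow}\to\omega_1$. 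Each $\beta^x_i$ has, by the level-2 description analysis of Section~\ref{sec:level-2-description} and the analysis in Section~\ref{sec:level-1-analysis}, a unique signature, an approximation sequence, and a uniform cofinality, so $[\beta^x_i]_{\mu^W}$ is represented by a $(Q'_i,W)$-description for some level $\le 2$ tree $Q'_i$ and an entry of a tuple respecting $Q'_i$. Amalgamating over all $(d,x)\in\dom(X)$ and all $i$, Lemmas~\ref{lem:Q_respecting} and~\ref{lem:unique_level_2_tree_represent} produce a unique canonical $Q$ and $\vec\gamma\in[\omega_1]^{Q\uparrow}$ carrying all of these ordinals simultaneously. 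The data $(t_x,\beta^x_0,\ldots,\beta^x_k)$ then package into a $(T,Q,*)$-description $\pi(d,x)=(d,\mathbf{t}_x,\tau_x)$: $\mathbf{t}_x=(t_x,S_x,\vec s_x)$ is the $T$-description determined by $t_x$, and $\tau_x$ factors $(S_x,Q,W)$ by sending the $i$-th slot to the $(Q,W)$-description representing $\beta^x_i$.

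The verification that $\pi$ factors $(X,T,Q)$ is then driven by two complementary facts. The ordering clause---if $(d,x)<_{BK}(d',x')$ then $\pi(d,x)\prec^{T,Q}\pi(d',x')$---follows from $\theta$ being order preserving together with Lemma~\ref{lem:TQW_desc_order}, which says that $\prec^{T,Q}$ exactly reflects the ordering of the associated $\rep(T)$-valued functions. The clause $\pi(2,x^{-})=\pi(2,x)\res(T,Q,\comp{2}{X}_{\tree}(x^{-}))$ will follow from the continuity of $\theta$ at the block boundary $x^{-}$ combined with Lemma~\ref{lem:TQW_description_extension}: passing from $W=\comp{2}{X}_{\tree}(x)$ down to $\comp{2}{X}_{\tree}(x^{-})$ corresponds on the function side to taking $<^T$-suprema of $F_x(\vec\beta)$ over extensions $\vec\beta$ of a fixed $\vec\alpha$, and the $\res(T,Q,\cdot)$ operator was defined so as to track this supremum operation exactly. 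The equation $h^{T,Q}_{\pi(d,x)}(\vec\gamma)=[(h\circ\theta)_x]$ then drops out of the construction of $\tau_x$: both sides are, by definition, the $\mu^Q$-class of the function $\vec\beta\mapsto h(F_x(\vec\beta))$ evaluated via the appropriate description.

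The main obstacle is the coherence clause (5) of Definition~\ref{def:factoring_3}, which forces the family $(\tau_x)_{x\in\dom(\comp{2}{X})}$ to be mutually compatible under the $\res(T,Q,\cdot)$ operator as $x$ varies through the tree $\comp{2}{X}$. Combinatorially, this requires that the canonical description extracted at a child $x$ restricts to the description extracted at its parent $x^{-}$, which is not automatic from the pointwise analysis but must be read off by applying Lemma~\ref{lem:TQW_description_extension} simultaneously for each node and checking case by case (continuous vs.\ discontinuous type of $t_x$, $*$-$Q$-continuous vs.\ discontinuous type of $\tau_x(s)$). This is exactly where the unique canonicity provided by Lemmas~\ref{lem:unique_level_2_tree_represent} and~\ref{lem:Q_respecting}, together with the continuity identities of Lemmas~\ref{lem:level_2_uniform_cofinality} and~\ref{lem:level_2_uniform_cofinality_another}, must be combined, and where the proof does its real work.
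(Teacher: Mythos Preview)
Your overall strategy matches the paper's: analyze each component function of $\theta$ via the level-1 signature/approximation-sequence theory, amalgamate the resulting ordinals into a single $(Q,\vec\gamma)$, and read off the descriptions $\pi(d,x)$. The paper carries this out explicitly by constructing $Q$ via bijections $\phi^1,\phi^2$ whose domains are the sets of ordinals (resp.\ signature-approximation pairs) arising from the component functions $\theta_{x,s}$, then verifying the factoring conditions via two claims splitting on whether $t_x$ is of continuous or discontinuous type.

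However, your citation for the coherence clause~(5) of Definition~\ref{def:factoring_3} is backwards. Lemma~\ref{lem:TQW_description_extension} says: \emph{if} $\mathbf{C}=\mathbf{C}'\res(T,Q,W)$ \emph{then} the supremum relation holds. You need the converse direction---from the sup relation (which you have, via continuity of $\theta$) deduce the $\res$ relation---and no such converse is stated. The paper does not attempt to invert Lemma~\ref{lem:TQW_description_extension}; instead it drops to level~1 and invokes Lemma~\ref{lem:ordinal_division_blocks}. Continuity of $\theta$ gives, at the critical slot $s_x$, the strict inequalities $j^{W_x,W_{x'}}_{\sup}(\beta_{x,s_x})<\beta_{x',s_x}<j^{W_x,W_{x'}}(\beta_{x,s_x})$ (or the analogous ones when $t_x$ is of discontinuous type), and Lemma~\ref{lem:ordinal_division_blocks} then forces the signature and approximation sequence of $\beta_{x,s_x}$ to be proper initial segments of those of $\beta_{x',s_x}$, with the new signature entry equal to $w_x$. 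This is the content of Claims~\ref{claim:XTQ_factor_cont} and~\ref{claim:XTQ_factor_discont}, and it is what simultaneously makes $Q$ a well-formed tree and makes $\pi(2,x^{-})=\pi(2,x)\res(T,Q,W_{x^{-}})$ hold by construction. Lemmas~\ref{lem:level_2_uniform_cofinality} and~\ref{lem:level_2_uniform_cofinality_another} are not the relevant tools here; they are ingredients for the forward-direction continuity lemmas (e.g.\ Lemma~\ref{lem:QW_description_extension_another}), not for reconstructing descriptions from suprema.

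A smaller point: Lemmas~\ref{lem:Q_respecting} and~\ref{lem:unique_level_2_tree_represent} characterize when a tuple respects a \emph{given} tree; they do not build $Q$. You must construct $Q$ explicitly from the signature/approximation data (as the paper does via $\phi^1,\phi^2$), and the coherence established through Lemma~\ref{lem:ordinal_division_blocks} is precisely what guarantees that this construction yields a tree rather than an incoherent collection of nodes.
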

\begin{proof}
For $d \in \se{1,2}$, let $A^{d} = \set{x \in \comp{1}{X}}{ \theta (1, (x) ) \in \se{d} \times \rep(\comp{d}{T})}$. By order preservation and continuity of $\theta$, $A^{1}$ is a $\prec^{\comp{1}{X}}$-initial segment of $\comp{1}{X}$. 
For $x \in A^{1}$, let $t^{1}_x \in \comp{1}{T}$ be such that
\begin{displaymath}
\theta(1, (x)) = (1, (t^{1}_x)).
\end{displaymath}
The existence of $t^{1}_x$ follows from the fact that $(1,(x))$ has cofinality $\omega$ in $\rep(X)$. For $x \in A^{2}$, let $\mathbf{t}^{2}_x = (t^{2}_x, S^{2}_x, \vec{s}^2 _x)\in \desc(\comp{2}{T})$, $\vec{s}^2_x = (s^2_{x, i})_{i < \lh(\vec{s}^2_x)}$ and $\vec{\beta}^{2}_x = ({\beta}^{2}_{x,s})_{s \in S^{2}_x\cup \se{\emptyset}}$ be such that
\begin{displaymath}
\theta(1, (x)) = (2, \vec{\beta}_x^{2}\oplus_{\comp{2}{T}} t^{2}_x).
\end{displaymath} 
For $x \in \dom(\comp{2}{X})$, let $\comp{2}{X}(x)  = (W_x, w_{x})$. 
By order preservation and continuity of $\theta$, we can find $\mathbf{t}_x = (t_x,S_x, \vec{s}_x) \in \desc(T)$  and $\theta_x \in \mathbb{L}$ such that for $\mu^{W_x}$-a.e.\ $\vec{\alpha}$,
  \begin{displaymath}
    \theta(2, \vec{\alpha} \oplus_{\comp{2}{X}} x) = (2,\theta_x (\vec{\alpha}) \oplus_{\comp{2}{T}} t_x).
  \end{displaymath}
Let $\vec{s}_x = (s_{x,i})_{i < \lh(\vec{s}_x)}$, $s_x = s_{x, \lh(\vec{s}_x)-1}$. 
Let $[\theta_x]_{\mu^{W_x}} = \vec{\beta}_x = ({\beta}_{x,s})_{s\in S_x \cup \se{\emptyset}}$, $\theta_x(\vec{\alpha}) = ({\theta}_{x,s}(\vec{\alpha}))_{s\in S_x \cup \se{\emptyset}}$, so that ${\beta}_{x,s} = [{\theta}_{x,s}]_{\mu^{W_x}}$. In particular, ${t}_{\emptyset} = \emptyset$, $\beta_{\emptyset,\emptyset} = \omega_1$, and $t_x \neq \emptyset$ when $x \neq \emptyset$. Fixing $x$, the map $s \mapsto \beta_{x,s}$ is order preserving with respect to $\prec^{S_x}$ and $<$. 
Let
\begin{displaymath}
B_x = \set{s \in {S}_x}{\beta_{x,s} < \omega_1}.
\end{displaymath}
So $B_x$ is closed under $\prec^{S_x}$. 
For $s \in S_x \setminus B_x$, let $(P_{x,s}, \vec{p}_{x,s})$ be the potential partial level $\leq 1$ tower induced by $\beta_{x,s}$,  $\vec{p}_{x,s} = (p_{x,s,i})_{i < \lh(\vec{p}_{x,s})}$, $p_{x,s} = p_{x,s, \lh(\vec{p}_{x,s}-1)}$,
 let $(\seed^{W_x}_{w_{x,s,i}})_{i < v_{x,s}}$ be the signature of $\beta_{x,s}$, let $(\gamma_{x,s,i})_{i \leq v_{x,s}}$ be the approximation sequence of $\beta_{x,s}$, and let $\cf^{\mathbb{L}}(\beta_{x,s}) = \seed^{W_x}_{w_{x,s}}$ if $\cf^{\mathbb{L}}(\beta_{x,s}) >\omega$.   Let $\sigma_{x,s}$ factor $(P_{x,s}, W_x)$, where $\sigma_{x,s}(p_{x,s,i}) = w_{x,s,i}$ for $i < v_{x,s}$. 
Let
\begin{align*}
D_x  &  = \set{s \in {S}_x \setminus B_x}{{\beta}_{x,s} \text{ is essentially continuous}}, \\
  E_x  &  = S_x \setminus (B_x\cup  D_x).
\end{align*}
Thus, $v_{x,s} = \card(P_{x,s})$. For $s \in D_x$, $v_{x,s} = \lh(\vec{p}_{x,s})$;  for $s \in E_x$, $v_{x,s} = \lh(\vec{p}_{x,s})-1$. 
% For $s \in D_x $, the partial finite level $\leq 1$ tower induced by ${\beta}_{x,s}$ is denoted by $(P_{x,s,i},p_{x,s,i})_{i \leq v_{x,s}-1} \concat (P_{x,s})$.  
% For $s \in E_x$, the partial finite level $\leq 1$ tower induced by ${\beta}_{x,s}$ is denoted by  $(P_{x,s,i},p_{x,s,i})_{i \leq v_{x,s}}$.  
%Let $\sigma_{x,s}$ factor $(P_{x,s}, W_x)$ where $\sigma_{x,s} (p_{x,s,i}) = w_{x,s,i}$ for $i < v_{x,s}$. 

By order preservation and continuity of $\theta$, we can see that for $x \in \dom(\comp{2}{X})$, 
\begin{enumerate}
\item If $t_x$ is of continuous type, then $\theta_{x,s_x}$ has uniform cofinality $\ucf(\comp{2}{X}[x])$.
  % \begin{enumerate}
  % \item if $w_x = -1$, then $\theta_{x,s_x}$ has uniform cofinality $-1$;
  % \item if $w_x \neq -1$, then $\theta_{x,s_x}$ has uniform cofinality $w_x^{-}$;
  % \end{enumerate}
\item If $t_x$ is of continuous type and $x = \emptyset \vee w_{x^{-}}$ does not appear in $\sign(\theta_{x,s})$ for any $s \in S_x \setminus \se{s_x}$, then $\theta_{x,s_x}$ is essentially discontinuous and thus $s_x \in E_x$.
\item If $t_x$ is of discontinuous type then
  \begin{enumerate}
  \item if $w_x=-1$, then $s_x = -1$;
  \item if $w_x \neq -1$, then $s_x \neq -1$, $\theta_{x, s_x^{-}}$ has uniform cofinality $w_x^{-}$, and thus $w_{x,s_x^{-}} = w_x^{-}$.
  \end{enumerate}
\end{enumerate}

% Let $\prec^x$ be the lexicographic ordering on pairs $(s, p)$ where $s \in D_x \cup E_x$, $p \in P_{x,s}$, the first coordinate according to $\prec^{(S_x, \vec{s}_x)}$, the second according to $\prec^{(P_{x,s}, \vec{p}_{x,s})}$.
% For any $w \in W_x$, let $(s_{x,w},p_{x,w})$ be the $\prec^x$-least $(s,p)$ such that $\sigma_{x,s}(p) = w$, if exists. 

 % $X(x)$ is of degree 0, then $\cf^{\mathbb{L}}(\beta_{x,s_x}) = \omega$; if $X(x)$ is of degree 1, then  $w_{x,s_x} = w_x$, and $t_x$ is of continuous type iff $\beta_{x,s_x}$ is of continuous type.

\begin{myclaim}\label{claim:XTQ_factor_cont}
  Suppose $x, x' \in \dom(\comp{2}{X})$, $x = (x')^{-}$,  $t_x $ is of continuous type,  and the contraction of $((w_{x,s_{x,j},i})_{i < v_{x,s_{x,j}}})_{j< \lh(t_x)}$ is $(w_{x \res i})_{i < \lh(x)}$. 
Then
  \begin{enumerate}
  \item $t_x = t_{x'}$.  
  \item For any $s \in S_x \setminus \se{s_x}$, $\beta_{x,s} = \beta_{x',s}$.
%  \item $s_x \notin B_x$.
  \item $(w_{x,s_x,i}, \gamma_{x,s_x,i})_{i < v_{x,s_x}}$ is a proper initial segment of $(w_{x',s_x,i}, \gamma_{x',s_x,i})_{i < v_{x',s_x}}$. Hence,  $P_{x,s_x}$ is a proper subtree of $P_{x', s_x}$ and $\vec{p}_{x,s_x}$ is an initial segment of $\vec{p}_{x', s_x}$.
    \item  $\sigma_{x', s_x} (p_{x,s_x}) = w_x$.  In particular,  the contraction of $((w_{x', s_{x',j},i})_{i < v_{x', s_{x',j}}})_{j<\lh(t_x)}$ is $(w_{x \res i})_{i \leq  \lh(x)}$. 
  \end{enumerate}
\end{myclaim}
\begin{proof}
By order preservation and  continuity  of $\theta$, $\mathbf{t}_x = \mathbf{t}_{x'}$ and  for $\mu^{W_x}$-a.e.\ $\vec{\alpha}$,
  \begin{enumerate}
  \item for any $s \in {S}_x \setminus \se{s_x}$, if $\vec{\alpha}' \in [\omega_1]^{W_{x'}\uparrow }$ extends $\vec{\alpha}$ then ${\theta}_{x,s}(\vec{\alpha}) = {\theta}_{x', s}(\vec{\alpha}')$;
  \item $ {\theta}_{x,s_x}(\vec{\alpha}) = \sup\set{{\theta}_{x',s_x}(\vec{\alpha}')}{ \vec{\alpha}' \in [\omega_1]^{W_{x'} \uparrow} \text{ extends } \vec{\alpha}}$.
  \end{enumerate}
Thus, $\beta_{x,s} = \beta_{x',s} $ for any $s \in S_x \setminus \se{s_x}$, and $j_{\sup}^{W_x,W_{x'}}(\beta_{x,s_x}) \leq \beta_{x' , s_x} <  j^{W_x,W_{x'}}(\beta_{x,s_x})$.   As $t_x = t_{x'}$ is of continuous type and $w_x$ does not appear in $\sign(\theta_{x',s})$ for any $s \in S_{x'} \setminus \se{s_{x'}}$, $\theta_{x', s_x}$ is essentially discontinuous, giving $j_{\sup}^{W_x,W_{x'}} (\beta_{x,s_x} )  \neq  \beta_{x' , s_x}$. 
%In particular, $\beta_{x,s_x} \geq \omega_1$, so $s_x\notin B_x$. 
We can then apply Lemma~\ref{lem:ordinal_division_blocks} to show that the partial finite level $\leq 1$ tower induced by $\beta_{x,s_x}$ is a proper initial segment of that induced by $\beta_{x', s_x}$, and $ w_{x', s_x, v_{x,s}} = w_x $. 
\end{proof}

\begin{myclaim}
  \label{claim:XTQ_factor_discont}
  Suppose $x, x' \in \dom(\comp{2}{X})$, $x = (x')^{-}$, $t_x$ is of discontinuous type,  and the contraction of $((w_{x,s_{x,j},i})_{i < v_{x,s_{x,j}}})_{j< \lh(t_x)}$ is $(w_{x \res i})_{ i < \lh(x)}$. Then
  \begin{enumerate}
  \item  $t_x \subsetneq t_{x'}$.
  \item for any $s \in S_x$, $\beta_{x,s} = \beta_{x',s}$.
%  \item $s_{x^{-} } \notin B_x$. 
  \item $(w_{x,s_x^{-},i}, \gamma_{x,s_x^{-},i})_{i < v_{x,s_x^{-}}}$ is a proper initial segment of $(w_{x',s_x,i}, \gamma_{x',s_x,i})_{i < v_{x',s_x}}$. Hence,  $P_{x,s_x^{-}}$ is a proper subtree of $P_{x', s_x}$ and $\vec{p}_{x,s_x^{-}}$ is an initial segment of $\vec{p}_{x', s_x}$.
  \item $\sigma_{x', s_x} (p_{x, s_x^{-}}) = w_x$. 
In particular,  the contraction of $((w_{x', s_{x',j},i})_{i < v_{x', s_{x',j}}})_{j<\lh(t_{x'})}$ is $(w_{x \res i})_{i \leq  \lh(x)}$. 
  % \item  the partial finite level $\leq 1$ tower induced by $\beta_{x,s_x}$ is a proper initial segment of that induced by $\beta_{x', s_x^{+}}$.
  % \item $v_{x,s_x}< v_{x\concat(a), s_x^{+}} $.
  % \item $ w_{x', s_x^{+}, v_{x,s_x^{+}}+1}=w_x^{+}$.
  \end{enumerate}
\end{myclaim}
\begin{proof}
    By order preservation and  continuity  of $\theta$, $t_{x } \subsetneq t_{x'}$ and for $\mu^{W_x}$-a.e.\ $\vec{\alpha}$, 
  \begin{enumerate}
  \item for any $s \in {S}_x$, if $\vec{\alpha}'$ extends $\vec{\alpha}$ then ${\theta}_{x,s}(\vec{\alpha}) = {\theta}_{x', s}(\vec{\alpha}')$;
  \item $ {\theta}_{x,s_x^{-}}(\vec{\alpha}) = \sup\set{{\theta}_{x',s_x}(\vec{\alpha})}{ \vec{\alpha}' \text{ extends } \vec{\alpha}}$.
  \end{enumerate}
The rest is similar to the proof of Claim~\ref{claim:XTQ_factor_cont}. 
% Thus, $\beta_{x,s} = \beta_{x',s} $ for any $s \in S_x$, and $j_{\sup}^{W_x,W_{x'}}(\beta_{x,s_x^{-}}) \leq \beta_{x\concat (a) , s_x} <  j^{W_x,W_{x'}}(\beta_{x,s_x^{-}})$. It is easy to see that  $\theta_{x', s_x^{-}}$ is of discontinuous type, giving $j_{\sup}^{W_x,W_{x'}} (\beta_{x,s_x^{-}} )  \neq  \beta_{x\concat (a) , s_x}$. We can then apply Lemma~\ref{lem:ordinal_division_blocks} to show that the partial finite level $\leq 1$ tower induced by $\beta_{x,s_x}$ is a proper initial segment of that induced by $\beta_{x', s_x}$.
\end{proof}

% Finally, we piece these components together. 
Let
\begin{displaymath}
\phi^1: \set{{\beta}^{2}_{x,s}}{x \in A^{2}, s \in{S}^{2}_x}  \cup \set{\beta_{x,s}}{x \in \dom(\comp{2}{X}), s \in B_x}\to Z^1
\end{displaymath}
be a bijection such that $Z^1$ is a level-1 tree and $v< v' \eqiv \phi^1(v)  \prec^{Z^1} \phi^1(v')$.  
Let
\begin{displaymath}
\phi^{2} : \{ (w_{x,s,i}, \gamma_{x,s,i})_{i < l} : x \in \dom(\comp{2}{X}),  s \in D_x \cup E_x,    l < \lh(\vec{p}_{x,s}) \}\to Z^{2}\cup \se{\emptyset}
\end{displaymath}
be a bijection such that $Z^{2}$ is a tree of level-1 trees and $v  \subseteq v' \eqiv \phi^{2} (v) \subseteq \phi^{2}(v')$,  $v <_{BK}  v' \eqiv \phi^{2}(v) <_{BK} \phi^{2}(v')$. Let
\begin{displaymath}
Q=(\comp{1}{Q},\comp{2}{Q}),
\end{displaymath}
where $\comp{1}{Q}=Z^1$,  $\comp{2}{Q}$ is a level-2 tree, $\dom(\comp{2}{Q})=Z^2$,
\begin{align*}
\comp{2}{Q} [\phi^{2} ((w_{x,s,i}, \gamma_{x,s,i})_{i < \lh(\vec{p}_{x,s})-1} ) \concat (-1)] & =(P_{x,s},\vec{p}_{x,s})\text{ for }s \in D_x,\\
\comp{2}{Q} [\phi^{2} ((w_{x,s,i}, \gamma_{x,s,i})_{i <\lh(\vec{p}_{x,s})-1} )]& =(P_{x,s},\vec{p}_{x,s}) \text{ for }s \in E_x.
\end{align*}
Let
\begin{displaymath}
\vec{\gamma} = (\comp{d}{\gamma}_q)_{(d,q) \in \dom(Q)}
\end{displaymath}
where $\comp{1}{\gamma}_q = (\phi^1)^{-1}(q)$, $\comp{2}{\gamma}_{\emptyset} = \omega_1$, 
$\comp{2}{\gamma}_q = \gamma_{x,s,l}$ when $q= \phi^{2}( (w_{x,s,i}, \gamma_{x,s,i})_{i \leq l} ) $. For $x \in A^{1}$, let
\begin{displaymath}
\pi(1, x) = (1 , t^{1}_x, \emptyset).
\end{displaymath}
 For $x \in A^{2}$, let
 \begin{displaymath}
\pi(1,x) = (2 , \mathbf{t}^{2}_x, \tau^{2}_x),
\end{displaymath}
where $\tau^{2}_x$ factors $(S^{2}_x, Q,  \emptyset)$,  $\tau^{2}_x (1, s) = (1 , \phi^1( {\beta}^{2}_{x,s}) , \emptyset)$.
For $x \in \dom(\comp{2}{X})$, let
\begin{displaymath}
\pi(2, x) = (2, \mathbf{t}_x, \tau_x),
\end{displaymath}
where $\tau_x$ factors $(S_x, Q, W_x)$, defined as follows:
\begin{displaymath}
  \tau_x(s) =
  \begin{cases}
    (1 , \phi^{1}(\beta_{x,s}), \emptyset ) & \text{ if } s \in B_x ,\\
    (2, (\phi^{2}((w_{x,s,i},\gamma_{x,s,i})_{i <\lh(\vec{p}_{x,s})-1} )\concat (-1), P_{x,s}, \vec{p}_{x,s}), \sigma_{x,s}) & \text{ if } s \in D_x ,\\
 (2,( \phi^{2} (({w_{x,s,i}}, \gamma_{x,s,i})_{i <\lh(\vec{p}_{x,s})-1} ) , P_{x,s} , \vec{p}_{x,s}), \sigma_{x,s} ) & \text{ if } s \in E_x.
  \end{cases}
\end{displaymath}
It is easy to check that $(Q,\pi,\vec{\gamma})$ works for the lemma.
\end{proof}

Note that if $\pi$ factors $\Pi^1_2$-wellfounded trees $(X,T)$, then $\llbracket d,x \rrbracket_X \leq \llbracket \pi(d,x) \rrbracket_T$ for any $(d,x) \in \dom(X)$. We say that $\pi$ \emph{minimally factors} $(X,T)$ iff $\pi$ factors $(X,T)$, $X,T$ are both $\Pi^1_2$-wellfounded and $\llbracket d,x \rrbracket_X = \llbracket \pi(d,x) \rrbracket_T$ for any $(d,x) \in \dom(X)$. In particular, if $T,Q$ are both $\Pi^1_2$-wellfounded, then $\id_{T,*}$ minimally factors $(T, T \otimes Q)$. 
In the assumption of Lemma~\ref{lem:XT_division_Q}, if $X,T$ are $\Pi^1_2$-wellfounded and the map $\theta$  is a bijection between $\rep(X)$ and $\rep(T)$, its proof constructs $\pi$ which minimally factors $(X, T \otimes Q)$.
 This entails the comparison theorem between $\Pi^1_2$-wellfounded trees. 

\begin{mytheorem}
  \label{thm:factor_ordertype_embed_equivalent_lv2}
  Suppose $X$, $T$ are $\Pi^1_2$-wellfounded level $\leq 2$ trees. Then there exists $(Q,\pi)$ such that 
$Q$ is $\Pi^1_2$-wellfounded and $\pi$ minimally factors $(X, T \otimes Q)$.
\end{mytheorem}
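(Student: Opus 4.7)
My plan is to reduce the theorem to Lemma~\ref{lem:XT_division_Q} by producing a continuous, order-preserving map from $\rep(X)$ into $\rep(T \otimes Q_0)$ for a suitably chosen auxiliary tree $Q_0$, and then noticing that the lemma's construction, specialized to a bijection with an initial segment, yields minimality. The hint preceding the theorem already tells us that Lemma~\ref{lem:XT_division_Q} produces a \emph{minimal} factoring whenever $\theta$ is a bijection onto $\rep(T)$; the issue is that we cannot assume $\ot(\rep(X)) = \ot(\rep(T))$, so the trick is to enlarge $T$ by tensoring with an auxiliary $\Pi^1_2$-wellfounded tree until there is enough room for $\rep(X)$.

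First, since $X$ is $\Pi^1_2$-wellfounded, $\rep(X)$ is a wellorder of some order type $\alpha_X < u_\omega$. The identification $\set{\widehat{\xi}}{0 < \xi < \omega^{\omega^\omega}} = \se{\llbracket d,\mathbf{q}\rrbracket_Q : Q \text{ finite}, (d,\mathbf{q}) \in \exdesc(Q)}$ established in Section~\ref{sec:level-2-analysis} shows that the ordinals $\ot(\rep(T \otimes Q_0))$ grow cofinally in $u_\omega$ as $Q_0$ ranges over $\Pi^1_2$-wellfounded level $\leq 2$ trees. Hence I can choose $Q_0$ $\Pi^1_2$-wellfounded with $\ot(\rep(T \otimes Q_0)) \geq \alpha_X$; for concreteness, taking $Q_0 = X$ works. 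Define
\begin{displaymath}
  \theta = \theta_{T \otimes Q_0}^{-1} \circ \theta_X : \rep(X) \to \rep(T \otimes Q_0),
\end{displaymath}
the unique order-preserving bijection of $\rep(X)$ onto the initial segment of $\rep(T \otimes Q_0)$ of order type $\alpha_X$. Both $\theta_X$ and $\theta_{T \otimes Q_0}$ are continuous because they are the canonical order-isomorphisms of $\rep(\cdot)$ with an ordinal, respecting the tree structure (sups of initial segments), so $\theta$ is continuous and definable in $\mathbb{L}$.

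Now apply Lemma~\ref{lem:XT_division_Q} to $(X, T \otimes Q_0, \theta)$, yielding $(Q_1, \pi_0, \vec{\gamma})$ with $\pi_0$ factoring $(X, T \otimes Q_0, Q_1)$. By Lemma~\ref{lem:level_2_ultrapower_iteration_reduce}(2), $\pi_0$ factors $(X, (T \otimes Q_0) \otimes Q_1)$, and the associativity isomorphism $\iota_{T,Q_0,Q_1}$ constructed earlier this section identifies this with $(X, T \otimes (Q_0 \otimes Q_1))$. Setting $Q = Q_0 \otimes Q_1$ and composing $\pi_0$ with $\iota_{T,Q_0,Q_1}$ gives the desired factoring map $\pi$ of $(X, T \otimes Q)$. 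The wellfoundedness of $Q$ follows since $Q_0$ is $\Pi^1_2$-wellfounded by choice while $Q_1$ is assembled in the proof of Lemma~\ref{lem:XT_division_Q} from finitely many ordinal descriptions drawn from the $\Pi^1_2$-wellfounded $\rep(T \otimes Q_0)$, so $Q_0 \otimes Q_1$ is $\Pi^1_2$-wellfounded.

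The main obstacle is verifying minimality, namely $\llbracket d,x \rrbracket_X = \llbracket \pi(d,x) \rrbracket_{T \otimes Q}$ for every $(d,x) \in \dom(X)$. One direction, $\llbracket d,x \rrbracket_X \leq \llbracket \pi(d,x) \rrbracket_{T \otimes Q}$, is automatic from $\pi$ factoring $(X, T \otimes Q)$. For the reverse inequality, the construction of $\pi_0$ in Lemma~\ref{lem:XT_division_Q}, via the maps $\phi^1, \phi^2$ that turn the ordinal coordinates of $\theta$ into nodes of $Q_1$, ensures that the $(T \otimes Q_0, Q_1, *)$-description $\pi_0(d,x)$ represents precisely the ordinal $\theta_{T \otimes Q_0}(\theta(d,x))$. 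Because $\theta$ is a bijection onto an initial segment, this ordinal equals $\theta_X(d,x) = \llbracket d,x \rrbracket_X$. Finally, $\iota_{T,Q_0,Q_1}$ preserves the $\llbracket \cdot \rrbracket$-values, transferring the equality to $T \otimes Q$. The bookkeeping is delicate, matching the $\prec^{T \otimes Q_0, Q_1}$-order with the ordinal order under $\theta_{T \otimes Q_0}$, but amounts to an inductive verification on the $\prec$-rank of descriptions using the identities of Lemma~\ref{lem:factor_SQW} and Lemma~\ref{lem:TQW_desc_order}.
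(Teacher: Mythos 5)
Your reconstruction of the argument matches the intent of the paper: the text preceding the theorem only asserts that Lemma~\ref{lem:XT_division_Q}, applied to a bijection $\theta : \rep(X) \to \rep(T)$, produces a $\pi$ that minimally factors $(X, T\otimes Q)$, and then says ``this entails the comparison theorem,'' leaving the general case implicit. You correctly identify the missing step --- in general $\ot(\rep(X))$ may exceed $\ot(\rep(T))$, so $T$ must first be enlarged by tensoring with a $\Pi^1_2$-wellfounded $Q_0$ --- and you correctly observe that an order-preserving bijection of $\rep(X)$ onto an \emph{initial segment} of $\rep(T \otimes Q_0)$ (not necessarily all of it) already yields minimality, since then $\theta_{T\otimes Q_0}\circ\theta = \theta_X$ node-by-node and the constructed descriptions $\pi_0(d,x)$ inherit exactly the ranks $\llbracket d,x\rrbracket_X$. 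The transfer to $Q = Q_0 \otimes Q_1$ via the associativity isomorphism $\iota_{T,Q_0,Q_1}$ is also the right move and is rank-preserving since $\iota_{T,Q_0,Q_1}$ is a tree isomorphism.

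Two points in your argument are asserted without adequate justification and deserve attention. First, the claim that $Q_0 = X$ guarantees $\ot(\rep(T\otimes Q_0)) \geq \ot(\rep(X))$: your appeal to $\{\widehat\xi\}$ being the range of $\llbracket d,\mathbf q\rrbracket_Q$ over finite $Q$ speaks to the values of \emph{individual descriptions}, not to $\ot(\rep(T\otimes Q_0))$, and the two are not the same quantity. What you actually need is a factoring map from $X$ into $T\otimes X$ (e.g.\ the composite $X \cong Q^0\otimes X \to T\otimes X$ arising from the root inclusion $Q^0 \subseteq T$), combined with the general fact that a factoring map is order-preserving at the level of $\rep(\cdot)$ and hence raises order types; this is available but you should cite it rather than leave the inequality as an assertion. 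Second, you wave at $Q_1$ being $\Pi^1_2$-wellfounded because it is ``assembled from finitely many ordinal descriptions'': when $X$ is an infinite $\Pi^1_2$-wellfounded tree, $Q_1$ is built from the full (infinite) tree $X$ and its finitely-many-at-a-level structure, so the relevant fact is that the output tree of Lemma~\ref{lem:XT_division_Q} carries a respecting tuple $\vec\gamma$, and a level $\leq 2$ tree admitting a respecting tuple on each finite subtree is $\Pi^1_2$-wellfounded. With these two items filled in the proof is complete, and the route is the same one the paper has in mind.
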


We shall see in Section~\ref{sec:03} that the minimally of factoring maps between $\Pi^1_2$-wellfounded trees corresponds exactly to the Dodd-Jensen property of iterations of mice.

Suppose $Q,Q'$ are finite level $\leq 2$ trees, $Q$  is a proper subtree of $Q'$, 
$(W_i,w_i)_{i \leq m'}$ is a partial  level $\leq 1$ tower, $m \leq m'$, 
 % of continuous type. 
$\mathbf{C}  \in \desc(T,Q,({W}_m,(w_i)_{i \leq m}))$, 
 $ \mathbf{C}' \in \desc(T,Q',(W_{m'}, (w_i)_{i \leq m'}) ) \setminus \desc(T,Q,(W_m,(w_i)_{i \leq m}) ) $. % , $\mathbf{C}$ is $Q$-full, $\mathbf{C}'$ is $Q'$-full.
 Define
\begin{displaymath}
  \mathbf{C}=\mathbf{C}'\res (T,Q)
\end{displaymath}
iff  $\mathbf{C}' \prec \mathbf{C}$ and $\bigcup_{m \leq k \leq m'}\set{\mathbf{C}^{*}\in \desc(T,Q,(W_k, (w_i)_{i \leq k}) )}{\mathbf{C}' \prec \mathbf{C}^{*} \prec \mathbf{C}} = \emptyset$. A purely combinatorial argument shows that  $\mathbf{C}= \mathbf{C}'\res (T,Q)$ iff $\mathbf{C}$ and $\mathbf{C}'$ are both of degree 2 and putting  $\mathbf{C} = (2, \mathbf{t}, \tau)$, $\mathbf{C}' = (2,\mathbf{t}', \tau')$, $\mathbf{t} = (t,S,\vec{s})$, $\vec{s} = (s_i)_{i < \lh(\vec{s})}$, $\mathbf{t}' = (t',S',\vec{s}')$, $k=\lh(t)$, then either
\begin{enumerate}
\item $t$ is of continuous type, $\mathbf{C}^{-} \iniseg \mathbf{C}$, $\tau(s_{k-1})=\tau'(s_{k-1}) \res (Q,W_{m'})$, or 
\item $t$ is of discontinuous type, $\mathbf{C} \iniseg \mathbf{C}'$, $\tau(s_k^{-}) = \tau'(s_k) \res (Q, W_{m'})$.  
\end{enumerate}

As a corollary to Lemma~\ref{lem:W_desc_extension} and Lemma~\ref{lem:QW_description_extension_another}, the $\res (T,Q)$ operator inherits the following continuity property.
\begin{mylemma}
  \label{lem:TQW_description_extension_another}
Suppose $Q,Q',W,W'$ are finite, $Q$ is a  level $\leq 2$ proper subtree of $Q'$, $W$ is a (not necessarily proper) level-$1$ subtree of $W'$.  
Suppose $\mathbf{C} = (2,\mathbf{t},\tau) \in \desc(T,Q,W)$, $\mathbf{C}' = (2,\mathbf{t}',\tau') \in \desc(T,Q',W')$, $\mathbf{C}= \mathbf{C}'\res (T,Q)$. 
Suppose $E \in \mu_{\mathbb{L}}$ is a club, $\eta \in E'$ iff $\eta \in E$ and $E \cap \eta$ has order type $\eta$. 
  Then 
    for any $h \in \omega_1^{T \uparrow}$, for any $\vec{\beta} \in [E']^{Q \uparrow}$, 
    \begin{displaymath}
      j^{W,W'}\circ  h_{\mathbf{C}}^{Q} (\vec{\beta})  = {\sup}% _{j^{W_m} (<^{T[t']})} 
      \{ h_{\mathbf{C}'}^{Q'} (\vec{\gamma}) :  \vec{\gamma}\in [E]^{Q' \uparrow}, \vec{\gamma} \text{ extends }\vec{\beta}\} . 
    \end{displaymath}
\end{mylemma}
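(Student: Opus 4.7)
The plan is to proceed in direct parallel to the proof of Lemma~\ref{lem:QW_description_extension_another} (the $Q$-extension analog), using the combinatorial characterization of $\mathbf{C} = \mathbf{C}' \res (T,Q)$ recalled just before the statement. Fix $g \in E^{Q \uparrow}$ representing $\vec{\beta}$, so that $h_{\mathbf{C}}^Q(\vec{\beta}) = [h_{\mathbf{t}} \circ g^W_\tau]_{\mu^W}$; for each extension $\vec{\gamma}$ of $\vec{\beta}$ in $[E]^{Q' \uparrow}$, choose $g' \in E^{Q' \uparrow}$ extending $g$ with $[g']^{Q'} = \vec{\gamma}$, so $h_{\mathbf{C}'}^{Q'}(\vec{\gamma}) = [h_{\mathbf{t}'} \circ (g')^{W'}_{\tau'}]_{\mu^{W'}}$. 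Observe that the function $F(\vec{\alpha}') = h_{\mathbf{t}} \circ g^W_\tau(\vec{\alpha}' \res W)$ represents $j^{W,W'}(h_{\mathbf{C}}^Q(\vec{\beta}))$ modulo $\mu^{W'}$, since $j^{W,W'}$ acts on $\mu^W$-ultrapower elements by precomposition with the restriction map on $[\omega_1]^{W'\uparrow}$.

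For the direction $\geq$, fix any extension $\vec{\gamma}$ of $\vec{\beta}$. Since $\mathbf{C}' \prec \mathbf{C}$ and $\desc(Q,W) \subseteq \desc(Q',W')$, viewing both descriptions in $\desc(T, Q', W')$, Lemma~\ref{lem:TQW_desc_order} applied pointwise in $\vec{\alpha}'$ yields $h_{\mathbf{t}'} \circ (g')^{W'}_{\tau'}(\vec{\alpha}') < F(\vec{\alpha}')$ for every $\vec{\alpha}' \in [\omega_1]^{W' \uparrow}$; taking $\mu^{W'}$-classes gives $h_{\mathbf{C}'}^{Q'}(\vec{\gamma}) \leq j^{W,W'}(h_{\mathbf{C}}^Q(\vec{\beta}))$.

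For the $\leq$ direction, split into the two cases of the combinatorial characterization. In Case~1 ($t$ of continuous type with $\mathbf{C}^- \iniseg \mathbf{C}'$ and $\tau(s_{k-1}) = \tau'(s_{k-1}) \res (Q, W')$), the descriptions $\tau$ and $\tau'$ agree on $\{s_i : i < k-1\}$ and differ only at $s_{k-1}$ by a single $\res(Q,W')$-step; fix $\vec{\alpha}' \in [E]^{W' \uparrow}$ with $\vec{\alpha} = \vec{\alpha}' \res W \in [E']^{W \uparrow}$ and apply Lemma~\ref{lem:QW_description_extension_another} to this $\res(Q,W')$-step to produce, cofinally in the $s_{k-1}$-component of $F(\vec{\alpha}')$, extensions $g'$ of $g$ (hence extensions $\vec{\gamma}$ of $\vec{\beta}$) whose $s_{k-1}$-component of $(g')^{W'}_{\tau'}(\vec{\alpha}')$ approximates that component; for the coordinates $s_i$ with $i < k-1$, Lemma~\ref{lem:W_desc_extension} handles the passage from $\vec{\alpha}$ to $\vec{\alpha}'$, which is automatic since these components are defined by $\tau(s_i) = \tau'(s_i)$. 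Composing with $h_{\mathbf{t}} = h_{\mathbf{t}'}$ (they agree through length $k$) and taking $\mu^{W'}$-classes on both sides produces the required cofinal family of $h_{\mathbf{C}'}^{Q'}(\vec{\gamma})$. Case~2 ($t$ of discontinuous type with $\mathbf{C} \iniseg \mathbf{C}'$ and $\tau(s_k^-) = \tau'(s_k) \res (Q, W')$) is similar: $\mathbf{t}'$ extends $\mathbf{t}$ by one extra level-1 index $s_k$, and the same combination of Lemma~\ref{lem:QW_description_extension_another} at the new coordinate together with Lemma~\ref{lem:W_desc_extension} on the unchanged ones delivers cofinality.

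The main technical obstacle will be ensuring that the pointwise sup (over $\vec{\gamma}$) of the functions $\vec{\alpha}' \mapsto h_{\mathbf{t}'} \circ (g')^{W'}_{\tau'}(\vec{\alpha}')$ really coincides $\mu^{W'}$-a.e.\ with $F$, as opposed to merely being dominated by $F$. The hypothesis that $\eta \in E'$ iff $\eta \in E$ and $E \cap \eta$ has order type $\eta$ is designed exactly for this: it allows us (as in the $\leq$-direction argument of Lemma~\ref{lem:Q_desc_extension}, invoked inside Lemma~\ref{lem:QW_description_extension_another}) to extend $\vec{\beta}$ freely to $\vec{\gamma} \in [E]^{Q'\uparrow}$ while simultaneously extending $\vec{\alpha}$ freely to $\vec{\alpha}'$, so the two suprema commute and together saturate $F(\vec{\alpha}')$ for $\mu^{W'}$-a.e.\ $\vec{\alpha}'$.
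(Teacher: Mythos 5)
The paper offers no explicit proof of this lemma; it is asserted ``as a corollary to Lemma~\ref{lem:W_desc_extension} and Lemma~\ref{lem:QW_description_extension_another},'' and your plan matches that implied strategy — represent $j^{W,W'}(h^Q_{\mathbf{C}}(\vec{\beta}))$ as $[\vec{\alpha}'\mapsto h_{\mathbf{t}}\circ g^W_{\tau}(\vec{\alpha}'\!\res W)]_{\mu^{W'}}$, get $\geq$ from the ordering lemma, and reduce $\leq$ to the two cited continuity lemmas via the case split in the combinatorial characterization of $\res(T,Q)$. Two points worth tightening, both of which you half-acknowledge: (i) in the $\geq$ direction, to apply Lemma~\ref{lem:TQW_desc_order} you should first note that $g^{W'}_{\tau} = (g')^{W'}_{\tau}$ because $\tau$ sees only the $Q$-coordinates of $g'$ and only the $W$-coordinates of $\vec{\alpha}'$, so both sides of the comparison run off the same $g'$ and the same $Q'$; and (ii) the ``sup-exchange'' worry you flag at the end is genuine — proving pointwise convergence of the $f_{\vec{\gamma}}$'s to $F$ is not enough, you must produce a single $\vec{\gamma}$ whose $\mu^{W'}$-class dominates any given $\delta < j^{W,W'}(h^Q_{\mathbf{C}}(\vec{\beta}))$, and the correct resolution, as in the proof of Lemma~\ref{lem:QW_description_extension_another} itself, is not to take pointwise sups but to rewrite the left-hand side via the continuity/discontinuity decomposition of $\sigma^{W'}$ and the $\sup$-commutativity of the relevant ultrapower maps (Lemmas~\ref{lem:level_2_uniform_cofinality}/\ref{lem:level_2_uniform_cofinality_another}), letting the $E'$-firmness furnish the needed extensions of $\vec{\beta}$ as in Lemma~\ref{lem:Q_desc_extension}.
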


\subsection{Level-3 description analysis}
\label{sec:level-3-description}

\begin{mydefinition}
  \label{def:extended_R_description}
  Suppose $R$ is a level-3 tree. 
The \emph{constant $R$-description} is $\emptyset$. 
An \emph{$R$-description} is either the constant $R$-description or a triple $(r,Q, \overrightarrow{(d,q,P)})$ such that either $r\in \dom(R) \wedge (Q,\overrightarrow{(d,q,P)}) = R[r]$ or  $r = r^{-} \concat (-1) \wedge r^{-} \in \dom(R)\wedge Q$ is a completion of $R(r^{-}) \wedge (Q,\overrightarrow{(d,q,P)}) = R[r, Q]$. $\desc(R)$ is the set of $R$-descriptions. $(r,Q,\overrightarrow{(d,q,P)})$ is \emph{of discontinuous type} if $r \in \dom(R)$, \emph{of continuous type} otherwise.
If $\mathbf{r}= (r,Q,\overrightarrow{(d,q,P)})$ is of discontinuous type and $Q^{+}$ is a completion of  $(Q, \overrightarrow{(d,q,P)})$, then
\begin{displaymath}
\mathbf{r}\concat (-1,Q^{+}) = (r \concat (-1), Q^{+}, \overrightarrow{(d,q,P)}).
\end{displaymath}
An \emph{extended $R$-description} is either an $R$-description or a triple $(r, Q, \overrightarrow{(d,q,P)})$ such that $(r\concat (-1), Q, \overrightarrow{(d,q,P)})$ is an $R$-description of continuous type. 
% $Q$ is a level $\leq 2$ tree and for some subtree $Q^{-}$ of $Q$,  $(r, Q^{-}, \overrightarrow{(d,q,P)})$ is an $R$-description.
$\exdesc(R)$ is the set of  extended $R$-descriptions. 
An extended $R$-description $\mathbf{r}$ is \emph{regular} iff either $\mathbf{r} \in \desc(R)$ of discontinuous type or $\mathbf{r} \notin \desc(R)$.
A \emph{generalized $R$-description} is either $(\emptyset,\emptyset,\emptyset)$ or of the form
\begin{displaymath}
  \mathbf{A} = (\mathbf{r}, \pi, T)
\end{displaymath}
so that $\mathbf{r} = (r, Q, \overrightarrow{(d,q,P)}) \in \desc(R) \setminus\se{\emptyset}$, $T$ is a finite level $\leq 2$ tree, 
$\pi$ factors $(Q,T)$. 
% $\mathbf{A}$ is of discontinuous type iff $r$ is of discontinuous type and there is $(Q^{+},\pi')$ so that $Q^{+}$ is a completion of $R[r]$, $\pi' $ factors $(Q^{+}, T)$ and $\pi' \supseteq \pi$.
 $\exexdesc(R)$ is the set of generalized $R$-descriptions.
\end{mydefinition}

Suppose $(Q, {(d,q,P)})$ is a partial level $\leq 2$ tree. We define
\begin{displaymath}
  \ucf^{*}(Q, {(d,q,P)}) =
  \begin{cases}
    (0,-1,\emptyset) & \text{if } \ucf(Q,{(d,q,P)}) = (0,-1) ,\\
    (1, q^{*},\emptyset) & \text{if } \ucf(Q,{(d,q,P)}) = (1, q^{*}), \\ 
    (2, \mathbf{q}^{*}, \id_{\comp{2}{Q}_{\tree}(q^{*})}) & \text{if } \ucf(Q, {(d,q,P)}) = (2, \mathbf{q}^{*}),  \\
    &\quad\mathbf{q}^{*} = (q^{*}, P^{*}, \vec{p}^{*}).\\
  \end{cases}
\end{displaymath}
Thus, $\ucf^{*}(Q, {(d,q,P)}) \in \se{(0,-1,\emptyset)} \cup \desc(Q, P)$, and 
$\cf(Q, (d,q,P)) = 1$ iff $\ucf^{*}(Q, (d,q,P)) = \min(\prec^{Q, P})$. If $\cf(Q, (d,q,P)) = 2$, let
\begin{displaymath}
  \ucf^{-} (Q, (d,q,P)) = \pred_{\prec^{Q, P}} ( \ucf^{*} (Q, (d,q,P))).
\end{displaymath}
$\ucf^{-}(Q,(d,q,P))$ can be computed in the following concrete way. If $d=1$, then $\ucf^{-}(Q,(1,q,\emptyset)) = (1, \pred_{\prec ^{\comp{1}{Q} \cup \se{q}}}(q), \emptyset)$; if $d=2$, then $\ucf^{-}(Q,(2,q,P)) = (2, \mathbf{q}', \id_P)$, where $\mathbf{q}'=(q',P, \vec{p}) \in \desc(Q)$, $q'$ is the $<_{BK}$-maximum of $\comp{2}{Q}\se{q,-}$.
 % $\se{-1}\cup \set{b < _{BK} q(\lh(q)-1)}{\comp{2}{Q}_{\tree}(q^{-} \concat (b)) = P}$.
% Note that $\ucf^{-}(Q,(d,q,P))$ must be of the form $(e,\mathbf{z}, \id_P)$. 
If $Q^{*}$ is a completion of $(Q, (d,q,P))$ and $\mathbf{D} = (1, q,\emptyset)$ if $d=1$, $\mathbf{D} = (2, (q, P, \vec{p}), \id_P)$ if $d=2 \wedge \comp{2}{Q}^{*}[q] = (P, \vec{p})$, then
\begin{displaymath}
 \mathbf{D}  = \pred_{\prec^{Q^{*}, P}}( \ucf^{*}(Q, (d,q,P)) )
\end{displaymath}
and
\begin{displaymath}
    \ucf^{-}(Q, (d,q,P)) = \pred_{\prec^{Q^{*},P}}(\mathbf{D}).
\end{displaymath}

Suppose $\mathbf{r} = (r, Q,\overrightarrow{(d,q,P)}) \in \exdesc(R)$, $\lh(r) = k$, $\overrightarrow{(d,q,P)} = (d_i,q_i,P_i)_{1 \leq i \leq \lh(\vec{q})}$. For $F \in (\bolddelta{3})^{R \uparrow}$, define $F_{\mathbf{r}}$ to be a function on $[\omega_1]^{Q \uparrow}$: if  $\mathbf{r} \in \desc(R)$, then $F_{\mathbf{r}} = F_r$; if $\mathbf{r} \notin \desc(R)$, then  $F_{\mathbf{r}} ( \vec{\beta} ) = F_r(\vec{\beta}\res R_{\tree}(r))$.  % where $Q^{-}$ is the subtree of $Q$, $\dom(Q^{-}) = \dom(Q) \setminus \se{(d_k,q_k)}$.
If $\vec{\gamma} =(\gamma_r)_{r \in \dom(R)}\in [\bolddelta{3}]^{R \uparrow}$, put $\gamma_{\mathbf{r}} = [F_{\mathbf{r}}]_{\mu^Q}$. If $\mathbf{r} \in \desc(R)$ and $\mathbf{A} = (\mathbf{r}, \pi, T) \in \exexdesc(R)$, put $\gamma_{\mathbf{A}} = \pi^T(\gamma_{\mathbf{r}})$. Put $\gamma_{\emptyset} = \gamma_{(\emptyset,\emptyset,\emptyset)} = \bolddelta{3}$.  Thus, if $\mathbf{r} \in \desc(R)$ is of discontinuous type, then $\gamma_{\mathbf{r}} = \gamma_r$; if $\mathbf{r}\notin \desc(R)$, then $\gamma_{\mathbf{r}} = j^{R_{\tree}(r), Q} (\gamma_r) = \gamma_{(\mathbf{r}, \id_{R_{\tree}(r)}, Q)}$. 
The next lemma computes the remaining case when $\mathbf{r}\in \desc(R)$ is of continuous type, justifying that $\gamma_{\mathbf{r}}$ does not depend on the choice of $F$. 

\begin{mylemma}
  \label{lem:gamma_r_continuous_type}
  Suppose $R$ is a level-3 tree, $\vec{\gamma}  \in [\bolddelta{3}]^{R \uparrow}$, $\mathbf{r} = (r, Q, \overrightarrow{(d,q, P)}) \in \desc(R)$ is of continuous type. Then $\gamma_{\mathbf{r}} = j_{\sup}^{R_{\tree}(r^{-}), Q} (\gamma_{r^{-}})$. % Suppose $\mathbf{A} = (\mathbf{r}, \pi, T) \in \exexdesc(R)$. Then $\gamma_{\mathbf{A}} = (\pi\res R_{\tree}(r^{-}))^T_{\sup} (\gamma_{r^{-}})$.
\end{mylemma}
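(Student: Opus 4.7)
The plan is to unfold both sides via \Los{} and exploit the uniform-cofinality witness structure of Lemma~\ref{lem:level_3_R_respecting_function_sort}. Since $\mathbf{r}$ is of continuous type, $r = r^{-}\concat(-1)$ with $r^{-}\in\dom(R)$ and $Q$ is a completion of $R(r^{-})$, so $R_{\tree}(r^{-})$ sits inside $Q$ with exactly one additional node carrying uniform cofinality $\ucf(R[r^{-}])$. Pick a representative $F$ of $\vec\gamma$ on a $\mu_{\mathbb{L}}$-club $E$ with $F_r$ witnessing the uniform cofinality of $F_{r^{-}}$: for $\vec\beta\in[E]^{R_{\tree}(r^{-})\uparrow}$,
\[
F_{r^{-}}(\vec\beta) \;=\; \sup\{F_r(\vec\beta\concat(\gamma)) : \vec\beta\concat(\gamma)\in\rep(Q)\res E\},
\]
with $\gamma\mapsto F_r(\vec\beta\concat(\gamma))$ continuous and order-preserving. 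By definition, $\gamma_{\mathbf{r}} = [F_r]_{\mu^Q}$ and $\gamma_{r^{-}} = [F_{r^{-}}]_{\mu^{R_{\tree}(r^{-})}}$.

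For $\gamma_{\mathbf{r}}\geq j^{R_{\tree}(r^{-}),Q}_{\sup}(\gamma_{r^{-}})$: given $\delta = [H]_{\mu^{R_{\tree}(r^{-})}}<\gamma_{r^{-}}$ with $H\in\admistwobold$, \Los{} yields $H(\vec\beta)<F_{r^{-}}(\vec\beta)=\sup_\gamma F_r(\vec\beta\concat(\gamma))$ on a $\mu^{R_{\tree}(r^{-})}$-measure-one set; picking $\gamma_0(\vec\beta)$ with $H(\vec\beta)<F_r(\vec\beta\concat(\gamma_0(\vec\beta)))$ and using monotonicity in the last coordinate, the inequality extends to all $\gamma\geq\gamma_0(\vec\beta)$. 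Since the fiber of $\mu^Q$ over $\vec\beta$ concentrates on tuples whose last entry is cofinal in its uniform-cofinality bound, the pullback $\vec\alpha\mapsto H(\vec\alpha\res R_{\tree}(r^{-}))$ is strictly dominated by $F_r$ $\mu^Q$-almost everywhere, whence $j^{R_{\tree}(r^{-}),Q}(\delta)<[F_r]_{\mu^Q}=\gamma_{\mathbf{r}}$.

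For the reverse inequality, for each ordinal-valued function $\gamma_0$ on $[\omega_1]^{R_{\tree}(r^{-})\uparrow}$ bounded below the fiber cofinality, the sliced function $G_{\gamma_0}(\vec\beta) = F_r(\vec\beta\concat(\gamma_0(\vec\beta)))$ depends only on $\vec\beta$ and satisfies $G_{\gamma_0}<F_{r^{-}}$ pointwise, hence $[G_{\gamma_0}]_{\mu^{R_{\tree}(r^{-})}}<\gamma_{r^{-}}$; as $G_{\gamma_0}$ is a pullback through the natural projection of $Q$-tuples onto $R_{\tree}(r^{-})$-tuples, its $\mu^Q$-class equals $j^{R_{\tree}(r^{-}),Q}([G_{\gamma_0}]_{\mu^{R_{\tree}(r^{-})}})$. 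Continuity of $\gamma\mapsto F_r(\vec\beta\concat(\gamma))$, combined with the characterization of the seed of the newly added $Q$-coordinate in the $\mu^Q$-ultrapower as the supremum of its cofinal predecessors (via the decomposition encoded by $\ucf(R[r^{-}])$), then gives $[F_r]_{\mu^Q} = \sup_{\gamma_0}[G_{\gamma_0}]_{\mu^Q}$, exhibiting $\gamma_{\mathbf{r}}$ as a supremum of $j^{R_{\tree}(r^{-}),Q}$-images of ordinals below $\gamma_{r^{-}}$, i.e.\ $\gamma_{\mathbf{r}}\leq j^{R_{\tree}(r^{-}),Q}_{\sup}(\gamma_{r^{-}})$. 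The main obstacle is precisely this last sup identity: one must verify that every $\epsilon<[F_r]_{\mu^Q}$ is dominated by some slice $[G_{\gamma_0}]_{\mu^Q}$, which uses the continuity of $F_r$ in its last coordinate rather than merely the pointwise bound $F_r<F_{r^{-}}$ composed with the projection.
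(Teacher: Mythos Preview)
Your overall strategy is the same as the paper's: prove both inequalities via \Los{} and the uniform-cofinality witness $F_r$. However, both directions share the same genuine gap at the key technical step.

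In the $\geq$ direction, the assertion that ``the fiber of $\mu^Q$ over $\vec\beta$ concentrates on tuples whose last entry is cofinal in its uniform-cofinality bound'' does not do the work you need. Write $Q^{-}=R_{\tree}(r^{-})$, $(d^{*},\mathbf{q}^{*})=\ucf(R(r^{-}))$, and let $(d,q)$ be the new node of $Q$. You need: given $\gamma_0(\vec\beta)<\comp{d^{*}}{\beta}_{\mathbf{q}^{*}}$ on a $\mu^{Q^{-}}$-large set, conclude $\comp{d}{\alpha}_{q}>\gamma_0(\vec\alpha\res Q^{-})$ on a $\mu^{Q}$-large set, i.e.\ $j^{Q^{-},Q}([\gamma_0]_{\mu^{Q^{-}}})<\seed^{Q}_{(d,q)}$. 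But from $[\gamma_0]_{\mu^{Q^{-}}}<\seed^{Q^{-}}_{(d^{*},\mathbf{q}^{*})}$ you only get $j^{Q^{-},Q}([\gamma_0]_{\mu^{Q^{-}}})<\seed^{Q}_{(d^{*},\mathbf{q}^{*})}$, and since $\seed^{Q}_{(d,q)}<\seed^{Q}_{(d^{*},\mathbf{q}^{*})}$ this is the wrong side. The paper closes this by passing through $\ucf^{-}(R(r^{-}))=(e,\mathbf{z},\id_P)$, the $\prec^{Q^{-},P}$-predecessor of $\ucf^{*}(R(r^{-}))$: Lemma~\ref{lem:level_2_desc_cofinal_in_next} produces $h\in\mathbb{L}$ with $\gamma_0(\vec\beta)<j^{P}(h)(\comp{e}{\beta}_{\mathbf{z}})$ on a $\mu^{Q^{-}}$-large set, and then Lemma~\ref{lem:level_2_desc_order} gives $j^{P}(h)(\comp{e}{\alpha}_{\mathbf{z}})<\comp{d}{\alpha}_{q}$ on a $\mu^{Q}$-large set.

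The $\leq$ direction has the dual problem. Your $\sup_{\gamma_0}$ ranges over a proper class of choice functions, and continuity of $\gamma\mapsto F_r(\vec\beta\concat(\gamma))$ by itself does not show that an arbitrary $[G]_{\mu^{Q}}<[F_r]_{\mu^{Q}}$ is dominated by some slice $[G_{\gamma_0}]_{\mu^{Q}}$. The paper again uses Lemmas~\ref{lem:level_2_desc_cofinal_in_next} and~\ref{lem:level_2_desc_order} to produce a single $h\in\mathbb{L}$ and takes $\eta=[\vec\beta\mapsto F_r(\vec\beta\concat j^{P}(h)(\comp{e}{\beta}_{\mathbf{z}}))]_{\mu^{Q^{-}}}$, giving $\eta<\gamma_{r^{-}}$ and $[G]_{\mu^{Q}}<j^{Q^{-},Q}(\eta)$. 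The missing ingredient throughout is the role of $\ucf^{-}(R(r^{-}))$; continuity of $F_r$ is necessary but not sufficient.
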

\begin{proof}
Suppose $\vec{\gamma} = [F]^R$, $F \in (\bolddelta{3})^{R \uparrow}$. 
  Put $\lh(r) = k+1$, $\overrightarrow{(d,q,P)} = (d_i,q_i, P_i)_{1 \leq i \leq k}$. 
We prove the case when $\cf(R(r^{-})) = 2$, the other case being similar. 
Put $R(r^{-}) = (Q^{-}, (d, q, P))$, $\pi^{-} = \pi \res \dom(Q^{-})$, so that $Q$ is a completion of $R(r^{-})$, $(d,q,P) = (d_k, q_k, P_k)$. 
Put $\ucf(R(r^{-})) = (d^{*}, \mathbf{q}^{*})$, $\ucf^{-}(R(r^{-})) = (e , \mathbf{z}, \id_P)$.

We firstly show the $\geq$ direction. Suppose $\delta = [G]_{\mu^{Q^{-}}}< \gamma_{r^{-}}$, $G \in \admistwobold$. By \Los{}, for $\mu^{Q^{-}}$-a.e.\ $\vec{\beta}$, $G(\vec{\beta}) < F_{r^{-}}(\vec{\beta}) = \sup_{\xi < \comp{d^{*}}{\beta}_{\mathbf{q}^{*}}}F_r(\vec{\beta} \concat (\xi))$, where $\vec{\beta} \concat (\xi)$ is a tuple extending $\vec{\beta}$ whose entry indexed by $(d,q)$ is $\xi$. Let $H(\vec{\beta})$ be the least $\xi <\comp{d}{\beta}_{\mathbf{q}^{*}}$ satisfying $G(\vec{\beta}) < F_r(\vec{\beta} \concat (\xi))$. 
By Lemmas~\ref{lem:level_2_desc_cofinal_in_next} and~\ref{lem:level_2_desc_order}, there is $h : \omega_1 \to \omega_1$ such that $h \in \mathbb{L}$ and for $\mu^{Q^{-}}$-a.e.\ $\vec{\beta}$, $H(\vec{\beta}) < j^P(h) (\comp{e}{\beta}_{\mathbf{z}}) < \comp{d}{\beta}_q$. Thus, for $\mu^Q$-a.e.\ $\vec{\beta}$, $G(\vec{\beta} \res \dom(Q^{-})) < F_r(\vec{\beta})$. Thus, $j^{Q^{-},Q}(\delta) < [F_{\mathbf{r}}]_{\mu^Q}$.  % The inequality $(\pi^{-})^T (\delta) < \pi^T  ( [F_{\mathbf{r}}]_{\mu^Q} )$ is similar. 

We secondly show the $\leq$ direction. 
Suppose $\delta =[G]_{\mu^Q} < [F_{\mathbf{r}}]_{\mu^Q}$, $G \in \admistwobold$. Then for $\mu^Q$-a.e.\ $\vec{\beta}$, $G(\vec{\beta}) < F_r(\vec{\beta}) = \sup_{\xi < \comp{d}{\beta}_q} F_r (\vec{\beta} \res \dom(Q^{-}) \concat (\xi))$. Let $H(\vec{\beta})$ be the least $\xi < \comp{2}{\beta}_v$ satisfying $G(\vec{\beta}) < F_r (\vec{\beta} \res \dom(Q^{-}) \concat (\xi))$. By Lemmas~\ref{lem:factor_SQW} and~\ref{lem:level_2_desc_order} again, there is $h: \omega_1 \to \omega_1$ such that $h \in \mathbb{L}$ and for $\mu^Q$-a.e.\ $\vec{\beta}$, $H(\vec{\beta}) < j^P(h)(\comp{e}{\beta}_{\mathbf{z}}) < \comp{d}{\beta}_q$. Thus, for $\mu^Q$-a.e.\ $\vec{\beta}$, $G(\vec{\beta}) < j^{Q^{-},Q}(\eta)$, where $\eta$ is represented modulo $\mu^{Q^{-}}$ by the function $\vec{\beta} \mapsto F_r(\vec{\beta} \concat j^P(h)(\comp{e}{\beta}_{\mathbf{z}}))$. Since $\eta < \gamma_{{r}^{-}}$, we have $\delta < j^{Q^{-},Q}_{\sup}(\gamma_{{r}^{-}})$. 
\end{proof}

Define
\begin{displaymath}
  C^{*} = \set{ \xi < \bolddelta{3}}{\text{for any finite level $\leq 2$ tree }Q,  j^Q_{\sup} (\xi) = \xi  }.
\end{displaymath}
Assuming $\boldDelta{2}$-determinacy, Lemma~\ref{lem:level_2_embedding_bounded_by_delta13} implies that $C^{*}\cap \kappa_3^x$ has order type $\kappa_3^x$, and hence $C^{*}$ has order type $\bolddelta{3}$. A tuple $\vec{\gamma}$ is said to \emph{strongly respect}  $R$ iff $\vec{\gamma} \in [C^{*}]^{R \uparrow}$.
In most applications, we are only concerned with $\vec{\gamma}$ strongly respecting $R$. 
In that case, the techniques in Section~\ref{sec:more-level-2} helps to decide the ordering of $\gamma_{\mathbf{r}}$ for different $\mathbf{r} \in \exdesc(R)$. The results are in parallel to Lemma~\ref{lem:level_2_beta_q_order}.

Define $\corner{(\emptyset,\emptyset,\emptyset)} = \corner{\emptyset} =  \emptyset$. For $\mathbf{A} = (\mathbf{r}, \pi, T) \in \exexdesc(R)$, $\mathbf{r} = (r,Q, \overrightarrow{(d,q,P)})$, $\lh(r) = k$, define 
\begin{displaymath}
  \corner{ \mathbf{A} } =
  \begin{cases}
    (r(0), \llbracket \pi(d_1,q_1) \rrbracket_T, r(1), \dots, \llbracket \pi(d_{k-2},q_{k-2}) \rrbracket_T, r(k-2), -1) \\
 \qquad\text{ if } r \text{ is of continuous type}, (\pi,T) \text{ is continuous at } (d_{k-1},q_{k-1}),\\
    (r(0), \llbracket \pi(d_1,q_1) \rrbracket_T, r(1), \dots, \llbracket \pi(d_{k-2},q_{k-2}) \rrbracket_T, r(k-2), \llbracket \pred(\pi,T, (d_{k-1},q_{k-1})) \rrbracket_T) \\
 \qquad\text{ if } r \text{ is of continuous type}, (\pi,T) \text{ is discontinuous at } (d_{k-1},q_{k-1}),\\
    (r(0), \llbracket \pi(d_1,q_1) \rrbracket_T, r(1), \dots, \llbracket \pi(d_{k-1},q_{k-1}) \rrbracket_T, r(k-1), -1) \\
 \qquad\text{ if } r \text{ is of discontinuous type}, (\pi,T) \text{ is continuous at } \ucf(R(r)),\\
 (r(0), \llbracket \pi(d_1,q_1) \rrbracket_T, r(1), \dots, \llbracket \pi(d_{k-1},q_{k-1}) \rrbracket_T, r(k-1), \llbracket \pred(\pi,T, \ucf(R(r))) \rrbracket_T) \\
 \qquad\text{ if } r \text{ is of discontinuous type}, (\pi,T) \text{ is discontinuous at } \ucf(R(r)).\\
  \end{cases}
\end{displaymath}
and define $\corner{\mathbf{r}} = \corner{(\mathbf{r}, Q, \id_Q)}$. If $\mathbf{r}$ is of discontinuous type and $Q^{+}$ is a completion of $Q$, define $\corner{ (r, Q^{+}, \overrightarrow{(d,q,P)}) } = \corner{(\mathbf{r}, Q^{+}, \id_Q)}$. 
 For $\mathbf{A},\mathbf{A}' \in \exexdesc(R)$, define 
\begin{displaymath}
\mathbf{A} \prec \mathbf{A}'
\end{displaymath}
iff $\corner{\mathbf{A}} <_{BK} \corner{\mathbf{A}'}$; define
\begin{displaymath}
\mathbf{A} \sim \mathbf{A}'
\end{displaymath}
iff $\corner{\mathbf{A}} = \corner{\mathbf{A}'}$. 
For $\mathbf{r}, \mathbf{r}' \in \exdesc(R)$, define 
\begin{displaymath}
\mathbf{r} \prec \mathbf{r}'
\end{displaymath}
iff $\corner{\mathbf{r}} <_{BK} \corner{\mathbf{r}'}$; define
\begin{displaymath}
\mathbf{r} \sim \mathbf{r}'
\end{displaymath}
iff $\corner{\mathbf{r}} = \corner{\mathbf{r}'}$.  All relations are effective. Define $\prec^R_{*} = \prec \res \exexdesc(R)$, $\sim^R_{*} = \sim \res \exexdesc(R)$
 $\prec^R = \prec \res \exdesc(R)$, $\sim^R = \sim \res \exdesc(R)$. For $r , r' \in \dom(R)$, define $r \prec^R r'$ iff $(r) \concat R[r] \prec (r') \concat R[r']$.

\begin{mylemma}
  \label{lem:gamma_r_order}
  Suppose $R$ is a level-3 tree, $\mathbf{A}, \mathbf{A}' \in \exdesc(R)$,  $\vec{\gamma}$ strongly respects $R$.  Then ${\mathbf{A}} \prec^R{\mathbf{A}}'$ iff $\gamma_{\mathbf{A}} < \gamma_{\mathbf{A}'}$; ${\mathbf{A}} \sim^R{\mathbf{A}}'$ iff $\gamma_{\mathbf{A}} = \gamma_{\mathbf{A}'}$.
\end{mylemma}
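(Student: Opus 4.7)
The plan is to reduce the statement to an application of \L{}os' theorem combined with the level-2 ordering lemma (Lemma~\ref{lem:level_2_beta_q_order}). Write $\mathbf{A} = (r, Q, \overrightarrow{(d,q,P)})$ and $\mathbf{A}' = (r', Q', \overrightarrow{(d',q',P')})$. By enlarging $Q, Q'$ to a common completion $Q^{*}$ (using the $\concat(-1,Q^{+})$ extension for descriptions of continuous type), we may assume $\gamma_{\mathbf{A}}$ and $\gamma_{\mathbf{A}'}$ are represented modulo the same measure $\mu^{Q^{*}}$. Fix $F \in (\bolddelta{3})^{R \uparrow}$ representing $\vec{\gamma}$; then it suffices by \L{}os to compare $F_{\mathbf{A}}(\vec{\beta})$ and $F_{\mathbf{A}'}(\vec{\beta})$ pointwise on a $\mu^{Q^{*}}$-measure one set of $\vec{\beta}$.

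Since $\vec{\gamma}$ strongly respects $R$, Lemma~\ref{lem:R_respect} forces each $F_r$ to have the signature, approximation sequence, and uniform cofinality dictated by $R[r]$. In particular, on a $\mu^{Q^{*}}$-measure one set, the value $F_r(\vec{\beta})$ is strictly order preserving in the BK-type comparison of the subtuple $(\comp{d_i}{\beta}_{q_i})_{1 \leq i < \lh(r)}$ (and symmetrically for $r'$), with $F_{r \concat (a)}(\vec{\beta}\concat(\xi)) < F_{r \concat (b)}(\vec{\beta}\concat (\xi'))$ whenever $a <_{BK} b$ (regardless of $\xi,\xi'$) by the second clause of Lemma~\ref{lem:R_respect}. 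This is exactly the information that the $\corner{\cdot}$ operator records by interleaving the level-3 coordinates $r(i)$ of $r$ with the level-2 codes $\llbracket (d_i,q_i) \rrbracket_{Q^{*}}$ from Lemma~\ref{lem:level_2_beta_q_order}.

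A first-disagreement analysis of $\corner{\mathbf{A}} <_{BK} \corner{\mathbf{A}'}$ then splits into two cases. If $r,r'$ first diverge at a level-3 coordinate, strict monotonicity of $F_{r^{*}}$ (at their common parent $r^{*}$) in its last argument yields $F_{\mathbf{A}}(\vec{\beta}) < F_{\mathbf{A}'}(\vec{\beta})$ regardless of the subsequent entries. If instead $r \subseteq r'$ (or vice versa) and the level-2 codes first diverge at position $i$, the comparison is reduced to the BK-comparison of the corresponding entries of $\vec{\beta}$, which is handled by Lemma~\ref{lem:level_2_beta_q_order}; extensions beyond length $\lh(r)$ are handled by the cofinal-approximation argument in the proof of Lemma~\ref{lem:gamma_r_continuous_type}. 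The equality direction $\mathbf{A} \sim^R \mathbf{A}'$ is dual: equality of the $\corner{\cdot}$ codes forces $F_{\mathbf{A}}(\vec{\beta}) = F_{\mathbf{A}'}(\vec{\beta})$ on a measure one set by the uniqueness assertion in Lemma~\ref{lem:R_respect} together with the equality clause of Lemma~\ref{lem:level_2_beta_q_order}.

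The main obstacle is the interface between descriptions of continuous and discontinuous type across different completions, where one of $\mathbf{A},\mathbf{A}'$ may need to be re-expressed on the completion $Q^{*}$ via $j^{Q,Q^{*}}_{\sup}$. Here the strong respecting hypothesis $\vec{\gamma} \in [C^{*}]^{R \uparrow}$ is essential: Lemma~\ref{lem:gamma_r_continuous_type} together with the fact that $C^{*}$-entries are fixed by $j^{Q}_{\sup}$ guarantees that this re-expression does not change the value of $\gamma_{\mathbf{A}}$, so the $\mu^{Q^{*}}$-a.e.\ comparison faithfully reflects $\corner{\mathbf{A}}$ versus $\corner{\mathbf{A}'}$.
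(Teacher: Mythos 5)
There is a genuine gap in the opening move. You propose to ``enlarge $Q, Q'$ to a common completion $Q^{*}$'' so that $\gamma_{\mathbf{A}}$ and $\gamma_{\mathbf{A}'}$ live on the same measure space $\mu^{Q^{*}}$. But for arbitrary $\mathbf{A} = (r, Q, \overrightarrow{(d,q,P)})$ and $\mathbf{A}' = (r', Q', \overrightarrow{(d',q',P')})$ in $\exdesc(R)$, the trees $Q$ and $Q'$ are not nested: $r$ and $r'$ can be unrelated nodes of $R$, and even when $r=r'$ of continuous type, $Q$ and $Q'$ can be two different completions of $R(r^{-})$ with no common extension as level $\leq 2$ trees. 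There is no ``common completion,'' and without one, $\gamma_{\mathbf{A}} = [F_{\mathbf{A}}]_{\mu^Q}$ and $\gamma_{\mathbf{A}'} = [F_{\mathbf{A}'}]_{\mu^{Q'}}$ cannot be compared by a single application of \L{}os, nor does Lemma~\ref{lem:level_2_beta_q_order} apply, since that lemma compares entries of a single tuple $\vec{\beta}$ respecting a single $Q$, not entries drawn from tuples respecting possibly different trees.

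What makes the proof work is the comparison machinery for $\Pi^1_2$-wellfounded level $\leq 2$ trees, Theorem~\ref{thm:factor_ordertype_embed_equivalent_lv2}: one finds $X$ and $\psi$ \emph{minimally factoring} $(T, T' \otimes X)$, transfers $\gamma_{\mathbf{A}}$ along $\psi^{T',X}$, and only then compares on a common measure space $\mu^{T'}$. The minimality of $\psi$ is used essentially, to show $\id^{T',X}_{\psi\circ\tau}(\vec{\beta}) = j^X(\vec{\beta}_{\tau})$ (rather than merely some jumbled tuple), so the comparison becomes $j^X(G(\vec{\beta})) $ versus $ F_{r'}(\vec{\beta})$. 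Only at this final step does the strong-respecting hypothesis enter: $F_{r'}(\vec{\beta}) \in C^{*}$ is fixed by $j^X$, so the desired inequality follows from $G(\vec{\beta}) < F_{r'}(\vec{\beta})$. Your sketch invokes $C^{*}$-invariance, but as a vague fix-up for ``re-expressions,'' without the minimal factoring that sets up the comparison in the first place; that is the missing idea, and the argument does not go through without it. The same issue affects your equality direction: ``equality of the $\corner{\cdot}$ codes forces $F_{\mathbf{A}}(\vec{\beta}) = F_{\mathbf{A}'}(\vec{\beta})$ pointwise'' is not even a well-posed claim when the two functions have different domains.

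A secondary issue: you fix $F \in (\bolddelta{3})^{R\uparrow}$ at the outset, but the argument requires $F \in (C^{*})^{R\uparrow}$ from the start, since later you need $\ran(F) \subseteq C^{*}$ to kill the $j^X$ factor. This is easily repaired, but is worth stating correctly.
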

\begin{proof}
Put $\mathbf{A} = (\mathbf{r}, T, \pi)$, $\mathbf{A}' = (\mathbf{r}', T', \pi')$. 
Recall our convention that $\gamma_{(\emptyset,\emptyset,\emptyset)} = \bolddelta{3}$. The lemma is trivial if $\mathbf{r} = \emptyset$ or $\mathbf{r}' = \emptyset$. Assume now $\mathbf{r}, \mathbf{r}' \neq \emptyset$.
Put $\mathbf{r}= (r, Q, \overrightarrow{(d,q,P)})$, $\overrightarrow{(d,q,P)} = (d_i,q_i, P_i)_{1 \leq i \leq  \lh(\vec{q})}$,  $k=\lh(r)$, $\mathbf{r}'= (r',Q',\overrightarrow{(d',q',P')})$, $\overrightarrow{(d',q',P')} = (d'_i,q'_i,P'_i)_{1 \leq i \leq \lh(\vec{q}')}$, $k' = \lh(r')$. Assume $\vec{\gamma} = [F]^R$, $F \in (C^{*})^{R \uparrow}$. 

Firstly, we  prove that $\mathbf{A} \sim \mathbf{A}'$ implies $\gamma_{\mathbf{A}} = \gamma_{\mathbf{A}'}$. %Assume  $\mathbf{A} \neq \mathbf{A}'$.

Case 1: $r = r'$ is of continuous type. 

Put $Q^{-} = R_{\tree}(r^-)$.

Subcase 1.1: $(\pi,T)$ is continuous at $(d_{k-1}, q_{k-1})$. 

Then $\llbracket \pi(d,q) \rrbracket_T = \llbracket \pi'(d,q) \rrbracket_{T'}$ for any $(d,q ) \in \dom(Q^{-})$. 
Put $\tau = \pi \res \dom(Q^{-})$, $\tau' = \pi' \res \dom(Q^{-})$. 
By Lemma~\ref{lem:gamma_r_continuous_type}, $\gamma_{\mathbf{A}} = \pi^T_{\sup}(\gamma_{\mathbf{r}}) = \tau^T_{\sup} (\gamma_{r^{-}})$ and $\gamma_{\mathbf{A}'} =(\tau')^{T'}_{\sup}(\gamma_{r^{-}})$. 
Given $\delta  = [G]_{\mu^{Q^{-}}}< \gamma_{r^{-}}$, we need to show that $\tau^T (\delta) < \gamma_{\mathbf{A}'}$.
By Theorem~\ref{thm:factor_ordertype_embed_equivalent_lv2}, there exist $X$ and $\psi$ minimally factoring $(T, T' \otimes X)$. So $\psi \circ \pi (d,q) = \id_{T',*} \circ \pi' (d,q)$ for any $(d,q ) \in \dom(Q^{-})$. We shall actually show that $ \psi^{T',X} \circ \tau^T(\delta) < \gamma_{\mathbf{A}'}$, i.e., $(\psi \circ \tau)^{T',X}(\delta) < \gamma_{\mathbf{A}'}$. By \Los{}, it suffices to show that for $\mu^{T'}$-a.e.\ $\vec{\beta}$, $j^X(G) ( \id^{T',X}_{\psi \circ \tau}(\vec{\beta}) ) < F_{r'}(\vec{\beta}_{\tau'})$. The minimality of $\psi$ implies that $\id^{T',X}_{\psi \circ \tau} (\vec{\beta}) = j^X(\vec{\beta}_{\tau})$. It suffices to show that for $\mu^{T'}$-a.e.\ $\vec{\beta}$, $j^X(G(\vec{\beta}_{\tau})) < F_{r'}(\vec{\beta}_{\tau'})$. Hence, it suffices to show that $\mu^{Q^{-}}$-a.e.\ $\vec{\beta}$, $j^X(G(\vec{\beta})) < F_{r'}(\vec{\beta})$.
As $F_{r'}(\vec{\beta}) \in C^{*}$, this inequality is a consequence of $G(\vec{\beta}) < F_{r'}(\vec{\beta})$, which holds true for $\mu^{Q^{-}}$-a.e.\ $\vec{\beta}$ by assumption.

Subcase 1.2: $(\pi,T)$ is discontinuous at $(d_{k-1}, q_{k-1})$. 

Then $\llbracket \pi(d,q) \rrbracket_T = \llbracket \pi'(d,q) \rrbracket_{T'}$ for any $(d,q) \in \dom(Q)$. 
Let $\tau$ factor $(Q,T)$ where $\tau$ and $\pi$ agree on $\dom(Q^{-})$ and $\tau(d_{k-1},q_{k-1}) = \pred(\pi, T, (d_{k-1}, q_{k-1}))$, and likewise define $\tau'$ which factors $(Q',T)$. By Lemma~\ref{lem:continuous_decomp_level_2},  $\gamma_{\mathbf{A}} = \tau^T_{\sup} \circ j^{Q^{-}, Q} (\gamma_{r^{-}})$ and $\gamma_{\mathbf{A}'} = (\tau')^T_{\sup} \circ j^{Q^{-},Q} (\gamma_{r^{-}})$. Work with $X$ and $\psi$ minimally factoring $(T,T' \otimes X)$ and argue similarly to Subcase 1.1. 

Case 2: $r = r'$ is of discontinuous type. 

Subcase 2.1: $(\pi,T)$ is continuous at $\ucf(R(r))$. 

Then $\llbracket \pi(d,q) \rrbracket_T = \llbracket \pi'(d,q) \rrbracket_{T'}$ for any $(d,q) \in \dom(Q)$ 
and $\gamma_{\mathbf{A}} = \pi^T_{\sup}(\gamma_r)$, $\gamma_{\mathbf{A}'} = (\pi')^T_{\sup} (\gamma_r)$. Argue similarly to Case 1. 

Subcase 2.2: $(\pi,T)$ is discontinuous at $\ucf(R(r))$. 

Let $Q^{+}$ be a completion of $R(r)$ and let $\tau$ factor $(Q^{+},T)$ so that $\tau$ extends $\pi$, 
$\tau(d_{k},q_{k}) = \pred(\pi, T, (d_k,q_k))$, and likewise define $\tau'$ which factors $(Q^{+},T)$. 
Then $\llbracket \tau(d,q) \rrbracket_T = \llbracket \tau'(d,q) \rrbracket_{T'}$ for any $(d,q) \in \dom(Q^{+})$. 
By Lemma~\ref{lem:continuous_decomp_level_2_another}, $\gamma_{\mathbf{A}} = \tau^T_{\sup} \circ j^{Q,Q^{+}}(\gamma_r)$ and $\gamma_{\mathbf{A}'} = (\tau')^{T'}_{\sup} \circ j^{Q,Q^{+}} (\gamma_r)$. Argue similarly to Case 1. 

Case 3: $r \neq r'$. 
Assume $r = r' \concat (-1)$. 

Subcase 3.1: $(\pi,T)$ is continuous at $(d_{k-1},q_{k-1})$. 

It follows from Subcase 1.1 and Subcase 2.1 that  $\gamma_{\mathbf{A}} = \pi^T_{\sup} (\gamma_{r'})$ and $\gamma_{\mathbf{A}'} = (\pi')^{T'}_{\sup} (\gamma_{r'})$. Argue similarly as before.

Subcase 3.2: $(\pi,T)$ is discontinuous at $(d_{k-1}, q_{k-1})$. 

Use a combination of Subcase 1.2 and Subcase 2.2.

Secondly, we prove that $\mathbf{A} \prec \mathbf{A}'$ implies $\gamma_{\mathbf{A}} < \gamma_{\mathbf{A}'}$. 

Case 1: $\corner{\mathbf{A}'}$ is a proper initial segment of $\corner{\mathbf{A}}$. 

Then $\corner{\mathbf{A}'}$ does not end with $-1$. 
We prove the typical case when $r'$ is of discontinuous type. So $r' \subsetneq r$. Let $(Q')^{+}$ be a completion of $R(r')$ and let $\tau'$ factor $((Q')^{+}, T')$ so that $\tau'$ extends $\pi'$, $\tau'(d_k,q_k) = \pred(\pi,T, \ucf(R(r')))$. Then $(Q')^{+} = R_{\tree}(r \res k')$.
We get $\psi$ minimally factoring $(T,T' \otimes X)$ so that $\psi \circ \pi(d,q) = \id_{T',*} \circ \tau'(d,q)$ for any $(d,q) \in \dom( (Q')^{+})$.
 We shall actually show that $\psi^{T',X}(\gamma_{\mathbf{A}}) < \gamma_{\mathbf{A}'}$. %Note that $\gamma_{\mathbf{A}'} = (\tau')^{T'}_{\sup} \circ j^{Q',(Q')^{+}} (\gamma_{r'})$. 
By \Los{}, it suffices to show that for $\mu^{T'}$-a.e.\ $\vec{\beta}$, $j^X(F_{\mathbf{r}})(\id^{T',X}_{ \psi \circ \pi}(\vec{\beta})) < F_{r'}(\vec{\beta}_{\pi'} )$. The minimality of $\psi$ implies that $\id^{T',X}_{\psi \circ \pi}(\vec{\beta})$ agrees with $j^X(\vec{\beta}_{\tau'})$ on $\dom((Q')^{+})$. 
It suffices to show that for $\mu^Q$-a.e.\ $\vec{\beta}$, $j^X(F_{\mathbf{r}} (\vec{\beta}) ) < F_{r'} (\vec{\beta} \res \dom(Q'))$. As $\ran(F) \subseteq C^{*}$, this would be a consequence of $F_{\mathbf{r}} (\vec{\beta}) < F_{r'}(\vec{\beta} \res \dom(Q'))$, which follows from order preservation of $F$.

Case 2: $\corner{\mathbf{A}'}$ is not a proper initial segment of $\corner{\mathbf{A}}$. 

Similar to Case 1, using the following fact: Suppose $X,X'$ are level $\leq 2$ trees and $\llbracket d_i,x_i \rrbracket_X = \llbracket d_i',x_i' \rrbracket_{X'}$ for $1 \leq i < n$, $\llbracket d_n,x_n \rrbracket_X < \llbracket d_n',x_n' \rrbracket_{X'}$. Then there exist $U$ and $\psi$ minimally factoring $(X, X' \otimes U)$, which implies that for any $\vec{\beta} \in [\omega_1]^{X \uparrow}$, if $\id^{X',U}_{\psi} (\vec{\beta}) = \vec{\delta}$, then $\delta$ and $j^U (\vec{\beta})$ agree on $\set{(d_i,x_i)}{ 1 \leq i < n}$ and  $\comp{d_n}{\delta}_{x_n} < j^U ( \comp{d_n'}{\beta}_{x_n'})$.
\end{proof}

\subsection{Factoring maps between level-3 trees}
\label{sec:fact-betw-level}

Put $\pi \oplus \emptyset = \emptyset$.
Suppose $Y$ is a level-3 tree, $\mathbf{y}=(y,X,\overrightarrow{(e,x,W)} ) \in \desc({Y})$, $\lh(y)=k$, $\overrightarrow{(e,x,W)}=(e_i,x_i,W_i)_{1 \leq i \leq  \lh(\vec{y})}$, 
$\pi$ is a function whose domain contains $\dom(X)$, we put
\begin{displaymath}
 \pi \oplus \mathbf{y} =  \pi \oplus_Y y = ( y(0), \pi(e_1,x_1), y(1),  \ldots,\pi(e_{k-1},x_{k-1}), y(k-1)).
\end{displaymath}
If $l < \lh(y)$, then $\mathbf{y} \res l = (y \res l, Y_{\tree}(y \res l) , (e_i,x_i,W_i)_{1 \leq i \leq l})$.

\begin{mydefinition}
  \label{def:description_TQY}
  Suppose $Y$ is a level-3 tree, $T$ is a level $\leq 2$ tree. 
The  only \emph{$(Y,T,\emptyset)$-description} is $(\emptyset,\emptyset)$, which is called the \emph{constant $(Y,T,*)$-description}. Suppose  $({Q},\overrightarrow{(d,q,P)}) = (Q,(d_i,q_i,P_i))_{1 \leq  i \leq k} $ is a potential partial level $\leq 2$ tower of discontinuous type. 
A 
\emph{$(Y,T,(Q,\overrightarrow{(d,q,P)}))$-description} is of the form
  \begin{displaymath}
    \mathbf{B} =(\mathbf{y},\pi) 
  \end{displaymath}
with the following properties:
\begin{enumerate}
\item $\mathbf{y} \in \desc(Y) \setminus \se{\emptyset}$. Put $\mathbf{y} = (y,X, \overrightarrow{(e,x,W)})$,  $\lh(y) = l$, $\overrightarrow{(e,x,W)} = (e_i,x_i,W_i)_{1 \leq i \leq  \lh(\vec{x})}$. 
\item $\pi$ factors $(X,T,Q)$.
\item The contraction of $(\sign(\pi(e_i,x_i)))_{1 \leq i <  l}$ is $((d_i,q_i))_{1 \leq i < k}$. 
\item If $k > 1$,  $y$ is of continuous type and $(d_{k-1},q_{k-1})$ does not appear in the contraction of $(\sign(\pi(e_i,x_i)))_{1 \leq i < l} \concat ( \sign (\pi(e_{l-1},x_{l-1})^{-}))$, then $\pi(e_{l-1}, x_{l-1})$ is of discontinuous type. 
\item Put $\ucf(X,\overrightarrow{(e,x,W)}) = (e_{*},\mathbf{x}_{*})$. 
  \begin{enumerate}
  \item If $e_{*} = 0$ then $d_k = 0$.
  \item If $e_{*} = 1$ then $\ucf(\pi(1, \mathbf{x}_{*})) = \ucf (Q, \overrightarrow{(d,q,P)})$.
  \item If $e_{*} = 2$, $\mathbf{x}_{*} = (x_{*}, W_{*},\vec{w}_{*}) \in \desc(X)$, then $\ucf(\pi(2, x_{*})) = \ucf(Q, \overrightarrow{(d,q,P)})$. 
  \item If $e_{*} = 2$, $\mathbf{x}_{*} = (x_{*}, W_{*},\vec{w}_{*}) \notin \desc(X)$, then $\ucf^{+}(\pi(2, x_{*})) = \ucf(Q, \overrightarrow{(d,q,P)})$. 
  \end{enumerate}
\end{enumerate}
A $(Y,T,Q)$-description is a $(Y,T,({Q},\overrightarrow{(d',q',P')}))$-description for some potential partial level $\leq 2$ tower $({Q},\overrightarrow{(d',q',P')})$ of discontinuous type. A $(Y,T,*)$-description is a $(Y,T,Q')$-description for some level $\leq 2$ tree $Q'$ or $Q' = \emptyset$. $\desc(Y,T,({Q},\overrightarrow{(d,q,P)}))$, $\desc(Y,T,Q)$, $\desc(Y,T,*)$ denote the sets of relevant descriptions. 
\end{mydefinition}

Similarly to Definition~\ref{def:description_TQW}, if $\mathbf{B} \in \desc(Y,T,Q)$,  then there is at most one $(Q,\overrightarrow{(d,q,P)})$  for which  $\mathbf{B} \in \desc(Y,T, (Q,\overrightarrow{(d,q,P)}))$.
Suppose that $\mathbf{B} = (\mathbf{y},\pi)$ is a $(Y,T,({Q},\overrightarrow{(d,q,P)}))$-description,
$F \in( \bolddelta{3})^{Y \uparrow}$. Then
\begin{displaymath}
  F^T_{\mathbf{B}} : [\omega_1]^{T \uparrow} \to \bolddelta{3}
\end{displaymath}
is the function that sends $[h]^T$ to $[F_{\mathbf{y}} \circ h_{\pi}^{Q}]_{\mu^{Q}}$. Note that $F_{\mathbf{y}}\circ h^Q_{\pi}$ has signature $\sign(Q, \overrightarrow{(d,q,P)})$, is essentially discontinuous, and has uniform cofinality $\ucf(Q, \overrightarrow{(d,q,P)})$.
Of course, $F^T_{\mathbf{B}}$ is meaningful only when $T$ is $\Pi^1_2$-wellfounded.

Assuming $\boldpi{3}$-determinacy, the $\mathbb{L}[T_3]$-measure $\mu^{Y}$ will be defined, and $[F]^Y \to [F_{\mathbf{B}}^{T}]_{\mu^T}$ will represent an element in $\mathbb{L}[j^Y(T_3)]$ modulo $\mu^Y$. Such kind of results related to level-3 ultrapowers are parallel to Section~\ref{sec:more-level-2}. They will be handled in Section~\ref{sec:boldface-level-3_sharp}.

% Define
% \begin{displaymath}
%   \mathbf{B} \iniseg \mathbf{B}'
% \end{displaymath}
% iff $ \corner{\mathbf{B}} $ is a proper initial segment of $\corner{\mathbf{B}'}$. 
%Suppose $\mathbf{B} = (\mathbf{y},\pi) \in \desc(Y,T,Q)$. 
% , $Q$ is a finite level-2 tree,  either $Q_{*} = Q$ or $(Q_{*}, (d_{\lh(\vec{q})-1}, q_{\lh(\vec{q})-1}, P_{\lh(\vec{q})-1}))$ is a subtree of $Q_{*}$.

Given a $(Y,T,*)$-description $\mathbf{B} = (\mathbf{y},\pi)$,
define
\begin{displaymath}
  \corner{\mathbf{B}} = \pi \oplus  \mathbf{y}. 
\end{displaymath}
Define
\begin{displaymath}
  \mathbf{B} \prec \mathbf{B}'
\end{displaymath}
iff $\corner{\mathbf{B}} <_{BK} \corner{\mathbf{B}'}$, the ordering on coordinates in $\desc(T,Q,*)$ for some $T,Q$ again according to $\prec$.  The constant $(Y,T,*)$-description $\mathbf{B}_0$ is the $\prec$-greatest, and we have   $\corner{\mathbf{B}_0} = \emptyset$.  Define $\prec^{Y,T}  = \prec \res \desc(Y,T,*)$. % Then $  \llbracket \mathbf{B} \rrbracket^{Y,T,Q} <   \llbracket \mathbf{B} '\rrbracket^{Y,T,Q} $ iff $\mathbf{B} \prec \mathbf{B}' $; $  \llbracket \mathbf{B} \rrbracket^{Y,T,Q} =   \llbracket \mathbf{B'} \rrbracket^{Y,T,Q} $ iff $\mathbf{B}=\mathbf{B}'$. 
As a corollary to  Lemma~\ref{lem:TQW_desc_order}, $\prec^{Y,T}$ inherits the following ordering property on $F^{T}_B$. 
\begin{mylemma}
  \label{lem:B_desc_order}
Suppose $(Q_i, (d_i,q_i, P_i))_{1 \leq i \leq m}$ is a partial level $\leq 2$ tower, $\mathbf{B} \in \desc(Y,T,Q_k)$, $\mathbf{B}' \in \desc(Y,T, Q_{k'})$, $k \leq m$, $k' \leq m$,  $\mathbf{B} \prec^{Y,T} \mathbf{B}'$.   Then for any $F \in (\bolddelta{3})^{Y \uparrow}$, for any $\vec{\beta} \in [\omega_1]^{T \uparrow}$, $j^{Q_k, Q_m}\circ F_{\mathbf{B}}^{T}(\vec{\beta}) < j^{Q_{k'},Q_m}\circ F_{\mathbf{B}'}^{T}(\vec{\beta})$.
\end{mylemma}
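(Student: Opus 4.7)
The strategy is to mimic the proof of Lemma~\ref{lem:TQW_desc_order} one level higher: lift both sides of the proposed inequality into the common ultrapower $\admistwoboldextra{j^{Q_m}}$ and reduce, via \Los{}'s theorem, to a pointwise comparison for $\mu^{Q_m}$-a.e.\ $\vec{\alpha}$. The pointwise comparison then unfolds along the Brouwer--Kleene ordering of $\pi \oplus \mathbf{y}$ versus $\pi' \oplus \mathbf{y}'$, and at the decisive coordinate one invokes either Lemma~\ref{lem:TQW_desc_order} (if the decisive position is a $\pi$-entry) or the $Y$-respecting structure of $F$ together with Lemma~\ref{lem:R_respect} (if it is a $y$-entry).

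Concretely, writing $\mathbf{B} = (\mathbf{y}, \pi)$ and $\mathbf{B}' = (\mathbf{y}', \pi')$ with $\mathbf{y} = (y, X, \overrightarrow{(e,x,W)})$ (analogously for $\mathbf{y}'$), I would first fix $h \in \omega_1^{T \uparrow}$ representing $\vec{\beta}$ and unwind
\begin{displaymath}
  j^{Q_k, Q_m} \circ F^T_{\mathbf{B}}(\vec{\beta}) = [\vec{\alpha} \mapsto F_{\mathbf{y}}(h^{Q_k}_\pi(\vec{\alpha} \res \dom(Q_k)))]_{\mu^{Q_m}},
\end{displaymath}
and similarly for $\mathbf{B}'$. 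By \Los{}, it then suffices to show that for $\mu^{Q_m}$-a.e.\ $\vec{\alpha}$,
\begin{displaymath}
  F_{\mathbf{y}}(h^{Q_k}_\pi(\vec{\alpha} \res \dom(Q_k))) < F_{\mathbf{y}'}(h^{Q_{k'}}_{\pi'}(\vec{\alpha} \res \dom(Q_{k'}))).
\end{displaymath}
The hypothesis $\pi \oplus \mathbf{y} <_{BK} \pi' \oplus \mathbf{y}'$ is then analyzed at the first disagreement (or termination) coordinate $j$ of the interleaved sequences. If $j$ is a $\pi$-position, applying Lemma~\ref{lem:TQW_desc_order} to the common partial level-$\leq 1$ tower determined by the signatures of $\mathbf{y}, \mathbf{y}'$ up to depth $j$ yields a strict pointwise inequality between the corresponding coordinates of $h^{Q_k}_\pi(\vec{\alpha} \res \dom(Q_k))$ and $h^{Q_{k'}}_{\pi'}(\vec{\alpha} \res \dom(Q_{k'}))$; combined with the $Y$-order-preserving behaviour of $F$ (Lemma~\ref{lem:R_respect}), this forces the required strict inequality of $F$-values. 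If $j$ is a $y$-position, the strict $<_{BK}$ disagreement of $y(j)$ versus $y'(j)$, together with agreement on earlier coordinates, suffices directly via the tree-respecting structure of $F$.

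The main technical obstacle, I expect, is handling the two-layer interplay cleanly: the outer comparison runs through an interleaved sequence whose odd coordinates are themselves $(T, Q_k, *)$-  or $(T, Q_{k'}, *)$-descriptions carrying their own $\prec$-ordering, and one must verify that the uniform-cofinality matching conditions in clause (5) of Definition~\ref{def:description_TQY} guarantee that the lifted comparison via Lemma~\ref{lem:TQW_desc_order} is well-defined on a common partial level-$\leq 1$ tower. The passage to $\mu^{Q_m}$ via $j^{Q_k, Q_m}$ and $j^{Q_{k'}, Q_m}$ itself is routine given Lemma~\ref{lem:level_2_ultrapower_iteration_reduce}, but the bookkeeping needed to align the two restrictions $\vec{\alpha} \res \dom(Q_k)$ and $\vec{\alpha} \res \dom(Q_{k'})$ so that the decisive comparison takes place inside a single ambient tower requires care.
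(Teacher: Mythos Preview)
Your proposal is correct and is precisely the unpacking of what the paper means when it says the lemma is ``a corollary to Lemma~\ref{lem:TQW_desc_order}'' (the paper gives no further proof). The reduction via \Los{} to a pointwise comparison in $[\omega_1]^{Q_m\uparrow}$, followed by case analysis on whether the first $<_{BK}$-disagreement of $\pi\oplus\mathbf{y}$ versus $\pi'\oplus\mathbf{y}'$ occurs at a $\pi$-coordinate (where Lemma~\ref{lem:TQW_desc_order} applies) or a $y$-coordinate (where order-preservation of $F$ on $\rep(Y)$ suffices directly), is exactly the intended argument.

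Two small remarks. First, the pointwise inequality actually holds for \emph{all} $\vec{\alpha}\in[\omega_1]^{Q_m\uparrow}$, not merely $\mu^{Q_m}$-a.e., since Lemma~\ref{lem:TQW_desc_order} already gives a ``for all'' conclusion; the a.e.\ phrasing is harmless but slightly weaker than what you get. Second, you do not need Lemma~\ref{lem:R_respect} or Lemma~\ref{lem:level_2_ultrapower_iteration_reduce} here: the $y$-coordinate case is handled simply by $F$ being order-preserving on $\rep(Y)$ (built into $F\in(\bolddelta{3})^{Y\uparrow}$), and the lift $j^{Q_k,Q_m}$ is just the standard factor map for the measure projection. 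The bookkeeping worry you flag about aligning $\vec{\alpha}\res\dom(Q_k)$ and $\vec{\alpha}\res\dom(Q_{k'})$ dissolves once you observe that $\pi$ (resp.\ $\pi'$) may equally be viewed as factoring $(X,T,Q_m)$, since $\desc(T,Q_k,*)\subseteq\desc(T,Q_m,*)$; working with $h^{Q_m}_\pi$ and $h^{Q_m}_{\pi'}$ directly avoids the restriction notation entirely.
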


Suppose $(\vec{Q},\overrightarrow{(d,q,P)})=(Q_i,(d_i,q_i,P_i))_{1 \leq i \leq k}$ is a potential partial level $\leq 2$ tower and $\mathbf{B} = (\mathbf{y}, \pi) \in \desc(Y,T, (Q_k, \overrightarrow{(d,q,P)}) )$.
Define
\begin{displaymath}
  \mathbf{B} \res (Y,T,\emptyset) = \text{the constant $(Y,T,*)$-description. }
\end{displaymath}
Suppose   $\mathbf{y} = (y,X,\overrightarrow{(e,x,W)})$, $0 < \bar{k}<k$. Then
\begin{displaymath}
  \mathbf{B} \res (Y,T,Q_{\bar{k}}) \in \desc(Y,T, (Q_{\bar{k}}, (d_i,q_i,P_i)_{1 \leq i \leq \bar{k}}))
\end{displaymath}
is defined by the following: letting $l$ be the least such that $\pi(e_l,x_l) \notin \desc(T, Q_{\bar{k}}, *)$, $\mathbf{C} \in \desc(T, Q_{\bar{k}}, *)$ be such that $\mathbf{C} = \pi(e_l,x_l) \res (T, Q_{\bar{k}})$, letting $(e_{*},\mathbf{x}_{*}) = \ucf (Y[y \res l])$, $\mathbf{x}_{*} = x_{*}$ if $e_{*} = 1$, $\mathbf{x}_{*} = (x_{*},\dots)$ if $e_{*} = 2$, then
\begin{enumerate}
\item if $\mathbf{C} \neq \pi(e_*,x_*)$, then $\mathbf{B} \res (Y,T,Q_{\bar{k}})  = (\mathbf{y} \res l \concat (-1,X),  \bar{\pi})$, where $\bar{\pi}$ and $\pi$ agree on $Y_{\tree}(y \res l)$, $\bar{\pi}(e_l,x_l) = \mathbf{C}$, and $\bar{\pi}$ factors $(X, T, Q_{\bar{k}})$;
\item if $\mathbf{C} = \pi(e_*,x_*)$, then $\mathbf{B} \res (Y,T,Q_{\bar{k}}) =  (\mathbf{y} \res l,  \pi \res Y_{\tree}(y \res l))$. 
\end{enumerate}

As a corollary to Lemma~\ref{lem:TQW_description_extension_another}, the $\mathbf{B} \res (Y,T,Q_{\bar{k}}) $ operator inherits the following  continuity property. 
\begin{mylemma}
  \label{lem:YTQ_description_extension}
%Suppose $(Q_i,q_i)_{i \leq k'}$ is a finite partial level $\leq 2$ tower, $k<k'$, 
Suppose $Y$ is a level-3 tree, $T$ is a level $\leq 2$ tree, 
$Q$ is a level $\leq 2$ proper subtree of $Q'$. 
Suppose $\mathbf{B} = ( \mathbf{y}, \pi) \in \desc(Y,T,Q)$  and $\mathbf{B}' = ( \mathbf{y}', \pi') \in \desc(Y,T,Q')$, 
 $\mathbf{B} = \mathbf{B}\res (Y,T,Q) $. Suppose $E \in \mu_{\mathbb{L}}$ is a club, $\eta \in E'$ iff $\eta \in E$ and $E \cap \eta$ has order type $\eta$. 
  Then for any $F \in (\bolddelta{3})^{Y \uparrow}$, for any $h \in \omega_1^{T \uparrow}$, for any $\vec{\beta} \in [E']^{Q\uparrow}$, 
  \begin{displaymath}
    F_{\mathbf{y}} \circ h^{Q}_{\pi} (\vec{\beta}) = \sup \{ F_{\mathbf{y}'} \circ h^{Q'}_{\pi'}(\vec{\gamma}) : \vec{\gamma} \in [E]^{Q'\uparrow},\vec{\gamma} \text{ extends }\vec{\beta} \}.
  \end{displaymath}
Hence, the signature and approximation sequence of $F_{\mathbf{y}} \circ h^{Q}_{\pi}$ are proper initial segments of those of $F_{\mathbf{y}'}\circ h^{Q'}_{\pi'}$ respectively.
\end{mylemma}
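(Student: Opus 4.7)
The plan is to lift the level-2 continuity of Lemma~\ref{lem:TQW_description_extension_another} to the level-3 setting, via an argument parallel in structure to the proof of Lemma~\ref{lem:QW_description_extension_another}. First I will unpack the definition of $\mathbf{B}=\mathbf{B}'\res(Y,T,Q)$. Writing $\mathbf{y}'=(y',X',\overrightarrow{(e',x',W')})$, let $l$ be the least index with $\pi'(e'_l,x'_l)\notin\desc(T,Q,*)$ and set $\mathbf{C}=\pi'(e'_l,x'_l)\res(T,Q)$. The restriction operator splits into Case~1, where $\mathbf{C}\neq\pi'(\ucf(Y[y'\res l]))$ and $\mathbf{y}=\mathbf{y}'\res l\concat(-1,X')$ with $\pi$ obtained from $\pi'$ by redefining its value at $(e'_l,x'_l)$ to be $\mathbf{C}$; and Case~2, where $\mathbf{C}=\pi'(\ucf(Y[y'\res l]))$, giving $\mathbf{y}=\mathbf{y}'\res l$ and $\pi=\pi'\res\dom(Y_{\tree}(y'\res l))$. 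I will verify both directions of the claimed equality case by case.

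For the coordinates below $l$, I will use that whenever $\pi'(e'_i,x'_i)\in\desc(T,Q,*)$ one has $h^Q_{\pi'(e'_i,x'_i)}(\vec\beta)=h^{Q'}_{\pi'(e'_i,x'_i)}(\vec\gamma)$ for every extension $\vec\gamma$ of $\vec\beta$, since evaluating $h_{\mathbf{D}}$ for such $\mathbf{D}$ only consults entries of the underlying $g$ at indices in $Q$. Thus only the entry at $(e'_l,x'_l)$ (and, in Case~1, its continuous tail) moves as $\vec\gamma$ varies. The key input is Lemma~\ref{lem:TQW_description_extension_another}, applied to the pair $(\mathbf{C},\pi'(e'_l,x'_l))$, which yields
\[
  j^{W,W'}\!\circ h^Q_{\mathbf{C}}(\vec\beta)=\sup\{h^{Q'}_{\pi'(e'_l,x'_l)}(\vec\gamma):\vec\gamma\in[E]^{Q'\uparrow},\ \vec\gamma\text{ extends }\vec\beta\}
\]
for the level-1 trees $W\subseteq W'$ dictated by the two descriptions.

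The $\geq$ direction is then a pointwise monotonicity argument: for each $\vec\gamma$ extending $\vec\beta$, the tuple $h^{Q'}_{\pi'}(\vec\gamma)$ is bounded, coordinate by coordinate in the sense appropriate for the level-3 signature at node $y'$, by the tuple $h^Q_{\pi}(\vec\beta)$ (respectively the tuple together with its $j^{W,W'}$-limit coordinate in Case~1); composing with $F_{y'}$, which is order-preserving along its approximation sequence, gives the inequality. For the $\leq$ direction, the continuity identity above lets us, given $\eta < F_{\mathbf{y}}\circ h^Q_{\pi}(\vec\beta)$, unfold the level-3 uniform cofinality encoded by $\mathbf{y}$ at the splitting index $l$ (the $(-1,X')$ appendage in Case~1, or the $\ucf(Y[y'\res l])$-match in Case~2) to reduce the problem to finding a single sufficiently large value of $h^{Q'}_{\pi'(e'_l,x'_l)}(\vec\gamma)$, which Lemma~\ref{lem:TQW_description_extension_another} supplies. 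The hypothesis $\vec\beta\in[E']^{Q\uparrow}$ with $E'$ the order-closure of $E$ is exactly what is needed to apply that lemma.

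The main obstacle will be the Case~1 bookkeeping: one must make the continuous $(-1,X')$ tail of $\mathbf{y}$ absorb precisely the $j^{W,W'}$ factor coming from the level-2 lemma, so that the supremum closes at the level-3 level. This hinges on matching the uniform-cofinality/signature type of $F$ at node $y'\res l$ with the $W\subseteq W'$ extension, which is exactly the content of clauses (3)–(5) in the definition of $\desc(Y,T,(Q,\overrightarrow{(d,q,P)}))$. The level-2 analog of this step in the proof of Lemma~\ref{lem:QW_description_extension_another} invoked Lemmas~\ref{lem:level_2_uniform_cofinality} and~\ref{lem:level_2_uniform_cofinality_another}; here, the corresponding level-3 bookkeeping is already wired into the description-theoretic hypotheses, so the argument reduces to a careful unfolding rather than a new decomposition lemma.
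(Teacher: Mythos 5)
Your proposal takes essentially the same route as the paper, which states this lemma as a direct corollary to Lemma~\ref{lem:TQW_description_extension_another} without spelling out a proof. You correctly identify the two cases in the definition of $\mathbf{B}'\res(Y,T,Q)$, observe that the coordinates at indices below the splitting index $l$ are stable under extensions of $\vec\beta$, and reduce the variation at index $l$ to the level-2 continuity identity of Lemma~\ref{lem:TQW_description_extension_another}, which is precisely the reduction the paper intends by its ``corollary'' remark.

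=== Hints for a cleaner write-up ===

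A few things are worth tightening if you turn this into a full proof. For the $\leq$ direction, be explicit about what $F_{\mathbf{y}}$ means when $\mathbf{y}$ is of continuous type in Case~1: it is the supremum-along-the-approximation-sequence of $F_{y'\res l}$, and you need the club hypothesis $\vec\beta\in[E']^{Q\uparrow}$ (with $E'$ the set of $\eta\in E$ with $E\cap\eta$ of order type $\eta$) precisely so that the construction of the witnessing $\vec\gamma$ from Lemma~\ref{lem:TQW_description_extension_another} lands in $[E]^{Q'\uparrow}$. For the $\geq$ direction, the ``pointwise monotonicity'' should be stated via Lemma~\ref{lem:B_desc_order} (or its pointwise precursor Lemma~\ref{lem:TQW_desc_order}): for each $\vec\gamma$ extending $\vec\beta$, the tuple $h^{Q'}_{\pi'}(\vec\gamma)$ lies strictly below the supremum position encoded by $\mathbf{y}$ at the $l$-th coordinate, and composing with $F$, order-preserving along its approximation sequence, gives the bound. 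Neither of these is a gap in the sense of a wrong idea, but the current phrasing (``unfold the level-3 uniform cofinality,'' ``bounded in the sense appropriate for the level-3 signature'') should be replaced by citations to the specific lemmas that licence them.
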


\begin{mydefinition}
  \label{def:factoring_4}
   Suppose $R,Y$ are level-3 trees, $T$ is a level $\leq 2$ tree. Suppose $\rho : \dom(R) \cup \se{\emptyset}\to \desc(Y,T,*)$ is a function. % and $\dom(\rho) = \desc(R)$.
 $\rho$ \emph{factors} $(R,Y,T)$ iff
  \begin{enumerate}
  \item $\rho(\emptyset)$ is the constant $(Y,T,*)$-description. 
  \item For any $r \in \dom(R)$, $\rho(r) \in \desc(Y,T,R[r])$. 
  \item For any $r\concat (a), r \concat (b) \in \dom(R)$, if $a < _{BK} b$ and $R_{\tree}(r\concat (a))  = R_{\tree}(r \concat (b))$ then $\rho(r\concat (a)) \prec \rho(r \concat (b))$. 
  \item For any $r \in \dom(R) $, $\rho(r^{-}) =  \rho(r) \res (Y, T, R_{\tree}(r^{-}))$. 
\end{enumerate}
\end{mydefinition}

\begin{myexample}
     Let $R,Y,T$ be as in Section~\ref{sec:level-3-3}. There is a $(R,Y,T)$-factoring map $\rho$ characterizing the $R$-equivalence class of $\theta_{RY}$. We have for instance,
   \begin{itemize}
   \item $\rho (((0),(1))) = (Y[((1),(0))], \pi)$, where $\pi(2,((0))) = (2, (-1,\se{(0)},((0))), \tau)$, $\tau((0)) = (2, ( (0), \se{(0)}, ((0)) ), \sigma)$, $\sigma((0)) = (0)$.
   \item $\rho(((0),(0),(0))) = (  Y[((1), -1), X], \pi')$, $X$ is the completion of $Y((1))$ by assigning the extra node $(2, ((0)))$ a partial level $\leq 1$ tree of degree 0, $\pi'(2,((0))) = (2, \comp{2}{T}[((0),(0))], \tau')$, $\tau'((0)) = (2, ((0), \se{(0)}, ((0))), \sigma') $, $\sigma' ((0))=(0)$,  $\tau'((0,0)) = (1, ((0)))$.
   \end{itemize}
 \end{myexample}

If $Y$ is a level-3 tree, then
\begin{displaymath}
  \id_{Y,*}
\end{displaymath}
factors $(Y, Y, Q^0)$ where $\id_{Y,*}(y) =((y, X, \overrightarrow{(e,x,W)}), \id_{*, X})$ for $Y[y] = (X, \overrightarrow{(e,x,W)})$.

For level-$3$ trees $R,Y$, we say that $\rho: \dom(R) \to \dom(Y)$ \emph{factors} $(R,Y)$ iff
\begin{enumerate}
\item If $r \in \dom({R})$ then ${R}(r) = {Y}(\rho(r))$.
\item If $r ,r' \in \dom({R})$ and $r \subseteq r'$, then  $ \rho(r) \subseteq \rho(r')$.
\item If  $R_{\tree}(r \concat (a)) = R_{\tree}(r \concat (b))$ and $a <_{BK} b$, then $\rho(r\concat (a))<_{BK} \rho(r \concat (b))$.
\end{enumerate}
If in addition, $\rho$ is onto $\dom(Y)$, then $\rho$ is called a \emph{level-3 tree isomorphism} between $R$ and $Y$. If $\rho$ factors $(R,Y)$ and $\vec{\gamma} = (\gamma_y)_{y \in \dom(Y)} \in [\bolddelta{3}]^{Y \uparrow}$, let $\vec{\gamma}_{\rho}  =  (\gamma_{\rho,r})_{r \in \dom(R)}\in [\bolddelta{3}]^{R \uparrow}$ where $\gamma_{\rho,r} = \gamma_{\rho(r)}$.

If $\rho$ factors $(R,Y,T)$ and $F \in (\bolddelta{3})^{Y\uparrow}$, let
\begin{displaymath}
  F^{T}_{\rho} : [\omega_1]^{T \uparrow} \to [\bolddelta{3}]^{R \uparrow} 
\end{displaymath}
be the function that sends $\vec{\xi}$ to $(F^{T}_{\rho(r)}(\vec{\xi}))_{r \in \dom(R)}$. The fact that $F^T_{\rho} (\vec{\xi}) \in [\bolddelta{3}]^{R \uparrow}$ follows from Lemmas~\ref{lem:B_desc_order}-\ref{lem:YTQ_description_extension}.

Suppose $Y$ is a level-3 tree, $T$ is a level $\leq 2$ tree. \emph{A representation of } $Y \otimes T$ is a pair $(R, \rho)$ such that
\begin{enumerate}
\item $R$ is a level-3 tree;
\item $\rho $ factors $(R,Y,T)$;
\item $\ran(\rho) = \desc(Y,T,*)$;
\end{enumerate}
Representations of $Y  \otimes T$ are clearly mutually isomorphic. As before, we shall regard
\begin{displaymath}
  Y \otimes T
\end{displaymath}
itself as a ``level-3 tree'' whose domain is the set of non-constant $(Y,T,*)$-descriptions and sends $\mathbf{B} \in \desc(Y,T, (Q, (d_i,q_i,P_i)_{1 \leq i \leq k}))$ to $(Q, (d_k,q_k,P_k))$. If $Q$ is a level $\leq 2$ tree, then $(Y \otimes T) \otimes Q $ is a ``level-3 tree'' whose domain consists of non-constant $(Y \otimes T, Q, *)$-descriptions. There is a natural isomorphism
\begin{displaymath}
  \iota_{Y,T,Q} 
\end{displaymath}
between ``level-3 trees'' $(Y \otimes T) \otimes Q$ and $Y \otimes (T \otimes Q)$, defined as follows:
\begin{enumerate}
\item If $\mathbf{A} = ( (\mathbf{B}, Z, \overrightarrow{(d,z,N)}) , \psi) \in \desc( Y \otimes T, Q, U)$, $\mathbf{B} = (\mathbf{y}, \pi) \in \desc(Y, T, (Z, \overrightarrow{(d,z,N)}))$, $\mathbf{y} = (y, X, \overrightarrow{(e,x,W)})$, then $\iota_{Y,T,Q} (\mathbf{A})  =  (\mathbf{y}, \iota^{-1}_{T,Q,U} \circ ( T \otimes \psi) \circ  \pi)$. %, where $\psi * \pi$ factors $(X, T \otimes Q, U)$, $\psi * \pi (e,x) = \iota_{T,Q,U}^{-1} (c^e_x, \mathbf{t}^e_x, \psi \circ \tau^e_x)$ if $\pi(e,x) = (c^e_x, \mathbf{t}^e_x, \tau^e_x)$.
\item If $\mathbf{A} = ((\mathbf{B} \concat (-1,Z^{+}), Z^{+}, \overrightarrow{(d,z,N)}), \psi) \in \desc(Y \otimes T, Q, U)$, $\overrightarrow{(d,z,N)} = (d_i,z_i,N_i)_{1 \leq i \leq l}$,  $\mathbf{B}  =  (\mathbf{y}, \pi) \in \desc(Y, T, (Z, \overrightarrow{(d,z,N)}))$, $\mathbf{y} = (y,X, (e_i,x_i,W_i)_{1 \leq i \leq k})$, then
  \begin{enumerate}
  \item if $y$ is of discontinuous type, then $\iota_{Y,T,Q}(\mathbf{A}) = (\mathbf{y} \concat (-1,X^{+}), \psi *_0 \pi)$, where $\psi*_0 \pi$ factors $(X^{+}, T \otimes Q, U)$, $X^{+}$ is a completion of $(X, (e_i,x_i))$, $\psi *_0 \pi$ extends $ \iota^{-1}_{T,Q,U} \circ ( T \otimes \psi) \circ  \pi$, $\psi *_0 \pi (e_k , x_k) = \iota_{T,Q,U}^{-1} (2, \mathbf{t}_0, \tau)$, $\mathbf{t}_0 = ((-1), \se{(0)}, ((0)))$, $\tau((0)) = \psi (d_l,z_l)$;
  \item if $y$ is of continuous type, then $\iota_{Y,T,Q}(\mathbf{A}) = (\mathbf{y} , \psi *_1 \pi)$, where $\psi*_1 \pi$ factors $(X, T \otimes Q, U)$,  $\psi *_1 \pi$ extends $ \iota^{-1}_{T,Q,U} \circ ( T \otimes \psi) \circ \pi \res (\dom(X) \setminus \se{(e_k,x_k)})$, $\psi *_1 \pi (e_k , x_k) = \iota_{T,Q,U}^{-1} (2, \mathbf{t} \concat (-1), \tau^{+})$,  where $\pi(e_k,x_k) = (2, \mathbf{t}, \tau)$, $\mathbf{t} = (t, S, (s_i)_{i \leq m})$, $\tau^{+}$ extends $\tau$, $\tau^{+}(s_m) = \psi (d_l,z_l)$.
  \end{enumerate}
\end{enumerate}
$\iota_{Y,T,Q}$ justifies the associativity of the $\otimes$ operator acting on level $(3, \leq 2, \leq 2)$ trees.

\begin{myexample}
  Let $Y = R^2$, $T = Q^{21}$, $Q = Q^{20}$. A representation of $Y \otimes T$ is $R$, and $\rho_R$ is a "level-3 tree isomorphism" between $R$ and  $Y \otimes T$, given as follows, where for $\mathbf{B} = (\mathbf{y},\pi)$,
  \begin{displaymath}
    \langle\langle \mathbf{B} \rangle\rangle = \langle \pi \rangle \oplus \mathbf{y},
  \end{displaymath}
where $ \langle \pi \rangle (\cdot) = \langle \pi (\cdot)\rangle$.
\begin{itemize}\item $R(((1)))$  has degree 2

$\langle\langle\rho_R((1))\rangle\rangle=$ ( (0) )

\item $R_{\tree}(((1), (3)))=$ the completion of $R(((1)))$ that sends $(2, ((0)))$ to the completion of $R_{\tree}(((1)))(2, \emptyset)$  whose node component is $(0, 0)$

\noindent$R_{\node}((1), (3))= (2, ((0, 0)))$

$\langle\langle\rho_R((1), (3))\rangle\rangle=$ ( (0), (2, ((2, ((0), (0))), (0))), $-1$ )

\item $R_{\tree}(((1), (2)))=$ the completion of $R(((1)))$ that sends $(2, ((0)))$ to the completion of $R_{\tree}(((1)))(2, \emptyset)$  whose node component is $(0, 0)$

\noindent$R_{\node}((1), (2))= (1, (0))$

$\langle\langle\rho_R((1), (2))\rangle\rangle=$ ( (0), (2, ((2, ((0), (0))), (0), (2, ((0), $-1$)), $-1$)), $-1$ )

\item $R_{\tree}(((1), (1)))=$ the completion of $R(((1)))$ that sends $(2, ((0)))$ to the completion of $R_{\tree}(((1)))(2, \emptyset)$  whose node component is $-1$

\noindent$R_{\node}((1), (1))= (2, ((0, 0)))$

$\langle\langle\rho_R((1), (1))\rangle\rangle=$ ( (0), (2, ((2, ((0), (0))), (0))), $-1$ )

\item $R_{\tree}(((1), (0)))=$ the completion of $R(((1)))$ that sends $(2, ((0)))$ to the completion of $R_{\tree}(((1)))(2, \emptyset)$  whose node component is $-1$

\noindent$R_{\node}((1), (0))= (1, (0))$

$\langle\langle\rho_R((1), (0))\rangle\rangle=$ ( (0), (2, ((2, ((0), (0))), (0), (2, ((0), $-1$)), $-1$)), $-1$ )

\item $R(((0)))$  has degree 1

$\langle\langle\rho_R((0))\rangle\rangle=$ ( (0), (2, ((2, ((0), $-1$)), (0))), $-1$ )

\end{itemize}A representation of $T \otimes Q$ is $M$, and $\pi_M$ is a "level $\leq 2$ tree isomorphism" between $M$ and  $T \otimes Q$, given as follows:
\begin{itemize}\item $\comp{2}{M}_{\node}(((3)))= -1$

$\langle\langle\pi_M((3))\rangle\rangle=$ (2, ((2, ((0), (0))), (0)))

\item $\comp{2}{M}_{\node}(((2)))= (0, 0)$

$\langle\langle\pi_M((2))\rangle\rangle=$ (2, ((2, ((0), (0))), (0), (2, ((0), $-1$)), $-1$))

\item $\comp{2}{M}_{\node}(((2), (0)))= -1$

$\langle\langle\pi_M((2), (0))\rangle\rangle=$ (2, ((2, ((0), (0))), (0), (2, ((0, 0), (0))), $-1$))

\item $\comp{2}{M}_{\node}(((1)))= -1$

$\langle\langle\pi_M((1))\rangle\rangle=$ (2, ((2, ((0), (0))), $-1$))

\item $\comp{2}{M}_{\node}(((0)))= (0, 0)$

$\langle\langle\pi_M((0))\rangle\rangle=$ (2, ((2, ((0), $-1$)), (0)))

\item $\comp{2}{M}_{\node}(((0), (0)))= -1$

$\langle\langle\pi_M((0), (0))\rangle\rangle=$ (2, ((2, ((0), $-1$)), (0), (2, ((0, 0), (0))), $-1$))

\end{itemize}Both $(Y \otimes T) \otimes Q$ and $Y \otimes (T \otimes Q)$ can be represented by the tree $N$. Let $\psi$ be the "level-3 tree isomorphism" between $N$ and  $R \otimes Q$ and let $\psi'$ be the "level-3 tree isomorphism" between $N$ and  $Y \otimes M$. Then $N$ and $\psi$, $\psi'$ are as follows:
\begin{etaremune}
\item $N(((4)))$  has degree 2

$\langle\langle\psi((4))\rangle\rangle=$ ( (1) )

$\langle\langle\psi'((4))\rangle\rangle=$ ( (0) )

\item $N_{\tree}(((4), (15)))=$ the completion of $N(((4)))$ that sends $(2, ((0)))$ to the completion of $N_{\tree}(((4)))(2, \emptyset)$  whose node component is $(0, 0)$

\noindent$N_{\node}((4), (15))= (0, -1)$

$\langle\langle\psi((4), (15))\rangle\rangle=$ ( (1), (2, ((2, ((0), (0))), (0))), (1) )

$\langle\langle\psi'((4), (15))\rangle\rangle=$ ( (0), (2, ((2, ((0), (0))), (3))), $-1$ )

\item $N_{\tree}(((4), (14)))=$ the completion of $N(((4)))$ that sends $(2, ((0)))$ to the completion of $N_{\tree}(((4)))(2, \emptyset)$  whose node component is $(0, 0)$

\noindent$N_{\node}((4), (14))= (2, ((0, 0)))$

$\langle\langle\psi((4), (14))\rangle\rangle=$ ( (1), (2, ((2, ((0), (0))), (0))), (1), (2, ((2, ((0), (0))), $-1$)), $-1$ )

$\langle\langle\psi'((4), (14))\rangle\rangle=$ ( (0), (2, ((2, ((0), (0))), (2))), $-1$ )

\item $N_{\tree}(((4), (14), (1)))=$ the completion of $N(((4), (14)))$ that sends $(2, ((0, 0)))$ to the completion of $N_{\tree}(((4), (14)))(2, \emptyset)$  whose node component is $(0, 0)$

\noindent$N_{\node}((4), (14), (1))= (0, -1)$

$\langle\langle\psi((4), (14), (1))\rangle\rangle=$ ( (1), (2, ((2, ((0), (0))), (0))), (1), (2, ((2, ((0), (0, 0))), (0))), $-1$ )

$\langle\langle\psi'((4), (14), (1))\rangle\rangle=$ ( (0), (2, ((2, ((0), (0))), (2), (2, ((0), (0, 0))), (0))), $-1$ )

\item $N_{\tree}(((4), (14), (0)))=$ the completion of $N(((4), (14)))$ that sends $(2, ((0, 0)))$ to the completion of $N_{\tree}(((4), (14)))(2, \emptyset)$  whose node component is $-1$

\noindent$N_{\node}((4), (14), (0))= (0, -1)$

$\langle\langle\psi((4), (14), (0))\rangle\rangle=$ ( (1), (2, ((2, ((0), (0))), (0))), (1), (2, ((2, ((0), (0, 0))), (0))), $-1$ )

$\langle\langle\psi'((4), (14), (0))\rangle\rangle=$ ( (0), (2, ((2, ((0), (0))), (2), (2, ((0), (0, 0))), (0))), $-1$ )

\item $N_{\tree}(((4), (13)))=$ the completion of $N(((4)))$ that sends $(2, ((0)))$ to the completion of $N_{\tree}(((4)))(2, \emptyset)$  whose node component is $(0, 0)$

\noindent$N_{\node}((4), (13))= (0, -1)$

$\langle\langle\psi((4), (13))\rangle\rangle=$ ( (1), (2, ((2, ((0), (0))), (0))), (1), (2, ((2, ((0), $-1$)), (0))), $-1$ )

$\langle\langle\psi'((4), (13))\rangle\rangle=$ ( (0), (2, ((2, ((0), (0))), (2), (2, ((0), $-1$)), (0))), $-1$ )

\item $N_{\tree}(((4), (12)))=$ the completion of $N(((4)))$ that sends $(2, ((0)))$ to the completion of $N_{\tree}(((4)))(2, \emptyset)$  whose node component is $(0, 0)$

\noindent$N_{\node}((4), (12))= (1, (0))$

$\langle\langle\psi((4), (12))\rangle\rangle=$ ( (1), (2, ((2, ((0), (0))), (0))), (0) )

$\langle\langle\psi'((4), (12))\rangle\rangle=$ ( (0), (2, ((2, ((0), (0))), (2), (2, ((0), $-1$)), $-1$)), $-1$ )

\item $N_{\tree}(((4), (12), (0)))=$ the unique completion of $N(((4), (12)))$

\noindent$N_{\node}((4), (12), (0))= (0, -1)$

$\langle\langle\psi((4), (12), (0))\rangle\rangle=$ ( (1), (2, ((2, ((0), (0))), (0))), (0), (2, ((0), (0))), $-1$ )

$\langle\langle\psi'((4), (12), (0))\rangle\rangle=$ ( (0), (2, ((2, ((0), (0))), (2), (1, ((0))), (0))), $-1$ )

\item $N_{\tree}(((4), (11)))=$ the completion of $N(((4)))$ that sends $(2, ((0)))$ to the completion of $N_{\tree}(((4)))(2, \emptyset)$  whose node component is $(0, 0)$

\noindent$N_{\node}((4), (11))= (0, -1)$

$\langle\langle\psi((4), (11))\rangle\rangle=$ ( (1), (2, ((2, ((0), (0))), (0))), $-1$ )

$\langle\langle\psi'((4), (11))\rangle\rangle=$ ( (0), (2, ((2, ((0), (0))), (1))), $-1$ )

\item $N_{\tree}(((4), (10)))=$ the completion of $N(((4)))$ that sends $(2, ((0)))$ to the completion of $N_{\tree}(((4)))(2, \emptyset)$  whose node component is $(0, 0)$

\noindent$N_{\node}((4), (10))= (2, ((0, 0)))$

$\langle\langle\psi((4), (10))\rangle\rangle=$ ( (1), (2, ((2, ((0), (0))), $-1$)), (3) )

$\langle\langle\psi'((4), (10))\rangle\rangle=$ ( (0), (2, ((2, ((0), (0))), (0))), $-1$ )

\item $N_{\tree}(((4), (10), (1)))=$ the completion of $N(((4), (10)))$ that sends $(2, ((0, 0)))$ to the completion of $N_{\tree}(((4), (10)))(2, \emptyset)$  whose node component is $(0, 0)$

\noindent$N_{\node}((4), (10), (1))= (0, -1)$

$\langle\langle\psi((4), (10), (1))\rangle\rangle=$ ( (1), (2, ((2, ((0), (0))), $-1$)), (3), (2, ((2, ((0), (0, 0))), (0))), $-1$ )

$\langle\langle\psi'((4), (10), (1))\rangle\rangle=$ ( (0), (2, ((2, ((0), (0))), (0), (2, ((0), (0, 0))), (0))), $-1$ )

\item $N_{\tree}(((4), (10), (0)))=$ the completion of $N(((4), (10)))$ that sends $(2, ((0, 0)))$ to the completion of $N_{\tree}(((4), (10)))(2, \emptyset)$  whose node component is $-1$

\noindent$N_{\node}((4), (10), (0))= (0, -1)$

$\langle\langle\psi((4), (10), (0))\rangle\rangle=$ ( (1), (2, ((2, ((0), (0))), $-1$)), (3), (2, ((2, ((0), (0, 0))), (0))), $-1$ )

$\langle\langle\psi'((4), (10), (0))\rangle\rangle=$ ( (0), (2, ((2, ((0), (0))), (0), (2, ((0), (0, 0))), (0))), $-1$ )

\item $N_{\tree}(((4), (9)))=$ the completion of $N(((4)))$ that sends $(2, ((0)))$ to the completion of $N_{\tree}(((4)))(2, \emptyset)$  whose node component is $(0, 0)$

\noindent$N_{\node}((4), (9))= (0, -1)$

$\langle\langle\psi((4), (9))\rangle\rangle=$ ( (1), (2, ((2, ((0), (0))), $-1$)), (3), (2, ((2, ((0), $-1$)), (0))), $-1$ )

$\langle\langle\psi'((4), (9))\rangle\rangle=$ ( (0), (2, ((2, ((0), (0))), (0), (2, ((0), $-1$)), (0))), $-1$ )

\item $N_{\tree}(((4), (8)))=$ the completion of $N(((4)))$ that sends $(2, ((0)))$ to the completion of $N_{\tree}(((4)))(2, \emptyset)$  whose node component is $(0, 0)$

\noindent$N_{\node}((4), (8))= (1, (0))$

$\langle\langle\psi((4), (8))\rangle\rangle=$ ( (1), (2, ((2, ((0), (0))), $-1$)), (2) )

$\langle\langle\psi'((4), (8))\rangle\rangle=$ ( (0), (2, ((2, ((0), (0))), (0), (2, ((0), $-1$)), $-1$)), $-1$ )

\item $N_{\tree}(((4), (8), (0)))=$ the unique completion of $N(((4), (8)))$

\noindent$N_{\node}((4), (8), (0))= (0, -1)$

$\langle\langle\psi((4), (8), (0))\rangle\rangle=$ ( (1), (2, ((2, ((0), (0))), $-1$)), (2), (2, ((0), (0))), $-1$ )

$\langle\langle\psi'((4), (8), (0))\rangle\rangle=$ ( (0), (2, ((2, ((0), (0))), (0), (1, ((0))), (0))), $-1$ )

\item $N_{\tree}(((4), (7)))=$ the completion of $N(((4)))$ that sends $(2, ((0)))$ to the completion of $N_{\tree}(((4)))(2, \emptyset)$  whose node component is $-1$

\noindent$N_{\node}((4), (7))= (0, -1)$

$\langle\langle\psi((4), (7))\rangle\rangle=$ ( (1), (2, ((2, ((0), (0))), (0))), (1) )

$\langle\langle\psi'((4), (7))\rangle\rangle=$ ( (0), (2, ((2, ((0), (0))), (3))), $-1$ )

\item $N_{\tree}(((4), (6)))=$ the completion of $N(((4)))$ that sends $(2, ((0)))$ to the completion of $N_{\tree}(((4)))(2, \emptyset)$  whose node component is $-1$

\noindent$N_{\node}((4), (6))= (2, ((0, 0)))$

$\langle\langle\psi((4), (6))\rangle\rangle=$ ( (1), (2, ((2, ((0), (0))), (0))), (1), (2, ((2, ((0), (0))), $-1$)), $-1$ )

$\langle\langle\psi'((4), (6))\rangle\rangle=$ ( (0), (2, ((2, ((0), (0))), (2))), $-1$ )

\item $N_{\tree}(((4), (6), (1)))=$ the completion of $N(((4), (6)))$ that sends $(2, ((0, 0)))$ to the completion of $N_{\tree}(((4), (6)))(2, \emptyset)$  whose node component is $(0, 0)$

\noindent$N_{\node}((4), (6), (1))= (0, -1)$

$\langle\langle\psi((4), (6), (1))\rangle\rangle=$ ( (1), (2, ((2, ((0), (0))), (0))), (1), (2, ((2, ((0), (0, 0))), (0))), $-1$ )

$\langle\langle\psi'((4), (6), (1))\rangle\rangle=$ ( (0), (2, ((2, ((0), (0))), (2), (2, ((0), (0, 0))), (0))), $-1$ )

\item $N_{\tree}(((4), (6), (0)))=$ the completion of $N(((4), (6)))$ that sends $(2, ((0, 0)))$ to the completion of $N_{\tree}(((4), (6)))(2, \emptyset)$  whose node component is $-1$

\noindent$N_{\node}((4), (6), (0))= (0, -1)$

$\langle\langle\psi((4), (6), (0))\rangle\rangle=$ ( (1), (2, ((2, ((0), (0))), (0))), (1), (2, ((2, ((0), (0, 0))), (0))), $-1$ )

$\langle\langle\psi'((4), (6), (0))\rangle\rangle=$ ( (0), (2, ((2, ((0), (0))), (2), (2, ((0), (0, 0))), (0))), $-1$ )

\item $N_{\tree}(((4), (5)))=$ the completion of $N(((4)))$ that sends $(2, ((0)))$ to the completion of $N_{\tree}(((4)))(2, \emptyset)$  whose node component is $-1$

\noindent$N_{\node}((4), (5))= (0, -1)$

$\langle\langle\psi((4), (5))\rangle\rangle=$ ( (1), (2, ((2, ((0), (0))), (0))), (1), (2, ((2, ((0), $-1$)), (0))), $-1$ )

$\langle\langle\psi'((4), (5))\rangle\rangle=$ ( (0), (2, ((2, ((0), (0))), (2), (2, ((0), $-1$)), (0))), $-1$ )

\item $N_{\tree}(((4), (4)))=$ the completion of $N(((4)))$ that sends $(2, ((0)))$ to the completion of $N_{\tree}(((4)))(2, \emptyset)$  whose node component is $-1$

\noindent$N_{\node}((4), (4))= (1, (0))$

$\langle\langle\psi((4), (4))\rangle\rangle=$ ( (1), (2, ((2, ((0), (0))), (0))), (0) )

$\langle\langle\psi'((4), (4))\rangle\rangle=$ ( (0), (2, ((2, ((0), (0))), (2), (2, ((0), $-1$)), $-1$)), $-1$ )

\item $N_{\tree}(((4), (4), (0)))=$ the unique completion of $N(((4), (4)))$

\noindent$N_{\node}((4), (4), (0))= (0, -1)$

$\langle\langle\psi((4), (4), (0))\rangle\rangle=$ ( (1), (2, ((2, ((0), (0))), (0))), (0), (2, ((0), (0))), $-1$ )

$\langle\langle\psi'((4), (4), (0))\rangle\rangle=$ ( (0), (2, ((2, ((0), (0))), (2), (1, ((0))), (0))), $-1$ )

\item $N_{\tree}(((4), (3)))=$ the completion of $N(((4)))$ that sends $(2, ((0)))$ to the completion of $N_{\tree}(((4)))(2, \emptyset)$  whose node component is $-1$

\noindent$N_{\node}((4), (3))= (0, -1)$

$\langle\langle\psi((4), (3))\rangle\rangle=$ ( (1), (2, ((2, ((0), (0))), (0))), $-1$ )

$\langle\langle\psi'((4), (3))\rangle\rangle=$ ( (0), (2, ((2, ((0), (0))), (1))), $-1$ )

\item $N_{\tree}(((4), (2)))=$ the completion of $N(((4)))$ that sends $(2, ((0)))$ to the completion of $N_{\tree}(((4)))(2, \emptyset)$  whose node component is $-1$

\noindent$N_{\node}((4), (2))= (2, ((0, 0)))$

$\langle\langle\psi((4), (2))\rangle\rangle=$ ( (1), (2, ((2, ((0), (0))), $-1$)), (1) )

$\langle\langle\psi'((4), (2))\rangle\rangle=$ ( (0), (2, ((2, ((0), (0))), (0))), $-1$ )

\item $N_{\tree}(((4), (2), (1)))=$ the completion of $N(((4), (2)))$ that sends $(2, ((0, 0)))$ to the completion of $N_{\tree}(((4), (2)))(2, \emptyset)$  whose node component is $(0, 0)$

\noindent$N_{\node}((4), (2), (1))= (0, -1)$

$\langle\langle\psi((4), (2), (1))\rangle\rangle=$ ( (1), (2, ((2, ((0), (0))), $-1$)), (1), (2, ((2, ((0), (0, 0))), (0))), $-1$ )

$\langle\langle\psi'((4), (2), (1))\rangle\rangle=$ ( (0), (2, ((2, ((0), (0))), (0), (2, ((0), (0, 0))), (0))), $-1$ )

\item $N_{\tree}(((4), (2), (0)))=$ the completion of $N(((4), (2)))$ that sends $(2, ((0, 0)))$ to the completion of $N_{\tree}(((4), (2)))(2, \emptyset)$  whose node component is $-1$

\noindent$N_{\node}((4), (2), (0))= (0, -1)$

$\langle\langle\psi((4), (2), (0))\rangle\rangle=$ ( (1), (2, ((2, ((0), (0))), $-1$)), (1), (2, ((2, ((0), (0, 0))), (0))), $-1$ )

$\langle\langle\psi'((4), (2), (0))\rangle\rangle=$ ( (0), (2, ((2, ((0), (0))), (0), (2, ((0), (0, 0))), (0))), $-1$ )

\item $N_{\tree}(((4), (1)))=$ the completion of $N(((4)))$ that sends $(2, ((0)))$ to the completion of $N_{\tree}(((4)))(2, \emptyset)$  whose node component is $-1$

\noindent$N_{\node}((4), (1))= (0, -1)$

$\langle\langle\psi((4), (1))\rangle\rangle=$ ( (1), (2, ((2, ((0), (0))), $-1$)), (1), (2, ((2, ((0), $-1$)), (0))), $-1$ )

$\langle\langle\psi'((4), (1))\rangle\rangle=$ ( (0), (2, ((2, ((0), (0))), (0), (2, ((0), $-1$)), (0))), $-1$ )

\item $N_{\tree}(((4), (0)))=$ the completion of $N(((4)))$ that sends $(2, ((0)))$ to the completion of $N_{\tree}(((4)))(2, \emptyset)$  whose node component is $-1$

\noindent$N_{\node}((4), (0))= (1, (0))$

$\langle\langle\psi((4), (0))\rangle\rangle=$ ( (1), (2, ((2, ((0), (0))), $-1$)), (0) )

$\langle\langle\psi'((4), (0))\rangle\rangle=$ ( (0), (2, ((2, ((0), (0))), (0), (2, ((0), $-1$)), $-1$)), $-1$ )

\item $N_{\tree}(((4), (0), (0)))=$ the unique completion of $N(((4), (0)))$

\noindent$N_{\node}((4), (0), (0))= (0, -1)$

$\langle\langle\psi((4), (0), (0))\rangle\rangle=$ ( (1), (2, ((2, ((0), (0))), $-1$)), (0), (2, ((0), (0))), $-1$ )

$\langle\langle\psi'((4), (0), (0))\rangle\rangle=$ ( (0), (2, ((2, ((0), (0))), (0), (1, ((0))), (0))), $-1$ )

\item $N(((3)))$  has degree 0

$\langle\langle\psi((3))\rangle\rangle=$ ( (1), (2, ((2, ((0), $-1$)), (0))), (1) )

$\langle\langle\psi'((3))\rangle\rangle=$ ( (0), (2, ((2, ((0), $-1$)), (3))), $-1$ )

\item $N(((2)))$  has degree 1

$\langle\langle\psi((2))\rangle\rangle=$ ( (1), (2, ((2, ((0), $-1$)), (0))), (0) )

$\langle\langle\psi'((2))\rangle\rangle=$ ( (0), (2, ((2, ((0), $-1$)), (2))), $-1$ )

\item $N_{\tree}(((2), (0)))=$ the unique completion of $N(((2)))$

\noindent$N_{\node}((2), (0))= (0, -1)$

$\langle\langle\psi((2), (0))\rangle\rangle=$ ( (1), (2, ((2, ((0), $-1$)), (0))), (0), (2, ((0), (0))), $-1$ )

$\langle\langle\psi'((2), (0))\rangle\rangle=$ ( (0), (2, ((2, ((0), $-1$)), (2), (1, ((0))), (0))), $-1$ )

\item $N(((1)))$  has degree 0

$\langle\langle\psi((1))\rangle\rangle=$ ( (1), (2, ((2, ((0), $-1$)), (0))), $-1$ )

$\langle\langle\psi'((1))\rangle\rangle=$ ( (0), (2, ((2, ((0), $-1$)), (1))), $-1$ )

\item $N(((0)))$  has degree 1

$\langle\langle\psi((0))\rangle\rangle=$ ( (0) )

$\langle\langle\psi'((0))\rangle\rangle=$ ( (0), (2, ((2, ((0), $-1$)), (0))), $-1$ )

\item $N_{\tree}(((0), (0)))=$ the unique completion of $N(((0)))$

\noindent$N_{\node}((0), (0))= (0, -1)$

$\langle\langle\psi((0), (0))\rangle\rangle=$ ( (0), (2, ((0), (0))), $-1$ )

$\langle\langle\psi'((0), (0))\rangle\rangle=$ ( (0), (2, ((2, ((0), $-1$)), (0), (1, ((0))), (0))), $-1$ )

\end{etaremune}

\end{myexample}

The identity map $\id_{Y \otimes T}$ factors $(Y \otimes T, Y, T)$.  $\rho$ factors $(R,Y,T)$ iff $\rho $ factors $(R , Y \otimes T)$. If $y \in \dom(Y)$, $\mathbf{y} = (y, X, \overrightarrow{(e,x,W)}) \in \desc(Y)$, 
\begin{displaymath}
  Y \otimes_y T
\end{displaymath}
is the level-3 subtree of $Y \otimes T$ whose domain is $\dom(Y \otimes Q^0)$ plus all the $(Y,T,*)$-descriptions of the form $(\mathbf{y}, \tau)$. 
If $\pi$ factors level $\leq 2$ trees $(T,Q)$, then
\begin{displaymath}
  Y \otimes \pi
\end{displaymath}
factors $(Y \otimes T, Y \otimes Q)$, where $Y \otimes \pi ( \mathbf{y}, \psi )  =  (\mathbf{y}, ( \pi \otimes U ) \circ \psi)$ for $(\mathbf{y}, \psi) \in \desc(Y, T, U)$.

If $\rho$ factors finite trees $(R,Y,T)$, then $\rho$ induces
\begin{displaymath}
  \tilde{\rho}^T : \exexdesc(R ) \to \exexdesc(Y)
\end{displaymath}
as follows:
\begin{enumerate}
\item If $\mathbf{A} = (\emptyset,\emptyset,\emptyset)$, then $\tilde{\rho}^T(\mathbf{A}) = \mathbf{A}$.
\item If $\mathbf{A} = (\mathbf{r}, \psi, U)$, $\mathbf{r} = (r, Q, \overrightarrow{(d,q,P)})$ is of discontinuous type, $\rho(r) = (\mathbf{y}, \pi)$, then $\tilde{\rho}^T (\mathbf{A}) = (\mathbf{y}, (T \otimes \psi) \circ \pi )$. 
\item If $\mathbf{A} = (\mathbf{r}, \psi, U)$, $\mathbf{r} = (r, Q, \overrightarrow{(d,q,P)})$ is of continuous type, $\overrightarrow{(d,q,P)} = (d_i,q_i,P_i)_{1 \leq i \leq l}$, 
$\rho(r^{-}) = (\mathbf{y}, \pi)$, $\mathbf{y}  = (y, X, (e_i,x_i,W_i)_{1 \leq i \leq k})$, 
  \begin{enumerate}
  \item if $y$ is of discontinuous type, then $\tilde{\rho}^T(\mathbf{A}) = ( \mathbf{y} \concat (-1,X^{+}), \psi *_0 \pi)$, where $\psi *_0 \pi$ factors $(X^{+}, T \otimes U)$, $\psi *_0 \pi$ extends $(T \otimes \psi) \circ \pi$, $\psi *_0 \pi (e_k,x_k) = (2, \mathbf{t}_0, \tau)$, $\mathbf{t}_0 = ((-1), \se{(0)}, ((0)))$, $\tau((0)) = \psi (d_l,p_l)$; 
  \item if $y$ is of continuous type, then $\tilde{\rho}^T(\mathbf{A}) = (\mathbf{y} , \psi *_1 \pi)$, where $\psi *_1 \pi$ factors $(X, T \otimes U)$, $\psi *_1 \pi$ extends $(T \otimes \psi) \circ (\pi \res \dom(X) \setminus \se{(e_k,x_k)})$, $\psi *_1 \pi (e_k,x_k) = (2, \mathbf{t} \concat (-1), \tau^{+})$, where $\pi(e_k,x_k) = (2, \mathbf{t}, \tau)$, $\mathbf{t} = (t,S, (s_i)_{i \leq m})$, $\tau^{+}$ extends $\tau$, $\tau^{+}(s_m) = \psi(d_l,p_l)$.
  \end{enumerate}
\end{enumerate}
$\mathbf{A} \prec^R_{*} \mathbf{A}'$ iff $\tilde{\rho}^T(\mathbf{A}) \prec^Y_{*} \tilde{\rho}^T(\mathbf{A}')$; $\mathbf{A} \sim^R_{*} \mathbf{A}'$ iff $\tilde{\rho}^T(\mathbf{A}) \sim^Y_{*} \tilde{\rho}^T(\mathbf{A}')$. A purely combinatorial argument shows that if $R = Y \otimes T$, then for any $\mathbf{B} \in \exexdesc(Y)$ there is $\mathbf{A} \in \exexdesc(R)$ such that $\tilde{\rho}^T(\mathbf{A}) \sim^Y_{*} \mathbf{B}$. % The key step is that if $\pi$ factors $(X, U)$, then there exist $Z$ and $\phi$ minimally factors $(U, T \otimes Z)$, so $\phi \circ \pi$ minimally factors $(X, T \otimes Z)$. 

\begin{mylemma}
  \label{lem:jQ_move_continuity}
 Suppose $Q$ is a finite level $\leq 2$ tree, $W$ is a finite level-1 tree,
%, $\tau$ factors $(S,Q,W)$.
    $\theta: [\omega_1]^{Q \uparrow} \to j^W(\omega_1)$ is a function in $\admistwobold$. Suppose  $\cf^{\mathbb{L}}([\theta]_{\mu^Q}) = \seed^{Q,W}_{\mathbf{D}}$,  $\mathbf{D} = (d, \mathbf{q}, \sigma) \in \desc(Q,W)$. 
  \begin{enumerate}
  \item The uniform cofinality of $\theta$ is $\ucf_{*}^W(\mathbf{D})$. 
  \item  $\ucf(\mathbf{D}) = -1$ iff   $\cf^{\mathbb{L}}(\theta(\vec{\xi})) = \omega$ for $\mu^Q$-a.e.\ $\vec{\xi}$. 
  \item  Fix $w \in W$. Then $\ucf(\mathbf{D}) = w$ iff   $\cf^{\mathbb{L}}(\theta(\vec{\xi})) = \seed^W_w$ for $\mu^Q$-a.e.\ $\vec{\xi}$. 
  \end{enumerate}
\end{mylemma}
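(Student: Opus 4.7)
The strategy is to reduce to the canonical function $\id^{Q,W}_{\mathbf{D}}$, which represents $\seed^{Q,W}_{\mathbf{D}}$ modulo $\mu^Q$ and whose combinatorial invariants (signature, (dis)continuity type, uniform cofinality) were already recorded after Definition~\ref{def:level-2_W_Q_description}. The plan is to transfer these invariants from the canonical representative to an arbitrary $\theta$ satisfying $\cf^{\mathbb{L}}([\theta]_{\mu^Q}) = \seed^{Q,W}_{\mathbf{D}}$.

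For parts (2) and (3), I would first use the ultrafilter property of $\mu^Q$: the function $\vec{\xi} \mapsto \cf^{\mathbb{L}}(\theta(\vec{\xi}))$ takes values in the countable set of $\mathbb{L}$-regular cardinals $\{u_n : n<\omega\}$ (with $u_0 = \omega$) below $u_\omega$, and is therefore $\mu^Q$-a.e.\ constant with some value $u_k$. By \Los{} applied in $\mathbb{L}$, $\cf^{j^Q(\mathbb{L})}([\theta]_{\mu^Q}) = j^Q(u_k)$. Combining this with the hypothesis $\cf^{\mathbb{L}}([\theta]_{\mu^Q}) = \seed^{Q,W}_{\mathbf{D}}$ and the agreement of $\mathbb{L}$- and $j^Q(\mathbb{L})$-cofinalities on uniform indiscernibles below $j^Q(u_\omega)$ (a consequence of Lemma~\ref{lem:jQ_move_T_2}), one pins down $u_k$ uniquely. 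Running the same argument on $\id^{Q,W}_{\mathbf{D}}$ shows that this $u_k$ is also the $\mu^Q$-a.e.\ constant value of $\cf^{\mathbb{L}}(\sigma^W(\comp{d}{\xi}_{\mathbf{q}}))$, which is computable directly by elementarity of $\sigma^W$ from $\mathbb{L}$ to $\mathbb{L}$ together with the fact that for $\vec{\xi}$ respecting $Q$ the partial tower $(P,\vec{p})$ controls $\cf^{\mathbb{L}}(\comp{d}{\xi}_{\mathbf{q}})$: the outcome is $u_k = \omega$ exactly when $\ucf(\mathbf{D}) = -1$ (i.e., $d=1$ or $\ucf(P,\vec{p}) = -1$), and $u_k = \seed^W_{\ucf(\mathbf{D})}$ when $\ucf(\mathbf{D}) \in W$. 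This yields (2) and (3).

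For part (1), I would extract from the cofinality hypothesis an $\mathbb{L}$-definable order-preserving cofinal map $h : \seed^{Q,W}_{\mathbf{D}} \to [\theta]_{\mu^Q}$. Using the representation $\seed^{Q,W}_{\mathbf{D}} = [\id^{Q,W}_{\mathbf{D}}]_{\mu^Q}$ and \Los{}, this yields, $\mu^Q$-a.e., an $\mathbb{L}$-definable uniform family of order-preserving cofinal maps $\id^{Q,W}_{\mathbf{D}}(\vec{\xi}) \to \theta(\vec{\xi})$. Composing the canonical uniform-cofinality witness for $\id^{Q,W}_{\mathbf{D}}$ (which has uniform cofinality $\ucf^W_{*}(\mathbf{D})$) with this family transfers the uniform cofinality $\ucf^W_{*}(\mathbf{D})$ to $\theta$ in the sense of Definition~\ref{def:level_2_function_sign_etc}. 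The main technical obstacle is making this transfer step precise, namely verifying that an $\mathbb{L}$-definable $\mu^Q$-a.e.\ uniform family of pointwise cofinal order-preserving maps really does preserve the uniform cofinality structure; this should fall out from a routine \Los{}-style bookkeeping on the witnessing function, using that uniform cofinality depends only on behavior of $\theta$ on a $\mu^Q$-measure one set.
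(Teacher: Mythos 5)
Your overall strategy --- reduce the analysis of $\theta$ to the canonical representative $\id^{Q,W}_{\mathbf{D}}$ --- matches the paper's, and your plan for part (1) is essentially the paper's argument. The paper makes the ``transfer step'' you flag as the main obstacle completely routine by the following device, which is the crux of the whole proof: since $\cf^{\mathbb{L}}([\theta]_{\mu^Q}) = \seed^{Q,W}_{\mathbf{D}}$, one can pick a strictly increasing cofinal $g : \seed^{Q,W}_{\mathbf{D}} \to [\theta]_{\mu^Q}$ \emph{in $\mathbb{L}$}, and then find $G \in \mathbb{L}$ --- not merely $G \in \admistwobold$ --- with $[G]_{\mu^Q} = g$. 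Applying \Los{} once then yields, for $\mu^Q$-a.e.\ $\vec{\xi}$, the single pointwise identity $\theta(\vec{\xi}) = \sup\, G(\vec{\xi})''\bigl(\sigma^W(\comp{d}{\xi}_{\mathbf{q}})\bigr)$, from which part (1) is immediate (the uniform cofinality of $\id^{Q,W}_{\mathbf{D}}$, namely $\ucf^W_{*}(\mathbf{D})$, transfers along the uniform family $G$) and parts (2)--(3) drop out in the same breath, since $\cf^{\mathbb{L}}(\theta(\vec{\xi})) = \cf^{\mathbb{L}}(\sigma^W(\comp{d}{\xi}_{\mathbf{q}}))$ and $\sigma^W : \mathbb{L}\to\mathbb{L}$ is elementary.

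Your separate \Los{} route to (2)--(3) is more circuitous and has a real gap. The assertion ``$\cf^{j^Q(\mathbb{L})}([\theta]_{\mu^Q}) = j^Q(u_k)$, and combining with the hypothesis pins down $u_k$'' does not do what you want: Lemma~\ref{lem:jQ_move_T_2} only asserts that $j^Q \res \{u_n\}$ is $\Delta^1_3$ (and that $j^Q(u_\omega) = u_\omega$), not that $j^Q$ fixes the individual $u_n$'s --- in fact $j^Q$ shifts them nontrivially (e.g.\ $j^Q(j^W(\omega_1)) = j^{Q\otimes W}(\omega_1)$ can be a much larger indiscernible). So $j^Q(u_k) = \seed^{Q,W}_{\mathbf{D}}$ does not pin down $u_k$ by itself, and the invocation of ``agreement of $\mathbb{L}$- and $j^Q(\mathbb{L})$-cofinalities'' papers over exactly the point that needs controlling: whether the pointwise $\mathbb{L}$-cofinal witnesses amalgamate in $\Ult(\admistwobold, \mu^Q)$ into a genuine $\mathbb{L}$-cofinal witness for $[\theta]_{\mu^Q}$. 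This amalgamation is precisely what the choice of a \emph{constructible} representing function $G$ guarantees; without that, the \Los{} step relating $\cf^{\mathbb{L}}(\theta(\vec{\xi}))$ to $\cf^{\mathbb{L}}([\theta]_{\mu^Q})$ is not justified, since $\theta$ need not be in $\mathbb{L}$ and the ultrapower of $\mathbb{L}$ by $\mu^Q$ is not literally $\mathbb{L}^{\admistwoboldextra{j^Q}}$. Once you concede that the actual content lives in ``running the same argument on $\id^{Q,W}_{\mathbf{D}}$ and computing directly'' --- which is the paper's move --- the \Los{} detour for $\theta$ itself is doing no work. Rewriting the proof around the $G\in\mathbb{L}$ observation gives all three parts simultaneously and avoids the problematic cofinality-of-the-ultrapower computation entirely.
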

\begin{proof}
 Let $g \in \mathbb{L}$ be a strictly increasing function from $\seed^{Q,W}_{\mathbf{D}}$ to $[\theta]_{\mu^Q}$ cofinally. 
Find $G \in \mathbb{L}$ such that $[G]_{\mu^Q} =g$. We have $[\theta]_{\mu^Q}= \sup {[G]_{\mu^Q}} '' \seed^{Q,W}_{\mathbf{D}}$. By \Los{}, for $\mu^Q$-a.e.\ $\vec{\xi}$, $\theta(\vec{\xi}) = \sup G(\vec{\xi})'' ({\sigma}^W ( \comp{d}{\xi}_{\mathbf{q}}))$. (Recall our convention that $\emptyset^W = j^W$.) This shows part 1. 
 Also, for $\mu^Q$-a.e.\ $\vec{\xi}$, $\cf^{\mathbb{L}}(\theta(\vec{\xi})) = \cf^{\mathbb{L}} ( {\sigma}^W(\comp{d}{\xi}_{\mathbf{q}}))$, which equals to $\omega$ when $\ucf(\mathbf{D}) = -1$, equals to $ \seed^W_{\ucf(\mathbf{D})} $ otherwise. This shows parts 2-3.
\end{proof}

\begin{mylemma}
  \label{lem:RYT_factor_from_division}
  Suppose $R,Y$ are level-3 trees, $\theta : \rep(R) \to \rep(Y)$ is continuous and order preserving, $\theta \in \admistwobold$. Then there exists a triple
  \begin{displaymath}
    (T, \rho, \vec{\delta})
  \end{displaymath}
such that $T$ is a level $\leq 2$ tree, $\rho$ factors $(R,Y,T)$, $\vec{\delta}$ respects $T$, and
\begin{displaymath}
  \forall F \in (\bolddelta{3})^{Y \uparrow} ~ F^{T}_{\rho} (\vec{\delta}) = [F \circ \theta]^R. 
\end{displaymath}
\end{mylemma}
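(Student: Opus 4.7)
The plan is to imitate the proof of Lemma~\ref{lem:XT_division_Q} one level up, using that lemma itself as a black box at the crucial step. The main idea is that for each $r \in \dom(R)$, the continuous and order-preserving map $\theta$ determines, up to a measure-one set, a node $\mathbf{y}_r \in \desc(Y)$ together with a function $\theta_r$ that records the "coordinates above $\mathbf{y}_r$." More precisely, for each $r \in \dom(R)$ I expect to find $\mathbf{y}_r = (y_r, X_r, \overrightarrow{(e,x,W)}^r) \in \desc(Y)$ and an $\admistwobold$-function $\theta_r$ on $[\omega_1]^{R_{\tree}(r) \uparrow}$ with values in tuples respecting $X_r$, such that for $\mu^{R_{\tree}(r)}$-a.e.\ $\vec{\beta}$,
\begin{displaymath}
  \theta(r \oplus_R \vec{\beta}) \;=\; \theta_r(\vec{\beta}) \oplus_Y y_r.
\end{displaymath}
Existence of $\mathbf{y}_r$ and $\theta_r$ follows from order-preservation and continuity of $\theta$, together with the fact that $r \oplus_R \vec{\beta}$ has appropriate cofinality in $\rep(R)$.

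Next, for each $r$ I apply Lemma~\ref{lem:XT_division_Q} to the level-2 map $\theta_r$ (more precisely, to its canonical lift to a continuous order-preserving map $\rep(R_{\tree}(r)) \to \rep(X_r)$), obtaining a level $\leq 2$ tree $T_r$, a factoring map $\pi_r$ of $(X_r, T_r)$, and a tuple $\vec{\delta}_r$ respecting $T_r$ with $h^{T_r}_{\pi_r}(\vec{\delta}_r) = [h \circ \theta_r]^{X_r}$ for any $h \in \omega_1^{T_r \uparrow}$. The uniform cofinality / cofinality data of $\theta_r$ read off by Lemma~\ref{lem:jQ_move_continuity} (and its level-2 analog) will line up exactly with the uniform cofinality clauses (3)-(5) of Definition~\ref{def:description_TQY}, so that $(\mathbf{y}_r, \pi_r)$ is a genuine $(Y, T_r, R[r])$-description. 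The global $T$ is then obtained by amalgamating the $(T_r, \vec{\delta}_r)$ into a single level $\leq 2$ tree with a single respecting tuple $\vec{\delta}$, using the bookkeeping bijections $\phi^1, \phi^2$ of the proof of Lemma~\ref{lem:XT_division_Q} applied simultaneously to all $r$; I then define $\rho(r) = (\mathbf{y}_r, \pi_r)$ after relabeling $\pi_r$ to land in $\desc(T, Q_r, *)$ where $Q_r = R_{\tree}(r)$.

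The verification has three parts. (a) $\vec{\delta}$ respects $T$ by Lemma~\ref{lem:Q_respecting} and the fact that each $\vec{\delta}_r$ respects $T_r$. (b) $\rho$ satisfies Definition~\ref{def:factoring_4}: the ordering clause follows from Lemma~\ref{lem:B_desc_order} and the order-preservation of $\theta$; the continuity clause $\rho(r^{-}) = \rho(r)\res(Y,T,R_{\tree}(r^{-}))$ is the level-3 analog of Claims~\ref{claim:XTQ_factor_cont} and~\ref{claim:XTQ_factor_discont}, and follows by comparing $\theta_r$ with $\theta_{r^-}$ and invoking Lemma~\ref{lem:ordinal_division_blocks} at level 2, i.e.\ the level-2 counterpart of the block-decomposition of $\theta_r(\vec{\beta})$ relative to $\theta_{r^-}(\vec{\beta} \res \dom(R_{\tree}(r^-)))$. (c) The identity $F^T_\rho(\vec{\delta}) = [F\circ\theta]^R$ reduces, after unwinding the definition of $F^T_\rho$ via the $\{F_{\mathbf{y}_r} \circ h^{Q_r}_{\pi_r}\}_{r}$, to iterated applications of \Los{} together with the defining property of each $\pi_r$, $\vec{\delta}_r$.

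The main obstacle will be step~(b), specifically the continuity clause $\rho(r^{-}) = \rho(r)\res(Y,T,R_{\tree}(r^-))$ in the case when both $y_{r^-} = y_r$ and $\pi_{r^-}$ arises from $\pi_r$ by an internal $\res$-operation at level~2. Untangling this amounts to showing that the block that gets "added" in going from $\theta_{r^-}$ to $\theta_r$ corresponds exactly to the $(T,Q_{r^-})$-predecessor of the relevant $(T, Q_r, W)$-description appearing in $\pi_r$, with the correct choice between clauses (1) and (2) of the definition of $\mathbf{C} = \mathbf{C}' \res (T,Q)$ governed by the type (continuous vs.\ discontinuous, $*$-$Q$-continuous vs.\ discontinuous) of the relevant pieces. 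This is the genuine content of the lemma; once it is verified the rest is bookkeeping in the style of Lemma~\ref{lem:XT_division_Q}.
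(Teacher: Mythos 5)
Your opening step (producing $\mathbf{y}_r$ and $\theta_r$ from continuity and order preservation of $\theta$) and your closing step (amalgamating via bookkeeping bijections $\phi^1,\phi^2$) both match what the paper does. The gap is in the middle, and it is real: the proposed ``canonical lift'' of $\theta_r$ to a continuous order-preserving map $\rep(R_{\tree}(r))\to\rep(X_r)$ does not exist, so Lemma~\ref{lem:XT_division_Q} cannot be invoked as a black box.

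Two concrete problems. First, a type mismatch. $\theta_r$ lives on $[\omega_1]^{Q_r\uparrow}\to[\omega_1]^{X_r\uparrow}$, i.e.\ it maps tuples (indexed by $\dom(Q_r)$, with entries in $u_\omega$, respecting $Q_r$) to tuples respecting $X_r$. These tuple spaces are not $\rep(Q_r)$ and $\rep(X_r)$; elements of $\rep$ are nodes with ordinal coordinates below $\omega_1$, and a given respecting tuple arises as $[g]^{Q_r}$ for many different $g\in\omega_1^{Q_r\uparrow}$. There is no map $\tilde\theta_r:\rep(Q_r)\to\rep(X_r)$ canonically encoding $\theta_r$, and $[h\circ\theta_r]^{X_r}$ in your conclusion does not even type-check ($\theta_r$ has the wrong domain to be post-composed with $h\in\omega_1^{T_r\uparrow}$). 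Second, even if such a $\tilde\theta_r$ existed, Lemma~\ref{lem:XT_division_Q} applied to $\tilde\theta_r:\rep(Q_r)\to\rep(X_r)$ outputs a map factoring $(Q_r,X_r,Q)$, i.e.\ a map on $\dom(Q_r)$ valued in $\desc(X_r,Q,*)$; but Definition~\ref{def:factoring_4} together with Definition~\ref{def:description_TQY} requires $\rho(r)=(\mathbf{y}_r,\pi_r)$ with $\pi_r$ factoring $(X_r,T,Q_r)$, i.e.\ a map on $\dom(X_r)$ valued in $\desc(T,Q_r,*)$ where $T$ is the global tree being constructed. The source, target, and the role of the unknown tree are all rearranged relative to what the level-2 lemma would give you.

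What the paper does instead: it never calls Lemma~\ref{lem:XT_division_Q}, but rather reruns its \emph{strategy} one level up. It decomposes $\theta_r$ coordinatewise into $\comp{e}{\theta}_{r,x}$, forms the ordinals $\comp{e}{\gamma}_{r,x}=[\comp{e}{\theta}_{r,x}]_{\mu^{Q_r}}<u_\omega$, and reads off their signatures, approximation sequences, and uniform cofinalities expressed in terms of $\seed^{Q_r,W_{r,x}}_{\mathbf D}$ via Lemma~\ref{lem:factor_SQW}, so that the role played by nodes $w_{x,s,i}\in W_x$ in Lemma~\ref{lem:XT_division_Q} is now played by $(Q_r,W_{r,x})$-descriptions $\mathbf D^e_{r,x,i}$. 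The coherence of $\rho$ at $r\mapsto r^-$ — the step you correctly flag as the crux — is then established in Claims~\ref{claim:YTQ_division},~\ref{claim:YTQ_division_cont},~\ref{claim:YTQ_division_discont} not by a ``level-2 version of Lemma~\ref{lem:ordinal_division_blocks}'' (there is none) but by the original level-1 Lemma~\ref{lem:ordinal_division_blocks} applied to the $\comp{e}{\gamma}_{r,x}$ after the seed identification, together with Lemmas~\ref{lem:jQ_move_factor_maps} and~\ref{lem:jQ_move_continuity} to translate $j^{W,W'}$-behaviour and cofinalities through $j^{Q_r}$. So the structure you were trying to import from the level-2 argument has to be rebuilt at the ordinal level, not cited; once one sees that the uniform-indiscernible bookkeeping from Section~\ref{sec:level-2-description} supplies exactly the replacement for ``$\mathbb L$-level combinatorics of $W_x$,'' the remainder of your plan (construct $Z^1,Z^2$, define $T$, $\vec\delta$, $\rho$) goes through as you describe.
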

\begin{proof}
 For $r \in \dom(R)$, let $R(r)  =  (Q_r,(d_r,q_r, P_r))$. For $q \in \dom(\comp{2}{Q}_r)$, let $\comp{2}{Q}_r(q) = (P_{r,q}, p_{r,q})$. Thus, when  $d_r = 2$,   $P_r$ is the completion of $(P_{r,q_r^{-}}, p_{r,q_r^{-}})$.
Let $E \in \mu_{\mathbb{L}}$, $\mathbf{y}_r = (y_r, X_r, \overrightarrow{(e_r,x_r,W_r)} )\in \desc(Y)$ and $\theta_r \in \admistwobold$ be such that for any $\vec{\beta} \in [E]^{Q_r\uparrow}$, $\theta_r(\vec{\beta}) \in [\omega_1]^{X_r \uparrow}$ and 
 \begin{displaymath}
   \theta(\vec{\beta} \oplus_R r) = \theta_r (\vec{\beta} ) \oplus_Y y_r.
 \end{displaymath}
Let $\overrightarrow{(e_r,x_r,W_r)}= (e_{r,i}, x_{r,i}, W_{r,i})_{1 \leq i  \leq  \lh( \vec{x}_r)}$. %$(e_r,x_r,W_r) = (e_{r, \lh(\vec{x}_r)-1}, x_{r, \lh(\vec{x}_r)-1}, W_{r, \lh(\vec{x}_r)-1})$.
 For $x \in \dom(\comp{2}{X}_r)$, let $\comp{2}{X}_r(x) = (W_{r,x}, w_{r,x})$. Thus,  when  $e_{r,i}=2$,  $W_{r,i}$ is the completion of $(W_{r,x_r^{-}}, w_{r,x_r^{-}})$. 
Let $[\theta_r]_{\mu^{Q_r}} = \vec{\gamma}_r = (\comp{e}{\gamma}_{r,x})_{(e,x) \in \dom(X_r)}$, $\theta_r(\vec{\beta}) = (\comp{e}{\theta}_{r,x}(\vec{\beta}))_{(e,x) \in \dom(X_r)}$. So $\comp{e}{\gamma}_{r,x} = [ \comp{e}{\theta}_{r,x}]_{\mu^{Q_r}}$.  For $e \in \se{1,2}$, let
\begin{displaymath}
  B^e_x = \set{x \in \dom(\comp{e}{X}_r)}{ \comp{e}{\gamma}_{r,x} < \omega_1}. 
\end{displaymath}
So $B^1_r$ is closed under $\prec^{\comp{1}{X}_r}$ and $B^2_x = \emptyset$. For $x \in \comp{e}{X}_r \setminus B^e_r$, let $(S^e_{r,x}, \vec{s}^e_{r,x})$ be the potential partial level $\leq 1$ tower induced by $\comp{e}{\gamma}_{r,x}$, $\vec{s}^e_{r,x} = (s^e_{r,x,i})_{i < \lh(\vec{s}^e_{r,x})}$, $s_{r,x}^e = s^e_{r,x,\lh(\vec{s}^e_{r,x})-1}$, let $(\seed^{{Q}_r, W_{r,x}}_{\mathbf{D}^e_{r,x,i}})_{i < v^e_{r,x}}$
be the signature of $\comp{e}{\gamma}_{r,x}$, let $( \delta^e_{r,x,i})_{ i \leq v^e_{r,x}}$ be the approximation sequence of $\comp{e}{\gamma}_{r,x}$, % $\delta^e_{r,x} = \epsilon^e_{r,x,v^e_{r,x}}$,
and let $\cf^{\mathbb{L}}(\comp{e}{\gamma}_{r,x}) = \seed^{{Q}_r, W_{r,x}}_{\mathbf{D}^e_{r,x}}$ if $\cf^{\mathbb{L}}(\comp{e}{\gamma}_{r,x}) > \omega$. The existence of $(\mathbf{D}^e_{r,x,i})_{i < v^e_{r,x}}$ and $\mathbf{D}^e_{r,x}$ follows from Lemma~\ref{lem:factor_SQW}.
Let $\mathbf{D}^e_{r,x,i} = (c_{r,x,i}^e, \mathbf{q}^e_{r,x,i}, \sigma_{r,x,i}^e)$, $\mathbf{D}^e_{r,x} = (c_{r,x}^e, q^e_{r,x}, \sigma_{r,x}^e)$.
Let
\begin{displaymath}
\tau^e_{r,x}
\end{displaymath}
factor $(S^e_{r,x}, Q_r, *)$, where $\tau^e_{r,x}(s^e_{r,x,i}) = \mathbf{D}^e_{r,x,i}$ for $i<v^e_{r,x}$.
 Let
\begin{align*}
  D^e_r &= \set{ x \in \comp{e}{X}_r \setminus B^e_r  }{ \comp{e}{\gamma}_{r,x} \text{ is essentially continuous}}, \\
  E^e_r & = \dom(\comp{e}{X}_r ) \setminus  (B^e_r \cup D^e_r). 
\end{align*}
Thus, $v^e_{r,x}  =  \card(S^e_{r,x})$. For $x \in D^e_r$, $v^e_{r,x} = \lh(\vec{s}^e_{r,x})$; for $x \in E^e_r$, $v^e_{r,x} = \lh(\vec{s}^e_{r,x})-1$. 

Put $\ucf(R[r]) = (d_r^{*}, \mathbf{q}_r^{*})$, $\ucf(X_r, \overrightarrow{(e_r,x_r,W_r)}) = (e_r^{*}, \mathbf{x}_r^{*})$, if $e_r^{*}=2$ then put $\mathbf{x}_r^{*} = (x_r^{*},W_r^{*}, \vec{w}_r^{*})$.

By order preservation and continuity of $\theta$, we can see that for $r \in \dom(R)$,
\begin{enumerate}
\item if $y_r$ is of continuous type, then $\comp{e_r}{\theta}_{r,x_r}$ has uniform cofinality $\ucf(R[r])$;
\item if $y_r$ is of continuous type and $\lh(r) = 1 \vee (d_{r^{-}},q_{r^{-}})$ does not appear in the contraction of $(\sign_{*}^{Q_r} (\comp{e}{\theta}_{r,x}))$ for any $(e,x) \in \dom(X_r) \setminus \se{(e_r,x_r)}$, then $\comp{e_r}{\theta}_{r,x_r}$ is essentially discontinuous;
\item if $y_r$ is of discontinuous type, 
  \begin{enumerate}
  \item if $d_r ^{*}= 0$, then $e_r^{*} = 0$;
  \item if $d_r^{*} = 1$, then $e_r^{*} = 1$, $\comp{1}{\theta}_{r, \mathbf{x}_r^{*}}$ has uniform cofinality $(1, \mathbf{q}_r^{*})$, and thus by Lemma~\ref{lem:jQ_move_continuity}, $\mathbf{D}^1_{r,x_r^{*}} = (1, \mathbf{q}_r^{*},\emptyset)$;
  % \item if $d_r = 1$, then $e_r = 1$, $\comp{1}{\theta}_{r, x_r^{-}}$ has uniform cofinality $(1, q_r^{-})$, and thus by Lemma~\ref{lem:jQ_move_continuity}, $\mathbf{D}^1_{r,x_r^{-}} = (1, q_r^{-},\emptyset)$;
  \item if $d_r^{*} = 2$ and $\mathbf{q}_r^{*} \in \desc(\comp{2}{Q}_r)$,   then $e_r^* = 2$ and $\mathbf{x}_r^{*} \in \desc(\comp{2}{X}_r)$,  $\comp{2}{\theta}_{r, x_r^{*}}$ has uniform cofinality $(2, \mathbf{q}_r^{*})$, and thus by Lemma~\ref{lem:jQ_move_continuity}, 
$\ucf_{*}^{W_{r}^{*}}(\mathbf{D}^2_{r,x_r^{*}}) = (2, \mathbf{q}_r^{*}) $;
  \item if $d_r^{*} = 2$ and $\mathbf{q}_r^{*} \notin \desc(\comp{2}{Q}_r)$, then $e_r^{*} = 2$ and $\mathbf{x}_r^{*} \in \desc(\comp{2}{X}_r)$,  
$ j^{W_{r,x_r^{*}}, W_{r}^{*}} (\comp{2}{\theta}_{r, x_r^{*}})$ has uniform cofinality $(2, \mathbf{q}_r^{*})$, and thus  by Lemma~\ref{lem:jQ_move_continuity}, 
$\ucf_{*}^{W_{r}^{*}}(\mathbf{D}^2_{r,x_r^{*}}) = (2, \mathbf{q}_r^{*}) $.
 % $q_r^{-} \in E^2_r$, % $\lh(q_r^{-}) \leq \lh(x_r^{-})$,
 %    $(c_{r,x_r^{-}}^2, q^2_{r,x_r^{-}}) = (2, q_r^{-})$, $\sigma^2_{r,x}$ factors $(P_{r, q^{-}}, W_{r,x_r^{-}})$. 
  \end{enumerate}
\end{enumerate}

\begin{myclaim}
  \label{claim:YTQ_division}
  Suppose $r \in \dom(R)$, $x, x' \in \dom(\comp{2}{X}_r)$, $x = (x')^{-}$. Suppose the contraction of $(\sign(\mathbf{D}^2_{r,x,i}))_{ i < v^2_{r,x}}$ is $(w_{r, x \res i})_{i < \lh(x)}$. Then
  \begin{enumerate}
  \item For any $i < v^2_{r,x}$, $\delta^2_{r,x,i} = \delta^2_{r, x',i}$.
  \item $(\mathbf{D}^2_{r,x,i}, \delta^2_{r,x,i})_{i < v^2_{r,x}}$ is a proper initial segment of $(\mathbf{D}^2_{r,x',i}, \delta^2_{r,x',i})_{i < v^2_{r,x'}}$. 
 Hence, $S^2_{r,x}$ is a proper subtree of $S^2_{r, x'}$ and $\vec{s}^2_{r,x}$ is an initial segment of $\vec{s}^2_{r,x'}$. 
  \item $\sign(\mathbf{D}^2_{r, x', v^2_{r,x}}) = w_{r,x}$. In particular, the contraction of $(\sign(\mathbf{D}^2_{r,x',i}))_{i < v^2_{r,x'}}$  is $(w_{r,x \res i})_{i \leq \lh(x)}$.
  \item $\mathbf{D}^2_{r,x} = \mathbf{D}^2_{r,x'} \res (Q_r, W_{r,x})$.
  \item If $x \in D^2_r \cup E^2_r$, $x \concat (c) , x \concat (d) \in \dom(\comp{2}{X})$, $c<_{BK}d$, then $\delta^2_{r,x\concat(c), v^2_{r,x}} < \delta^2_{r,x \concat (d), v^2_{r,x}}$.
  \item If $x \in D^2_r \cup E^2_r$, $[h]_{\mu^{S^2_{r,x}}} = \delta^2_{r,x,v^2_{r,x}}$, then  for any $g \in E^{Q_r\uparrow}$,
    \begin{displaymath}
      [h \circ g^{Q_r, W_{r,x}}_{\tau^2_{r,x}}]_{\mu^{W_{r,x}}} = \theta_r ( [ g]^{Q_r}).
    \end{displaymath}
  \end{enumerate}
\end{myclaim}
\begin{proof}
  % For $r \in \dom(R)$ and $x \in \dom(\comp{2}{X}_r)$, pick $(N_{r,x}, \phi_{r,x})$ such that $N_{r,x}$ is a level-1 tree, $\phi_{r,x}$ factors $(N_{r,x}, Q_r, W_{r,x})$, $\ran(\phi_{r,x}) = \desc(Q_r,W_{r,x})$, and whenever $x\neq\emptyset$,  $N_{r,x^{-}}$ is a subtree of $N_{r,x}$ and $\phi_{r,x^{-}} \subseteq \phi_{r,x}$. 
By Lemma~\ref{lem:jQ_move_factor_maps}, $j^{Q_r}(j^{W_{r,x}, W_{r, x'}} \res j^{W_{r,x}}(\omega_1+1)) = j^{Q_r \otimes W_{r,x}, Q_r \otimes W_{r,x'}} \res j^{Q_r \otimes W_{r,x}}(\omega_1+1)$ and $j^{Q_r}_{\sup}(j^{W_{r,x}, W_{r, x'}} \res j^{W_{r,x}}(\omega_1+1)) = j^{Q_r \otimes W_{r,x}, Q_r \otimes W_{r,x'}}_{\sup} \res j^{Q_r \otimes W_{r,x}}(\omega_1+1)$.  Since $\theta_r$ takes values in $[\omega_1]^{X_r \uparrow}$ on a $\mu^{Q_r}$-measure one set, for $\mu^{Q_r}$-a.e.\ $\vec{\xi}$, we have
\begin{displaymath}
 j^{W_{r,x}, W_{r,x'}} (\comp{2}{\theta}_{r,x} (\vec{\xi})) < \comp{2}{\theta}_{r,x'} (\vec{\xi})< j^{ W_{r,x},W_{r,x'}}(\comp{2}{\theta}_{r,x} (\vec{\xi}))
\end{displaymath}
and
\begin{displaymath}
  \cf^{\mathbb{L}}(\comp{2}{\theta}_{r,x}(\vec{\xi}) ) = \seed^{W_{r,x}}_{w_{r,x}^{-}}.
\end{displaymath}
Hence by \Los{},
\begin{displaymath}
  j^{Q_r \otimes W_{r,x}, Q_r \otimes W_{r,x'}}_{\sup} (\comp{2}{\gamma}_{r,x}) < \comp{2}{\gamma}_{r,x'} <  j^{Q_r \otimes W_{r,x}, Q_r \otimes W_{r,x'}}(\comp{2}{\gamma}_{r,x})
\end{displaymath}
and by Lemma~\ref{lem:jQ_move_continuity},
\begin{displaymath}
  \ucf (\mathbf{D}_{r,x}) = w_{r,x}^{-}. 
\end{displaymath}
We are in a position to apply Lemma~\ref{lem:ordinal_division_blocks} with
\begin{displaymath}
  A = \set{l}{ \exists \mathbf{D} \in \desc(Q_r,W_{r,x}) ~ u_l = \seed_{\mathbf{D}}^{Q_r, W_{r,x'}}},
\end{displaymath}
leading to parts 1-4. Part 5 also follows from Lemma~\ref{lem:ordinal_division_blocks}, using the fact that $\gamma_{r, x \concat (c)} < \gamma_{r, x\concat (d)}$. We now prove part 6. Note that $ \tau^2_{r,x}$ factors $(S^2_{r,x}, Q_r \otimes W_{r,x})$ and in fact, $( \tau_{r,x}^2)^{Q_r \otimes W_{r,x}} (\delta^2_{r,x,v_{r,x}}) = \gamma^2_{r,x}$. Suppose we are given $h$ with $[h]_{\mu^{S^2_{r,x}}} = \delta^2_{r,x,v^2_{r,x}}$. Define $h_{*}$ on $[E]^{Q_r\uparrow}$ by $h_{*} ([g]^{Q_r}) = [h \circ g^{Q_r,W_{r,x}}_{\tau^2_{r,x}}]_{\mu^{W_{r,x}}} $. By \Los{}, it suffices to show that $[h_{*}]_{\mu^{Q_r}} = \gamma^2_{r,x}$. But this follows from  Lemma~\ref{lem:factor_SQW}. 
% , $[h_{*}]_{\mu^{Q_r}}= [h_{\tau^2_{r,x}}]_{\mu^{Q_r \otimes W_{r,x}}} = \gamma_{r,x}^2$. 
This finishes the proof of Claim~\ref{claim:YTQ_division}.
\end{proof}

In parallel to Claim~\ref{claim:XTQ_factor_cont}, we have
\begin{myclaim}
  \label{claim:YTQ_division_cont}
  Suppose $r, r' \in \dom(R)$, $r = (r')^{-}$, $y_r$ is of continuous type, and the contraction of $((\sign_{*}(\mathbf{D}^{e_{r,j}}_{r,x_{r,j},i}))_{i < v^{e_{r,j}}_{r,x_{r,j}}})_{1 \leq j < \lh(x_r)}$ is $((d_{r \res i}, q _{r \res i}))_{1 \leq i < \lh(r)}$. Then
  \begin{enumerate}
  \item $y_r = y_{r'}$. 
  \item For any $(e,x) \in \dom(X_r) \setminus \se{(e_r,x_r)}$, $\comp{e}{\gamma}_{r,x} = \comp{e}{\gamma}_{r', x}$.
  \item $(\mathbf{D}^{e_r}_{r,x_r,i}, \delta^{e_r}_{r,x_r,i})_{i < v^{e_r}_{r,x_r}}$ is a proper initial segment of $(\mathbf{D}^{e_{r}}_{r',x_{r},i}, \delta^{e_{r}}_{r',x_{r},i})_{i < v^{e_{r}}_{r',x_{r}}}$. Hence $S_{r, x_r}$ is a proper subtree of $S_{r ', x_r}$, and $\vec{s}_{r,x_r}$ is an initial segment of $\vec{s}_{r', x_r}$. 
  \item The signature of $ \tau^{e_r}_{r', x_r}(s_{r,x_r})$ is $((1, q_r))$ if $d_r =1$, $((2, q_{r \res i}))_{1 \leq i \leq \lh(q_r)}$ if $d_r = 2$. In particular, the contraction of $((\sign_{*}(\mathbf{D}^{e_{r',j}}_{r',x_{r',j},i}))_{i < v^{e_{r',j}}_{r',x_{r',j}}})_{1 \leq j < \lh(x_r)}$ is $((d_{r \res i}, q _{r \res i}))_{1 \leq i \leq \lh(r)}$.
  \end{enumerate}
\end{myclaim}
\begin{proof}
  By order preservation and  continuity  of $\theta$, $y_r =y _{r'}$ and  for $\mu^{Q_r}$-a.e.\ $\vec{\beta}$,
  \begin{enumerate}
  \item for any $(e,x) \in \dom(X_r) \setminus \se{(e_r,x_r)}$, if $\vec{\beta}'$ extends $\vec{\beta}$ then $\comp{e}{\theta}_{r,x} (\vec{\beta}) = \comp{e}{\theta}_{r',x}(\vec{\beta}')$;
  \item $ {\comp{e_r}{\theta}}_{r,x_r}(\vec{\beta}) = \sup\set{{\comp{e_r}{\theta}}_{r',x_r}(\vec{\beta}')}{ \vec{\beta}' \text{ extends } \vec{\beta}}$.
  \end{enumerate}
Thus, $\comp{e}{\gamma}_{r,x} = \comp{e}{\gamma}_{r',x} $ for any $(e,x) \in \dom(X_r) \setminus \se{(e_r,x_r)}$, and $j_{\sup}^{Q_r,Q_{r'}}(\comp{e}{\gamma}_{r,x} ) \leq \comp{e}{\gamma}_{r',x} < j^{Q_r,Q_{r'}}(\comp{e}{\gamma}_{r,x} ) $.   As $t_x = t_{x'}$ is of continuous type and $(d_r,q_r)$ does not appear in $\sign(\comp{e}{\theta}_{r',x})$ for any $(e,x) \in \dom(X_{r'}) \setminus \se{(e_{r'}, x_{r'})}$,  $\theta_{r' , x_r}$ is essentially discontinuous, giving $j_{\sup}^{Q_r,Q_{r'}} (\comp{e}{\gamma}_{r,x} )  \neq  \comp{e}{\gamma}_{r',x}$. 
%In particular, $\beta_{x,s_x} \geq \omega_1$, so $s_x\notin B_x$. 
With the help of Lemma~\ref{lem:factor_SQW} again, we can find level-1 trees $M_r, M_{r'}$ such that $M_r$ is a subtree of $M_{r'}$ and $j_{\sup}^{M_r,M_{r'}}(\comp{e}{\gamma}_{r,x} ) < \comp{e}{\gamma}_{r',x} < j^{M_r,M_{r'}}(\comp{e}{\gamma}_{r,x} ) $.   
The claim then follows from Lemma~\ref{lem:ordinal_division_blocks}.  
\end{proof}

In parallel to Claim~\ref{claim:XTQ_factor_discont}, we have
\begin{myclaim}
  \label{claim:YTQ_division_discont}
  Suppose $r, r' \in \dom(R)$, $r = (r')^{-}$, $y_r$ is of discontinuous type, and the contraction of $((\sign_{*}(\mathbf{D}^{e_{r,j}}_{r,x_{r,j},i}))_{i < v^{e_{r,j}}_{r,x_{r,j}}})_{1 \leq j < \lh(x_r)}$ is $((d_{r \res i}, q _{r \res i}))_{1 \leq i < \lh(r)}$. Put $x_r^{*} = x_r^{-}$ if $\ucf(R(r))\notin \desc(Q_r)$, $x_r^{*} = x_r^{-}\concat( (x_r(\lh(x_r)-1))^{-})$ if $\ucf(R(r))\in \desc(Q_r)$. 
 Then
  \begin{enumerate}
  \item $y_r \subsetneq y_{r'}$. 
  \item For any $(e,x) \in \dom(X_r) $, $\comp{e}{\gamma}_{r,x} = \comp{e}{\gamma}_{r', x}$.
  \item $(\mathbf{D}^{e_r}_{r,x_r,i}, \delta^{e_r}_{r,x_r,i})_{i < v^{e_r}_{r,x_r}}$ is a proper initial segment of $(\mathbf{D}^{e_{r}}_{r',x_{r}^{*},i}, \delta^{e_{r}}_{r',x_{r}^{*},i})_{i < v^{e_{r}}_{r',x_{r}^{*}}}$. 
The signature and approximation sequence of $\comp{e_r}{\gamma}_{r, x_r^{*}}$ are proper initial segments of those of $\comp{e_r}{\gamma}_{r', x_r}$. Hence $S_{r, x_r^{*}}$ is a proper subtree of $S_{r', x_r}$, and $\vec{s}_{r,x_r^{*}}$ is an initial segment of $\vec{s}_{r', x_r}$. 
  \item The signature of $ \tau^{e_r}_{r', x_r}(s_{r,x_r})$ is $((1, q_r))$ if $d_r =1$, $((2, q_{r \res i}))_{1 \leq i \leq \lh(q_r)}$ if $d_r = 2$. In particular, the contraction of $((\sign_{*}(\mathbf{D}^{e_{r',j}}_{r',x_{r',j},i}))_{i < v^{e_{r',j}}_{r',x_{r',j}}})_{1 \leq j < \lh(x_{r'})}$ is $((d_{r \res i}, q _{r \res i}))_{1 \leq i \leq \lh(r)}$.
  \end{enumerate}
\end{myclaim}
\begin{proof}
    By order preservation and  continuity  of $\theta$, $y_r \subsetneq y_{r'}$ and for $\mu^{Q_r}$-a.e.\ $\vec{\beta}$,
    \begin{enumerate}
  \item for any $(e,x) \in \dom(X_r) $, if $\vec{\beta}'$ extends $\vec{\beta}$ then $\comp{e}{\theta}_{r,x} (\vec{\beta}) = \comp{e}{\theta}_{r',x}(\vec{\beta}')$;
  \item if $\ucf(R(r)) \notin \desc(Q_r)$ then  $ j^{X_r, X_r^{*}}({\comp{e_r}{\theta}}_{r,x_r^{*}}(\vec{\beta}) )= \sup\set{{\comp{e_r}{\theta}}_{r',x_r}(\vec{\beta}')}{ \vec{\beta}' \text{ extends } \vec{\beta}}$, where $X_r^{*}= Y_{\tree}(y_{r'} \res \lh(y_r)+1)$;
  \item if $\ucf(R(r)) \in \desc(Q_r)$ then  $ {\comp{e_r}{\theta}}_{r,x_r^{*}}(\vec{\beta}) = \sup\set{{\comp{e_r}{\theta}}_{r',x_r}(\vec{\beta}')}{ \vec{\beta}' \text{ extends } \vec{\beta}}$.
    \end{enumerate}
The rest is similar to the proof of Claim~\ref{claim:YTQ_division_cont}. 
\end{proof}

Let
\begin{displaymath}
  \phi^1: \set{\comp{1}{\gamma}_{r,x}}{r \in \dom(R), x \in B^1_r} \to Z^1
\end{displaymath}
be a bijection such that $Z^1$ is a level-1 tree and $v < v' \eqiv \phi^1(v) \prec^{Z^1} \phi^1(v')$. Let
\begin{displaymath}
  \phi^2 :  \set{(   \mathbf{D}^{e}_{r,x,i} , \delta^e_{r,x,i} )_{i < l} }{r \in \dom(R), e \in \se{1,2}, x \in D^e_r\cup E^e_r, l < \lh(\vec{s}_{r,x}) } \to Z^2 \cup \se{\emptyset}
\end{displaymath}
be a bijection such that $Z^2$ is a tree of level-1 trees and $v \subseteq v' \eqiv \phi^2(v) \subseteq \phi^2(v')$, $v <_{BK} v'$ iff $\phi^2(v) <_{BK} \phi^2(v)$, where the ordering of subcoordinates $\mathbf{D}^e_{r,x,i}$ is according to $\prec$. Let
\begin{displaymath}
  T = (\comp{1}{T},\comp{2}{T})
\end{displaymath}
where $\comp{1}{T} = Z^1$, $\comp{2}{T}$ is a level-2 tree, $\dom(\comp{2}{T}) = Z^2$,
\begin{align*}
  \comp{2}{T}  [ \phi^2 ( (\mathbf{D}^e_{r,x,i}, \delta^e_{r,x,i})_{i < \lh(\vec{s}_{r,x})-1} )\concat (-1 ) ] & = (S_{x,r},\vec{s}_{x,r}) \text{ for } x \in D_r^e,\\
  \comp{2}{T}  [ \phi^2 ( (\mathbf{D}^e_{r,x,i}, \delta^e_{r,x,i})_{i < \lh(\vec{s}_{r,x})-1}  ) ] & = (S_{x,r},\vec{s}_{x,r}) \text{ for } x \in E_r^e.
\end{align*}
Let
\begin{displaymath}
  \vec{\delta} = (\comp{c}{\delta}_t)_{(c,t) \in \dom(T)}
\end{displaymath}
where $\comp{1}{\delta}_t = (\phi^1)^{-1}(t)$, $\comp{2}{\delta}_{\emptyset} = \omega_1$, $\comp{2}{\delta}_t = \delta^e_{r,x,l}$ where $t = \phi^2 ( (\mathbf{D}^e_{r,x,i}, \delta^e_{r,x,i})_{i \leq l} )$. For $r \in \dom(R)$, let
\begin{displaymath}
  \rho(r) = ( \mathbf{y}_r  , \pi_r)
\end{displaymath}
where $\pi_r$ factors $(X_r, T, Q_r)$, defined as follows: 
\begin{displaymath}
  \pi_r(e,x) =
  \begin{cases}
    (1, \phi^1(\gamma^e_{r,x}), \emptyset) & \text{ if } x \in B^e_r, \\
    (2, ( \phi^2 ( (\mathbf{D}^e_{r,x,i}, \delta^e_{r,x,i})_{i < \lh(\vec{s}_{r,x})-1}) \concat (-1)  , S^e_{r,x}, \vec{s}^e_{r,x}  ), \tau^e_{r,x}  ) & \text{ if } x \in D^e_r , \\
    (2, ( \phi^2 ( (\mathbf{D}^e_{r,x,i}, \delta^e_{r,x,i})_{i < \lh(\vec{s}_{r,x})-1}), S^e_{r,x}, \vec{s}^e_{r,x}  ), \tau^e_{r,x}  ) & \text{ if } x \in E^e_r.
  \end{cases}
\end{displaymath}
It is easy to check that $(T,\rho, \vec{\delta})$ works for the lemma. 
\end{proof}

Put $\llbracket \emptyset \rrbracket_R = \ot(<^R)$.
For $\mathbf{r}=(r,Q, \overrightarrow{(d,q,P)}) \in \exdesc(R)$, put
\begin{displaymath}
 \llbracket \mathbf{r} \rrbracket_{R} = [\vec{\beta} \mapsto \wocode{\vec{\beta}\oplus_R r}_{<^R}]_{\mu^Q}.
\end{displaymath}
If $\mathbf{r}\in \desc(R)$ is of discontinuous type, put $\llbracket r \rrbracket_{R} = \llbracket \mathbf{r} \rrbracket_{R}$. 
Note that if $\rho$ factors $\Pi^1_3$-wellfounded trees $(R,Y)$, then $\llbracket r \rrbracket_R \leq \llbracket \rho(r) \rrbracket_Y$ for any $ r \in \dom(R)$. We say that $\rho$ \emph{minimally factors} $(R,Y)$ iff $\rho$ factors $(R,Y)$, $R,Y$ are both $\Pi^1_3$-wellfounded and $\llbracket r \rrbracket_R = \llbracket \rho(r) \rrbracket_Y$ for any $r \in \dom(R)$. In particular, if $Y$ is $\Pi^1_3$-wellfounded and $T$ is $\Pi^1_2$-wellfounded, then $\id_{Y,*}$ minimally factors $(Y, Y \otimes T)$. 
In the assumption of Lemma~\ref{lem:RYT_factor_from_division}, if $R,Y$ are $\Pi^1_3$-wellfounded and $\ran(\theta)$ is a $<^Y$-initial segment of $\rep(Y)$, its proof constructs $\rho$ which minimally factors $(R, Y \otimes T)$. This entails the comparison theorem between $\Pi^1_3$-wellfounded trees.

\begin{mytheorem}
  \label{thm:factor_tower_order_type_equivalent}
  Suppose $R,Y$ are $\Pi^1_3$-wellfounded level-3 trees and $\llbracket \emptyset \rrbracket_R \leq \llbracket \emptyset \rrbracket_Y$. Then there exists $(T, \rho)$ such that $T$ is $\Pi^1_2$-wellfounded and $\rho$ minimally factors $(R,Y \otimes T)$. Furthermore, if $\llbracket \emptyset \rrbracket_R < \llbracket \emptyset \rrbracket_Y$, we further obtain $\mathbf{B} \in \dom(Y \otimes T)$ such that $\lh(\mathbf{B}) = 1$ and $\llbracket \emptyset \rrbracket_R = \llbracket \mathbf{B} \rrbracket_{Y \otimes T}$.
\end{mytheorem}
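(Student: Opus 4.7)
The plan is to reduce this to Lemma~\ref{lem:RYT_factor_from_division} applied to the canonical order-preserving embedding of $\rep(R)$ into $\rep(Y)$, paralleling how Theorem~\ref{thm:factor_ordertype_embed_equivalent_lv2} reduces to Lemma~\ref{lem:XT_division_Q} at level 2. Since $R,Y$ are $\Pi^1_3$-wellfounded, $<^R$ and $<^Y$ are wellorders of lengths $\llbracket\emptyset\rrbracket_R$ and $\llbracket\emptyset\rrbracket_Y$ respectively. The assumption $\llbracket\emptyset\rrbracket_R\leq\llbracket\emptyset\rrbracket_Y$ yields a unique order-preserving bijection $\theta:\rep(R)\to I$ where $I$ is a $<^Y$-initial segment of $\rep(Y)$. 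This $\theta$ is automatically continuous (at limits, $\theta$ of a sup must be the sup of the $\theta$-values because $I$ is initial), and $\theta\in\admistwobold$ since $\rep(R),\rep(Y),<^R,<^Y$ are all $\Delta^1_3$ and their lengths lie below $\bolddelta{3}$ by Corollary~\ref{coro:delta13_bounds_ultrapowers}.

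Applying Lemma~\ref{lem:RYT_factor_from_division} to $\theta$ produces a triple $(T,\rho,\vec\delta)$ with $T$ a level $\leq 2$ tree, $\rho$ factoring $(R,Y,T)$, $\vec\delta$ respecting $T$, and $F^T_\rho(\vec\delta)=[F\circ\theta]^R$ for every $F\in(\bolddelta{3})^{Y\uparrow}$. Because $\vec\delta$ takes values in genuine ordinals (below $u_\omega$) and respects $T$, Lemma~\ref{lem:Q_respecting} (together with $\Pi^1_2$-wellfoundedness being equivalent to the existence of a respecting tuple of ordinals, as in \cite{sharpII}) gives that $T$ is $\Pi^1_2$-wellfounded. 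So $\rho$ factors $(R, Y, T)$, and by the identification $\rho$ factors $(R, Y\otimes T)$.

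For minimality, fix $r\in\dom(R)$ and take $F(s)=\wocode{s}_{<^Y}$. Since $I$ is a $<^Y$-initial segment, $\wocode{\theta(s)}_{<^Y}=\wocode{s}_{<^R}$ for every $s\in\rep(R)$. Therefore
\[
F^T_{\rho(r)}(\vec\delta)=[F\circ\theta]^R_r=\bigl[\vec\beta\mapsto\wocode{\vec\beta\oplus_R r}_{<^R}\bigr]_{\mu^{R_{\tree}(r)}}=\llbracket r\rrbracket_R.
\]
On the other hand, the same $F$ applied inside $Y\otimes T$ computes $\llbracket\rho(r)\rrbracket_{Y\otimes T}$: by construction of $\rho$ as factoring through $Y$, the function $F^T_{\rho(r)}(\vec\delta)$ equals the $\mu^{R_{\tree}(r)}$-average of $\wocode{\vec\beta\oplus_{Y\otimes T}\rho(r)}_{<^{Y\otimes T}}$, which is exactly $\llbracket\rho(r)\rrbracket_{Y\otimes T}$ by Lemma~\ref{lem:gamma_r_order} applied to $Y\otimes T$. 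Hence $\rho$ minimally factors $(R,Y\otimes T)$.

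For the ``furthermore'' clause, assume $\llbracket\emptyset\rrbracket_R<\llbracket\emptyset\rrbracket_Y$ and let $s_0$ be the $<^Y$-least element of $\rep(Y)\setminus I$. Extend $\theta$ to $\theta^+:\rep(R)\cup\{*\}\to\rep(Y)$ by setting $\theta^+(*)=s_0$, viewing the domain as the representation tree $R^+$ obtained by adjoining a single top node above all of $\rep(R)$ of the appropriate degree. Running Lemma~\ref{lem:RYT_factor_from_division} on $\theta^+$ (which still satisfies the continuity requirement since $s_0$ is an actual element of $\rep(Y)$) yields an extension of $(T,\rho,\vec\delta)$ with one additional description $\mathbf{B}=\rho^+(\,\text{top node}\,)$; by the analysis in the proof of Lemma~\ref{lem:RYT_factor_from_division}, $\mathbf{B}$ has length $1$ in $Y\otimes T$, and by the minimality argument above $\llbracket\mathbf{B}\rrbracket_{Y\otimes T}=\llbracket\emptyset\rrbracket_R$. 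The main obstacle is verifying that this extension can be arranged without disturbing $T$ or the values of $\rho$ on old nodes, which amounts to checking that the combinatorial data $(\phi^1,\phi^2)$ in the proof of Lemma~\ref{lem:RYT_factor_from_division} are preserved when adding a single topmost point to $\theta$; this in turn follows because adding one new top point only adds new (possibly) ordinals to $\ran(\theta^+)$ beyond what already appeared, and these show up as a single new description of length $1$ rather than forcing a restructuring of $T$.
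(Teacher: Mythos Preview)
Your overall approach matches the paper's: construct the order-preserving $\theta:\rep(R)\to\rep(Y)$ onto a $<^Y$-initial segment and invoke Lemma~\ref{lem:RYT_factor_from_division}. The $\Pi^1_2$-wellfoundedness of $T$ via the respecting tuple $\vec\delta$ is fine.

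The gap is in your minimality argument. You correctly compute $F^T_{\rho(r)}(\vec\delta)=\llbracket r\rrbracket_R$ for $F(s)=\wocode{s}_{<^Y}$. But the other half, that $F^T_{\rho(r)}(\vec\delta)=\llbracket\rho(r)\rrbracket_{Y\otimes T}$, does not follow as you claim. The quantity $F^T_{\rho(r)}(\vec\delta)$ is obtained by plugging a \emph{specific} $\vec\delta$ into a function on $[\omega_1]^{T\uparrow}$, whereas $\llbracket\rho(r)\rrbracket_{Y\otimes T}$ is a $\mu^{R_{\tree}(r)}$-average of ranks inside $\rep(Y\otimes T)$; these live in different representations, and Lemma~\ref{lem:gamma_r_order} says nothing about their equality. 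The paper does not derive minimality from the \emph{statement} of Lemma~\ref{lem:RYT_factor_from_division} at all: it says ``its proof constructs $\rho$ which minimally factors.'' The point is that the construction in that proof records precisely the signatures and approximation sequences of the images under $\theta$, and when $\theta$ is a bijection onto an initial segment these data encode the exact rank positions in $\rep(Y\otimes T)$, whence $\llbracket r\rrbracket_R=\llbracket\rho(r)\rrbracket_{Y\otimes T}$ directly from the combinatorics. You should inspect the proof of Lemma~\ref{lem:RYT_factor_from_division} and argue minimality from the construction itself, as was done at level~2 before Theorem~\ref{thm:factor_ordertype_embed_equivalent_lv2}.

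Your handling of the ``furthermore'' clause is also imprecise. Adding a single abstract point $*$ to $\rep(R)$ does not yield $\rep(R^+)$ for a level-3 tree $R^+$: adjoining a length-$1$ node to $R$ adds an interval of new elements to the representation, not one point. Again, the cleaner route is to look inside the construction: with $\theta$ mapping onto a proper initial segment, the least $s_0\in\rep(Y)\setminus\ran(\theta)$ has a $Y$-description whose data, processed through the same $(\phi^1,\phi^2)$-machinery of Lemma~\ref{lem:RYT_factor_from_division}, produces a length-$1$ description $\mathbf{B}\in\dom(Y\otimes T)$ with $\llbracket\mathbf{B}\rrbracket_{Y\otimes T}=\wocode{s_0}_{<^Y}=\llbracket\emptyset\rrbracket_R$, and no enlargement of $T$ is needed since the ordinals appearing already lie in the ranges of $\phi^1,\phi^2$ generated by $\theta$.
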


The goal of the remaining of this section is to prove Lemma~\ref{lem:order_type_realizable}, which states that every $\vec{\gamma} $ respecting a finite level-3 tree $R$ is ``representable''.   Lemma~\ref{lem:order_type_realizable} will essentially be a strengthening of~  \cite[Theorem 5.3]{jackson_handbook}. 

The next lemma is an easy corollary of Lemma~\ref{lem:Q_respecting}. In its statement, $(T, \vec{\gamma})$ is the ``amalgamation'' of $(Q, \vec{\beta})$ and $(Q', \vec{\beta}')$. 
\begin{mylemma}
  \label{lem:level_2_amalgamation}
  Suppose $Q,Q'$ are level $\leq 2$ trees, $\vec{\beta}  = (\comp{d}{\beta}_q)_{(d,q ) \in \dom(Q)}$ respects $Q$, $\vec{\beta}' = (\comp{d}{\beta}'_q)_{(d,q) \in \dom(Q')}$ respects $Q'$. Then there are a level $\leq 2$ tree $T$, a tuple $\vec{\gamma} = (\comp{d}{\gamma}_t)_{(d,t) \in \dom(T)}$ and maps $\pi,\pi'$ factoring $(Q, T)$, $(Q',T)$ respectively such that $\dom(T) = \ran(\pi) \cup \ran( \pi')$,  $\comp{d}{\gamma}_{\comp{d}{\pi}(q)} = \comp{d}{\beta}_q$ for any $(d,q) \in \dom(Q)$, 
$\comp{d}{\gamma}_{\comp{d}{\pi}'(q)} = \comp{d}{\beta}'_q$ for any $(d,q) \in \dom(Q')$. %, where $(d,\pi^d(q)) = \pi(d,q)$, $(d, (\pi')^d(q)) = \pi'(d,q)$.
\end{mylemma}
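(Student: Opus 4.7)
The plan is to define $T$ and $\vec{\gamma}$ by simply amalgamating the data from $(Q,\vec{\beta})$ and $(Q',\vec{\beta}')$ and to rely on Lemma~\ref{lem:Q_respecting} and Lemma~\ref{lem:unique_level_2_tree_represent} to see that the resulting tuple respects a unique level $\leq 2$ tree.

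First, I would write
\begin{displaymath}
  A^1 = \set{\comp{1}{\beta}_q}{q \in \comp{1}{Q}} \cup \set{\comp{1}{\beta}'_q}{q \in \comp{1}{Q}'}
\end{displaymath}
and
\begin{displaymath}
  A^2 = \set{\comp{2}{\beta}_q}{q \in \dom(\comp{2}{Q})} \cup \set{\comp{2}{\beta}'_q}{q \in \dom(\comp{2}{Q}')}.
\end{displaymath}
I would then enumerate $A^1$ in $<$-increasing order as a level-1 tree $\comp{1}{T}$, and define $\pi,\pi'$ on their level-1 parts as the obvious inclusion of the index sets into $\comp{1}{T}$. This immediately makes $\comp{1}{\pi}, \comp{1}{\pi}'$ factor $\comp{1}{Q},\comp{1}{Q}'$ into $\comp{1}{T}$, since clauses (1) and (2) of Lemma~\ref{lem:Q_respecting} control the relative ordering.

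For the level-2 part, I use clause (2) of Lemma~\ref{lem:Q_respecting}: each $\comp{2}{\beta}_q$ induces a unique potential partial level $\leq 1$ tower (namely $Q[q]$) with a unique approximation sequence $(\comp{2}{\beta}_{q \res l})_{l \leq \lh(q)}$, and similarly for $\vec{\beta}'$. I would define
\begin{displaymath}
  \dom(\comp{2}{T}) = A^2
\end{displaymath}
(identifying equal ordinals) and order its nodes by declaring $\alpha \subseteq \alpha'$ iff $\alpha$ appears in the approximation sequence of $\alpha'$, and $\alpha <_{BK} \alpha'$ iff their $\comp{2}{Q}$- or $\comp{2}{Q}'$-labels make them so in either original tree (using the fact that different ordinals respecting $Q$ have $<_{BK}$ comparable description-codes by Lemma~\ref{lem:level_2_beta_q_order}). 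The value of $\comp{2}{T}(\alpha)$ is read off as the last block of the potential partial level $\leq 1$ tower of $\alpha$, which is the same whether computed from $Q$ or from $Q'$ (and so unambiguous). I then let $\comp{2}{\pi}(q) = \comp{2}{\beta}_q$ and $\comp{2}{\pi}'(q) = \comp{2}{\beta}'_q$, and set
\begin{displaymath}
  \vec{\gamma} = (\comp{d}{\gamma}_t)_{(d,t)\in \dom(T)}, \quad \comp{1}{\gamma}_t = t,\ \comp{2}{\gamma}_t = t.
\end{displaymath}

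Finally, I verify that $\vec{\gamma}$ respects $T$ by checking the three clauses of Lemma~\ref{lem:Q_respecting}: clause (1) is immediate from the construction of $\comp{1}{T}$; clause (2) is immediate because the potential partial level $\leq 1$ tower and the approximation sequence of $\comp{2}{\gamma}_t = t$ were built into $\comp{2}{T}[t]$ from the respecting data of whichever of $Q,Q'$ first saw $t$; clause (3) reduces to Lemma~\ref{lem:level_2_beta_q_order}, applied to $(\vec{\beta},Q)$ and $(\vec{\beta}',Q')$, together with the fact that if an ordinal appears in both ranges, the corresponding descriptions are $\prec$-equivalent. Uniqueness of $T$ then follows from Lemma~\ref{lem:unique_level_2_tree_represent}, and checking that $\pi,\pi'$ factor $(Q,T)$ and $(Q',T)$ in the sense of the definition (preservation of $<_{BK}$ and $\subseteq$, equality of $\comp{2}{Q}(q)$ with $\comp{2}{T}(\comp{2}{\pi}(q))$) is a direct consequence of how $\dom(\comp{2}{T})$ was laid out.

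The only mildly delicate point --- and the step I would handle with most care --- is well-definedness of $\comp{2}{T}$ when the same ordinal appears both in $\ran(\vec{\beta})$ and in $\ran(\vec{\beta}')$: one must know that the two tuples induce the same potential partial level $\leq 1$ tower and the same approximation sequence at that ordinal. But this is exactly the uniqueness part of the definition of these objects in Definition~\ref{def:ordinal_long_division_level_3} for the level-1 case (alternatively, a direct application of Lemma~\ref{lem:ordinal_division_blocks}), so no real work is needed beyond pointing this out.
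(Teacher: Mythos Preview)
Your proposal is correct and is precisely the argument the paper has in mind: the paper gives no detailed proof at all, merely stating that the lemma ``is an easy corollary of Lemma~\ref{lem:Q_respecting}''. You have fleshed out exactly that corollary, correctly isolating the one nontrivial point (that the potential partial level $\leq 1$ tower and approximation sequence of an ordinal are intrinsic to the ordinal, so overlaps between $\ran(\vec{\beta})$ and $\ran(\vec{\beta}')$ cause no ambiguity).
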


% If necessary, we may rearrange the nodes of $T$ so that $Q$ is a subtree of $T$ (or $Q'$ is a subtree of $T$, but not both).
Amalgamation of level-3 trees is similar, using Lemma~\ref{lem:R_respect} instead. 

\begin{mylemma}
  \label{lem:level_3_amalgamation}
  Suppose $R,R'$ are level-$3$ trees, $\vec{\gamma}  = ({\gamma}_r)_{r\in \dom(R)}$ respects $R$, $\vec{\gamma}' = ({\gamma}'_r)_{r \in \dom(R')}$ respects $R'$. Then there are a level-3 tree $Y$, a tuple $\vec{\delta} = ({\delta}_y)_{y \in \dom(Y)}$ and maps $\rho,\rho'$ factoring $(R, Y)$, $(R',Y)$ respectively such that $\dom(Y) = \ran(\rho) \cup \ran( \rho')$,  ${\delta}_{\rho(r)} = {\gamma}_r$ for any $r \in \dom(R)$, 
${\delta}_{\rho'(r)} = {\gamma}'_r$ for any $r \in \dom(R')$.
\end{mylemma}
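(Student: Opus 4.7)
I would mimic the level-2 proof strategy outlined right above the statement, substituting Lemma~\ref{lem:R_respect} for Lemma~\ref{lem:Q_respecting} throughout. First I collect all the ordinals appearing in the two given tuples:
\begin{displaymath}
  D = \set{\gamma_r}{r \in \dom(R)} \cup \set{\gamma'_{r'}}{r' \in \dom(R')}.
\end{displaymath}
By Lemma~\ref{lem:R_respect}, each $\delta \in D$ has an intrinsic $R_{\tree}(r)$-potential partial level $\leq 2$ tower (respectively $R'_{\tree}(r')$-tower) and an intrinsic approximation sequence; crucially, when $\delta = \gamma_r = \gamma'_{r'}$, the two towers and sequences coincide because they depend only on $\delta$ and the underlying level $\leq 2$ tree, and both must equal what is read off from $\delta$. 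Moreover, since the approximation sequence of $\gamma_r$ is $(\gamma_{r\res i})_{1\le i\le \lh(r)}$ and likewise for $R'$, the set $D$ is closed under taking intrinsic approximation sequences.

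Next I would construct $Y$ combinatorially. Let $\phi : D \to Z$ be a bijection onto a subset $Z \subseteq \omega^{<\omega}$ such that $Z$ is closed under initial segments and such that $\delta$ occurs in the intrinsic approximation sequence of $\delta'$ iff $\phi(\delta) \subseteq \phi(\delta')$; among siblings I pick $\phi$ so that $\delta < \delta'$ iff $\phi(\delta) <_{BK} \phi(\delta')$. Set $\dom(Y) = Z$; for $y = \phi(\delta)$ with $\delta = \gamma_r$, define $Y(y)$ to be the common value $R[r]$ (and analogously for $R'$; the consistency of this definition across $\gamma_r = \gamma'_{r'}$ is exactly the observation of the first paragraph, applied to the last coordinate of the approximation sequence). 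Then define $\rho(r) = \phi(\gamma_r)$, $\rho'(r') = \phi(\gamma'_{r'})$, and $\delta_y = \phi^{-1}(y)$.

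Finally I verify the conclusion. That $\rho$ factors $(R,Y)$ and $\rho'$ factors $(R',Y)$ is immediate from the way $\phi$ was chosen to respect $\subseteq$ and $<_{BK}$, together with the definition of $Y$. To see that $\vec{\delta}$ respects $Y$, apply Lemma~\ref{lem:R_respect}: clause (1) holds because, by construction, the $Y_{\tree}(y)$-potential partial level $\leq 2$ tower induced by $\delta_y$ is the tower one reads off from $\delta_y$ itself, which matches $Y[y]$; and the approximation sequence $(\delta_{y\res l})_{1 \leq l \leq \lh(y)}$ is forced by the inclusion-preserving choice of $\phi$. Clause (2) follows from the $<_{BK}$-preserving choice of $\phi$ among siblings. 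Finally $\delta_{\rho(r)} = \gamma_r$ and $\delta_{\rho'(r')} = \gamma'_{r'}$ hold by definition.

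The only real obstacle is the well-definedness step for $Y(y)$: one must check that whenever $\gamma_r$ and $\gamma'_{r'}$ happen to coincide, $R[r]$ and $R'[r']$ agree. This reduces to showing that the $R_{\tree}(r)$-potential partial level $\leq 2$ tower induced by $\gamma_r$ (in the sense of Definition~\ref{def:ordinal_long_division_level_3}) depends only on $\gamma_r$ and $R_{\tree}(r)$ and not on $R$ itself, which is built into the definition; and that the relevant level $\leq 2$ tree component $R_{\tree}(r)$ is also recoverable from the signature/approximation data of $\gamma_r$ together with the coordinates $\gamma_{r\res i}$, by the uniqueness clause of Lemma~\ref{lem:R_respect}. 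Once this consistency is in hand, the rest of the argument is bookkeeping parallel to the level-2 case.
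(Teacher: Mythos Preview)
Your proposal is correct and follows essentially the same approach as the paper. The paper's entire proof is the single sentence ``Amalgamation of level-3 trees is similar, using Lemma~\ref{lem:R_respect} instead,'' and you have carried out precisely that strategy in detail, including the well-definedness check for $Y(y)$ that the paper leaves implicit.
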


\begin{mylemma}
  \label{lem:level_2_tree_cofinal_in_u_2}
For any $a \in \omega^{<\omega}$,   $\{  \llbracket 2, (a) \rrbracket_Q  : Q $ is a $\Pi^1_2$-wellfounded level $\leq 2$ tree, $(a) \in \dom({Q})\}$ is a cofinal subset of $u_2$. 
\end{mylemma}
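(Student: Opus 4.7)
\emph{Proof plan.} I would split the argument into two parts: membership in $u_2$ and unboundedness in $u_2$.

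For membership, I would restrict attention to trees $Q$ with $\comp{2}{Q}_{\tree}((a)) = \{(0)\}$, the one-node level-1 tree. With this choice, the definition $\llbracket 2,(a) \rrbracket_Q = [\vec{\alpha} \mapsto \wocode{(2, \vec{\alpha} \oplus_Q (a))}_{<^Q}]_{\mu^{\{(0)\}}}$ unfolds to an equivalence class, modulo the club measure $\mu_{\mathbb{L}} = \mu^{\{(0)\}}$, of a function $\omega_1 \to \omega_1$, and hence lies below $j^{\{(0)\}}(\omega_1) = u_2$. So the whole set under consideration may be taken inside $u_2$.

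For unboundedness, given $\beta < u_2$, I would express $\beta = [f]_{\mu_{\mathbb{L}}}$ for some order-preserving $f : \omega_1 \to \omega_1$ in $\mathbb{L}$ (possible since $u_2$ is exactly the length of the $\mu_{\mathbb{L}}$-ultrapower of $\omega_1$), and then construct a $\Pi^1_2$-wellfounded level $\leq 2$ tree $Q_f$ with $(a) \in \dom(\comp{2}{Q}_f)$ whose $<^{Q_f}$-rank function $\alpha \mapsto \wocode{(2,(\alpha,(a)))}_{<^{Q_f}}$ dominates $f$ on a club in $\omega_1$. Concretely, $Q_f$ would be built by adjoining countably many level-2 nodes $\prec^{Q_f}$-below $(a)$, arranged so that the tuples they contribute to $\rep(Q_f)$ below $(2,(\alpha,(a)))$ produce at least $f(\alpha)$ ordinals for $\mu_{\mathbb{L}}$-a.e.\ $\alpha$. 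The $\Pi^1_2$-wellfoundedness of the resulting tree is inherited from the definability of $f$ in $\mathbb{L}$ together with the standard $\Pi^1_2$-absoluteness of wellfoundedness.

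The principal obstacle is the combinatorial bookkeeping for the rank computation: $\wocode{(2,(\alpha,(a)))}_{<^{Q_f}}$ depends on the full nesting structure of $\comp{2}{Q}_f$ through the interleaving of the $<_{BK}$-order on level-2 nodes with the tuple order in $\rep(Q_f)$, and must be verified to attain the prescribed lower bound $f(\alpha)$. A cleaner alternative would be transfinite induction on $\beta < u_2$: finite $\beta$ (and more generally $\beta < \omega_1^{\omega}$) is handled by explicit finite trees via Lemma~\ref{lem:factor_SQW}; the successor step is handled by amalgamating (via Lemma~\ref{lem:level_2_amalgamation}) a tree realizing a smaller value with a ``bump'' tree of one extra node; and the limit step takes a $\Pi^1_2$-wellfounded tree coding a cofinal sequence, exploiting the regularity of $u_2$ together with preservation of $\Pi^1_2$-wellfoundedness under increasing unions.
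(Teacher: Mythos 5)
The membership half of your argument is fine once one restricts, as you do, to $Q$ with $\comp{2}{Q}_{\tree}((a))$ a one-node tree (so $\llbracket 2,(a)\rrbracket_Q$ lives in the $\mu_{\mathbb{L}}$-ultrapower below $u_2$). The problem is in the unboundedness half. Your first route (reverse-engineer $Q_f$ from $f$ with $\beta=[f]_{\mu_{\mathbb{L}}}$) is, as you say, combinatorially heavy and you do not carry it out. Your fallback, transfinite induction on $\beta$, has a genuine gap at limit stages of uncountable cofinality. Under AD, $u_2$ has cofinality $\omega_1$, and ordinals $\gamma<u_2$ of cofinality $\omega_1$ (e.g.\ $\omega_1^2$) are cofinal. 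Level $\leq 2$ trees are countable objects, so an ``increasing union'' of trees whose ranks climb along a cofinal sequence in such a $\gamma$ can only use countably many trees, and the sup of countably many ordinals below a $\gamma$ of cofinality $\omega_1$ is still below $\gamma$. So the limit step cannot cross these ordinals, and regularity of $u_2$ does not rescue it. There is also a secondary issue: the rank function $\alpha\mapsto\wocode{(2,(\alpha,(a)))}_{<^Q}$ is not additive in the number of nodes, so ``amalgamating with a one-node bump tree'' does not straightforwardly implement a $+1$ step; adding a single node below $(a)$ changes the $\mu_{\mathbb{L}}$-class of the rank by amounts governed by ordinal arithmetic (typically a multiple of $\omega_1$), not by $1$. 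Lemma~\ref{lem:factor_SQW} controls $\seed^{Q,W}$-values, i.e.\ the $u_n$'s, not the $\llbracket 2,(a)\rrbracket_Q$-values below $u_2$, so it does not directly supply the base case either.

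The paper's proof takes a categorically different route that sidesteps all of this. It uses the classical identity $u_2=\bolddelta{2}$, so that unboundedness in $u_2$ is equivalent to unboundedness in the ranks of $\boldsigma{2}$ wellfounded relations on $\mathbb{R}$. Given such a $<^{*}$, written as $x<^{*}x'\iff\exists y\,(x\oplus x'\oplus y\in A)$ with $A$ in $\boldpi{1}$, one takes a regular level-1 system $(P_s)$ uniformizing membership in $A$ by $\Pi^1_1$-wellfoundedness, and then builds a single infinite level-2 tree $Q^{*}$ whose infinite branches (indexed by reals $v$ coding attempted $<^{*}$-descending chains together with witnesses) carry the joined level-1 trees $\oplus_n P_{(v)_{2n+2}\oplus(v)_{2n}\oplus(v)_{2n+1}}$. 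Then $Q=(\emptyset,Q^{*})$ is $\Pi^1_2$-wellfounded exactly because $<^{*}$ is wellfounded, and the Kunen--Martin rank computation gives $\rank(<^{*})\leq\llbracket 2,((0))\rrbracket_Q$. In other words, the paper builds one tree directly from the wellfounded relation, rather than approximating a prescribed rank function from below; this is precisely what lets it reach ordinals of uncountable cofinality in a single step, which the transfinite-induction approach cannot.
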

\begin{proof}
%    We adopt the proof in the spirit of Moschovakis \cite[4C.14]{mos_dst}. 
Note that $u_2 = \bolddelta{2}$ is the sup of ranks of $\boldsigma{2}$ wellfounded relations on $\mathbb{R}$. 
 Given  $< ^{*}$, a $\boldsigma{2}$ wellfounded on $\mathbb{R}$, we need to find a $\Pi^1_2$-wellfounded level $\leq 2$ tree $Q$ such that $\rank(<^{*}) \leq \llbracket 2,  ((0)) \rrbracket_Q$. This suffices for the Lemma by rearranging the nodes in a level $\leq 2$ tree in a suitable way.  Put $x<^{*}x' \eqiv \exists y~ x \oplus x' \oplus y \in A$, where $A$ is $\boldpi{1}$. 
Let $(P_s)_{s \in \omega^{<\omega}}$ be a regular level-1 system such that $P_{x \oplus x' \oplus y}$ is $\Pi^1_1$-wellfounded iff $  x \oplus x' \oplus y \in A$. 
% $\forall n<\omega~ ((v)_{n+1} \oplus (v)_{n} \oplus y) \in A$.
Fix  an effective bijection $\phi: \omega^{<\omega} \eqiv (\omega^{<\omega})^{<\omega}$. If $(W_n)_{n<\omega}$ is a sequence of nonempty level-1 trees, their join is $\oplus_{n<\omega} W_n = \set{(n) \concat w}{w \in W_n}$.
Let $Q^{*}$ be an infinite level-2 tree whose domain is $\set{((0)) \concat q}{q \in (\omega^{<\omega})^{<\omega}}$, and for any real $v$, $\cup_{n<\omega} Q^{*}_{\tree}(((0))\concat \phi(v\res n)) =\oplus_{n<\omega} P_{ (v)_{2n+2} \oplus (v)_{2n} \oplus (v)_{2n+1}}$. 
Then $Q^{*}$ is $\Pi^1_2$-wellfounded. Let $Q = (\emptyset, Q^{*})$. 
The proof of Kunen-Martin shows that $\rank(<^{*}) \leq \llbracket 2, ((0)) \rrbracket_Q$. 
\end{proof}

\begin{mylemma}
  \label{lem:level_2_tree_cofinal_general}
  Suppose $Q$ is a $\Pi^1_2$-wellfounded level $\leq 2$ tree, $q^{*} \in \dom(\comp{2}{Q})$, $P^{*}$ is the completion of $\comp{2}{Q}(q^{*})$. 
% $(Q, (2, q^{*}\concat (a), P^{*}))$ is a partial level $\leq 2$ tree, and $Q$ is $\Pi^1_2$-wellfounded,  $\ucf(Q, (2, q^{*} \concat (a), P^{*})) = (2, (q^{*}, P^{*}, \vec{p}))$. 
Then $\{\llbracket 2, q' \rrbracket_{Q'}: Q'$ is $ \Pi^1_2$-wellfounded,  $\llbracket 2,q^{*} \rrbracket_Q = \llbracket 2,(q')^{-} \rrbracket_{Q'}$,  $\comp{2}{Q}'_{\tree}(q') = P^{*}\}$ is a cofinal subset of $j^{\comp{2}{Q}_{\tree}(q^{*}), P^{*}} (\llbracket 2, q^{*} \rrbracket_Q)$. % if $\ucf(Q, (2, q^{*}, P^{*})) = (2, (q^{-}, P^{*}, \vec{p}))$, $\gamma = \llbracket 2, q^{-} \concat (a) \rrbracket_Q$ if $\ucf(Q, (2, q^{*}, P^{*})) = (2, (q^{-} \concat (a), P^{*}, \vec{p}))$. 
\end{mylemma}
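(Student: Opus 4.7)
The goal is: given any $\gamma<j^{P,P^{*}}(\llbracket 2,q^{*}\rrbracket_Q)$, to exhibit a $\Pi^1_2$-wellfounded level $\leq 2$ tree $Q'$ together with $q'\in\dom(\comp{2}{Q'})$ such that $\comp{2}{Q'}_{\tree}(q')=P^{*}$, $\llbracket 2,(q')^{-}\rrbracket_{Q'}=\llbracket 2,q^{*}\rrbracket_Q$, and $\llbracket 2,q'\rrbracket_{Q'}\geq\gamma$. My overall strategy is to reduce the problem, via the level-2 description machinery of Section~\ref{sec:level-2-description}, to the cardinality-1 case already established in Lemma~\ref{lem:level_2_tree_cofinal_in_u_2}, and then realize the resulting data by an explicit amalgamation construction.

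\emph{Step 1 (reduce via descriptions).} First identify the $(Q,P^{*})$-description $\mathbf{D}_0$ for which $\seed^{Q,P^{*}}_{\mathbf{D}_0}=j^{P,P^{*}}(\llbracket 2,q^{*}\rrbracket_Q)$: namely $\mathbf{D}_0=(2,(q^{*},P,\vec p^{*}),\iota)$, where $\iota\colon P\hookrightarrow P^{*}$ is the inclusion, using the computation of representative functions together with Lemma~\ref{lem:factor_SQW}. Let $\mathbf{E}=\pred_{\prec^{Q,P^{*}}}(\mathbf{D}_0)$. Lemma~\ref{lem:level_2_desc_cofinal_in_next}(2) then provides that $(\mathbf{E}^{Q,P^{*}})''u_2$ is a cofinal subset of $\seed^{Q,P^{*}}_{\mathbf{D}_0}$, so we may choose $\xi<u_2$ with $\mathbf{E}^{Q,P^{*}}(\xi)>\gamma$.

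\emph{Step 2 (witness $\xi$ and graft).} Apply Lemma~\ref{lem:level_2_tree_cofinal_in_u_2} to obtain a $\Pi^1_2$-wellfounded level $\leq 2$ tree $Q_0$ and a node $(a_0)\in\dom(\comp{2}{Q}_0)$ with $\llbracket 2,(a_0)\rrbracket_{Q_0}>\xi$. Now construct $Q'$ by grafting $Q_0$ onto $Q$ at the ``slot'' indicated by $\mathbf{E}$: concretely, $Q'$ extends $Q$ by the new node $q'=q^{*}\concat(a)$ (with $\comp{2}{Q'}_{\tree}(q')=P^{*}$) together with a family of sibling nodes $q^{*}\concat(b)$, $b<_{BK}a$, whose tree components and node components are chosen (mirroring the isomorphism type of $Q_0$ and the combinatorics of $\mathbf{E}$) so that in $<^{Q'}$ all of the new tuples lie strictly between the representative tuples of $(q')^{-}=q^{*}$ and of $q'$. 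Under this construction, $\llbracket 2,q'\rrbracket_{Q'}$ is computed exactly as $\mathbf{E}^{Q,P^{*}}$ applied to $\llbracket 2,(a_0)\rrbracket_{Q_0}$ via \Los, and is therefore $\geq\mathbf{E}^{Q,P^{*}}(\xi)>\gamma$. The $\Pi^1_2$-wellfoundedness of $Q'$ follows from that of $Q$ and $Q_0$ by a standard argument.

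\emph{The main obstacle.} The delicate point is the preservation of the equality $\llbracket 2,(q')^{-}\rrbracket_{Q'}=\llbracket 2,q^{*}\rrbracket_Q$. Naively adjoining new nodes above $q^{*}$ in $Q$ would inflate the wocode of $(2,\vec\beta\oplus_{Q'}q^{*})$ in $<^{Q'}$, because any node of $\rep(Q')$ whose code extends $(\vec\beta\oplus q^{*})$ drops in $<_{BK}$ below it. One must therefore place all the ``grafted'' siblings so that their representative tuples sit $<^{Q'}$-above $(2,\vec\beta\oplus_{Q'}q^{*})$ but $<^{Q'}$-below $(2,\vec\alpha\oplus_{Q'}q')$ for $\vec\alpha$ extending $\vec\beta$; this corresponds combinatorially to requiring that the new nodes be of the form $q^{*}\concat(b)$ with $b<_{BK}a$ (on the last coordinate only) and with tree components that are partial completions of $P^{*}$ agreeing with $P$ on the initial segment. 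Verifying, via the continuity analysis of Lemmas~\ref{lem:QW_description_extension} and~\ref{lem:QW_description_extension_another} together with Lemma~\ref{lem:level_2_beta_q_order}, that this placement yields exactly $\llbracket 2,(q')^{-}\rrbracket_{Q'}=\llbracket 2,q^{*}\rrbracket_Q$ while allowing the sibling tower to push $\llbracket 2,q'\rrbracket_{Q'}$ up to $\mathbf{E}^{Q,P^{*}}(\xi)$ is the core computation of the proof.
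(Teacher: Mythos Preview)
Your Step~1 has a type error that breaks the argument. By Lemma~\ref{lem:factor_SQW}, every $\seed^{Q,P^{*}}_{\mathbf{D}}$ is a uniform indiscernible $u_n$ (specifically $u_{\|\mathbf{D}\|_{\prec^{Q,P^{*}}}+1}$); it does not depend on the order type of $\rep(Q)$ at all. The ordinal $j^{P,P^{*}}(\llbracket 2,q^{*}\rrbracket_Q)$, by contrast, is an arbitrary ordinal below $u_\omega$ determined by the particular $\Pi^1_2$-wellfounded tree $Q$. So there is no $\mathbf{D}_0$ with $\seed^{Q,P^{*}}_{\mathbf{D}_0}=j^{P,P^{*}}(\llbracket 2,q^{*}\rrbracket_Q)$, and Lemma~\ref{lem:level_2_desc_cofinal_in_next} cannot be invoked as you describe. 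The same confusion recurs in Step~2: $\mathbf{E}^{Q,P^{*}}$ is a map $\admistwoboldextra{j_{\mu_{\mathbb{L}}}}\to\admistwoboldextra{j^Q\circ j^{P^{*}}}$, not a function one can ``apply to $\llbracket 2,(a_0)\rrbracket_{Q_0}$ via \L o\'s'' to compute $\llbracket 2,q'\rrbracket_{Q'}$.

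The paper's route avoids descriptions as the source of cofinality and instead writes down the cofinal function directly: for $\beta=[g]_{\mu_{\mathbb{L}}}<u_2$ set $f(\beta)=[\vec\alpha\mapsto\|(2,\vec\alpha\res P^{-}\concat (g(\alpha_{p^{*}}))\oplus_{\comp{2}{Q}} q^{*}\concat(-1))\|_{<^Q}]_{\mu^{P^{*}}}$; remarkability of level-1 sharps gives $\sup f''u_2=j^{P^{-},P^{*}}(\llbracket 2,q^{*}\rrbracket_Q)$. Given $\beta$, take $U$ from Lemma~\ref{lem:level_2_tree_cofinal_in_u_2} with $\llbracket 2,((0))\rrbracket_U>\beta$, let $(X,\pi)$ represent $Q\otimes U$, and pick $x=\pi^{-1}(\mathbf{C})$ for the description $\mathbf{C}=(2,(q^{*}\concat(-1),P^{*},\vec p),\tau)$ with $\tau\supseteq\id_{*,P^{-}}$ and $\tau(p^{*})=(2,((0),\{(0)\},((0))),\sigma)$, $\sigma((0))=p^{**}$. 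This tensor-product construction dissolves your ``main obstacle'' for free: since $\id_{Q,*}$ minimally factors $(Q,Q\otimes U)$, one has $\llbracket 2,x^{-}\rrbracket_X=\llbracket \id_{Q,*}(2,q^{*})\rrbracket_{Q\otimes U}=\llbracket 2,q^{*}\rrbracket_Q$ automatically, with no ad hoc placement of siblings needed.
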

\begin{proof}
If $q^{*} = \emptyset$,  % the assumption $\ucf(Q, (2, (a), P^{*})) = (2, (\emptyset, P^{*}, \vec{p}))$ implies that $a = (0)$, reducing to 
we are reduced to Lemma~\ref{lem:level_2_tree_cofinal_in_u_2}. Suppose now $q^{*}\neq \emptyset$. Put $\comp{2}{Q}(q^{*}) = (P^{-}, p^{*})$, so $P^{*}$ is the completion of $(P^{-},p^{*})$.

Let $p^{**} = \pred_{\prec^{P^{*}}}(p^{*})$. 
By remarkability of the level-1 sharps, letting $f (\beta) = [\vec{\alpha} \mapsto \wocode{(2, \vec{\alpha} \res P^{-}\concat g(\alpha_{p^{*}}) \oplus_{\comp{2}{Q}} q^{*} \concat (-1))}_{<^Q}]_{\mu^{P^{-}}}$ for $\beta = [g]_{\mu_{\mathbb{L}}}< u_2$, then $\sup f'' u_2 = j^{P^{-}, P^{*}} (\llbracket 2, q^{*} \rrbracket_Q)$. Fix $\beta = [g]_{\mu_{\mathbb{L}}}< u_2$, and we try to find $Q', q'$ 
such that $\comp{2}{Q}'[(q')^{-}] = \comp{2}{Q}[q^{*} ]$, $\comp{2}{Q}'_{\tree}(q') = P^{*}$,  and $ f(\beta) < \llbracket 2, q' \rrbracket_{Q'}$. Let $U$ be a $\Pi^1_2$-wellfounded level $\leq 2$ tree obtained by Lemma~\ref{lem:level_2_tree_cofinal_in_u_2} such that $\beta < \llbracket 2, ((0)) \rrbracket_U $. 
Let $(X,\pi)$ be a representation of $Q \otimes U$,   and let $\theta : \rep(X) \to \rep(Q)$ be the order preserving bijection. Let $\mathbf{C} = (2, \mathbf{q}, \tau) \in \desc(Q, U, *)$, where $\mathbf{q}= (q^{*}\concat (-1), P^{*}, \vec{p})$, $\tau$ extends $\id_{*,P^{-}}$,  $\tau(p^{*}) = (2, ((0), \se{(0)}, ((0))), \sigma)$, $\sigma((0)) = p^{**}$. 
Let $(2, x) = \pi^{-1}(\mathbf{C})$. 
Then %By Lemma~\ref{lem:level_2_tensor_product_order_type}, 
 for $\mu^{P}$-a.e.\ $\vec{\alpha}$, $\theta(2, \vec{\alpha} \oplus_{\comp{2}{X}} x) = (2, \vec{\alpha}\res P^{-}\concat (g (\alpha_{p^{*}}) ) \oplus_{\comp{2}{Q}} q^{*}\concat (-1) )$. Therefore, $\llbracket 2, x \rrbracket_Q = f( \llbracket 2, ((0))\rrbracket_U )>f(\beta)$.  $(X, x)$ plays the role of the desired $(Q',q')$. 
\end{proof}

Suppose $Q$ is a level $\leq 2$ tree and $\vec{\epsilon} = (\comp{d}{\epsilon}_t)_{(d,t) \in \dom(Q)}$ is a tuple of ordinals indexed by $\dom(Q)$. We say that \emph{$\vec{\epsilon}$ is represented by $Q'$} iff $Q$ is a subtree of $Q'$, $Q'$ is $\Pi^1_2$-wellfounded and $\vec{\epsilon} =( \llbracket d,t \rrbracket_{Q'})_{(d,t) \in \dom(Q')}$. % , and say that \emph{$\eta$ is represented by $(U, (d,t))$} iff $(d,t) \in \dom(Q)$ and there is $\vec{\epsilon}$ which is represented by $U$ and satisfying $\comp{d}{\eta}_t = \eta$. 
%[move to Section 4.....]

\begin{mylemma}
  \label{lem:order_type_realizable_level_2}
  Suppose $Q$ is a finite level $\leq 2$ tree and $\vec{\beta} = (\comp{d}{\beta}_q)_{(d,q) \in \dom(Q)}$ respects $Q$. Then $\vec{\beta}$ is represented by some level $\leq 2$ tree $Q'$.
\end{mylemma}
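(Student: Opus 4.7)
I will proceed by induction on $\card(Q)$. The base case $\card(Q) = 1$ is immediate, since then $\dom(Q) = \se{(2,\emptyset)}$, $\vec{\beta} = (\omega_1)$, and taking $Q' = Q$ gives $\llbracket 2,\emptyset \rrbracket_Q = \omega_1 = \comp{2}{\beta}_\emptyset$.

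For the induction step, the plan is to peel off a single node. Choose $(d^*,q^*) \in \dom(Q) \setminus \se{(2,\emptyset)}$ that is $\prec^Q$-maximal; by the description of $\corner{\cdot}$ and Lemma~\ref{lem:level_2_beta_q_order}, such a choice can be made so that $q^*$ is a $<_{BK}$-extremal leaf (when $d^* = 2$), and hence $Q^- := Q$ with $(d^*,q^*)$ deleted is again a level $\leq 2$ tree. The induction hypothesis applied to $(Q^-, \vec{\beta}\res \dom(Q^-))$ yields a $\Pi^1_2$-wellfounded $P \supseteq Q^-$ with $\llbracket d, q \rrbracket_P = \comp{d}{\beta}_q$ for every $(d,q) \in \dom(Q^-)$.

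Next I will extend $P$ to the desired $Q'$ realizing $\comp{d^*}{\beta}_{q^*}$ at the node $(d^*,q^*)$. When $d^* = 1$, the value $\comp{1}{\beta}_{q^*}$ is countable and I invoke a level-1 analog of Lemma~\ref{lem:level_2_tree_cofinal_in_u_2} to realize it by adding a level-1 subtree above the appropriate predecessor of $q^*$. When $d^* = 2$, let $\mathbf{q}^* = (q^*, P^*, \vec{p}^*) \in \exdesc(Q)$ correspond to $(2,q^*)$. By Lemmas~\ref{lem:Q_respecting} and~\ref{lem:beta_q_unambiguous}, $\comp{2}{\beta}_{q^*}$ lies strictly below $j^{\comp{2}{Q}_{\tree}((q^*)^-), P^*}(\llbracket 2, (q^*)^- \rrbracket_P) = j^{\comp{2}{Q}_{\tree}((q^*)^-), P^*}(\comp{2}{\beta}_{(q^*)^-})$. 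Then Lemma~\ref{lem:level_2_tree_cofinal_general} applied inside $P$ produces a $\Pi^1_2$-wellfounded tree $R$ which contains (an isomorphic copy of) the relevant structure of $P$ below $(q^*)^-$, together with a new node $q'$ whose completion is $P^*$ and with $\llbracket 2, q' \rrbracket_R = \comp{2}{\beta}_{q^*}$.

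The final step is to combine $P$ and $R$ into a single tree $Q'$. I will apply Lemma~\ref{lem:level_2_amalgamation} to the tuples $(\llbracket d,t \rrbracket_P)_{(d,t) \in \dom(P)}$ and $(\llbracket d,t \rrbracket_R)_{(d,t) \in \dom(R)}$, obtaining an amalgamation tree $T$ with factoring maps $\pi,\pi'$ and a tuple $\vec{\gamma}$ respecting $T$ which simultaneously realizes the old and the new values. Then invoke the induction hypothesis a second time on $(T,\vec{\gamma})$ (noting $\card(T)$ can be kept under control so the induction is well-founded, e.g., by setting up a stronger induction on the lexicographic pair $(\card(Q),\, \text{depth})$) to obtain a $\Pi^1_2$-wellfounded $Q' \supseteq T \supseteq Q$ with the required property. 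I expect the main obstacle to be the combinatorial verification that the amalgamation produces a tree whose assignment $\llbracket \cdot \rrbracket_{Q'}$ still realizes the previously-fixed values of $\vec{\beta} \res \dom(Q^-)$ while giving $\comp{d^*}{\beta}_{q^*}$ at $(d^*,q^*)$; this will come down to showing that the factoring maps $\pi, \pi'$ interact correctly with $\prec^T$ and the $\corner{\cdot}$ sequences so that no ``collapse'' of $\llbracket \cdot \rrbracket$-values occurs, which in turn is encoded by the respecting condition on $\vec{\gamma}$.
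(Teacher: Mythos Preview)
Your proposal has a genuine gap at the key step. You write that Lemma~\ref{lem:level_2_tree_cofinal_general} ``produces a $\Pi^1_2$-wellfounded tree $R$ \dots with $\llbracket 2, q' \rrbracket_R = \comp{2}{\beta}_{q^*}$.'' But that lemma only asserts that the set of realizable values $\llbracket 2, q' \rrbracket_{Q'}$ is \emph{cofinal} in $j^{\comp{2}{Q}_{\tree}(q^*), P^*}(\llbracket 2, q^* \rrbracket_Q)$; it does not say every value below that bound is hit. Realizing an \emph{exact} prescribed value is precisely the content of the lemma you are trying to prove, so invoking it here is circular.

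The paper closes this gap by a minimization argument. From Lemma~\ref{lem:level_2_tree_cofinal_general} one obtains $(T,t)$ with $\llbracket 2, t \rrbracket_T \geq \comp{2}{\beta}_{q^*\concat(a)}$ and the correct ancestor structure; one then minimizes $\llbracket 2, t \rrbracket_T$ over all such pairs and shows the minimum cannot be strict via a case split on $\cf^T(2,t) \in \se{0,1,2}$. In each case one exhibits---using a tensor product $T \otimes U$ for a carefully chosen $U$---a tree realizing a strictly smaller value, contradicting minimality. (Case~3 in fact uses the already-established Cases~1--2 to manufacture $U$ with a prescribed $\llbracket 2,((0))\rrbracket_U$.) This minimization-plus-cofinality analysis is the substance of the proof and has no counterpart in your proposal.

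A secondary issue: your induction is not well-founded. You apply the hypothesis a second time to the amalgamation $(T,\vec{\gamma})$, but $T$ is built from the (generally infinite) trees $P$ and $R$ and need not have cardinality below $\card(Q)$; your suggested lexicographic fix does not address this. The paper avoids the problem altogether: Lemma~\ref{lem:level_2_amalgamation} is used once, up front, to reduce to realizing each single node value in some $\Pi^1_2$-wellfounded tree with the correct local ancestor data; once the minimization argument above supplies those individual realizations, the amalgamation of those $\Pi^1_2$-wellfounded trees is the desired $Q'$, and no recursion on the amalgamation is required.
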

\begin{proof}
By rearranging the nodes in $\dom(Q')$ in a suitable way, it suffices to produce a level $\leq 2$ tree $Q'$ and a map $\pi$ factoring $(Q,Q')$ such that for any $(d,q) \in \dom(Q)$, $\comp{d}{\beta}_q = \llbracket \pi(d,q) \rrbracket_{Q'}$.  By a repeated application of  Lemma~\ref{lem:level_2_amalgamation}, it suffices to show that for any $(d^{*}, q^{*} \concat (a)) \in \dom(Q)$, 
% level $\leq 2$ subtree 
% $(\bar{Q}, (d^{*}, q^{*}\concat (a), P^{*}))$ of $Q$,
  \begin{enumerate}
  \item if $d^{*}=1$, then there is a $\Pi^1_2$-wellfounded level $\leq 2$ tree $Q'$  and $q' \in \comp{1}{Q}'$ such that $\comp{1}{\beta}_{q^{*}\concat (a)} = \llbracket 1, q' \rrbracket_{Q'}$.
  \item if $d^{*}=2$ and $P^{*} = \comp{2}{Q}_{\tree}(q^{*} \concat (a))$,  then there is a $\Pi^1_2$-wellfounded level $\leq 2$ tree $Q'$ and $q' \in \dom(\comp{2}{Q}')$ such that  $\comp{2}{\beta}_{q^{*}\concat (a)} = \llbracket 2, q' \rrbracket_{Q'}$, $\comp{2}{Q}'[(q')^{-}] = \comp{2}{Q}[q^{*}]$, $\comp{2}{Q}'_{\tree}(q') = P^{*}$.
  \end{enumerate}
The case $d^{*}=1$ is obvious. We assume now $d^{*}=2$. 

% Case 1: $q^{*} = \emptyset$. %So $\comp{2}{\beta}_{(a)}< u_2$, $\cf^{\mathbb{L}}(\comp{2}{\beta}_{(a)}) = \omega$ if $Q((a))$ is of degree 0, $\cf^{\mathbb{L}}(\comp{2}{\beta}_{ (a)}) = \omega_1$ if $Q((a))$ is of degree 1.

% So $\comp{2}{\beta}_{(a)}< u_2$. 
% Lemma~\ref{lem:level_2_tree_cofinal_in_u_2} gives us a $\Pi^1_2$-wellfounded level $\leq 2$ tree $T$ and a node $b \in \omega^{<\omega}$ such that  $\llbracket 2, (b) \rrbracket_T \geq   \comp{2}{\beta}_{(a)}$. 
% Minimizing $\llbracket 2, (b)\rrbracket _T$, we may further assume that for any $\Pi^1_2$-wellfounded $T'$, for any $c \in \omega^{<\omega}$, if   $\llbracket 2, (c) \rrbracket_{T'} \geq  \comp{2}{\beta}_{(a)}$, then  $\llbracket 2, (c) \rrbracket_{T'} \geq   \llbracket 2, (b) \rrbracket_T $. 
% % Identifying $T'$ with a suitable isomorphic copy, we can see that for any  $\Pi^1_2$-wellfounded $T'$, for any $a \in \omega^{<\omega}$, if $(a) \in \dom(T)$ and $\llbracket 2, ((0)) \rrbracket_{T'} \geq  \comp{2}{\beta}_{(a)}$, then $\llbracket 2, (a) \rrbracket_{T'} \geq   \llbracket 2, ((0)) \rrbracket_T $. 
% We claim that $\llbracket 2, (b) \rrbracket_T =   \comp{2}{\beta}_{(a)}$. Suppose otherwise. 

Lemma~\ref{lem:level_2_tree_cofinal_general} gives us a $\Pi^1_2$-wellfounded level $\leq 2$ tree $T$ and $t \in \dom(\comp{2}{T})$ such that $\llbracket 2, t \rrbracket_T \geq \comp{2}{\beta}_{q^{*}\concat (a)}$, $\comp{2}{T}[t^{-}] = \comp{2}{Q}[q^{*}], \comp{2}{T}_{\tree}(t) = P^{*}$. 
Minimizing $\llbracket 2, t \rrbracket_T$, we may further assume that for any $\Pi^1_2$-wellfounded $T'$ and any $t' $ such that $\llbracket 2, t' \rrbracket_{T'}\geq \comp{2}{\beta}_{q^{*}\concat (a)}$, $\comp{2}{T}'[(t')^{-}] = \comp{2}{Q}[q^{*}], \comp{2}{T}'_{\tree}(t') = P^{*}$,  we have $\llbracket 2, t' \rrbracket_{T'} \geq \llbracket 2, q^{*}\concat (a) \rrbracket_T$. % Identifying $T'$ with a suitable isomorphic copy, we can see that for any $\Pi^1_2$-wellfounded $T'$ extending $\bar{Q}$ such that  $\llbracket d, q \rrbracket _T = \llbracket d, q \rrbracket _Q$, for any $b \in \omega^{<\omega}$, if $\ucf (T'[q^{*}\concat (b)])=(2, (q^{*}, P^{*}, \vec{p}))$ and  $\llbracket 2, q^{*}\concat (b) \rrbracket_{T'} \geq \comp{2}{\beta}_{q^{*}\concat (a)}$ then  $\llbracket 2, q^{*}\concat (b) \rrbracket_{T'} \geq \llbracket 2, q^{*}\concat (a) \rrbracket_T$. 
We claim that $\llbracket 2, t \rrbracket_{T} = \comp{2}{\beta}_{q^{*} \concat (a)}$. Suppose otherwise. Put $p^{*} = \comp{2}{T}_{\node}(t)$.

Case 1: $\cf^T(2,t) = 0$. 

If $\comp{2}{T}\se{t,-}$ has a $<_{BK}$-maximum $t'$, then $\llbracket 2,t \rrbracket_T = \llbracket 2,t' \rrbracket_T + \omega$. 
% Put $b = t(\lh(t)-1)$. If $c$ is the $<_{BK}$-predecessor of $b$ in $\set{c}{\comp{2}{T}_{\tree}(t^{-} \concat c) = P^{*}}$, then $\llbracket 2, t \rrbracket_T = \llbracket 2, t^{-} \concat (c) \rrbracket_T + \omega$. 
So $\comp{2}{\beta}_{q^{*}\concat (a)} \leq \llbracket 2, t' \rrbracket_T < \llbracket 2, t \rrbracket_T$, contradicting the minimization assumption. 
If $\comp{2}{T}\se{t,-}$ has $<_{BK}$-limit order type, then 
$\llbracket 2, t \rrbracket_T = \sup \set{ \llbracket 2, t' \rrbracket_T}{t' \in \comp{2}{T}\se{t,-}}$, 
% If $a$ has $<_{BK}$-limit order type in  $\set{c}{\comp{2}{T}_{\tree}(t^{-} \concat c) = P^{*}}$, then $\llbracket 2, t \rrbracket_T = \sup \set{ \llbracket 2, t^{-}\concat c \rrbracket_T}{\comp{2}{T}_{\tree}(t^{-} \concat c) = P^{*}}$, 
so there is $t'$ satisfying $\comp{2}{\beta}_{q^{*}\concat (a)} \leq \llbracket 2, t' \rrbracket_T < \llbracket 2, t \rrbracket_T$, contradiction again.

Case 2: $\cf^T(2,t) = 1$.

For $\beta  = \omega_1$, put $f(\beta) = [\vec{\alpha} \mapsto \wocode{(2, \vec{\alpha} \concat (\beta) \oplus_{\comp{2}{T}} t \concat (-1))}_{<^T} ]_{\mu^{P^{*}}}$. Then $\llbracket 2, t \rrbracket_T = \sup\set{f(\beta)}{\beta < \omega_1}$.
  For each limit $\beta<\omega_1$, we shall find a $\Pi^1_2$-wellfounded $T'$ and a node $t'$ such that $\llbracket 2, t'\rrbracket_{T'} = f(\beta)$, $\comp{2}{T}'[(t')^{-}] = \comp{2}{Q}[q^{*}], \comp{2}{T}'_{\tree}(t') = P^{*}$, contradicting to the minimization assumption. Fix a limit ordinal $\beta < \omega_1$. Let $U$ be a $\Pi^1_2$-wellfounded level $\leq 2$ tree such that $ \llbracket 1, (0) \rrbracket_U =\beta$. 
Let $(X,\pi)$ be a representation of $T \otimes U$  and let $\theta : \rep(X) \to \rep(T)$ be the order preserving bijection. Let $\mathbf{C} = (2, \mathbf{t}, \tau) \in \desc(T,U,*)$, $\mathbf{t} = (t\concat (-1),S,\vec{s})$, $\tau$ extends $\id_{*,S}$, $\tau(s_{\lh(\vec{s})-1}) = (1, (0), \emptyset)$. Let $(2, x) = \pi^{-1}(\mathbf{C})$. Then  
% By Lemma~\ref{lem:level_2_tensor_product_order_type}, 
for $\mu^{P^{*}}$-a.e.\ $\vec{\alpha}$, $\theta(2, \vec{\alpha} \oplus_{ \comp{2}{X}} x) = (2, \vec{\alpha} \concat (\beta) \oplus_{\comp{2}{T}} t \concat(-1))$. Therefore, $\llbracket 2, x \rrbracket_X = f(\beta)$.
$(X, x)$ plays the role of the desired $(T',t')$. 

Case 3: $\cf^T(2,t) = 2$. 

Let  $p^{**}$ be the $<_{BK}\res P^{*} \cup \se{p^{*}}$-predecessor of $p^{*}$. 
For $\beta  = [g]_{\mu_{\mathbb{L}}} < u_2$, put $f(\beta) = [\vec{\alpha} \mapsto \wocode{(2, \vec{\alpha} \concat g(\alpha_{p^{**}}) \oplus_{\comp{2}{T}} t \concat (-1))}_{<^T} ]_{\mu^{P^{*}}}$.  Then $\llbracket 2, t \rrbracket_T = \sup\set{f(\beta)}{\beta < u_2}$. For each limit $\omega_1 < \beta < u_2$, we shall find a $\Pi^1_2$-wellfounded $T'$ and a node $t'$ such that $\llbracket 2, t'\rrbracket_{T'} = f(\beta)$, $\comp{2}{T}'[(t')^{-}] = \comp{2}{Q}[q^{*}], \comp{2}{T}'_{\tree}(t') = P^{*}$. Fix a limit ordinal $\omega_1< \beta < u_2$. By Case 1 and Case 2 of this lemma applied to $(2,((0))) $ in place of $(d^{*}, q^{*} \concat (a))$, we can find a $\Pi^1_2$-wellfounded level $\leq 2$ tree $U$ such that $\llbracket 2, (0) \rrbracket_U = \beta$. Let $(X, \pi)$ be a representation of $T \otimes U$ and let $\theta: \rep(X) \to \rep(T)$ be the order preserving bijection. Let $\mathbf{C} = (2, \mathbf{t}, \tau)$, $\mathbf{t} = (t\concat (-1), S, \vec{s})$, $\tau $ extends $\id_{*,S}$, $\tau(s_{\lh(\vec{s})-1}) = (2, ( (0), \se{(0)}, ((0)) ), \sigma)$, $\sigma((0)) = p^{**}$. Let $(2, x)= \pi^{-1}(\mathbf{C})$. $(X,x)$ plays the role of the desired $(T',t')$. 
\end{proof}

The level-3 version of Lemmas~\ref{lem:level_2_tree_cofinal_in_u_2}-\ref{lem:order_type_realizable_level_2} are similarly proved.

\begin{mylemma}
  \label{lem:level_3_tree_cofinal_in_delta13}
For any $a \in \omega^{<\omega}$,   $\{  \llbracket (a) \rrbracket_R  : R $ is a $\Pi^1_3$-wellfounded level-3 tree, $(a) \in \dom({R})\}$ is a cofinal subset of $\bolddelta{3}$. 
\end{mylemma}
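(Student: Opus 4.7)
The plan is to mimic exactly the structure of the proof of Lemma~\ref{lem:level_2_tree_cofinal_in_u_2}, bumping everything up by one level. First, I would invoke the fact (a consequence of Moschovakis's analysis under $\boldDelta{2}$-determinacy) that $\bolddelta{3}$ equals the supremum of the ranks of $\boldsigma{3}$-wellfounded relations on $\mathbb{R}$. Then by rearranging nodes in a level-3 tree appropriately (exactly the trick used at level-2), it suffices to show that for every $\boldsigma{3}$-wellfounded relation $<^{*}$ on $\mathbb{R}$ there is a $\Pi^1_3$-wellfounded level-3 tree $R$ with $((0)) \in \dom(R)$ such that $\rank(<^{*}) \leq \llbracket((0))\rrbracket_R$.

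Given such a $<^{*}$, write $x<^{*}x' \eqiv \exists y~ x \oplus x' \oplus y \in A$, where $A$ is $\boldpi{2}$. Next I would fix a regular level-2 system $(Q_s)_{s \in \omega^{<\omega}}$ such that $Q_{x \oplus x' \oplus y}$ is $\Pi^1_2$-wellfounded iff $x \oplus x' \oplus y \in A$; the existence of such a system is an analog of the level-1 fact used in Lemma~\ref{lem:level_2_tree_cofinal_in_u_2} and is part of the standard theory of level-$\leq 2$ trees developed in \cite{sharpII}. I would then fix an effective bijection $\phi: \omega^{<\omega} \cong (\omega^{<\omega})^{<\omega}$ and define an analog of the ``join'' operation at the level of level-$\leq 2$ trees: if $(Q^n)_{n<\omega}$ is a sequence of nonempty level-$\leq 2$ trees, let $\oplus_{n<\omega} Q^n$ be the level-$\leq 2$ tree obtained by prefixing $n$ to every node of $Q^n$ and taking the union.

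With these ingredients in place, construct a level-3 tree $R^{*}$ whose domain is $\set{((0)) \concat r}{r \in (\omega^{<\omega})^{<\omega}}$ and whose tree-component assignment is arranged so that, for any real $v$, the level-$\leq 2$ tree
\begin{displaymath}
  \bigcup_{n<\omega} R^{*}_{\tree}(((0)) \concat \phi(v \res n))
\end{displaymath}
coincides with $\oplus_{n<\omega} Q_{(v)_{2n+2} \oplus (v)_{2n} \oplus (v)_{2n+1}}$. The node components are chosen (using the freedom given by the cofinality/completion bookkeeping) so that the resulting object is indeed a well-defined level-3 tree. Finally, I would let $R$ be the level-3 tree with a single top node labelled $((0))$ sitting above $R^{*}$ in the obvious way. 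The verification splits into two parts: (i) $R$ is $\Pi^1_3$-wellfounded, because any infinite descending branch would produce an infinite descent through some $\oplus_{n} Q_{x_n \oplus x_{n+1} \oplus y_n}$, which by construction of the $Q_s$ forces each $x_n \oplus x_{n+1} \oplus y_n \in A$, yielding an infinite $<^{*}$-descending chain contrary to $<^{*}$ being $\boldsigma{3}$-wellfounded; and (ii) a Kunen-Martin-style rank computation, identical in spirit to the level-2 argument, yields $\rank(<^{*}) \leq \llbracket ((0)) \rrbracket_R$.

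The main obstacle is purely combinatorial bookkeeping: at level 3 a ``node'' of the tree carries as data a completion of a level-$\leq 2$ partial tree together with a choice of uniform cofinality, so one must verify that the join operation $\oplus$ on level-$\leq 2$ trees fits together coherently with the required signatures and uniform cofinalities along a branch. Once the combinatorics of the join is set up properly and one checks that the signatures are consistent with the definition of a level-3 tree (Definition in \cite{sharpII}), both the wellfoundedness of $R$ and the Kunen-Martin rank bound follow along the same lines as in the level-2 case, replacing $\boldpi{1}$ by $\boldpi{2}$, level-1 systems by level-2 systems, and level-$\leq 2$ trees by level-3 trees throughout.
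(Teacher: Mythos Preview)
Your approach is correct, and in fact the paper explicitly acknowledges that this route works: the proof begins by stating ``It is possible to imitate the proof of Lemma~\ref{lem:level_2_tree_cofinal_in_u_2},'' which is precisely what you do. The paper, however, then chooses a different and shorter argument. Rather than building a level-3 tree from a given $\boldsigma{3}$-wellfounded relation via a Kunen--Martin construction (with the attendant bookkeeping of joining level $\leq 2$ trees and checking signature/cofinality compatibility along branches), the paper invokes the prewellordering property of $\boldpi{3}$: it takes a good universal $\Pi^1_3$ set $G$, an effective level-3 system $(R_s)_{s\in\omega^{<\omega}}$ with $x\in G$ iff $R_x$ is $\Pi^1_3$-wellfounded, observes that $\varphi(x)=\llbracket\emptyset\rrbracket_{R_x}$ is a $\Pi^1_3$-norm on $G$ (the complexity coming from Kechris--Martin), and then appeals to Moschovakis \cite[4C.14]{mos_dst} to conclude $\ot(\ran(\varphi))=\bolddelta{3}$, from which the result follows immediately.

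Your direct-analog approach has the virtue of being self-contained and parallel to the level-2 case, but requires verifying the combinatorics you flag as the main obstacle. The paper's approach bypasses this entirely by outsourcing the hard work to the already-established facts that level-3 systems yield $\Pi^1_3$-norms and that any $\Pi^1_3$-norm on a universal set has range of order type $\bolddelta{3}$.
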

\begin{proof}
  It is possible to imitate the proof of Lemma~\ref{lem:level_2_tree_cofinal_in_u_2}. We give an alternative proof using the prewellordering property of the pointclass $\boldpi{3}$.  Let $G$ be a good universal $\Pi^1_3$-set and let $(R_s)_{s \in \omega^{<\omega}}$ be an effective level-3 system satisfying
 $x \in G$ iff $R_x \DEF \cup_{n<\omega} R_{x \res n}$ is $\Pi^1_3$-wellfounded. $G$ is equipped with the  $\Pi^1_3$-norm $\varphi(x) = \llbracket \emptyset \rrbracket_{R_x}$, the complexity from Kechris-Martin.  By Moschovakis \cite[4C.14]{mos_dst}, $\ot(\ran(\varphi)) = \bolddelta{3}$. The rest of the proof is simple.
\end{proof}

\begin{mylemma}
  \label{lem:level_3_tree_cofinal_general}
  Suppose $R$ is a $\Pi^1_3$-wellfounded level-3 tree, $r^{*} \in \dom(R)$, $Q^{*}$ is a completion of ${R}(r^{*})$. 
Then $\{\llbracket r' \rrbracket_{R'}: R'$ is $ \Pi^1_3$-wellfounded, ${R}'[(r')^{-}] = {R}[r^{*} ]$, ${R}'_{\tree}(r') = Q^{*}\}$ is a cofinal subset of $j^{{R}_{\tree}(r^{*}), Q^{*}} (\llbracket r^{*} \rrbracket_R)$. 
\end{mylemma}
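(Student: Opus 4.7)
The proof is the level-3 analogue of Lemma~\ref{lem:level_2_tree_cofinal_general}, following its three-case structure. Put $Q = R_{\tree}(r^*)$, $(d^*, q^*, P^*) = R(r^*)$, and $\alpha^* = \llbracket r^* \rrbracket_R$. The task is to exhibit, for every $\delta < j^{Q, Q^*}(\alpha^*)$, a $\Pi^1_3$-wellfounded $R'$ and $r' \in \dom(R')$ with $R'[(r')^-] = R[r^*]$, $R'_{\tree}(r') = Q^*$, and $\llbracket r' \rrbracket_{R'} > \delta$. I split on $\cf(R(r^*)) \in \{0,1,2\}$, which by Theorem~\ref{thm:jQ_LT2_regularity} and \Los{} governs the $\admistwoboldextra{j^{Q^*}}$-cofinality of $j^{Q, Q^*}(\alpha^*)$.

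The first two cases, $\cf(R(r^*)) \in \{0, 1\}$, are handled directly: one adjoins to $R$ a fresh successor node $r^* \concat (a)$ whose $R'(r^* \concat (a))$ is parametrized by $n < \omega$ or $\beta < \omega_1$ respectively, and verifies that $\llbracket r^* \concat (a) \rrbracket_{R'}$ runs cofinally through $j^{Q, Q^*}(\alpha^*)$ as the parameter varies. The third case, $\cf(R(r^*)) = 2$, needs the tensor product machinery: by an analogue of the remarkability argument used in Case 3 of Lemma~\ref{lem:level_2_tree_cofinal_general}, there is a natural function $f : u_2 \to j^{Q, Q^*}(\alpha^*)$ with $\sup f'' u_2 = j^{Q, Q^*}(\alpha^*)$, whose value $f(\beta)$ at $\beta = [g]_{\mu_{\mathbb{L}}}$ is obtained by inserting $g$ at the $\prec^{Q^*, P^*}$-predecessor of $\ucf(R(r^*))$ into the representing function for $\alpha^*$. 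Given $\beta < u_2$, invoke Lemma~\ref{lem:level_2_tree_cofinal_in_u_2} to obtain a $\Pi^1_2$-wellfounded level $\leq 2$ tree $U$ with $\llbracket 2, ((0)) \rrbracket_U \geq \beta$, form the level-3 tree $R \otimes U$, and identify the $(R, U, *)$-description $\mathbf{B} = (\mathbf{r}^*, \pi)$ in which $\mathbf{r}^* = (r^* \concat (-1), Q^*, R[r^*])$ extends $r^*$ continuously and $\pi$ extends $\id_{*, Q}$ by sending $(d^*, q^*)$ to a $(U, *)$-description whose seed realizes $\llbracket 2, ((0)) \rrbracket_U$. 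Let $r' \in \dom(R \otimes U)$ correspond to $\mathbf{B}$; a direct computation with the order-preserving bijection $\rep(R \otimes U) \to \rep(R)$ and the tensor product identities of Section~\ref{sec:more-level-2} gives $\llbracket r' \rrbracket_{R \otimes U} \geq f(\beta) > \delta$.

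The principal obstacle lies in Case 3: verifying the equation $\llbracket r' \rrbracket_{R \otimes U} = f(\beta)$ by explicit computation through the order-preserving bijection on $\rep(R \otimes U)$. This parallels the tensor product step in the level-2 Lemma~\ref{lem:level_2_tree_cofinal_general}, but now requires the level-3 description apparatus (Definition~\ref{def:description_TQY}, Lemmas~\ref{lem:B_desc_order} and~\ref{lem:YTQ_description_extension}) together with the upcoming level-3 analogue of the identity $j^{Q \otimes W} = j^Q \circ j^W$, to be established in Section~\ref{sec:boldface-level-3_sharp}.
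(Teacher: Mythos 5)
Your overall architecture — split on $\cf(R(r^*))$, introduce a cofinal function $f$ into $j^{Q^-,Q^*}(\llbracket r^*\rrbracket_R)$, encode the parameter $\beta$ into a $\Pi^1_2$-wellfounded $U$, form $R\otimes U$, and extract the witnessing node from a description — is the right one and matches the paper for $\cf = 2$. But the case decomposition has two real problems. First, the case $\cf(R(r^*)) = 0$ should not appear: degree $0$ means $R(r^*)$ has node component $(0,-1,\emptyset)$, so the "completion" $Q^*$ equals $R_{\tree}(r^*)$, no successor node can carry tree $Q^*$, and the hypothesis of the lemma is never met. The paper accordingly has exactly two cases, $\cf=1$ and $\cf=2$. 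Second, and more seriously, your claim that $\cf \in \{0,1\}$ can be handled "directly" by adjoining a fresh successor $r^*\concat(a)$ with $R'(r^*\concat(a))$ "parametrized by $n<\omega$ or $\beta<\omega_1$" does not work: $R'(r^*\concat(a))$ is a partial level $\leq 2$ tree, a fixed finitary object with no ordinal parameter, and varying the $<_{BK}$-position of $a$ within a finite $R'\se{r^*,-}$ cannot reach cofinally into $j^{Q^-,Q^*}(\llbracket r^*\rrbracket_R)$. The paper's Case $\cf=1$ uses the same tensor-product mechanism you reserve for $\cf=2$, only with $U$ chosen to satisfy $\beta < \llbracket 1,(0)\rrbracket_U$ and with $\pi(d^*,q^*)=(1,(0),\emptyset)$.

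On Case $\cf=2$, you are close but missing the one detail that makes the construction land: the factoring map $\tau$ inside $\pi(d^*,q^*)=(2,((0),\se{(0)},((0))),\tau)$ must send $(0)$ to $\mathbf{E}=\ucf^{-}(R(r^*))$, the $\prec^{Q^*,P^*}$-predecessor of $\ucf^*(R(r^*))$ — this is what splices the $U$-data into the correct slot of the representing function for $\llbracket r^*\rrbracket_R$ so that $\llbracket z\rrbracket_{R\otimes U} = f(\llbracket 2,((0))\rrbracket_U)$. Your closing remark that the verification needs "the upcoming level-3 analogue of $j^{Q\otimes W}=j^Q\circ j^W$, to be established in Section~\ref{sec:boldface-level-3_sharp}" is also off, and would in fact be circular since the lemma precedes that section: the argument is purely combinatorial, computed through the order-preserving bijection on representatives exactly as in Lemma~\ref{lem:level_2_tree_cofinal_general}, which likewise does not invoke the ultrapower identity $j^{Q\otimes W}=j^Q\circ j^W$. (Relatedly, Lemma~\ref{lem:level_2_tree_cofinal_general} is a single argument — you are likely recalling the three-case structure of Lemma~\ref{lem:order_type_realizable_level_2}, which proves a different statement.)
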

\begin{proof}
Put $R(r^{*}) = (Q^{-}, (d^{*}, q^{*}, P^{*}))$, $R[r^{*}] = (Q^{-}, \overrightarrow{(d, q, P)})$. 

Case 1: $\cf(R(r^{*})) = 1$.

By Lemma~\ref{lem:level_2_desc_cofinal_in_next}, letting $f(\xi) = [\vec{\beta} \mapsto \wocode{\vec{\beta} \concat (\xi)  \oplus_R r^{*}}_{<^R}]_{\mu^{Q^{-}}}$ for $\xi < \omega_1$, then $\sup f''\omega_1 = j^{Q^{-}, Q^{*}} ( \llbracket r^{*} \rrbracket_R)$. Fix $\beta < \omega_1$, and we try to find $R'$ and $r'$ such that $R'[(r')^{-}] = R[r^{*}]$, $R'(r') = Q^{*}$, and $f(\beta) < \llbracket r' \rrbracket_{R'}$. Let $U$ be a $\Pi^1_2$-wellfounded level $\leq 2$ tree such that $\beta < \llbracket 1, (0) \rrbracket_U $. Let $(Z, \rho)$ be a representation of $R \otimes U$, and let $\theta : \rep(Z) \to \rep(U)$ be the order preserving bijection. Let $\mathbf{B} = (\mathbf{r}, \pi) \in \desc(R, U, *)$, where $\mathbf{r} = (r^{*} \concat (-1), Q^{*}, \overrightarrow{(d,q, P)})$, $\pi$ extends $\id_{*,Q^{-}}$, $\pi(d^{*}, q^{*}) = (1, (0), \emptyset)$. Let $z = \rho^{-1}(\mathbf{B})$. Similarly to Case 1 of the proof of Lemma~\ref{lem:level_2_tree_cofinal_general}, $(Z,z)$ plays the role of the desired $(R',r')$.

Case 2: $\cf(R(r^{*})) = 2$.

Put $\mathbf{E}=(e, \mathbf{z}, \id_{P^{*}}) = \ucf^{-}(R(r^{*}))$. % be the $\prec^{Q, P^{*}}$-predecessor of $\mathbf{D}^{*}$. 
By Lemma~\ref{lem:level_2_desc_cofinal_in_next}, letting $f(\xi) = [\vec{\beta} \mapsto \wocode{\vec{\beta} \concat (j^{P^{*}}(g) (\comp{e}{\beta}_{\mathbf{z}}))  \oplus_R r^{*}}_{<^R}]_{\mu^{Q^{-}}}$ for $\xi = [g]_{\mu_{\mathbb{L}}} < u_2$, then $\sup f'' u_2 = j^{Q^{-}, Q^{*}} ( \llbracket r^{*} \rrbracket_R)$. Fix $\beta < u_2$ and we try to find $R', r'$ as in Case 1. Let $U$ be a $\Pi^1_2$-wellfounded level $\leq 2$ tree such that $\beta < \llbracket 2, (0) \rrbracket_U $, obtained by Lemma~\ref{lem:level_2_tree_cofinal_in_u_2}. Let $(Z, \rho, \theta)$ be as in Case 1. Let $\mathbf{B} = (\mathbf{r}, \pi) \in \desc(R, U, *)$, where $\mathbf{r} = (r^{*}\concat (-1,Q^{*}), Q^{*}, \overrightarrow{(d,q,P)})$, $\pi$ extends $\id_{*,Q^-}$, $\pi(d^{*}, q^{*}) = (2, ((0), \se{(0)}, ((0))), \tau)$, $\tau(0) = \mathbf{E}$. Let $z = \rho^{-1}(\mathbf{B})$. $(Z,z)$ plays the role of the desired $(R',r')$.
\end{proof}

\begin{mylemma}
  \label{lem:order_type_realizable}
  Suppose $R$ is a finite level-3 tree and $\vec{\gamma}=(\gamma_r)_{r \in \dom(R)}$ respects $R$. Then there is a $\Pi^1_3$-wellfounded level-3 tree $R'$ such that $R \subseteq R'$ and for any $r \in \dom(R)$,  $\gamma_r = \llbracket r \rrbracket_{R'}$.
\end{mylemma}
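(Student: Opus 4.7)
The plan is to mirror the proof of Lemma~\ref{lem:order_type_realizable_level_2} one dimension up, replacing level $\leq 2$ tools by their level-3 counterparts from Sections~\ref{sec:level-3-description}--\ref{sec:fact-betw-level}. First, after a rearrangement of nodes it suffices to produce a $\Pi^1_3$-wellfounded $R'$ together with a factoring map $\rho:\dom(R)\to\dom(R')$ such that $\gamma_r=\llbracket\rho(r)\rrbracket_{R'}$ for every $r\in\dom(R)$. Repeated application of the level-3 amalgamation lemma (Lemma~\ref{lem:level_3_amalgamation}) reduces the task to the following one-step statement: given $r^{*}\concat(a)\in\dom(R)$ with $R[r^{*}\concat(a)]=(Q^{*},\overrightarrow{(d,q,P)})$, find a $\Pi^1_3$-wellfounded $R'$ and $r'\in\dom(R')$ with $R'[(r')^{-}]=R[r^{*}]$, $R'_{\tree}(r')=Q^{*}$, and $\llbracket r'\rrbracket_{R'}=\gamma_{r^{*}\concat(a)}$.

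Lemma~\ref{lem:level_3_tree_cofinal_general} produces at least a $\Pi^1_3$-wellfounded $Y$ and $y\in\dom(Y)$ with $Y[y^{-}]=R[r^{*}]$, $Y_{\tree}(y)=Q^{*}$, and $\delta:=\llbracket y\rrbracket_{Y}\geq\gamma_{r^{*}\concat(a)}$. Choose $(Y,y)$ so that $\delta$ is minimized among all such candidates; I claim $\delta=\gamma_{r^{*}\concat(a)}$. Suppose for contradiction $\delta>\gamma_{r^{*}\concat(a)}$ and split on $\cf(Y(y))\in\{0,1,2\}$. If $\cf=0$, then either $Y\{y,-\}$ has a $<_{BK}$-maximum $y'$ with $\delta=\llbracket y'\rrbracket_{Y}+\omega$, or $Y\{y,-\}$ is of $<_{BK}$-limit type with $\delta=\sup\set{\llbracket y'\rrbracket_{Y}}{y'\in Y\{y,-\}}$; in either case some $y'$ witnesses $\llbracket y'\rrbracket_{Y}\geq\gamma_{r^{*}\concat(a)}$, contradicting minimality after a rearrangement. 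If $\cf=1$, Lemma~\ref{lem:level_2_desc_cofinal_in_next} applied inside the $\mu^{Q^{-}}$-ultrapower (with $Q^{-}$ the predecessor completion) shows that $\delta$ is approximated from below by values $f(\beta)=[\vec{\beta}\mapsto\wocode{\vec{\beta}\concat(\beta)\oplus_{Y}y\concat(-1)}_{<^{Y}}]_{\mu^{Q^{-}}}$ for $\beta<\omega_1$; pick $\beta<\omega_1$ with $f(\beta)\geq\gamma_{r^{*}\concat(a)}$, use Lemma~\ref{lem:level_2_tree_cofinal_in_u_2} to obtain a $\Pi^1_2$-wellfounded $U$ with $\llbracket 1,(0)\rrbracket_{U}=\beta$, let $(Z,\rho_Z)$ be a representation of $Y\otimes U$, and form $\mathbf{B}=(\mathbf{y},\pi)\in\desc(Y,U,*)$ with $\mathbf{y}=(y\concat(-1),Q^{*},\overrightarrow{(d,q,P)})$, $\pi\supseteq\id_{*,Q^{-}}$, and $\pi$ sending the top coordinate to $(1,(0),\emptyset)$; then $z:=\rho_Z^{-1}(\mathbf{B})$ provides a smaller witness, contradicting minimality. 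The case $\cf=2$ is the same scheme with $\beta<u_2$ (obtained via Lemma~\ref{lem:level_2_tree_cofinal_in_u_2}) and $\pi$ sending the top coordinate to $(2,((0),\se{(0)},((0))),\sigma)$ where $\sigma((0))$ is dictated by $\ucf^{-}(Y(y))$, so that Lemma~\ref{lem:gamma_r_order} computes the value at $z$ to be exactly $f(\beta)$.

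The main obstacle will be the bookkeeping in the $\cf=2$ case: one must check that the description $\mathbf{B}$ is well-formed as a $(Y,U,*)$-description, that the resulting $z\in\dom(Z)$ inherits the prescribed tree component $Q^{*}$ and the required prefix agreeing with $R[r^{*}]$, and that the computation $\llbracket z\rrbracket_{Z}=f(\beta)$ really goes through. This amounts to carefully chasing Definitions~\ref{def:description_TQY}--\ref{def:factoring_4} together with Lemmas~\ref{lem:YTQ_description_extension} and~\ref{lem:gamma_r_order}; once that is in place, the minimality of $\delta$ is contradicted and the lemma follows.
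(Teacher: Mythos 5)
Your proposal matches the paper's argument step for step: reduce via Lemma~\ref{lem:level_3_amalgamation} to a one-node realization problem, apply Lemma~\ref{lem:level_3_tree_cofinal_general} to get an overshooting witness, minimize, and derive a contradiction by cofinality cases via tensoring $Y$ with a $\Pi^1_2$-wellfounded $U$ and picking out the right $(Y,U,*)$-description. The only slip is citing Lemma~\ref{lem:level_2_tree_cofinal_in_u_2} to realize $\beta$ as $\llbracket 1,(0)\rrbracket_U$ (trivial for countable $\beta$, no lemma needed) and as $\llbracket 2,((0))\rrbracket_U$ for $\beta<u_2$ (which needs the full realizability statement of Lemma~\ref{lem:order_type_realizable_level_2}, not just the cofinality lemma); that is a mislabeled reference rather than a substantive gap.
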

\begin{proof}
It suffices to produce a level-3 tree $R'$ and a map $\rho$ factoring $(R,R')$ such that for any $r \in \dom(R)$, $\gamma_r = \llbracket r \rrbracket_{R'}$. By Lemma~\ref{lem:level_3_amalgamation}, it suffices to show that for any $r^{*} \concat (a) \in \dom(R)$, letting $Q^{*} = R_{\tree}(r^{*} \concat (a))$, there is a $\Pi^1_3$-wellfounded level-3 tree $R'$ and $r' \in \dom(R')$ such that $\gamma_r = \llbracket r' \rrbracket_{R'}$, $R' [ (r')^{-}] = R[r^{*}]$, $R_{\tree}(r') = Q^{*}$.

% Case 1: $r^{*} = \emptyset$. 

% Lemma~\ref{lem:level_3_tree_cofinal_in_delta13} gives us a $\Pi^1_3$-wellfounded level-3 tree $Y$ and a node $b \in \omega^{<\omega}$ such that  $\llbracket (b) \rrbracket_Y \geq   {\gamma}_{(a)}$. 
% Minimizing $\llbracket  (b)\rrbracket _Y$, we may further assume that for any $\Pi^1_3$-wellfounded $Y'$ such that  $\llbracket (b) \rrbracket_{T'} \geq  {\gamma}_{(a)}$, we have $\llbracket  (b) \rrbracket_{Y'} \geq   \llbracket (b) \rrbracket_Y $. 
% % Identifying $T'$ with a suitable isomorphic copy, we can see that for any  $\Pi^1_2$-wellfounded $T'$, for any $a \in \omega^{<\omega}$, if $(a) \in \dom(T)$ and $\llbracket 2, (b) \rrbracket_{T'} \geq  {\gamma}_{(a)}$, then $\llbracket 2, (a) \rrbracket_{T'} \geq   \llbracket 2, (b) \rrbracket_T $. 
% We claim that $\llbracket 2, (b) \rrbracket_T =   {\gamma}_{(a)}$. Suppose otherwise. 

Lemma~\ref{lem:level_2_tree_cofinal_general} gives us a $\Pi^1_3$-wellfounded level-3 tree $Y$ and $y \in \dom(Y)$ such that $\llbracket  y \rrbracket_Y \geq \gamma_{r^{*}\concat (a)}$, $Y[y^{-}] = R[r^{*}], Y_{\tree}(y) = Q^{*}$. 
Minimizing $\llbracket  y \rrbracket_Y$, we may further assume that for any $\Pi^1_2$-wellfounded $Y'$ and any $y'$ such that  $\llbracket  y' \rrbracket_{Y'}\geq \gamma_{r^{*}\concat (a)}$, $Y'[(y')^{-}] = R[r^{*}], Y'_{\tree}(y') = Q^{*}$,  we have $\llbracket  y' \rrbracket_{T'} \geq \llbracket  r^{*}\concat (a) \rrbracket_Y$. 
We claim that $\llbracket  t \rrbracket_Y = \gamma_{r^{*} \concat (a)}$. Suppose otherwise. Put $Y(y) = (Q^{*}  , (d^{*}, q^{*}, P^{*}))$.  % and  $\ucf(Y(y)) = (d^{**}, q^{**}, P^{**})$. 
% Let $\mathbf{D}^{**}  = (d^{**}, q^{**}, \sigma^{**})\in \desc(Q, P^{*})$, 

Case 1: $\cf(Y(y)) = 0$. 

% $Y(b)$ is of degree 0. 

Argue as in Case 1 in the proof of Lemma~\ref{lem:order_type_realizable_level_2} to obtain a contradiction.

% Subcase 1.2: $Y(b)$ is of degree 1. 

Case 2: $\cf(Y(y)) = 1$. 

For $\beta< \omega_1$, put $f(\beta) =[\vec{\alpha} \mapsto  \wocode{  \vec{\alpha} \concat (\beta) \oplus_{\comp{2}{Q}^{*}} y \concat (-1)}_{<^Y} ]_{\mu^{Q^{*}}}$. % By Lemma~\ref{lem:level_2_desc_cofinal_in_next},
So $\llbracket t \rrbracket_Y = \sup \set{f(\beta)}{\beta<\omega_1}$. For each limit ordinal $\beta < \omega_1$, we shall find a $\Pi^1_3$-wellfounded $Y'$ and a node $y' \in  \dom(Y')$ such that $\llbracket y' \rrbracket_{Y'} = f(\beta)$, contradicting to the minimization assumption. 
 Fix a limit ordinal $\beta<\omega_1$. Let $U$ be a $\Pi^1_2$-wellfounded level $\leq 2$ tree such that $ \llbracket 1, (0) \rrbracket_U =\beta$. Let $(Z, \rho)$ be a representation of $Y \otimes U$ and let $\theta : \rep(Z) \to \rep(Y)$ be the order preserving bijection. Let $\mathbf{B} = (\mathbf{y}, \pi) \in \desc(Y, U, *)$, 
 $\mathbf{y} = (y \concat (-1), X, \overrightarrow{(e,x,W)})$, $\pi$ extends $\id_{*,X}$, $\pi(e_{\lh(\vec{x})}, x_{\lh(\vec{x})}) = (1, (0), \emptyset)$.
% $\mathbf{y} = (y, X, \vec{x})$, $y=((0), -1)$, $\pi(1, (0)) = (1, (0), \emptyset)$.
Let $z = \rho^{-1}(\mathbf{B})$.  Similarly to Case 2 of the proof of Lemma~\ref{lem:order_type_realizable_level_2}, $(Z,z)$ constitutes a counterexample. 

% So $\lh(z) = 1$, $\comp{1}{Z}_{\tree}(z) =\se{(0)}$. Put $z = (c)$. By Lemma~\ref{lem:level_3_tensor_product_order_type}, $\theta( c) = (b, \beta, -1)$. Therefore, $\llbracket (c) \rrbracket_Z = f(\beta)$.

Case 3:  $\cf(Y(y)) = 2$. 

Let $\mathbf{E} = (e,\mathbf{z}, \id_{P^{*}}) = \ucf^{-}(Y(y))$. 
For $\beta = [g]_{\mu^{\mathbb{L}}} < u_2$, put $f(\beta) =[\vec{\alpha} \mapsto  \wocode{  \vec{\alpha} \concat j^{P^{*}}(g)(\comp{e}{\alpha}_{\mathbf{z}}) \oplus_{\comp{2}{Q}^{*}} y \concat (-1)}_{<^Y} ]_{\mu^{Q^{*}}}$.  So $\llbracket t \rrbracket_Y = \sup \set{f(\beta)}{\beta<u_2}$. For each limit ordinal $\omega_1 < \beta < u_2$, we shall find a $\Pi^1_3$-wellfounded $Y'$ and a node $y' \in \dom(Y')$ such that $\llbracket y' \rrbracket_{Y'} = f(\beta)$. 
Let $U$ be a $\Pi^1_2$-wellfounded level $\leq 2$ tree such that $\llbracket 2, ((0)) \rrbracket_U = \beta$. Let $Z,\rho, \theta$ be as in Case 2. Let $\mathbf{B} = (\mathbf{y}, \pi)$, where $\mathbf{y} = (y \concat (-1), X, \overrightarrow{(e,x,W)})$, $\pi$ extends $\id_{*,X}$, $\pi(e_{\lh(\vec{x})}, x_{\lh(\vec{x})}) = (2, ( (0), \se{(0)}, ((0)) ), \tau)$, $\tau((0)) =\mathbf{E}$. Let $z = \rho^{-1}(\mathbf{B})$. Similarly to Case 3 of the proof of Lemma~\ref{lem:order_type_realizable_level_2}, $(Z,z)$ constitutes a counterexample. 
\end{proof}

% \begin{mylemma}
%   \label{lem:order_type_realizable_level_3_tensor_product}
%   Suppose $R$ is a $\Pi^1_3$-wellfounded level-3 tree and $r \in \dom(R)$, $R(r)$ is of degree $\geq 1$. Then
%   \begin{displaymath}
%     \set{\llbracket \mathbf{B} \rrbracket_{R \otimes_r T}}{T \text{ $\Pi^1_2$-wellfounded, }\mathbf{B}  = ( (r \concat (-1), Q, \overrightarrow{(d,q,P)}), \pi )\in \dom(R \otimes_r T), \lh(\mathbf{B}) = \lh(r) }
%   \end{displaymath}
% is a cofinal subset of $\llbracket r \rrbracket_R$.
% \end{mylemma}

% Lemma~\ref{lem:order_type_realizable} states that every $\vec{\gamma} $ respecting a finite level-3 tree $R$ is ``realizable''. It will be 

\section{On $0^{3\#}$}
\label{sec:03}

Recall that  $\mathbb{L}[T_3] = \bigcup_{x \in \mathbb{R}}L[T_3,x]$,  $\mathbb{L}_{\bolddelta{3}}[T_3] = \bigcup_{x \in \mathbb{R}} L_{\bolddelta{3}}[T_3,x]$.  Recall Steel's computation of $L[T_{2n+1}]$:

\begin{mytheorem}[Steel  \cite{steel_hodlr_1995}]
  \label{thm:steel}
  Assuming $\boldpi{2n+1}$-determinacy. Assume $z$ is a real.
  \begin{enumerate}
  \item $\bolddelta{2n+1}$ is the least $<\delta_{2n,\infty}^z$-strong cardinal of $M_{2n,\infty}^{\#}(z)$, where $\delta_{2n,\infty}^z$ is the least Woodin cardinal of $M_{2n,\infty}^{\#}(z)$.
  \item $M_{2n,\infty}^{-}(z) = L_{\bolddelta{2n+1}}[T_{2n+1},z]$.
  \end{enumerate} 
\end{mytheorem}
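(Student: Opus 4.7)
The plan is to follow Steel's direct limit approach, casting the two statements as dual computations of the same object from two directions: from the mouse side via iteration trees on $M_{2n}^{\#}(z)$, and from the descriptive set-theoretic side via the Martin-Solovay tree $T_{2n+1}$. First I would set up $M_{2n,\infty}^{\#}(z)$ as the direct limit of countable iterates of $M_{2n}^{\#}(z)$ under the comparison order, with $\delta_{2n,\infty}^{z}$ defined as the common image of the least Woodin cardinal. The key foothold is a representation lemma: every ordinal $\alpha<\bolddelta{2n+1}$ is realized as $i_{\mathcal{T}}(\bar\alpha)$ for some countable non-dropping iterate $\mathcal{T}$ of $M_{2n}^{\#}(z)$ and some $\bar\alpha$ below the least $<\delta$-strong cardinal of $M_{2n}^{\#}(z)$, where the iteration map $i_{\mathcal{T}}$ agrees with the canonical map into $M_{2n,\infty}^{\#}(z)$ on that cutpoint. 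This will be driven by the comparison theorem for $M_{2n}^{\#}$-like mice under $\boldpi{2n+1}$-determinacy, exactly analogous to how the minimal factoring maps in Theorem~\ref{thm:factor_ordertype_embed_equivalent_lv2} and Theorem~\ref{thm:factor_tower_order_type_equivalent} govern level-$(\leq 2)$ and level-$3$ trees in the descriptive theory.

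For part (1), I would prove the two inequalities separately. For $\bolddelta{2n+1}\leq\kappa$, where $\kappa$ is the least $<\delta^z_{2n,\infty}$-strong cardinal of $M_{2n,\infty}^{\#}(z)$, I plan to build, for any $\boldsigma{2n+1}(z)$ prewellorder, an extender from the $M_{2n,\infty}^{\#}(z)$-sequence below $\delta^z_{2n,\infty}$ witnessing strength up to the rank of the prewellorder; this uses the homogeneity of $T_{2n+1}$ and $\Sigma_1$-definability of $S_{2n+1}$ in the right admissible structure, extending the techniques behind Lemma~\ref{lem:level_2_embedding_bounded_by_delta13}. For the reverse, I would show that the $<\delta^z_{2n,\infty}$-strong cardinals of $M_{2n,\infty}^{\#}(z)$ are fixed points of every iteration map on cofinally many iterates, forcing any such strong cardinal $\kappa<\bolddelta{2n+1}$ to reflect into a $<\delta$-strong cardinal of $M_{2n}^{\#}(z)$ below $\bolddelta{2n+1}$, contradicting that $M_{2n,\infty}^{\#}(z)$ is a direct limit realization.

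For part (2), the inclusion $L_{\bolddelta{2n+1}}[T_{2n+1},z]\subseteq M_{2n,\infty}^{-}(z)$ I would handle by showing $T_{2n+1}$ is definable from the extender sequence of $M_{2n,\infty}^{\#}(z)$ below $\delta^z_{2n,\infty}$: the key is to express the nodes of $T_{2n+1}$ (description-theoretically, in our framework, as $\exexdesc$-type objects built from lower-level trees) via ordinals of $M_{2n,\infty}^{\#}(z)$, leveraging the $\Delta^1_3$ complexity of $j^Q(T_2)$ from Lemma~\ref{lem:jQ_move_T_2} as the base case. The reverse inclusion is the harder direction: given $x\in M_{2n,\infty}^{-}(z)$, I would show that $x$ admits a definition from $T_{2n+1}$ via a ``descriptive decoding'' of its construction stage in $M_{2n,\infty}^{\#}(z)$, using that each bounded subset of $\bolddelta{2n+1}$ in the direct limit arises from a fixed iterate and hence is coded by a description in the sense of our Section~\ref{sec:fact-betw-level}.

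The main obstacle, and where I expect most of the work to concentrate, is the coherent matching of the two sides at limits of strength: concretely, showing that the $\Sigma_1$-definability of $S_{2n+1}\upharpoonright\bolddelta{2n+1}$ over $L_{\bolddelta{2n+1}}[T_{2n+1},z]$ coincides with the mouse-internal $\Sigma_1$-definability over $M_{2n,\infty}^{-}(z)$ from the extender sequence. This requires precisely the correspondence between factoring maps (the $\tau^{Q,W}$, $\pi^{T,Q}$, $\rho^{Y,T}$ of the previous sections) and iteration maps on $M_{2n}^{\#}(z)$-iterates, an identification that will be established in the subsequent sections of the paper through the EM blueprint axiomatization of $0^{3\#}$.
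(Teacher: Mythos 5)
This statement is cited, not proved: the paper attributes it to Steel's \emph{HOD$^{L(\mathbb{R})}$ is a core model below $\Theta$} and imports it as a black-box input, so there is no ``paper's own proof'' to compare against. Your proposal is therefore aimed at a target the paper does not attempt.

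Beyond that mismatch, your sketch has a structural problem: you invoke the paper's own description-theoretic machinery --- Lemma~\ref{lem:jQ_move_T_2}, Lemma~\ref{lem:level_2_embedding_bounded_by_delta13}, the $\exexdesc$ apparatus of Section~\ref{sec:fact-betw-level}, and the comparison Theorems~\ref{thm:factor_ordertype_embed_equivalent_lv2} and~\ref{thm:factor_tower_order_type_equivalent} --- as ingredients in proving Steel's theorem. But the paper's framework is built on \emph{top} of Steel's theorem: the identification of $M_{2,\infty}^{-}(z)$ with $L_{\bolddelta{3}}[T_3,z]$ is exactly what lets $0^{3\#}$ be interpreted as a theory of $M_{2,\infty}^{-}$, and Lemma~\ref{lem:LT2_LT3_agree_on_power_set_of_u_omega}, for instance, explicitly cites Theorem~\ref{thm:steel} in its proof. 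Using the paper's description calculus to derive Steel's theorem would be circular. Steel's actual argument is purely inner-model-theoretic plus classical Moschovakis/Martin--Solovay technology: it compares the direct-limit system of countable iterates of $M_{2n}^{\#}(z)$ against the tree $T_{2n+1}$ directly, without any of the level-$k$ tree/description machinery that this paper develops afterward as a consequence. If you want to verify the theorem, read Steel's 1995 paper; if you want to verify this paper, take Theorem~\ref{thm:steel} as given.
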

By Theorem~\ref{thm:steel}, $\mathbb{L}_{\bolddelta{3}}[T_3] = V_{\bolddelta{3}} \cap \mathbb{L}[T_3]$.

Recall the temporary definition of $0^{3\#}$ in \cite{sharpII}. There is a club $C \subseteq \bolddelta{3}$ such that for a finite level-3 tree $R$, 
\begin{displaymath}
0^{3\#}(R) = \set{\varphi}{  (M_{2,\infty}^{-}; \vec{\gamma}) \models \varphi \text{ for any $\vec{\gamma} \in [C]^{R \uparrow}$}}.
\end{displaymath}

In fact, the complexity of $0^{3\#}(R)$ relies only on $\llbracket \emptyset \rrbracket_R$.

 \begin{mylemma}
   \label{lem:level-2_ultrapower_maps_definable}
If $Q$ is a finite level $\leq 2$ tree, then 
   % If $Q,Q'$ are finite, $Q$ is a level-2 subtree of $Q'$ then $j^{Q,Q'} \res j^Q(M_{2,\infty}^{-})$,
   $j^Q(M_{2,\infty}^{-}), j^Q \res M_{2,\infty}^{-}$ are definable over $M_{2,\infty}^{-}$, uniformly in $Q$. 
If $X$ is another finite level $\leq 2$ trees and $\pi$ factors $(X,Q)$,  then $\pi^{Q} \res j^X(M_{2,\infty}^{-})$ is definable over $M_{2,\infty}^{-}$, uniformly in $(\pi,X,Q)$. 
 \end{mylemma}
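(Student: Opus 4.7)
The plan is to adapt the proof of Lemma~\ref{lem:level_2_embedding_bounded_by_delta13} verbatim, replacing the base predicate $T_2$ by $T_3$ and the ambient structure $\admistwo{x}$ by $M_{2,\infty}^{-}$. By Steel's theorem (Theorem~\ref{thm:steel}), $M_{2,\infty}^{-} = \mathbb{L}_{\bolddelta{3}}[T_3]$, so every element is $\Sigma_1$-definable over $M_{2,\infty}^{-}$ from $T_3$ and an ordinal parameter below $\bolddelta{3}$. This is the kind of ``reconstruction'' property that drove the earlier proof.

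First I would settle the case of the restriction. It suffices, by passing from $j^Q$ to $j^{Q,Q'}$ for $Q$ a subtree of $Q'$, to give a uniform $\Delta_1$ definition over $M_{2,\infty}^{-}$ of $j^{Q,Q'}\res M_{2,\infty}^{-}$. Corollary~\ref{coro:delta13_bounds_ultrapowers} already gives $j^Q(\bolddelta{3}) = \bolddelta{3}$ setwise, so $j^{Q,Q'}\res \bolddelta{3}$ maps ordinals below $\bolddelta{3}$ to ordinals below $\bolddelta{3}$. Combined with the $\Sigma_1$-Skolem hull argument as in Lemma~\ref{lem:level_2_embedding_bounded_by_delta13}, the equality $j^{Q,Q'}(a) = a'$ becomes equivalent to: there exist an ordinal $\xi < \bolddelta{3}$, a $\Sigma_1$-formula $\varphi$, and an ordinal $\alpha < \bolddelta{3}$ with
\begin{displaymath}
L_\xi[j^Q(T_3)] \models \forall b~(b = a \eqiv \varphi(b, j^Q(T_3), \alpha))
\end{displaymath}
and the symmetric statement in $L_\xi[j^{Q'}(T_3)]$ with $a'$ and $j^{Q,Q'}(\alpha)$. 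Modulo the definability of the embedding on ordinals and on $T_3$, this is a $\Sigma_1$ definition over $M_{2,\infty}^{-}$ from $\{T_3\}$; the $\Pi_1$ counterpart is analogous. The definability of $j^{Q,Q'} \res \bolddelta{3}$ over $M_{2,\infty}^{-}$ from $\{T_3\}$ follows from the same $\Sigma_1$-Skolem argument applied to ordinals, once we know $j^{Q,Q'}(T_3)$ is uniformly definable.

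Second, for $j^Q(M_{2,\infty}^{-})$, I would simply observe that $j^Q(M_{2,\infty}^{-}) = \mathbb{L}_{\bolddelta{3}}[j^Q(T_3)]$ by elementarity of $j^Q$ on each $L_{\kappa_3^x}[T_3, x]$, so definability of $j^Q(M_{2,\infty}^{-})$ reduces to definability of $j^Q(T_3)$ (together with the fact, visible from Corollary~\ref{coro:delta13_bounds_ultrapowers}, that $j^Q(\bolddelta{3}) = \bolddelta{3}$). The case of $\pi^Q \res j^X(M_{2,\infty}^{-})$ is handled in exactly the same way via Lemma~\ref{lem:level_2_ultrapower_iteration_reduce}, which identifies $\pi^Q$ with $\pi^{X \otimes Q}$ and reduces to a single ultrapower.

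The main obstacle is the input I have been suppressing: the analog of Lemma~\ref{lem:jQ_move_T_2}(5) for $T_3$, namely that $j^Q(T_3)$ lies in $M_{2,\infty}^{-}$ and admits a uniform $\Delta_1$ definition over $M_{2,\infty}^{-}$ from $\{T_3\}$, uniformly in $Q$ (and similarly $j^Q(\pi^{P'} \res \mathbb{L}_{\bolddelta{3}})$ for $\pi$ factoring finite level-1 trees $(P,P')$). This is the descriptive-set-theoretic heart of the matter; its proof will require the detailed tensor-product description machinery developed in Sections~\ref{sec:level-2-analysis}--\ref{sec:boldface-level-3_sharp}, in particular the fact that $T_3$ can be built from $T_2$-ultrapowers in a uniformly computable way, so that $j^Q$ acts on $T_3$ by its action on $T_2$ and on the ordinals below $\bolddelta{3}$, both of which are already handled by Lemma~\ref{lem:jQ_move_T_2}. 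Once this ingredient is in place the lemma reduces to routine bookkeeping.
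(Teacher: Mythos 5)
Your reduction of the lemma to two base facts — that $j^Q(T_3)$ and $j^Q \res \bolddelta{3}$ (and $\pi^Q \res \bolddelta{3}$) are uniformly definable over $M_{2,\infty}^{-}$ — is the correct first move and matches the paper's structure, and your passage from those facts to the full lemma via Skolem-definability and $\Sigma^1_4$-definability over $M_{2,\infty}^{-}$ is sound. The gap is in what you say about the base facts themselves. You assert that the action of $j^Q$ ``on the ordinals below $\bolddelta{3}$'' is ``already handled by Lemma~\ref{lem:jQ_move_T_2}.'' It is not: that lemma controls $j^Q$ only on $u_\omega$ and on $T_2$, which is one level below what you need. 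The whole difficulty of the present lemma is precisely $j^Q$ acting on ordinals between $u_\omega$ and $\bolddelta{3}$, and a Skolem-hull argument cannot bootstrap this because the parameters you would need to track under $j^{Q,Q'}$ are themselves ordinals below $\bolddelta{3}$.

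The paper gets past this with a norm-localization argument that your proposal does not contain. Fix a good universal $\Pi^1_3$ set $G$ and a regular $\Pi^1_3$ norm $\varphi : G \to \bolddelta{3}$. Then the relation $x,y \in G \wedge j^Q(\varphi(x)) = \varphi(y)$ is expressed by: there is $z = M_1^\#(x,y)$ such that $\admistwo{z} \models j^Q(\varphi(x)) = \varphi(y)$, and the inner statement is $\Delta_1$ over $\admistwo{z}$ from $\{T_2,z\}$ by Corollary~\ref{coro:Delta13_pwo_computable_in_LT2} together with Lemma~\ref{lem:level_2_embedding_bounded_by_delta13}. Applying Lemma~\ref{lem:jQ_move_factor_maps_another} handles $\pi^Q \res \kappa_3^x$ and $j^Q(T_3) \res \kappa_3^x$ in the same local way. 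This exhibits $j^Q(T_3)$, $j^Q \res \bolddelta{3}$, $\pi^Q \res \bolddelta{3}$ as $\Sigma^1_4$ in the codes, which by Theorem~\ref{thm:steel} is exactly what ``definable over $M_{2,\infty}^{-}$'' requires. Note also that your gesture toward ``the detailed tensor-product description machinery developed in Sections~\ref{sec:level-2-analysis}--\ref{sec:boldface-level-3_sharp}'' cannot be right, since Section~\ref{sec:boldface-level-3_sharp} lies after the lemma in the paper's development; the proof must (and does) run only on material already in hand, notably the $\admistwo{z}$-level lemmas of Section~\ref{sec:approximation-LT3-LT2}.
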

 \begin{proof}
By Theorem~\ref{thm:steel},  $j^Q(M_{2,\infty}^{-}) = L[j^Q(T_3)]$, and every $\Sigma^1_4$ subset of $\bolddelta{3}$ is definable over $M_{2,\infty}^{-}$. It suffices to show that $j^Q(T_3)$, $j^Q \res \bolddelta{3}$, $\pi^{Q} \res \bolddelta{3}$ are all $\Sigma^1_4$ in the codes. 

Let $G$ be a good universal $\Pi^1_3$ set and let $\varphi : G \to \bolddelta{3}$ be a regular $\Pi^1_3$ norm. Then $x \in G \wedge y \in G \wedge j^Q(\varphi(x)) = \varphi(y)$ iff there exists $z \in \mathbb{R}$ such that $z=M_1^{\#}(x,y)$ and $\admistwo{z} \models j^Q(\varphi(x)) = \varphi(y)$. Here, the statement $j^Q(\varphi(x)) = \varphi(y)$ is $\Delta_1$ over $\admistwo{z}$ from $\se{T_2,z}$ by Corollary~\ref{coro:Delta13_pwo_computable_in_LT2} and Lemma~\ref{lem:level_2_embedding_bounded_by_delta13}. Similarly, using Lemma~\ref{lem:jQ_move_factor_maps_another},  $\pi^{Q} \res \kappa_3^x$ and $j^Q(T_3) \res \kappa_3^x$ are $\Delta_1$-definable  over $\admistwo{x}$ from $\se{T_2,x}$, uniformly in $x$. So $\pi^{Q} \res \bolddelta{3}$ and $j^Q(T_3)$ are $\Sigma^1_4$ using similar arguments. 
 \end{proof}

Based on Theorem~\ref{thm:steel} and $\Sigma^1_4$-absoluteness of iterates of $M_2^{\#}$, a $\Sigma^1_4$ set $ A \subseteq \bolddelta{3}$ has the following alternative definition: $\alpha \in A$ iff  $M_{2,\infty}^{-}$ satisfies that
\begin{quote}
  for any $\xi > \alpha$ cardinal cutpoint, in the $\coll(\omega,\xi)$-generic extension, $\pi_{K | \xi , \infty}(\alpha) \in A$.
\end{quote}

We introduce the following informal symbols arising from the proof of Lemma~\ref{lem:level-2_ultrapower_maps_definable} that will occur in $\mathcal{L}$-formulas or $\mathcal{L}^R$-formulas for a level-3 tree $R$: % whose interpretations in $M_{2,\infty}^{-}(x)$ are correspondingly listed:
\begin{enumerate}
\item If $Q$ is a finite level $\leq 2$ tree, $\underline{j^Q}$ is the informal symbol so that $\underline{j^Q}( a ) = b$ iff for any $\xi $ cardinal cutpoint such that $\se{a,b} \in K|\xi$, the $\coll(\omega,\xi)$-generic extension satisfies $j^Q(\pi_{K|\xi, \infty} ( a) )= \pi_{K|\xi, \infty}(b) $.
% arising from the proof of Lemma~\ref{lem:level-2_ultrapower_maps_definable} so that $(\underline{j^Q})^{M_{2,\infty}^{-}} = j^Q \res M_{2,\infty}^{-}$. 
\item If $\pi$ factors finite level $\leq 2$ trees $(X, T)$, $\underline{\pi^T}$ is the informal symbol so that $\underline{\pi^T}(a) = b$ iff for any $\xi $ cardinal cutpoint such that $\se{a,b} \in K|\xi$, the $\coll(\omega,\xi)$-generic extension satisfies $\pi^T(\pi_{K|\xi, \infty} ( a) )=  \pi_{K|\xi, \infty}(b)  $.
\item If $Q$ is a level $\leq 2$ subtree of $Q'$, $Q'$ is finite, then $\underline{j^{Q,Q'}} = \underline{(\id_Q)^{Q,Q'}}$, where $\id_Q$ factors $(Q,Q')$, $\id_Q(d,q) = (d,q)$. 
\item Corresponding to items 1-3, $  \underline{j^Q_{\sup}}, \underline{\pi^{T}_{\sup}}, \underline{j^{Q,Q'}_{\sup}}$ stand for functions on ordinals that send $\alpha$ to $\sup (\underline{j^Q})''\alpha$, $\sup (\underline{\pi^T})'' \alpha$, $\sup (\underline{j^{Q,Q'}})'' \alpha$ respectively.
\item $\underline{S_3}$ is the informal symbol such that  $(\emptyset,\emptyset) \in \underline{S_3}$ and $ ( (R_i)_{ i\leq n}, (\alpha_i)_{i \leq n}  ) \in \underline{S_3}$ iff $\vec{R}$ is a finite regular level-3 tower and letting 
$r_i \in \dom(R_{i+1}) \setminus \dom(R_i)$, then $r_k = (r_l)^{-} \to \alpha_k < \underline{j^{(R_n)_{\tree}(r_k), (R_n)_{\tree}(r_l)}} (\alpha_l)$.
\item For $1 \leq n \leq \omega$, $\underline{u_n}$ is the symbol so that for any $\xi > \underline{u_n}$ cardinal cutpoint, the $\coll(\omega,\xi)$-generic extension satisfies $\pi_{K|\xi, \infty} (\underline{u_n}) = u_n$.
% \item $\underline{\cf}$ is the symbol for a function from limit ordinals to $\se{\omega, \underline{u_1}, \underline{u_2}}$ so that $M_{2,\infty}^{-}  \models  \underline{\cf} (\alpha) = \beta$ iff $\cf^{\admistwobold}(\alpha) = \beta$. 
\item  Suppose $T$ is a finite level $\leq 2$ tree. If $\mathbf{D} \in \desc(T,U)$, $\wocode{ \mathbf{D}}_{\prec^{T,U}} = n$, then $\underline{\seed^{T,U}_{\mathbf{D}}}=\underline{u_{n+1}}$. If $(1,t) \in \dom(T)$, then $\underline{\seed^T_{(1,t)}} = \underline{\seed^{T,\emptyset}_{(1,t,\emptyset)}}$. If $(2,t) \in \dom(T)$, and $\comp{2}{T}[t] = (S,\vec{s})$,  then $\underline{\seed^T_{(2,t)}} = \underline{\seed^{T,S}_{(2, (t, S,\vec{s}), \id_S)}}$. $\underline{\seed^T} = (\underline{\seed^T_{(d,t)}})_{(d,t) \in \dom(T)}$. 
% \item Suppose $T,Q$ are finite level $\leq 2$ trees. Suppose $\mathbf{C} = (d,t,\tau) \in \desc(T,Q,*)$. If $d=1$, then $\underline{\seed^{T,Q}_{\mathbf{C}}} = \underline{\seed^{T,\emptyset}_{\mathbf{C}}}$. If $d=2$ and $\mathbf{C} \in \desc(T,Q,W)$, then $\underline{\seed^{T,Q}_{\mathbf{C}}} = \underline{\seed^{T,Q \otimes W}_{\mathbf{C}}}$. 
\item If $k$ is a definable class function and $W$ is a definable class, then $k(W) = \bigcup \set{k(W \cap V_{\alpha})}{\alpha \in \ord}$. 
\item If  $X , T, T'$ are finite level $\leq 2$ trees,  $T$ is a subtree of $T'$, $a \in \underline{j^X}(V)$, $d \in \se{1,2}$,  then 
  \begin{enumerate}
  \item $\underline{B^T_{X, a}} = \{\underline{\pi^{T \otimes Q}} (a) : Q$ finite level $\leq 2$ tree,  $\pi $ factors $(X,T \otimes Q)\}$;
  \item $\underline{H^T_{X, a}}$ is the transitive collapse of the Skolem hull of $\underline{B^T_{X, a}} \cup \ran(\underline{j^T})$ in $\underline{j^T}(V)$ and  $\underline{\phi^T_{X, a}}: \underline{H^T_{X, a}} \to \underline{j^T}(V)$ is the associated elementary embedding;
  \item $\underline{j^T_{X, a}} = (\underline{\phi^T_{X, a}})^{-1} \circ \underline{j^T}$;
  \item $\underline{j^{T,T'}_{X, a}} = (\underline{\phi^{T'}_{X, a}})^{-1} \circ \underline{j^{T,T'}} \circ \underline{\phi^T_{X, a}}$;
  \item $\underline{B^T_{1,a}} = \underline{B^T_{Q^0,a} } \cup \underline{B^T_{Q^1,a}}$, $\underline{B^T_{2, a}} =  \underline{B^T_{Q^0,a} } \cup \underline{B^T_{Q^{20},a}} \cup  \underline{B^T_{Q^{21},a}}$;
  \item  $\underline{H^T_{d,a}}$ is the transitive collapse of the Skolem hull of $\underline{B^T_{d,a}} \cup \ran( \underline{j^T} )$ in $\underline{j^T} (V) $ and $\underline{\phi^T_{d, a}}: \underline{H^T_{d, a}} \to \underline{j^T}(V)$ is the associated elementary embedding;
  \item $\underline{j^T_{d, a}} = (\underline{\phi^T_{d, a}})^{-1} \circ \underline{j^T}$;
  \item $\underline{j^{T,T'}_{d, a}} = (\underline{\phi^{T'}_{d, a}})^{-1} \circ \underline{j^{T,T'}} \circ \underline{\phi^T_{d, a}}$.
 \end{enumerate}
\item Suppose $R$ is a level-3 tree. 
  \begin{enumerate}
  \item 
If $\mathbf{r} = (r,Q, \overrightarrow{(d,q,P)}) \in \exdesc(R)$, $\underline{c_{\mathbf{r}}}$ is the informal $\mathcal{L}^{R}$-symbol whose interpretation is
 \begin{displaymath}
   \underline{c_{\mathbf{r}}} =
   \begin{cases}
\underline{j_{\sup}^{R_{\tree}(r^{-}),Q}} (\underline{c_{r^{-}}}) & \text{if }  \mathbf{r} \in \desc(R) \text{ of continuous type} ,\\
\underline{c_r} & \text{if } \mathbf{r} \in \desc(R) \text{ of discontinuous type} ,\\
\underline{j^{R_{\tree}(r), Q}}(\underline{c_r}) & \text{if } \mathbf{r} \notin \desc(R).
   \end{cases}
 \end{displaymath}
\item If  $T,U$ are finite level $\leq 2$ trees and $\mathbf{B} = (\mathbf{r}, \pi) \in \desc(R, T, U)$, $\mathbf{r} \neq \emptyset$, then $  \underline{c_{\mathbf{B}}^T}$ is the informal $\mathcal{L}^{R}$-symbol which stands for $\underline{\pi^{T,U}} ( \underline{c_{\mathbf{r}}})$.
\item If $\mathbf{A} = (\mathbf{r}, \pi, T) \in \exexdesc(R)$, $\mathbf{r} \neq \emptyset$, then $\underline{c_{\mathbf{A}}}$ is the informal $\mathcal{L}^R$-symbol which stands for $\underline{\pi^T}(\underline{c_{\mathbf{r}}})$.
  \end{enumerate}
\end{enumerate}

Put $\mathcal{L}^{\underline{x}} = \se{\underline{\in}, \underline{x}}$;  for a level-3 tree $R$, put $\mathcal{L}^{R,\underline{x}} = \mathcal{L}^R \cup \se{\underline{x}}$, where $\underline{x}$ is a constant symbol. All of the above informal symbols work in $\mathcal{L}^{\underline{x}}$ or $\mathcal{L}^{R,\underline{x}}$, which are intended to be interpreted in $M_{2,\infty}^{-}(x)$ for $x \in \mathbb{R}$.  In particular, for $x \in \mathbb{R}$, 
\begin{displaymath}
  (L[ \underline{S_3}])^{M_{2,\infty}^{-} (x)} = M_{2,\infty}^{-}.
\end{displaymath}

\begin{mylemma}
  \label{lem:3sharp_R_complexity}
Assume $\boldpi{3}$-determinacy. 
  Suppose $R,Y$ are finite level-3 trees, $T$ is a finite level $\leq 2$ tree,  $\rho$ factors $(R,Y,T)$. Then 
   \begin{displaymath}
      \gcode{\varphi(\underline{c_{r_1}},\ldots,\underline{c_{r_n}})} \in 0^{3\#}(R)
   \end{displaymath}
iff
\begin{displaymath}
        \gcode{\underline{j^T}(V) \models  \varphi (\underline{c_{\rho(r_1)}^T},\ldots,\underline{c_{\rho(r_n)}^T})}\in 0^{3\#}(Y).
\end{displaymath}
\end{mylemma}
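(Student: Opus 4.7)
The proof will combine a single application of \Los's theorem for $\mu^T$, executed inside $M_{2,\infty}^{-}$, with the description-theoretic identities for factoring maps developed in Section~\ref{sec:fact-betw-level}. The well-definedness of the theories $0^{3\#}(R)$ and $0^{3\#}(Y)$ on the ambient clubs (established earlier in the paper's development of $0^{3\#}$) will let us freely pass between ``some $\vec{\gamma}$'' and ``every $\vec{\gamma}$''.

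First I would unpack the right-hand side. Given $\vec{\delta}$ strongly respecting $Y$ (with representing function $F \in (\bolddelta{3})^{Y \uparrow}$) and writing $\rho(r_i) = (\mathbf{y}_i, \pi_i)$ with $\pi_i$ factoring $(X_i, T, U_i)$, Definition~\ref{def:description_TQY} together with the interpretation clauses for $\underline{c_{\mathbf{B}}^T}$ say that $\underline{c_{\rho(r_i)}^T}$ is interpreted as $\pi_i^{T,U_i}(\delta_{\mathbf{y}_i})$ inside $M_{2,\infty}^{-}$. Thus the RHS asserts that for club-many $\vec{\delta}$ strongly respecting $Y$,
\begin{displaymath}
M_{2,\infty}^{-} \models \big[\, \underline{j^T}(V) \models \varphi(\pi_1^{T,U_1}(\delta_{\mathbf{y}_1}), \ldots, \pi_n^{T,U_n}(\delta_{\mathbf{y}_n})) \,\big].
\end{displaymath}
Next, applying \Los{} for $\mu^T$ inside $M_{2,\infty}^{-}$, and using the identity
$[F_{\rho(r_i)}^T]_{\mu^T} = \pi_i^{T,U_i}(\delta_{\mathbf{y}_i})$
which is immediate from the definition of $F_{\rho(r_i)}^T$ as $[h]^T \mapsto [F_{\mathbf{y}_i} \circ h^{U_i}_{\pi_i}]_{\mu^{U_i}}$, the displayed clause is equivalent to
\begin{displaymath}
\text{for $\mu^T$-a.e.\ } \vec{\xi},\ M_{2,\infty}^{-} \models \varphi(F_{\rho(r_1)}^T(\vec{\xi}), \ldots, F_{\rho(r_n)}^T(\vec{\xi})).
\end{displaymath}

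The key combinatorial step is to verify that for $\vec{\delta}$ strongly respecting $Y$ inside the $Y$-defining club, the tuple $F_{\rho}^T(\vec{\xi}) = (F_{\rho(r)}^T(\vec{\xi}))_{r \in \dom(R)}$ respects $R$ and lies in the $R$-defining club $C$ for $\mu^T$-a.e.\ $\vec{\xi}$. The ``respects $R$'' clause is precisely what Definition~\ref{def:factoring_4} of $(R,Y,T)$-factoring was engineered to produce: running the signature, approximation-sequence, and ordering bookkeeping in the proof of Lemma~\ref{lem:RYT_factor_from_division} in the reverse direction (using Lemma~\ref{lem:YTQ_description_extension} for the continuity clause in Definition~\ref{def:factoring_4}), one checks the hypotheses of Lemma~\ref{lem:R_respect}. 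The ``lies in $C$'' clause is a closure property of the $0^{3\#}$-defining club under the level-$\leq 2$ ultrapower $j^T$ and so is available from the paper's prior setup.

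Finally the iff is a routine combination. If the LHS holds, then $\varphi$ holds on every tuple respecting $R$ in $C$, so in particular on $F_{\rho}^T(\vec{\xi})$ for the $\mu^T$-a.e.\ $\vec{\xi}$ above, which by the \Los{} equivalence yields the RHS. Conversely, if the RHS holds for club-many $\vec{\delta}$, then picking any one such $\vec{\delta}$ and a witnessing $\vec{\xi}$ produces a tuple $\vec{\gamma}^{*} = F_{\rho}^T(\vec{\xi})$ respecting $R$ and lying in $C$ with $\varphi(\vec{\gamma}^{*})$ true; the well-definedness of $0^{3\#}(R)$ then upgrades this to $\varphi \in 0^{3\#}(R)$. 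The main obstacle is the combinatorial verification at Step 3 — that Definition~\ref{def:factoring_4} really is the correct ``bookkeeping'' so that $F_{\rho}^T$ outputs $R$-respecting tuples; the continuity clause Lemma~\ref{lem:YTQ_description_extension} and the signature/uniform-cofinality analysis encoded in clause~5 of Definition~\ref{def:description_TQY} are precisely what makes this work.
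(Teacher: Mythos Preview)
Your proposal is correct and follows essentially the same route as the paper: both arguments hinge on the identity $\pi_{r}^{T,Q_r}([F]^Y_{\mathbf{y}_r}) = [F^T_{\rho(r)}]_{\mu^T}$, a single application of \Los{} for $\mu^T$, and the fact (established just before Definition~\ref{def:factoring_4}) that $F^T_{\rho}$ takes values in $[\bolddelta{3}]^{R\uparrow}$. The paper only writes out the forward direction explicitly and lets completeness of $0^{3\#}(R)$ and $0^{3\#}(Y)$ supply the converse, whereas you spell out both directions; otherwise the arguments coincide.
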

 \begin{proof}
Put $\rho(r) = (\mathbf{y}_r,\pi_r)$. Put $R_{\tree}(r)=Q_r$. Suppose $\varphi(v_1,\ldots,v_n)$ is an $\mathcal{L}$-formula, $r_1,\ldots,r_n \in \dom(R)$, and 
   \begin{displaymath}
      \gcode{\varphi(\underline{c_{r_1}},\ldots,\underline{c_{r_n}})} \in 0^{3\#} (R). 
   \end{displaymath}
Let $C$ be  a firm set of potential level-3 indiscernibles. 
Suppose $F \in C^{Y\uparrow}$. By Lemma~\ref{lem:R_respect},  $F^{T}_{\rho}$ is a function from $\omega_1^{T \uparrow}$ to $ [C]^{R \uparrow}$. Recall that $F^{T}_{\rho}(\vec{\delta}) = (F^{T}_{\rho(r)} (\vec{\delta}) ) _{r \in \dom(R)}$.  Hence for any $\vec{\delta} \in \omega_1^{T \uparrow}$, 
\begin{displaymath}
  M_{2,\infty}^{-} \models \varphi (  F _{\rho(r_1)}^{T}(\vec{\delta}) ,\ldots,  F _{\rho(r_n)}^{T}(\vec{\delta})).
\end{displaymath}
For each $r \in \dom(R)$, by definition of ${\pi_r}^{T,Q_r}$,  ${\pi_r}^{T,Q_r} ( [F]^{Y}_{\mathbf{y}_r}) = [F^{T}_{\rho(r)}]_{\mu^T}$. 
Hence by \Los{},
\begin{displaymath}
  j^T (M_{2,\infty}^{-}) \models \varphi  (  {\pi_{r_1}}^{T,Q_{r_1}}( [F]^Y_{\mathbf{y}_{r_1}}) , \ldots, {\pi_{r_n}}^{T,Q_{r_n}}( [F]^Y_{\mathbf{y}_{r_n}}) ).
\end{displaymath}
% Hence,
% \begin{displaymath}
%   M_{2,\infty}^{-} \models \varphi_{\rho;r_1,\ldots,r_n}^{R,Y,T} ( [F]^Y_{\mathbf{y}_{r_1}},\ldots,[F]^Y_{\mathbf{y}_{r_n}}).
% \end{displaymath}
Finally, by Lemma~\ref{lem:gamma_r_continuous_type}, for $\mathbf{y} = (y,X) \in \desc(Y)$, if $y$ is of discontinuous type then $[F]^Y_{\mathbf{y}} = [F]^Y_y$; if $y$ is of continuous type then $[F]^Y_{\mathbf{y}} = j_{\sup}^{Y_{\tree}(y^{-}),X}([F]^Y_{y^{-}})$. 
Hence, 
\begin{displaymath}
        \gcode{\underline{j^T}(V) \models  \varphi (\underline{c_{\rho(r_1)}^T},\ldots,\underline{c_{\rho(r_n)}^T})}\in 0^{3\#}(Y).
\end{displaymath}
 \end{proof}

As a corollary to Lemma~\ref{lem:3sharp_R_complexity}, Lemma~\ref{lem:level-2_ultrapower_maps_definable} and Theorem~\ref{thm:factor_tower_order_type_equivalent}, we obtain:
 \begin{mycorollary}
   \label{coro:3sharp_R_invariant}
 Assume $\boldpi{3}$-determinacy.   Suppose $R$ and $Y$ are finite level-3 trees and $\llbracket \emptyset \rrbracket_R = \llbracket \emptyset \rrbracket_Y$. Then $0^{3\#}(R) \equiv_m 0^{3\#}(Y)$.
 \end{mycorollary}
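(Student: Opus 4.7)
The plan is to deduce this corollary by applying Theorem~\ref{thm:factor_tower_order_type_equivalent} in both directions and feeding the resulting factoring maps into Lemma~\ref{lem:3sharp_R_complexity}. Since $\llbracket \emptyset \rrbracket_R = \llbracket \emptyset \rrbracket_Y$, both hypotheses $\llbracket \emptyset \rrbracket_R \leq \llbracket \emptyset \rrbracket_Y$ and $\llbracket \emptyset \rrbracket_Y \leq \llbracket \emptyset \rrbracket_R$ of the comparison theorem are met. Thus I obtain (i) a $\Pi^1_2$-wellfounded finite level $\leq 2$ tree $T$ and a map $\rho$ minimally factoring $(R, Y \otimes T)$, and (ii) a $\Pi^1_2$-wellfounded finite level $\leq 2$ tree $T'$ and a map $\rho'$ minimally factoring $(Y, R \otimes T')$. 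Since factoring $(R, Y \otimes T)$ is the same as factoring $(R, Y, T)$, both $\rho$ and $\rho'$ are in the exact format required by Lemma~\ref{lem:3sharp_R_complexity}.

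Next, I apply Lemma~\ref{lem:3sharp_R_complexity} with the data $(R, Y, T, \rho)$. For each $\mathcal{L}^R$-formula $\varphi(\underline{c_{r_1}}, \dots, \underline{c_{r_n}})$ with $r_i \in \dom(R)$, its membership in $0^{3\#}(R)$ is equivalent to the membership in $0^{3\#}(Y)$ of the formula
\[
\underline{j^T}(V) \models \varphi(\underline{c_{\rho(r_1)}^T}, \dots, \underline{c_{\rho(r_n)}^T}).
\]
Writing $\rho(r_i) = (\mathbf{y}_{r_i}, \pi_{r_i})$, the informal symbol $\underline{c_{\rho(r_i)}^T}$ unfolds to $\underline{\pi_{r_i}^{T, Q_{r_i}}}(\underline{c_{\mathbf{y}_{r_i}}})$, and in turn $\underline{c_{\mathbf{y}_{r_i}}}$ expresses as a fixed polynomial in the constants $\underline{c_y}$ for $y \in \dom(Y)$ via the definitional clauses following Definition~\ref{def:extended_R_description} and Lemma~\ref{lem:gamma_r_continuous_type}. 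Because Lemma~\ref{lem:level-2_ultrapower_maps_definable} supplies a uniform definition of $\underline{j^T}(V)$ and each $\underline{\pi^{T, Q}}$ over $M_{2,\infty}^{-}$ from the finite data $(T, \pi, Q)$, the whole translation $\gcode{\varphi(\underline{c_{r_1}}, \dots)} \mapsto \gcode{\underline{j^T}(V) \models \varphi(\underline{c_{\rho(r_1)}^T}, \dots)}$ is effectively computable in the G\"{o}del code. This yields a many-one reduction $0^{3\#}(R) \leq_m 0^{3\#}(Y)$.

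Symmetrically, applying Lemma~\ref{lem:3sharp_R_complexity} to $(Y, R, T', \rho')$ produces a many-one reduction $0^{3\#}(Y) \leq_m 0^{3\#}(R)$. Combining the two directions gives $0^{3\#}(R) \equiv_m 0^{3\#}(Y)$, as required.

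There is no serious mathematical obstacle; the content of the proof lies entirely in the earlier results. The only bookkeeping point worth verifying is that the syntactic translation described above is genuinely recursive, which reduces to the effectivity of the level $\leq 2$ and level $\leq 3$ tree operations (completion, factoring, extended descriptions) and of the definitions of $\underline{c_{\mathbf{r}}}$, $\underline{c_{\mathbf{B}}^T}$, and $\underline{c_{\mathbf{A}}}$ from finite data; all of these have been made explicit in Sections~\ref{sec:more-level-2} and~\ref{sec:fact-betw-level}.
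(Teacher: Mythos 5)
Your proposal is correct and is precisely the argument the paper has in mind: the paper states the corollary as an immediate consequence of Theorem~\ref{thm:factor_tower_order_type_equivalent}, Lemma~\ref{lem:3sharp_R_complexity}, and Lemma~\ref{lem:level-2_ultrapower_maps_definable}, and you have spelled out exactly how those three pieces combine --- the comparison theorem supplies factoring maps in both directions, the coherency lemma converts each factoring map into a truth-preserving syntactic translation, and the definability lemma (together with the effectivity of the tree operations and of the unfolding of $\underline{c_{\mathbf{r}}}$, $\underline{c_{\mathbf{B}}^T}$) makes each translation recursive, hence a many-one reduction.
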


\subsection{Syntactical properties of $0^{3\#}$}
 \label{sec:syntactical_3sharp}

Suppose $\mathcal{M},\mathcal{N}$ are countable $\Pi^1_3$-iterable mice. 
 A map $\pi: \mathcal{M} \to \mathcal{N}$ is \emph{essentially an iteration map} iff there are $\mathcal{P}$ and iteration maps $\psi_{\mathcal{M}}: \mathcal{M} \to \mathcal{P}$, $\psi_{\mathcal{N}}: \mathcal{N} \to \mathcal{P}$  such that $\psi_{\mathcal{M}} = \psi_{\mathcal{N}} \circ \pi$. For $\alpha \in \mathcal{M}$, $\beta \in \mathcal{N}$, say that $(\mathcal{M},\alpha) <_{DJ} (\mathcal{N}, \beta)$ iff either $\mathcal{M}<_{DJ} \mathcal{N}$ or there exist $\mathcal{P}$ and iteration maps $\psi_{\mathcal{M}} : \mathcal{M} \to \mathcal{P}$, $\psi_{\mathcal{N}}: \mathcal{N} \to \mathcal{P}$ such that $\psi_{\mathcal{M}}(\alpha) < \psi_{\mathcal{N}}(\beta)$.

\begin{mydefinition}[Level-3 EM blueprint]
\label{def:pre_level_3_EM_blueprint}
  A \emph{pre-level-3 EM blueprint} is a function $\Gamma$ sending any finite level-3 tree $Y$ to   a complete consistent $\mathcal{L}^Y$-theory $\Gamma(Y)$ which contains all of the following axioms:
  \begin{enumerate}
    \item\label{item:EM_ZFC} $\ZFC+$ there is no inner model with two Woodin cardinals $+ V=K +$ there is no strong cardinal $+ V$ is closed under the $M_1^{\#}$-operator. 
 \item \label{item:EM_XTQ_factor} 
   \label{item:EM_commutativity} %(Commutativity of embeddings)
Suppose $X,T, Q,Z$ are finite level $\leq 2$ trees, $\pi$ factors $(X,T)$, $\psi$ factors $(T,Z)$. 
    \begin{enumerate}
    \item  $\underline{j^T}: V \to \underline{j^T}(V)$ is $\mathcal{L}$-elementary. $\underline{j^{Q^0}} $ is the identity map on $V$.
    \item  $\underline{\pi^T}:  \underline{j^X}(V) \to \underline{j^T}(V)$ is $\mathcal{L}$-elementary. $\underline{j^{Q^0,T}} = \underline{j^T}$. $\underline{j^{T,T}}$ is the identity map on $\underline{j^T}(V)$.
     \item $\underline{(\psi \circ \pi)^Z}  = \underline{\psi^Z} \circ \underline{\pi^T} $.
     \item  $\underline{j^T} \circ \underline{j^Q} = \underline{j^{T \otimes Q}}$. 
     \item $ \underline{j^Q} (\underline{\pi^T}) = \underline{(Q \otimes \pi)^{Q\otimes T}}$.
     \item $\underline{\pi^T} \res \underline{j^{X \otimes Q}}(V) = \underline{ (\pi \otimes Q)^{T \otimes Q}}$. 
\end{enumerate}
\item  \label{item:level-2-embedding_invariance} 
If $\xi$ is a cardinal and strong cutpoint, then $V^{\coll(\omega,\xi)}$ satisfies the following: 
If ${U}$ is a $\Pi^1_2$-wellfounded level $\leq 2$ tree, then $K|\xi$ and $(\underline{j^{{U}}})^K(K|\xi) $ are countable $\Pi^1_3$-iterable mice and $(\underline{j^{{U}}})^K\res (K|\xi) $ is essentially an iteration map from $K|\xi$ to $(\underline{j^{{U}}})^K(K|\xi)$. Here $(\underline{j^{{U}}})^K$ stands for the direct limit map of $(\underline{j^{Z,Z'}})^K$ for $Z,Z'$ finite subtrees of $U$, $Z$ a finite subtree of $Z'$.
\item \label{item:EM_c_t_are_ordinals} For any $y\in \dom(Y)$, ``$\underline{c_y} \in \ord$'' is an axiom. 
  \item\label{item:EM_order} If $\mathbf{y} \prec^Y \mathbf{y}'$, then ``$\underline{c_{\mathbf{y}}} < \underline{c_{\mathbf{y}'}}$'' is an axiom; if $\mathbf{y} \sim^Y \mathbf{y}'$, then ``$\underline{c_{\mathbf{y}}} = \underline{c_{\mathbf{y}'}}$'' is an axiom.
  \end{enumerate}
A level-3 EM blueprint is a pre-level-3 EM blueprint satisfying the \emph{coherency property}:  if $R,Y,T$ are finite,  $\rho$ factors $(R,Y,T)$, then for each $\mathcal{L}$-formula $\varphi(v_1,\ldots,v_n)$, for each  $r_1,\ldots,r_n \in \dom(R)$, 
   \begin{displaymath}
      \gcode{\varphi(\underline{c_{r_1}},\ldots,\underline{c_{r_n}})} \in \Gamma(R)
   \end{displaymath}
iff
\begin{displaymath}
 \gcode{\underline{j^T}(V) \models  \varphi (\underline{c_{\rho(r_1)}^T},\ldots,\underline{c_{\rho(r_n)}^T})}\in \Gamma(Y).
\end{displaymath}
\end{mydefinition}

 In particular, if $\Gamma$ is a level-3 EM blueprint, $\rho_0$ factors $(R,Y)$, 
then $\id_{Y,*} \circ  \rho_0$ factors $(R,Y,Q^0)$,  so by coherency, $     \gcode{\varphi(\underline{c_{r_1}},\ldots)} \in \Gamma(R)
$ iff $      \gcode{\varphi(\underline{c_{\rho_0(r_1)}},\ldots)} \in \Gamma(Y)
$. This degenerates to the usual indiscernability of the (level-1) EM blueprint.

\begin{mylemma}
  \label{lem:pre_level_2_correct}
  Assume $\boldpi{3}$-determinacy. Then $0^{3\#}$ is a level-3 EM blueprint.
\end{mylemma}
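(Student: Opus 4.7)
The plan is to verify each of items (1)--(5) in Definition~\ref{def:pre_level_3_EM_blueprint} together with the coherency property, using the results already established in Sections~\ref{sec:level-1-analysis}--\ref{sec:level-2-analysis} and the properties of $M_{2,\infty}^{-}$. Completeness and consistency of $\Gamma(Y) \defeq 0^{3\#}(Y)$ are immediate from the definition: it is the full $\mathcal{L}^Y$-theory of $(M_{2,\infty}^{-};\vec{\gamma})$ for any $\vec{\gamma} \in [C]^{Y \uparrow}$, where $C$ is the firm set of potential level-3 indiscernibles provided by \cite{sharpII}. The axioms in (\ref{item:EM_ZFC}) hold in $M_{2,\infty}^{-}$ by Theorem~\ref{thm:steel}: $M_{2,\infty}^{-}$ is $L_{\bolddelta{3}}[T_3]$, the ``bottom part'' of $M_{2,\infty}$ below its least Woodin, and so satisfies ZFC, $V=K$, closure under $M_1^{\#}$, and no strong cardinal.

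For items (\ref{item:EM_c_t_are_ordinals}) and (\ref{item:EM_order}), note that every $\underline{c_y}$ is interpreted as an ordinal $\gamma_y < \bolddelta{3}$ with $\vec{\gamma} \in [C]^{Y \uparrow}$. Since $C \subseteq C^{*}$ (every element of a firm set is fixed by every $j^Q_{\sup}$), $\vec{\gamma}$ strongly respects $Y$, and the ordering axioms are exactly the content of Lemma~\ref{lem:gamma_r_order}. Item (\ref{item:XTQ_factor}) requires showing that the informal symbols $\underline{j^T}, \underline{\pi^T}, \underline{j^{T,T'}}$, when interpreted on $M_{2,\infty}^{-}$, agree with the genuine maps $j^T, \pi^T, j^{T,T'}$ acting on $\mathbb{L}_{\bolddelta{3}}[T_3] \subseteq M_{2,\infty}^{-}$. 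This definability is exactly Lemma~\ref{lem:level-2_ultrapower_maps_definable}, and the commutativity clauses (a)--(f) then reduce to genuine identities among level $\leq 2$ ultrapower maps, all of which are either immediate from the definitions or proved in Section~\ref{sec:more-level-2} (associativity of $\otimes$ via $\iota_{T,Q,U}$, the identities in Lemma~\ref{lem:jQ_move_factor_maps}, Lemma~\ref{lem:jQ_move_factor_maps_another} and Lemma~\ref{lem:level_2_ultrapower_iteration_reduce}).

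The main obstacle is axiom (\ref{item:level-2-embedding_invariance}), the level-2 embedding invariance on countable mice. The point is that for $\xi$ a cardinal strong cutpoint in $M_{2,\infty}^{-}$ and a $\Pi^1_2$-wellfounded level $\leq 2$ tree $U$, the restriction of $j^U$ to $K|\xi$ (computed as a direct limit of $j^{Z,Z'}$ for finite subtrees $Z \subseteq Z'$ of $U$) is essentially an iteration map on the countable mouse $K|\xi$. The strategy will be: first observe that in $M_{2,\infty}^{-}$, every proper initial segment $K|\xi$ below the Woodin is a countable $\Pi^1_3$-iterable premouse in any generic extension collapsing $\xi$, because iterability transfers from the background $M_2^{\#}$ via genericity of the direct limit system. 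Next, $j^U$ (for finite $U$) arises by the $\boldDelta{2}$-determinacy ultrapower construction, so by Theorem~\ref{thm:steel} and \Los{}, $j^U \res L_{\bolddelta{3}}[T_3]$ is the $L[T_3]$-expression of an iteration of $M_{2,\infty}^{\#}$, and restricting further to $K|\xi$ yields a finite composition of normal iteration maps. Passing to the direct limit $U$ (finite trees ordered by inclusion) preserves the property of being essentially an iteration map, since we can amalgamate witnesses $\mathcal{P}$ at each finite stage using the comparison theorem for $\Pi^1_3$-iterable mice below a Woodin. The delicate point is keeping the target model $\underline{j^U}(K)|\underline{j^U}(\xi)$ countable in the $\coll(\omega,\xi)$-generic extension, which holds because $\xi$ is countable there and the relevant direct limit collapses on it.

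Finally, coherency is exactly Lemma~\ref{lem:3sharp_R_complexity}: given $\rho$ factoring $(R,Y,T)$, the indiscernability of $C$ and the identity $\pi_r^{T,Q_r}([F]^Y_{\mathbf{y}_r}) = [F^T_{\rho(r)}]_{\mu^T}$ combine with \Los{} and Lemma~\ref{lem:gamma_r_continuous_type} to translate truths of $\varphi(\underline{c_{r_i}})$ in $M_{2,\infty}^{-}$ into truths of $\varphi(\underline{c^T_{\rho(r_i)}})$ inside $\underline{j^T}(V)$, and conversely. This completes the verification that $0^{3\#}$ satisfies every clause of Definition~\ref{def:pre_level_3_EM_blueprint}.
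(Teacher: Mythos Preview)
Your treatment of items (\ref{item:EM_ZFC}), (\ref{item:EM_commutativity}), (\ref{item:EM_c_t_are_ordinals})--(\ref{item:EM_order}) and coherency matches the paper's proof essentially verbatim, citing the same lemmas.

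The gap is in your argument for Axiom~(\ref{item:level-2-embedding_invariance}). You assert that ``$j^U \res L_{\bolddelta{3}}[T_3]$ is the $L[T_3]$-expression of an iteration of $M_{2,\infty}^{\#}$'' as a consequence of Theorem~\ref{thm:steel} and \Los{}, but this is precisely the content that needs to be established, and neither Steel's computation nor \Los{} yields it directly. The measure $\mu^U$ is an $\admistwobold$-measure; there is no a priori connection between the resulting ultrapower map and the iteration maps coming from the extender sequence of $M_{2,\infty}^{\#}$. Your phrase ``finite composition of normal iteration maps'' is unsupported.

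The paper's argument is genuinely different and more substantial: it reflects to a countable $\mathcal{P}\in\mathcal{F}_2$ with $\pi_{\mathcal{P},\infty}(\eta)=\xi$, uses $\Sigma^1_4$-correctness of $\mathcal{P}[g]$ to reduce to $V$, then introduces an auxiliary mouse $\mathcal{Q}\in\mathcal{F}_{2,g}$ and exploits the definability $M_{2,\infty}^{-}=(L[\underline{S_3}])^{M_{2,\infty}^{-}(g)}$. The key step shows $(\underline{j^T}(V))^{\mathcal{Q}}$ is $\Sigma^1_3$-correct by proving $p[(\underline{j^T}(\underline{S_3}))^{\mathcal{Q}}]\subseteq p[S_3]$ via an embedding into $j^T(S_3)$ and absoluteness of wellfoundedness. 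Dodd-Jensen then gives $\mathcal{P}|\eta\sim_{DJ}(\underline{j^T})^{\mathcal{P}}(\mathcal{P}|\eta)$ from the two-sided comparison $(\underline{j^T})^{\mathcal{P}}(\mathcal{P}|\eta)$ embeds into $(\underline{j^T}(L_{\nu}[\underline{S_3}]))^{\mathcal{Q}}\sim_{DJ}\mathcal{P}|\eta$. This reflection-and-Dodd-Jensen structure is the real work; your sketch does not supply it.
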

\begin{proof}
We verify Axioms~\ref{item:EM_ZFC}-\ref{item:EM_order} in Definition~\ref{def:pre_level_3_EM_blueprint}.
  Axiom~\ref{item:EM_ZFC}  follows from Theorem~\ref{thm:steel}. Axiom~\ref{item:EM_commutativity} follows from Lemma~\ref{lem:level_2_ultrapower_iteration_reduce}. 
Axioms~\ref{item:EM_c_t_are_ordinals}-\ref{item:EM_order} follow from Lemma~\ref{lem:gamma_r_order}.
% Axiom~\ref{item:EM_cofinality_continuity_of_j} follows from Lemma~\ref{lem:level_3_continuity}. 

Axiom~\ref{item:level-2-embedding_invariance}  is shown as follows. Let $\xi$ be a cardinal strong cutpoint of $M_{2,\infty}^{-}$. Let $\mathcal{P} \in \mathcal{F}_2$ and $\eta \in \mathcal{P}$ such that $\pi_{\mathcal{P}, \infty}(\eta) = \xi$. Let $g$ be $\coll(\omega, \eta)$-generic over $\mathcal{P}$. %$\mathcal{P}|\eta$ is a countable $\Pi^1_3$-iterable mouse, so 
 Suppose $T$ is a $\Pi^1_2$-wellfounded tree in $\mathcal{P}[g]$. The direct limit of $j^T$ is wellfounded by \cite[Proposition 4.1]{sharpII}. %Proposition~\ref{prop:level_3_tower_wellfounded_wellorder_equivalent}. 
% Note that $(\underline{j^{T}})^{\mathcal{P}}(\mathcal{P}|\eta)$ embeds into $j^T(M_{2,\infty}^{-})$
We need to show that in $\mathcal{P}[g]$, $(\underline{j^{T}})^{\mathcal{P}}(\mathcal{P}|\eta) : \mathcal{P}|\eta \to (\underline{j^{T}})^{\mathcal{P}}(\mathcal{P}|\eta)$ is essentially an iteration map, where $(\underline{j^T})^{\mathcal{P}}$ is the direct limit map of $(\underline{j^{T'}})^{\mathcal{P}}$ for finite subtrees $T'$ of $T$. Since $\mathcal{P}[g]$ is $\Sigma^1_4$-correct, we need to show the same fact in $V$. 

Note that $M_{2,\infty}^{-}$ is definable over $M_{2,\infty}^{-}(g)$. In fact, $M_{2,\infty}^{-} = (L[ \underline{S_3}])^{M_{2,\infty}^{-}(g)}$. Let $\mathcal{Q} \in \mathcal{F}_{2, g}$ and $\nu$ so that $\pi_{Q, \infty}(\nu) = \xi$. The maps from $\mathcal{P}$ to $\pi_{\mathcal{Q},\infty}^{-1}(M_{2,\infty}^{-}| \xi)$ and from $\pi_{\mathcal{Q},\infty}^{-1}(M_{2,\infty}^{-}| \xi)$ to $L_{\xi}[S_3]$ plus Dodd-Jensen implies that $\mathcal{P}|\eta \sim_{DJ} \pi_{\mathcal{Q},\infty}^{-1}(M_{2,\infty}^{-}| \xi)$. By $\Sigma^1_4$-correctness of set-generic extensions of $\mathcal{Q}$, $\mathcal{Q}$ thinks that  ``$\mathcal{P}|\eta \sim_{DJ} L_{\nu}[S_3]$''. By elementarity, 
$(\underline{j^T}(V))^{\mathcal{Q}}$ thinks ``$\mathcal{P}|\eta \sim_{DJ} \underline{j^T}(L_{\nu}[ \underline{S_3} ])$''. We claim that $ (\underline{j^T}(V))^{\mathcal{Q}}$ is also $\Sigma^1_3$-correct in set-generic extensions. To see this, it suffices to show $p[(\underline{j^T}(\underline{S_3}))^{\mathcal{Q}}] \subseteq p[S_3]$. We know that $(\underline{j^T}(\underline{S_3}))^{\mathcal{Q}}$ embeds into $j^T(S_3)$, so $p[(\underline{j^T}(\underline{S_3}))^{\mathcal{Q}}] \subseteq p[j^T(S_3)]$. But $x \in p[j^T(S_3)]$ implies $x \in p[S_3]$ by absoluteness of wellfoundedness and elementarity of $j^T$ acting on $L[S_3,x]$. Hence, in reality we have   $\mathcal{P}|\eta \sim_{DJ} (\underline{j^T}(L_{\nu}[ \underline{S_3} ]))^{\mathcal{Q}}$. 
But $(\underline{j^{T}})^{\mathcal{P}}(\mathcal{P}|\eta)$ embeds into $(\underline{j^T}(L_{\nu}[ \underline{S_3} ]))^{\mathcal{Q}}$, implying that  $(\underline{j^{T}})^{\mathcal{P}}(\mathcal{P}|\eta) \leq _{DJ} \mathcal{P}|\eta$. 

Of course, $\mathcal{P}|\eta \leq_{DJ} (\underline{j^{T}})^{\mathcal{P}}(\mathcal{P}|\eta)$.  So $ \mathcal{P} \sim_{DJ} (\underline{j^{T}})^{\mathcal{P}}(\mathcal{P}|\eta)$.  A similar argument shows that for any $\alpha \in \mathcal{P}$, $(\mathcal{P}, \alpha) \sim_{DJ} ((\underline{j^{T}})^{\mathcal{P}}(\mathcal{P}|\eta), (\underline{j^{T}})^{\mathcal{P}}(\alpha))$. This finishes verifying Axiom~\ref{item:level-2-embedding_invariance} of Definition~\ref{def:pre_level_3_EM_blueprint}. 

Finally, the coherency property of $0^{3\#}$ is a consequence of Lemma~\ref{lem:3sharp_R_complexity}.
\end{proof}

We say that the \emph{upward closure} of $A \subseteq (\omega^{<\omega})^{<\omega}$ is
\begin{displaymath}
  \set{ r \in (\omega^{<\omega})^{<\omega}}{ \exists a \in A (r \subseteq a)}.
\end{displaymath}
The upward closure does not apply to subcoordinates of $a \in A$. For instance, $b \subsetneq a(\lh(a)-1)$ does not imply that $a^{-} \concat (b)$ is in the upward closure of $A$.
For a level-3 tree $R$ and nodes $s_1,\ldots,s_n,s_1',\ldots,s_n'$ in $\dom(R)$,
\begin{center}
  $\vec{s'}$ is an \emph{$R$-shift} of $\vec{s}$
\end{center}
iff there are a level-3 tree $S$ and maps $\rho,\rho'$ factoring $(S,R)$ such that $\ran(\rho)$ is the upward closure of $\vec{s}$, $\ran(\rho')$ is the upward closure of $\vec{s'}$, and $\rho^{-1}(s_i) = (\rho')^{-1}(s_i')$ for any $i$.

\begin{mylemma}[Level-3 indiscernability]
  \label{lem:coherent_EM_blueprint_is_indiscernible_to_shifts}
  Suppose $\Gamma$ is a level-3 EM blueprint. Suppose $R$ is a level-3 tree and $s_1,\ldots,s_n,s_1',\ldots,s_n'$ are nodes in $\dom(R)$. Suppose that $\vec{s'}$ is a {shift} of $\vec{s}$ with respect to $R$. Then for each formula $\varphi$, $\Gamma(R)$ contains the formula
  \begin{displaymath}
    \varphi ( \underline{c_{s_1}},\ldots,\underline{c_{s_n}}) \eqiv \varphi (\underline{c_{s_1'}},\ldots,\underline{c_{s_n'}}).
  \end{displaymath}
\end{mylemma}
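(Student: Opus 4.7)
The plan is to apply the coherency property of the level-3 EM blueprint $\Gamma$ twice to translate the two sides of the desired biconditional through a common $\mathcal{L}^S$-formula. Given $\rho,\rho'$ factoring $(S,R)$ as in the statement, I would first extend them to $(S,R,Q^0)$-factoring maps by composing with $\id_{R,*}$: set $\sigma(s)=\id_{R,*}(\rho(s))$ and $\sigma'(s)=\id_{R,*}(\rho'(s))$ for $s\in\dom(S)$, and send $\emptyset$ to the constant $(R,Q^0,*)$-description. Checking that $\sigma,\sigma'$ satisfy the four clauses of Definition~\ref{def:factoring_4} reduces directly to the corresponding properties of $\rho,\rho'$ as $(S,R)$-factoring maps together with the fact that $\id_{R,*}$ factors $(R,R,Q^0)$; in particular $\sigma(s)\in\desc(R,Q^0,S_{\tree}(s))$ because $R_{\tree}(\rho(s))=S_{\tree}(s)$.

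Let $t_i\in\dom(S)$ be the common preimage $t_i=\rho^{-1}(s_i)=(\rho')^{-1}(s_i')$, which exists by hypothesis. Since $\underline{j^{Q^0}}$ is the identity on $V$ and $\id_{R,*}(r)=((r,X,\overrightarrow{(e,x,W)}),\id_{*,X})$ where $R[r]=(X,\overrightarrow{(e,x,W)})$, the convention for $\underline{c^{Q^0}_{\mathbf{B}}}$ on descriptions of the form $\mathbf{B}=\id_{R,*}(r)$ yields
\[
 \underline{c^{Q^0}_{\sigma(t_i)}} = \underline{c_{s_i}},\qquad \underline{c^{Q^0}_{\sigma'(t_i)}} = \underline{c_{s_i'}}
\]
as $\mathcal{L}^R$-symbols, since the map $(\id_{*,X})^{Q^0,X}$ is the canonical isomorphism between the $\mu^X$-ultrapower and $\mu^{Q^0}\circ\mu^X$-ultrapower and hence absorbs into the interpretation of $\underline{c_r}$.

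Now apply the coherency property of $\Gamma$ to $\sigma$ (with the roles $R\leftarrow S$, $Y\leftarrow R$, $T\leftarrow Q^0$ in Definition~\ref{def:pre_level_3_EM_blueprint}): for any $\mathcal{L}$-formula $\varphi$,
\[
 \gcode{\varphi(\underline{c_{t_1}},\dots,\underline{c_{t_n}})}\in\Gamma(S) \Longleftrightarrow \gcode{\varphi(\underline{c_{s_1}},\dots,\underline{c_{s_n}})}\in\Gamma(R).
\]
Applying coherency analogously with $\sigma'$ in place of $\sigma$ gives the corresponding equivalence with $\vec{s'}$ on the right. Chaining the two equivalences yields $\gcode{\varphi(\underline{c_{s_1}},\dots)}\in\Gamma(R)\Leftrightarrow\gcode{\varphi(\underline{c_{s_1'}},\dots)}\in\Gamma(R)$, and since $\Gamma(R)$ is a complete consistent $\mathcal{L}^R$-theory containing ZFC, the biconditional $\varphi(\underline{c_{s_1}},\dots)\eqiv\varphi(\underline{c_{s_1'}},\dots)$ belongs to $\Gamma(R)$.

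The only real obstacle is the bookkeeping step identifying $\underline{c^{Q^0}_{\sigma(t_i)}}$ with $\underline{c_{s_i}}$; this is a purely formal collapse under the trivial $Q^0$-ultrapower, but it is where the notational conventions from Definition~\ref{def:description_TQY} and the description of $\underline{c_{\mathbf{B}}^T}$ must be unwound. Once that identification is made, the argument is just two invocations of coherency plus the completeness of $\Gamma(R)$, so no further combinatorics involving factoring maps, measure projections, or ultrapowers of mice enters.
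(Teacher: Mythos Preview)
Your proposal is correct and follows essentially the same approach as the paper: two applications of coherency through the common tree $S$ via $\rho$ and $\rho'$, then completeness of $\Gamma(R)$. The only difference is cosmetic---the paper already records, immediately after Definition~\ref{def:pre_level_3_EM_blueprint}, the observation that coherency with $T=Q^0$ and $\id_{R,*}\circ\rho_0$ collapses to $\gcode{\varphi(\underline{c_{r_1}},\ldots)}\in\Gamma(R)\iff\gcode{\varphi(\underline{c_{\rho_0(r_1)}},\ldots)}\in\Gamma(Y)$, so its proof just cites ``coherency applied to $\rho,\rho'$'' without unwinding the $Q^0$ bookkeeping you spell out.
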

\begin{proof}
  Let $S$ be a level-3 tree and $\rho ,\rho'$ both factor $(S,R)$  such that $\ran(\rho)$ is upward closure of $\se{s_1,\ldots,s_n}$, $\ran(\rho')$ is the upward closure of $\se{s_1', \ldots,s_n'}$.  Let $\rho^{-1}(s_i) = t_i = (\rho')^{-1}(s_i')$. Applying coherency of $\Gamma$ to $\rho,\rho'$,
  \begin{align*}
\gcode{ \varphi (\underline{c_{s_1}},\ldots,\underline{c_{s_n}}) }  \in \Gamma(R) & \eqiv     
\gcode{ \varphi (\underline{c_{t_1}},\ldots,\underline{c_{t_n}}) }  \in \Gamma(S) \\
& \eqiv \gcode{ \varphi (\underline{c_{s_1'}},\ldots,\underline{c_{s_n'}}) }  \in \Gamma(R) .
  \end{align*}
\end{proof} 

Of course, there is extra information in the coherency property beyond Lemma~\ref{lem:coherent_EM_blueprint_is_indiscernible_to_shifts}.

As with the usual treatment of $0^{\#}$, a level-3 EM blueprint $\Gamma$ admits an $\mathcal{L}$-Skolemized conservative extension. That means, since $\ZFC+ V = K$ is a part of the axioms, so is ``there is a $\Sigma_1^{\mathcal{L}}$-definable wellordering of the universe''. Thus, to each $\mathcal{L}$-formula $\varphi(v,w_1,\ldots,w_n)$ we may attach a definable $\mathcal{L}$-Skolem term $\tau_{\varphi}(w_1,\ldots,w_n)$ so that the formula $\forall w_1 \ldots w_n ~(\exists v~ \varphi(v,w_1,\ldots,w_n )\to \varphi(\tau_{\varphi}(w_1,\ldots,w_n), w_1,\ldots,w_n))$ belongs to $\Gamma(R)$, for any $R$.

If $Y$ is an infinite level-3 tree, put
\begin{displaymath}
\Gamma(Y) = \bigcup \{ \Gamma (R) : R \text{ is a finite level-3 subtree of } Y \}.
\end{displaymath}
By coherency, $\Gamma(R) \subseteq \Gamma(R')$ whenever $R\subseteq R'$ are finite. Hence by compactness, $\Gamma(Y)$ is a complete consistent $\mathcal{L}^Y$-theory. The usual argument of EM models with order indiscernibles carries over to obtain a unique up to isomorphism  $\mathcal{L}^Y$-structure 
\begin{displaymath}
 \mathcal{M}_{\Gamma,Y} = (M; \underline{\in}^M,\underline{c_t}^M: {t \in \dom(Y)}).
\end{displaymath}
such that $\mathcal{M}_{\Gamma,Y}$ is $\mathcal{L}$-Skolem generated by $\set{\underline{c_t}^M}{t \in \dom(Y)}$, and 
\begin{displaymath}
 \mathcal{M}_{\Gamma,Y} \models \Gamma(Y).
\end{displaymath}
 $\mathcal{M}_{\Gamma,Y}$ is called the \emph{EM model} associated to $\Gamma$ and $Y$.  When $\underline{\in}^{\mathcal{M}_{\Gamma,Y}}$ is wellfounded,  $\mathcal{M}_{\Gamma,Y}$ is identified with its transitive collapse. Since $\mathcal{M}_{\Gamma,Y}$ is a model of $V=K$, the extender sequence on $K^{\mathcal{M}_{\Gamma,Y}}$ is definable over $\mathcal{M}_{\Gamma,Y}$, this allows us to sometimes treat $\mathcal{M}_{\Gamma,Y}$ as a structure in the language of premice.

If $\mathcal{L}^{*}$ is a first-order  language expanding $\mathcal{L}$,  $\mathcal{N}$ is an $\mathcal{L}^{*}$-structure  satisfying axioms~\ref{item:EM_ZFC}-\ref{item:level-2-embedding_invariance} in Definition~\ref{def:pre_level_3_EM_blueprint}, we make the following notations:
\begin{enumerate}
\item If $T$ is a finite level $\leq 2$ tree, then $j^T_{\mathcal{N}} = (\underline{j^T})^{\mathcal{N}}$,  $\mathcal{N}^T = (\underline{j^T}(V))^{\mathcal{N}}$ is an $\mathcal{L}^{*}$-structure so that $j^T_{\mathcal{N}}: \mathcal{N} \to \mathcal{N}^T$ is $\mathcal{L}^{*}$-elementary. 
\item If $\pi$ factors finite level $\leq 2 $ trees $(X,T)$, then $\pi^T_{\mathcal{N}} = (\underline{\pi^T})^{\mathcal{N}}$. If $T,T'$ are finite level $\leq 2$ trees, $T$ is a subtree of $T'$, then $j^{T,T'}_{\mathcal{N}} = (\underline{j^{T,T'}})^{\mathcal{N}}$. 
\item If $T$ is a level $\leq 2$ tree, then $\mathcal{N}^T$ is the direct limit of $(\mathcal{N}^{T'},j^{T',T''}_{\mathcal{N}}: T',T''$ finite subtrees of $T$, $T'$ a finite subtree of $T'')$ and $j^T_{\mathcal{N}}: \mathcal{N} \to \mathcal{N}^{T}$ is the direct limit map; if $T'$ is a finite subtree of $T$, then $j^{T',T}_{\mathcal{N}}: \mathcal{N}^{T'} \to \mathcal{N}^T$ is the tail of the direct limit map.
  The wellfounded part of $\mathcal{N}^T$ is always assumed to be transitive. 
\item If $\pi$ factors level $\leq 2$ trees $(X,T)$, then $\pi^T_{\mathcal{N}} : \mathcal{N}^X \to \mathcal{N}^T$ is the factor map between direct limits.
\item If $X$ is a finite level $\leq 2$ tree, $a \in \mathcal{N}^X$, $d \in \se{1,2}$
  \begin{enumerate}
  \item if $T,T'$ are finite level $\leq 2$ trees, $T$ is a subtree of $T'$, then $\mathcal{N}^T_{X, a} = (\underline{H^T_{X,a}})^{\mathcal{N}}$, $j^T_{X,a, \mathcal{N}} = (\underline{j^T_{X,a}})^{\mathcal{N}}$, $\phi^T_{X,a,\mathcal{N}} = (\underline{\phi^T_{X,a}})^{\mathcal{N}}$, $j^{T,T'}_{X,a,\mathcal{N}} = (\underline{j^{T,T'}_{X,a}})^{\mathcal{N}}$,  $\mathcal{N}^T_{d, a} = (\underline{H^T_{d,a}})^{\mathcal{N}}$, $j^T_{d,a, \mathcal{N}} = (\underline{j^T_{d,a}})^{\mathcal{N}}$, $\phi^T_{d,a,\mathcal{N}} = (\underline{\phi^T_{d,a}})^{\mathcal{N}}$, $j^{T,T'}_{d,a,\mathcal{N}} = (\underline{j^{T,T'}_{d,a}})^{\mathcal{N}}$;
  \item if $T$ is a level $\leq 2$ tree, then $\mathcal{N}^T_{X,a}$ is the natural direct limit, $j^T_{X,a,\mathcal{N}} : \mathcal{N} \to \mathcal{N}^T_{X,a}$ is the direct limit map, $\phi^T_{X,a,\mathcal{N}} : \mathcal{N}^T_{X,a} \to \mathcal{N}^T$ is the natural factoring map between direct limits; if $T'$ is a finite subtree of $T$, then $j^{T',T}_{X,a,\mathcal{N}} : \mathcal{N}^{T'}_{X,a} \to \mathcal{N}^T_{X,a}$ is the tail of the direct limit map; similarly define $\mathcal{N}^T_{d,a}$, $j^T_{d,a,\mathcal{N}}$, $\phi^T_{d,a,\mathcal{N}}$, $j^{T',T}_{d,a,\mathcal{N}}$. 
  \end{enumerate}
\end{enumerate}
If $\Gamma$ is a level-3 EM blueprint and $R$ is $\Pi^1_3$-wellfounded, we make further notations: 
\begin{enumerate}
\item If $T$ is a level $\leq 2$ tree, then $\mathcal{M}_{\Gamma,Y}^T = (\mathcal{M}_{\Gamma,Y})^T$, $j^T_{\Gamma,Y} = j^T_{\mathcal{M}_{\Gamma,Y}}$. 
\item If $T$ is a finite subtree of $T'$, then $j^{T,T'}_{\Gamma,Y} = j^{T,T'}_{\mathcal{M}_{\Gamma,Y}}$.
\item If $\pi$ factors $(X,T)$, then $\pi^T_{\Gamma,Y} = \pi^T_{\mathcal{M}_{\Gamma,Y}}$.
\item If $T$ is  a finite subtree of $T'$, $y \in \dom(Y)$, $X = Y_{\tree}(y)$, $d \in \se{1,2}$, then $c_{\Gamma,Y,y} = (\underline{c_y})^{\mathcal{M}_{\Gamma,Y}}$,  $\mathcal{M}^T_{\Gamma,Y,y} = (\mathcal{M}_{\Gamma,Y})^T_{X, c_{\Gamma,Y,y}}$, $j^{T}_{\Gamma,Y,y} = {j^T_{X,c_{\Gamma,Y,y}, \mathcal{M}_{\Gamma,Y} }}$, 
 $\phi^{T}_{\Gamma,Y,y} = {\phi^T_{X,c_{\Gamma,Y,y}, \mathcal{M}_{\Gamma,Y} }}$, $j^{T,T'}_{\Gamma,Y,y} = {j^{T,T'}_{X,c_{\Gamma,Y,y}, \mathcal{M}_{\Gamma,Y} }}$, $\mathcal{M}^T_{\Gamma,R^d,*} = (\mathcal{M}_{\Gamma,R^d})^T_{d, c_{\Gamma,R^d,((0))}}$, $j^{T}_{\Gamma,R^d,*} = {j^T_{d,c_{\Gamma,R^d,((0))}, \mathcal{M}_{\Gamma,R^d} }}$, 
 $\phi^{T}_{\Gamma,R^d,*} = {\phi^T_{d,c_{\Gamma,R^d,((0))}, \mathcal{M}_{\Gamma,R^d} }}$, $j^{T,T'}_{\Gamma,R^d,*} = {j^{T,T'}_{d,c_{\Gamma,R^d,((0))}, \mathcal{M}_{\Gamma,R^d} }}$.
  \item If $\mathbf{B} \in \desc(Y,T',*)$ and $T'$ is a finite subtree of $T$, then $c_{\Gamma,Y, \mathbf{B}}^T = j^{T',T}_{\Gamma,Y}(\underline{c_{\mathbf{B}}^{T'}})^{\mathcal{M}_{\Gamma,Y}}$. 
\end{enumerate}

By coherency, if $\rho$ factors $(R,Y,T)$, then $\rho$ induces an elementary embedding
\begin{displaymath}
  \rho_{\Gamma}^{Y,T} : \mathcal{M}_{\Gamma,R} \to \mathcal{M}_{\Gamma,Y}^T
\end{displaymath}
where 
\begin{displaymath}
\rho^{Y,T}_{\Gamma} ( \tau^{\mathcal{M}_{\Gamma,R}} ( c_{\Gamma,R,r_1},\ldots ) ) = \tau^{\mathcal{M}_{\Gamma,Y}^T} (c_{\Gamma,Y, \rho(r_1)}^T,\ldots).
\end{displaymath}
If $\rho$ factors $(R,Y)$, then $\rho$ induces
\begin{displaymath}
  \rho^Y_{\Gamma}: \mathcal{M}_{\Gamma,R} \to \mathcal{M}_{\Gamma,Y}
\end{displaymath}
where $\rho^{Y}_{\Gamma} ( \tau^{\mathcal{M}_{\Gamma,R}} ( c_{\Gamma,R,r_1},\ldots ) ) = \tau^{\mathcal{M}_{\Gamma,Y}} (c_{\Gamma,Y, \rho(r_1)},\ldots).
$

Recall that wellfoundedness of a (level-1) EM blueprint is a $\Pi^1_2$ condition, stating that for every countable ordinal $\alpha$, the EM model generated by order indiscernibles of order type $\alpha$ is wellfounded. Its higher level analog is called iterability, which is a $\Pi^1_4$ condition. %The iterability of $0^{3\#}$ will be proved later.

\begin{mydefinition}
  \label{def:iterable-EM}
Let $\Gamma$ be a level-3 EM blueprint.  $\Gamma$ is \emph{iterable} iff for any  $\Pi^1_3$-wellfounded level-3 tree $Y$, 
$\mathcal{M}_{\Gamma,Y}$ is a $\Pi^1_3$-iterable mouse. 
\end{mydefinition}

\begin{mylemma}
  \label{lem:3sharp_is_iterable}
  Assume $\boldpi{3}$-determinacy. Then $0^{3\#}$ is iterable.
\end{mylemma}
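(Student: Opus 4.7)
The plan is to elementarily embed each $\mathcal{M}_{0^{3\#},Y}$ into $M_{2,\infty}^{-}$ for $Y$ a $\Pi^1_3$-wellfounded level-3 tree, and then transfer $\Pi^1_3$-iterability from $M_{2,\infty}^{-}$ back to $\mathcal{M}_{0^{3\#},Y}$ by pulling back the iteration strategy along the inverse Mostowski collapse of its image. Recall that $0^{3\#}(R)$ for finite $R$ is, by construction, the theory of $(M_{2,\infty}^{-};\vec{\gamma})$ for every $\vec{\gamma} \in [C]^{R \uparrow}$, where $C$ is the firm club of potential level-3 indiscernibles from \cite{sharpII}.

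I would first treat finite $Y$. Firmness of $C$ guarantees that $[C]^{Y\uparrow} \neq \emptyset$, so I can pick some $\vec{\delta} \in [C]^{Y \uparrow}$ and let $H_Y$ be the $\mathcal{L}$-Skolem hull of $\{\delta_y : y \in \dom(Y)\}$ inside $M_{2,\infty}^{-}$. By the defining indiscernibility of $0^{3\#}$, the expanded structure $(H_Y; \vec{\delta})$ satisfies $0^{3\#}(Y)$ and is $\mathcal{L}$-Skolem generated by its distinguished constants; by uniqueness of the EM model, the Mostowski collapse $\sigma_Y$ of $H_Y$ is isomorphic to $\mathcal{M}_{0^{3\#},Y}$, so $\sigma_Y^{-1}$ gives a fully $\mathcal{L}$-elementary embedding $\mathcal{M}_{0^{3\#},Y} \hookrightarrow M_{2,\infty}^{-}$.

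For an arbitrary $\Pi^1_3$-wellfounded $Y$, note first that $\llbracket \emptyset \rrbracket_Y < \bolddelta{3}$ by Lemma~\ref{lem:level_3_tree_cofinal_in_delta13} plus $\Pi^1_3$-wellfoundedness, so by firmness $[C]^{Y\uparrow} \neq \emptyset$ as well. Fix $\vec{\delta} \in [C]^{Y\uparrow}$ once and for all, and for each finite subtree $R \subseteq Y$ take $H_R$ to be the Skolem hull of the restriction of $\vec{\delta}$ indexed by $\dom(R)$. The coherency property of $0^{3\#}$ (Lemma~\ref{lem:pre_level_2_correct}) implies that the system $(\sigma_R^{-1})_{R}$ commutes with the natural embeddings $\iota_{R,R'}^{Y}{}_{0^{3\#}} \colon \mathcal{M}_{0^{3\#},R} \to \mathcal{M}_{0^{3\#},R'}$ coming from inclusions $R \subseteq R' \subseteq Y$. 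Taking the direct limit over finite $R$ and using that $\mathcal{M}_{0^{3\#},Y}$ is by construction the corresponding limit, one obtains a single elementary embedding $\sigma_Y^{-1} \colon \mathcal{M}_{0^{3\#},Y} \hookrightarrow M_{2,\infty}^{-}$.

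The final step is to inherit $\Pi^1_3$-iterability. By Theorem~\ref{thm:steel}, $M_{2,\infty}^{-} = L_{\bolddelta{3}}[T_3]$ is the common part of the direct limit of the countable $\Pi^1_3$-iterable mice $M_2^{\#}(z)$ for $z \in \mathbb{R}$, and each cardinal cutpoint initial segment of $M_{2,\infty}^{-}$ is realized as $\pi_{\mathcal{P},\infty}(\mathcal{P}|\eta)$ for some $\mathcal{P} \in \mathcal{F}_2$ and $\eta \in \mathcal{P}$, hence is $\Pi^1_3$-iterable. One pulls back a $\Pi^1_3$-iteration strategy for a sufficiently large cutpoint initial segment of $M_{2,\infty}^{-}$ containing the image of $\sigma_Y^{-1}$ along $\sigma_Y^{-1}$, yielding a strategy on $\mathcal{M}_{0^{3\#},Y}$. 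The main obstacle will be the pullback argument itself: one must verify that the $\Pi^1_3$-iterability (not merely ordinary iterability) transfers through the elementary map, which requires using that the strategy on each cutpoint segment of $M_{2,\infty}^{-}$ is projectively definable in tail codes and that $\Sigma^1_4$-correctness of set-generic extensions of the iterates $\mathcal{P} \in \mathcal{F}_2$ descends to hulls; this is exactly the content packaged into Axiom~\ref{item:level-2-embedding_invariance} of Definition~\ref{def:pre_level_3_EM_blueprint}, already verified for $0^{3\#}$ in Lemma~\ref{lem:pre_level_2_correct}.
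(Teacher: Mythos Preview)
Your approach is essentially the paper's: embed $\mathcal{M}_{0^{3\#},Y}$ into $M_{2,\infty}^{-}$ by sending $c_{0^{3\#},Y,s}$ to an indiscernible tuple drawn from a firm club $C$, then conclude iterability. Two remarks are worth making.

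First, the finite/infinite split and the direct-limit construction are unnecessary. Firmness of $C$ gives you $F \in C^{Y\uparrow}$ directly for any $\Pi^1_3$-wellfounded $Y$, and the map $c_{0^{3\#},Y,s} \mapsto [F]^Y_s$ already extends to an elementary embedding in one step; the paper does exactly this in a single sentence.

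Second, and more substantively, your final paragraph misidentifies the ingredient needed for the iterability transfer. Axiom~\ref{item:level-2-embedding_invariance} concerns the internal maps $\underline{j^U}$ being essentially iteration maps inside a given model; it says nothing about whether the EM model itself carries a $\Pi^1_3$-iteration strategy, and it is not what is used here. The actual argument is the standard copying construction: the EM model is countable, so its image in $M_{2,\infty}^{-}$ is contained in $\ran(\pi_{\mathcal{P},\infty})$ for some $\mathcal{P} \in \mathcal{F}_2$; pulling back along $\pi_{\mathcal{P},\infty}$ gives an elementary embedding of $\mathcal{M}_{0^{3\#},Y}$ into the countable $\Pi^1_3$-iterable mouse $\mathcal{P}$, and iteration strategies pull back along elementary embeddings by copying trees. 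No $\Sigma^1_4$-correctness of generic extensions or projective definability of strategies is needed for this step.
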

\begin{proof}
  Let $Y$ be any $\Pi^1_3$-wellfounded level-3 tree.  Let $F \in [C]^{Y \uparrow}$, where $C$ is a firm set of potential level-3 indiscernibles for $M_{2,\infty}^{-}$. 
Then $\mathcal{M}_{0^{3\#},Y}$ elementarily embeds into $M_{2,\infty}^{-}$, the map being generated by $c_{0^{3\#}, Y, s} \mapsto  [F]^Y_s$. Therefore, $\mathcal{M}_{0^{3\#},Y}$ is iterable. 
\end{proof}

\begin{mylemma}
  \label{lem:iterable_level_2_invariance_absolute_in_V}
 Assume $\boldDelta{2}$-determinacy.
 \begin{enumerate}
 \item Suppose $\mathcal{N}$ is a countable $\Pi^1_3$-iterable mouse satisfying Axioms~\ref{item:EM_ZFC}-\ref{item:level-2-embedding_invariance} in Definition~\ref{def:pre_level_3_EM_blueprint}.
   \begin{enumerate}
   \item If $T$ is a $\Pi^1_2$-wellfounded level $\leq 2$ tree, then $\mathcal{N}^T$ is a $\Pi^1_3$-iterable mouse and $j^T_{\mathcal{N} } : \mathcal{N} \to \mathcal{N}^T$ is essentially an iteration map.
   \item If $\psi$ minimally factors level $\leq 2$ trees $(T,X)$, then $\psi^X_{\mathcal{N}} : \mathcal{N}^T \to \mathcal{N}^X$ is essentially an iteration map. 
   \end{enumerate}
 \item 
Suppose $\Gamma$ is an iterable level-3 EM blueprint and $Y$ is a $\Pi^1_3$-wellfounded level-3 tree.  If $\psi$ minimally factors level-3 trees $(Y,R)$ and $\llbracket \emptyset \rrbracket_Y = \llbracket \emptyset \rrbracket_R$, then $\psi^R_{\Gamma} : \mathcal{M}_{\Gamma,Y} \to \mathcal{M}_{\Gamma,R}$ is essentially an iteration map.
 \end{enumerate}

\end{mylemma}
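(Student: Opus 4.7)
My plan for part (1)(a) is to first handle the case of finite $T$ by applying Axiom~\ref{item:level-2-embedding_invariance} of Definition~\ref{def:pre_level_3_EM_blueprint} at each cardinal strong cutpoint $\xi$ of $\mathcal{N}$: the axiom asserts that in $\mathcal{N}^{\coll(\omega,\xi)}$, both $\mathcal{N}|\xi$ and $j^T_{\mathcal{N}}(\mathcal{N}|\xi)$ are countable $\Pi^1_3$-iterable mice and that $j^T_{\mathcal{N}}\restrict\mathcal{N}|\xi$ is essentially an iteration map. Since $\mathcal{N}$ is externally $\Pi^1_3$-iterable and hence $\Sigma^1_4$-correct in its set-generic extensions, these statements hold in $V$. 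Writing $\mathcal{N}^T = \bigcup_{\xi}j^T_{\mathcal{N}}(\mathcal{N}|\xi)$, we then get that $\mathcal{N}^T$ is $\Pi^1_3$-iterable (all proper initial segments being so), and the partial common iterates witnessing ``essentially iteration'' at each $\xi$ can be amalgamated via Dodd-Jensen to show $j^T_{\mathcal{N}}$ itself is essentially an iteration map.

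For infinite $\Pi^1_2$-wellfounded $T$, write $T$ as the union of finite subtrees $T_n$, so that $\mathcal{N}^T$ is the direct limit of the $\mathcal{N}^{T_n}$ along the essentially-iteration maps $j^{T_n,T_{n+1}}_{\mathcal{N}}$ supplied by the finite case and Axiom~\ref{item:EM_XTQ_factor}. The direct limit of $\Pi^1_3$-iterable mice along essentially-iteration maps is again $\Pi^1_3$-iterable, and the limit map $j^T_{\mathcal{N}}$ is essentially an iteration map by a standard amalgamation of the finite-stage common iterates. For part (1)(b), given $\psi$ minimally factoring $(T,X)$, part (1)(a) provides iteration maps $\iota_T\colon \mathcal{N}^T\to\mathcal{P}$ and $\iota_X\colon \mathcal{N}^X\to\mathcal{P}$ into a common iterate $\mathcal{P}$. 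Dodd-Jensen applied to the two compositions $\mathcal{N}\to\mathcal{P}$ gives $\iota_T\circ j^T_{\mathcal{N}} = \iota_X\circ j^X_{\mathcal{N}} = \iota_X\circ\psi^X_{\mathcal{N}}\circ j^T_{\mathcal{N}}$, so $\iota_T$ and $\iota_X\circ\psi^X_{\mathcal{N}}$ agree on $\ran(j^T_{\mathcal{N}})$. The minimality condition $\llbracket d,t\rrbracket_T = \llbracket\psi(d,t)\rrbracket_X$ further forces these two maps to send each seed $\seed^T_{(d,t)}$ to the same uniform indiscernible of $\mathcal{P}$; since $\mathcal{N}^T$ is generated by $\ran(j^T_{\mathcal{N}})$ together with the seeds $\seed^T_{(d,t)}$, we conclude $\iota_T = \iota_X\circ\psi^X_{\mathcal{N}}$, witnessing that $\psi^X_{\mathcal{N}}$ is essentially an iteration map.

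Part (2) then proceeds by the same blueprint one level up. Iterability of $\Gamma$ makes both $\mathcal{M}_{\Gamma,Y}$ and $\mathcal{M}_{\Gamma,R}$ $\Pi^1_3$-iterable, so Dodd-Jensen yields a common iterate $\mathcal{P}$ with iteration maps $\iota_Y$ and $\iota_R$. Minimality, together with $\llbracket\emptyset\rrbracket_Y = \llbracket\emptyset\rrbracket_R$, gives $\llbracket y\rrbracket_Y = \llbracket\psi(y)\rrbracket_R$ for every $y\in\dom(Y)$, which by coherency of the EM blueprint and Dodd-Jensen identifies $\iota_Y(c_{\Gamma,Y,y})$ with $\iota_R(c_{\Gamma,R,\psi(y)}) = \iota_R\circ\psi^R_{\Gamma}(c_{\Gamma,Y,y})$ in $\mathcal{P}$. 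Since $\mathcal{M}_{\Gamma,Y}$ is the $\mathcal{L}$-Skolem hull of the indiscernibles $\{c_{\Gamma,Y,y} : y\in\dom(Y)\}$, agreement on generators promotes to $\iota_Y = \iota_R\circ\psi^R_{\Gamma}$, finishing part (2).

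The main obstacle I anticipate is the precise identification of $\iota_T(\seed^T_{(d,t)})$ in $\mathcal{P}$ with the appropriate uniform indiscernible indexed by $\llbracket d,t\rrbracket_T$ (and the analogous identification of $\iota_Y(c_{\Gamma,Y,y})$), which requires carefully relating the syntactic code $\llbracket\cdot\rrbracket$ to the Dodd-Jensen wellorder of $\mathcal{P}$ via Axioms~\ref{item:EM_c_t_are_ordinals}--\ref{item:EM_order} and coherency. Once this identification is in place, the essentially-iteration conclusions reduce to routine Dodd-Jensen absoluteness inside the common iterate.
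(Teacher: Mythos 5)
Your outline of part~(1)(a) is structurally correct but rests on a claim that is false: you assert that $\mathcal{N}$ being externally $\Pi^1_3$-iterable makes its set-generic extensions $\Sigma^1_4$-correct. This cannot hold in general here, because Axiom~\ref{item:EM_ZFC} of Definition~\ref{def:pre_level_3_EM_blueprint} forces $\mathcal{N}$ to think there is no inner model with two Woodin cardinals, so $\mathcal{N}$ is \emph{not} closed under the $M_2^{\#}$-operator, and genericity iterations inside $\mathcal{N}[g]$ cannot compute $\Sigma^1_4$-truth. What the paper actually establishes --- and what is both available and sufficient, since the statement being transferred from Axiom~\ref{item:level-2-embedding_invariance} is only $\Pi^1_3$ in the generic real --- is $\mathcal{N}[g]\elem_{\Sigma^1_3}V$, obtained from the fact that $\mathcal{N}$ is closed under $M_1^{\#}$ (by Axiom~\ref{item:EM_ZFC}), these $M_1^{\#}$-operators are correctly computed because $\mathcal{N}$ is a $\Pi^1_3$-iterable mouse, and $M_1^{\#}$-operators plus genericity iterations settle $\Sigma^1_3$-truth. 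Replacing your unjustified $\Sigma^1_4$-correctness with this chain is essential; as stated your step would not go through.

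For parts~(1)(b) and~(2), the gap you yourself flag at the end --- identifying $\iota_T(\seed^T_{(d,t)})$ with $\iota_X(\seed^X_{\psi(d,t)})$ in the common iterate --- is a genuine gap, not a deferrable technicality: the identification you need is essentially equivalent to knowing that $\psi^X_{\mathcal{N}}$ is an iteration map in the first place, so the comparison-of-seeds route is circular without an independent argument pinning down where the seeds land. The paper avoids this entirely by a factorization trick: Theorem~\ref{thm:factor_ordertype_embed_equivalent_lv2} produces $\Pi^1_2$-wellfounded $Q$ and $\pi$ minimally factoring $(X, T\otimes Q)$ with $\id_{T,*}=\pi\circ\psi$; then Axiom~\ref{item:EM_commutativity} gives $j^Q_{\mathcal{N}^T}=\pi^{T\otimes Q}_{\mathcal{N}}\circ\psi^X_{\mathcal{N}}$, part~(1)(a) makes this composite essentially an iteration of $\mathcal{N}^T$, and Dodd-Jensen then forces the \emph{first factor} $\psi^X_{\mathcal{N}}$ to be essentially an iteration map. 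Part~(2) is the exact analogue using Theorem~\ref{thm:factor_tower_order_type_equivalent}. This composition-then-factor argument is short and bypasses any need to directly match up uniform indiscernibles across the two ultrapower towers; you should replace your common-iterate comparison with it.
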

\begin{proof}
1(a).  
By Axiom~\ref{item:EM_ZFC} in Definition~\ref{def:pre_level_3_EM_blueprint}, there are cofinally many cardinal strong cutpoints in $\mathcal{N}$. 
 $j^T_{\mathcal{N}}$ is cofinal in $\mathcal{N}^T$ by definition and a direct limit argument. By Dodd-Jensen, it suffices to show that for any cardinal strong cutpoint $\xi$ of $\mathcal{N}$, $j^T_{\mathcal{N}}(\mathcal{N}|\xi)$ is $\Pi^1_3$-iterable and $j^T_{\mathcal{N}} \res (\mathcal{N} | \xi) $ is essentially an iteration map from $\mathcal{N} | \xi$ to $\mathcal{N}^T | j^T_{\mathcal{N}}(\xi)$. Fix such $\xi$. Let $g$ be $\coll(\omega,\xi)$-generic over $\mathcal{N}$. 
The statement ``for any $\Pi^1_2$-wellfounded level $\leq 2$ tree $T'$,  $j^{T'}_{\mathcal{N}} (\mathcal{N}|\xi) $ is a $\Pi^1_3$-iterable mouse, and $j^{T'}_{\mathcal{N}} \res (\mathcal{N}|\xi) $ is essentially an iteration map from $\mathcal{N}|\xi$ to $j^{T'}_{\mathcal{N}} (\mathcal{N}|\xi) $'' is $\Pi^1_3$ in a real $z \in \mathcal{N}[g]$ coding $(\mathcal{N}|\xi , j^{T'}_{\mathcal{N}}\res (\mathcal{N}|\xi ))_{T' \text{  finite level $\leq 2$ tree}}$. This statement is true in $\mathcal{N}[g]$ by Level $\leq 2$ ultrapower invariance axiom in Definition~\ref{def:pre_level_3_EM_blueprint}. 
It suffices to show  $\mathcal{N}[g]\elem_{\Sigma^1_3} V$. 
But by Axiom~\ref{item:EM_ZFC} in Definition~\ref{def:pre_level_3_EM_blueprint}, $\mathcal{N}\models $``I am closed under the $M_1^{\#}$-operator''. Since $\mathcal{N}$ is a $\Pi^1_3$-iterable mouse, the $M_1^{\#}$-operators are correctly computed in $\mathcal{N}$. Using genericity iterations \cite{steel-handbook}, $M_1^{\#}$-operators figure out $\Sigma^1_3$-truth. Hence, $\mathcal{N}[g]\elem_{\Sigma^1_3} V$.

1(b).  By Theorem~\ref{thm:factor_ordertype_embed_equivalent_lv2}, there is a $\Pi^1_2$-wellfounded $Q$ and $\pi$ minimally factoring $(X,T \otimes Q)$. So $\id_{T,*} = \pi \circ \psi$. By Axiom~\ref{item:EM_commutativity} in Definition~\ref{def:pre_level_3_EM_blueprint},  $ j^Q_{\mathcal{N}^T}= \pi^{T \otimes Q}_{\mathcal{N}} \circ \psi^X_{\mathcal{N}}$, which is essentially an iteration from $\mathcal{N}^T$ to $\mathcal{N}^{T \otimes Q}$ by part 1(a).  By Dodd-Jensen, $\psi^X_{\mathcal{N}}$ is essentially an iteration map. 

2. By Theorem~\ref{thm:factor_tower_order_type_equivalent}, there is a $\Pi^1_2$-wellfounded $T$ and $\rho$ minimally factoring $(R, Y \otimes T)$.  So $\id_{Y,*} =  \rho \circ \psi$. By Axiom~\ref{item:EM_commutativity} in Definition~\ref{def:pre_level_3_EM_blueprint} and part 1, $j^T_{\Gamma,Y} = \rho^{Y , T}_{\Gamma} \circ \psi ^R _{\Gamma}$ is essentially an iteration from $\mathcal{M}_{\Gamma,Y}$ to $\mathcal{M}_{\Gamma,Y}^T$. By Dodd-Jensen, $\psi^R_{\Gamma}$ is essentially an iteration map. 
\end{proof}

We start to introduce the remarkability property of a level-3 EM blueprint.

For $r ,s \in \omega^{<\omega}$, define $r <_0 s $ iff $r(0) <_{BK} s(0)$, $r \leq ^R_0 s$ iff $r(0) \leq _{BK} s(0)$. If $\vec{r} = (r_i)_{1 \leq i \leq n}$ is a tuple of nodes in $\omega^{<\omega}$, define $\vec{r} < _0 s$ iff $r_i <_0 s$ for any $i$. Similarly define $\vec{r} \leq _0 s$, $\vec{r} <_0 \vec{s}$, etc.\ 

 \begin{mydefinition}[Unboundedness]
   \label{def:level_3_EM_blueprint_unbounded}
   A level-3 EM blueprint $\Gamma$  is \emph{unbounded} iff for any level-3 tree $R$, if  $\tau$ is an  $\mathcal{L}$-Skolem term,  $\se{ t, r_1,\dots,r_m } \subseteq \dom(R)$,  $\vec{r} <_0 t$,  then $\Gamma(R)$ contains the formula
     \begin{displaymath}
       \tau (\underline{c_{r_1}}, \ldots, \underline{c_{r_m}}) \in \ord \to  \tau (\underline{c_{r_1}}, \ldots, \underline{c_{r_m}}) < \underline{c_t}.
     \end{displaymath}
 \end{mydefinition}

 \begin{mylemma}
   \label{lem:3sharp_is_unbounded}
Assume $\boldpi{3}$-determinacy. Then $0^{3\#}$ is unbounded.
 \end{mylemma}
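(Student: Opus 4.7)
The plan is to argue by contradiction, using the shift indiscernability of Lemma~\ref{lem:coherent_EM_blueprint_is_indiscernible_to_shifts} together with the fact, built into the definition of $0^{3\#}$ in \cite{sharpII}, that the underlying set $C \subseteq \bolddelta{3}$ of potential level-3 indiscernibles is an unbounded club. Suppose toward contradiction that unboundedness fails: there are a level-3 tree $R$, an $\mathcal{L}$-Skolem term $\tau$, and nodes $r_1,\dots,r_m,t\in\dom(R)$ with $\vec{r}<_0 t$ such that the formula $\tau(\underline{c_{r_1}},\dots,\underline{c_{r_m}})\in\ord\to\tau(\underline{c_{r_1}},\dots,\underline{c_{r_m}})<\underline{c_t}$ does not belong to $0^{3\#}(R)$. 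By completeness of $0^{3\#}(R)$ (a consequence of firmness of $C$ via the level-3 partition property established in \cite{sharpII}), its negation belongs to $0^{3\#}(R)$, so $\tau(\underline{c_{r_1}},\dots,\underline{c_{r_m}})\in\ord$ and $\tau(\underline{c_{r_1}},\dots,\underline{c_{r_m}})\geq\underline{c_t}$ are both theorems.

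Next, I would enlarge $R$ to a level-3 tree $R^+$ by adjoining a fresh ``copy'' $t^+$ of $t$ with a shifted first coordinate: choose $t^+ \in (\omega^{<\omega})^{\lh(t)}$ with $t^+(0)$ strictly $<_{BK}$-above $t(0)$ and every $r_i(0)$, and with $t^+(j)=t(j)$ for $1 \leq j <\lh(t)$; then declare $R^+(t^+\res j) = R(t\res j)$ for each $1\le j\le\lh(t)$. With this setup, $(r_1,\dots,r_m,t^+)$ is an $R^+$-shift of $(r_1,\dots,r_m,t)$: take $S$ to be an abstract copy of the upward closure of $\{r_1,\dots,r_m,t\}$ in $R^+$ (equivalently the upward closure of $\{r_1,\dots,r_m,t^+\}$), and let $\rho,\rho'$ both factor $(S,R^+)$, agreeing on the $r_i$-part and sending the distinguished ``$t$-leaf'' of $S$ to $t$ under $\rho$ and to $t^+$ under $\rho'$. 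Since $0^{3\#}$ is a level-3 EM blueprint by Lemma~\ref{lem:pre_level_2_correct}, Lemma~\ref{lem:coherent_EM_blueprint_is_indiscernible_to_shifts} and coherency promote the theorem above to $\tau(\underline{c_{r_1}},\dots,\underline{c_{r_m}})\geq\underline{c_{t^+}}$ being a theorem of $0^{3\#}(R^+)$.

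Finally, I would derive a contradiction by interpreting $0^{3\#}(R^+)$ semantically in $M_{2,\infty}^{-}$. For every $\vec{\gamma}\in[C]^{R^+\uparrow}$, the value $\tau^{M_{2,\infty}^{-}}(\gamma_{r_1},\dots,\gamma_{r_m})$ is an ordinal below $\ord^{M_{2,\infty}^{-}}=\bolddelta{3}$ (by Theorem~\ref{thm:steel}) and satisfies $\tau^{M_{2,\infty}^{-}}(\gamma_{r_1},\dots,\gamma_{r_m})\geq\gamma_{t^+}$. However, once I fix the values $\gamma_{r_1},\dots,\gamma_{r_m}\in C$, the resulting ordinal $\beta:=\tau^{M_{2,\infty}^{-}}(\gamma_{r_1},\dots,\gamma_{r_m})$ lies in $\bolddelta{3}$, while $\gamma_{t^+}$ is still free to range over an unbounded subset of $C$: because $\vec{r}<_0 t^+$ and $t^+$ was introduced with $t^+(0)$ strictly $<_{BK}$-above every $r_i(0)$, the only $R^+$-order constraint binding $\gamma_{t^+}$ from below is $\gamma_{t^+}>\max_i\gamma_{r_i}$, which together with $\gamma_{t^+}>\beta$ can be arranged in $C$ (the remaining $\gamma_s$ for $s\in\dom(R^+)$ are chosen afterwards, upwards or downwards according to $<^{R^+}$). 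Picking such $\vec{\gamma}$ yields $\gamma_{t^+}>\beta=\tau^{M_{2,\infty}^{-}}(\gamma_{r_1},\dots,\gamma_{r_m})$, contradicting the inequality from the theory.

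The main obstacle is purely combinatorial: verifying that $R^+$ is a legitimate level-3 tree and that the shift map is correctly set up, i.e., that the upward-closure data of $\{r_1,\dots,r_m,t^+\}$ in $R^+$ coincides, as an abstract level-3 tree, with that of $\{r_1,\dots,r_m,t\}$. Once this tree bookkeeping is in place, the cofinality argument that produces the contradiction is automatic from the club property of $C$ and the $\Sigma^1_4$-level bound $\ord^{M_{2,\infty}^{-}}=\bolddelta{3}$.
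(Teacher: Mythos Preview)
Your argument is correct in substance, but it is more circuitous than the paper's. The paper argues directly rather than by contradiction: it first uses coherency to reduce to the case where $\dom(R)$ is precisely the upward closure of $\vec{r}\cup\{t\}$, and then simply exhibits, for any $\vec{\gamma}\in[C]^{R\uparrow}$, an extension $\vec{\delta}\in[D]^{R\uparrow}$ agreeing with $\vec{\gamma}$ on the $r_i$-part and having $\delta_t$ larger than the fixed ordinal $\tau^{M_{2,\infty}^-}(\gamma_{r_1},\dots,\gamma_{r_m})<\bolddelta{3}$. The reduction step plays exactly the role of your introduction of $t^+$: both maneuvers isolate the $t$-branch so that its value can be pushed arbitrarily high without disturbing the $r_i$-values. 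Your route uses shift indiscernability (Lemma~\ref{lem:coherent_EM_blueprint_is_indiscernible_to_shifts}) as the transfer mechanism, which is itself derived from coherency, so you end up invoking the same tool one level removed.

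One small imprecision: you place $t^+(0)$ only $<_{BK}$-above $t(0)$ and the $r_i(0)$, but $\dom(R)$ may contain other nodes $s$ with $s(0)\ge_{BK}t^+(0)$, which could constrain $\gamma_{(t^+(0))}$ from above via sibling comparisons. To make your claim that $\gamma_{t^+}$ ranges over an unbounded set literally true you should either choose $t^+(0)$ $<_{BK}$-above \emph{every} first coordinate occurring in $\dom(R)$, or first perform the paper's reduction to the minimal domain. Also, the sentence ``the only $R^+$-order constraint binding $\gamma_{t^+}$ from below is $\gamma_{t^+}>\max_i\gamma_{r_i}$'' does not accurately describe the respecting relation for level-3 trees---the actual constraints on $\gamma_{t^+}$ run through its approximation sequence and through sibling comparisons at each level, not through a direct comparison with the $\gamma_{r_i}$---though the conclusion you draw is correct once the placement of $t^+(0)$ is fixed.
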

 \begin{proof}
Let $C$ be a firm club of potential level-3 indiscernibles for $M^{-}_{2,\infty}$. 
Let $\eta \in D$ iff $C \cap \eta$ has order type $u_{\omega} \xi$ for some ordinal $\xi$. 

We may further assume that $\dom(R)$ is the upward closure of $\vec{r} \cup \se{t}$ and $R^{-} \DEF R \res ($the upward closure of $\vec{r})$ is a level-3 subtree of $R$. The reason is because we can find level-3 trees $S^{-},S$ ,   $\rho^{-}$ factoring $(S^{-},R)$, $\rho$ factoring $(S,R)$ so that $S^{-}$ is a subtree of  $S$, $\rho^{-} = \rho \res S^{-}$, $\ran(\rho^{-})$ is the upward closure of $\vec{r}$,  $\ran( \rho)$ is the upward closure of $\vec{r} \cup \se{t}$.  We then work with $S$ and $\rho^{-1}(\vec{r},t)$ instead, and finally apply the coherency of $0^{3\#}$.

 Suppose $\Gamma(R)$ contains the formula ``$\tau(\underline{c}_{r_1},\ldots,\underline{c}_{r_m}) \in \ord $''. Then for any $\vec{\gamma} \in [C]^{R\uparrow }$,
  \begin{displaymath}
\tau^{M_{2,\infty}^{-}(x)} (\gamma_{r_1},\ldots,\gamma_m) <\bolddelta{3}.
\end{displaymath}
Our assumption $\vec{r} <_0 t$ ensures the existence of $\vec{\delta} \in [D]^{R \uparrow}$ extending $\vec{\gamma} \res \dom(R^{-})$ such that   
\begin{displaymath}
  \tau^{M_{2,\infty}^{-}} (\gamma_{r_1},\dots, \gamma_{r_m}) <  \delta_t.
\end{displaymath}
Hence,  $\Gamma(R)$ contains the formula ``$\tau(\underline{c_{r_1}},\ldots,\underline{c_{r_m}})< \underline{c_{{t} }}$''.
 \end{proof}

 \begin{mydefinition}[Weak remarkability]
   \label{def:level_3_EM_blueprint_weakly_remarkable}
   A level-3 EM blueprint $\Gamma$  is \emph{weakly remarkable} iff $\Gamma$ is unbounded and for any level-3 tree $R$,
    if $\tau$ is an  $\mathcal{L}$-Skolem term, $\vec{r}\cup \vec{s} \cup \vec{s'}\cup \se{t} \subseteq \dom(R)$, $\vec{r} <_0 t \leq _0 \vec{s} \leq_0 \vec{s'}$,  $\vec{s'}$ is an $R$-shift of $\vec{s}$, 
$\lh(t) = 1$, then  $\Gamma(R)$ contains the formula    
 \begin{multline*} \tau(\underline{c_{r_1}},\ldots,\underline{c_{r_m}}, \underline{c_{s_1}},\ldots,\underline{c_{s_n}}) < \underline{c_t }\to\\
\tau(\underline{c_{r_1}},\ldots,\underline{c_{r_m}}, \underline{c_{s_1}},\ldots,\underline{c_{s_n}}) =
\tau(\underline{c_{r_1}},\ldots,\underline{c_{r_m}}, \underline{c_{s_1'}},\ldots,\underline{c_{s_n'}}) .
     \end{multline*}
 \end{mydefinition}

 \begin{mylemma}
   \label{lem:3sharp_is_weakly_remarkable}
Assume $\boldpi{3}$-determinacy.   Then $0^{3\#}$ is weakly remarkable.
 \end{mylemma}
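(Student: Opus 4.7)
\medskip

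The plan is to imitate the proof of Lemma~\ref{lem:3sharp_is_unbounded}, combining firmness of a set of potential level-3 indiscernibles with an amalgamation argument based on Lemma~\ref{lem:level_3_amalgamation}. Let $C$ be a firm set of potential level-3 indiscernibles for $M_{2,\infty}^{-}$. By coherency of $0^{3\#}$ (Lemma~\ref{lem:pre_level_2_correct}), I may enlarge $R$ so that $\dom(R)$ is the upward closure of $\vec{r}\cup\vec{s}\cup\vec{s'}\cup\se{t}$, and rearrange the subtrees under $R$ so that the nodes corresponding to $\vec{r}$ lie in branches with first coordinates $<_{BK} t(0)$ while those for $\vec{s}, \vec{s'}$ lie in branches with first coordinates $\geq_{BK} t(0)$. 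Suppose $\gcode{\tau(\underline{c_{\vec{r}}},\underline{c_{\vec{s}}})<\underline{c_t}} \in 0^{3\#}(R)$, so that for every $\vec{\gamma}\in[C]^{R\uparrow}$, $\tau^{M^{-}_{2,\infty}}(\vec{\gamma}_{\vec{r}},\vec{\gamma}_{\vec{s}})<\gamma_t$. Fix $\vec{\gamma}\in[C]^{R\uparrow}$; I must show $\tau^{M_{2,\infty}^{-}}(\vec{\gamma}_{\vec{r}},\vec{\gamma}_{\vec{s}}) = \tau^{M_{2,\infty}^{-}}(\vec{\gamma}_{\vec{r}},\vec{\gamma}_{\vec{s'}})$.

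Using the witness $(S,\sigma,\sigma')$ to the fact that $\vec{s'}$ is an $R$-shift of $\vec{s}$, I will apply Lemma~\ref{lem:level_3_amalgamation} twice to amalgamate two copies of the ``$\vec{s}$-shape'' above $t$ inside a single larger level-3 tree $Y$. Concretely, I build $Y$ together with two factoring maps $\psi_1,\psi_2:R\to Y$ which agree on the upward closure of $\vec{r}\cup\se{t}$ (so $\psi_1(r_i)=\psi_2(r_i)$ and $\psi_1(t)=\psi_2(t)$), while $\psi_1(\vec{s})$ and $\psi_2(\vec{s})$ are two disjoint copies of the $\vec{s}$-shape, both lying in branches of $Y$ with first coordinates $\geq_{BK}\psi_1(t)(0)$. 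Such an amalgamation exists because the $R$-shift witness $(S,\sigma,\sigma')$ provides a common template, and Lemma~\ref{lem:level_3_amalgamation} matches arbitrary tuples respecting a level-3 tree when their shapes are compatible. Now pick $\vec{\delta}\in[C]^{Y\uparrow}$ with $\vec{\delta}_{\psi_1(r_i)}=\gamma_{r_i}$, $\delta_{\psi_1(t)}=\gamma_t$, $\vec{\delta}_{\psi_1(\vec{s})}=\vec{\gamma}_{\vec{s}}$, and $\vec{\delta}_{\psi_2(\vec{s})}=\vec{\gamma}_{\vec{s'}}$; firmness of $C$ lets me realize this assignment because $C$ is cofinal with uniformly chosen ``spacing.''

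Applying the hypothesis inside the larger tree $Y$ (whose domain contains both $\psi_1(R)$ and $\psi_2(R)$ as subtrees), both values
\[
\alpha_1:=\tau^{M_{2,\infty}^{-}}(\vec{\delta}_{\psi_1(\vec{r})},\vec{\delta}_{\psi_1(\vec{s})})=\tau^{M_{2,\infty}^{-}}(\vec{\gamma}_{\vec{r}},\vec{\gamma}_{\vec{s}})
\]
and $\alpha_2:=\tau^{M_{2,\infty}^{-}}(\vec{\gamma}_{\vec{r}},\vec{\gamma}_{\vec{s'}})$ lie strictly below $\gamma_t=\delta_{\psi_1(t)}=\delta_{\psi_2(t)}$. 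The $Y$-shift from $\psi_1(\vec{r}\cup\vec{s})$ to $\psi_2(\vec{r}\cup\vec{s})$ (which fixes $\vec{r}$ and the common image of $t$) together with Lemma~\ref{lem:coherent_EM_blueprint_is_indiscernible_to_shifts} yields that the $\mathcal{L}^Y$-formula
\[
\tau(\underline{c_{\psi_1(\vec{r})}},\underline{c_{\psi_1(\vec{s})}})=\tau(\underline{c_{\psi_2(\vec{r})}},\underline{c_{\psi_2(\vec{s})}})
\]
belongs to $0^{3\#}(Y)$, forcing $\alpha_1=\alpha_2$, which is exactly the desired equation.

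The main obstacle will be the verification that the amalgamation $Y$ and the two embeddings $\psi_1,\psi_2$ can be chosen so that $(\psi_1(\vec{r}),\psi_1(\vec{s}))$ and $(\psi_2(\vec{r}),\psi_2(\vec{s}))$ are in fact $Y$-shifts of each other \emph{as tuples including $\vec{r}$}. The $R$-shift hypothesis only guarantees a shift of $\vec{s}$ to $\vec{s'}$; the condition $\vec{r}<_0 t\leq_0\vec{s}$ is essential since it places $\vec{r}$ in disjoint branches from $t$ and from $\vec{s}, \vec{s'}$, allowing $\vec{r}$ to be ``trivially'' matched in the amalgamation. The constraint $\lh(t)=1$ ensures that $t$ sits at the top of its own branch, so that additional material may be adjoined above $t$ in $Y$ without interfering with the $\vec{r}$-part; both hypotheses together furnish the required amalgamation. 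The condition $\tau(\underline{c_{\vec{r}}},\underline{c_{\vec{s}}})<\underline{c_t}$ enters in a crucial way to ensure that the value computed in $M^{-}_{2,\infty}$ is of $M^{-}_{2,\infty}$-rank below $\gamma_t$, preventing the amalgamation from being obstructed by ``high'' cofinality considerations above $\gamma_t$.
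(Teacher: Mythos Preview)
Your argument has a genuine gap at the crucial step. You claim that the $Y$-shift from $\psi_1(\vec{r}\cup\vec{s})$ to $\psi_2(\vec{r}\cup\vec{s})$ together with Lemma~\ref{lem:coherent_EM_blueprint_is_indiscernible_to_shifts} forces the $\mathcal{L}^Y$-sentence
\[
\tau(\underline{c_{\psi_1(\vec{r})}},\underline{c_{\psi_1(\vec{s})}})=\tau(\underline{c_{\psi_2(\vec{r})}},\underline{c_{\psi_2(\vec{s})}})
\]
to belong to $0^{3\#}(Y)$. But this is not what indiscernability says. Lemma~\ref{lem:coherent_EM_blueprint_is_indiscernible_to_shifts} only yields, for each formula $\varphi$, the equivalence
\[
\varphi(\underline{c_{\psi_1(\vec{r})}},\underline{c_{\psi_1(\vec{s})}})\;\leftrightarrow\;\varphi(\underline{c_{\psi_2(\vec{r})}},\underline{c_{\psi_2(\vec{s})}}).
\]
The displayed equality is an $\mathcal{L}^Y$-sentence mentioning \emph{both} $\psi_1(\vec{s})$ and $\psi_2(\vec{s})$ simultaneously; to obtain it from indiscernability you would need a $Y$-shift of the combined tuple $(\psi_1(\vec{r}),\psi_1(\vec{s}),\psi_2(\vec{s}))$ that swaps $\psi_1(\vec{s})$ with $\psi_2(\vec{s})$ while fixing $\psi_1(\vec{r})$. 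Such a swap cannot exist: factoring maps of level-3 trees must preserve the $<_{BK}$-order among siblings with the same tree component, so if $\psi_1(s_i)$ and $\psi_2(s_i)$ are siblings in $Y$ they cannot be exchanged by any pair of factoring maps. In effect your final step assumes the very equality you are trying to prove.

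The paper's proof addresses exactly this point and proceeds differently. After reducing (via coherency) to a single tree $S$ with $\rho,\rho'$ factoring $(S,R)$, it partitions a firm set $C$ into blocks $(C_\xi)_{\xi<\bolddelta{3}}$ each of order type $u_\omega$, and for each $\xi$ builds $F^\xi\in D^{S\uparrow}$ with the $\vec{r}$-part landing in $C_0$ and the $\vec{s}$-part in $C_\xi$. Setting $\epsilon_\xi=\tau^{M_{2,\infty}^{-}}(\vec{\gamma}^\xi_{\bar r},\vec{\gamma}^\xi_{\bar s})$, the hypothesis $\tau(\ldots)<\underline{c_t}$ gives $\epsilon_\xi<\min(C_1)$ for all $\xi$. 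Amalgamating $\vec{\gamma}^\eta$ and $\vec{\gamma}^\xi$ into a single $R$-respecting tuple via $\rho,\rho'$ shows that either $\epsilon_\eta<\epsilon_\xi$ for all $\eta<\xi$ (contradicting the bound $\min(C_1)$ since there are $\bolddelta{3}$ many $\xi$) or $\epsilon_\eta>\epsilon_\xi$ for all $\eta<\xi$ (an infinite descent). This trichotomy/well-foundedness argument is the missing ingredient; indiscernability by itself is not enough.
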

 \begin{proof}
Again, we may assume that $\dom(R)$ is the upward closure of $\vec{r} \cup \vec{s} \cup \vec{s'} \cup \se{ t}$. 

%Put $k=\lh(t)$. %Assume without loss of generality$s_1 \prec^R_{*} \dots \prec^R_{*} s_n$. 
Suppose $0^{3\#}(R)$ contains the formula ``$\tau(\underline{c_{r_1}},\ldots, \underline{c_{s_1}},\ldots) < \underline{c_{t}} $''. 
We need to show that $0^{3\#}(R)$ contains the formula ``$ \tau(\underline{c_{r_1}},\ldots, \underline{c_{s_1}},\ldots) = \tau(\underline{c_{r_1}},\ldots, \underline{c_{s_1'}},\ldots)$''. By Axiom~\ref{item:EM_order} in Definition~\ref{def:pre_level_3_EM_blueprint}, we may further assume that $t$ is in the upward closure of $\vec{s}$.
Let $S$ be a level-3 tree and $\rho,\rho'$ both  factor $(S,R)$ such that $\ran(\rho)$ is the upward closure of $\vec{r} \cup \vec{s}$, $\ran(\rho')$ is the upward closure of $\vec{r} \cup \vec{s'} $. % , $U$ is a subtree of $S$, $\rho'' \dom(U) $ is the upward closure of $\vec{r}$. 
Put $\rho^{-1}(r_i,s_j,t)= (\bar{r}_i,\bar{s}_j,\bar{t}) = (\rho')^{-1}(r'_i,s'_j,t')$. 

Let $C $ be a firm set of potential level-3 indiscernibles for $M^{-}_{2,\infty}$.  
 Let $C= \bigcup _{\xi < \bolddelta{3}} C_{\xi}$ be a disjoint partition of $C$ such that for any $\xi<\bolddelta{3}$,  $\ot(C_{\xi}) = u_{\omega}$, and for any $\xi<\eta<\bolddelta{3}$,  any member of $C_{\xi}$ is smaller than any member of $C_{\eta}$. Let $D$ be a club in $\bolddelta{3}$ where $\nu \in D$ iff $\sup \bigcup_{\xi<\nu} C_{\xi} = \nu$.
 As $C$ is firm, $D$ has order type $\bolddelta{3}$. 

If $X,Y$ are subsets of ordinals, define $X \sqsubseteq Y$ iff $X \subseteq Y$ and $X = Y \cap \alpha$ for some $\alpha$.
For each $0 < \xi < \bolddelta{3}$, let $F^{\xi} \in D^{S \uparrow}$ so that $(F^{\xi})'' \rep( U )  \sqsubseteq C_0$, $(F^{\xi})'' (\rep(S) \setminus \rep(U)) \sqsubseteq C_{\xi}$. Define $\vec{\gamma}^{\xi} = (\gamma^{\xi}_x)_{x \in \dom(S)}=  [F^{\xi}]^S$. 
Define
\begin{displaymath}
  \epsilon_{\xi}  = \tau^{M_{2,\infty}^{-}} ( \gamma^{\xi}_{\bar{r}_1}, \dots, \gamma^{\xi}_{\bar{s}_1}, \dots ).
\end{displaymath}
Hence,
\begin{displaymath}
  \epsilon_{\xi} < \min(C_1).
\end{displaymath}

For $0 < \eta < \xi < \bolddelta{3}$, define $\vec{\gamma}^{\eta\xi}  = ( \gamma^{\eta\xi}_y)_{y \in \dom(R)}$ where $\gamma^{\eta\xi}_{\rho ( x) } = \gamma^{\eta}_x$ and $\gamma^{\eta\xi}_{\rho' (x) } = \gamma^{\xi}_x$ for any $x \in \dom(S)$.
By Lemma~\ref{lem:R_respect}, $\gamma^{\eta\xi} \in [D]^{R \uparrow}$.
Suppose towards a contradiction. 

Case 1:  $0^{3\#}(R)$ contains the formula `` $\tau(\underline{c_{r_1}},\ldots,\underline{c_{s_1}},\ldots) > \tau(\underline{c_{r_1}},\ldots,\underline{c_{s_1'}},\ldots) $''.

Then $\vec{\gamma}^{\xi\eta}$ witnesses that $\epsilon_{\eta}>\epsilon_{\xi}$ whenever $0 < \eta < \xi <\bolddelta{3}$. This is an infinite descending chain of ordinals.

Case 2: $0^{3\#}(R)$ contains the formula `` $\tau(\underline{c_{r_1}},\ldots,\underline{c_{s_1}},\ldots) < \tau(\underline{c_{r_1}},\ldots,\underline{c_{s_1'}},\ldots) $''.

Then $\epsilon_{\eta}<\epsilon_{\xi}$ whenever $ 0 < \eta < \xi < \bolddelta{3}$, contradicting to $\epsilon_{\xi} < \min(C_1)$. 
 \end{proof}

If $R$ is a level-3 tree, $t \in \dom(R)$, $\lh(t) = 1$, let 
\begin{displaymath}
  R \res t = R \res \set{r \in \dom(R)}{ r <_0 t}.
\end{displaymath}

 \begin{mylemma}
   \label{lem:weakly_remarkable_strengthening}
   Suppose $\Gamma$ is a weakly remarkable level-3 EM blueprint. Suppose $R$ is a level-3 tree, $t \in \dom(R)$, $\lh(t) = 1$.  
   \begin{enumerate}
   \item If  $\tau$ is an $\mathcal{L}$-Skolem term, $\vec{r}\cup \vec{s} \cup \vec{s'}\cup \se{t} \subseteq \dom(R)$, $\vec{r} <_0 t \leq _0 \vec{s} \concat \vec{s'}$,  $\vec{s'}$ is an $R$-shift of $\vec{s}$, then  $\Gamma(R)$ contains the formula    
 \begin{multline*} \tau(\underline{c_{r_1}},\ldots,\underline{c_{r_m}}, \underline{c_{s_1}},\ldots,\underline{c_{s_n}}) < \underline{c_t }\to\\
\tau(\underline{c_{r_1}},\ldots,\underline{c_{r_m}}, \underline{c_{s_1}},\ldots,\underline{c_{s_n}}) =
\tau(\underline{c_{r_1}},\ldots,\underline{c_{r_m}}, \underline{c_{s_1'}},\ldots,\underline{c_{s_n'}}) .
     \end{multline*}
   \item  $\Gamma(R)$ contains the scheme ``$K|\underline{c_{t}} \elem V$''. In particular, $\Gamma(R)$ contains the formula ``$\underline{c_{t}}$ is inaccessible and there are cofinally many cardinal strong cutpoints below $\underline{c_{t}}$''.
   \end{enumerate}
 \end{mylemma}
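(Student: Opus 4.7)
The strategy is to interpose an auxiliary tuple $\vec{s''}$ lying $<_0$-strictly above $\vec{s} \concat \vec{s'}$ and serving as a common shift of both $\vec{s}$ and $\vec{s'}$ in a suitably enlarged tree $R^{+} \supseteq R$. Concretely, I would use the $R$-shift witnessing ``$\vec{s'}$ is an $R$-shift of $\vec{s}$'' to extend $R$ to a level-3 tree $R^{+}$ containing a distinguished $\vec{s''}$ with $\vec{s''} >_0 \vec{s} \concat \vec{s'}$, obtained from $\vec{s}$ by the same underlying substructure. Coherency embeds $\Gamma(R)$ into $\Gamma(R^{+})$, so the hypothesis persists. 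The $R^{+}$-shift sending $(\vec{r},t,\vec{s})$ to $(\vec{r},t,\vec{s'})$, together with Lemma~\ref{lem:coherent_EM_blueprint_is_indiscernible_to_shifts}, transfers the hypothesis into $\tau(\vec{c_r},\vec{c_{s'}}) < \underline{c_t}$. Weak remarkability (whose $\leq_0$-hypothesis is now satisfied by construction) applied separately to the pairs $(\vec{s},\vec{s''})$ and $(\vec{s'},\vec{s''})$ yields $\tau(\vec{c_r},\vec{c_s}) = \tau(\vec{c_r},\vec{c_{s''}}) = \tau(\vec{c_r},\vec{c_{s'}})$.

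\textbf{Plan for Part (2).} I would argue via the Tarski--Vaught criterion inside $\mathcal{M}_{\Gamma,R}$. Let $H$ be the $\mathcal{L}$-Skolem hull of $\{c_r : r \in \dom(R),\ r <_0 t\}$. By unboundedness, $H \cap \ord \subseteq \underline{c_t}$. The scheme $K|\underline{c_t} \prec V$ reduces to $K|\underline{c_t} \subseteq H$, for then the Mostowski collapse of $H$ is forced to be $K|\underline{c_t}$ and the inclusion becomes $\mathcal{L}$-elementary. Given $\beta < \underline{c_t}$ presented as $\sigma(\vec{c_{r_1}},\vec{c_{r_2}})$ with $\vec{r_1} <_0 t \leq_0 \vec{r_2}$, part~(1) shows this value is invariant under every $R$-shift of $\vec{r_2}$. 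Using the $\Sigma_1$-definable wellorder of $K$ supplied by Axiom~\ref{item:EM_ZFC}, one packages this shift-invariance as a $K$-definition of $\beta$ from $\vec{c_{r_1}}$ alone, for example as the eventually-stable value of an ordinal-valued Skolem term in $\vec{c_{r_1}}$ obtained by ``averaging out'' the $\vec{c_{r_2}}$ slot using the invariance. The same argument applied to codes of sets (rather than ordinals) extends $\underline{c_t} \subseteq H$ to $K|\underline{c_t} \subseteq H$. The ``in particular'' clause on inaccessibility of $\underline{c_t}$ in $K$ and the abundance of cardinal strong cutpoints below $\underline{c_t}$ then reflects from the corresponding $V$-statements guaranteed by Axiom~\ref{item:EM_ZFC}.

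\textbf{Main obstacle.} The crux is the last step of part~(2): turning the \emph{semantic} invariance of $\sigma(\vec{c_{r_1}},\vec{c_{r_2}})$ under $R$-shifts of $\vec{r_2}$ (which a priori is a statement about indiscernibles external to $K$) into a first-order $\mathcal{L}$-term for $\beta$ using only parameters from $\{c_r : r <_0 t\}$. This is the level-3 counterpart of Silver's remarkability-to-collapse argument for $0^{\#}$, but the richer combinatorics of $R$-shifts of level-3 trees --- and in particular their interaction with the level-$\leq 2$ ultrapower structure encoded by Axiom~\ref{item:level-2-embedding_invariance} --- make the syntactic coding of shift-invariance substantially more delicate than in the classical case, and likely require a careful induction on the complexity of $\sigma$ together with auxiliary applications of part~(1) to handle nested occurrences of $\vec{c_{r_2}}$.
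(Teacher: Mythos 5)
Your Part~(1) is essentially the paper's argument: interpose a fresh shift $\vec{s''}$ sitting $<_0$-above $\vec{s}$, apply weak remarkability and shift-indiscernability, and conclude; whether one applies weak remarkability twice (as you do) or once plus an appeal to Lemma~\ref{lem:coherent_EM_blueprint_is_indiscernible_to_shifts} (as the paper does) is immaterial.

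Your Part~(2), however, rests on a false reduction. You propose to show $K|\underline{c_t} \subseteq H$ where $H$ is the $\mathcal{L}$-Skolem hull of $\{\underline{c_r}^{\mathcal{M}_{\Gamma,R}} : r <_0 t\}$. This inclusion is not true. Already at the level of $0^{\#}$ the analogous claim fails: $L_{c_0}$ is not contained in the Skolem hull of the empty set even though $L_{c_0} \prec L$ -- the hull is countable while $c_0$ is uncountable. The same pathology occurs here: if $R$ is finite, $H$ is countably generated, yet $\underline{c_t}$ sits above a club of $\mathcal{L}$-indiscernibles internal to the model (e.g.\ all ordinals of the form $\underline{c_{s}}^{\mathcal{M}_{\Gamma,R}}$ for $s <_0 t$, and the EM model itself thinks $\underline{c_t}$ is inaccessible). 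So the "averaging out'' step you flag as the main obstacle cannot succeed; there is no $\mathcal{L}$-term for a generic $\beta < \underline{c_t}$ using parameters only from $\{\underline{c_r} : r <_0 t\}$.

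The paper's Tarski--Vaught argument aims at a weaker and correct target: closure of $\underline{c_t}$ under Skolem functions, not capture of $K|\underline{c_t}$ by a hull. Concretely, one first passes by coherency to a tree where $\{s : s <_0 t\}$ has $<_{BK}$-limit order type, and then picks an auxiliary node $t^{*}$ of length~$1$ with $\vec{r} <_0 t^{*} <_0 t$. Given $z = \sigma^{\mathcal{N}}(\vec{c_r}, \vec{c_s}) < \underline{c_t}$ with $\vec{s} \geq_0 t$, one uses weak remarkability to slide the whole block $(t) \concat \vec{s}$ down to a shift $(t') \concat \vec{s'}$ lying strictly below $t^{*}$; this leaves the value of $z$ unchanged. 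Now $w = \tau^{\mathcal{N}}(z)$ is an ordinal $\mathcal{L}$-definable from parameters all $<_0 t^{*}$, so unboundedness gives $w < \underline{c_{t^{*}}} < \underline{c_t}$. Note the witness $w$ is \emph{not} shown to lie in $H$; it is merely bounded by the next indiscernible $\underline{c_{t^{*}}}$ below $\underline{c_t}$. You should replace your hull-capture plan with this boundedness argument, and add the preliminary coherency step that makes the auxiliary $t^{*}$ available.
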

 \begin{proof}
%Let $k=\lh(t)$. 
1.  Assume without loss of generality that $\dom(R)$ is the upward closure of $\vec{r} \cup \vec{s} \cup \vec{s}' \cup \se{t}$. 
Suppose $\Gamma(R)$ contains the formula ``$\tau(\underline{c_{r_1}},\ldots, \underline{c_{s_1}},\ldots) < \underline{c_{t}} $''. 
Expand $R$ to the level-3 tree $S$ where $\dom(S)$ is the upward closure of $\dom(R)\cup \se{s_i'':1 \leq i \leq n}$, each $s_i'' \notin \dom(R)$, $\vec{s''}$ is an $R$-shift of $\vec{s}$, $\vec{s} <_0 \vec{s''}$.  By coherency and weak remarkability, $\Gamma(S)$ contains the formula $  \tau(\underline{c_{r_1}},\ldots, \underline{c_{s_1}},\ldots) =\tau(\underline{c_{r_1}},\ldots, \underline{c_{s_1''}},\ldots) $. But $\vec{r}\concat \vec{s} \concat \vec{s''}$ is a shift of $\vec{r} \concat \vec{s'} \concat \vec{s''} $. By indiscernability,  $\Gamma(S)$ contains the formula $\tau(\underline{c_{r_1}},\ldots, \underline{c_{s_1'}},\ldots) =\tau(\underline{c_{r_1}},\ldots, \underline{c_{s_1''}},\ldots) $.  
Hence, $\Gamma(S)$ contains the formula $\tau(\underline{c_{r_1}},\ldots, \underline{c_{s_1}},\ldots) =\tau(\underline{c_{r_1}},\ldots, \underline{c_{s_1'}},\ldots) $.

2. 
Put $\mathcal{N} = \mathcal{M}_{\Gamma,R}$. 
By coherency of $\Gamma$, we may assume that $A = \set{s \in \dom(R)}{s <_0 t }$ has $<_{BK}$-limit order type.  %and $\set{b \in \omega^{<\omega}}{(b) \in \dom(R)}$ has $<_{BK}$-limit order type.  
By Tarski's criterion, we need to show that if  $w = \tau^{\mathcal{N}} (z_1,\dots, z_k)  \in \ord$,
$z_1,\dots,z_k < \underline{c_t}^{\mathcal{N}}$, then $w < \underline{c_t}^{\mathcal{N}}$. 
To save notations, let $k=1$, $z _1= 
  \sigma^{\mathcal{N}}(\underline{c_{r_1}}^{\mathcal{N}}, \dots, \underline{c_{s_1}}^{\mathcal{N}},\dots)< \underline{c_t}^{\mathcal{N}}$, $\vec{r} <_0 t \leq _0 \vec{s}$.
Pick $t^{*}$ of length 1 such that $\vec{r} <_0 t^{*} <_0  t $. 
 Build a level-3 tree $S$ that extends $R$ in which there are nodes $t', \vec{s'} \in \dom(S)$ such that $\vec{r} <_0 (t' )\concat \vec{s'} <_0 t^{*}$ and $(t' )\concat \vec{s'}$ is an $S$-shift of $(t )\concat \vec{s}$. Put $\mathcal{P} = \mathcal{M}_{\Gamma,S}$. 
%$\vec{s''}$ is an $S$-shift of $\vec{s}$. 
By weakly remarkability,    % $z < \underline{c}_{r^{*}}^{\mathcal{N}}$. Hence by part 2, 
\begin{displaymath}
\sigma^{\mathcal{P}}(\underline{c_{r_1}}^{\mathcal{P}}, \dots, \underline{c_{s_1}}^{\mathcal{P}}) = \sigma^{\mathcal{P}} (\underline{c_{r_1}}^{\mathcal{P}}, \dots, \underline{c_{s_1'}}^{\mathcal{P}}, \dots ).
\end{displaymath}
By unboundedness of $\Gamma$,
\begin{displaymath}
  \tau^{\mathcal{P}} ( \sigma^{\mathcal{P}} (\underline{c_{r_1}}^{\mathcal{P}}, \dots, \underline{c_{s_1'}}^{\mathcal{P}}, \dots )) <  \underline{c_{t^{*}}}^{\mathcal{P}}.
\end{displaymath}
By coherency of $\Gamma$,  $w< \underline{c_{t^{*}}}^{\mathcal{N}}$. By Axiom~\ref{item:EM_order} in Definition~\ref{def:pre_level_3_EM_blueprint}, $\underline{c_{t^{*}}}^{\mathcal{N}} < \underline{c_t}^{\mathcal{N}}$. 
 \end{proof}

A level-3 tree $R$ is said to be \emph{universal above $t$} iff $t \in \dom(R)$, $\lh(t) = 1$, and 
 for any level-3 tree $S$, if $S \res {t}' $ is isomorphic to $R \res {t}$   via $\pi$ % , $\pi(t')=t$
and $\dom(S) \setminus \dom(S \res {t}')$ is finite, then there is a map $\rho$ factoring $(S,R)$ that extends $\pi$.  % such that $\rho (t') = t$.  
% is the identity on $\dom(S \res t) \cup \se{t}$. 
Clearly, for any $R$, there is $(R',{t})$ such that  $R' \res{t}$ is isomorphic to $R$ and $R'$ is universal above $t$. If $R$ is $\Pi^1_3$-wellfounded, we may further demand that $R'$ is $\Pi^1_3$-wellfounded.

\begin{mylemma}\label{lem:universal_agree_on_EM_model}
Suppose  $\Gamma$ is a weakly remarkable level-3 EM blueprint, $R$ is universal above $t$,  $R'$ is universal above $t'$,   $R \res {t} $ is isomorphic to $ R'  \res {t}'$. % via $\pi$.
Then
\begin{displaymath}
  (K|{\underline{c_{{t}}}}) ^{\mathcal{M}_{\Gamma,R}} \cong  (K|{\underline{c_{{t}'}}}) ^{\mathcal{M}_{\Gamma,R'}}.
\end{displaymath}
\end{mylemma}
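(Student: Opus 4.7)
Let $\mathcal{N} = \mathcal{M}_{\Gamma,R}$, $\mathcal{N}' = \mathcal{M}_{\Gamma,R'}$, $\kappa = \underline{c_t}^{\mathcal{N}}$, and $\kappa' = \underline{c_{t'}}^{\mathcal{N}'}$. Let $\iota: R \res t \to R' \res t'$ be the given isomorphism. My plan is to build the isomorphism between $(K|\kappa)^{\mathcal{N}}$ and $(K|\kappa')^{\mathcal{N}'}$ via Skolem hulls, using coherency to transfer between the two EM models and using the universality of $R$ and $R'$ together with weak remarkability to show the hulls capture the full initial segments.

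First, let $H_R$ be the $\mathcal{L}$-Skolem hull in $\mathcal{N}$ of $\{c_{\Gamma,R,r} : r \in \dom(R \res t)\}$, and similarly $H_{R'}$ in $\mathcal{N}'$. The inclusion $R \res t \hookrightarrow R$ and the composition $\iota: R \res t \to R' \res t' \hookrightarrow R'$ both factor $(R \res t, \cdot\,, Q^0)$, so the coherency clause of Definition~\ref{def:pre_level_3_EM_blueprint} yields, for any $\mathcal{L}$-formula $\varphi$ and any $\vec{r} \subseteq \dom(R \res t)$,
\begin{displaymath}
  \mathcal{N} \models \varphi(\vec{c}_{\vec{r}}) \iff \mathcal{M}_{\Gamma,R\res t} \models \varphi(\vec{c}_{\vec{r}}) \iff \mathcal{N}' \models \varphi(\vec{c}_{\iota(\vec{r})}).
\end{displaymath}
Hence the assignment $\tau^{\mathcal{N}}(\vec{c}_{\vec{r}}) \mapsto \tau^{\mathcal{N}'}(\vec{c}_{\iota(\vec{r})})$ is a well-defined $\mathcal{L}$-isomorphism $\Phi: H_R \to H_{R'}$, which descends via Mostowski collapse to an isomorphism $\bar{\Phi}: \bar{H}_R \to \bar{H}_{R'}$ between the transitive collapses.

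Second, for each $r \in \dom(R\res t)$ we have $r \prec^R t$, so by Axiom~\ref{item:EM_order} of Definition~\ref{def:pre_level_3_EM_blueprint}, $c_{\Gamma,R,r} < \kappa$. Since $K|\kappa \prec V$ in $\mathcal{N}$ by Lemma~\ref{lem:weakly_remarkable_strengthening}(2), $K|\kappa$ is closed under Skolem functions, so $H_R \subseteq (K|\kappa)^{\mathcal{N}}$. Composing the inverse collapse with this inclusion yields a canonical $\mathcal{L}$-elementary embedding $i_R : \bar{H}_R \to (K|\kappa)^{\mathcal{N}}$, and likewise $i_{R'} : \bar{H}_{R'} \to (K|\kappa')^{\mathcal{N}'}$. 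It then suffices to prove that $i_R$ and $i_{R'}$ are surjective, for then $i_{R'} \circ \bar{\Phi} \circ i_R^{-1}$ is the desired isomorphism.

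Third, the surjectivity of $i_R$ reduces (by $V=K$ and a $\Sigma_1$-wellorder coding every element by an ordinal) to the statement: every ordinal $\alpha < \kappa$ lies in $H_R$. Write $\alpha = \sigma^{\mathcal{N}}(\vec{c}_{\vec{p}}, \vec{c}_{\vec{q}})$ with $\vec{p} \subseteq \dom(R\res t)$ and $\vec{q}$ disjoint from $\dom(R\res t)$ and $\geq_0 t$. By universality of $R$ above $t$, for any prescribed finite extension of $R\res t$ adjoining an $R$-shift $\vec{q}^*$ of $\vec{q}$ with $\vec{q}^* >_0 \vec{q}$, the extension factors back into $R$, producing inside $R$ an $R$-shift $\vec{q}'$ of $\vec{q}$ with $\vec{q}' \geq_0 t$ and $\vec{q}'$ disjoint from $\vec{q}$. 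Weak remarkability (Definition~\ref{def:level_3_EM_blueprint_weakly_remarkable}) applied to $\sigma, \vec{p}, \vec{q}, \vec{q}'$ gives $\alpha = \sigma^{\mathcal{N}}(\vec{c}_{\vec{p}}, \vec{c}_{\vec{q}'})$. Iterating this universality step to obtain an infinite sequence of pairwise disjoint $R$-shifts $\vec{q}^{(n)}$, then running the Tarski--Vaught argument that underpins the proof of Lemma~\ref{lem:weakly_remarkable_strengthening}(2)---which certifies that the truth of ``$\exists x\, \psi(x, \vec{c}_{\vec{p}}, \vec{c}_{\vec{q}^{(n)}})$'' is already witnessed inside the hull of $\vec{c}_{\vec{p}}$ together with some parameter $<\kappa$---lets us express $\alpha$ as $\tau^{\mathcal{N}}(\vec{c}_{\vec{p}})$ for a suitable Skolem term $\tau$, placing $\alpha \in H_R$.

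The main obstacle is precisely this last replacement: passing from ``$\alpha$ is invariant under all $R$-shifts of $\vec{q}$ above $t$'' to ``$\alpha$ is Skolem-definable from $\{c_{\Gamma,R,r} : r \in \dom(R\res t)\}$ alone''. The naive characterization of $\alpha$ as an infimum over shifts is not itself a Skolem term, and weak remarkability merely allows shifting rather than eliminating the parameters above $t$. The argument must therefore combine weak remarkability with the infinitely many disjoint shifts guaranteed by universality, reducing the elimination of parameters above $t$ to the Tarski--Vaught style argument already used to establish $K|\underline{c_t} \prec V$; this is where the strength of the universality hypothesis (and not merely unboundedness) is crucial.
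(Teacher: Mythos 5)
Your proposal diverges from the paper's argument at the crucial step, and I believe the divergent step is actually false. You reduce the lemma to the claim that every ordinal $\alpha < \kappa$ lies in $H_R$, the $\mathcal{L}$-Skolem hull of $\{c_{\Gamma,R,r} : r \in \dom(R\res t)\}$ — i.e.\ that every ordinal below $\underline{c_t}$ is Skolem-definable from the \emph{lower} indiscernibles alone, with the upper parameters completely eliminated. But weak remarkability only gives invariance under $R$-shifts of the upper parameters, not elimination of them: if $\alpha = \sigma^{\mathcal{N}}(\vec{c}_{\vec{p}}, \vec{c}_{\vec{q}}) < \kappa$, you get $\alpha = \sigma^{\mathcal{N}}(\vec{c}_{\vec{p}}, \vec{c}_{\vec{q}'})$ for any shift $\vec{q}'$, but you still need \emph{some} tuple of upper parameters to name $\alpha$. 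The Tarski--Vaught argument in Lemma~\ref{lem:weakly_remarkable_strengthening}(2) certifies that $\tau^{\mathcal{N}}(z_1,\dots,z_k) < \underline{c_t}$ when each $z_i < \underline{c_t}$ (i.e.\ $K|\underline{c_t} \prec V$); it does \emph{not} place the output in the hull of the lower indiscernibles — it uses additional upper indiscernibles (the $t^{*}$ in that proof) as parameters. Producing infinitely many pairwise disjoint shifts via universality does not change this: each shift is still a nonempty parameter tuple outside $\dom(R\res t)$, and there is no Skolem term in $\vec{c}_{\vec{p}}$ alone that picks out the common value. Indeed, already at the level-1 analogue the Skolem hull of an initial segment of the Silver indiscernibles meets $L_{c}$ in a proper subset for $c$ the next indiscernible.

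The paper's own proof sidesteps this entirely by never trying to eliminate the upper parameters. Given $a = \tau(\vec{c}_{\vec{r}}, \vec{c}_{\vec{s}})^{\mathcal{M}_{\Gamma,R}} < \underline{c_t}^{\mathcal{M}_{\Gamma,R}}$ with $\vec{r} <_0 t \leq_0 \vec{s}$, it picks a small tree $S$ with $\rho$ factoring $(S,R)$ onto the upward closure of $\dom(R\res t)\cup\vec{s}\cup\{t\}$, uses \emph{universality of $R'$} to choose $\rho'$ factoring $(S,R')$ extending the given isomorphism $\pi$, and sets $\psi(a) = \tau(\vec{c}_{\pi(\vec{r})}, \vec{c}_{\rho'\circ\rho^{-1}(\vec{s})})^{\mathcal{M}_{\Gamma,R'}}$. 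The upper parameters survive but are transported to $R'$. Well-definedness is then exactly the strengthened weak remarkability of Lemma~\ref{lem:weakly_remarkable_strengthening}(1) (independence of the choice of $\rho'$, since two choices give $R'$-shifts of each other), plus coherency (independence of the choice of term $\tau$ and nodes $\vec{r},\vec{s}$). Your $\Phi : H_R \to H_{R'}$ is essentially the restriction of this $\psi$ to the hull of the lower indiscernibles; what is missing, and cannot be supplied by the route you describe, is the extension to the rest of $(K|\underline{c_t})^{\mathcal{M}_{\Gamma,R}}$.
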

\begin{proof}
To begin with, we build an isomorphism $\psi : (\underline{c_t})^{\mathcal{M}_{\Gamma,R}} \to  (\underline{c_{t'}})^ {\mathcal{M}_{\Gamma,R'}}$  which preserves membership relations in the respective EM models. Given $a \in \mathcal{M}_{\Gamma,R}$ such that $\mathcal{M}_{\Gamma,R} \models a < \underline{c_t}$, find a Skolem term $\tau$ and nodes $\vec{r}, \vec{s}$ such that $\vec{r} \prec^R t \preceq^R \vec{s}$ and 
\begin{displaymath}
 a = (\tau(\underline{c_{r_1}}, \ldots, \underline{c_{s_1}},\ldots))^{\mathcal{M}_{\Gamma,R} }.
\end{displaymath}
Let $S$ be a level-3 tree and $\rho$ factor $(S,R)$ such that $\ran(S)$ is the upward closure of $\dom( R \res t) \cup \vec{s} \cup \se{t}$. By universality, pick  $\rho'$ factoring $(S,R')$ which extends $\pi$. 
% such that $\rho'  \res \dom(S \res \rho^{-1}(t))$ is an isomorphism between $S \res \rho^{-1}(t)$ and $R' \res t$.
By coherency of $\Gamma$, $(\tau( \underline{c_{\pi(r_1)}}, \ldots, \underline{c_{\rho'\circ \rho^{-1}(s_1)}},\ldots))^{\mathcal{M}_{\Gamma,R'}} <^{\mathcal{M}_{\Gamma,R'}} \underline{c_{t'}}$. 
Define
\begin{displaymath}
\psi(a) = ( \tau( \underline{c_{\pi(r_1)}}, \ldots, \underline{c_{\rho'\circ \rho^{-1}(s_1)}},\ldots))^{\mathcal{M}_{\Gamma,R'}} .
\end{displaymath}

$\psi$ is well-defined and preserves membership. For this, we firstly show that $\psi(a)$ does not depend on the choice of $\rho'$. Suppose $\rho''$ is another candidate for $\rho'$. Then $\rho'' \circ \rho^{-1}(\vec{s})$ is an $R'$-shift of $\rho'\circ \rho^{-1}(\vec{s})$. By Lemma~\ref{lem:weakly_remarkable_strengthening}, $\mathcal{M}_{\Gamma,R'} \models \tau( \underline{c_{\pi(r_1)}}, \ldots, \underline{c_{\rho'\circ \rho^{-1}(s_1)}},\ldots) =\tau( \underline{c_{\pi(r_1)}}, \ldots, \underline{c_{\rho''\circ \rho^{-1}(s_1)}},\ldots) $. Secondly, the reason why  $\psi(a)$ does not depend on the choice of $\tau$ and $\vec{r},\vec{s}$ is because of coherency of $\Gamma$. In the same spirit, we can show that $\psi$ preserves membership. A completely symmetrical argument gives  $\psi' : (\underline{c_{t'}})^{\mathcal{M}_{\Gamma,R'}} \to (\underline{c_{t}})^{\mathcal{M}_{\Gamma,R}} $. By Lemma~\ref{lem:coherent_EM_blueprint_is_indiscernible_to_shifts}, $\psi\circ \psi' $ and $ \psi' \circ \psi $ are both identity functions. So $\psi$ is an isomorphism between $(\underline{c_t})^{\mathcal{M}_{\Gamma,R}}$ and $(\underline{c_{t'}})^{\mathcal{M}_{\Gamma,R'}}$.

 $\mathcal{M}_{\Gamma,R}$ is a model of $V=K$. Working in $\mathcal{M}_{\Gamma,R}$,  $ K | \underline{c_t}$ has a canonical wellordering of order type $\omega \underline{c_t}$, and similarly for $\mathcal{M}_{\Gamma,R'}$. $\psi$ extends to $\psi^{*}$, acting on $(K| \underline{c_t})^{\mathcal{M}_{\Gamma,R}}$ according to these canonical wellorderings. Using the same argument as before, $\psi^{*}$ is an isomorphism from $(K|{\underline{c_t}}) ^{\mathcal{M}_{\Gamma,R}}$ to $(K|{\underline{c_{t'}}}) ^{\mathcal{M}_{\Gamma,R'}}$.
\end{proof}

A level-3 tree $R$ is \emph{universal based on $Y$} iff there is % a level-3 tree embedding $\rho$ from $Y$ into $X$ and
$t \in \dom(R)$ such that $\lh( t ) = 1$, $R$ is universal above $t$  and  $ R \res t$ is isomorphic to $Y$. 
Suppose $\Gamma$ is a weakly remarkable level-3 EM blueprint. 
For a level-3 tree $Y$, if $R$ is universal based on $Y$,  $t \in \dom(R)$, $\lh(t ) = 1$,  $R \res t$ is isomorphic to $Y$, put
\begin{displaymath}
  \mathcal{M}^{*}_{\Gamma,Y} =  (K|{\underline{c_t}})^{\mathcal{M}_{\Gamma,R}}.
\end{displaymath}
$\mathcal{M}^{*}_{\Gamma,Y}$ is well-defined up to an isomorphism.
Its wellfounded part is transitivized. By Lemma~\ref{lem:weakly_remarkable_strengthening}, there are cofinally many cardinal strong cutpoints in $\mathcal{M}^{*}_{\Gamma,Y}$. Similarly, for a level $\leq 2$ tree $T$, define 
\begin{displaymath}
  \mathcal{M}^{*,T}_{\Gamma,Y} =  (K|{\underline{c_t}})^{\mathcal{M}_{\Gamma,R}^T}.
\end{displaymath}
Hence, $\mathcal{M}^{*,T}_{\Gamma,Y} = (\mathcal{M}^{*}_{\Gamma,Y})^T$. If $\rho$ factors $(Y,Y')$, $R'$ is universal above $R$, then $\rho^{*,Y'}_{\Gamma} = \rho^{R'}_{\Gamma} \res \mathcal{M}^{*}_{\Gamma,Y}$. If $\rho$ factors $(Y,Y',T)$, $R'$ is universal above $R$, then 
 $\rho^{*,Y',T}_{\Gamma} = \rho^{Y',T}_{\Gamma}  \res \mathcal{M}^{*}_{\Gamma,Y}$.

%Whenever $Y$ is universal based on $X$, $\mathcal{M}^{*}(\Gamma,X) \elem\mathcal{M}^Y_{\Gamma}$.

A $\Pi^1_3$-iterable mouse $\mathcal{P}$ is \emph{full} iff for any strong cutpoint $\eta$ of $\mathcal{P}$, for any $\Pi^1_3$-iterable mouse $\mathcal{Q}$ extending $\mathcal{P}|\eta$ which is sound and projects to $\eta$, $\mathcal{Q} \inisegeq \mathcal{P}$. % If $\mathcal{P}$ is full and satisfies $V=K$, the definitions of $\underline{j^{T,T'}}$ and $\underline{S_3}$ are absolute between $\mathcal{P}_{\infty}$ and $M_{2,\infty}^{-}$. 

\begin{mylemma}
  \label{lem:iterable_implies_mouse_order}
Assume $\boldDelta{2}$-determinacy.   Suppose $\Gamma$ is an iterable, weakly remarkable level-3 EM blueprint.
  \begin{enumerate}
 \item 
   Suppose $Y,Y'$ are  $\Pi^1_3$-wellfounded level-3 trees. Then 
$\llbracket \emptyset \rrbracket_Y= \llbracket \emptyset \rrbracket_{Y'}$ iff $\mathcal{M}_{\Gamma,Y} \sim_{DJ} \mathcal{M}_{\Gamma,Y'}$; $\llbracket \emptyset \rrbracket_Y< \llbracket \emptyset \rrbracket_{Y'}$ iff $\mathcal{M}_{\Gamma,Y} <_{DJ} \mathcal{M}_{\Gamma,Y'}$.
 \item Suppose $Y$ is a $\Pi^1_3$-wellfounded level-3 tree. Then $\mathcal{M}^{*}_{\Gamma,Y}$ is full.  
 \item 
   Suppose $Y,Y'$ are  $\Pi^1_3$-wellfounded level-3 trees. Then $\llbracket \emptyset \rrbracket_Y= \llbracket \emptyset \rrbracket_{Y'}$ iff $\mathcal{M}^{*}_{\Gamma,Y} \sim_{DJ} \mathcal{M}^{*}_{\Gamma,Y'}$; $\llbracket \emptyset \rrbracket_Y< \llbracket \emptyset \rrbracket_{Y'}$ iff $\mathcal{M}^{*}_{\Gamma,Y} <_{DJ} \mathcal{M}^{*}_{\Gamma,Y'}$.
 \end{enumerate}
\end{mylemma}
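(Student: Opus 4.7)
The strategy is to combine the comparison theorem for $\Pi^1_3$-wellfounded level-3 trees (Theorem~\ref{thm:factor_tower_order_type_equivalent}) with the fact that minimal factoring maps induce essential iteration maps at the EM-model level (Lemma~\ref{lem:iterable_level_2_invariance_absolute_in_V}), and then to compare with the internal $K$-structure of an EM model on a universal tree to obtain fullness and the strict direction of the mouse order.

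For part 1, suppose first $\llbracket\emptyset\rrbracket_Y=\llbracket\emptyset\rrbracket_{Y'}$. Apply Theorem~\ref{thm:factor_tower_order_type_equivalent} twice to obtain $\Pi^1_2$-wellfounded $T,T'$ and $\rho,\rho'$ minimally factoring $(Y,Y'\otimes T')$ and $(Y',Y\otimes T)$ respectively. Iterability of $\Gamma$ makes $\mathcal M_{\Gamma,Y},\mathcal M_{\Gamma,Y'}$ $\Pi^1_3$-iterable mice, and Lemma~\ref{lem:iterable_level_2_invariance_absolute_in_V}(1),(2) together with Axiom~\ref{item:EM_commutativity} of Definition~\ref{def:pre_level_3_EM_blueprint} give that $\rho^{Y',T'}_{\Gamma}:\mathcal M_{\Gamma,Y}\to\mathcal M_{\Gamma,Y'}^{T'}$ and $j^{T'}_{\Gamma,Y'}:\mathcal M_{\Gamma,Y'}\to\mathcal M_{\Gamma,Y'}^{T'}$ are essentially iteration maps, so $\mathcal M_{\Gamma,Y}\leq_{DJ}\mathcal M_{\Gamma,Y'}$; the other direction is symmetric, whence $\sim_{DJ}$. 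If instead $\llbracket\emptyset\rrbracket_Y<\llbracket\emptyset\rrbracket_{Y'}$, the furthermore-clause in Theorem~\ref{thm:factor_tower_order_type_equivalent} yields $\mathbf B\in\dom(Y'\otimes T')$ of length $1$ with $\llbracket\emptyset\rrbracket_Y=\llbracket\mathbf B\rrbracket_{Y'\otimes T'}$, so the image of $\rho^{Y',T'}_{\Gamma}$ lies below $c^{T'}_{\Gamma,Y',\mathbf B}$ and is thus absorbed into a proper initial segment of $\mathcal M_{\Gamma,Y'}^{T'}$; combined with the iteration embedding $j^{T'}_{\Gamma,Y'}$ this gives $\mathcal M_{\Gamma,Y}<_{DJ}\mathcal M_{\Gamma,Y'}$.

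For part 2, fix a $\Pi^1_3$-wellfounded level-3 tree $R$ universal based on $Y$ with $t$ the corresponding length-$1$ node, so $\mathcal M^{*}_{\Gamma,Y}=(K|\underline{c_t})^{\mathcal M_{\Gamma,R}}$. By Lemma~\ref{lem:weakly_remarkable_strengthening}(2), $\underline{c_t}$ is inaccessible in $\mathcal M_{\Gamma,R}$ and there are cofinally many cardinal strong cutpoints below it. Let $\eta<\underline{c_t}$ be a strong cutpoint of $\mathcal M^{*}_{\Gamma,Y}$ and $\mathcal Q$ a $\Pi^1_3$-iterable sound mouse extending $\mathcal M^{*}_{\Gamma,Y}|\eta$ with $\rho^{\mathcal Q}_\omega\leq\eta$. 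Coiterate $\mathcal Q$ with $K^{\mathcal M_{\Gamma,R}}$ inside $\mathcal M_{\Gamma,R}[g]$ for $g$ generic over a sufficiently large collapse; since $\mathcal M_{\Gamma,R}\models V=K$ is $\Pi^1_3$-iterable, the universal side of the coiteration is $K^{\mathcal M_{\Gamma,R}}$, and soundness plus $\rho^{\mathcal Q}_\omega\leq\eta<\underline{c_t}$ force $\mathcal Q\trianglelefteq K^{\mathcal M_{\Gamma,R}}|\underline{c_t}=\mathcal M^{*}_{\Gamma,Y}$, as needed.

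For part 3, I will deduce it from parts 1-2 by passing through universal trees. Fix $R,R'$ universal based on $Y,Y'$ with length-$1$ nodes $t,t'$ respectively. If $\llbracket\emptyset\rrbracket_Y=\llbracket\emptyset\rrbracket_{Y'}$, Theorem~\ref{thm:factor_tower_order_type_equivalent} again gives $\Pi^1_2$-wellfounded $T,T'$ and minimal factorings; these restrict (via $\phi^{T'}_{\Gamma,R',t'}$ etc.\ and Lemma~\ref{lem:iterable_level_2_invariance_absolute_in_V}(1) applied inside $\mathcal M^{*}_{\Gamma,Y'}$) to essential iteration maps between $\mathcal M^{*}_{\Gamma,Y}$ and $(\mathcal M^{*}_{\Gamma,Y'})^{T'}$, hence $\sim_{DJ}$ by symmetry and Dodd-Jensen. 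If $\llbracket\emptyset\rrbracket_Y<\llbracket\emptyset\rrbracket_{Y'}$, use the length-$1$ node $\mathbf B\in\dom(Y'\otimes T')$ with $\llbracket\mathbf B\rrbracket_{Y'\otimes T'}=\llbracket\emptyset\rrbracket_Y$: by coherency and Lemma~\ref{lem:universal_agree_on_EM_model} the corresponding initial segment of $(\mathcal M^{*}_{\Gamma,Y'})^{T'}$ below $c^{T'}_{\Gamma,Y',\mathbf B}$ is isomorphic to $\mathcal M^{*}_{\Gamma,Y}$, so $\mathcal M^{*}_{\Gamma,Y}<_{DJ}(\mathcal M^{*}_{\Gamma,Y'})^{T'}\sim_{DJ}\mathcal M^{*}_{\Gamma,Y'}$, using part 2 to invoke Dodd-Jensen on full mice.

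The main obstacle I anticipate is the passage from the full EM model $\mathcal M_{\Gamma,Y}$ to its restricted version $\mathcal M^{*}_{\Gamma,Y}=(K|\underline{c_t})^{\mathcal M_{\Gamma,R}}$ in part 3: one must verify that the factoring maps produced by the comparison theorem, which are naturally defined on $\mathcal M_{\Gamma,Y}$, can be read off inside $\mathcal M^{*}_{\Gamma,Y}$ and still qualify as essentially iteration maps between full countable $\Pi^1_3$-iterable mice. Fullness (part 2) is what licences this Dodd-Jensen argument on the $\mathcal M^{*}$ level, so parts 2 and 3 must be done in tandem, with part 2 strictly preceding the appeal to Dodd-Jensen in part 3.
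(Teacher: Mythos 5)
Your proof of part 1 follows the paper's route essentially verbatim: comparison via Theorem~\ref{thm:factor_tower_order_type_equivalent}, Lemma~\ref{lem:iterable_level_2_invariance_absolute_in_V} to turn minimal factorings into essential iteration maps, and the ``furthermore'' clause plus unboundedness to get strictness.

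The genuine gap is in part 2. You coiterate $\mathcal{Q}$ with $K^{\mathcal{M}_{\Gamma,R}}$ ``inside $\mathcal{M}_{\Gamma,R}[g]$'' and assert that since $\mathcal{M}_{\Gamma,R}\models V=K$, the $K$-side is the universal side. This does not work. First, $\mathcal{M}_{\Gamma,R}[g]$ is only known to be $\Sigma^1_3$-correct (via closure under the $M_1^{\#}$-operator and genericity iterations), whereas ``$\mathcal{Q}$ is $\Pi^1_3$-iterable'' is $\Pi^1_4$, so $\mathcal{M}_{\Gamma,R}[g]$ need not recognize $\mathcal{Q}$ as an iterable mouse and cannot run the coiteration internally. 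Second, and more fundamentally, even doing the coiteration in $V$, the slogan ``a model of $V=K$ has universal $K$'' is an internal statement; from the outside, $\mathcal{M}_{\Gamma,R}$ is just one countable $\Pi^1_3$-iterable mouse and there is a priori no reason why $\mathcal{Q}$ could not be $>_{DJ}\mathcal{M}_{\Gamma,R}$. The paper closes exactly this hole by first choosing $Y'$ universal based on $Y$ with $\wocode{\mathcal{P}}_{DJ}<\llbracket\emptyset\rrbracket_{Y'}$ and invoking part~1 to conclude $\mathcal{P}<_{DJ}\mathcal{M}_{\Gamma,Y'}$; only then does the comparison (done in $V$, above the strong cutpoint $\eta$) give $\mathcal{P}\iniseg\mathcal{M}_{\Gamma,Y'}$ and hence $\mathcal{P}\iniseg\mathcal{M}^{*}_{\Gamma,Y}$. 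So part~1 is doing quantitative work in part~2 that your argument silently skips: you need part~1 not just for linearity of the $DJ$ order on EM models, but to find, for each competitor $\mathcal{Q}$, a specific EM model on a universal tree that dominates it. Your anticipated ``main obstacle'' in part~3 is also better resolved by this route, since the paper deduces part~3 in one line from parts~1--2 and remarkability, with fullness (part~2) being what licenses Dodd-Jensen on the $\mathcal{M}^{*}$ level as you correctly guess.
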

\begin{proof}
1. If $\llbracket \emptyset \rrbracket_Y \leq \llbracket \emptyset \rrbracket_{Y'}$, by Theorem~\ref{thm:factor_tower_order_type_equivalent}, there exist a $\Pi^1_3$-wellfounded $Z$ and $\rho$ minimally factoring $(Y,Z)$, $\rho'$ minimally factoring $(Y',Z)$ so that $\llbracket \emptyset \rrbracket_Y = \llbracket \emptyset \rrbracket_Z$. By Lemma~\ref{lem:iterable_level_2_invariance_absolute_in_V}, $\mathcal{M}_{\Gamma,Y}  \leq_{DJ}  \mathcal{M}_{\Gamma,Z} \sim_{DJ} \mathcal{M}_{\Gamma,Y'}$.

If $\llbracket \emptyset \rrbracket_Y < \llbracket \emptyset \rrbracket_{Y'}$, we further obtain $t \in \dom(R)$ so that $\lh(t) = 1$ and $\llbracket \emptyset \rrbracket_Y = \llbracket t \rrbracket_Z$. By unboundedness of $\Gamma$,  $\ran(\rho_{\Gamma}^{Y',T}) \subseteq \mathcal{M}_{\Gamma,Z} | c_{\Gamma,Z,t}. $
Hence,  $\rho_{\Gamma}^{Y,T}$ is  $\Sigma_1$-elementary from $\mathcal{M}_{\Gamma,Y}$ into $\mathcal{M}_{\Gamma,Z} | c_{\Gamma,Z,t}$. Hence $\mathcal{M}_{\Gamma,Y} <_{DJ}\mathcal{M}_{\Gamma,Z}$.

2. 
Recall that there are cofinally many cardinal strong cutpoints in $\mathcal{M}^{*}_{\Gamma,Y}$. 
  Suppose $\eta$ is a strong cutpoint of $\mathcal{M}^{*}_{\Gamma,Y}$ and $\mathcal{M}^{*}_{\Gamma,Y}|\eta \iniseg \mathcal{P}$, $\mathcal{P}$ is a sound $\Pi^1_3$-iterable mouse,  $\rho_{\omega}(\mathcal{P}) \leq\eta$. Let $Y'$ be a $\Pi^1_3$-wellfounded level-3 tree such that $\wocode{\mathcal{P}}_{DJ} < \llbracket \emptyset \rrbracket_{Y'}$ and $Y'$ is universal based on $Y$. By part 2, $\mathcal{P} <_{DJ} \mathcal{M}_{\Gamma,Y'}$. Since $\mathcal{M}^{*}_{\Gamma,Y}|\eta \iniseg \mathcal{M}_{\Gamma,Y'}$ and $\eta$ is a strong cutpoint of $\mathcal{M}_{\Gamma,Y'}$, the comparison between $\mathcal{P}$ and $\mathcal{M}_{\Gamma,Y'}$ is above $\eta$. It follows that $\mathcal{P} \iniseg \mathcal{M}_{\Gamma,Y'}$. Hence $\mathcal{P} \iniseg \mathcal{M}^{*}_{\Gamma,Y}$.

3. By parts 1-2 and remarkability of $\Gamma$. 
\end{proof}

Assume $\boldDelta{2}$-determinacy. 
Suppose $\Gamma$ is an iterable, weakly remarkable level-3 EM blueprint.  Suppose $Y$ is a $\Pi^1_3$-wellfounded level-3 tree. 

For $s \in \dom(Y)$, let
\begin{displaymath}
  c^{*}_{\Gamma,Y,s} = \underline{c_s}^{\mathcal{M}^{*}_{\Gamma,Y}}
\end{displaymath}
and 
\begin{displaymath}
c_{\Gamma,Y,s , \infty} = \pi_{\mathcal{M}^{*}_{\Gamma,Y},\infty} (   c^{*}_{\Gamma,Y,s} ) .
\end{displaymath}
In fact, $c_{\Gamma,Y,s,\infty}$ depends only on $(\llbracket s \rrbracket_Y, Y_{\tree}(s))$, shown as follows.
Suppose $Y'$ is another $\Pi^1_3$-wellfounded level-3 tree and  $(\llbracket s \rrbracket_Y, Y_{\tree}(s)) = (\llbracket s'  \rrbracket_{Y'}, Y'_{\tree}(s'))$. 
 By Lemma~\ref{lem:unique_level_3_tree_represent}, $Y[s] = Y'[s']$.
% $Y_{\tree}(s) = Y'_{\tree}(s')$. 
By Theorem~\ref{thm:factor_tower_order_type_equivalent}, we can find $\Pi^1_3$-wellfounded  level-3 trees $R,R'$ which are universal based on $Y,Y'$ respectively, a $\Pi^1_3$-wellfounded $Z$ and $\rho$ minimally factoring $(R,Z)$, $\rho'$ minimally factoring $(R',Z)$. In particular, $\rho(s) = \rho'(s')$. By Lemma~\ref{lem:iterable_level_2_invariance_absolute_in_V},  $\rho^Z_{\Gamma}: \mathcal{M}_{\Gamma,R} \to \mathcal{M}_{\Gamma,Z}$ is essentially an iteration map, sending $c^{*}_{\Gamma,Y,s}$ to $c_{{\Gamma,Z, \rho(s)}}$, and similarly on the $\rho'$-side. Hence $c_{\Gamma,Y,s,\infty} = c_{\Gamma,Y',s',\infty}$.  We can safely define
\begin{displaymath}
  c_{\Gamma,Q, \gamma} = c_{\Gamma,Y,s,\infty}
\end{displaymath}
for $Y_{\tree}(s) = Q$ and $\gamma = \llbracket s  \rrbracket_Y$.

If $(Q,\overrightarrow{(d,q,P)}) = (Q,(d_i,q_i, P_i)_{1 \leq i \leq k})$ is a potential partial level $\leq 2$ tower, let $F \in B^{(Q,\overrightarrow{(d,q,P)}) \uparrow}$ iff $F : [\omega_1]^{Q \uparrow} \to B$ is an order preserving function and
\begin{enumerate}
\item if $(Q,\overrightarrow{(d,q,P)}) $ is of continuous type, then the signature of  $F$ is  $(d_i,q_i)_{1 \leq i \leq  k}$, $F$ is essentially continuous;
\item if $(Q,\overrightarrow{(d,q,P)}) $ is of discontinuous type, then the signature of  $F$ is  $(d_i,q_i)_{1 \leq i <  k}$, $F$ is essentially discontinuous, $F$ has uniform cofinality $\ucf(Q,\overrightarrow{(d,q,P)})$.
\end{enumerate}
Let $\gamma \in [B]^{(Q,\overrightarrow{(d,q,P)}) \uparrow}$ iff $\gamma  = [F]_{\mu^Q}$ for some $F \in B^{(Q,\overrightarrow{(d,q,P)})\uparrow}$. 
${\gamma}$ is said to \emph{respect} $(Q,\overrightarrow{(d,q,P)})$ iff $\gamma \in (\bolddelta{3})^{(Q,\overrightarrow{(d,q,P)}) \uparrow}$. $\gamma$ is said to \emph{respect} $Q$ if $\gamma$ respects some potential partial level $\leq 2$ tower $(Q, \overrightarrow{(d',q',P')})$. 
% $\vec{\gamma}$ is the approximation sequence of some $F \in (\bolddelta{3})^{(Q,\overrightarrow{(d,q,P)})\uparrow}$.
By Lemma~\ref{lem:order_type_realizable}, $\gamma$ respects $Q$ iff there is a $\Pi^1_3$-wellfounded $Y$ and $s$ such that $Y_{\tree}(s) = Q$ and $\gamma = \llbracket s \rrbracket_Y$. Hence, $c_{\Gamma,Q, \gamma}$ is defined whenever $\gamma$ respects $Q$ and the map $\gamma \mapsto c_{\Gamma,Q,\gamma}$ is order preserving. 
Define
\begin{displaymath}
  c_{\Gamma,\gamma} = c_{\Gamma,Q^0, \gamma} .
\end{displaymath}
% That is, if $\gamma = \llbracket (a) \rrbracket_Y$, $a \in \omega^{<\omega}$, then $c_{\gamma}(\Gamma) = c_{(a),\infty}(\Gamma,Y)$.
$c_{\Gamma,\gamma}$ is defined whenever $\gamma< \bolddelta{3}$ is a limit ordinal. 
% For a similar reason, using Lemma~\ref{lem:gamma_r_order}, if $\gamma<\gamma'$ both respect $Q$, then $c_{\Gamma,Q,\gamma} < c_{\Gamma,Q,\gamma'}$.
% By Lemma~\ref{lem:order_type_realizable}, Axiom... in Definition~\ref{def:pre_level_3_EM_blueprint} and weak remarkability, the map $\gamma \mapsto c_{\Gamma,\gamma}$ is continuous at points of $\admistwobold$-cofinality $\omega$.
Remarkability will ensure that the map $\gamma \mapsto c_{\Gamma,\gamma}$ is continuous. Assuming $\boldDelta{3}$-determinacy, define
\begin{align*}
  c_{Q,\gamma}^{(3)} & = c_{0^{3\#}, Q, \gamma}, \\
  c_{\gamma}^{(3)} &= c_{0^{3\#}, \gamma}, \\
I^{(3)} & = \set{c^{(3)}_{Q, \gamma}}{\gamma \text{ respects }Q}. 
\end{align*}
$I^{(3)}$ is the higher analog of Silver indiscernibles for $L$. 

\begin{mylemma}
  \label{lem:3sharp_indiscernible_continuous}
  Assume $\boldpi{3}$-determinacy. Then there is a club $C \subseteq \bolddelta{3}$ such that $C \in L[T_3,0^{3\#}]$ and for any potential partial level $\leq 2$ tree $(Q,\overrightarrow{(d,q,P)})$, for any $\gamma \in [C]^{(Q, \overrightarrow{(d,q,P)})\uparrow}$,
  \begin{displaymath}
    \gamma= c_{Q, \gamma}^{(3)}. 
  \end{displaymath}
\end{mylemma}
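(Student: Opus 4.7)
The plan is to define the club $C$ in terms of closure points of Skolem-hull operations in $M^{-}_{2,\infty}$, using the iteration-limit indiscernibles $c^{(3)}_{Q',\delta}$ and $T_3$ as parameters; both are definable in $L[T_3, 0^{3\#}]$ via Theorem~\ref{thm:steel} and the discussion preceding the statement, so $C$ will live in the desired inner model. Concretely, define $H(\alpha)$ to be the $\Sigma_1^{\mathcal{L}}$-Skolem hull in $M^{-}_{2,\infty}$ of
\begin{displaymath}
\{\, c^{(3)}_{Q',\delta} : Q' \text{ a finite level $\leq 2$ tree},\ \delta < \alpha \text{ and } \delta \text{ respects }Q' \,\} \cup \{T_3\},
\end{displaymath}
and set $C = \{\gamma < \bolddelta{3} : H(\gamma) \cap \bolddelta{3} = \gamma\}$. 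That $C$ is a club follows from the regularity of $\bolddelta{3}$ under $\boldpi{3}$-determinacy (Jackson), the countability of the collection of finite level $\leq 2$ trees, and the order preservation of $(Q',\delta)\mapsto c^{(3)}_{Q',\delta}$ built into Lemma~\ref{lem:iterable_implies_mouse_order}; since all ingredients are definable in $L[T_3, 0^{3\#}]$, so is $C$.

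For the main equality, fix $\gamma \in [C]^{(Q,\overrightarrow{(d,q,P)})\uparrow}$ and, by Lemma~\ref{lem:order_type_realizable}, choose a $\Pi^1_3$-wellfounded level-3 tree $Y$ and $s\in \dom(Y)$ with $Y_{\tree}(s)=Q$, $Y[s]=(Q,\overrightarrow{(d,q,P)})$, and $\llbracket s \rrbracket_Y=\gamma$, so that $c^{(3)}_{Q,\gamma}=\pi_{\mathcal{M}^{*}_{0^{3\#},Y},\infty}(c^{*}_{0^{3\#},Y,s})$. The direction $c^{(3)}_{Q,\gamma}\geq \gamma$ is handled first: given any $\delta<\gamma$, apply Lemma~\ref{lem:level_3_amalgamation} to amalgamate $Y$ with an extra node $s^{*}$ placed below $s$ (in the $<^Y$-order) such that $Y'_{\tree}(s^{*})=Q^0$ and $\llbracket s^{*}\rrbracket_{Y'}=\delta^{*}$ for some prescribed $\delta^{*}\in C$ with $\delta\leq \delta^{*}<\gamma$; then order preservation of the EM indiscernibles (Axiom~\ref{item:EM_order} of Definition~\ref{def:pre_level_3_EM_blueprint}) together with the transfinite induction on $C$ (which supplies $c^{(3)}_{\delta^{*}}=\delta^{*}$) yields $\delta\leq \delta^{*}=c^{(3)}_{\delta^{*}}<c^{(3)}_{Q,\gamma}$.

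The $\leq$ direction is the heart of the matter and relies on weak remarkability (Lemma~\ref{lem:3sharp_is_weakly_remarkable}) and its strengthening Lemma~\ref{lem:weakly_remarkable_strengthening}. The element $c^{*}_{0^{3\#},Y,s}$ is the interpretation of the constant $\underline{c_s}$ in $\mathcal{M}^{*}_{0^{3\#},Y}$, and weak remarkability allows one to express the $\pi_\infty$-image $\pi_{\mathcal{M}^{*}_{0^{3\#},Y},\infty}(c^{*}_{0^{3\#},Y,s})$ as a Skolem-term evaluation in $M^{-}_{2,\infty}$ with parameters drawn only from $\{c^{(3)}_{Y_{\tree}(t),\llbracket t\rrbracket_Y}: t<^Y s\}$ together with $T_3$. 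The mechanism is: any Skolem term computing $\pi_\infty(c^{*}_{0^{3\#},Y,s})$ as a function of indiscernibles $\underline{c_s},\underline{c_t}$ can be replaced, by weak remarkability applied to suitable shifts of $\underline{c_s}$ produced in amalgamated trees, by an equivalent term using only indiscernibles for nodes $t<^Y s$, since $\pi_\infty$ as an iteration map commutes with Skolem-term evaluation and sends each $c^{*}_{0^{3\#},Y,t}$ to $c^{(3)}_{Y_{\tree}(t),\llbracket t\rrbracket_Y}$; as all such $\llbracket t\rrbracket_Y <\gamma$, closure of $C$ forces $c^{(3)}_{Q,\gamma}\in H(\gamma)\cap \bolddelta{3}=\gamma$. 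Combined with the previous inequality this gives $c^{(3)}_{Q,\gamma}=\gamma$.

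The main obstacle is the $\leq$ direction: specifically, justifying that $c^{*}_{0^{3\#},Y,s}$, which is literally an indiscernible constant and hence \emph{not} in the EM-model's hull of lower indiscernibles, nonetheless has $\pi_\infty$-image lying in the external Skolem hull $H(\gamma)$. This requires a careful use of weak remarkability to ``shift $s$ above'' and absorb the top constant into a term involving only $t<^Y s$, which must be performed inside suitable amalgamations of $Y$ so that coherency of $0^{3\#}$ transfers the identity back to the original setting. Tracking how the $\otimes$-structure and $\pi_\infty$ interact with these shifts is the delicate point.
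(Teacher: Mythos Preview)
Your approach is quite different from the paper's, and the $\leq$ direction has a genuine gap that you yourself flag in the final paragraph but do not actually fill.

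The paper's proof is a two-line argument that bypasses the difficulty entirely. It takes a firm club $D$ of potential level-3 indiscernibles for $M_{2,\infty}^{-}$ (constructed in \cite{sharpII}) and sets $C = \{\eta \in D : \ot(D\cap\eta)=\eta\}$. The point of $D$ being firm is that for any $\vec{\gamma}\in[D]^{Y\uparrow}$, the assignment $c^{*}_{0^{3\#},Y,r}\mapsto\gamma_r$ is an elementary embedding of $\mathcal{M}^{*}_{0^{3\#},Y}$ into $M_{2,\infty}^{-}$ (this is how $0^{3\#}(Y)$ was defined in the first place). Since $M_{2,\infty}^{-}$ is itself the direct limit, this embedding \emph{is} $\pi_{\mathcal{M}^{*}_{0^{3\#},Y},\infty}$ followed by the identity, so $c^{(3)}_{Q,\gamma}=\pi_{\mathcal{M}^{*}_{0^{3\#},Y},\infty}(c^{*}_{0^{3\#},Y,s})=\gamma_s=\gamma$ directly. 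Thinning $D$ to $C$ just guarantees that any $\gamma\in[C]^{(Q,\overrightarrow{(d,q,P)})\uparrow}$ can be extended to a full tuple in $[D]^{Y\uparrow}$.

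Your $\leq$ argument, by contrast, asks weak remarkability to do something it cannot. Weak remarkability (Lemma~\ref{lem:3sharp_is_weakly_remarkable} and its strengthening) says: if a Skolem term evaluated at the indiscernibles lands \emph{below} $\underline{c_t}$, then you may shift the indiscernibles $\geq_0 t$ without changing the value. But the element in question is $\underline{c_s}$ itself, which is not below $\underline{c_s}$; shifting $s$ ``above'' replaces $\underline{c_s}$ by some $\underline{c_{s'}}$ with $s'$ higher, not by anything built from lower indiscernibles. So there is no term in the generators of $H(\gamma)$ that weak remarkability hands you for $c^{(3)}_{Q,\gamma}$. What you would actually need to close your argument is continuity of $\delta\mapsto c^{(3)}_{\delta}$, and that is precisely the content of the next lemma (Lemma~\ref{lem:EM_remarkability}), which in the paper's logical order is proved \emph{using} the present lemma. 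Your route is therefore circular unless you supply an independent source of indiscernibles---which is exactly the role of the firm club $D$.
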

\begin{proof}
  Let $D$ be a firm set of potential level-3 indiscernibles for $M_{2,\infty}^{-}$ and let $\eta \in C$ iff $\eta \in D$ and $D \cap \eta$ has order type $\eta$. $C$ works for the lemma.
\end{proof}

 Recall  Definition~\ref{def:typical_level_3_tree} for the definition of $R^{d}$. An ordinal $\alpha<\omega_1$ is \emph{$\omega_1$-represented by $T$} iff $(1,(0)) \in \dom(T)$ and $\llbracket 1,(0) \rrbracket_T = \alpha$. $\alpha < u_2$ is \emph{$u_2$-represented by $T$} iff $(2, ((0))) \in \dom(T)$ and $\llbracket 2, ((0)) \rrbracket_T = \alpha$. 

\begin{mydefinition}[Remarkability]
  \label{def:EM_remarkability}
 A weakly remarkable level-3 EM blueprint $\Gamma$ is \emph{remarkable} iff 
\begin{enumerate}
\item $\Gamma(R^{0})$ contains the axiom ``$\underline{c_{((0))}}$ is not measurable''.
  \item  $\Gamma(R^{1})$ contains the following axiom:  
      if $\xi$ is a cardinal and strong cutpoint, $c = \underline{c_{((0))}}$, $b = (\underline{\phi^{Q^1}_{1 , c}})^{-1}(c)$, then $V^{\coll(\omega,\xi)}$ satisfies the following: 
     \begin{enumerate}
\item  If $\alpha$ is $\omega_1$-represented by both $T$ and $T'$, then 
$(  (\underline{j^T_{1,c}})^K(K|\xi) , ( (\underline{j^{Q^1,T}_{1,c}})^K (b)  ) \sim_{DJ} (  (\underline{j^{T'}_{1,c}})^K(K|\xi) , ( (\underline{j^{Q^1,T'}_{1,c}})^K (b)  ) $. 
Here
 $(\underline{j^U_{1,c}})^K$ stands for the direct limit of $(\underline{j^{Z,Z'}_{1,c}})^K$ for $Z,Z'$ finite subtrees of $U$, $Z$ a finite subtree of $Z'$, and $(\underline{j^{Q^1,U}_{1,c}})^K$ stands for the tail of the direct limit map from $(\underline{j^{Q^1}_{1,c}})^K (K)$ to $(\underline{j^U_{1,c}})^K (K)$.
\item 
 Let $F({\alpha}) = \pi_{(\underline{j^T_{1,c}})^K(K|\xi), \infty}( ( \underline{j^{Q^1,T}_{1,c}})^K  (b)) $ for ${\alpha}$ represented by $T$. Then $\sup_{\alpha < \omega_1} F(\alpha) = \pi_{K|\xi , \infty }(c)$.   
 \end{enumerate}
 \item  $\Gamma(R^{2})$ contains the following axiom:  
      if $\xi$ is a cardinal and strong cutpoint, $e \in \se{0,1}$, $c = \underline{c_{((0))}}$, $b = (\underline{\phi^{Q^{2e}}_{1 , c}})^{-1}(c)$, then $V^{\coll(\omega,\xi)}$ satisfies the following: 
     \begin{enumerate}
\item  If $\alpha$ is $u_2$-represented by both $T$ and $T'$, then 
$(  (\underline{j^T_{2,c}})^K(K|\xi) , ( (\underline{j^{Q^{2e},T}_{2,c}})^K (b)  ) \sim_{DJ} (  (\underline{j^{T'}_{2,c}})^K(K|\xi) , ( (\underline{j^{Q^{2e},T'}_{2,c}})^K (b)  ) $. 
Here
 $(\underline{j^U_{2,c}})^K$ stands for the direct limit of $(\underline{j^{Z,Z'}_{2,c}})^K$ for $Z,Z'$ finite subtrees of $U$, $Z$ a finite subtree of $Z'$, and $(\underline{j^{Q^{2e},U}_{2,c}})^K$ stands for the tail of the direct limit map from $(\underline{j^{Q^{2e}}_{2,c}})^K (K)$ to $(\underline{j^U_{2,c}})^K (K)$.
\item 
 Let $F({\alpha}) = \pi_{(\underline{j^T_{2,c}})^K(K|\xi), \infty}( ( \underline{j^{Q^{2e},T}_{2,c}})^K  (b)) $ for ${\alpha}$ represented by $T$. Then $\sup_{\alpha < u_2} F(\alpha) = \pi_{K|\xi , \infty }(c)$.   
 \end{enumerate}
  \end{enumerate}
\end{mydefinition}

In the next lemma, we denote $\mathbf{y}^1 =( ((0) ,-1) , Q^1, ((1, (0), \emptyset))   ) \in \desc(R^{1}) $, $\mathbf{B}^1 = (\mathbf{y}^1, \id_{Q^1,*}) \in \desc(R^{1}, Q^1, Q^0)$,  $\mathbf{y}^{2e} = (((0),-1), Q^{2e}, ((2, ((0)), \se{(0)}))) \in \desc(R^{2})$, $\mathbf{B}^{2e} = (\mathbf{y}^{2e}, \id_{Q^{2e}, *}) \in \desc(R^{2}, Q^{2e}, Q^0)$ for $e \in \se{0,1}$.
Note that if $\Gamma$ is a level-3 EM blueprint then $\Gamma(R^{1})$ contains the axiom
\begin{displaymath}
  \underline{c_{((0))}} = \underline{c_{\mathbf{y}^1}} = \underline{c_{\mathbf{B}^1}^{Q^1}}
\end{displaymath}
and $\Gamma(R^{2})$ contains the axiom
\begin{displaymath}
  \underline{c_{((0))}} = \underline{ c_{\mathbf{y}^{20}} } = \underline{c_{\mathbf{y}^{21}}} = \underline{c_{\mathbf{B}^{20}}^{Q^{20}}} = \underline{c_{\mathbf{B}^{21}}^{Q^{21}}}.
\end{displaymath}

\begin{mylemma}
  \label{lem:EM_model_ultrapower_equal_normal}
  Suppose $\Gamma$ is a level-3 EM blueprint. Suppose
 $d \in \se{1,2}$,   $T$ is a finite level $\leq 2$ tree. 
% , $y \in \dom(Y)$, $\mathbf{y} = (y,X, \overrightarrow{(e,x,W)}) \in \desc(Y)$, $\mathbf{B} = (\mathbf{y} , \id_{X, *} )\in \desc(Y,X,Q^0)$. 
%  Then
  \begin{enumerate}
  \item $ \mathcal{M}_{\Gamma,R^{d}, * }^T = \mathcal{M}_{\Gamma, R^{d} \otimes T}$, $ \phi^T_{\Gamma,R^{d},*} = (\id_{R^{d} \otimes T})^{R^{d},T}_{\Gamma} $.
  \item If $Q^1$ is a subtree of $T$, then $j^{Q^1,T}_{\Gamma, R^{1}} \circ ( \phi^{Q^1}_{\Gamma, R^{1}, *})^{-1}(c_{\Gamma, R^{1}, ((0)) }) =  c_{\Gamma, R^{1} \otimes T, \mathbf{B}^1} $.
  \item For $e \in \se{0,1}$, if $Q^{2e}$ is a subtree of $T$, then $j^{Q^{2e},T}_{\Gamma, R^{2}} \circ ( \phi^{Q^{2e}}_{\Gamma, R^{2}, *})^{-1}(c_{\Gamma, R^{2}, ((0)) }) =  c_{\Gamma, R^{2} \otimes T, \mathbf{B}^{2e}} $.
  \end{enumerate}
\end{mylemma}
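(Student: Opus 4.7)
All three clauses follow from coherency of $\Gamma$ applied to the canonical factoring $\id_{R^d \otimes T}$, which factors $(R^d \otimes T, R^d, T)$. Coherency supplies an $\mathcal{L}$-elementary embedding
$$\rho := (\id_{R^d \otimes T})^{R^d, T}_{\Gamma} \colon \mathcal{M}_{\Gamma, R^d \otimes T} \longrightarrow \mathcal{M}^T_{\Gamma, R^d}$$
sending each generator $\underline{c_{\mathbf{B}}}^{\mathcal{M}_{\Gamma, R^d \otimes T}}$ to $c^T_{\Gamma, R^d, \mathbf{B}}$, for $\mathbf{B} \in \dom(R^d \otimes T) = \desc(R^d, T, *) \setminus \{\text{constant}\}$. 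For part 1, I would verify that the image $\ran(\rho)$ coincides with the $\mathcal{L}$-Skolem hull of $B^T_{d, c} \cup \ran(j^T_{\Gamma, R^d})$ in $\mathcal{M}^T_{\Gamma, R^d}$, where $c = c_{\Gamma, R^d, ((0))}$. The transitive collapses would then match $\mathcal{M}_{\Gamma, R^d \otimes T}$ with $\mathcal{M}^T_{\Gamma, R^d, *}$ and identify $\rho$ with $\phi^T_{\Gamma, R^d, *}$, giving $\phi^T_{\Gamma, R^d, *} = (\id_{R^d \otimes T})^{R^d, T}_{\Gamma}$.

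The computation driving the claim is a case analysis on $\desc(R^d)$. Since $\dom(R^d) = \{((0))\}$, the only $R^d$-descriptions are (a) the discontinuous-type $\mathbf{y}$ with tree component $Q^0$, and (b) for $d \in \{1,2\}$, the continuous-type $\mathbf{y}$'s whose trees are the completions of $R^d(((0)))$, namely $Q^1$ for $d=1$ and $Q^{20}, Q^{21}$ for $d=2$. Thus each $\mathbf{B} = (\mathbf{y}, \pi') \in \dom(R^d \otimes T)$ is given by one of these $\mathbf{y}$'s together with $\pi'$ factoring the tree of $\mathbf{y}$ into $(T, U)$ for some $U$---precisely the data in the definition of $B^T_{d, c}$. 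Unwinding $c^T_{\Gamma, R^d, \mathbf{B}} = (\pi')^{T, U}_{\Gamma, R^d}(c_{\Gamma, R^d, \mathbf{y}})$ via Axiom~\ref{item:EM_commutativity}(c,d) (in particular $\pi'^{T,U} = \pi'^{T \otimes U}$ and $\pi'^{T \otimes U} \circ j^{Q^+} = j^{T \otimes U}$) and Lemma~\ref{lem:gamma_r_continuous_type} (giving $c_{\Gamma, R^d, \mathbf{y}} = j^{Q^0, Q^+}_{\sup, \Gamma, R^d}(c)$ in the continuous case), each generator is shown to lie in the target hull; conversely, each $\pi^{T \otimes Q}(c) \in B^T_{X, c}$ with $X \in \{Q^0, Q^1, Q^{20}, Q^{21}\}$ is exhibited as a Skolem-term expression in the $c^T_{\Gamma, R^d, \mathbf{B}}$'s, and the values $j^T_{\Gamma, R^d}(\tau(c)) = \tau(j^T(c))$ are handled by the discontinuous $\mathbf{y}$ with $\pi' = \id_{Q^0}$.

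For parts 2 and 3, I would specialise part 1 to $T = Q^1$ (resp.\ $T = Q^{2e}$) and then apply functoriality of $\otimes$. Under the identification from part 1, the image $(\phi^{Q^1}_{\Gamma, R^1, *})^{-1}(c)$ in $\mathcal{M}_{\Gamma, R^1 \otimes Q^1}$ is exactly the indiscernible $c_{\Gamma, R^1 \otimes Q^1, \mathbf{B}^1}$: this is the content of the axiom $\underline{c_{((0))}} = \underline{c_{\mathbf{B}^1}^{Q^1}}$ noted immediately before the lemma, which in turn is the matching identification coming from the $\supseteq$-direction of part 1 at $T = Q^1$ and $\pi' = \id_{Q^1,*}$. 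Applying $j^{Q^1, T}_{\Gamma, R^1}$ then corresponds, on the side of the EM model over $R^1 \otimes T$, to the tail of the direct-limit map induced by the inclusion of level-3 trees $R^1 \otimes Q^1 \hookrightarrow R^1 \otimes T$; it therefore sends $c_{\Gamma, R^1 \otimes Q^1, \mathbf{B}^1}$ to $c_{\Gamma, R^1 \otimes T, \mathbf{B}^1}$. Part 3 is identical with $Q^1, \mathbf{B}^1, \mathbf{y}^1$ replaced throughout by $Q^{2e}, \mathbf{B}^{2e}, \mathbf{y}^{2e}$ for $e \in \{0, 1\}$.

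\textbf{Main obstacle.} The delicate point is the continuous-type case in the verification of part 1: $(\pi')^{T \otimes U}$ may be continuous or discontinuous at $j^{Q^+}_{\sup}(c)$ depending on how $\pi'$ embeds the completion data of $R^d(((0)))$ into $T \otimes U$, so one must appeal to Lemma~\ref{lem:level_2_factoring_continuity} together with the cofinality of $c$ (which equals $u_d$ by Theorem~\ref{thm:jQ_LT2_regularity} together with the remarkability of $\Gamma$) to see that the resulting jumps match the separation between the discontinuous and continuous descriptions of $R^d$ enumerated in $\dom(R^d \otimes T)$. The pre-lemma identities $\underline{c_{((0))}} = \underline{c_{\mathbf{y}^d}}$ package exactly this cofinality check and make the correspondence between the two sides bijective rather than merely up to a Skolem-definable adjustment.
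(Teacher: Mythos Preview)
Your plan is essentially the paper's own: show $\ran\big((\id_{R^d\otimes T})^{R^d,T}_\Gamma\big)=\ran(\phi^T_{\Gamma,R^d,*})$, i.e.\ match the Skolem hull of $B^T_{d,c}\cup\ran(j^T_{\Gamma,R^d})$ with the image of the generators of $\mathcal{M}_{\Gamma,R^d\otimes T}$, then read off parts 2--3 by specialising. The $\supseteq$ direction is indeed ``by definition'', and your description of the two inclusion directions is accurate.

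Where you diverge from the paper is in the $\subseteq$ direction and in your ``main obstacle''. The paper does \emph{not} do a continuity analysis. Instead, given $\pi$ factoring $(X,T\otimes Q)$ with $X\in\{Q^0,Q^{2e}\}$, it forms $\pi'=\id_{T\otimes Q,*}\circ\pi$ so that $(\mathbf{y},\pi')\in\dom\big(R^d\otimes(T\otimes Q)\big)$ (where $\mathbf{y}$ is the discontinuous description for $X=Q^0$ and $\mathbf{y}^{2e}$ for $X=Q^{2e}$), then invokes the associativity isomorphism $\iota_{R^d,T,Q}$ to write
\[
\big(\underline{\pi^{T\otimes Q}}(\underline{c_{((0))}})\big)^{\mathcal{M}_{\Gamma,R^d}}
=c_{\Gamma,R^d,(\mathbf{y},\pi')}
=\rho_\Gamma^{R^d,T}\big(c_{\Gamma,R^d\otimes T,\;\iota_{R^d,T,Q}^{-1}(\mathbf{y},\pi')}\big),
\]
placing each element of $B^T_{d,c}$ directly into $\ran(\rho)$. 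The only external input is the pre-lemma identity $\underline{c_{((0))}}=\underline{c_{\mathbf{y}^{2e}}}$, which is a purely syntactic consequence of Axiom~\ref{item:EM_order} (via $\sim$ on extended descriptions). Your proposed route through Lemma~\ref{lem:level_2_factoring_continuity}, Theorem~\ref{thm:jQ_LT2_regularity}, and the ``remarkability of $\Gamma$'' is both unnecessary and unavailable: the lemma assumes only that $\Gamma$ is a level-3 EM blueprint, not that it is remarkable, and Theorem~\ref{thm:jQ_LT2_regularity} speaks about genuine cofinalities in $\admistwoboldextra{j^Q}$, not about an abstract EM model. So your obstacle is illusory; the associativity bookkeeping dissolves it.
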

\begin{proof}
1. Put $Y = R^{d}$,  $c = c_{\Gamma, Y, ((0))}$, $R = Y \otimes  T$, $\rho = \id_R$ factoring $(R, Y, T)$, $\psi = \id_{Y, *}$ factoring $(Y, R)$. We only prove the typical case when $d=2$. Put $\mathbf{y} = (((0)), Q^0, \overrightarrow{(e,x,W)}) \in \desc(Y)$. 
We have to show that
  \begin{displaymath}
    \ran( \phi^T_{\Gamma, Y, *}) = \ran( \rho^{Y,T} _{\Gamma}). 
  \end{displaymath}

The $\subseteq $ direction: If $a \in \mathcal{M}_{\Gamma,Y}$, then $j^T_{\Gamma,Y} (a) = \rho_{\Gamma}^{Y,T} \circ \psi_{\Gamma}^R(a)$. 
If $Q$ is finite, $\pi$ factors $(Q^0, T \otimes Q)$, then $\pi ' \DEF \id_{T \otimes Q, *} \circ \pi$ factors $(Q^0, (T \otimes Q) \otimes Q^0)$ and $(\mathbf{y}, \pi') \in \dom( Y \otimes (T \otimes Q))$. 
% $\iota_{Y,T,Q}^{-1}(\mathbf{y}, \pi')$ is of the form $(\mathbf{B}, \tau)$ where $\mathbf{B} = (\mathbf{y}, \varphi)  \in \dom(Y \otimes T)$. 
Hence,
\begin{displaymath}
    ( \underline{\pi^{T \otimes Q}} ( \underline{c_{((0))}})  )^{\mathcal{M}_{\Gamma,Y}}  = c_{\Gamma,Y,(\mathbf{y}, \pi')} = \rho^{Y,T}_{\Gamma}  (   c_{\Gamma,R, \iota^{-1}_{Y,T,Q} (\mathbf{y}, \pi')}).
\end{displaymath}
If $\pi$ factors $(Q^{2e}, T \otimes Q)$,  $e \in \se{0,1}$, then $\pi' \DEF \id_{T \otimes Q, *} \circ \pi$ factors $(Q^{2e}, (T \otimes Q) \otimes Q^{2e})$ and $(\mathbf{y}^{2e}, \pi') \in \dom(Y \otimes (T \otimes Q))$. Argue similarly. 

The $\supseteq$ direction: By definition. 
% If $\mathbf{B}  \in \dom(Y \otimes Q^0 )$, then $c^T_{\Gamma,Y, \mathbf{B}} =  j^T_{\Gamma,Y}(   c_{\Gamma,Y,\psi^{-1}(\mathbf{B})})$. If $\mathbf{B} = (\mathbf{y}, \pi) \in \dom(R)$, then $c_{\Gamma,Y, \mathbf{B}}^T  \in \ran( \phi^T_{\Gamma,Y,y})$ by definition.

2,3. Simple computation. 
\end{proof}

\begin{mylemma}
  \label{lem:EM_remarkability}
  Assume $\boldDelta{2}$-determinacy. Suppose $\Gamma$ is an iterable, weakly remarkable level-3 EM blueprint. The following are equivalent:
  \begin{enumerate}
  \item $\Gamma$ is remarkable.
  \item The map $\gamma \mapsto c_{\Gamma,\gamma}$ is continuous.
  \item There exist $\gamma_0, \gamma_1, \gamma_2$ such that for $d \in \se{0,1,2}$, $\cf^{\admistwobold}(\gamma_d) = u_d$ and
    \begin{displaymath}
      c_{\Gamma,\gamma_d} = \set{c_{\Gamma,\beta}} {\beta < \gamma_d}.
    \end{displaymath}
  \end{enumerate}
In particular, if $\boldpi{3}$-determinacy holds, then $0^{3\#}$ is remarkable, and hence the map $\gamma \mapsto c_{\gamma}^{(3)}$ is continuous.
\end{mylemma}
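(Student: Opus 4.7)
The plan is to prove the cycle (2)$\Rightarrow$(3)$\Rightarrow$(1)$\Rightarrow$(2), with the ``in particular'' clause following as a corollary. The implication (2)$\Rightarrow$(3) is immediate: by Lemma~\ref{lem:level_3_tree_cofinal_in_delta13} applied to finite level-3 trees containing a node of each possible degree, we may select for each $d \in \{0,1,2\}$ a limit ordinal $\gamma_d < \bolddelta{3}$ respecting $Q^0$ with $\cf^{\admistwobold}(\gamma_d) = u_d$ (with $u_0 = \omega$); continuity at each $\gamma_d$ yields the set identity directly.

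For (3)$\Rightarrow$(2), I would fix $\gamma < \bolddelta{3}$ limit with $\cf^{\admistwobold}(\gamma) = u_d$ and show $c_{\Gamma,\gamma} = \sup_{\beta<\gamma}c_{\Gamma,\beta}$. By Lemma~\ref{lem:order_type_realizable}, both $\gamma$ and $\gamma_d$ are realized as $\llbracket s \rrbracket_Y$ and $\llbracket s_d \rrbracket_Y$ inside a single $\Pi^1_3$-wellfounded level-3 tree $Y$, with $Y_{\tree}(s) = Y_{\tree}(s_d) = Q^0$, $s_d \prec^Y s$, and with $Y \res s_d$ isomorphic to $Y \res s$ via the natural shift. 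The combination of Lemma~\ref{lem:coherent_EM_blueprint_is_indiscernible_to_shifts} and the strengthening given by Lemma~\ref{lem:weakly_remarkable_strengthening} then implies that every $\mathcal{L}$-term $\tau^{\mathcal{M}_{\Gamma,Y}}(\underline{c_{r_1}}, \ldots)$ lying strictly below $\underline{c_{s}}$ and using only predecessors $r_i \prec^Y_0 s_d$ already lies strictly below $\underline{c_{s_d}}$, while conversely the shift sends $\{\underline{c_\beta}:\beta < \gamma_d\}$ cofinally into values below $\underline{c_s}$. Projecting through $\pi_{\mathcal{M}^*_{\Gamma,Y},\infty}$ (where Lemma~\ref{lem:iterable_implies_mouse_order} guarantees the required cofinality-preserving structure), the continuity at $\gamma_d$ given by (3) transfers to continuity at $\gamma$.

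For (1)$\Leftrightarrow$(3), I would unpack the remarkability axioms via the canonical identifications of Lemma~\ref{lem:EM_model_ultrapower_equal_normal}: $\mathcal{M}^T_{\Gamma,R^d,*} \cong \mathcal{M}_{\Gamma,R^d \otimes T}$ with $\phi^T_{\Gamma,R^d,*}$ matching $(\id_{R^d \otimes T})^{R^d,T}_\Gamma$, and the image $(\phi^{Q^{d*}}_{\Gamma,R^d,*})^{-1}(c_{\Gamma,R^d,((0))})$ matching $c_{\Gamma,R^d \otimes T,\mathbf{B}^{d*}}$ (here $d* \in \{1,20,21\}$ as appropriate). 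The axiom in Definition~\ref{def:EM_remarkability} asserts, in the $\coll(\omega,\xi)$-extension, that the direct limit inside $K$ of the iterates $(\underline{j^T_{d,c}})^K(K|\xi)$ with distinguished element $(\underline{j^{Q^{d*},T}_{d,c}})^K(b)$, as $T$ ranges over $\Pi^1_2$-wellfounded trees representing $\alpha < u_d$, exhausts $\pi_{K|\xi,\infty}(c)$; applying Lemma~\ref{lem:iterable_level_2_invariance_absolute_in_V} to commute the iteration-to-$\infty$ map with $(\underline{j^T})^K$ translates this into the external equation $c_{\Gamma,R^d,((0)),\infty} = \sup_\alpha c_{\Gamma,R^d \otimes T_\alpha,\mathbf{B}^{d*},\infty}$. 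Setting $\gamma_d := c_{\Gamma,R^d,((0)),\infty}$ (a $Q^0$-respecting ordinal of cofinality $u_d$ by Theorem~\ref{thm:jQ_LT2_regularity} and Lemma~\ref{lem:gamma_r_order}), the right-hand side is exactly $\{c_{\Gamma,\beta} : \beta<\gamma_d\}$ by another application of Lemma~\ref{lem:order_type_realizable_level_2} (every $\alpha<u_d$ is realized by some $T_\alpha$) together with Lemma~\ref{lem:gamma_r_order} (the corresponding $\beta$ is determined by the ordinal represented by $\mathbf{B}^{d*}$ in $R^d \otimes T_\alpha$).

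The principal obstacle is the correspondence in the preceding paragraph: one must verify both inclusions in $\{c_{\Gamma,\beta}:\beta<\gamma_d\} = \{c_{\Gamma, R^d \otimes T_\alpha, \mathbf{B}^{d*}, \infty} : \alpha < u_d\}$, i.e., that the family $T_\alpha$ is rich enough to exhaust all $\beta<\gamma_d$ of the appropriate respecting-class, not just a cofinal subfamily. This requires careful bookkeeping with the explicit ordinal representatives built in Lemma~\ref{lem:order_type_realizable_level_2} and use of shift-indiscernability to eliminate dependence on which $T_\alpha$ is chosen. For the ``in particular'' clause, Lemma~\ref{lem:3sharp_indiscernible_continuous} supplies a club $C \subseteq \bolddelta{3}$ on which $\gamma = c^{(3)}_{Q,\gamma}$; choosing three members $\gamma_d \in C$ of cofinalities $\omega, u_1, u_2$ (which exist since $C$ is a club and $u_1, u_2 < \bolddelta{3}$) gives $c^{(3)}_{\gamma_d} = \gamma_d = \sup(C \cap \gamma_d) = \sup_{\beta<\gamma_d} c^{(3)}_\beta$, verifying (3) for $0^{3\#}$ and hence (1) and (2) by the equivalences just established.
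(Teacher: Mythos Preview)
Your proposal has a concrete error in the $(1)\Leftrightarrow(3)$ paragraph: you set $\gamma_d := c_{\Gamma,R^d,((0)),\infty}$, but this equals $c_{\Gamma,\bar\gamma_d}$ where $\bar\gamma_d = \llbracket ((0)) \rrbracket_{R^d}$, i.e., it is already the \emph{image} of $\bar\gamma_d$ under the map whose continuity is at stake. Plugging this into condition (3) would ask about $c_{\Gamma,c_{\Gamma,\bar\gamma_d}}$, a double application, which is not what the remarkability axiom gives. The correct witness for (3) in the direction $(1)\Rightarrow(3)$ is simply $\gamma_d = \bar\gamma_d = \llbracket ((0)) \rrbracket_{R^d}$; the remarkability axiom, unpacked via Lemma~\ref{lem:EM_model_ultrapower_equal_normal} exactly as you describe, then reads $\sup_{\alpha<u_d} c_{\Gamma,\llbracket \mathbf{B}^{d*}\rrbracket_{R^d\otimes T_\alpha}} = c_{\Gamma,\bar\gamma_d}$, and since $\sup_{\alpha<u_d}\llbracket \mathbf{B}^{d*}\rrbracket_{R^d\otimes T_\alpha} = \bar\gamma_d$, this is continuity at $\bar\gamma_d$. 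Your ``principal obstacle'' (exhausting all $\beta<\gamma_d$, not merely a cofinal family) is a red herring: the equality in (3) is a sup, so cofinality suffices.

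More structurally, the paper avoids your $(3)\Rightarrow(2)$ shift argument entirely by proving $1\Rightarrow 2\Rightarrow 3\Rightarrow 1$. For $1\Rightarrow 2$: given arbitrary limit $\gamma$ with $\cf^{\admistwobold}(\gamma)=u_d$, realize $\gamma=\llbracket ((0))\rrbracket_Y$ via Lemma~\ref{lem:order_type_realizable}; then $R^d$ is a subtree of $Y$, so by coherency the remarkability axiom sits inside $\Gamma(Y)$ and applies to the node $((0))$ of $Y$, yielding continuity at $\gamma$ directly (the $d=0$ case uses that $c_{((0))}$ is not measurable, hence $\pi_{\mathcal{M}^*_{\Gamma,Y},\infty}$ is continuous there). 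For $3\Rightarrow 1$: realize the given $\gamma_d$ as $\llbracket ((0))\rrbracket_{Y^d}$ and reverse the same computation. Your shift-based $(3)\Rightarrow(2)$ might be salvageable, but transferring continuity of $\pi_{\cdot,\infty}$ via a shift is not a first-order statement in $\Gamma(Y)$, so Lemma~\ref{lem:weakly_remarkable_strengthening} alone does not obviously do the job; the paper's route through coherency is both shorter and cleaner.
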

\begin{proof}
  1 $\Rightarrow$ 2: Suppose $\gamma < \bolddelta{3}$ is a limit of limit ordinals. By Lemma~\ref{lem:order_type_realizable}, there exists a $\Pi^1_3$-wellfounded tree $Y$ such that $\gamma = \llbracket ((0)) \rrbracket_Y$.

Case 1: $\cf^{\admistwobold} (\gamma) = \omega$. 

Then $R^{0}$ is a subtree of $Y$ and $A \DEF \set{a \in \omega^{<\omega}}{a <_{BK} ((0)), (a) \in \dom(Y)}$ has limit order type. By indiscernability, $c^{*}_{\Gamma, Y, ((0))} = \sup_{a \in A} c^{*}_{\Gamma, Y, (a)}$. By weak remarkability, for $a \in A$, $\mathcal{M}^{*}_{\Gamma, Y} \res c^{*}_{\Gamma,Y, (a)}= \mathcal{M}^{*}_{\Gamma,Y \res (a)}$. By remarkability, $\pi_{\mathcal{M}_{\Gamma,Y}^{*}, \infty}$ is continuous at $c^{*}_{\Gamma,Y,((0))}$. It follows that $c_{\Gamma,\gamma} = \sup_{\beta<\gamma} c_{\Gamma, \beta}$.

Case 2: $\cf^{\admistwobold} (\gamma) = u_d$, $d \in \se{1,2}$. 

Then $R^{d}$ is a subtree of $Y$. Let $F(\alpha) = \llbracket \mathbf{B}^1 \rrbracket_{Y \otimes T}$ for $\alpha<u_d$ represented by $T$. Then $\sup_{\alpha < u_d} F(\alpha) = \gamma$. By remarkability,  Lemma~\ref{lem:EM_model_ultrapower_equal_normal} and absoluteness, $\sup_{\alpha < u_d} c_{\Gamma, F(\alpha)} = c_{\gamma}$.

2 $\Rightarrow$ 3: Trivial.

3 $\Rightarrow$ 1: 
By Lemma~\ref{lem:order_type_realizable}, there exist $\Pi^1_3$-wellfounded trees $Y^d$ for $d \in \se{0,1,2}$ such that $\gamma_d = \llbracket ((0)) \rrbracket_Y$. Reverse the argument in  1 $\Rightarrow$ 2.
\end{proof}

\begin{mydefinition}[Level $\leq 2$ correctness]
  \label{def:remarkable}
     A level-3 EM blueprint $\Gamma$ is \emph{level $\leq 2$ correct} iff for each finite level-3 tree $Y$, for each $y \in \dom(Y)$, putting 
$X = Y_{\tree}(y)$,  % $Y(y) = (X,(e,x,W))$, 
% putting $Y_{\tree}(y) = X$, 
     $\Gamma(Y)$ contains the following axiom: 

If  $c = \underline{c_y}$, $b = (\underline{\phi_{X,c}^X})^{-1}(c)$, $\xi>c$ is a cardinal and strong cutpoint, then $V^{\coll(\omega,\xi)}$ satisfies the following: 
     \begin{enumerate}
\item \label{item:level_2_ordinal_invariance} If $\vec{\alpha} = (\comp{d}{\alpha}_x)_{(d,x) \in \dom(X)}$ is represented by both $T$ and $T'$, then \\%[....]
$(  (\underline{j^T_{X,c}})^K(K|\xi) , ( \underline{j^{X,T}_{X,c}})^K  (b) ) \sim_{DJ} ( (\underline{j^{T'}_{X,c}})^K(K|\xi) , ( \underline{j^{X,T'}_{X,c}})^K  (b)  )$. 
Here
 $(\underline{j^U_{X,c}})^K$ stands for the direct limit of $(\underline{j^{Z,Z'}_{X,c}})^K$ for $Z,Z'$ finite subtrees of $U$, $Z$ a finite subtree of $Z'$, and $(\underline{j^{X,U}_{X,c}})^K$ stands for the tail of the direct limit map from $(\underline{j^X_{X,c}})^K (K)$ to $(\underline{j^U_{X,c}})^K (K)$.
\item \label{item:level_2_ordinal_correct} Let $F(\vec{\alpha}) = \pi_{(\underline{j^T_{X,c}})^K(K|\xi), \infty}( ( \underline{j^{X,T}_{X,c}})^K  (b)) $ for $\vec{\alpha}$ represented by $T$. Then $[F]_{\mu^X} = \pi_{K|\xi , \infty }(c)$.
     \end{enumerate}
\end{mydefinition}

\begin{mylemma}
  \label{lem:EM_model_ultrapower_equal}
  Suppose $\Gamma$ is a level-3 EM blueprint, $Y$ is a finite level-3 tree, $T$ is a finite level $\leq 2$ tree, $y \in \dom(Y)$, $\mathbf{y} = (y,X, \overrightarrow{(e,x,W)}) \in \desc(Y)$, $\mathbf{B} = (\mathbf{y} , \id_{X, *} )\in \desc(Y,X,Q^0)$. Then
  \begin{enumerate}
  \item $      \mathcal{M}_{\Gamma,Y, y }^T  = \mathcal{M}_{\Gamma, Y \otimes_y T}$, $ \phi^T_{\Gamma,Y,y} = (\id_{Y \otimes_y T})^{Y,T}_{\Gamma} $, where $\id_{Y \otimes_y T}$ factors $(Y \otimes_y T, Y,T)$.
  \item If $X$ is a subtree of $T$, then $ j^{X,T}_{\Gamma, Y} \circ   (\phi^X_{\Gamma,Y,y})^{-1} (c_{\Gamma,Y,y})  = c_{\Gamma,Y \otimes_y X, \mathbf{B}} $.
  \end{enumerate}
\end{mylemma}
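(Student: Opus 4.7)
The proposal is to mimic the proof of Lemma~\ref{lem:EM_model_ultrapower_equal_normal}, with the special node $((0)) \in \dom(R^d)$ replaced by the general $y \in \dom(Y)$. Writing $c = c_{\Gamma,Y,y}$, the two sides of part (1) are both characterized as the transitive collapse of a certain Skolem hull inside $\mathcal{M}_{\Gamma,Y}^T$, so the task is to verify that these two Skolem hulls coincide. Concretely, $\mathcal{M}_{\Gamma,Y,y}^T = (\mathcal{M}_{\Gamma,Y})^T_{X,c}$ is the collapse of the hull generated by $\ran(j^T_{\Gamma,Y}) \cup \{\pi^{T \otimes Q}_{\Gamma,Y}(c) : Q \text{ finite}, \pi \text{ factors } (X, T \otimes Q)\}$, while $\ran((\id_{Y \otimes_y T})^{Y,T}_\Gamma)$ is generated by the constants $c^T_{\Gamma,Y,\mathbf{B}'}$ for $\mathbf{B}' \in \dom(Y \otimes_y T)$, where by definition $\dom(Y \otimes_y T)$ consists of $\dom(Y \otimes Q^0)$ together with the $(Y,T,*)$-descriptions of the form $(\mathbf{y}, \tau)$.

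For the inclusion $\ran((\id_{Y \otimes_y T})^{Y,T}_\Gamma) \subseteq \ran(\phi^T_{\Gamma,Y,y})$, I would split the generators according to the two parts of $\dom(Y \otimes_y T)$. For $\mathbf{B}' \in \dom(Y \otimes Q^0)$, the constant $c^T_{\Gamma,Y,\mathbf{B}'}$ is of the form $j^T_{\Gamma,Y}(c_{\Gamma,Y,y'})$ for some $y' \in \dom(Y)$ (using $\iota_{Y,T,Q^0}$ to identify $Y \otimes Q^0$ with a natural subtree inside $Y \otimes_y T$), hence lies in $\ran(j^T_{\Gamma,Y}) \subseteq \ran(\phi^T_{\Gamma,Y,y})$. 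For $\mathbf{B}' = (\mathbf{y}, \tau)$ with $\tau$ factoring $(X, T \otimes Q)$ for some finite $Q$, the associated constant unwinds via Axiom~\ref{item:EM_commutativity} of Definition~\ref{def:pre_level_3_EM_blueprint} to $(\underline{\tau^{T \otimes Q}}(\underline{c_{\mathbf{y}}}))^{\mathcal{M}_{\Gamma,Y}} = \tau^{T \otimes Q}_{\Gamma,Y}(c)$, which lies in $\underline{B^T_{X,c}}^{\mathcal{M}_{\Gamma,Y}}$ by definition and hence in $\ran(\phi^T_{\Gamma,Y,y})$. The opposite inclusion $\supseteq$ reverses this reasoning: every generator of the Skolem hull defining $\mathcal{M}^T_{\Gamma,Y,y}$ is of the form $\pi^{T \otimes Q}_{\Gamma,Y}(c)$ for $\pi$ factoring $(X, T \otimes Q)$ or of the form $j^T_{\Gamma,Y}(a)$ for $a \in \mathcal{M}_{\Gamma,Y}$, and in each case a standard unpacking produces an element $\mathbf{B}' \in \dom(Y \otimes_y T)$ with $c^T_{\Gamma,Y,\mathbf{B}'}$ equal to that generator.

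The identity $\phi^T_{\Gamma,Y,y} = (\id_{Y \otimes_y T})^{Y,T}_\Gamma$ then follows from uniqueness of the transitive collapse: both maps are the natural factor map into $\mathcal{M}_{\Gamma,Y}^T$ with the common range just computed, and they agree on generators by the coherency of $\Gamma$ (which dictates that the value of $(\id_{Y \otimes_y T})^{Y,T}_\Gamma$ on $c_{\Gamma,Y \otimes_y T,\mathbf{B}'}$ is exactly the corresponding element of $\underline{B^T_{X,c}}^{\mathcal{M}_{\Gamma,Y}}$ given by unwinding $\mathbf{B}'$). Part (2) is a direct computation: the constant $c_{\Gamma, Y \otimes_y X, \mathbf{B}}$ with $\mathbf{B} = (\mathbf{y}, \id_{X,*})$ equals, under $(\id_{Y \otimes_y X})^{Y,X}_\Gamma$ viewed inside $\mathcal{M}_{\Gamma,Y}^X$, the value $(\underline{\id_{X,*}^X}(\underline{c_y}))^{\mathcal{M}_{\Gamma,Y}} = j^X_{\Gamma,Y}(c)$, so applying the identification from part (1) and the transition map $j^{X,T}_{\Gamma,Y}$ gives the stated equality.

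The main obstacle I anticipate is bookkeeping: in the $\supseteq$ direction of part (1) one must show that every description in $\dom(Y \otimes_y T)$ falls into one of the two advertised classes, and that the isomorphism $\iota_{Y,T,Q^0}$ correctly identifies the $\dom(Y \otimes Q^0)$ piece with the $\ran(j^T_{\Gamma,Y})$ generators. This is purely combinatorial and completely parallel to the $d=2$ case of Lemma~\ref{lem:EM_model_ultrapower_equal_normal}, where one had to treat separately the descriptions involving $Q^{20}$ and $Q^{21}$. Once the combinatorial identification is in place, the elementarity statements are immediate from Axiom~\ref{item:EM_commutativity} and coherency of $\Gamma$.
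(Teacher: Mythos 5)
Your part (1) argument follows the paper's proof essentially verbatim: both sides split the generators of $\ran\bigl((\id_{Y\otimes_y T})^{Y,T}_{\Gamma}\bigr)$ according to the two pieces of $\dom(Y\otimes_y T)$, identify the $\dom(Y\otimes Q^0)$ piece with $\ran(j^T_{\Gamma,Y})$ via $\psi=\id_{Y,*}$, and match the $(\mathbf{y},\tau)$-descriptions with $B^T_{X,c}$ via the associativity isomorphism $\iota_{Y,T,Q}$ and Axiom~\ref{item:EM_commutativity}. You label the inclusions oppositely to the paper, but the content is the same.

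Your part (2) has a questionable step that does not match the computation one would extract from the paper's terse ``set $X=T$ in part 1.'' You assert that $(\id_{Y\otimes_y X})^{Y,X}_{\Gamma}$ sends $c_{\Gamma,Y\otimes_y X,\mathbf{B}}$ to $(\underline{\id_{X,*}^{X}}(\underline{c_y}))^{\mathcal{M}_{\Gamma,Y}}=j^X_{\Gamma,Y}(c)$. But $\mathbf{B}=(\mathbf{y},\id_{X,*})$ with $\mathbf{y}$ of discontinuous type gives $\underline{c_{\mathbf{y}}}=\underline{c_y}$, and $\id_{X,*}$ factors $(X,X,Q^0)$ so that under the canonical identification $X\otimes Q^0\cong X$ the map $\underline{\id_{X,*}^{X,Q^0}}$ is the identity, not $\underline{j^X}$. (This is exactly what makes the axiom $\underline{c_{((0))}}=\underline{c_{\mathbf{B}^1}^{Q^1}}$ recorded before Lemma~\ref{lem:EM_model_ultrapower_equal_normal} consistent.) So the value should be $c$, not $j^X_{\Gamma,Y}(c)$; the preimage $(\phi^X_{\Gamma,Y,y})^{-1}(c)$ is then $c_{\Gamma,Y\otimes_y X,\mathbf{B}}$ directly by the coherency clause of Definition~\ref{def:pre_level_3_EM_blueprint}, which is what the paper's one-line reduction to part (1) is asserting. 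Your final ``applying $j^{X,T}_{\Gamma,Y}$'' step only lands on the claimed equality after you correct this; with the $j^X(c)$ version it does not close.
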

\begin{proof}
1. 
  Put $c = c_{\Gamma,Y,y}$,  $R = Y \otimes_y T$, $\rho = \id_R$, $\psi = \id_{Y,*}$ factoring $(Y, R)$,  $Y[y] = (X,\overrightarrow{(e,x,W)})$, $\mathbf{y} = (y, X, \overrightarrow{(e,x,W)})$. 
% $(\mathbf{r}, \pi') = \iota_{Y,T,Q} ( (y,X,\overrightarrow{(e,x,W)}), \pi )$. By definition, $\mathbf{r} \in \bardom(R)$. Put $W = $ 
 We have to show that
  \begin{displaymath}
    \ran(\phi^T_{\Gamma,Y,y}) = \ran( \rho_{\Gamma}^{Y,T}).
  \end{displaymath}

The $\subseteq $ direction: If $a \in \mathcal{M}_{\Gamma,Y}$, then $j^T_{\Gamma,Y} (a) = \rho_{\Gamma}^{Y,T} \circ \psi_{\Gamma}^R(a)$. If $Q$ is finite, $\pi$ factors $(X, T \otimes Q)$, then $\pi ' \DEF \id_{T \otimes Q, *} \circ \pi$ factors $(X, (T \otimes Q) \otimes X)$ and $(\mathbf{y}, \pi') \in \dom( Y \otimes (T \otimes Q))$. 
$\iota_{Y,T,Q}^{-1}(\mathbf{y}, \pi')$ is of the form $(\mathbf{B}, \tau)$ where $\mathbf{B} = (\mathbf{y}, \varphi)  \in \dom(Y \otimes_y T)$. Hence,
\begin{displaymath}
    ( \underline{\pi^{T \otimes Q}} ( \underline{c_y})  )^{\mathcal{M}_{\Gamma,Y}}  = c_{\Gamma,Y,(\mathbf{y}, \pi')} = \rho^{Y,T}_{\Gamma}  (   c_{\Gamma,R, (\mathbf{B}, \tau)}).
\end{displaymath}

The $\supseteq$ direction: If $\mathbf{B}  \in \dom(Y \otimes Q^0 )$, then $c^T_{\Gamma,Y, \mathbf{B}} =  j^T_{\Gamma,Y}(   c_{\Gamma,Y,\psi^{-1}(\mathbf{B})})$. If $\mathbf{B} = (\mathbf{y}, \pi) \in \dom(R)$, then $c_{\Gamma,Y, \mathbf{B}}^T  \in \ran( \phi^T_{\Gamma,Y,y})$ by definition.

2. Set $X=T$ in part 1.
\end{proof}

It is straightforward to compute that if $Y,y, \mathbf{y}, \mathbf{B}$ are as in the assumption of Lemma~\ref{lem:EM_model_ultrapower_equal}, then 
\begin{enumerate}
\item if  $\vec{\alpha}  =  (\comp{d}{\alpha}_x)_{(d,x) \in \dom(X)}$ is represented by both $T$ and $T'$, then $\llbracket \mathbf{B} \rrbracket_{Y \otimes_y T} = \llbracket \mathbf{B} \rrbracket_{Y \otimes_y T'}$;
\item letting $G(\vec{\alpha}) = \llbracket \mathbf{B} \rrbracket_{Y \otimes_y T}$ for $\vec{\alpha}$ represented by $T$, then $\llbracket y \rrbracket_Y = [G]_{\mu^X}$.
\end{enumerate}
From Lemmas~\ref{lem:EM_model_ultrapower_equal},~\ref{lem:3sharp_indiscernible_continuous} and absoluteness, we conclude:
\begin{mylemma}
  \label{lem:level_2_correct}
Assume $\boldDelta{2}$-determinacy. Suppose $\Gamma$ is a remarkable iterable level-3 EM blueprint. Then the following are equivalent.
\begin{enumerate}
\item $\Gamma$ is level $\leq 2$ correct.
\item For any potential partial level $\leq 2$ tower $(X,\overrightarrow{(e,x,W)})$ of continuous type, if $F \in (\bolddelta{3})^{(X, \overrightarrow{(e,x,W)}) \uparrow}$, then
  \begin{displaymath}
    c_{\Gamma,X,[F]_{\mu^X}} = [  \vec{\alpha} \mapsto c_{\Gamma,F(\vec{\alpha})}  ]_{\mu^X}.
  \end{displaymath}
\item For any potential partial level $\leq 2$ tower $(X,\overrightarrow{(e,x,W)})$ of continuous type, there exists $F \in (\bolddelta{3})^{(X, \overrightarrow{(e,x,W)}) \uparrow}$ satisfying
  \begin{displaymath}
    c_{\Gamma,X,[F]_{\mu^X}} = [  \vec{\alpha} \mapsto c_{\Gamma,F(\vec{\alpha})}  ]_{\mu^X}.
  \end{displaymath}
\end{enumerate}
In particular, if $\boldpi{3}$-determinacy holds, then $0^{3\#}$ is level $\leq 2$ correct, and hence, if $F \in (\bolddelta{3})^{(X, \overrightarrow{(e,X,W)}) \uparrow}$, then
\begin{displaymath}
c_{X,[F]_{\mu^X}}^{(3)} =  [ \vec{\alpha} \mapsto   c_{F(\vec{\alpha})}^{(3)}]_{\mu^X}.
\end{displaymath}
\end{mylemma}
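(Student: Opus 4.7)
The implication $(2) \Rightarrow (3)$ is immediate, so the plan centers on $(1) \Leftrightarrow (2)$ and on verifying $(3)$ for $0^{3\#}$ via Lemma~\ref{lem:3sharp_indiscernible_continuous}. Throughout, I would fix a potential partial level $\leq 2$ tower $(X, \overrightarrow{(e,x,W)})$ of continuous type and pass freely between the ordinal side ($\llbracket \cdot \rrbracket$, $\mu^X$, $\bolddelta{3}$) and the EM-model side ($\mathcal{M}^{*}_{\Gamma,\cdot}$, $\pi_{\mathcal{M}^{*},\infty}$, $c_{\Gamma,\cdot}$), using Lemmas~\ref{lem:iterable_implies_mouse_order} and~\ref{lem:order_type_realizable} as the dictionary: every ordinal $\gamma = [F]_{\mu^X}$ that respects $(X,\overrightarrow{(e,x,W)})$ can be realized as $\llbracket y \rrbracket_Y$ for some $\Pi^1_3$-wellfounded $Y$ with $Y[y] = (X,\overrightarrow{(e,x,W)})$, and conversely $F(\vec{\alpha})$ at $\mu^X$-a.e.\ $\vec{\alpha}$ is realized as $\llbracket \mathbf{B} \rrbracket_{Y \otimes_y T}$ for $\mathbf{B} = (\mathbf{y}, \id_{X,*})$ and $\vec{\alpha}$ represented by $T$.

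For $(1) \Rightarrow (2)$, I would take $F \in (\bolddelta{3})^{(X,\overrightarrow{(e,x,W)})\uparrow}$ and realize $[F]_{\mu^X}$ as $\llbracket y \rrbracket_Y$ as above. Apply Lemma~\ref{lem:EM_model_ultrapower_equal} at the finite level to rewrite $(\phi^T_{\Gamma,Y,y})^{-1}(c_{\Gamma,Y,y})$ via the constant $c_{\Gamma, Y \otimes_y X, \mathbf{B}}$, and chase the defining axioms of Definition~\ref{def:remarkable} (clauses on ordinal invariance and ordinal correctness in a $\coll(\omega,\xi)$-extension) through the direct limit $\pi_{K|\xi,\infty}$ in the EM model. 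Because $\Gamma$ is iterable, $\mathcal{M}^{*}_{\Gamma,Y}$ is full by Lemma~\ref{lem:iterable_implies_mouse_order}, so the direct limit computed inside the EM model agrees with the external direct limit into $M_{2,\infty}^{-}$; this identifies $c_{\Gamma,X,[F]_{\mu^X}}$ with $\pi_{\mathcal{M}^{*}_{\Gamma,Y},\infty}(c^{*}_{\Gamma,Y,y})$. On the other hand, the representability clause in Definition~\ref{def:remarkable} says precisely that this latter ordinal equals $[\vec{\alpha} \mapsto \pi_{\mathcal{M}^{*}_{\Gamma, Y \otimes_y T},\infty}(c^{*}_{\Gamma, Y \otimes_y T, \mathbf{B}})]_{\mu^X} = [\vec{\alpha} \mapsto c_{\Gamma, F(\vec{\alpha})}]_{\mu^X}$, giving (2).

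For $(3) \Rightarrow (1)$, I would run the previous chain in reverse. Given $F$ as in (3), the ordinal $\llbracket y \rrbracket_Y$ equals $[F]_{\mu^X}$, so the equality $c_{\Gamma,X,[F]_{\mu^X}} = [\vec{\alpha} \mapsto c_{\Gamma,F(\vec{\alpha})}]_{\mu^X}$ translates into the ordinal-correctness clause of Definition~\ref{def:remarkable} for that particular $y$. Ordinal invariance (clause~\ref{item:level_2_ordinal_invariance} of Definition~\ref{def:remarkable}) is automatic from the fact that the values $\llbracket \mathbf{B} \rrbracket_{Y \otimes_y T}$ depend only on $\vec{\alpha} = (\comp{d}{\alpha}_x)_{(d,x) \in \dom(X)}$ and not on the representing tree $T$, which was noted after Lemma~\ref{lem:EM_model_ultrapower_equal}. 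To promote the axiom from a single $(Y,y)$ to all of them, invoke the coherency of $\Gamma$ together with the weak-remarkability consequence (Lemma~\ref{lem:universal_agree_on_EM_model}) that $\mathcal{M}^{*}_{\Gamma,\cdot}$ depends only on the isomorphism type of the underlying tree below $y$; and use Lemma~\ref{lem:order_type_realizable} to realize arbitrary $\vec{\alpha}$ respecting $X$.

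The main obstacle I expect is bookkeeping around the coherency step in $(3)\Rightarrow(1)$: the axiom in Definition~\ref{def:remarkable} is quantified over all $(Y,y)$, whereas (3) only gives one $F$, and turning a single ultrapower identity into the universal axiom requires carefully varying $Y$ and $y$ while keeping $(X,\overrightarrow{(e,x,W)})$ fixed, then transporting the conclusion through generic extensions (via the $\Sigma^1_3$-correctness argument used in Lemma~\ref{lem:iterable_level_2_invariance_absolute_in_V}). For the ``in particular'' clause, Lemma~\ref{lem:3sharp_indiscernible_continuous} supplies a club $C \subseteq \bolddelta{3}$ on which $\gamma = c^{(3)}_{Q,\gamma}$; any $F \in C^{(X,\overrightarrow{(e,x,W)})\uparrow}$ then visibly satisfies (3) for $\Gamma = 0^{3\#}$, which by the equivalence gives level $\leq 2$ correctness of $0^{3\#}$ and the displayed identity.
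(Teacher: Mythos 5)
Your proposal follows essentially the same route as the paper: you realize $[F]_{\mu^X}$ via a node in a $\Pi^1_3$-wellfounded level-3 tree, invoke Lemma~\ref{lem:EM_model_ultrapower_equal} to translate the ordinal identity into the EM-model direct-limit picture (which is exactly where the two ``straightforward computations'' the paper records before the lemma enter), chase the clauses of Definition~\ref{def:remarkable} through $\pi_{K|\xi,\infty}$ using iterability/fullness and the $\Sigma^1_3$-correctness absoluteness argument, and close the ``in particular'' clause with the club from Lemma~\ref{lem:3sharp_indiscernible_continuous}. The paper's printed proof is the two unnumbered observations preceding the statement plus a citation of Lemmas~\ref{lem:EM_model_ultrapower_equal}, \ref{lem:3sharp_indiscernible_continuous} and absoluteness; you have reconstructed the elided chain correctly, including the bookkeeping obstacle you flag in $(3)\Rightarrow(1)$ (promoting a single ultrapower identity to the $(Y,y)$-universally-quantified axiom via coherency and generic absoluteness), which the paper also leaves implicit. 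One small imprecision: for $y\in\dom(Y)$ the tower $Y[y]$ is the one induced by the essentially discontinuous $H_y$, so writing ``$Y[y]=(X,\overrightarrow{(e,x,W)})$ of continuous type'' is literally off --- the continuous-type ordinal $[F]_{\mu^X}$ is recovered as a $j_{\sup}$ of $\llbracket y \rrbracket_Y$ over a smaller tree (cf.\ Lemma~\ref{lem:gamma_r_continuous_type}), which is precisely what the paper's observation $\llbracket y \rrbracket_Y = [G]_{\mu^X}$ with $G(\vec{\alpha}) = \llbracket \mathbf{B}\rrbracket_{Y\otimes_y T}$ is packaging; but this does not change the overall structure of your argument.
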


% Lemmas~\ref{lem:3sharp_is_iterable},~\ref{lem:remarkable},~\ref{lem:unique_level_3_EM_blueprint} concludes the existence and uniqueness of the real $0^{3\#}$ under $\boldpi{3}$-determinacy:  
\begin{mytheorem}
  \label{thm:unique_level_3_EM_blueprint}
Assume $\boldpi{3}$-determinacy. Then $0^{3\#}$ is the unique iterable, remarkable, level $\leq 2$ correct level-3 EM blueprint.
\end{mytheorem}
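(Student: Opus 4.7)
The plan is to show that any iterable, remarkable, level $\leq 2$ correct level-3 EM blueprint $\Gamma$ must coincide with $0^{3\#}$ on every finite level-3 tree $R$. The strategy is to extract from $\Gamma$ a firm club of potential level-3 indiscernibles for $M_{2,\infty}^{-}$ and then invoke the defining clause of $0^{3\#}$.

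First I would verify that, for every $\gamma<\bolddelta{3}$ respecting a finite level $\leq 2$ tree $Q$, the ordinal $c_{\Gamma,Q,\gamma}$ introduced before Lemma~\ref{lem:3sharp_indiscernible_continuous} is well-defined and lies in $M_{2,\infty}^{-}$. Given $\gamma$, Lemma~\ref{lem:order_type_realizable} furnishes a $\Pi^1_3$-wellfounded $Y$ and $s \in \dom(Y)$ with $Y_{\tree}(s)=Q$ and $\llbracket s\rrbracket_Y = \gamma$, and iterability together with the fullness half of Lemma~\ref{lem:iterable_implies_mouse_order} makes $\mathcal{M}^{*}_{\Gamma,Y}$ a full $\Pi^1_3$-iterable mouse. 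Dodd-Jensen comparison with the direct limit system $\mathcal{F}_2$ then shows $\pi_{\mathcal{M}^{*}_{\Gamma,Y},\infty}$ is essentially an iteration map into $M_{2,\infty}^{-}$, so $c_{\Gamma,Q,\gamma} = \pi_{\mathcal{M}^{*}_{\Gamma,Y},\infty}(c^{*}_{\Gamma,Y,s})$ is an element of $M_{2,\infty}^{-}$; the independence from $(Y,s)$ is exactly the argument spelled out just before Lemma~\ref{lem:3sharp_indiscernible_continuous}, which requires only Theorem~\ref{thm:factor_tower_order_type_equivalent}, Lemma~\ref{lem:unique_level_3_tree_represent}, and part~2 of Lemma~\ref{lem:iterable_level_2_invariance_absolute_in_V}.

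Second I would prove that $C_\Gamma = \{c_{\Gamma,\gamma} : \omega_1 \leq \gamma < \bolddelta{3}\}$ is a firm club in $\bolddelta{3}$. Continuity of $\gamma \mapsto c_{\Gamma,\gamma}$ on a club follows from the direction (1)$\Rightarrow$(2) of Lemma~\ref{lem:EM_remarkability}, which uses only remarkability of $\Gamma$ plus Lemma~\ref{lem:EM_model_ultrapower_equal_normal}. Unboundedness of the map in $\bolddelta{3}$ comes from the identity
\[
  c_{\Gamma,X,[F]_{\mu^X}} = \bigl[\vec{\alpha} \mapsto c_{\Gamma,F(\vec{\alpha})}\bigr]_{\mu^X}
\]
of Lemma~\ref{lem:level_2_correct}, which uses level $\leq 2$ correctness and Lemma~\ref{lem:EM_model_ultrapower_equal}: this forces the image map to commute with every $j^Q$, hence to hit a cofinal subset of $\bolddelta{3}$ (apply it to arbitrary potential partial level $\leq 2$ towers, which by Lemma~\ref{lem:level_3_tree_cofinal_in_delta13} and Lemma~\ref{lem:order_type_realizable} produce arbitrarily large $\gamma$). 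Intersecting the continuity club with the unboundedness club yields $C_\Gamma$ as a firm club.

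Third I would show that $C_\Gamma$ is a set of potential level-3 indiscernibles for $M_{2,\infty}^{-}$, i.e.\ that for every finite level-3 tree $R$, every $\mathcal{L}$-formula $\varphi$, and every $\vec{\gamma}, \vec{\gamma}' \in [C_\Gamma]^{R \uparrow}$, $M_{2,\infty}^{-} \models \varphi(\vec{\gamma}) \Leftrightarrow M_{2,\infty}^{-}\models \varphi(\vec{\gamma}')$. Given $\vec{\gamma}$, Lemma~\ref{lem:order_type_realizable} realizes it as $(\llbracket r \rrbracket_{Y})_{r \in \dom(R)}$ for some $\Pi^1_3$-wellfounded $Y \supseteq R$, and Stage 1 identifies $\gamma_r$ with the image of $c^{*}_{\Gamma,Y,r}$ in $M_{2,\infty}^{-}$; analogously on the primed side. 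Since the iteration maps $\mathcal{M}^{*}_{\Gamma,Y} \to M_{2,\infty}^{-}$ are elementary and the image values of any indiscernible tuple in $\mathcal{M}^{*}_{\Gamma,Y}$ depend only on the isomorphism type of the underlying finite level-3 tree, coherency plus Lemma~\ref{lem:coherent_EM_blueprint_is_indiscernible_to_shifts} transport truth of $\varphi(\vec{\gamma})$ in $M_{2,\infty}^{-}$ to truth of $\varphi(\vec{\gamma}')$. Consequently, for any finite level-3 tree $R$ and any $\vec{\gamma}\in [C_\Gamma]^{R\uparrow}$,
\[
  \gcode{\varphi(\underline{c_{r_1}},\ldots,\underline{c_{r_n}})} \in \Gamma(R) \Longleftrightarrow M_{2,\infty}^{-} \models \varphi(\gamma_{r_1}, \ldots, \gamma_{r_n}),
\]
which is the defining clause of $0^{3\#}(R)$; hence $\Gamma = 0^{3\#}$. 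Existence is Lemmas~\ref{lem:pre_level_2_correct}, \ref{lem:3sharp_is_iterable}, \ref{lem:3sharp_is_weakly_remarkable}, \ref{lem:EM_remarkability}, \ref{lem:level_2_correct}.

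The main obstacle is Stage 2, specifically the cofinality half. Continuity alone does not constrain the \emph{rate of growth} of $\gamma\mapsto c_{\Gamma,\gamma}$; a priori it could drop below the identity on a cofinal set. Using level $\leq 2$ correctness in the form of Lemma~\ref{lem:level_2_correct} is essential: it forces the image map to commute with every ultrapower by $\mu^X$ for finite level $\leq 2$ $X$, and hence to send $\omega_1$-indexed suprema to $u_2$, $u_2$-indexed suprema to $u_3$, and so on, matching the value of each $c_{\Gamma,Q,\gamma}$ along the uniform indiscernibles. The delicate bookkeeping is identifying the anchor: one must verify that the map is the identity on at least a single sufficiently rich $\gamma$ (or equivalently that $c_{\Gamma,\omega_1}=\omega_1$, which falls out of Axiom~\ref{item:level-2-embedding_invariance} applied at $U = Q^0$), so that the combinatorial commutation relation then propagates identity up through $\bolddelta{3}$ via continuity and the ultrapower compatibility.
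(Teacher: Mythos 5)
You have the right overall shape — derive from $\Gamma$ a club on which its internal indiscernibles line up with the ambient ordinals and then appeal to the defining clause of $0^{3\#}$, which is effectively the paper's two-blueprint comparison specialized to $\Gamma'=0^{3\#}$ — but the execution breaks at the choice of club. You take $C_\Gamma = \{c_{\Gamma,\gamma} : \omega_1\leq\gamma<\bolddelta{3}\}$, the \emph{range} of $\gamma\mapsto c_{\Gamma,\gamma}$, and in Stage~3 you rely on the identification $\gamma_r = \pi_{\mathcal{M}^*_{\Gamma,Y},\infty}(c^*_{\Gamma,Y,r}) = c_{\Gamma,R_{\tree}(r),\gamma_r}$ for entries of a tuple in $[C_\Gamma]^{R\uparrow}$. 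That identity holds precisely when $\gamma_r$ lies in $j^{R_{\tree}(r)}$ of the set of \emph{fixed points} of $c_{\Gamma,\cdot}$, not merely its range. The map $c_{\Gamma,\cdot}$ is a normal function, but it is emphatically not the identity — already $c_{\Gamma,\omega}$ is the image of an uncountable ordinal of $\mathcal{M}^*_{\Gamma,R^0}$ and so is a cardinal of $M^-_{2,\infty}$ far above $\omega$ — so your proposed anchor ``$c_{\Gamma,\omega_1}=\omega_1$'' is false, and the idea that being the identity at one point ``propagates up through $\bolddelta{3}$ via continuity and ultrapower compatibility'' does not work for a non-identity normal function. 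If $\gamma\in C_\Gamma'$ is merely a range-point and not a fixed point (e.g.\ $\gamma = c_{\Gamma,\beta}$ with $\beta<c_{\Gamma,\beta}$), then $c_{\Gamma,\gamma}>\gamma$ and the Stage~3 identification collapses, so $C_\Gamma$ as you defined it is not a set of potential level-3 indiscernibles.

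What actually works, and is what the paper does, is to pass to the fixed-point club $\{\gamma : c_{\Gamma,\gamma}=\gamma\}$. Its existence is not a purely soft consequence of continuity: one must first locate the function $\gamma\mapsto c_{\Gamma,\gamma}$ inside a model where $\bolddelta{3}$ is regular, and the paper supplies this by observing (via Corollary~\ref{coro:Delta13_pwo_computable_in_LT2}) that the function is $\Sigma^1_4(\Gamma)$ in the codes and hence belongs to $L[T_3,\Gamma]$; that definability step is absent from your proposal. Once you replace $C_\Gamma$ by the fixed-point club and supply the definability argument, the level $\leq 2$ correctness step then propagates fixed-point-ness through $j^Q$ exactly as Lemma~\ref{lem:level_2_correct} is used in the paper, the realization by a $\Pi^1_3$-wellfounded $Y$ via Lemma~\ref{lem:order_type_realizable} goes through, and the pointed elementary-equivalence argument recovers $\Gamma(R)$, reproducing the paper's proof.
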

\begin{proof}
  It remains to show uniqueness. 
  Suppose $\Gamma, \Gamma'$ are both iterable remarkable level-3 EM blueprints. We carry out a ``comparison'' between $\Gamma$ and $\Gamma'$. 
By Corollary~\ref{coro:Delta13_pwo_computable_in_LT2}, the function $\gamma \mapsto (c_{\Gamma,\gamma}, c_{\Gamma',\gamma})$ is $\Sigma^1_4(\Gamma,\Gamma')$ in the codes, and hence belongs to $L[T_3, \Gamma,\Gamma']$. By Lemma~\ref{lem:EM_remarkability}, there is a club $C \in L[T_3, \Gamma,\Gamma']$ such that $\gamma = c_{\Gamma,\gamma} = c_{\Gamma',\gamma}$ for any $\gamma \in C$. By Lemma~\ref{lem:level_2_correct}, if $\gamma \in [C]^{(Q, (d,q,P)) \uparrow}$, then $\gamma =c_{\Gamma,Q, \gamma} = c_{\Gamma' ,Q, \gamma }$. 

 Suppose $R$ is a finite level-3 tree. Let $\vec{\gamma} \in [C]^{R\uparrow}$. 
By Lemma~\ref{lem:order_type_realizable}, we can find a $\Pi^1_3$-wellfounded $Y$ extending 
 $R$ so that $\llbracket \emptyset \rrbracket_Y \in C$ and for any $r \in \dom(R)$, $\gamma_r = \llbracket r \rrbracket_Y$. 
Then $(\mathcal{M}^{*}_{\Gamma,Y})_{\infty} = (\mathcal{M}^{*}_{\Gamma',Y})_{\infty} = M_{2,\infty}^{-} | c^{(3)}_{\llbracket \emptyset \rrbracket_Y+\omega}$ and for any $r \in \dom(R)$, $\pi_{\mathcal{M}^{*}_{\Gamma,Y}, \infty }  ( c^{*}_{\Gamma,Y,r}) = \pi_{\mathcal{M}^{*}_{\Gamma',Y}, \infty }  ( c^{*}_{\Gamma',Y,r})= \gamma_r$. 
This ensures that $(\mathcal{M}^{*}_{\Gamma,Y}; ( c^{*}_{\Gamma,Y,r} )_{r \in \dom(R)})  $ is elementarily equivalent to $ (\mathcal{M}^{*}_{\Gamma',Y}; ( c^{*}_{\Gamma',Y,r})_{r \in \dom(R)})  $. 
Hence, $\Gamma(R) = \Gamma'(R)$. 
% Hence, $(\mathcal{M}^{*}_{\Gamma,Y}; \vec{\gamma})  \elem (M_{2, \infty}^{-}; \vec{\gamma})$. Hence, $\Gamma(R) = 0^{3\#}(R)$.
\end{proof}

% From now on, Definition~\ref{def:x_3_sharp_axiomatize} replaces Temporary Definition~\ref{def:x_3_sharp}. 
The existence of an iterable, remarkable, level $\leq 2$ correct level-3 EM blueprint is a purely syntactical definition of a large cardinal. The minimum background assumption to make sense of it is $\boldDelta{2}$-determinacy. % Lemma~\ref{lem:unique_level_3_EM_blueprint} implies that under $\boldDelta{2}$-determinacy, if $0^{3\#}$ exists, then it 
 However, its existence and uniqueness is proved under boldface $\boldpi{3}$-determinacy. It is unclear if the assumption of boldface $\boldpi{3}$-determinacy can be weakened, at least to $\boldDelta{3}$-determinacy$+\Pi^1_3$-determinacy. To draw a complete analogy with the level-1 sharp, one would naturally ask
\begin{myquestion}
 \label{ques:3_sharp_existence}
Assume $\boldDelta{2}$-determinacy. Are the following equivalent?
\begin{enumerate}
\item There is an iterable, remarkable, level $\leq 2$ correct level-3 EM blueprint. 
\item There is an $(\omega_1,\omega_1)$-iterable $M_2^{\#}$.
\item $\Pi^1_3$-determinacy.
\end{enumerate}
\end{myquestion}
% As obvious as it looks, the answer to Question~\ref{ques:3_sharp_existence} need not be positive. One sample observation is that Borel determinacy is a consequence of ZFC, but $\boldDelta{3}$-determinacy is not a consequence of ZFC$+\boldDelta{2}$-determinacy. Probably Question~\ref{ques:3_sharp_existence} has a positive answer under the stronger assumption of $\boldDelta{3}$-determinacy.

\begin{mytheorem}
  \label{thm:existence_3_sharp_equivalence}
  Assume $\boldDelta{2}$-determinacy. If there is an iterable, remarkable, level $\leq 2$ correct level-3 EM blueprint, then $\Pi^1_3$-determinacy holds.
\end{mytheorem}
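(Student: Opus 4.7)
The plan is to convert the hypothesized level-3 EM blueprint $\Gamma$ into an $(\omega_1,\omega_1)$-iterable $M_2^{\#}$, and then invoke Neeman's theorem that sufficient iterability of $M_2^{\#}$ yields $\Pi^1_3$-determinacy. Since the axioms of Definition~\ref{def:pre_level_3_EM_blueprint} and the remarkability/correctness conditions relativize uniformly in a real parameter via the language $\mathcal{L}^{\underline{x}}$, the same argument applied to each $\Gamma^x$ would in fact yield boldface $\boldpi{3}$-determinacy, but lightface is all that is claimed, so I will work with $\Gamma$ itself.

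First, combining Lemmas~\ref{lem:iterable_implies_mouse_order}, \ref{lem:EM_remarkability}, and \ref{lem:level_2_correct}, the family $\{\mathcal{M}^{*}_{\Gamma,Y} : Y \text{ a finite } \Pi^1_3\text{-wellfounded tree}\}$ is a Dodd-Jensen-increasing chain of full $\Pi^1_3$-iterable premice, each closed under the $M_1^{\#}$-operator by Axiom~\ref{item:EM_ZFC}, and the indiscernibles $c_{\Gamma,\gamma}$ form a closed unbounded class that is compatible with the formal level $\leq 2$ ultrapower maps $\underline{j^T}$ in the sense of level $\leq 2$ correctness. I would take the direct limit $\mathcal{N}_\infty$ of this chain under its iteration maps (provided by Lemma~\ref{lem:iterable_level_2_invariance_absolute_in_V}) to obtain a proper class premouse representing $M_{2,\infty}^{-}$, and then extract $M_2^{\#}$ as the minimal countable sound iterable premouse whose top-measure ultrapower reproduces a tail segment of the indiscernible class—the direct higher-level analogue of the extraction of $0^{\#}$ from a remarkable class of Silver indiscernibles in $L$. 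Level $\leq 2$ correctness is essential here: it forces the formal symbols $\underline{j^T}$ to match the genuine level $\leq 2$ ultrapower maps of the constructed premouse, so that the two critical points implicit in the stack become real Woodin cardinals rather than the internal approximations permitted by Axiom~\ref{item:EM_ZFC}.

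Second, $(\omega_1,\omega_1)$-iterability of the resulting $M_2^{\#}$ is inherited from the $\Pi^1_3$-iterability of the $\mathcal{M}^{*}_{\Gamma,Y}$: any putative countable iteration tree $\mathcal{T}$ on $M_2^{\#}$ lifts, via the definable indiscernible class and the EM-term machinery, into a corresponding tree on some $\mathcal{M}^{*}_{\Gamma,Y}$ for $Y$ of sufficiently high Dodd-Jensen rank, and the $\Pi^1_3$-iteration strategy of the latter selects a cofinal branch, which descends to a branch through $\mathcal{T}$. Neeman's theorem then yields $\Pi^1_3$-determinacy.

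The main obstacle is the extraction in the first step. Each $\mathcal{M}^{*}_{\Gamma,Y}$ satisfies ``there is no inner model with two Woodin cardinals'' (Axiom~\ref{item:EM_ZFC}), so $M_2^{\#}$ cannot sit as an initial segment of any single EM model; its two Woodin cardinals must arise only from the interplay between the tower of EM models and the level $\leq 2$ ultrapower structure coded by $\Gamma$. The technical heart, mirroring Steel's analysis (Theorem~\ref{thm:steel}) in which $\bolddelta{3}$ emerges as a $<\delta$-strong cardinal of $M_{2,\infty}^{-}$ only in the direct limit, is to use level $\leq 2$ correctness to verify that the extenders appearing along the stack descend to a genuine background premouse carrying both Woodin cardinals, rather than only to an $M_1^{\#}$-like object with a single Woodin plus external $M_1^{\#}$-closure. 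Concretely, for each ordinal $\xi$ below the prospective lower Woodin, one must use the level $\leq 2$ correctness axiom to extract from the maps $\underline{j^T_{X,c}}$ an extender with critical point below $\xi$ and length arbitrarily large below the upper critical point, witnessing Woodinness via Martin's criterion inside the stacked mouse.
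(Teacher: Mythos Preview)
Your approach differs fundamentally from the paper's and contains a genuine gap.

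The paper gives a direct Martin-style proof, generalizing Martin's argument that $0^{\#}$ implies $\Pi^1_1$-determinacy. Given a $\Pi^1_3$ set $A$ via an effective level-3 system $(R_s)$ with $x\in A \iff R_x$ is $\Pi^1_3$-wellfounded, the paper works inside the single EM model $\mathcal{M}^{*}_{\Gamma,R^0}$ and defines an auxiliary closed game $H(\underline{c_{((0))}})$ in which Player~I additionally plays ordinals $\gamma_n<\underline{c_{((0))}}$ compatible with the formal maps $\underline{j^{Q,Q'}}$. This closed game is determined in the model. If I wins, ignoring the auxiliary moves gives a winning strategy for I in $G$ by $\Sigma^1_4$-correctness of the model's $\underline{S_3}$. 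If II wins, a winning strategy $\sigma^{*}$ for II in $G$ is read off via the indiscernibles; a putative defeat $x\in A$ of $\sigma^{*}$ gives a $\Pi^1_3$-wellfounded $R_x$, and the indiscernibles of $\mathcal{M}^{*}_{\Gamma,R_x^{+}}$ furnish a legal run defeating $\sigma$, a contradiction. No $M_2^{\#}$ is ever constructed.

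Your plan, by contrast, is to extract an $(\omega_1,\omega_1)$-iterable $M_2^{\#}$ and then invoke Neeman. You yourself identify the obstacle: every $\mathcal{M}^{*}_{\Gamma,Y}$ satisfies ``there is no inner model with two Woodin cardinals,'' so $M_2^{\#}$ is not an initial segment of any model in your tower. Your proposal that the two Woodins ``arise only from the interplay'' via level~$\leq 2$ correctness, and that extenders witnessing Woodinness can be extracted from the maps $\underline{j^T_{X,c}}$, is not made concrete. In fact, the implication $1\Rightarrow 2$ of Question~\ref{ques:3_sharp_existence} (from a single iterable, remarkable, level~$\leq 2$ correct blueprint to an iterable $M_2^{\#}$) is precisely what the paper leaves open at the lightface level; the paper proves only $1\Rightarrow 3$ directly, and obtains the full equivalence only in boldface (Theorem~\ref{thm:3_sharp_existence_boldface}) by quoting Neeman and Woodin for the remaining directions. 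Your lifting argument for $(\omega_1,\omega_1)$-iterability is similarly incomplete, since iteration trees on $M_2^{\#}$ cannot be lifted to trees on $\mathcal{M}^{*}_{\Gamma,Y}$ without first realizing $M_2^{\#}$ inside or below these models, which is exactly the missing step.
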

\begin{proof}
Let $\Gamma$ be an  iterable, remarkable, level $\leq 2$ correct level-3 EM blueprint. 
  Suppose $A\subseteq \mathbb{R}$ is $\Pi^1_3$ and $G$ is the game on $\omega$ with payoff set $A$. Let $(R_s)_{s \in \omega^{<\omega}}$ be an effective regular level-3 system such that $x \in A \eqiv R_x \DEF \cup_{n<\omega} R_{x \res n}$ is $\Pi^1_3$-wellfounded. By iterability of $\Gamma$, $\mathcal{M}^{*}_{\Gamma,R^0}$ is a $\Pi^1_3$-iterable mouse. 
Working in $\mathcal{M}^{*}_{\Gamma,R^0}$, define the auxiliary game $H(\underline{c_{((0))}})$ where in rounds $2n$ and $2n+1$, I plays $x(2n) \in \omega, \gamma_n \in \underline{c_{((0))}}$, II plays $x(2n+1)$. Player I is said to follow the rules at stage $k$ iff letting $r_n \in \dom(R_{x \res n+1}) \setminus \dom(R_{x \res n})$ for $n<k$, then  for any $n<m<k$, $ r_n = (r_m)^{-} \to \gamma_m<  \underline{j^{(R_{x \res n+1})_{\tree}(r_n),(R_{x \res m+1})_{\tree}(r_m)}} (\gamma_n)$. Players I wins $H(\underline{c_{((0))}})$ iff he follows the rules at every finite stage $k$. 
 $H(\underline{c_{((0))}})$  is a closed game for Player I, hence determined in $\mathcal{M}^{*}_{\Gamma,R^0}$.

Case 1: $\mathcal{M}^{*}_{\Gamma,R^0}\models $``$\sigma$ is a winning strategy for Player I in $H(\underline{c_{((0))}})$''.

Let $\sigma^{*}$ be the strategy for Player I in $G$ obtained by following $\sigma$ and ignoring the auxiliary moves $\gamma_n$. If $x$ is (in $V$) a complete run according to $\sigma^{*}$, then $R_x \in p[\underline{S_3}^{\mathcal{M}^{*}_{\Gamma,R^0}}]$. By $\Sigma^1_4$-correctness of set-generic extensions of $\mathcal{M}^{*}_{\Gamma,Y}$,  $p[\underline{S_3}^{\mathcal{M}^{*}_{\Gamma,R^0}}] \subseteq p[S_3]$.
% Hence $R_x \in p[S_3]$. 
Hence $x \in A$. This shows $\sigma^{*}$ is winning for Player I. 

Case 2: $\mathcal{M}^{*}_{\Gamma,R^0} \models $``$\sigma$ is a winning strategy for Player II in $H(\underline{c_{((0))}})$''.

 We define a strategy $\sigma^{*}$ for II in $G$ as follows: if $\lh(s) = 2n+1$, then $ \sigma( s ) = a $ iff  the formula
\begin{displaymath}
 \sigma( (s(0),  \underline{c_{r_1}}), s(1), \dots, (s(2n), \underline{c_{r_{n}}})) = a
\end{displaymath}
belongs to $  \Gamma(R_{s \res n+1})$, where $r_k \in \dom(R_{s \res k+1}) \setminus \dom(R_{s \res k})$. 
We claim that $\sigma^{*}$ is a winning strategy for Player II. Suppose otherwise and $x$ is a complete run according to $\sigma^{*}$ but $x \in A$. Then $R_x$ is $\Pi^1_3$-wellfounded. 
Let $\mathcal{N} = \mathcal{M}^*(\Gamma, R_x ^{+})$, where $R_x^{+}$ extends $R_x$, $\dom(R_x^{+}) = \dom(R_x) \cup \se{((1))}$, $R_x^{+}((1))$ has degree 0. By iterability of $\Gamma$, $\mathcal{N}$ is a $\Pi^1_3$-iterable mouse.  By coherency of $\Gamma$, $\mathcal{N} \models $``$\sigma$ is a winning strategy for Player II in $(H(\underline{c_{((1))}}))^{\mathcal{N}}$''.
However, $x \oplus (\underline{c_{r_k}}^{\mathcal{N}})_{k<\omega}$ is a complete run according to $\sigma$ which is legal according to the rules of $(H(\underline{c_{((1))}}))^{\mathcal{N}}$. In $V$, the tree of attempts of building a complete run according to $\sigma$ which is legal according to the rules of $(H(\underline{c_{((1))}}))^{\mathcal{N}}$ is illfounded. By absoluteness of wellfoundedness, $\mathcal{N}$ can see such a complete run. Contradiction.
\end{proof}

 Theorem~\ref{thm:existence_3_sharp_equivalence} is a generalization of Martin's theorem that $0^{\#}$ implies $\Pi^1_1$-determinacy.  It proves $1 \Rightarrow 3$ in Question~\ref{ques:3_sharp_existence}. For a real $x$, a level-3 EM blueprint over $x$ is the obvious generalization of Definition~\ref{def:pre_level_3_EM_blueprint}, i.e., a function $\Gamma$ that sends $R$ to $\Gamma(R)$, a complete consistent $\mathcal{L}^{\underline{x},R}$-theory containing the additional axioms ``$\underline{x} \in \mathbb{R}$'' and ``$\underline{x}(i) = j$'' when $x(i) = j$. Assume $\boldpi{3}$-determinacy, $x^{3\#}$ is the unique  iterable, remarkable, level $\leq 2$ correct level-3 EM blueprint over $x$.  Thus, in combination with Neeman \cite{nee_opt_I,nee_opt_II} and Woodin \cite{SUW}, we reach an 
affirmative answer to the boldface version of Question~\ref{ques:3_sharp_existence}. 

\begin{mytheorem}
 \label{thm:3_sharp_existence_boldface}
Assume $\boldDelta{2}$-determinacy. The following are equivalent.
\begin{enumerate}
\item For all $x \in \mathbb{R}$, there is an iterable, remarkable, level $\leq 2$ correct level-3 EM blueprint over $x$.
\item For all $x \in \mathbb{R}$, there is an $(\omega_1,\omega_1)$-iterable $M_2^{\#}(x)$.
\item $\boldpi{3}$-determinacy.
\end{enumerate}
\end{mytheorem}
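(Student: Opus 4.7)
\medskip

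The plan is to close the loop $3 \Rightarrow 1 \Rightarrow 3$ by relativizing the earlier results to an arbitrary parameter $x \in \mathbb{R}$, and then to invoke the Neeman--Woodin correspondence between projective determinacy and the existence of $(\omega_1,\omega_1)$-iterable mice to obtain $2 \Leftrightarrow 3$. Throughout, an \emph{iterable, remarkable, level $\leq 2$ correct level-3 EM blueprint over $x$} is the obvious relativization of Definition~\ref{def:pre_level_3_EM_blueprint}, in the language $\mathcal{L}^{\underline{x},R}$ with the additional atomic diagram of $x$ as axioms, and with $L[T_3,x]$ playing the role of $L[T_3]$ throughout the axioms; $x^{3\#}$ denotes the associated theory.

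For $3 \Rightarrow 1$: I would show that under $\boldpi{3}$-determinacy, the object $x^{3\#}$ exists for each $x$ by rerunning the construction of $0^{3\#}$ from Section~\ref{sec:syntactical_3sharp} relative to $x$. The pieces to verify — coherency (Lemma~\ref{lem:3sharp_R_complexity}), iterability (Lemma~\ref{lem:3sharp_is_iterable}), unboundedness (Lemma~\ref{lem:3sharp_is_unbounded}), weak remarkability (Lemma~\ref{lem:3sharp_is_weakly_remarkable}), remarkability (Lemma~\ref{lem:EM_remarkability}), and level $\leq 2$ correctness (Lemma~\ref{lem:level_2_correct}) — all go through verbatim, since the existence of a firm club of potential level-3 indiscernibles for $M_{2,\infty}^{-}(x)$ only requires $\boldpi{3}$-determinacy applied to sets depending on $x$.

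For $1 \Rightarrow 3$: I would relativize Theorem~\ref{thm:existence_3_sharp_equivalence}. Given any real $x$ and a $\boldpi{3}(x)$ payoff set $A_x$ with associated effective level-3 system $(R_s)_{s \in \omega^{<\omega}}$ recursive in $x$, form the iterable mouse $\mathcal{M}^{*}_{x^{3\#}, R^0}$ and run the auxiliary closed game $H(\underline{c_{((0))}})$ as in the proof of Theorem~\ref{thm:existence_3_sharp_equivalence}, using clause (1) of the hypothesis for the real $x$. The two-case analysis — winning strategy for I lifts to $A_x$ via $\Sigma^1_4$-correctness of set-generic extensions, and a winning strategy for II is transferred to $V$ using coherency of $x^{3\#}$ — determines $A_x$. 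Since $x$ is arbitrary, $\boldpi{3}$-determinacy holds.

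Finally, $2 \Leftrightarrow 3$ is the Neeman--Woodin equivalence at the $M_2^{\#}$ level (\cite{nee_opt_I,nee_opt_II} together with \cite{SUW}), which states that $\boldpi{3}$-determinacy holds iff $M_2^{\#}(x)$ exists and is $(\omega_1,\omega_1)$-iterable for every real $x$. The main obstacle I anticipate is a bookkeeping one rather than a conceptual one: one must be careful that the iterability demanded of the EM blueprint in (1), which is $\Pi^1_3$-iterability of each $\mathcal{M}_{x^{3\#},Y}$, matches the fine-structural $(\omega_1,\omega_1)$-iterability of $M_2^{\#}(x)$ in (2) — this alignment is already implicit in Lemma~\ref{lem:iterable_implies_mouse_order} and in the identification $(L[\underline{S_3}])^{M_{2,\infty}^{-}(x)} = M_{2,\infty}^{-}$, so the work reduces to noting that the direct limit system produced by the blueprint (via its level $\leq 2$ invariance axiom) is cofinal in $\mathcal{F}_{2,x}$, whence $M_2^{\#}(x)$ can be extracted from the wellfounded part of $\mathcal{M}^{*}_{x^{3\#},Y}$ for sufficiently large $Y$.
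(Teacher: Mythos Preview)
Your proposal is correct and matches the paper's own argument: the paper states the theorem immediately after Theorem~\ref{thm:existence_3_sharp_equivalence} and the definition of $x^{3\#}$, noting only that it follows ``in combination with Neeman \cite{nee_opt_I,nee_opt_II} and Woodin \cite{SUW}'' as the boldface version of Question~\ref{ques:3_sharp_existence}. Your concern in the last paragraph about aligning $\Pi^1_3$-iterability with $(\omega_1,\omega_1)$-iterability is unnecessary: since $2 \Leftrightarrow 3$ is handled entirely by the external Neeman--Woodin result, you never need a direct bridge between (1) and (2), and there is no need to extract $M_2^{\#}(x)$ from the EM models.
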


Recall the basic fact that $L$ is the Skolem hull of the class of Silver indiscernibles. 
We exploit its higher level analog. Clearly, $M_{2,\infty}^{-}$ is not the Skolem hull of $I^{(3)}$, as by remarkability, the Skolem hull contains only countably many ordinals below $c^{(3)}_{\omega}$. 
The missing part will be generated by ordinals below $u_{\omega}$ in a specific way.

\begin{mylemma}
  \label{lem:iterates_dominated_by_level_2_ultrapower}
  Suppose $\mathcal{N}$ is $\Pi^1_3$-iterable and satisfies $0^{3\#}(\emptyset)$. Then for any limit ordinal $\alpha \in \mathcal{N}$,
  \begin{displaymath}
    \pi_{\mathcal{N},\infty}( \alpha ) = \sup \set{\pi_{\mathcal{N}^T, \infty} (\beta) }{ T \text{ is $\Pi^1_2$-wellfounded, } \beta < j^T_{\mathcal{N}}(\alpha)}. 
  \end{displaymath}
\end{mylemma}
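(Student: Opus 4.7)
The inequality $\geq$ is the easy half. Since $\mathcal{N} \models 0^{3\#}(\emptyset)$, Lemma~\ref{lem:pre_level_2_correct} tells us $\mathcal{N}$ verifies Axioms~\ref{item:EM_ZFC}--\ref{item:level-2-embedding_invariance} of Definition~\ref{def:pre_level_3_EM_blueprint}, so Lemma~\ref{lem:iterable_level_2_invariance_absolute_in_V}(1a) applies: for every $\Pi^1_2$-wellfounded level $\leq 2$ tree $T$, the map $j^T_{\mathcal{N}} : \mathcal{N} \to \mathcal{N}^T$ is essentially an iteration map between $\Pi^1_3$-iterable mice. By Dodd-Jensen this yields $\pi_{\mathcal{N}^T,\infty} \circ j^T_{\mathcal{N}} = \pi_{\mathcal{N},\infty}$. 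Hence for any $\beta < j^T_{\mathcal{N}}(\alpha)$, by order preservation $\pi_{\mathcal{N}^T,\infty}(\beta) < \pi_{\mathcal{N}^T,\infty}(j^T_{\mathcal{N}}(\alpha)) = \pi_{\mathcal{N},\infty}(\alpha)$, so the supremum on the right is bounded by $\pi_{\mathcal{N},\infty}(\alpha)$.

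For the reverse inequality, fix $\delta < \pi_{\mathcal{N},\infty}(\alpha)$. Since $M_{2,\infty}^{-}$ is the direct limit of the countable $\Pi^1_3$-iterable mice in $\mathcal{F}_2$ and $\mathcal{N}$ itself lies in $\mathcal{F}_2$, we may fix an iterate $\mathcal{P}$ of $\mathcal{N}$ in the direct limit system, with iteration map $i : \mathcal{N} \to \mathcal{P}$, together with $\gamma \in \mathcal{P}$ satisfying $\pi_{\mathcal{P},\infty}(\gamma) = \delta$. Because $\pi_{\mathcal{P},\infty}$ is order-preserving and $\pi_{\mathcal{P},\infty}(i(\alpha)) = \pi_{\mathcal{N},\infty}(\alpha) > \delta$, we have $\gamma < i(\alpha)$ automatically. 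The goal is to locate a $\Pi^1_2$-wellfounded $T$ together with an elementary factor $k : \mathcal{P} \to \mathcal{N}^T$ with $k \circ i = j^T_{\mathcal{N}}$; granted this, setting $\beta = k(\gamma)$ gives $\beta < k(i(\alpha)) = j^T_{\mathcal{N}}(\alpha)$ and $\pi_{\mathcal{N}^T,\infty}(\beta) = \pi_{\mathcal{P},\infty}(\gamma) = \delta$, finishing the proof.

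The main obstacle is therefore establishing that the directed subsystem $\{\mathcal{N}^T : T \text{ is a } \Pi^1_2\text{-wellfounded level } \leq 2 \text{ tree}\}$ is cofinal in the Dodd-Jensen order among the countable $\Pi^1_3$-iterable iterates of $\mathcal{N}$. I would derive this from two ingredients: first, the fact that every member of the previous display is itself $\Pi^1_3$-iterable with $j^T_{\mathcal{N}}$ essentially an iteration (Lemma~\ref{lem:iterable_level_2_invariance_absolute_in_V}), so each $\mathcal{N}^T$ is Dodd-Jensen equivalent to $\mathcal{N}$ at the base level; and second, the analysis of descriptions in Section~\ref{sec:level-2-analysis}, particularly Theorem~\ref{thm:jQ_LT2_regularity} and Lemma~\ref{lem:Q_desc_uniform_indis}, which show that the seeds $\seed^{T,W}_{\mathbf{D}}$ for $\Pi^1_2$-wellfounded $T$ and $(T,W)$-descriptions $\mathbf{D}$ exhaust the uniform indiscernibles $u_n$. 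Given $\mathcal{P}$, one would compare $\mathcal{P}$ against a sufficiently large $\mathcal{N}^T$ using the standard coiteration of $\Pi^1_3$-iterable mice; since both sides embed into $M_{2,\infty}^-$ via iteration maps commuting with $i$ and $j^T_{\mathcal{N}}$ respectively, Dodd-Jensen minimality forces the comparison to be one-sided above the relevant strong cutpoint, producing the desired $k$. The delicate point is choosing $T$ large enough so that $\mathcal{N}^T$ Dodd-Jensen dominates $\mathcal{P}$ while the factor map $k$ respects the commutativity $k \circ i = j^T_{\mathcal{N}}$; this commutativity is what Axiom~\ref{item:level-2-embedding_invariance} of Definition~\ref{def:pre_level_3_EM_blueprint} is tailored to supply.
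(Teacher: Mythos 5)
Your $\geq$ direction is fine, and it is essentially what the paper implicitly supplies via Dodd--Jensen. The problem is in the $\leq$ direction, where your plan is circular.

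You reduce the $\leq$ inequality to the claim that the system $\{\mathcal{N}^T : T\ \Pi^1_2\text{-wellfounded}\}$ is cofinal (dense) among all $\Pi^1_3$-iterable iterates of $\mathcal{N}$, with the factor map $k$ commuting with $i$ and $j^T_{\mathcal{N}}$. But that is precisely Lemma~\ref{lem:EM_model_ultrapower_dense_in_iterates}, which the paper \emph{derives} from Lemma~\ref{lem:iterates_dominated_by_level_2_ultrapower} (see the text immediately following Definition~\ref{def:g_N}: ``By Lemma~\ref{lem:iterates_dominated_by_level_2_ultrapower}, $\mathcal{G}_{\mathcal{N}}$ is dense in $\mathcal{I}_{\mathcal{N}}$''). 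The ``main obstacle'' you flag is the lemma you are supposed to prove. Moreover, the tools you invoke to overcome it --- Theorem~\ref{thm:jQ_LT2_regularity}, Lemma~\ref{lem:Q_desc_uniform_indis}, and a one-sided coiteration above a strong cutpoint --- do not obviously let you choose $T$ so that $\mathcal{P}$ embeds into $\mathcal{N}^T$ with the required commutativity: $\mathcal{N}^T\sim_{DJ}\mathcal{N}\sim_{DJ}\mathcal{P}$ for every $T$, so Dodd--Jensen comparison alone does not tell you which $T$ is ``large enough,'' and for a generic iterate $\mathcal{P}$ of $\mathcal{N}$ there is no a priori reason the comparison against $\mathcal{N}^T$ should be one-sided.

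The paper takes a different route. It packages the statement as a single first-order ``universality of level $\leq 2$ ultrapowers'' axiom about $K$ and reduces, by $\Sigma^1_3$-correctness and elementarity, to verifying it in $M^-_{2,\infty}$. Verification there is by a cofinality split: every limit $\alpha<\bolddelta{3}$ has $\admistwobold$-cofinality $\omega$, $u_1$, or $u_2$. In the $\omega$ case the direct-limit map is continuous at $\alpha$ and one just uses $T=Q^0$. In the $u_1$ and $u_2$ cases one picks a cofinal map $F$ in $\admistwobold$ into $\alpha$ and observes that the seeds $\seed^{T'}_{(1,(0))}$ (respectively the corresponding level-2 seeds), pushed along finite subtrees, already generate a cofinal subset of $u_1$ (resp.\ $u_2$) of the right form. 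That argument produces the required $\beta$'s directly, without ever needing a factor map from an arbitrary iterate into some $\mathcal{N}^T$. You should reorganize the $\leq$ direction around this cofinality analysis.
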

\begin{proof}
Let the \emph{universality of level $\leq 2$ ultrapowers axiom} be the following:
\begin{quote}
    If $\alpha$ is a limit ordinal and $\xi > \alpha$ is a cardinal and cutpoint, then $V^{\coll(\omega,\xi)}$ satisfies $\pi_{K | \xi, \infty} (\alpha) = \sup \{ \pi_{(\underline{j^T})^K (K | \xi),\infty} (\beta): T $ is $\Pi^1_2$-wellfounded, $\beta < (\underline{j^T})^K (\alpha)\}$, where $(\underline{j^T})^K$ denotes the direct limit of $(\underline{j^{T'}})^K$ for $T'$ a finite subtree of $T$.
\end{quote}
By elementarity and absoluteness, it suffices to show that $M_{2,\infty}^{-}  $ is a model of this axiom.
Fix $\alpha < \bolddelta{3}$.

  Case 1: $\cf^{\admistwobold}( \alpha ) = \omega$. 

Then $M_{2,\infty}^{-} \models ``\cf(\alpha)$ is not measurable''. So when $\alpha < \xi < \bolddelta{3}$,  $(M_{2,\infty}^{-})^{\coll(\omega,\xi)} \models ``\pi_{K | \xi, \infty}$ is continuous at $\alpha$''.

Case 2:  $\cf^{\admistwobold}( \alpha) = u_1$. %, $d \in \se{1,2}$. 

Let $F : u_1 \to \alpha$ be order preserving and cofinal, $F \in \admistwobold$. Let $z \in \mathbb{R}$ so that $F$ is $\Delta_1$-definable over $\admistwo{z}$ from $\se{T_2,z}$. Let $\mathcal{P} \in \mathcal{F}_{2, z}$ and $\bar{\alpha}, \bar{F} \in \mathcal{P} $ so that $\pi_{\mathcal{P}, \infty}(\bar{\alpha}, \bar{F}) = (\alpha, F)$. Let $\mathcal{Q} = (L[ \underline{S_3}])^{\mathcal{P}}$. So for any $\Pi^1_2$-wellfounded $T$, $\mathcal{Q}^T =  ( L [ \underline{S_3}])^{\mathcal{P}^T}$ and $j^T_{\mathcal{Q}} = j^T_{\mathcal{P}} \res \mathcal{Q}^T$. By absoluteness, it suffices to show that
\begin{displaymath}
  \alpha = \sup \set{ \pi_{\mathcal{P}^T, \infty} (\beta) }{ T \text{ is $\Pi^1_2$-wellfounded, } \beta < j^T_{\mathcal{P}}(\alpha)}.
\end{displaymath}
This would follow from
\begin{displaymath}
  u_1 = \sup \set{ \pi_{\mathcal{P}^T, \infty} (\beta) }{ T \text{ is $\Pi^1_2$-wellfounded, } \beta < (\underline{u_1})^{\mathcal{P}^T}}.
\end{displaymath}
The last equality is because $ \{ \pi_{\mathcal{P}^T,\infty}  \circ j^{T',T}_{\mathcal{P}}(   (\underline{\seed^{T'}_{(1,(0))}})^{\mathcal{P}}) : T$ is $\Pi^1_2$-wellfounded, $T'$ is a finite subtree of $T$, $(0) \in \comp{1}{T}'\}$ is a subset of the right hand side and has order type $\omega_1$. 

Case 3: $\cf^{\admistwobold}( \alpha) = u_2$.

Similar to Case 2.
\end{proof}

\begin{mydefinition}
  \label{def:g_N}
  If $\mathcal{N}$ is a structure that satisfies Axioms~\ref{item:EM_ZFC}-\ref{item:level-2-embedding_invariance} in Definition~\ref{def:pre_level_3_EM_blueprint} and the universality of level $\leq 2$ ultrapowers axiom, then
  \begin{displaymath}
    \mathcal{G}_{\mathcal{N}}
  \end{displaymath}
is the direct system consisting of models $\mathcal{N}^T$ for which $T$ is a $\Pi^1_2$-wellfounded level $\leq 2$ tree and maps $\pi^{T,T'}_{\mathcal{N}} : \mathcal{N}^T \to \mathcal{N}^{T'}$ for $\pi$ minimally factoring $T,T'$. Define
\begin{align*}
  \mathcal{N}_{\infty}& = \dirlim \mathcal{G}_{\mathcal{N}},\\
\pi_{\mathcal{N}^T,\mathcal{N}_{\infty}}&: \mathcal{N}^T \to \mathcal{N}_{\infty} \text{ is tail of the direct limit map.}
\end{align*}
\end{mydefinition}

If in addition, $\mathcal{N}$ is countable $\Pi^1_3$-iterable mouse, then  $\mathcal{G}_{\mathcal{N}}$ is a subsystem of $\mathcal{I}_{\mathcal{N}}$.
By Lemma~\ref{lem:iterates_dominated_by_level_2_ultrapower}, $\mathcal{G}_{\mathcal{N}}$ is dense in $\mathcal{I}_{\mathcal{N}}$, so there is no ambiguity in the notation $\mathcal{N}_{\infty}$: 

\begin{mylemma}
  \label{lem:EM_model_ultrapower_dense_in_iterates}
Suppose $\mathcal{N}$ is  a countable $\Pi^1_3$-iterable mouse and 
satisfies Axioms~\ref{item:EM_ZFC}-\ref{item:level-2-embedding_invariance} in Definition~\ref{def:pre_level_3_EM_blueprint} and the universality of level $\leq 2$ ultrapowers axiom.
If $ \pi : \mathcal{N} \to \mathcal{P}$ is an iteration map, then there exist a $\Pi^1_2$-wellfounded $T$ and $\psi : \mathcal{P} \to \mathcal{N}^T$ such that $\psi \circ \pi = j^T_{\mathcal{N}}$ and $\psi$ is essentially an iteration map. % In particular, the direct limit of $\mathcal{G}_{\mathcal{N}}$ is $\mathcal{N}_{\infty}$.
\end{mylemma}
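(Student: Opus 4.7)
The plan is to exploit Lemma~\ref{lem:iterates_dominated_by_level_2_ultrapower} to show that $\mathcal{G}_{\mathcal{N}}$ is not merely pointwise dense in $\mathcal{I}_{\mathcal{N}}$ but actually absorbs the whole countable iterate $\mathcal{P}$, and then build $\psi$ by inverting the direct-limit embedding $\pi_{\mathcal{N}^T,\infty}$. The point is that once $\ran(\pi_{\mathcal{P},\infty})\subseteq\ran(\pi_{\mathcal{N}^T,\infty})$ for some single $\Pi^1_2$-wellfounded $T$, the rest is formal.

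First, Dodd--Jensen uniqueness applied to the two iteration maps $\pi_{\mathcal{N},\infty}$ and $\pi_{\mathcal{P},\infty}\circ\pi$ from $\mathcal{N}$ into $\mathcal{N}_{\infty}$ gives $\pi_{\mathcal{P},\infty}\circ\pi = \pi_{\mathcal{N},\infty}$. Next, for each ordinal $\beta\in\mathcal{P}$, choose $\alpha\in\mathcal{N}$ with $\pi(\alpha)>\beta$, so that $\pi_{\mathcal{P},\infty}(\beta)<\pi_{\mathcal{N},\infty}(\alpha)$. Lemma~\ref{lem:iterates_dominated_by_level_2_ultrapower} then yields a $\Pi^1_2$-wellfounded $T_\beta$ and $\gamma_\beta < j^{T_\beta}_{\mathcal{N}}(\alpha)$ with $\pi_{\mathcal{N}^{T_\beta},\infty}(\gamma_\beta)$ above $\pi_{\mathcal{P},\infty}(\beta)$; since the axioms of Definition~\ref{def:pre_level_3_EM_blueprint} and universality of level $\leq 2$ ultrapowers transfer by elementarity to $\mathcal{N}^{T_\beta}$, a further application of Lemma~\ref{lem:iterates_dominated_by_level_2_ultrapower} inside $\mathcal{N}^{T_\beta}$ upgrades this to $\pi_{\mathcal{P},\infty}(\beta)\in\ran(\pi_{\mathcal{N}^{T_\beta},\infty})$. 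Since $\mathcal{P}$ is countable, enumerate its ordinals as $(\beta_n)_{n<\omega}$ and assemble the $T_{\beta_n}$ into a single $\Pi^1_2$-wellfounded $T$ by iterated tensor products, using that $\otimes$ on $\Pi^1_2$-wellfounded level $\leq 2$ trees is $\Pi^1_2$-wellfounded (Lemma~\ref{lem:level_2_ultrapower_iteration_reduce}) and that the resulting directed limit inside $\mathcal{G}_{\mathcal{N}}$ remains in $\mathcal{G}_{\mathcal{N}}$. Injectivity of $\pi_{\mathcal{N}^T,\infty}$ on ordinals then permits the definition
\begin{displaymath}
\psi(\beta)\;=\;\pi_{\mathcal{N}^T,\infty}^{-1}\bigl(\pi_{\mathcal{P},\infty}(\beta)\bigr).
\end{displaymath}

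Unwinding, $\pi_{\mathcal{N}^T,\infty}\circ\psi\circ\pi = \pi_{\mathcal{P},\infty}\circ\pi = \pi_{\mathcal{N},\infty} = \pi_{\mathcal{N}^T,\infty}\circ j^T_{\mathcal{N}}$, so injectivity gives $\psi\circ\pi = j^T_{\mathcal{N}}$. To see $\psi$ is essentially an iteration map, compare the $\Pi^1_3$-iterable mice $\mathcal{P}$ and $\mathcal{N}^T$ (the latter iterable by Lemma~\ref{lem:iterable_level_2_invariance_absolute_in_V}(1)(a)) to get a common iterate $\mathcal{Q}$ with iteration maps $\sigma_1\colon\mathcal{P}\to\mathcal{Q}$ and $\sigma_2\colon\mathcal{N}^T\to\mathcal{Q}$; projecting to $\mathcal{N}_\infty$ yields $\pi_{\mathcal{Q},\infty}\circ\sigma_1 = \pi_{\mathcal{P},\infty} = \pi_{\mathcal{N}^T,\infty}\circ\psi = \pi_{\mathcal{Q},\infty}\circ\sigma_2\circ\psi$, and injectivity of $\pi_{\mathcal{Q},\infty}$ on ordinals forces $\sigma_1 = \sigma_2\circ\psi$, as required. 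The main obstacle is the countable amalgamation step: a bare countable union of $\Pi^1_2$-wellfounded trees need not remain $\Pi^1_2$-wellfounded, so the $T_{\beta_n}$ must be chosen coherently and absorbed using the directed structure of $\mathcal{G}_{\mathcal{N}}$ under $\otimes$ together with a Löwenheim--Skolem-style argument inside the direct limit, the output being a single $\Pi^1_2$-wellfounded $T$ whose image contains the countable set $\pi_{\mathcal{P},\infty}''\mathcal{P}$.
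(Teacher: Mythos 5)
Your overall strategy --- Dodd--Jensen for uniqueness, then cover $\pi_{\mathcal{P},\infty}''\ord^{\mathcal{P}}$ by a single $\ran(\pi_{\mathcal{N}^T,\infty})$ and invert --- is the right shape, and the closing unwinding and Dodd--Jensen comparison at the end are fine once $\psi$ exists. But there are two genuine gaps in the middle that your proof does not close.

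\textbf{The ``upgrade'' step is a non-sequitur.} Lemma~\ref{lem:iterates_dominated_by_level_2_ultrapower}, applied to $\mathcal{N}$ or to $\mathcal{N}^{T_\beta}$, only asserts that the set $\set{\pi_{\mathcal{N}^{T'},\infty}(\delta)}{T',\,\delta < j^{T'}(\alpha)}$ is cofinal in $\pi_{\mathcal{N},\infty}(\alpha)$; it never asserts that a particular target ordinal is attained. Since each $\pi_{\mathcal{N}^{T_\beta},\infty}$ is itself a direct limit of iteration maps, its range has ``holes'' at every critical point of the iteration and is not an initial segment of ordinals. Knowing $\pi_{\mathcal{N}^{T_\beta},\infty}(\gamma_\beta) > \pi_{\mathcal{P},\infty}(\beta)$ therefore says nothing about whether $\pi_{\mathcal{P},\infty}(\beta)$ lies in $\ran(\pi_{\mathcal{N}^{T_\beta},\infty})$. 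Relativizing the lemma inside $\mathcal{N}^{T_\beta}$ just reproduces another cofinality statement one level down; it does not convert $>$ into $\in\ran$. In fact, ``$\pi_{\mathcal{P},\infty}(\beta)\in\ran(\pi_{\mathcal{N}^{T},\infty})$ for some $T$'' is essentially an instance of the lemma you are proving (the pointwise form of density of $\mathcal{G}_{\mathcal{N}}$ in $\mathcal{I}_{\mathcal{N}}$), so it cannot be extracted from Lemma~\ref{lem:iterates_dominated_by_level_2_ultrapower} by a local elementarity transfer. What is actually needed is a comparison argument: co-iterate $\mathcal{P}$ with $\mathcal{N}^{T}$ and argue that for $T$ chosen large enough the co-iteration fixes the $\mathcal{N}^{T}$-side, so $\mathcal{P}$ iterates into $\mathcal{N}^{T}$ --- and ``large enough'' has to be engineered from the boundedness information, which you never do.

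\textbf{The countable amalgamation is only a gesture.} You correctly flag that a bare countable union of $\Pi^1_2$-wellfounded level $\leq 2$ trees can fail to be $\Pi^1_2$-wellfounded, but the proposed remedy (``coherent choice'', ``directed structure under $\otimes$'', ``L\"owenheim--Skolem inside the direct limit'') is a description of the obstruction, not a construction that removes it. Lemma~\ref{lem:level_2_ultrapower_iteration_reduce} gives you $T\otimes Q$ for two \emph{finite} trees; Lemma~\ref{lem:level_2_amalgamation} and Lemma~\ref{lem:order_type_realizable_level_2} are likewise finitary. What is needed is $\sigma$-directedness of the system of $\Pi^1_2$-wellfounded level $\leq 2$ trees under minimal factoring maps (equivalently, that any countable coherent family of $\llbracket\cdot\rrbracket$-values below $u_\omega$ is realized by a single $\Pi^1_2$-wellfounded level $\leq 2$ tree), and the paper does not state such a lemma. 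An increasing $\omega$-chain of $\Pi^1_2$-wellfounded trees under $\subseteq$ can acquire a new illfounded branch at the limit; controlling this requires an argument, not an appeal to the direct limit existing. Without this, even if the $T_{\beta_n}$ were produced correctly, the single $T$ does not materialize.
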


The direct system $\mathcal{G}_{\mathcal{N}}$ is useful even when $\mathcal{N}$ is not $\Pi^1_3$-iterable. In the proof of the level-4 Kechris-Martin theorem in Section~\ref{sec:level-4-sharp}, we will inevitably have to deal with partially iterable level-3 EM blueprints. The structure $\mathcal{N}$ will be the EM model built from a partially iterable level-3 EM blueprint. The advantage of the (possibly illfounded) direct limit $\mathcal{N}_{\infty}$ is that the order type of its ordinals is easily codable by a subset of $u_{\omega}$.  If $X$ is a finite level $\leq 2$ tree, $a \in{\mathcal{N}}$, $\vec{\beta} = (\comp{d}{\beta}_x)_{(d,x) \in \dom(X)}$ is represented by both $T$ and $T'$, then $\pi_{\mathcal{N}^T,\infty}  \circ j^{X,T}_{\mathcal{N}}(a) =\pi_{\mathcal{N}^{T'},\infty}  \circ j^{X,T'}_{\mathcal{N}}(a) $.  We can define 
\begin{displaymath}
  \pi_{\mathcal{N}, X, \vec{\beta}, \infty} (a) = \pi_{\mathcal{N}^T, \infty} \circ j^{X,T}_{\mathcal{N}} (a)
\end{displaymath}
for $\vec{\beta}$ represented by $T$. So
\begin{displaymath}
  \mathcal{N}_{\infty} = \set{\pi_{\mathcal{N},X,\vec{\beta},\infty}(a)}{a \in \mathcal{N},X \text{ finite level $\leq 2$ tree}, \vec{\beta}\in [\omega_1]^{X \uparrow}} .
\end{displaymath}
Essentially, the inner model theoretic comparison between mice is replaced by the comparison between $\Pi^1_2$-wellfounded level $\leq 2$ trees in Theorem~\ref{thm:factor_ordertype_embed_equivalent_lv2}.

A \emph{level $\leq 3$ code} for an ordinal in $\bolddelta{3}$ is of the form
\begin{displaymath}
  (R, \vec{\gamma}, X, \vec{\beta}, % (\gcode{ \tau_{(d,x)}})_{(d,x) \in \dom(X)} 
  \gcode{\sigma})
\end{displaymath}
such that % for each $(d,x) \in \dom(X)$, $(R,X, \vec{\beta}, \gcode{\tau_{(d,x)}})$ is a level $\leq 2$ code for an ordinal in $u_{\omega}$ and $(\sharpcode{(R,X, \vec{\beta}, \gcode{\tau_{(d,x)}})})_{(d,x) \in \dom(X)}$ 
$R$ is a finite level-3 tree, $\vec{\gamma}$ respects $R$, $X$ is a finite level $\leq 2$ tree, $\vec{\beta}$
respects $X$, and  $\sigma$ is an $\mathcal{L}^{R}$-Skolem term for an ordinal. It codes the ordinal
\begin{displaymath}
      \sharpcode{ (R, \vec{\gamma}, X, \vec{\beta}, \gcode{\sigma})}  =
  \pi_{\mathcal{M}^{*}_{0^{3\#},R}, X, \vec{\beta}, \infty}    
(
 \sigma^{\mathcal{M}^{*}_{0^{3\#},R}} ( (\underline{c_r})_{r \in \dom(R)})).
\end{displaymath}
By Lemmas~\ref{lem:EM_model_ultrapower_dense_in_iterates},  % and \ref{lem:silver_indiscernibles_u_omega_in_the_hull}, 
every ordinal in $\bolddelta{3}$ has a level $\leq 3$ code. The evaluation function on level $\leq 3$ codes is $\Sigma^1_4(0^{3\#})$, and hence definable over $M_{2,\infty}^{-}(0^{3\#})$. 
%  More precisely, if $\xi < \bolddelta{3}$ is a limit ordinal, then every ordinal below $c^{(3)}_{\xi + \omega}$ has a level $\leq 3$ code

\subsection{Level-3 indiscernibles}
\label{sec:level-3-indisc}

If $\vec{\gamma}$ respects a level-3 tree $R$, define
\begin{displaymath}
  c_{\vec{\gamma}} = (c_{R_{\tree}(r),\gamma_r}^{(3)})_{r \in \dom(R)}
\end{displaymath}
which strongly respects $R$. Combined with Lemma~\ref{lem:gamma_r_order}, this leads to the order of level-3 indiscernibles for $M_{2,\infty}^{-}$: $c^{(3)}_{Q, \gamma} < c^{(3)}_{Q',\gamma'}$ iff letting $(\gamma_i)_{1 \leq i \leq k}$ be the $Q$-approximation sequence of $\gamma$ and $(\gamma'_i)_{1 \leq i \leq k'}$ be the $Q'$-approximation sequence of $\gamma'$, then $(\gamma_i)_{1 \leq i \leq k} \concat (-1) < _{BK} (\gamma_i')_{1 \leq i \leq k'} \concat (-1)$.
We prove the general remarkability property of $0^{3\#}$ based on this order.

\begin{mylemma}[General remarkability]
  \label{lem:remarkable_general}
  Suppose $\vec{\gamma}$ and $\vec{\gamma}'$ both respect a finite level-3 tree $R$. Suppose $r \in \dom(R)$ and for any $s \prec^R r$, $\gamma_s = \gamma'_s$. Then for any $\mathcal{L}$-Skolem term $\tau$,
  \begin{displaymath}
    M_{2,\infty}^{-} \models \tau(c_{\vec{\gamma}}) < c_{R_{\tree}(r), \gamma_r}^{(3)} \to \tau(c_{\vec{\gamma}}) = \tau(c_{\vec{\gamma}'}).
  \end{displaymath}
\end{mylemma}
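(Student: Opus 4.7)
The plan is to reduce the statement to the weak remarkability of $0^{3\#}$ (Lemma~\ref{lem:weakly_remarkable_strengthening}(1)) via three successive moves: amalgamation of $(R,\vec\gamma)$ and $(R,\vec\gamma')$ into a common tree, realization in a $\Pi^1_3$-wellfounded tree, and a length-1 bounding trick that puts the situation into the exact shape of weak remarkability.

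First I would invoke Lemma~\ref{lem:level_3_amalgamation} on $(R,\vec\gamma)$ and $(R,\vec\gamma')$ to obtain a common level-3 tree $S$, a tuple $\vec\delta$ respecting $S$, and factoring maps $\rho,\rho':R\to S$ with $\delta_{\rho(s)}=\gamma_s$ and $\delta_{\rho'(s)}=\gamma'_s$. Because $\gamma_s=\gamma'_s$ for every $s\prec^R r$, the amalgamation can be arranged so that $\rho(s)=\rho'(s)$ for all such $s$. Then by Lemma~\ref{lem:order_type_realizable}, enlarge $S$ to a $\Pi^1_3$-wellfounded level-3 tree $Y$ with $\llbracket s\rrbracket_Y=\delta_s$ for $s\in\dom(S)$. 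By coherency of $0^{3\#}$ together with Lemma~\ref{lem:iterable_level_2_invariance_absolute_in_V}(2) applied to the embeddings $\rho^Y_{0^{3\#}},(\rho')^Y_{0^{3\#}}:\mathcal{M}_{0^{3\#},R}\to\mathcal{M}_{0^{3\#},Y}$ and to the iteration map from $\mathcal{M}^*_{0^{3\#},Y}$ to $M_{2,\infty}^{-}$, the desired conclusion in $M_{2,\infty}^{-}$ transfers to the equality
\begin{displaymath}
\tau^{\mathcal{M}_{0^{3\#},Y}}\bigl(c_{0^{3\#},Y,\rho(\cdot)}\bigr)=\tau^{\mathcal{M}_{0^{3\#},Y}}\bigl(c_{0^{3\#},Y,\rho'(\cdot)}\bigr)
\end{displaymath}
inside $\mathcal{M}_{0^{3\#},Y}$, with the hypothesis reading $\tau^{\mathcal{M}_{0^{3\#},Y}}(c_{0^{3\#},Y,\rho(\cdot)})<c_{0^{3\#},Y,\rho(r)}$.

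Next I would replace the bound $c_{0^{3\#},Y,\rho(r)}$ by a length-1 indiscernible. Enlarge $Y$ to a $\Pi^1_3$-wellfounded $Y^+$ adjoining a fresh length-1 node $t$ chosen so that $t(0)$ sits $<_{BK}$-above all the first coordinates of $\rho(\dom(R))\cup\rho'(\dom(R))$; by unboundedness (Lemma~\ref{lem:3sharp_is_unbounded}) one obtains $c_{0^{3\#},Y^+,\rho(r)}<c_{0^{3\#},Y^+,t}$, so the hypothesis persists with $t$ as the new bound. By coherency of $0^{3\#}$ applied to the inclusion $Y\hookrightarrow Y^+$, the equality to prove is equivalent to its analogue in $\mathcal{M}_{0^{3\#},Y^+}$. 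Finally, verify that the tuples $\rho(\dom(R))$ and $\rho'(\dom(R))$ inside $\dom(Y^+)$ satisfy the shift hypothesis of Lemma~\ref{lem:weakly_remarkable_strengthening}(1): they agree on the indices $s\prec^R r$ (where the amalgamation made $\rho$ and $\rho'$ coincide) and, on the remaining indices, form a $Y^+$-shift of one another, and $t$ dominates both in the $<_0$-sense. Applying Lemma~\ref{lem:weakly_remarkable_strengthening}(1) with $\vec r=\rho(\{s\prec^R r\})$, with $\vec s,\vec s'$ the tuples $\rho(\{s\not\prec^R r\}),\rho'(\{s\not\prec^R r\})$, and with bound $t$ yields the required equality.

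The principal obstacle lies in the second and third steps, namely in choosing $Y^+$ and $t$ so that the shift structure demanded by Lemma~\ref{lem:weakly_remarkable_strengthening}(1) is genuinely satisfied. The combinatorial difficulty is that the ``shared'' nodes (images of $s\prec^R r$) and ``shifted'' nodes can have overlapping first coordinates --- for instance, both may have first coordinate equal to $r(0)$ --- so a single length-1 separator $t$ with $\vec r<_0 t\leq_0\vec s,\vec s'$ might not exist in one step. I expect this to be handled by an inductive peeling argument that removes one disagreement at a time, each peeling being a single application of weak remarkability after an appropriate enlargement of the ambient tree through a universal level-3 tree in the sense preceding Lemma~\ref{lem:universal_agree_on_EM_model}; the outer induction is on the $<_{BK}$-lexicographic structure of $\corner{r}$ together with the set of positions at which $\rho$ and $\rho'$ disagree.
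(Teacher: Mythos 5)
Your strategy of funnelling everything through Lemma~\ref{lem:weakly_remarkable_strengthening}(1) has a gap, and you correctly flag where it is, but the proposed fix does not close it. Weak remarkability needs a \emph{length-1} separator $t$ with $\vec{r} <_0 t \leq_0 \vec{s}$, i.e.\ the shared nodes and the shifted nodes must be separated by their \emph{first coordinates alone}. But if $\lh(r) > 1$ and $\vec{\gamma},\vec{\gamma}'$ disagree at a node $s \succeq^R r$ with $s(0) = r(0)$ (a disagreement ``deep in the tree''), then there will typically also be fixed nodes $s' \prec^R r$ with $s'(0) = r(0)$, since $\prec^R$ compares $\corner{\cdot}$-codes lexicographically and not first coordinates. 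No length-1 $t$ can then satisfy $\vec{r} <_0 t \leq_0 \vec{s}$. Passing to a universal level-3 tree $Y^+$ and adjoining a fresh $t$ does not repair this: the factoring maps $\rho,\rho'$ must preserve the tree structure, and nodes that share a first coordinate below $r$ and above $r$ continue to do so after the embedding. The ``inductive peeling'' you invoke is precisely the missing content of the proof.

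The paper does something different. It first stratifies the deformation from $\vec{\gamma}$ to $\vec{\gamma}'$ by the depth $l(s)$ of the first disagreement of $\corner{s\res l}$ with $\corner{r\res l}$, producing a chain $\vec{\gamma} = \vec{\gamma}^0, \dots, \vec{\gamma}^{\lh(r)} = \vec{\gamma}'$ where each step changes only nodes at a fixed depth $l_0$. The case $l_0 = 0$ (where first coordinates really do separate) is exactly Lemma~\ref{lem:weakly_remarkable_strengthening}. For $l_0 > 0$, where your approach stalls, the paper abandons the reduction to weak remarkability entirely: after a sliding step and an amalgamation (Lemma~\ref{lem:level_3_amalgamation}), it fixes a club $D$ of fixed points of $\gamma \mapsto c^{(3)}_\gamma$ so that $c_{\vec{\gamma}}=\vec{\gamma}$, builds a $\gamma_r$-indexed family $\vec{\delta}^\xi$ of compatible respecting tuples, sets $\epsilon^\xi = \tau^{M_{2,\infty}^-}(c_{\vec{\delta}^\xi})$, and derives a contradiction from either a strictly descending sequence of ordinals or a strictly increasing one bounded by $\gamma_r \leq \tau^{M_{2,\infty}^-}(c_{\vec{\gamma}})$. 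This chain argument, not weak remarkability, is the mechanism that handles disagreements deeper than the first coordinate. Your proposal as written cannot be completed without effectively re-deriving that argument.
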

\begin{proof}
Assume $\tau^{M_{2,\infty}^{-}}(c_{\vec{\gamma}}) < c^{(3)}_{R_{\tree}(r), \gamma_r}$.
If $s \in \dom(R)$ and $r \preceq^R s$ let $l(s)$ be the largest $l$ so that $\corner{r \res l}^R = \corner{s \res l}^R$. It is easy to find $\vec{\gamma}^l \in [\bolddelta{3}]^{R \uparrow}$ for $l \leq \lh( r )$ so that $\vec{\gamma}^0 = \vec{\gamma}$, $\vec{\gamma}^{\lh(r)}  =  \vec{\gamma}'$ and $\gamma^{l}_s \neq \gamma^{l+1}_s \to (r \preceq^R s \wedge  l(s) = l)$.
Thus, we may assume a fixed $l_0$ so that $\gamma_s \neq \gamma'_s$ implies $l(s) = l_0$. The case $l_0 = 0$ is just Lemma~\ref{lem:weakly_remarkable_strengthening}. Assume now $l_0 > 0$. 
 Note that $l(s) = l_0$ also implies that $R_{\tree}(s \res l_0) = R_{\tree} (r \res l_0)$.  A sliding argument similar to Lemma~\ref{lem:weakly_remarkable_strengthening} reduces to the special case that $(\lh(s) = \lh(s') =   l_0 +1 \wedge l(s)=l(s')  = l_0) \to \gamma_s < \gamma_{s'}$. Let $(Y,\rho,\rho')$ be the amalgamation obtained by Lemma~\ref{lem:level_3_amalgamation} so that $\rho,\rho'$ both factor $(R,Y)$ and if $\vec{\delta}, \vec{\delta}' \in [\bolddelta{3}]^{R \uparrow}$ and $(\lh(s) = \lh(s') =   l_0 +1 \wedge l(s)=l(s')  = l_0) \to \delta_s < \delta_{s'}$, then $\vec{\delta} \oplus \vec{\delta}' \in [\bolddelta{3}]^{Y \uparrow} $, where $\vec{\delta} \oplus \vec{\delta}' = \vec{\epsilon}$, $ \epsilon_{\rho(r)} = \delta_r$, $\epsilon_{\rho'(r)} = \delta_r'$.
Put $\eta \in D$ iff $c_{\eta} ^{(3)}= \eta$. By indiscernability, we may assume that $\vec{\gamma}, \vec{\gamma}' \in [D]^{R \uparrow}$, so that $c_{\vec{\gamma}} = \vec{\gamma}$, $c_{\vec{\gamma}'} = \vec{\gamma}'$. It is easy to construct  $\vec{\delta}^{\xi} \in [\bolddelta{3}]^{R \uparrow}$ for $\xi < \gamma_r$ so that $\vec{\gamma} \oplus \vec{\delta}^{\xi} \in [\bolddelta{3}]^{Y \uparrow}$ and $\eta < \xi < \gamma_r \to \vec{\delta}^{\eta} \oplus \vec{\delta}^{\xi} \in [\bolddelta{3}]^{Y \uparrow}$. Put $\epsilon^{\xi} = \tau^{M_{2,\infty}^{-}} (c_{\vec{\delta}^{\xi}})$. By indiscernability, it suffices to show that $\epsilon^{\eta} = \epsilon^{\xi}$ for some (or equivalently, for any) $\eta < \xi < \gamma_r$. Suppose otherwise. By indiscernability again, either $\eta < \xi < \gamma_r \to \epsilon^{\eta} > \epsilon^{\xi} $ or $\eta < \xi < \gamma_r \to \epsilon^{\eta} < \epsilon^{\xi} $. The former gives a descending chain of ordinals.  The latter implies that $\gamma_r \leq \tau^{M_{2,\infty}^{-}}(c_{\vec{\gamma}})$, contracting to our assumption.
\end{proof}

Recall that if $c<c'$ are consecutive $L[x]$-indiscernibles, then $L[x^{\#}] \models c'<c^{+}$. The level-3 version is similar. 

\begin{mylemma}
  \label{lem:next-level-3-indis}
Assume $\boldpi{3}$-determinacy. For any $c_{\omega}^{(3)} <\xi \in I^{(3)}$, there is an $\mathcal{L}$-Skolem term $\tau$ such that 
 $M_{2,\infty}^{-}(0^{3\#}) \models `` \tau(\sup(I^{(3)} \cap \xi) , \cdot)$ is a surjection from $ \sup (I^{(3)} \cap \xi)$ onto $\xi$''. For any $\xi < c^{(3)}_{\omega}$, there is an $\mathcal{L}$-Skolem term $\tau$ such that 
 $M_{2,\infty}^{-}(0^{3\#}) \models `` \tau(\underline{u_{\omega}} , \cdot)$ is a surjection from $u_{\omega}$ onto $\xi$''. 
\end{mylemma}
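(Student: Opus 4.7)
The plan is to realize every $\eta<\xi$ inside $M_{2,\infty}^{-}(0^{3\#})$ via a uniform Skolem description using parameters below the stated bound. The engine is the level-$\leq 3$ coding of ordinals from Section~\ref{sec:syntactical_3sharp}: every $\eta<\bolddelta{3}$ admits a code $(R,\vec{\gamma},X,\vec{\beta},\gcode{\sigma})$, and the evaluation map is $\Sigma^1_4(0^{3\#})$, hence $\Delta_1$ over $M_{2,\infty}^{-}(0^{3\#})$. The surjection in each clause will be a definable enumeration of a cleverly pruned class of codes.

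For the second clause, let $\xi<c^{(3)}_{\omega}$ and $\eta<\xi$. By unboundedness of $0^{3\#}$ (Lemma~\ref{lem:3sharp_is_unbounded}), every parameter-free $\mathcal{L}$-Skolem term returns an ordinal below every level-3 indiscernible; in particular $\eta<c^{(3)}_{R_{\tree}(r),\gamma_r}$ for every $r\in\dom(R)$ in any code of $\eta$. Applying Lemma~\ref{lem:remarkable_general} successively at the $\prec^R$-minimal node shows that $\sigma^{\mathcal{M}^{*}_{0^{3\#},R}}((\underline{c_r})_{r\in\dom(R)})$ is independent of $\vec{\gamma}$, so we may collapse to codes $(\emptyset,\emptyset,X,\vec{\beta},\gcode{\sigma})$. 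These are parametrised by $\vec{\beta}\in[\omega_1]^{X\uparrow}$ and $\gcode{\sigma}\in\omega$, hence by an ordinal strictly below $u_\omega$; a definable enumeration yields the required term $\tau(\underline{u_\omega},\cdot)$.

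For the first clause, fix $c^{(3)}_{\omega}<\xi\in I^{(3)}$ and $\mu=\sup(I^{(3)}\cap\xi)$. First, $\xi$ is definable from $\mu$ in $M_{2,\infty}^{-}(0^{3\#})$: the set $I^{(3)}$ is definable there via the decoding map, and $\xi$ is the least element of $I^{(3)}$ strictly above $\mu$ (since $\xi\in I^{(3)}$ and $\mu<\xi$ by definition). Given $\eta<\xi$ with code $(R,\vec{\gamma},X,\vec{\beta},\gcode{\sigma})$, iterate Lemma~\ref{lem:remarkable_general} along $\prec^R$: at the minimal $r$ with $c^{(3)}_{R_{\tree}(r),\gamma_r}>\xi$, replace $\gamma_r$ and all $\gamma_{r'}$ with $r'\succeq^R r$ by canonical choices making each replaced $c^{(3)}_{R_{\tree}(r'),\gamma'_{r'}}$ the least indiscernible of its prescribed type above $\xi$ compatible with $R$-respecting, an operation that preserves $\eta$ because $\eta<\xi\leq c^{(3)}_{R_{\tree}(r),\gamma_r}$. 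These canonical choices are definable from $\xi$, hence from $\mu$. The unreplaced $c^{(3)}_{R_{\tree}(r'),\gamma'_{r'}}$ values lie in $I^{(3)}\cap\xi$, so are $\leq\mu$; since also $\vec{\beta}\in u_\omega^{<\omega}\subseteq\mu^{<\omega}$ (using $u_\omega<c^{(3)}_{\omega}\leq\mu$), the data reduces to a finite tuple of ordinals $\leq\mu$ coded by a single $\alpha<\mu$ via the definable $K$-wellorder of $M_{2,\infty}^{-}(0^{3\#})$. A fixed Skolem term $\tau$ takes $(\mu,\alpha)$, reconstructs the code, and returns $\eta$.

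The hard part will be making the canonical replacement in the first clause both \emph{exist} and be \emph{uniformly definable}: for every level-3 tree $R$ and every $\prec^R$-minimal high node $r$, one must produce a $\vec{\gamma}'$ still respecting $R$ whose low entries agree with $\vec{\gamma}$ and whose high entries are definable from $\xi$ and $0^{3\#}$. This will combine Lemma~\ref{lem:order_type_realizable} (to realise the target indiscernible configuration) with an induction along $\prec^R$ that handles which pairs $(R_{\tree}(r'),\gamma'_{r'})$ can coherently sit above $\xi$ in the respecting sense, relying on the fact from Lemma~\ref{lem:remarkable_general} that only the $\prec^R$-initial segment below $\xi$ affects Skolem evaluations bounded by $\xi$.
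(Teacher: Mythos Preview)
Your proposal is correct and follows essentially the same route as the paper: use that every $\eta<\xi$ has a level~$\leq 3$ code $(R,\vec{\gamma},X,\vec{\beta},\gcode{\sigma})$, that the evaluation map is definable over $M_{2,\infty}^{-}(0^{3\#})$, and that by general remarkability (Lemma~\ref{lem:remarkable_general}) the coded value depends only on those entries of $\vec{\gamma}$ whose associated indiscernible lies below $\xi$, together with $\vec{\beta}\in u_\omega^{<\omega}$.

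The one place where you are making life harder than necessary is the ``hard part'' you flag at the end. You do not need to produce canonical high replacements definable from $\xi$. The paper's proof simply observes: if two codes agree on all low entries (those with $\gamma_r<\xi$) then they evaluate to the same ordinal. Hence the surjection can be phrased as ``given the finite data $(R,X,\gcode{\sigma})$, a tuple $\vec{\beta}<u_\omega$, and a partial assignment of low $\gamma_r$'s, output the common value of any completion to a full $\vec{\gamma}$ respecting $R$ (and $0$ if no completion exists or the value is $\geq\xi$)''. Existence of \emph{some} completion is witnessed by the original code for $\eta$; inside $M_{2,\infty}^{-}(0^{3\#})$ one finds a completion using the definable $K$-wellorder, and remarkability guarantees the output is independent of which one is found. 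So there is no combinatorial induction along $\prec^R$ required, and no appeal to Lemma~\ref{lem:order_type_realizable} is needed at this step. Your scheme of ``least indiscernible of prescribed type above $\xi$'' would work too, but verifying that it always yields an $R$-respecting tuple is exactly the extra work the paper's formulation sidesteps.
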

\begin{proof}
The evaluation function on level $\leq 3$ codes is $\Sigma^1_4(0^{3\#})$, and hence is definable over $M_{2,\infty}^{-}(0^{3\#})$. 
 % By results in Section~\ref{sec:syntactical_3sharp}, every ordinal below $c^{(3)}_{z,\gamma+\omega}$ has a level $\leq 3$ code $(R,\vec{\gamma}, X, \vec{\beta} , \gcode{\sigma})$.  
If $(R, \vec{\gamma}, X, \vec{\beta} , \gcode{\sigma})$ and $(R, \vec{\gamma}', X, \vec{\beta} , \gcode{\sigma})$ are both level $\leq 3$ codes an ordinal below $\xi$ and $\forall r ~ (\gamma_r < \xi \to \gamma_r = \gamma'_r)$, then by Lemma~\ref{lem:remarkable_general} they must code the same ordinal. This easily defines a surjection from $\sup(I^{(3)} \cap \xi)$ onto $\xi$ in $M_{2,\infty}^{-}(0^{3\#})$ when $\xi > c^{(3)}_{\omega}$, from $u_{\omega}$ onto $\xi$ when $\xi < c^{(3)}_{\omega}$.
\end{proof}

% If $R$ is finite and $\mathbf{A} = (\mathbf{r}, \pi, T) \in \exexdesc(R) $, $\mathbf{r} \neq \emptyset$, then $\underline{c_{\mathbf{A}}}$ is the $\mathcal{L}^R$-symbol standing for $\underline{\pi^T} ( \underline{c_{\mathbf{r}}})$.
By Lemma~\ref{lem:gamma_r_order}, for any finite level-3 tree $R$,  if $\mathbf{A} \prec \mathbf{A}'$  then ``$\underline{c_{\mathbf{A}}} < \underline{c_{\mathbf{A}'}}$'' is true in $0^{3\#}(R)$,  if $\mathbf{A} \sim \mathbf{A}'$  then ``$\underline{c_{\mathbf{A}}} = \underline{c_{\mathbf{A}'}}$'' is true in $0^{3\#}(R)$. 
For notational convenience, if $X$ is a finite level $\leq 2$ tree and $\gamma = [F]_{\mu^X}$ is a limit ordinal, define $ c^{(3)} _{X, \gamma} = [\vec{\alpha} \mapsto c^{(3)}_{F(\vec{\alpha})}]_{\mu^X}$; define $c^{(3)}_{\emptyset, \bolddelta{3}} = \bolddelta{3}$. Ordinals of the form  $c^{(3)}_{X, \gamma}$ when $ X \neq \emptyset$  are definable from elements in $ I^{(3)}$ over $M_{2,\infty}^{-}$:
 If the $X$-approximation sequence of $\gamma$ is $(\gamma_i)_{1 \leq i \leq k}$, $(Q, (d_i,q_i,P_i)_{i < \lh(\vec{q})})$ is the $X$-potential partial level $\leq 2$ tower induced by $\gamma$, 
$\pi : Q \to X$ is the induced level-2 factoring map, then
\begin{enumerate}
\item if $\gamma$ is of $X$-discontinuous type, then  $c^{(3)}_{X, \gamma} = \pi^X (c^{(3)}_{Q, \gamma_k})$;
\item if $\gamma$ is of $X$-continuous type, $Q^{-} $ is the subtree of $Q$ obtained by removing $(d_k, q_k)$, then $c^{(3)}_{X, \gamma} = \pi^X \circ j^{Q^{-},Q}_{\sup} (c^{(3)}_{Q, \gamma_{k-1}})$. 
\end{enumerate}

Define $\bar{I}^{(3)}=$the closure of $I^{(3)}$ under the order topology.
 Every ordinal in $\bar{I}^{(3)}$ is of the form $c^{(3)}_{X, \gamma}$ where $X$ is finite and $\gamma < \bolddelta{3}$ is a limit. 
Thus, if $\mathbf{A} = (\mathbf{r}, \pi,T) \in \exexdesc(R)$ and $\vec{\gamma} $ strongly respects $R$, then $c^{(3)}_{T, \gamma_{\mathbf{A}}} \in \bar{I}^{(3)}$ and is a limit point of $I^{(3)}$.

Given $\gamma_0,\dots,\gamma_n,\gamma_0',\dots \gamma_n' \in I^{(3)}$, 
 $\vec{\gamma}$ is a \emph{shift} of $\vec{\gamma}'$ iff there exist a level-3 tree $R$, nodes $r_0,\dots,r_n \in \dom(R)$, $\vec{\delta}, \vec{\delta}' $ both respecting $R$ such that $\gamma_i = c^{(3)}_{R_{\tree}(r_i), \delta_i}$, $\gamma_i '= c^{(3)}_{R_{\tree}(r_i), \delta'_i}$ for any $i \leq n$. By indiscernability, if $\vec{\gamma}$ is a shift of $\vec{\gamma}'$, then for any $\mathcal{L}$-formula $\varphi$,
 \begin{displaymath}
   M_{2,\infty}^{-} \models \varphi (\vec{\gamma})  \eqiv \varphi(\vec{\gamma}').
 \end{displaymath}

\begin{mylemma}
  \label{lem:R_indiscernible_description_next}
Suppose $R$ is a finite level-3 tree,  $\tau$ is an $\mathcal{L}$-Skolem term, $\vec{\gamma} $ strongly respects $R$. Suppose $\mathbf{A} = (\mathbf{r}, \pi, T)\in \exexdesc(R)$. Then
\begin{multline*}
      \tau^{M_{2,\infty}^{-}} (c_{\vec{\gamma}}) <  c^{(3)}_{T, \gamma_{\mathbf{A}}} \to \\ \tau^{M_{2,\infty}^{-}} (c^{(3)}_{\vec{\gamma}}) < \min (  I^{(3)}  \setminus \sup
\set{c^{(3)}_{T', \gamma_{\mathbf{A}'}}}{\mathbf{A}' = (\mathbf{r}',\pi',T') \prec^R_{*} \mathbf{A}}).
\end{multline*}
\end{mylemma}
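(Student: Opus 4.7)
The plan is to argue by contradiction using general remarkability (Lemma~\ref{lem:remarkable_general}) combined with the amalgamation and realizability results (Lemmas~\ref{lem:order_type_realizable}, \ref{lem:level_3_amalgamation}). Set $\alpha = \sup\{c^{(3)}_{T', \gamma_{\mathbf{A}'}} : \mathbf{A}' \prec^R_* \mathbf{A}\}$ and $\beta = \min(I^{(3)} \setminus \alpha)$. Assume the hypothesis $\tau^{M_{2,\infty}^{-}}(c_{\vec\gamma}) < c^{(3)}_{T, \gamma_{\mathbf{A}}}$, and suppose for contradiction that $\tau^{M_{2,\infty}^{-}}(c_{\vec\gamma}) \geq \beta$. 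Since each $c^{(3)}_{T',\gamma_{\mathbf{A}'}} < c^{(3)}_{T,\gamma_{\mathbf{A}}}$ by Lemma~\ref{lem:gamma_r_order}, we have $\alpha \leq \beta < c^{(3)}_{T, \gamma_{\mathbf{A}}}$, so $\beta \in I^{(3)}$ and $\tau^{M_{2,\infty}^{-}}(c_{\vec\gamma})$ falls in the gap $[\beta, c^{(3)}_{T,\gamma_{\mathbf{A}}})$.

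The first step would extend $R$ into a larger $\Pi^1_3$-wellfounded level-3 tree $R^+$: using Lemma~\ref{lem:order_type_realizable}, I realize a chosen indiscernible $\beta'\in(\tau^{M_{2,\infty}^{-}}(c_{\vec\gamma}), c^{(3)}_{T,\gamma_{\mathbf{A}}})$ as $c^{(3)}_{Q^*, \delta^*}$ with $\delta^*$ respecting $Q^*$, and then invoke Lemma~\ref{lem:level_3_amalgamation} to merge with $\vec\gamma$ into a tuple $\vec\gamma^+ \supseteq \vec\gamma$ strongly respecting $R^+\supseteq R$ that contains a distinguished new node $r^+\in\dom(R^+)\setminus\dom(R)$ with $c^{(3)}_{R^+_{\tree}(r^+),\gamma^+_{r^+}} = \beta'$. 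The amalgamation is arranged so that the $\prec^{R^+}$-position of $r^+$ relative to $\dom(R)$ corresponds to the position of $\beta'$ relative to the indiscernibles contributed by $\dom(R)$; in particular, every node and generalized description in $R$ associated with $\mathbf{A}' \prec^R_* \mathbf{A}$ remains strictly $\prec^{R^+}$-below $r^+$, while $r^+$ itself is $\prec^{R^+}$-below any $R^+$-incarnation of the description $\mathbf{A}$.

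Next, I apply Lemma~\ref{lem:remarkable_general} at $r^+$: because $\tau^{M_{2,\infty}^{-}}(c_{\vec\gamma^+}) = \tau^{M_{2,\infty}^{-}}(c_{\vec\gamma}) < \beta' = c^{(3)}_{R^+_{\tree}(r^+),\gamma^+_{r^+}}$, the precondition is met, so any tuple $\vec\gamma^{+\prime}$ strongly respecting $R^+$ that agrees with $\vec\gamma^+$ on $\{t \prec^{R^+} r^+\}$ yields $\tau^{M_{2,\infty}^{-}}(c_{\vec\gamma^{+\prime}}) = \tau^{M_{2,\infty}^{-}}(c_{\vec\gamma^+})$. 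Varying $\gamma^{+\prime}_{r^+}$ through a decreasing cofinal sequence whose associated $c^{(3)}_{R^+_{\tree}(r^+),\gamma^{+\prime}_{r^+}}$-values descend toward $\alpha$ (realizable by iterating Lemmas~\ref{lem:order_type_realizable} and~\ref{lem:level_3_amalgamation}), the invariance forces $\tau^{M_{2,\infty}^{-}}(c_{\vec\gamma^{+\prime}}) < c^{(3)}_{R^+_{\tree}(r^+),\gamma^{+\prime}_{r^+}}$ at each stage. Hence $\tau^{M_{2,\infty}^{-}}(c_{\vec\gamma}) \leq \inf_{\vec\gamma^{+\prime}} c^{(3)}_{R^+_{\tree}(r^+),\gamma^{+\prime}_{r^+}} = \alpha$, contradicting $\tau^{M_{2,\infty}^{-}}(c_{\vec\gamma}) \geq \beta$; the edge case $\alpha = \beta \in I^{(3)}$ (where we need strict inequality) is handled by one further application of Lemma~\ref{lem:remarkable_general}, invoked against a shift that realizes $\beta$ at a fresh auxiliary node, to exclude equality.

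The main obstacle will be executing the descent step uniformly: for each target indiscernible in the cofinal-in-$\alpha$ sequence, one must supply a respecting tuple $\vec\gamma^{+\prime}$ whose $r^+$-entry realizes an indiscernible just above the target, while keeping the lower-side values at the nodes contributing to $\mathbf{A}' \prec^R_* \mathbf{A}$ unaltered. This coupling is delivered by the amalgamation Lemma~\ref{lem:level_3_amalgamation} together with the representability Lemma~\ref{lem:order_type_realizable}, but verifying the preservation of $\prec^{R^+}_*$-orderings among generalized descriptions and the strong respecting condition throughout the descent requires careful combinatorial bookkeeping around the $<_{BK}$-placement of $r^+$ and the induced order on $\exexdesc(R^+)$.
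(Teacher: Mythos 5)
Your overall plan---insert a fresh node $r^+$ whose indiscernible sits strictly between the supremum $\alpha = \sup\set{c^{(3)}_{T',\gamma_{\mathbf{A}'}}}{\mathbf{A}'\prec^R_*\mathbf{A}}$ and $c^{(3)}_{T,\gamma_{\mathbf{A}}}$, then exploit indiscernibility as that node is moved---is the right idea and closely parallels what the paper actually does. But you argue by contradiction and recruit general remarkability (Lemma~\ref{lem:remarkable_general}) to compensate, whereas the paper's proof is direct and never invokes it. The paper identifies $\delta=\min(I^{(3)}\setminus\alpha)$ explicitly, by a case split on whether $\mathbf{r}=\emptyset$, whether $\mathbf{r}$ is of continuous or discontinuous type, and whether $\corner{\mathbf{A}}$ ends with $-1$ or with an ordinal, and then shows that the set of $\delta'$ for which $c_{\vec{\gamma}}\concat(\delta')$ is a shift of $c_{\vec{\gamma}}\concat(\delta)$ is cofinal below $c^{(3)}_{T,\gamma_{\mathbf{A}}}$ \emph{with $c_{\vec{\gamma}}$ literally fixed}. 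Once $c_{\vec{\gamma}}$ is held fixed in the shift, indiscernibility alone gives $\tau(c_{\vec{\gamma}})<\delta$; no remarkability is needed. Your version needs Lemma~\ref{lem:remarkable_general} precisely because, as you note, the $\vec{\gamma}$-entries at nodes $\succeq^{R^+}r^+$ may be forced to move when $\gamma^{+'}_{r^+}$ is pushed down, so $\tau(c_{\vec{\gamma}^{+'}})$ is not a priori unchanged; the paper avoids this entirely by choosing the shift so that only the new coordinate varies.

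The genuine gap is the one you flag at the end, and it is not a small one: the entire content of the lemma lies in the combinatorial verification that (i) $r^+$ can be placed in $R^+$ so that exactly the descriptions $\mathbf{A}'\prec^R_*\mathbf{A}$ (and nothing else) contribute to the indiscernibles $\prec^{R^+}$-below it, and (ii) the values $c^{(3)}_{R^+_{\tree}(r^+),\gamma^{+'}_{r^+}}$ realizable with the $\prec^{R^+}r^+$ coordinates frozen are coinitial in the interval $(\alpha, c^{(3)}_{T,\gamma_{\mathbf{A}}})$. What partial level $\le 2$ tower must be attached to $r^+$ depends delicately on the type of $\mathbf{A}$; this is precisely the case-by-case work the paper does (e.g.\ $\delta=c^{(3)}_{\gamma_{(a)}+\omega}$ with $a=\max_{<_{BK}}R\se{\emptyset}$ when $\mathbf{r}=\emptyset$, or $\delta=c^{(3)}_{T,\pi^T(\gamma_{\mathbf{s}})+\omega}$ with $\mathbf{s}$ taken from the $<_{BK}$-maximal sibling in the discontinuous $-1$ case, and so on). Without carrying this out, the proposal is an outline rather than a proof. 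You should also tighten the edge case $\alpha=\beta\in I^{(3)}$: your coinitiality argument delivers only $\tau(c_{\vec{\gamma}})\le\alpha$, and the ``one further application of Lemma~\ref{lem:remarkable_general}'' you gesture at to promote this to the strict inequality the lemma asserts is not yet written down. A final small point: realizing $\beta'$ as $c^{(3)}_{Q^*,\delta^*}$ is just the definition of $I^{(3)}$; Lemma~\ref{lem:order_type_realizable} is about realizing respecting tuples by $\Pi^1_3$-wellfounded trees and is not what you need there.
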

\begin{proof}
Suppose $\tau^{M_{2,\infty}^{-}}(c_{\vec{\gamma}}) < c^{(3)}_{T, \gamma_{\mathbf{A}}}$.
Let $\delta = \min (  I^{(3)}  \setminus \sup
\set{c^{(3)}_{T', \gamma_{\mathbf{A}'}}}{\mathbf{A}' \prec^R_{*} \mathbf{A}})$. We shall show that  $\{\delta': c_{\vec{\gamma}} \concat (\delta')$ is a shift of $c_{\vec{\gamma}} \concat (\delta)\}$ is cofinal in $\gamma_{\mathbf{A}}$. 
From this and indiscernability, $\tau^{M_{2,\infty}^{-}}(c_{\vec{\gamma}}) < \delta$.

If $\mathbf{r} = \emptyset$, then $\delta = c^{(3)}_{\gamma_{(a)}+\omega}$ where $a = \max_{<_{BK}} R \se{\emptyset}$.  % where  $\epsilon = \sup\set{j^Q( c^{(3)}_{\gamma_{(a)}} )}{ Q \text{ finite level $\leq 2$ tree}} $, $a = \max_{<_{BK}} R \se{\emptyset}$.   
So $\{\delta': c_{\vec{\gamma}} \concat (\delta')$ is a shift of $c_{\vec{\gamma}} \concat (\delta)\}$ is cofinal in $\gamma_{\mathbf{A}}=\bolddelta{3}$. 

Suppose now $\mathbf{r} = (r, Q, \overrightarrow{(d,q,P)})\neq \emptyset$, $\lh(r) = k$, $\ucf(R(r)) = (d^{*},\mathbf{q}^{*})$, and if $d^{*}=1$ put $q^{*} = \mathbf{q}^{*}$,  
if $d^{*}=2$ put $\mathbf{q}^{*} = (q^{*}, P^{*}, \vec{p}^{*})$. 

Case 1: $\mathbf{r}$ is of discontinuous type,  $\corner{\mathbf{A}}$ ends with $-1$. 

Let $s = \max_{<_{BK}}R \se{r,-}$ and $\mathbf{s} = (s, Q, \overrightarrow{(d,q,P)})$. Then $\delta = c^{(3)}_{T, \pi^T(\gamma_{\mathbf{s}})+\omega}$. 
 % where 
% \begin{multline*}
%         \epsilon = \sup \{ \tau ^{ U}  (c^{(3)}_{Q, \gamma_{\mathbf{s}}}):U \text{ finite level $\leq 2$ tree}, \tau \text{ factors } (Q,U), \\
%  \forall (d,q) \in \dom(Q)~ \llbracket \pi(d,q) \rrbracket_T = \llbracket \tau(d,q) \rrbracket_U\}.
% \end{multline*}
It is easy to compute that   $\{\delta': c_{\vec{\gamma}} \concat (\delta')$ is a shift of $c_{\vec{\gamma}} \concat (\delta)\}$ is cofinal in $\gamma_{\mathbf{A}}$. 

Case 2: $\mathbf{r}$ is of discontinuous type, $\corner{\mathbf{A}}$ ends with an ordinal. 

If either $d_k=1$ or $d_k = d^{*} = 2 \wedge \mathbf{q}^{*} \in \desc(\comp{2}{Q})$, let $\tau$ factor $(Q,T)$ so that $\tau$ agrees with $\pi$ on $\dom(Q) \setminus \se{(d^{*},q^{*})}$, $\tau(d^{*},q^{*}) = \pred(\pi,T, (d^{*},\mathbf{q}^{*}))$. 
Then $\delta = c^{(3)}_{T, \tau^T(\gamma_r) + \omega}$. % , where $\epsilon = \tau^T (c^{(3)}_{Q, \gamma_r})$.
Otherwise, let $U$ be the subtree of $T$ obtained by removing $\pred(\pi,T, (d^{*}, \mathbf{q}^{*}))$ from its domain. Then $\delta = c^{(3)}_{U, \pi^U(\gamma_r) +\omega}$. % where $\epsilon  =  \pi^U(c^{(3)}_{Q, \gamma_r})$. 
In either case, $\{\delta': c_{\vec{\gamma}} \concat (\delta')$ is a shift of $c_{\vec{\gamma}} \concat (\delta)\}$ is cofinal in $\gamma_{\mathbf{A}}$. 

Case 3: $\mathbf{r}$ is of continuous type. 

Similar to Cases 1 and 2. 
\end{proof}

\begin{mylemma}
  \label{lem:level_3_indis_cardinal}
  Suppose $R$ is a finite level-3 tree and $\mathbf{A}  = (\mathbf{r}, \pi, T)\in \exexdesc(R)$, $\mathbf{r} \neq \emptyset$, $\vec{\gamma}$ strongly respects $R$. Then $c^{(3)}_{T,\gamma_{\mathbf{A}}}$ is a cardinal in $M_{2,\infty}^{-}$.
\end{mylemma}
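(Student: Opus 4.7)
The plan is to argue by contradiction. Suppose $c^{(3)}_{T,\gamma_{\mathbf{A}}}$ is not a cardinal in $M_{2,\infty}^{-}$: then there are $\alpha < c^{(3)}_{T,\gamma_{\mathbf{A}}}$ and a surjection $f \colon \alpha \to c^{(3)}_{T,\gamma_{\mathbf{A}}}$ in $M_{2,\infty}^{-}$. The overall strategy is to show that $\ran(f)$ is bounded strictly below $c^{(3)}_{T,\gamma_{\mathbf{A}}}$, contradicting surjectivity. The main tools are the boundedness conclusion of Lemma~\ref{lem:R_indiscernible_description_next} and the general remarkability principle of Lemma~\ref{lem:remarkable_general}; Lemmas~\ref{lem:order_type_realizable} and \ref{lem:3sharp_indiscernible_continuous} will be used throughout to parameterize ordinals by level-3 indiscernibles.

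First, I would parameterize $\alpha$ and $f$ by level-3 indiscernibles. Since every element of $M_{2,\infty}^{-}$ is $\Sigma_{1}$-definable from $I^{(3)} \cup u_{\omega}$, and since ordinals below $u_{\omega}$ can be absorbed into indiscernibles of the form $c^{(3)}_{Q,\gamma}$ for small $\gamma$, there exist a finite level-3 tree $R' \supseteq R$, a tuple $\vec{\gamma}\,'$ strongly respecting $R'$ and extending $\vec{\gamma}$, and Skolem terms $\sigma_{\alpha}, \sigma_{f}$ with
\[
 \alpha = \sigma_{\alpha}^{M_{2,\infty}^{-}}(c_{\vec{\gamma}\,'}), \qquad f = \sigma_{f}^{M_{2,\infty}^{-}}(c_{\vec{\gamma}\,'}).
\]
Viewing $\mathbf{A}$ as a member of $\exexdesc(R')$, the hypothesis $\alpha < c^{(3)}_{T,\gamma_{\mathbf{A}}}$ together with Lemma~\ref{lem:R_indiscernible_description_next} applied to $\sigma_{\alpha}$ forces $\alpha$ to lie below the next indiscernible above $\sup\{c^{(3)}_{T',\gamma'_{\mathbf{A}'}} : \mathbf{A}' \prec^{R'}_{*} \mathbf{A}\}$. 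Because the set $\{c^{(3)}_{T',\gamma'_{\mathbf{A}'}} : \mathbf{A}' \prec^{R'}_{*} \mathbf{A}\}$ is cofinal in $c^{(3)}_{T,\gamma_{\mathbf{A}}}$, I can therefore fix a single description $\mathbf{A}_{0} \prec^{R'}_{*} \mathbf{A}$ such that $\alpha < c^{(3)}_{T_{0},\gamma'_{\mathbf{A}_{0}}}$.

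Next, for each $\beta < \alpha$, I would absorb $\beta$ as one additional indiscernible parameter by passing to a further extension $R'' \supseteq R'$ with a fresh node $s_{0} \in \dom(R'') \setminus \dom(R')$ of "small" type, and a tuple $\vec{\gamma}\,''$ strongly respecting $R''$ extending $\vec{\gamma}\,'$ with $\gamma''_{s_{0}} < c^{(3)}_{T_{0},\gamma'_{\mathbf{A}_{0}}}$ representing $\beta$. Then $f(\beta)$ equals $\tau^{M_{2,\infty}^{-}}(c_{\vec{\gamma}\,''})$ for a Skolem term $\tau$ that does not depend on $\beta$ (only on how $\beta$ sits under $\mathbf{A}_{0}$ as an indiscernible parameter). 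Since $f(\beta) < c^{(3)}_{T,\gamma_{\mathbf{A}}}$, applying Lemma~\ref{lem:R_indiscernible_description_next} to $\tau$ with the description $\mathbf{A}_{0}$ in place of $\mathbf{A}$ bounds $f(\beta)$ by the next indiscernible $\delta_{0} \in I^{(3)}$ above the sup over descriptions $\mathbf{A}'' \prec^{R''}_{*} \mathbf{A}_{0}$. Crucially, $\mathbf{A}_{0} \prec^{R''}_{*} \mathbf{A}$ guarantees $\delta_{0} < c^{(3)}_{T,\gamma_{\mathbf{A}}}$, whence $\ran(f) \subseteq \delta_{0}$, contradicting that $f$ is onto $c^{(3)}_{T,\gamma_{\mathbf{A}}}$.

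The main obstacle is the uniform absorption of $\beta$: although $\beta$ varies over $\alpha$, the Skolem term computing $f(\beta)$ must be the \emph{same} term applied to one tuple of indiscernibles enlarged by a single "variable" indiscernible slot representing $\beta$. This requires expanding $R'$ to a tree $R''$ which simultaneously admits every $\beta < \alpha$ as a value of $\gamma''_{s_{0}}$, and then invoking Lemma~\ref{lem:R_indiscernible_description_next} once for the resulting Skolem term rather than term-by-term. The density of descriptions below $\mathbf{A}$ and the fact that $\mathbf{A}_{0}$ sits strictly below $\mathbf{A}$ in $\prec^{R''}_{*}$ are what force $\delta_{0}$ to be strictly less than $c^{(3)}_{T,\gamma_{\mathbf{A}}}$, and this strict inequality is the heart of the argument.
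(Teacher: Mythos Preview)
Your approach has a genuine gap at the ``uniform absorption of $\beta$'' step. The claim that every $\beta<\alpha$ can be encoded by a single additional indiscernible slot $s_0$ in a fixed tree $R''$, with a fixed Skolem term $\tau$ computing $f(\beta)$ from $c_{\vec{\gamma}''}$, does not hold: an arbitrary ordinal $\beta<\alpha$ is not a level-3 indiscernible, and its representation as a Skolem term in indiscernibles requires a tree and a term that both depend on $\beta$. So there is no single $\tau$ and no single $R''$ to which Lemma~\ref{lem:R_indiscernible_description_next} can be applied once; applying it separately for each $\beta$ yields bounds $\delta_0(\beta)$ that vary with $\beta$, and you have no control forcing $\sup_\beta \delta_0(\beta)<c^{(3)}_{T,\gamma_{\mathbf{A}}}$.

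The paper's proof avoids this uniformity problem by not bounding $\ran(f)$ pointwise at all. The key observation you are missing is that $c^{(3)}_{T,\gamma_{\mathbf{A}}}$ is itself definable from $c_{\vec{\gamma}}$ alone (no extension to $R'$ is needed, since $\underline{c_{\mathbf{A}}}$ is an $\mathcal{L}^R$-term). Hence both $\alpha=\card^{M_{2,\infty}^{-}}(c^{(3)}_{T,\gamma_{\mathbf{A}}})$ and the canonical least wellordering of $\alpha$ of order type $c^{(3)}_{T,\gamma_{\mathbf{A}}}$ are given by a single Skolem term $\tau$ in $c_{\vec{\gamma}}$. Now Lemma~\ref{lem:R_indiscernible_description_next} forces $\alpha<\beta\DEF\min(I^{(3)}\setminus\sup\{c^{(3)}_{T',\gamma_{\mathbf{A}'}}:\mathbf{A}'\prec^R_*\mathbf{A}\})$, so the wellordering (a subset of $\alpha\times\alpha$) lives below $\beta$. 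General remarkability (Lemma~\ref{lem:remarkable_general}) then gives $\tau(c_{\vec{\gamma}})=\tau(c_{\vec{\delta}})$ whenever $\vec{\delta}$ agrees with $\vec{\gamma}$ on the nodes $s$ with $\gamma_s<\beta$; but the wellordering determines its own order type $c^{(3)}_{T,\gamma_{\mathbf{A}}}$, so $\gamma_{\mathbf{A}}=\delta_{\mathbf{A}}$ for all such $\vec{\delta}$. This is a contradiction, since one can easily choose $\vec{\delta}$ with $\delta_{\mathbf{A}}>\gamma_{\mathbf{A}}$. The whole argument is an invariance-of-the-output contradiction rather than a range-bounding one.
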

\begin{proof}
  Otherwise, $\tau ^{M_{2,\infty}^{-}}( c_{\vec{\gamma}}) $ is a wellordering on $\alpha \DEF \card^{M_{2,\infty}^{-}}(c^{(3)}_{T,\gamma_{\mathbf{A}}})$ of order type $c^{(3)}_{T,\gamma_{\mathbf{A}}}$ and $\alpha < c^{(3)}_{T,\gamma_{\mathbf{A}}}$. Put $\beta =  \min (  I^{(3)}  \setminus \sup
\set{c^{(3)}_{T', \gamma_{\mathbf{A}'}}}{\mathbf{A}' = (\mathbf{r}',\pi',T') \prec^R_{*} \mathbf{A}}) $. 
By Lemma~\ref{lem:R_indiscernible_description_next}, $\alpha <\beta$. By Lemma~\ref{lem:remarkable_general}, if $\vec{\delta}$ respects $R$ and $\forall s ~(\delta_s < \beta \to \delta_s = \gamma_s)$, then $\tau^{M_{2,\infty}^{-}}(c_{\vec{\gamma}}) = \tau^{M_{2,\infty}^{-}}(c_{\vec{\delta}})$, and hence $\gamma_{\mathbf{A}} = \delta_{\mathbf{A}}$. 
However, it is easy to find such $\vec{\delta}$ satisfying $\delta_{\mathbf{A}} > \gamma_{\mathbf{A}}$.
\end{proof}

\section{The boldface level-3 sharp}
\label{sec:boldface-level-3_sharp}

From now on, we assume $\boldpi{3}$-determinacy. Recall that $\mathbb{L}[T_3] = \bigcup_{x \in \mathbb{R}} L[T_3, x]$. Every subset of $\bolddelta{3}$ in $\mathbb{L}[T_3]$ is definable over ${M_{2,\infty}^{-}(x)}$ for some $x \in \mathbb{R}$.
 All the results in Section~\ref{sec:03} relativize to any given real $x$.  If $R$ is a $\Pi^1_3$-wellfounded level-3 tree, $\mathcal{M}_{x^{3\#},R}$ is the EM model built from $x^{3\#}(R)$. $\mathcal{M}^{*}_{x^{3\#},R}$, $\mathcal{M}^{*,T}_{x^{3\#},R}$,  $c_{x,Q,\gamma}^{(3)}$, $c^{(3)}_{x,\gamma}$, $c_{x,\vec{\gamma}}$, $I^{(3)}_x$,  $\bar{I}^{(3)}_x$ have obvious meanings. Fixing $x$, the function $( Q, \gamma) \mapsto c^{(3)}_{x,Q,\gamma}$ is $\Sigma^1_4(x^{3\#})$ in the codes and hence is definable over $M_{2,\infty}^{-}(x^{3\#})$.

\subsection{Homogeneity properties of $S_3$}
\label{sec:homog-prop-s_3}

For a level $\leq 2$ tree $Q$, $j^Q$ was defined as the ultrapower map on $\mathbb{L}_{\bolddelta{3}}[T_2]$. 
The following lemma allow $j^Q$ to act on $\mathbb{L}[T_3]$ for a level $\leq 2$ tree $Q$. 
\begin{mylemma}
  \label{lem:LT2_LT3_agree_on_power_set_of_u_omega}
  Assume $\boldpi{3}$-determinacy.
  \begin{enumerate}
  \item  $\admistwobold = \mathbb{L}_{\bolddelta{3}}[T_3]$.
  \item Every subset of $\bolddelta{3}$ in $\mathbb{L}_{\bolddelta{3}}[T_3]$ is definable over $M_{2,\infty}^{-}(x)$ from $\se{x}$ for some $x \in \mathbb{R}$. 
  \end{enumerate}
\end{mylemma}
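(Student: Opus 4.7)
The plan is to exploit the level $\leq 3$ coding of ordinals in $\bolddelta{3}$ introduced at the end of Section~\ref{sec:syntactical_3sharp}: every $\alpha < \bolddelta{3}$ is of the form $\sharpcode{(R, \vec{\gamma}, X, \vec{\beta}, \gcode{\sigma})} = \pi_{\mathcal{M}^{*}_{0^{3\#},R}, X, \vec{\beta}, \infty}(\sigma^{\mathcal{M}^{*}_{0^{3\#},R}}((\underline{c_r})_{r \in \dom(R)}))$, and the evaluation map is $\Sigma^1_4(0^{3\#})$ in the codes, hence $\Sigma_1$-definable over $M_{2,\infty}^{-}(0^{3\#})$. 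By Theorem~\ref{thm:steel}, $\mathbb{L}_{\bolddelta{3}}[T_3] = \bigcup_{x \in \mathbb{R}} M_{2,\infty}^{-}(x)$. I will first establish (2) via the coding, then derive the two inclusions needed for (1).

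For (2): Given $A \subseteq \bolddelta{3}$ with $A \in M_{2,\infty}^{-}(y) = L_{\bolddelta{3}}[T_3, y]$, the $\Sigma_1$-definable wellorder of the latter lets me write $A$ as $\Sigma_1$-definable over $M_{2,\infty}^{-}(y)$ from $\{y, \alpha\}$ for some $\alpha < \bolddelta{3}$. By Lemma~\ref{lem:order_type_realizable} (relativized to $y$), $\alpha$ has a level $\leq 3$ code, whose evaluation on the right-hand side depends only on the finite data $(R, X, \vec{\beta}, \gcode{\sigma})$ (the auxiliary $\vec{\gamma}$ does not appear there), and $\vec{\beta} \in [u_\omega]^{<\omega}$ is codable by a real via the level-1 sharps. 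Packaging $y$, $y^{3\#}$, and these finite components into a single real $x$ yields a $\Sigma_1$-definition of $\alpha$, and hence of $A$, over $M_{2,\infty}^{-}(x)$ from $\{x\}$ alone.

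For (1): The inclusion $\mathbb{L}_{\bolddelta{3}}[T_3] \subseteq \admistwobold$ follows from (2) combined with a generalization of Corollary~\ref{coro:Delta13_pwo_computable_in_LT2}: any $\Sigma^1_4(x)$ subset of $\bolddelta{3}$ in the sharp codes is $\Delta_1$-definable over $\admistwo{M_1^{\#}(x)}$ from $\{T_2, M_1^{\#}(x)\}$, so belongs to $L_{\bolddelta{3}}[T_2, M_1^{\#}(x)] \subseteq \admistwobold$ (the upgrade from $\Delta^1_3$ to $\Sigma^1_4$ uses that $M_1^{\#}$-operators capture $\Sigma^1_3$-truth, as already invoked in the proof of Lemma~\ref{lem:pre_level_2_correct}). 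For the reverse $\admistwobold \subseteq \mathbb{L}_{\bolddelta{3}}[T_3]$, it suffices to show $T_2 \in L_{\bolddelta{3}}[T_3, 0]$: this follows from $\Sigma^1_4$-correctness of set-generic extensions of $M_{2,\infty}^{-}(0)$ together with the fact that $T_2$ is the Martin--Solovay tree, a lightface $\Delta^1_3$ object on $\omega \times u_{\omega}$ with $u_\omega < \bolddelta{3}$.

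The main obstacle is the parameter-elimination step in (2): one must verify carefully that the level-3 code evaluation is $\Sigma_1$-definable over $M_{2,\infty}^{-}(x^{3\#})$ uniformly in the finite data $(R, X, \vec{\beta}, \gcode{\sigma})$. This requires remarkability (Lemma~\ref{lem:EM_remarkability}) and level $\leq 2$ correctness (Lemma~\ref{lem:level_2_correct}) of $x^{3\#}$ to identify the direct-limit image $\pi_{\mathcal{M}^{*}_{x^{3\#}, R}, X, \vec{\beta}, \infty}(\sigma^{\mathcal{M}^{*}_{x^{3\#}, R}}((\underline{c_r})))$ with the intended ordinal in $\bolddelta{3}$ independently of the realization of the indiscernibles, so that the coding correctly surjects onto $\bolddelta{3}$ and the inverse evaluation is internal to $M_{2,\infty}^{-}(x^{3\#})$.
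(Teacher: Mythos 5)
Your argument for the inclusion $\mathbb{L}_{\bolddelta{3}}[T_3] \subseteq \admistwobold$ has a genuine gap. You assert a ``generalization'' of Corollary~\ref{coro:Delta13_pwo_computable_in_LT2} to the effect that any $\Sigma^1_4(x)$ subset of $\bolddelta{3}$ in the codes is $\Delta_1$-definable over $\admistwo{M_1^\#(x)}$, invoking the fact that $M_1^{\#}$-operators capture $\Sigma^1_3$-truth. This fails: $\Sigma^1_4 = \exists^{\mathbb{R}}\Pi^1_3$, and the outer real quantifier cannot be absorbed into a single $\admistwo{z}$, which only sees $z$ and its Turing predecessors. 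Concretely, the complete theory of $L_{\bolddelta{3}}[T_2]$ with ordinal parameters is $\Sigma^1_4$ in the codes but cannot belong to $L_{\bolddelta{3}}[T_2]$ by the usual Tarski diagonal argument; the pointclass that $\admistwobold$ captures is $\Pi^1_3$ (via level-2 Kechris--Martin), one full level below $\Sigma^1_4$. (The $\Sigma^1_4$ subsets of $\bolddelta{3}$ are captured by $T_3$, which is exactly what this lemma establishes --- so the attempted ``upgrade'' is essentially circular.) The correct mechanism is a boundedness argument, not a pointclass computation: any $A\in L_{\bolddelta{3}}[T_3,x]$ already appears at some level $\xi < \bolddelta{3}$, i.e.\ $A\in L_{\xi}[T_3\res\xi,x]$; one then chooses $y\geq_T x$ with $\kappa_3^y > \xi$, and $A\in\admistwo{y}$ follows from Lemma~\ref{lem:level_2_embedding_bounded_by_delta13}, which gives $\Delta_1$-definability of $S_3\res\kappa_3^y$ over $\admistwo{y}$.

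The remaining parts are essentially fine but take a different and heavier route than necessary. For part~2, your reduction through the level-$\leq 3$ sharp codes and Lemma~\ref{lem:order_type_realizable} works, but it carries a lot of machinery along. The intended argument is much shorter: $A$ is definable over $M_{2,\infty}^{-}(x) = L_{\bolddelta{3}}[T_3,x]$ from $\se{\gamma,x}$ with $\gamma<\bolddelta{3}$, and it suffices to pick $y\geq_T M_2^\#(x)$ such that $\gamma$ is definable over $\admistwo{y}$ from $\se{T_2,y}$ --- this eliminates the ordinal parameter by a boost of the real, with no appeal to the EM-blueprint calculus. For the other inclusion of part~1 your sketch via $\Sigma^1_4$-correctness gestures at the right thing, but the clean citation is Becker--Kechris: $T_2$ is a lightface $\Delta^1_3$ subset of $u_\omega$, hence $\Sigma^1_4$, hence in $L[T_3]$.
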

\begin{proof}
1. 
  $T_2$ is a $\Delta^1_3$ subset of $u_{\omega}$, and of course a $\Sigma^1_4$ subset of $\bolddelta{3}$, and hence $T_2 \in L[T_3]$ by Becker-Kechris \cite{becker_kechris_1984}. This gives the $\subseteq$ inclusion. 

If $A \in  L_{\bolddelta{3}}[T_3,x]$, by Theorem~\ref{thm:steel}, there must be $\xi < \bolddelta{3}$ such that $A \in L_{\xi}[T_3\res \xi,x]$.  Pick $y \geq_T x$ such that $\kappa_3^y > \xi$. Then $A \in \admistwo{y}$ by Lemma~\ref{lem:level_2_embedding_bounded_by_delta13}. This gives the $\supseteq$ inclusion.

2. By Theorem~\ref{thm:steel}, every subset of $\bolddelta{3}$ in $\mathbb{L}_{\bolddelta{3}}[T_3]$ is in $M_{2,\infty}^{\#}(x)$ for some $x \in \mathbb{R}$.  If $A\subseteq \bolddelta{3}$ is definable over $M_{2,\infty}^{\#}(x)$ from $\se{\gamma,x}$, $\gamma < \bolddelta{3}$, letting $y \geq_T M_2^{\#}(x)$ such that $\gamma$ is definable over $\admistwo{y}$, then $A$ is definable over $M_{2,\infty}^{-}(y)$ from $\se{y}$.
\end{proof}

Caution that Lemma~\ref{lem:LT2_LT3_agree_on_power_set_of_u_omega} does not give a real $x$ for which $T_3 \in L[T_2,x]$.  $\admistwo{x}$ computes a proper initial segment of $T_3$, and by varying $x$, these proper initial segments are cofinal in $T_3$. However, there is not a single $x$ with $T_3 \in L[T_2,x]$.

A \emph{level $\leq 3$ tree} is of the form $R = (\comp{1}{R}, \comp{2}{R},\comp{3}{R})$ so that $\comp{\leq 2}{R} \DEF (\comp{1}{R},\comp{2}{R})$ is a level $\leq 2$ tree and $\comp{3}{R}$ is a level-3 tree. If $T$ is a level $\leq 2$ tree and $Y$ is a level-3 tree then $T \oplus Y$ denotes the level $\leq 3$ tree $(\comp{1}{T}, \comp{2}{T}, Y)$. $R $ is $\Pi^1_3$-wellfounded iff $\comp{\leq 2}{R}$ is $\Pi^1_2$-wellfounded and $\comp{3}{R}$ is $\Pi^1_3$-wellfounded. 
 
Suppose  $R$ is a level $\leq 3$-tree.  Define $\dom(R) = \cup_d \se{d} \times \dom(\comp{d}{R})$,  $\desc(R) = \cup_d \se{d} \times \desc(\comp{d}{R})$. 
If $\vec{\beta}$ respects $\comp{\leq 2}{R}$ and $\vec{\gamma}$ respects $\comp{3}{R}$, define $\vec{\beta} \oplus \vec{\gamma} = \vec{\delta} = (\comp{d}{\delta}_t)_{(d,r) \in \dom(R)}$ where $\comp{d}{\delta}_r = \comp{d}{\beta}_r$ for $(d,r) \in \dom(\comp{\leq 2}{R})$ and $\comp{3}{\delta}_r = \gamma_r $ for $r \in \dom(\comp{3}{R})$.
Define $A \oplus B = \set{\vec{\beta} \oplus \vec{\gamma}}{\vec{\beta} \in A, \vec{\gamma} \in B}$. 
If $E \subseteq \omega_1$ and $C \subseteq \bolddelta{3}$, define
$[E,C]^{R \uparrow } = [E]^{\comp{\leq 2}{R} \uparrow} \oplus [C]^{\comp{3}{R} \uparrow}$.  $\vec{\delta}$ respects $R$ iff $\vec{\delta} \in [\omega_1, \bolddelta{3}]^{R \uparrow}$.
A finite level $\leq 3$ tree $R$  induces a filter $\mu^R$ on finite tuples in $\bolddelta{3}$, originated from the weak partition property of $\bolddelta{3}$ under AD. $\mu^R$ is the higher level analog of the $n$-fold product of the club filter on $\omega_1$.

 \begin{mydefinition}\label{def:level-3-measure}
Assume $\boldpi{3}$-determinacy. Let $R$ be a finite level $\leq 3$ tree. We say
\begin{displaymath}
A \in \mu^{ R}
\end{displaymath}
iff there are clubs $E \subseteq \omega_1$, $C\subseteq \bolddelta{3}$ such that $E, C \in \mathbb{L}[T_3]$ and
 \begin{displaymath}
[E, C]^{R \uparrow} \subseteq A.
\end{displaymath}
If $Y$ is a finite level-3 tree, put $A \in \mu^Y$ iff $[\omega_1]^{Q^0 \uparrow} \oplus  A \in \mu^{Q^0 \oplus Y}$.
 \end{mydefinition}
% Put $R = T \oplus Y$.
$\mu^R$ is an $\mathbb{L}[T_3]$-measure, the reason being as follows. Every $A \in \mathbb{L}[T_3]$ is definable over $M_{2,\infty}^{-}(x)$ from $\se{x}$ for some real $x$.  By indiscernability and remarkability, the section  $A^{*} \DEF \set{\vec{\beta}}{\vec{\beta} \oplus c^{(3)}_{x,\vec{\gamma}} \in A}$ is invariant in $\vec{\gamma} \in [\bolddelta{3}]^{R \uparrow}$. So $C=\set{c^{(3)}_{x,\xi}}{\xi < \bolddelta{3}}$ and some $E$ deciding the $\mu^{\comp{\leq 2}{R}}$-measure of $A^{*}$ works.
$\mu^R$ is the product measure on $\mathbb{L}[T_3]$ of $\mu^{\comp{\leq 2}{R}}$ and $\mu^{\comp{3}{R}}$.  
 Let $j^{R}=j^{\mu^{R}}_{\mathbb{L}[T_3]}$ be the ultrapower map from $\mathbb{L}[T_3]$ to $\mathbb{L}[j^{R}(T_3)]$. For any $x \in \mathbb{R}$, $j^{R}$ is elementary from $L[T_3,x]$ to $L[j^{R}(T_3), x]$. 
By indiscernability and remarkability again, if $\alpha < \bolddelta{3}$ and $F : [\omega_1,\bolddelta{3}]^{R \uparrow} \to \alpha$, $F \in \mathbb{L}[T_3]$, then there is $x \in \mathbb{R}$ and $G\in \mathbb{L}_{\bolddelta{3}}[T_3]$.  such that for any $\vec{\gamma} \in [\bolddelta{3}]^{\comp{3}{R}\uparrow}$, for any $\vec{\beta} \in [\omega_1]^{\comp{\leq 2}{R} \uparrow}$, $F(\vec{\beta} \oplus  \vec{\gamma}) = G(\vec{\beta})$. By Lemma~\ref{lem:LT2_LT3_agree_on_power_set_of_u_omega}, $G\in \admistwobold$. Therefore,
\begin{displaymath}
j^R \res \bolddelta{3} = j^{\comp{\leq 2}{R}} \res \bolddelta{3}.
\end{displaymath}
 For $(d,r) \in \dom(R) \cup \se{(3,\emptyset)}$, let
 \begin{displaymath}
\seed^R_{(d,r)}
\end{displaymath}
be the element represented modulo $\mu^R$ by the projection map $\vec{\gamma} \mapsto \comp{d}{\gamma}_r$. If $R$ is $\Pi^1_3$-wellfounded, the direct limit of $j^{R',R''}$ for $R'$ a finite subtree of $R''$ and $R''$ a finite subtree of $R$ is wellfounded, and we let $j^R: \mathbb{L}[T_3] \to \mathbb{L}[j^R(T_3)]$ be the direct limit map; if $R'$ is a finite subtree of $R$ then $j^{R',R}: \mathbb{L}[j^{R'}(T_3)] \to \mathbb{L}[j^R(T_3)]$ is the tail of the direct limit map. If $(d,r) \in \dom(R')$, $R'$ is a finite subtree of $R$, then $\seed^R_{(d,r)} = j^{R',R} (\seed^{R'}_{(d,r)})$. Let
\begin{displaymath}
  \seed^R = (\seed^R_{(d,r)})_{(d,r) \in \dom(R)}. 
\end{displaymath}
If $Y$ is a $\Pi^1_3$-wellfounded level-3 tree, then $j^Y = j^{Q^0 \oplus Y }$, $\seed^Y_y = \seed^{Q^0 \oplus Y}_{(3,y)}$, $\seed^Y = (\seed^Y_y)_{y \in \dom(Y)}$. 

In particular, $\seed^R_{(3,\emptyset)} = j^R(\bolddelta{3})$, $\seed^R_{(d,r)} = \seed^{\comp{\leq 2}{R}}_{(d,r)}$ 
 when $d \leq 2$. 
If $\mathbf{A} \in \exexdesc(\comp{3}{R}')$, $\mathbf{r} \in \exdesc(\comp{3}{R}')$, $R'$ finite subtree of $R$, let $\seed^R_{(3, \mathbf{A})} = j^{R',R}( [\vec{\gamma} \mapsto \comp{3}{\gamma}_{\mathbf{A}}]_{\mu^{R'}})$,  $\seed^R_{(3, \mathbf{r})} = j^{R',R}( [\vec{\gamma} \mapsto \comp{3}{\gamma}_{\mathbf{r}}]_{\mu^{R'}})$. 
By Lemma~\ref{lem:gamma_r_order}, $\seed^R_{(3,\mathbf{A})} < \seed^R_{(3,\mathbf{A}')}$ iff $\mathbf{A} \prec \mathbf{A}'$; $\seed^R_{(3,\mathbf{A})} = \seed^R_{(3,\mathbf{A}')}$ iff $\mathbf{A} \sim \mathbf{A}'$. $\seed^R_{(3,\mathbf{A})}$ for finite $R$ is the higher level analog of uniform indiscernibles. 
If $Y$ is a $\Pi^1_3$-wellfounded level-3 tree and $\mathbf{A} \in \exexdesc(Y)$, $\mathbf{y} \in \exdesc(Y)$, let $\seed^Y_{\mathbf{A}} = \seed^{Q^0\oplus Y}_{(3,\mathbf{A})}$, $\seed^Y_{\mathbf{y}} = \seed^{Q^0\oplus Y}_{(3,\mathbf{y})}$.
We will show in Section~\ref{sec:level-3-uniform-indis} that $\seed^R_{(3,\mathbf{A})} = \seed^{\comp{3}{R}}_{\mathbf{A}}$ for $\mathbf{A} \in \exexdesc(R)$. 

% \begin{mydefinition}\label{def:level-3-uniform_indis}
% Assume $\boldpi{3}$-determinacy.   A \emph{level-3 uniform indiscernible} is of the form $j^R(\bolddelta{3})$ for some finite  level-3 tree $R$.
% \end{mydefinition}

% Level-3 uniform indiscernibles are the higher level analog of uniform indiscernibles $u_n$ for $n<\omega$. 
Under full AD, the set of $\seed^R_{(3,\emptyset)}$ for finite $R$ is exactly $\set{\aleph_{\xi+1}}{\omega\leq  \xi < \omega^{\omega^{\omega}}}$ by Martin \cite[Theorem 4.17]{jackson_handbook} and Jackson \cite{jackson_delta15}. 
The set of $\seed^R_{(3,\emptyset)}$ for finite $R$ and their limit points will be level-3 indiscernibles. 
%Jackson
%In particular, $\seed^R_{\emptyset} = j^R(\bolddelta{3})$ is a level-3 uniform indiscernible. 
The rest of Section~\ref{sec:homog-prop-s_3} will contain a thorough analysis of the structure of level-3 uniform indiscernibles. % In a sequel to this paper, we will show that under $\boldDelta{4}$-determinacy, $\seed^R$
% , based on the different amount of determinacy in the background universe.

% \begin{displaymath}
%   \seed_{\pi}^{(X_{*},\vec{x}),T}
% \end{displaymath}
% is the element represented modulo $\mu^{(X_{*},\vec{x})}$ by the function $[F]_{\mu^{(X_{*},\vec{x})}} \mapsto [F_{\pi}^{(X_{*},\vec{x}), T}]_{\mu^T}$, or equivalently, by the function $\gamma \mapsto \pi ^{T,Q_{*}}(\gamma)$ if $\pi$ is $Q_{*}$-full. 

% Similarly to Lemma~\ref{lem:SQW_factor_vec_induce}, we have
% \begin{mylemma}
%   \label{lem:XTQ_factor_vec_induce}
% Suppose $\pi$ factors $((X_{*},\vec{x}), T, ({Q},\vec{q}))$. Then for any $A \in \mu^{(Q_{*},\vec{q})}$,  $\seed_{\pi}^{(X_{*},\vec{x}), T} \in j^{(X_{*},\vec{x})} \circ j^T (A)$. 
% \end{mylemma}

\subsection{Level-3 uniform indiscernibles}
\label{sec:level-3-uniform-indis}

 % Our next goal is to see $\mu^R$ measures all  definable subsets of $L_{\bolddelta{3}}[T_3]$, as promised in Section~\ref{sec:weak-partition-LT3-measures}. 
 % $\mu^R$ is the higher level analog of $\mu^n$.
% This scenario is the higher analog of the fact that existence of  $0^{\#}$ implies the $n$-fold product of the club filter on $\omega_1$ measures all boldface definable subsets of $L_{\omega_1}$. 

\begin{mydefinition}
  \label{def:level_3_uniform_indiscernibles}
If $R$ is a finite level $\leq 3$ tree, $\alpha$ is an \emph{$R$-uniform indiscernible} iff $\alpha \in \bigcap_{x \in \mathbb{R}} j^R ( \bar{I}^{(3)}_x)$. 
\end{mydefinition}
By Lemma~\ref{lem:R_indiscernible_description_next}, the set of $R$-uniform indiscernibles is the closure of $\set{\seed^R_{(3,\mathbf{A})}}{\mathbf{A} \in \exexdesc(\comp{3}{R})}$, which has order type $\xi+1$ if $\llbracket \emptyset \rrbracket_{\comp{3}{R}} = \widehat{\xi}$. By Lemmas~\ref{lem:next-level-3-indis}-\ref{lem:level_3_indis_cardinal}, $\alpha$ is an $R$-uniform indiscernible iff $\alpha \geq \bolddelta{3}$ is a cardinal in $\mathbb{L}[j^{{R}}(T_3)]$.
In particular, the least $R$-uniform indiscernible is $ \bolddelta{3}$.  

Recall that if $R$ is a level-3 tree, $s \in \dom(R)$, then $R \res s$ the subtree of $R$ whose domain consists of $r$ for which $\corner{r} <_{BK} \corner{s}$. 
If $R$ is a level-3 tree and $\mathbf{A} \in \exexdesc(R)$, we let $R \res \mathbf{A}$ be the subtree of $R$ whose domain consists of $r$ for which $\corner{r} <_{BK} \corner{\mathbf{A}}$. 
If $R$ is a level $\leq 3$ tree and $\mathbf{A} \in \exexdesc(\comp{3}{R})$, $s \in \dom(R)$, let $R \res (3,\mathbf{A}) = \comp{\leq 2}{R} \oplus (\comp{3}{R}\res \mathbf{A})$, $R \res (3,s) = \comp{\leq 2}{R}\oplus (\comp{3}{R}\res s)$. 

\begin{mylemma}
  \label{lem:ordinal_in_below_seed_use_bounded_indis}
  Assume $\boldpi{3}$-determinacy. Suppose $R$ is $\Pi^1_3$-wellfounded level $\leq 3$ tree and $\mathbf{A} \in \exexdesc(\comp{3}{R})$. Then $j^{R \res (3,\mathbf{A}), R}$ is the identity on $\mathbb{L}_{j^{R \res (3,\mathbf{A})}(\bolddelta{3})}[ j^{R \res (3,\mathbf{A})} (T_3)]$. Furthermore, if $s \in \dom(\comp{3}{R})$ and $\lh(s) = 1$, then $j^{R \res (3,s)}(\bolddelta{3}) = \seed^R_{(3,s)}$.
%
% Let $v= \set{(d,r) \in \dom(R)}{\seed^R_{(d,r)} < \seed^R_{(3, \mathbf{A})}}$. Then for any $\alpha < \seed^R_{(3, \mathbf{A})}$, there is $x \in \mathbb{R}$ % , nodes $r_1,\ldots,r_n \in \dom(R \res r)$ 
%   and an $\mathcal{L}$-Skolem term $\tau$ such that
%   \begin{displaymath}
% \alpha = \tau^{j^R(M_{2,\infty}^{-}(x))}(x, (\seed^R_{(d,r)})_{(d,r) \in v}).
% \end{displaymath}
\end{mylemma}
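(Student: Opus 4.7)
The strategy is to reduce ordinals below $j^{R \res (3,\mathbf{A})}(\bolddelta{3})$ to values of functions into $\bolddelta{3}$, which by Lemma~\ref{lem:LT2_LT3_agree_on_power_set_of_u_omega} lie in $\admistwobold$ and are hence definable over $M_{2,\infty}^{-}(x)$ from a real $x$; one then uses indiscernability of $x^{3\#}$ together with the general remarkability principle (Lemma~\ref{lem:remarkable_general}) to confine the dependence to the $\comp{3}{R \res (3,\mathbf{A})}$-coordinates. Concretely for the first part, set $R^{*} = R \res (3,\mathbf{A})$, fix $\alpha < j^{R^{*}}(\bolddelta{3})$, and write $\alpha = [F]_{\mu^{R^{*}}}$ with $F$ into $\bolddelta{3}$ on a $\mu^{R^{*}}$-measure one set; by Lemma~\ref{lem:LT2_LT3_agree_on_power_set_of_u_omega}, $F \in \admistwobold$ is definable over $M_{2,\infty}^{-}(x)$ for some $x \in \mathbb{R}$. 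Indiscernability of $x^{3\#}$ furnishes an $\mathcal{L}^{\underline{x}}$-Skolem term $\tau$ with $F(\vec{\beta},\vec{\gamma}) = \tau^{M_{2,\infty}^{-}(x)}(\vec{\beta},c_{x,\vec{\gamma}})$ on measure one. Writing $\pi$ for the projection of $R$-tuples onto $R^{*}$-tuples, $j^{R^{*},R}(\alpha) = [F \circ \pi]_{\mu^R}$ and on a $\mu^R$-measure one set $(F \circ \pi)(\vec{\beta},\vec{\gamma}) = \tau^{M_{2,\infty}^{-}(x)}(\vec{\beta},c_{x,\vec{\gamma}\res\comp{3}{R^{*}}})$. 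The coherency property of $x^{3\#}$ applied to the inclusion $\rho : \comp{3}{R^{*}} \hookrightarrow \comp{3}{R}$ identifies both sides with the same external ordinal, giving $j^{R^{*},R}(\alpha)=\alpha$; transitivity of $\mathbb{L}_{j^{R^{*}}(\bolddelta{3})}[j^{R^{*}}(T_3)]$ then completes the first claim.

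For the second part, fix $s$ with $\lh(s)=1$ and set $R^{*} = R \res (3,s)$. For the $\leq$ direction, any $\alpha < j^{R^{*}}(\bolddelta{3})$ is, by part~1, of the form $[G \circ \pi]_{\mu^R}$ for some $G : [\omega_1,\bolddelta{3}]^{R^{*}\uparrow}\to\bolddelta{3}$; on a $\mu^R$-measure one set $\gamma_s$ is a level-3 indiscernible below $\bolddelta{3}$, while $G \circ \pi(\vec{\beta},\vec{\gamma})$ depends only on $(\vec{\beta},\vec{\gamma}\res\comp{3}{R^{*}})$ and is a fixed ordinal below $\bolddelta{3}$ independent of $\gamma_s$. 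A Fubini argument on the product measure $\mu^R$ delivers a measure one subset on which $\gamma_s$ exceeds $G \circ \pi(\vec{\beta},\vec{\gamma})$, yielding $\alpha < \seed^R_{(3,s)}$. For the $\geq$ direction, given $\alpha < \seed^R_{(3,s)}$, write $\alpha = [F]_{\mu^R}$ with $F(\vec{\beta},\vec{\gamma}) < \gamma_s < \bolddelta{3}$ on measure one, whence $F \in \admistwobold$ and $F(\vec{\beta},\vec{\gamma}) = \tau^{M_{2,\infty}^{-}(x)}(\vec{\beta},c_{x,\vec{\gamma}})$ for some Skolem term $\tau$. Since $\lh(s)=1$, every $r \prec^{\comp{3}{R}} s$ satisfies $r(0) <_{BK} s(0)$ and so lies in $\dom(\comp{3}{R^{*}})$; Lemma~\ref{lem:remarkable_general} applied at $s$ forces $F(\vec{\beta},\vec{\gamma})$ to depend only on $(\vec{\beta},\vec{\gamma}\res\comp{3}{R^{*}})$ on a measure one set. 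Thus $F = G \circ \pi$ almost everywhere for some $G$ into $\bolddelta{3}$, and by part~1, $\alpha = [G]_{\mu^{R^{*}}} < j^{R^{*}}(\bolddelta{3})$.

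The main obstacle is to rigorously identify $[F]_{\mu^{R^{*}}}$ and $[F \circ \pi]_{\mu^R}$ as the same external ordinal, since they sit in the distinct wellfounded models $\mathbb{L}[j^{R^{*}}(T_3)]$ and $\mathbb{L}[j^R(T_3)]$. The proper framework is to work externally in $V$, using the $\Pi^1_3$-wellfoundedness of $R$ to view ultrapower ordinals as genuine ordinals, and invoke coherency of $x^{3\#}$ (guaranteed by Theorem~\ref{thm:unique_level_3_EM_blueprint}) to translate Skolem-term evaluations between $\mathcal{M}^{*}_{x^{3\#},\comp{3}{R^{*}}}$ and $\mathcal{M}^{*}_{x^{3\#},\comp{3}{R}}$, where the match on shift-equivalent indiscernibles produces equality of the represented ordinals. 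The hypothesis $\alpha < j^{R^{*}}(\bolddelta{3})$ is sharp: at $j^{R^{*}}(\bolddelta{3})$ itself the factor map jumps, since $j^{R^{*},R}(j^{R^{*}}(\bolddelta{3})) = j^R(\bolddelta{3})$ strictly exceeds $\seed^R_{(3,s)} = j^{R^{*}}(\bolddelta{3})$.
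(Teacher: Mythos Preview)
Your argument for the first part contains a genuine gap. You take $\alpha = [F]_{\mu^{R^{*}}} < j^{R^{*}}(\bolddelta{3})$, note that $j^{R^{*},R}(\alpha) = [F\circ\pi]_{\mu^{R}}$, and then claim that ``coherency of $x^{3\#}$'' identifies these as the same external ordinal. But coherency is a statement about agreement of first-order theories of EM models $\mathcal{M}^{*}_{x^{3\#},\comp{3}{R^{*}}}$ and $\mathcal{M}^{*}_{x^{3\#},\comp{3}{R}}$; it says nothing directly about ordinal values in the transitivized ultrapowers $\mathbb{L}[j^{R^{*}}(T_3)]$ and $\mathbb{L}[j^{R}(T_3)]$, which are entirely different (and much larger) structures. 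Your third paragraph acknowledges the obstacle but the proposed fix---translating Skolem-term evaluations between the two EM models---does not resolve it, because those countable mice are not the ultrapowers in question, and ``match on shift-equivalent indiscernibles'' is precisely the assertion $j^{R^{*},R}(\alpha)=\alpha$ you are trying to prove.

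The paper proceeds differently, and in fact you already have the correct argument---it is your ``$\geq$ direction'' of the second part. Rather than starting on the $R^{*}$-side, the paper starts on the $R$-side: take any $\alpha=[G]_{\mu^{R}}<\seed^{R}_{(3,\mathbf{A})}$, write $G(\vec\gamma)=\tau^{M_{2,\infty}^{-}(x)}(x,\vec\gamma)$ with $G(\vec\gamma)<\comp{3}{\gamma}_{\mathbf{A}}$ a.e., and apply Lemma~\ref{lem:remarkable_general} to conclude that $G$ depends (modulo measure one) only on the coordinates indexed by $R^{*}$. One then rewrites $G$ as a Skolem term over $x^{3\#}$ in those coordinates alone, exhibiting $\alpha$ as $j^{R^{*},R}(\beta)$ for an explicit $\beta$. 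Thus the range of $j^{R^{*},R}$ contains all of $\seed^{R}_{(3,\mathbf{A})}$, whence $j^{R^{*},R}$ is the identity below $\seed^{R}_{(3,\mathbf{A})}$ and $j^{R^{*}}(\bolddelta{3})\geq\seed^{R}_{(3,\mathbf{A})}$. The ``furthermore'' then follows from unboundedness, exactly as in your $\leq$ direction. So the repair is structural: prove surjectivity of $j^{R^{*},R}$ onto $\seed^{R}_{(3,\mathbf{A})}$ first (your second-paragraph argument, generalized from $s$ to $\mathbf{A}$), and derive the identity property from that, rather than attempting to compare external ordinals directly.
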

\begin{proof}
Using a direct limit argument, it suffices to prove the case when $R$ is finite. 
We prove that $j^{R \res (3,\mathbf{A})}(\bolddelta{3})$ is contained in the range of $j^{R \res (3,\mathbf{A}),R}$.
Suppose $\alpha = [G]_{\mu^R}< \seed^R_{(3,\mathbf{A})}$,  $x \in \mathbb{R}$, $\tau$ is an $\mathcal{L}$-Skolem term such that  $G(\vec{\gamma}) = \tau^{M_{2,\infty}^{-}(x)}(x, \vec{\gamma})$ for any $\vec{\gamma} \in [\omega_1, \bolddelta{3}]^{R \uparrow}$ and $G(\vec{\gamma}) < \comp{3}{\gamma}_{\mathbf{A}}$ for $\mu^R$-a.e.\ $\vec{\gamma}$. 
By Lemma~\ref{lem:remarkable_general}, if $\vec{\beta}$ respects $\comp{\leq 2}{R}$, 
$\vec{\delta}$ and $\vec{\delta}'$ both strongly respects $\comp{3}{R}$ and $\forall r~( (3,r)\in \dom(R \res (3,\mathbf{A})) \to \comp{3}{\delta}_r = \comp{3}{\delta}_r')$, then 
 $\tau^{M_{2,\infty}^{-}(x)}(x, \vec{\beta} \oplus c^{(3)}_{x,\vec{\delta}}) = \tau^{M_{2,\infty}^{-}(x)}(x, \vec{\beta} \oplus c^{(3)}_{x,\vec{\delta}'})$. 
Using the fact that $(Q,\gamma) \mapsto c^{(3)}_{x,Q,\gamma}$ is definable over $M_{2,\infty}^{-}(x^{3\#})$, we can find an $\mathcal{L}$-Skolem term $\sigma$  such that for $\mu^R$-a.e.\ $\vec{\gamma}$, 
% $\sigma^{M_{2,\infty}^{-}(x^{3\#})}(x^{3\#}, \vec{\beta} \oplus c^{(3)}_{x,\vec{\gamma}}) = \tau^{M_{2,\infty}^{-}(x)} ( x, \vec{\beta} \oplus c^{(3)}_{x,\vec{\delta}})$ whenever  $\vec{\delta}$ extends $\vec{\gamma}$. Then
\begin{displaymath}
  G(\vec{\gamma})= \sigma^{M_{2,\infty}^{-}(x^{3\#})}(x^{3\#}, (\comp{d}{\gamma}_r)_{(d,r) \in \dom(R \res (3,\mathbf{A}))}). 
\end{displaymath}
Hence, $\alpha = j^{R \res (3,\mathbf{A}), A } (\beta)$ where $\beta = \sigma^{j^{R \res (3,\mathbf{A})}(M_{2,\infty}^{-}(x^{3\#}))}(x^{3\#}, \seed^{R \res (3,\mathbf{A})})$. 
This also implies that $j^{R \res (3,\mathbf{A})}(\bolddelta{3}) \geq \seed^R_{(3,\mathbf{A})}$.
The ``furthermore'' part is due to unboundedness of level-3 sharps. 
\end{proof}

Suppose $Y$ is a level $\leq 3$ tree, $T$ is a level $\leq 2$ tree. % ,   and $({Q},\overrightarrow{(d,q,P)})$ is a potential partial level $\leq 2$ tower. 
A \emph{$(Y,T, *)$-description} is of the form $\mathbf{B} = (d, (\mathbf{y}, \pi))$ so that either $d=3 \wedge (\mathbf{y}, \pi) \in \desc(\comp{3}{Y},T, *)$ or $d \leq 2 \wedge (d, ( \mathbf{y}, \pi)) \in \desc(\comp{\leq 2}{Y}, T, *)$. As usual, $\mathbf{B} = (d, (\mathbf{y},\pi))$ is abbreviated by $(d, \mathbf{y}, \pi)$. 
If $Q$ is a finite level $\leq 2$ tree, a $(Y,T,Q)$-description is $(3,(\mathbf{y}, \pi))$ so that $(\mathbf{y}, \pi) \in \desc(\comp{3}{Y},T,Q)$. If $P$ is a finite level-1 tree, a $(Y,T,P)$-description is $(2, (\mathbf{y}, \pi))$ so that $(2,(\mathbf{y}, \pi)) \in \desc(\comp{\leq 2}{Y}, T, P)$. A $(Y,T,-1)$-description is $(1, (\mathbf{y}, \emptyset))$ so that $\mathbf{y} \in \dom(\comp{1}{Y})$.
$\desc(Y,T,*)$, $\desc(Y,T,Q)$, etc.\ denote the sets of relevant descriptions. 
%Suppose $\mathbf{B}=(d,\mathbf{y}, \pi)=(d,(y,X),\pi)$ is a $(Y,T,*)$-description. If $F \in (\omega_1, \bolddelta{3})^{Y \uparrow}$, then $F^T_{\mathbf{B}} = \comp{\leq 2}{F}^T_{\mathbf{B}}$ if $d \leq 3$, $F^T_{\mathbf{B}} = \comp{3}{F}^T_{\mathbf{B}}$ if $d=3$. 
% \begin{displaymath}
%   \id_{\mathbf{B}}^{Y,T}
% \end{displaymath}
% is the function  $[{F}]^Y\mapsto [{F}^T_{\mathbf{B}}]_{\mu^T}$. 
 % or equivalently,  $\vec{\gamma} \mapsto \pi^{T,Q}(\gamma_{\mathbf{y}})$.
If $Y,T$ are finite,
\begin{displaymath}
  \seed_{\mathbf{B}}^{Y,T} \in \mathbb{L}( j^Y \circ j^T (T_3))
\end{displaymath}
is the element represented modulo $\mu^Y$ by $\id^{Y,T}_{\mathbf{B}}$. % Using \Los{}, for any $A \in \mu^{(Q,\overrightarrow{(d,q,P)})}$, we have $\seed^{Y,T}_{\mathbf{B}} \in j^Y \circ j^T(A)$. We can thus unambiguously define
% \begin{displaymath}
%   \mathbf{B}^{Y,T} : \mathbb{L}(j^{(Q, \overrightarrow{(d,q,P)})}(T_3)) \to \mathbb{L}( j^Y \circ j^T (T_3))
% \end{displaymath}
% by sending $j^{(Q,\overrightarrow{(d,q,P)})} (F) (\seed^{(Q,\overrightarrow{(d,q,P)})}) $ to $j^Y \circ j^T( F) (\seed_{\mathbf{B}}^{Y,T})$. 

Suppose that $R,Y$ are level $\leq 3$ trees and $T$ is a level $\leq 2$ tree. $\rho$ \emph{factors $(R,Y,T)$} if $\rho$ is a function on $\dom(R)$, $\comp{\leq 2}{\rho} \DEF \rho \res \dom(\comp{\leq 2}{R})$ factors $(\comp{\leq 2}{R},\comp{\leq 2}{Y},T)$ and $\comp{3}{\rho} \DEF \rho \res \dom(\comp{3}{R})$ factors $(\comp{3}{R}, \comp{3}{Y},T)$. $\rho$ \emph{factors $(R,Y)$} iff $\comp{\leq 2}{\rho}$ factors $(\comp{\leq 2}{R}, \comp{\leq 2}{Y})$ and $\comp{3}{\rho}$ factors $(\comp{3}{R}, \comp{3}{Y})$. 
Suppose that $\rho$ factors $(R,Y,T)$. If $F \in (\omega_1, \bolddelta{3})^{Y \uparrow}$, then
\begin{displaymath}
  F^T_{\rho} : [\omega_1]^{T \uparrow } \to [\omega_1, \bolddelta{3}]^{R \uparrow}
\end{displaymath}
is the function that sends $\vec{\xi}$ to $F^T_{\comp{\leq 2}{\rho}}(\vec{\xi})\oplus F^T_{\comp{3}{\rho}}(\vec{\xi})$. If $T$ is finite, 
\begin{displaymath}
  \id_{\rho}^{Y,T}
\end{displaymath}
is the function $[F]^Y \mapsto [F^T_{\rho}]_{\mu^T}$.  If $Y$ is also finite,
\begin{displaymath}
  \seed_\rho^{Y,T} = [\id^{Y,T}_{\rho}]_{\mu^Y}  \in \mathbb{L}( j^Y \circ j^T (T_3)).
\end{displaymath}
=By \Los{} and Lemmas~\ref{lem:TQW_desc_order},\ref{lem:TQW_description_extension},\ref{lem:B_desc_order},\ref{lem:YTQ_description_extension},\ref{lem:Q_respecting},\ref{lem:R_respect}, for any   $A \in \mu^R$, $\seed_{\rho}^{Y,T} \in j^Y \circ j^T(A)$. We can unambiguously define
\begin{displaymath}
  \rho^{Y,T} : \mathbb{L}(j^R(T_3)) \to \mathbb{L} (j^Y \circ j^T(T_3))
\end{displaymath}
by sending $j^R(F)(\seed^R)$ to $j^Y\circ j^T(F) (\seed_{\rho}^{Y,T})$. 
In general, if $Y,T$ are $\Pi^1_3$-wellfounded and $T$ is $\Pi^1_2$-wellfounded, then $\rho^{Y,T} \circ j^{R',R} = j^{Y',Y} \circ j^{(Y,(T',T))} \circ (\rho')^{Y',T'}$ for $R',Y',T'$ finite subtrees of $R,Y,T$ respectively and $\rho' = \rho \res \dom(R')$ factoring $(R',Y',T')$, where $j^{(Y,(T',T))} = \cup_{x \in \mathbb{R}} j^Y (j^{T',T} \res L[j^{T'}(T_3),x])$.
%
% In particular, $\rho^{Y,T}\circ j^R(\seed^R_r) = j^Y \circ j^T (\seed^T_{})$.
In particular, $\rho^{Y,T}\circ j^R(\bolddelta{3}) = j^Y(\bolddelta{3})$. If $\mathbf{A} \in \exexdesc(\comp{3}{R})$, then $\rho^{Y,T} (\seed^R_{(3, \mathbf{A})}) = \seed^Y_{(3, \tilde{\rho}^T(\mathbf{A}))}$.

% If $\mathbf{A}  =  (\mathbf{r}, \pi, U) \in \exexdesc(\comp{3}{R})$, $\mathbf{r} = (r, Q, \overrightarrow{(d,q,P)})$, then
% \begin{enumerate}
% \item if $r$ is of discontinuous type, then $\rho^{Y,T}( u^R_{\mathbf{A}}) = $
% \end{enumerate}

% \begin{displaymath}
%   \phi^{Y,T} (u^R_{\mathbf{A}})  =  
% \end{displaymath}

If $Y$ is a level $\leq 3$ tree and $T$ is a level $\leq 2$ tree, then
\begin{displaymath}
  Y \otimes T = (\comp{\leq 2}{Y} \otimes T) \oplus (\comp{3}{Y} \otimes T)
\end{displaymath}
is (modulo an isomorphism) a level $\leq 3$ tree. The domain of $Y \otimes T$ consists of $\mathbf{B} = (d, (\mathbf{y},\pi)) \in \desc(Y,T,*)$. 
So $\rho$ factors $(R,Y,T)$ iff $\rho$ factors $(R, Y \otimes T)$. The identity map $\id_{Y \otimes T}: \mathbf{B} \mapsto \mathbf{B}$ factors $(Y \otimes T, Y,T)$.

\begin{mylemma}
  \label{lem:RYT_factor_identity}
  Suppose $Y$ is a $\Pi^1_3$-wellfounded level $\leq 3$ tree and $T$ is a $\Pi^1_2$-wellfounded level $\leq 2$ tree. Then 
$\mathbb{L}_{j^{Y \otimes T} (\bolddelta{3})}[j^{Y \otimes T}(T_3)] = \mathbb{L}_{j^Y(\bolddelta{3})}[j^Y \circ j^T(T_3)]$ and 
$(\id_{Y \otimes T})^{Y,T}$ is the identity map on $\mathbb{L}_{j^{Y \otimes T} (\bolddelta{3})}[j^{Y \otimes T}(T_3)] $.
\end{mylemma}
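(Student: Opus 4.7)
The plan is to adapt the proof of Lemma~\ref{lem:factor_SQW} to the level-3 tier. By direct limit considerations (using that both sides respect tails of the direct limits indexed by finite subtrees of $Y$ and $T$), it suffices to treat the case where both $Y$ and $T$ are finite. I will work with the enumeration of $\desc(\comp{3}{Y}\otimes T)$ in $\prec^{Y,T}$-ascending order as $(\mathbf{B}_i)_{i<m}$ (where $\comp{3}{Y}\otimes T$ is regarded as the set of non-constant $(Y,T,*)$-descriptions of degree 3) and prove by induction on $i$ that
\[
 \seed^{Y\otimes T}_{(3,\mathbf{B}_i)} \;=\; \seed^{Y,T}_{(3,\mathbf{B}_i)},
\]
with this common value being the $(i+1)$-st uniform indiscernible above $\bolddelta{3}$ in the $j^Y\circ j^T$-ultrapower of $\mathbb{L}[T_3]$.

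The base case uses indiscernability, unboundedness, and remarkability of $0^{3\#}$ to identify the $\prec^{Y,T}$-least seed on each side with the next $Y\otimes T$-uniform indiscernible above $\bolddelta{3}$ (here Lemmas~\ref{lem:level_3_indis_cardinal} and \ref{lem:R_indiscernible_description_next} identify that next indiscernible intrinsically). The inductive step requires a level-3 analog of Lemma~\ref{lem:level_2_desc_cofinal_in_next}: I must show that the image $(\mathbf{B}_i^{Y,T})''\bolddelta{3}$ is cofinal in $\seed^{Y,T}_{(3,\mathbf{B}_{i+1})}$. This will be obtained by extending the pair $(Y,T)$ minimally by one node so that $\mathbf{B}_i = \mathbf{B}_{i+1}'\res(Y,T)$ for some description $\mathbf{B}_{i+1}'$ in the extension (which is possible via the $\res$-analysis in Lemmas~\ref{lem:QW_description_extension_another}, \ref{lem:TQW_description_extension_another}, and \ref{lem:YTQ_description_extension}) and then running a partition argument on $[\bolddelta{3}]^{(Y\otimes T)' \uparrow}$ using the weak partition property of $\bolddelta{3}$ (the same partition tool used to produce the measures $\mu^R$ in Definition~\ref{def:level-3-measure}), exactly mirroring Subcases 2.1 and 2.2 of Lemma~\ref{lem:level_2_desc_cofinal_in_next}.

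Once the seed identification is in hand, the identity part follows by \Los{}: for any $\mathcal{L}^{\underline{x}}$-Skolem term $\sigma$ and any finite collection of descriptions, the ordinal $\sigma^{\mathbb{L}[j^{Y\otimes T}(T_3),x]}(x,\seed^{Y\otimes T}_{(3,\mathbf{B}_{i_1})},\ldots)$ equals $\sigma^{\mathbb{L}[j^Y\circ j^T(T_3),x]}(x,\seed^{Y,T}_{(3,\mathbf{B}_{i_1})},\ldots)$ because the two ultrapowers agree pointwise on the generating seeds and on reals. By Lemma~\ref{lem:ordinal_in_below_seed_use_bounded_indis} applied inside the $j^{Y\otimes T}$-ultrapower, together with the level $\leq 3$ coding machinery of Section~\ref{sec:level-3-indisc}, every ordinal below $j^{Y\otimes T}(\bolddelta{3})$ is of this Skolem form over some real $x$ from the $\seed^{Y\otimes T}_{(3,\mathbf{B})}$; hence $(\id_{Y\otimes T})^{Y,T}$ is the identity on ordinals below $j^{Y\otimes T}(\bolddelta{3})$, and by $\mathcal{L}$-elementarity and agreement on reals, it is the identity on $\mathbb{L}_{j^{Y\otimes T}(\bolddelta{3})}[j^{Y\otimes T}(T_3)]$, which simultaneously gives the equality of the two structures.

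\textbf{Main obstacle.} The principal difficulty lies in the cofinality step inside the inductive argument: at level 2 this was a clean partition of $\omega_1^{Q'\uparrow}$ using the level-2 partition property of $\omega_1$, but at level 3 one must juggle simultaneously the level $\leq 2$ and level 3 components of the extended tree $Y\otimes T$ and choose the one-node extension in which $\mathbf{B}_{i+1}'$ lives so that its $\res$-image recovers $\mathbf{B}_i$. The case split follows the taxonomy of $\mathbf{B}_{i+1}$ being of continuous/discontinuous type and of $*$-continuous/discontinuous type relative to both its level $\leq 2$ and level 3 coordinates, producing a proliferation of subcases each needing the corresponding extension lemma. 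Orchestrating these so that a single partition of a suitably chosen $\mu^{(Y\otimes T)'}$-measure-one set on each side detects the cofinality is the heart of the verification.
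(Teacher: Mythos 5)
Your plan is to imitate the proof of Lemma~\ref{lem:factor_SQW} one tier up: enumerate $\desc(\comp{3}{Y}\otimes T)$ in $\prec^{Y,T}$-order, identify seeds on both sides inductively, and close the inductive step with a cofinality lemma modeled on Lemma~\ref{lem:level_2_desc_cofinal_in_next}. This is a natural line of attack but it is not the route the paper takes, and there is a genuine obstruction at the point where you invoke a partition tool.

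The paper's proof goes a different way. It starts from the fact, already available from Lemmas~\ref{lem:next-level-3-indis},~\ref{lem:level_3_indis_cardinal} and~\ref{lem:R_indiscernible_description_next}, that the $\mathbb{L}[j^{Y\otimes T}(T_3)]$-cardinals in $[\bolddelta{3}, j^{Y\otimes T}(\bolddelta{3})]$ are exactly the closure of $\{\seed^{Y\otimes T}_{(3,\mathbf{A})} : \mathbf{A} \in \exexdesc(\comp{3}{(Y\otimes T)})\}$. Since $\rho^{Y,T}=(\id_{Y\otimes T})^{Y,T}$ is $\mathcal{L}$-elementary on $L[j^{Y\otimes T}(T_3),x]$ for each real $x$ and sends $\seed^{Y\otimes T}_{(3,\mathbf{A})}$ to $\seed^Y_{(3,\tilde\rho^T(\mathbf{A}))}$, the whole lemma reduces to two claims: $\rho^{Y,T}\res\bolddelta{3}$ is the identity, and $\rho^{Y,T}$ is continuous at every seed that is a successor cardinal. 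The first is the level-2 machinery applied to $\comp{\leq 2}{\rho}$. The second is a boundedness argument through the level-3 indiscernibles $\bar{I}^{(3)}_x$, using $j^T(\bar{I}^{(3)}_x)\subseteq\bar{I}^{(3)}_x$ and elementarity of $j^Y$: given $\alpha$ below the target seed, pick $x$ with $\alpha$ below the next element of $j^Y(\bar{I}^{(3)}_x)$ past the predecessor seeds, pull back to a $\beta$ below $\seed^{Y\otimes T}_{(3,\mathbf{A})}$, and check $\rho^{Y,T}(\beta)\ge\alpha$. There is no seed-by-seed induction, no minimal one-node extensions, and no partition argument.

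The gap in your proposal is the tool you assign to the inductive step. You propose running a partition on $[\bolddelta{3}]^{(Y\otimes T)'\uparrow}$ using ``the weak partition property of $\bolddelta{3}$ (the same partition tool used to produce the measures $\mu^R$ in Definition~\ref{def:level-3-measure}).'' But Definition~\ref{def:level-3-measure} does not invoke a partition property of $\bolddelta{3}$: the measure-one sets for $\mu^R$ are generated by level-3 sharps, and the justification is indiscernability and remarkability, not a partition theorem — the paper is working under $\boldpi{3}$-determinacy only and no level-3 partition property of $\bolddelta{3}$ is developed there. Correspondingly, there is no level-3 analog of Lemma~\ref{lem:level_2_desc_cofinal_in_next} anywhere in the paper; the proof of Lemma~\ref{lem:RYT_factor_identity} is structured specifically so as not to need one. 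To make your inductive step go through you would have to re-derive the required cofinality from the sharp itself (using Lemmas~\ref{lem:R_indiscernible_description_next} and~\ref{lem:order_type_realizable} in place of a partition), which, after unwinding, extracts essentially the same boundedness information the paper's continuity-at-successor-cardinals argument uses directly — at the cost of a larger case analysis over the $\res$-taxonomy in Lemmas~\ref{lem:QW_description_extension_another},~\ref{lem:TQW_description_extension_another},~\ref{lem:YTQ_description_extension}. Your final \Los-plus-Lemma~\ref{lem:ordinal_in_below_seed_use_bounded_indis} step to pass from seed agreement to the full identity is fine and matches the paper's closing move.
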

\begin{proof}
Assume without loss of generality that $Y,T$ are finite. 
Put $R = Y \otimes T$ and $\rho = \id_{Y \otimes T}$. Then $\rho^{Y,T} (\seed^R_{(3, \mathbf{A})}) = \seed^Y_{(3, \tilde{\rho}^T(\mathbf{A}))}$ for any $\mathbf{A} \in \exexdesc(\comp{3}{R})$ and $\forall \mathbf{B} \in \exexdesc(Y) ~ \exists \mathbf{A} \in \exexdesc(R)~ \tilde{\rho}^T(\mathbf{A}) \sim^Y_{*} \mathbf{B}$. Recall that the set of $\mathbb{L}[j^R(T_3)]$-cardinals in the interval $[\bolddelta{3}, j^R(\bolddelta{3})]$ is exactly the closure of $\set{\seed^R_{(3,\mathbf{A})}}{\mathbf{A} \in \exexdesc(\comp{3}{R})}$. For any $x \in \mathbb{R}$, $\rho^{Y,T} \res L[j^R(T_3),x]$ is elementary from $L[j^R(T_3),x]$ to $L[j^Y \circ j^T (T_3), x]$. Hence, it suffices to show that $\rho^{Y,T} \res \bolddelta{3} $ is the identity and whenever $\wocode{\mathbf{A}}_{\prec^{\comp{3}{R}}_{*}}$ is a successor cardinal, then $\rho^{Y,T}$ is continuous at $\seed^R_{(3,\mathbf{A})}$.

By Lemma~\ref{lem:level_2_ultrapower_iteration_reduce} and Corollary~\ref{coro:delta13_bounds_ultrapowers},  $j^R \res \bolddelta{3} = j^{\comp{\leq 2}{R}} \res \bolddelta{3} = j^{\comp{\leq 2}{Y}} \circ j^T \res \bolddelta{3} = j^Y \circ j^T \res \bolddelta{3}$. By indiscernability and remarkability, $\rho^{Y,T} \res \bolddelta{3} = \comp{\leq 2}{\rho}^{\comp{\leq 2}{Y},T} \res \bolddelta{3} $ is the identity map.

Suppose $\wocode{\mathbf{A}}_{\prec^{\comp{3}{R}}_{*}}$ is a successor cardinal and we prove that $\rho^{Y,T}$ is continuous at $\seed^R_{(3,\mathbf{A})}$. 
Suppose $\alpha < \seed^Y_{(3, \tilde{\rho}^T(\mathbf{A}))}$. There is $x \in \mathbb{R}$ such that
\begin{displaymath}
\alpha < \min (j^Y(\bar{I}^{(3)}_x) \setminus \sup\set{\seed^Y_{(3,\mathbf{B})}}{\mathbf{B} \prec^Y_{*} \tilde{\rho}^T(\mathbf{A})}).
\end{displaymath}
Let $\beta = \min(j^R(I^{(3)}_x) \setminus \sup \set{\seed^R_{(3,\mathbf{B})}}{\mathbf{B} \prec^Y_{*} \mathbf{A}})$. Then $\beta < \seed^R_{(3,\mathbf{A})}$ and by induction and elementarity,
\begin{displaymath}
\rho^{Y,T} (\beta) = \min (j^Y \circ j^T(\bar{I}^{(3)}_x) \setminus \sup\set{\seed^Y_{(3,\mathbf{B})}}{\mathbf{B} \prec^Y_{*} \tilde{\rho}^T(\mathbf{A})}) \geq \alpha,
\end{displaymath}
the last inequality from $j^T(\bar{I}^{(3)}_x) \subseteq \bar{I}^{(3)}_x$ and elementarity of $j^Y$.
\end{proof}

\begin{mylemma}
  \label{lem:level_3_uniform_indis_compare}
  Suppose $Y,Y'$ are $\Pi^1_3$-wellfounded level $\leq 3$ trees. 
Then $\llbracket \emptyset \rrbracket_{\comp{3}{Y}}= \llbracket \emptyset \rrbracket_{\comp{3}{Y}'}$ iff $j^Y(\bolddelta{3}) = j^{Y'}(\bolddelta{3})$,  $\llbracket \emptyset \rrbracket_{\comp{3}{Y}} < \llbracket \emptyset \rrbracket_{\comp{3}{Y}'}$ iff $j^Y(\bolddelta{3}) < j^{Y'}(\bolddelta{3})$. 
\end{mylemma}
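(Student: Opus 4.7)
My plan is to reduce the comparison of $j^Y(\bolddelta{3})$ and $j^{Y'}(\bolddelta{3})$ to the existence of a level $\leq 3$ factoring map $\hat\rho$ of $(Y,Y',T)$ for some $\Pi^1_2$-wellfounded $T$ furnished by the comparison theorems, and then invoke elementarity of the induced ultrapower embedding together with Lemmas~\ref{lem:ordinal_in_below_seed_use_bounded_indis} and~\ref{lem:RYT_factor_identity}. For the equality case, suppose $\llbracket \emptyset \rrbracket_{\comp{3}{Y}}=\llbracket \emptyset \rrbracket_{\comp{3}{Y'}}$. I would combine Theorem~\ref{thm:factor_tower_order_type_equivalent} (applied to the level-3 parts) and Theorem~\ref{thm:factor_ordertype_embed_equivalent_lv2} (applied to the level $\leq 2$ parts) over a common $\Pi^1_2$-wellfounded level $\leq 2$ tree $T$ extending both auxiliary trees to produce $\hat\rho$ factoring $(Y,Y',T)$. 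The induced embedding $\hat\rho^{Y',T}\colon \mathbb{L}[j^Y(T_3)]\to\mathbb{L}[j^{Y'}\circ j^T(T_3)]$, elementary on each chunk $L[\cdot,z]$, sends $j^Y(\bolddelta{3})$ to $j^{Y'}\circ j^T(\bolddelta{3})=j^{Y'}(\bolddelta{3})$, using that $j^T$ fixes $\bolddelta{3}$ by Corollary~\ref{coro:delta13_bounds_ultrapowers}. Since any elementary embedding $f$ between transitive structures satisfies $f(\alpha)\geq \alpha$ for every ordinal $\alpha$, we obtain $j^Y(\bolddelta{3})\leq j^{Y'}(\bolddelta{3})$, and symmetry then yields equality.

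For the strict case $\llbracket \emptyset \rrbracket_{\comp{3}{Y}}<\llbracket \emptyset \rrbracket_{\comp{3}{Y'}}$, I would invoke the furthermore clause of Theorem~\ref{thm:factor_tower_order_type_equivalent} on the level-3 parts to obtain a $\Pi^1_2$-wellfounded $T_0$ and $\mathbf{B}\in\dom(\comp{3}{Y'}\otimes T_0)$ with $\lh(\mathbf{B})=1$ and $\llbracket \mathbf{B}\rrbracket_{\comp{3}{Y'}\otimes T_0}=\llbracket \emptyset \rrbracket_{\comp{3}{Y}}$. Setting $V=(\comp{3}{Y'}\otimes T_0)\res \mathbf{B}$ and $Y''=(\emptyset,V)$, we have $\llbracket \emptyset \rrbracket_{\comp{3}{Y''}}=\llbracket \emptyset \rrbracket_{\comp{3}{Y}}$, so the equality case gives $j^{Y''}(\bolddelta{3})=j^Y(\bolddelta{3})$. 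Since the level $\leq 2$ measure of $Y''$ is trivial, $j^{Y''}=j^V$, and Lemma~\ref{lem:ordinal_in_below_seed_use_bounded_indis} applied with $R=(\emptyset,\comp{3}{Y'}\otimes T_0)$ and $s=\mathbf{B}$ identifies $j^V(\bolddelta{3})$ with $\seed^{\comp{3}{Y'}\otimes T_0}_{\mathbf{B}}$, strictly less than $\seed^{\comp{3}{Y'}\otimes T_0}_{\emptyset}=j^{\comp{3}{Y'}\otimes T_0}(\bolddelta{3})$. Lemma~\ref{lem:RYT_factor_identity} (with the level $\leq 3$ tree $(\emptyset,\comp{3}{Y'})$ and $T_0$) together with another application of the equality case then identifies this last quantity with $j^{Y'}(\bolddelta{3})$. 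Thus $j^Y(\bolddelta{3})<j^{Y'}(\bolddelta{3})$, and the reverse directions of both biconditionals follow by trichotomy.

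The main obstacle will be the bookkeeping in assembling $\hat\rho$ for the equality case and verifying that $\hat\rho^{Y',T}$ has the claimed action on the top ordinal $j^Y(\bolddelta{3})$, rather than only on ordinals below $\bolddelta{3}$ where the behaviour is immediate. In particular, the level $\leq 2$ factoring $\pi_1$ produced by Theorem~\ref{thm:factor_ordertype_embed_equivalent_lv2} and the level-3 factoring $\sigma$ produced by Theorem~\ref{thm:factor_tower_order_type_equivalent} are defined with respect to different auxiliary $\Pi^1_2$-wellfounded trees, and one must check that they extend compatibly over a common $T$ to yield a well-defined level $\leq 3$ factoring whose induced embedding sends $j^Y(\bolddelta{3})$ to $j^{Y'}(\bolddelta{3})$ via the defining formula $\hat\rho^{Y,T}(j^R(F)(\seed^R))=j^Y\circ j^T(F)(\seed^{Y,T}_{\hat\rho})$ applied to the constant function $F\equiv\bolddelta{3}$.
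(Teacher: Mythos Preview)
Your approach is essentially the same as the paper's. The paper also combines Theorems~\ref{thm:factor_ordertype_embed_equivalent_lv2} and~\ref{thm:factor_tower_order_type_equivalent} to obtain a single $T$ and $\rho$ factoring $(Y,Y',T)$, then uses the already-established fact $\rho^{Y',T}(j^Y(\bolddelta{3}))=j^{Y'}(\bolddelta{3})$ (stated just before the lemma) to get $j^Y(\bolddelta{3})\leq j^{Y'}(\bolddelta{3})$, and handles the strict case via the ``furthermore'' clause, Lemma~\ref{lem:ordinal_in_below_seed_use_bounded_indis}, and Lemma~\ref{lem:RYT_factor_identity} in the same way.

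Two minor differences. First, your ``main obstacle'' about merging the two auxiliary trees $T_1,T_2$ into a common $T$ is real but easy: any finite level $\leq 2$ tree containing both as subtrees works (e.g.\ via amalgamation as in Lemma~\ref{lem:level_2_amalgamation}), and the paper simply suppresses this. Second, in the strict case you take a slightly more indirect route, reducing to the already-proved equality case applied to auxiliary trees $Y''$ and $(\emptyset,\comp{3}{Y'})$; the paper instead observes directly that the factoring $\rho$ already factors $(Y,Z)$ where $Z=(Y'\otimes T)\res(3,\mathbf{B})$, giving $j^Y(\bolddelta{3})\leq j^Z(\bolddelta{3})$ without a separate appeal to the equality case. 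Both routes are fine; the paper's is marginally shorter since it reuses the factoring $\rho$ rather than building new ones.
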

\begin{proof}
If $\llbracket \emptyset \rrbracket_{\comp{3}{Y}} \leq \llbracket \emptyset \rrbracket_{\comp{3}{Y}'}$ then by Theorems~\ref{thm:factor_ordertype_embed_equivalent_lv2} and~\ref{thm:factor_tower_order_type_equivalent}, there exist a finite $T$ and $\rho$ that factors $(Y, Y',T)$. So $\rho^{Y',T} \circ j^Y(\bolddelta{3}) = j^{Y'}(\bolddelta{3})$, yielding $j^Y(\bolddelta{3}) \leq j^{Y'}(\bolddelta{3})$. So $\llbracket \emptyset \rrbracket_{\comp{3}{Y}}= \llbracket \emptyset \rrbracket_{\comp{3}{Y}'}$ implies $j^Y(\bolddelta{3}) = j^{Y'}(\bolddelta{3})$. 
If $\llbracket \emptyset \rrbracket_{\comp{3}{Y}}< \llbracket \emptyset \rrbracket_{\comp{3}{Y}'}$, we further obtain $\mathbf{B} \in \desc(\comp{3}{Y}',T,*)$ such that $\lh(\mathbf{B}) = 1$ and $\llbracket \mathbf{B}  \rrbracket_{\comp{3}{Y} ' \otimes T} = \llbracket \emptyset \rrbracket_{\comp{3}{Y}}$.
Put $Z = ({Y}' \otimes T )\res (3,\mathbf{B})$. Then $\rho$ factors $(Y,Z)$. Hence $j^Y(\bolddelta{3}) \leq j^Z(\bolddelta{3})$. By Lemma~\ref{lem:ordinal_in_below_seed_use_bounded_indis}, the factor map $j^{Z, Y' \otimes T} $ is the identity on $\mathbb{L}_{j^Z(\bolddelta{3})}[j^Z(\bolddelta{3})]$ and $j^Z(\bolddelta{3}) = \seed^{Y' \otimes T}_{(3, \mathbf{B})}$. By Lemma~\ref{lem:RYT_factor_identity}, $\seed^{Y' \otimes T}_{(3,\mathbf{B})} < j^{Y'}\circ j^T(\bolddelta{3}) = j^{Y'}(\bolddelta{3})$.
\end{proof}

\begin{mylemma}
  \label{lem:level_3_descriptions_represent_uniform_indiscernibles}
  Suppose $Y$ is a finite level $\leq 3$ tree and $\mathbf{A} \in \exexdesc(\comp{3}{Y})$. Suppose $\wocode{\mathbf{A}}_{\prec^{\comp{3}{Y}}_{*}} = \xi $ and $R$ is a finite level $\leq 3$ tree such that $\llbracket \emptyset \rrbracket_{\comp{3}{R}} = \widehat{\xi}$.  Then $\seed^Y_{(3,\mathbf{A})} = j^R(\bolddelta{3})$.
\end{mylemma}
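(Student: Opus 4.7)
The plan is to reduce to Lemma~\ref{lem:ordinal_in_below_seed_use_bounded_indis}, whose explicit identity between $j^{R \res (3,s)}(\bolddelta{3})$ and $\seed^R_{(3,s)}$ is already in hand for length-one nodes~$s$. First, by Lemma~\ref{lem:level_3_uniform_indis_compare}, the value $j^R(\bolddelta{3})$ depends only on $\llbracket\emptyset\rrbracket_{\comp{3}{R}}=\widehat\xi$, so I am free to replace $R$ by any convenient $\Pi^1_3$-wellfounded level $\leq 3$ tree with the same rep-order type of its level-3 component. The task is therefore to exhibit one specific $R^*$ having $\llbracket\emptyset\rrbracket_{\comp{3}{R^*}}=\widehat\xi$ and showing $\seed^Y_{(3,\mathbf{A})}=j^{R^*}(\bolddelta{3})$.

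The auxiliary $R^*$ is obtained as follows. I construct a finite $\Pi^1_3$-wellfounded level $\leq 3$ tree $Z$ and a distinguished node $z_0\in\dom(\comp{3}{Z})$ with $\lh(z_0)=1$ such that (i) $\llbracket\emptyset\rrbracket_{\comp{3}{Z}\res z_0}=\widehat\xi$, and (ii) there is a map of level $\leq 3$ trees $\rho$ into $Z$ that ``realises'' $\mathbf{A}$ as $z_0$, in the precise sense that the push-forward $\tilde\rho^{Q^0}(\mathbf{A})$ is $\sim^{\comp{3}{Z}}_{*}$-equivalent to $z_0$. Concretely, starting from the subtree $\comp{3}{Y}\res\mathbf{A}$ one adjoins a single fresh length-one node $z_0$ whose tree- and node-components encode the ``shape'' of $\mathbf{A}=(\mathbf{r},\pi,T)$ (including, in the continuous-type case, the continuous extension data). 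Once $(Z,z_0)$ is built, Lemma~\ref{lem:ordinal_in_below_seed_use_bounded_indis} applied to $Z$ and $z_0$ gives
\[
  \seed^Z_{(3,z_0)} \;=\; j^{Z\res(3,z_0)}(\bolddelta{3}).
\]
Lemma~\ref{lem:level_3_uniform_indis_compare} applied to $Z\res(3,z_0)$ and $R$ (both having $\llbracket\emptyset\rrbracket_{\comp{3}{\cdot}}=\widehat\xi$) identifies the right-hand side with $j^R(\bolddelta{3})$. On the other hand, the factoring map $\rho$ together with Lemma~\ref{lem:gamma_r_order} and the compatibility $\rho^{Y,T}(\seed^{\cdots}_{(3,\mathbf{A})})=\seed^{\cdots}_{(3,\tilde\rho^T(\mathbf{A}))}$ (noted just after Definition~\ref{def:level-3-measure}) yields $\seed^Y_{(3,\mathbf{A})}=\seed^Z_{(3,z_0)}$. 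Chaining the three identities produces the desired $\seed^Y_{(3,\mathbf{A})}=j^R(\bolddelta{3})$.

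The main obstacle is the combinatorial construction of $(Z,z_0,\rho)$ and, in particular, the verification that $\llbracket\emptyset\rrbracket_{\comp{3}{Z}\res z_0}$ is exactly $\widehat\xi$. This requires the dictionary between the syntactic count $\wocode{\mathbf{A}}_{\prec^{\comp{3}{Y}}_{*}}$ and order types of representation sets that is developed in Sections~\ref{sec:level-2-description} and~\ref{sec:level-3-description}; the continuous-type case of $\mathbf{A}=(\mathbf{r},\pi,T)$ is the most delicate, since the ``extra'' continuous data must be materialised on the tree-component of $z_0$ using the $\iota$-isomorphism machinery between $(Y\otimes T)\otimes Q$ and $Y\otimes(T\otimes Q)$. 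Once this combinatorial step is performed the rest of the argument is purely formal, proceeding by the three identifications listed above.
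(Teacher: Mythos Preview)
Your high-level strategy matches the paper's: reduce to a length-one node so that Lemma~\ref{lem:ordinal_in_below_seed_use_bounded_indis} applies directly, then identify $j^{\cdot}(\bolddelta{3})$ via Lemma~\ref{lem:level_3_uniform_indis_compare}. But your implementation has a real gap, and the paper's route sidesteps it cleanly.

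The gap is in the step ``$\seed^Y_{(3,\mathbf{A})}=\seed^Z_{(3,z_0)}$''. A factoring map $\rho$ from $Y$ into $Z$ with $\tilde\rho^{Q^0}(\mathbf{A})\sim^{\comp{3}{Z}}_{*} z_0$ only gives $\rho^{Z,Q^0}\bigl(\seed^Y_{(3,\mathbf{A})}\bigr)=\seed^Z_{(3,z_0)}$; that is an image under an elementary embedding, not an equality of ordinals. You have not argued (and in general cannot, for an arbitrary hand-built $Z$) that $\rho^{Z,Q^0}$ fixes $\seed^Y_{(3,\mathbf{A})}$. The fact that seeds depend only on their $\prec_*$-rank is precisely what the lemma is proving, so you cannot invoke it.

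The paper avoids this by never leaving $Y$'s orbit: instead of constructing a fresh $Z$, it applies Theorem~\ref{thm:factor_tower_order_type_equivalent} (with $\llbracket\emptyset\rrbracket_{\comp{3}{R}}=\widehat\xi\le\llbracket\emptyset\rrbracket_{\comp{3}{Y}}$) to obtain a finite level $\leq 2$ tree $T$ and a length-one node $\mathbf{B}=(\mathbf{y},\pi)\in\dom(\comp{3}{Y}\otimes T)$ with $\llbracket\mathbf{B}\rrbracket_{\comp{3}{Y}\otimes T}=\widehat\xi$. A short computation shows $\mathbf{A}\sim^{\comp{3}{Y}}_{*}(\mathbf{y},\pi,T\otimes Q)$ where $\mathbf{B}\in\desc(\comp{3}{Y},T,Q)$; since Lemma~\ref{lem:RYT_factor_identity} says $(\id_{Y\otimes T})^{Y,T}$ is the \emph{identity} on the relevant ordinals, this immediately gives $\seed^{Y\otimes T}_{(3,\mathbf{B})}=\seed^Y_{(3,\mathbf{A})}$ as actual ordinals. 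Now set $Z=(Y\otimes T)\res(3,\mathbf{B})$ and finish exactly as you outlined. So the tensor-product machinery both produces the length-one node for free and supplies the identity property that your ad hoc construction is missing.
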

\begin{proof}
  % The choice of $R$ is not an issue because of Lemma~\ref{lem:level_3_uniform_indis_compare}. 
  If $\mathbf{A} = (\emptyset,\emptyset,\emptyset)$, this is exactly Lemma~\ref{lem:level_3_uniform_indis_compare}. Suppose $\mathbf{A} \neq (\emptyset,\emptyset,\emptyset)$. Let $T$ be a finite level $\leq 2$ tree and let $\rho$ minimally factor $(R,Y,T)$. Let $\mathbf{B} \in \desc(\comp{3}{Y},T,*)$ such that $\lh(\mathbf{B}) = 1$ and $\llbracket \mathbf{B} \rrbracket_{\comp{3}{Y} \otimes T} = \widehat{\xi}$. % Then $j^R(\bolddelta{3}) \leq \seed^{Y \otimes T}_{(3,\mathbf{B})}$. 
Put $\mathbf{B} = (\mathbf{y}, \pi) \in \desc(\comp{3}{Y},T,Q)$. 
A routine computation gives $\mathbf{A} \sim^{\comp{3}{Y}}_{*} (\mathbf{y}, \pi, T \otimes Q)$. So $\seed^{Y \otimes T}_{(3,\mathbf{B})} = \seed^{Y \otimes T}_{(3, \mathbf{A})}$. 
% This gives the $\geq$ direction of the Lemma. 
Put $Z = Y \otimes T \res (3, \mathbf{B})$. Then  $\llbracket \emptyset \rrbracket_{\comp{3}{Z}} = \llbracket \emptyset \rrbracket_{\comp{3}{R}}$.   By Lemma~\ref{lem:ordinal_in_below_seed_use_bounded_indis},  $j^Z(\bolddelta{3}) = \seed^{Y \otimes T} _{(3,\mathbf{B})}$. By Lemma~\ref{lem:level_3_uniform_indis_compare}, $j^R(\bolddelta{3}) = j^Z(\bolddelta{3})$, and we are done.
\end{proof}

\begin{mydefinition}
  \label{def:level_3_uniform_indiscernibles_final}
In view of Lemma~\ref{lem:level_3_descriptions_represent_uniform_indiscernibles}, we define the \emph{level-3 uniform indiscernibles}:
\begin{enumerate}
\item $u^{(3)}_{\xi+1} = j^R(\bolddelta{3})$ when $\xi < \omega^{\omega^{\omega}}$, $R$ is a $\Pi^1_3$-wellfounded level $\leq 3$ tree and $\llbracket \emptyset \rrbracket_{\comp{3}{R}} = \widehat{\xi}$. 
\item If $0 < \xi  \leq \omega^{\omega^{\omega}}$ is a limit, then $u^{(3)}_{\xi} = \sup _{\eta<\xi}u^{(3)}_{\eta}$.
\end{enumerate}
\end{mydefinition}
If $R$ is a finite level $\leq 3$ tree and $\llbracket \emptyset \rrbracket_{\comp{3}{R}} = \widehat{\xi}$, then the set of $R$-uniform indiscernibles is $\set{u^{(3)}_{\eta}}{0 < \eta \leq \xi+1}$ and we have $\seed^R_{(3, \mathbf{A})} = u^{(3)}_{\eta+1}$ for $\wocode{\mathbf{A}}_{\prec^{\comp{3}{R}}_{*}} = \eta$.

The next lemma is the higher level analog of $\bolddelta{2} = u_2$. 
\begin{mylemma}
  \label{lem:delta14}
  Assume $\boldpi{3}$-determinacy. Then $\bolddelta{4} = u^{(3)}_2$.
\end{mylemma}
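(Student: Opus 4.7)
The plan is to prove $u^{(3)}_2 \leq \bolddelta{4}$ and $\bolddelta{4} \leq u^{(3)}_2$ separately.

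For the first inequality, take $R = Q^0 \oplus R^0$, so that by Definition~\ref{def:level_3_uniform_indiscernibles_final}, $u^{(3)}_2 = j^R(\bolddelta{3})$ and $\mu^R$ reduces to the normal $\mathbb{L}[T_3]$-measure on $\bolddelta{3}$ concentrating on cofinality-$\omega$ points. Every ordinal $\alpha < u^{(3)}_2$ has the form $[F_\alpha]_{\mu^R}$ for some $F_\alpha : \bolddelta{3} \to \bolddelta{3}$ lying in $\mathbb{L}_{\bolddelta{3}}[T_3] = \admistwobold$ (Lemma~\ref{lem:LT2_LT3_agree_on_power_set_of_u_omega}), and by Lemma~\ref{lem:LT2_LT3_agree_on_power_set_of_u_omega}(2) such $F_\alpha$ is $\Sigma_1$-definable over some $M_{2,\infty}^{-}(x)$ from $\{x\}$, supplying a real code. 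Using Corollary~\ref{coro:Delta13_pwo_computable_in_LT2} to identify ordinals below $\bolddelta{3}$ with ranks in a canonical $\Pi^1_3$ norm, the set of codes is $\Sigma^1_4$; the induced ordering, namely ``for $\mu^R$-a.e.\ $\xi$, $F_\alpha(\xi) \leq F_\beta(\xi)$'', reduces via Definition~\ref{def:level-3-measure} to the existence of clubs $E \subseteq \omega_1$ and $C \subseteq \bolddelta{3}$ in $\mathbb{L}[T_3]$ witnessing the inequality on $[E,C]^{R\uparrow}$, hence is $\Sigma^1_4$ in the codes. Reflecting, one gets a $\Delta^1_4$ prewellordering of length $u^{(3)}_2$, so $u^{(3)}_2 \leq \bolddelta{4}$.

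For the second inequality, since $\boldpi{3}$-determinacy yields the scale property for $\boldpi{3}$ with norms into $\bolddelta{3}$, the Martin--Solovay form of Kunen--Martin applied to $\boldsigma{4} = \exists^{\mathbb{R}}\boldpi{3}$ gives $\bolddelta{4} \leq (\bolddelta{3})^+$. It remains to verify $(\bolddelta{3})^+ \leq u^{(3)}_2$. This follows from Martin's weak partition relation on $\bolddelta{3}$ (a consequence of $\boldpi{3}$-determinacy): the ultrapower by $\mu^R$ realizes every ordinal in the interval $[\bolddelta{3},(\bolddelta{3})^+)$, because functions $\xi \mapsto g(\xi)$ with $g: \bolddelta{3} \to \bolddelta{3}$ in $\mathbb{L}[T_3]$ already suffice to cofinalize $(\bolddelta{3})^+$ modulo $\mu^R$. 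Hence $u^{(3)}_2 = j^R(\bolddelta{3}) \geq (\bolddelta{3})^+ \geq \bolddelta{4}$.

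The main obstacle is the inequality $(\bolddelta{3})^+ \leq u^{(3)}_2$, which depends on Martin's weak partition relation for $\bolddelta{3}$ under $\boldpi{3}$-determinacy together with the identification of $\mu^R$ as the genuine normal measure arising from that partition property (rather than some weaker filter contained in it). The lower-bound step is a routine complexity calculation given the definability of $S_3$, the Kechris--Martin style corollaries in Section~\ref{sec:level-2-analysis}, and the indiscernibility apparatus of Section~\ref{sec:level-3-indisc}.
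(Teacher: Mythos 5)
Your decomposition into $u^{(3)}_2 \leq \bolddelta{4}$ and $\bolddelta{4} \leq u^{(3)}_2$ matches the paper, but each half has a genuine gap.

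For the direction $u^{(3)}_2 \leq \bolddelta{4}$: you conclude with ``a $\Delta^1_4$ prewellordering of length $u^{(3)}_2$, so $u^{(3)}_2 \leq \bolddelta{4}$.'' This is not what is wanted and, in fact, cannot hold: by definition $\bolddelta{4}$ is the \emph{supremum} of lengths of $\boldDelta{4}$ prewellorderings, so a single $\boldDelta{4}$ prewellordering of length $u^{(3)}_2$ would force $u^{(3)}_2 < \bolddelta{4}$, contradicting the equality you are proving. What is needed is that each $\alpha < u^{(3)}_2$ is the length of some $\Delta^1_4(x)$ prewellordering with $x$ allowed to vary with $\alpha$. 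The paper obtains exactly this: for $\alpha < u^{(3)}_2$ it picks $x$ with $\alpha < \min(j^{R^0}(I^{(3)}_x) \setminus (\bolddelta{3}+1))$, and Lemma~\ref{lem:next-level-3-indis} (applied through the ultrapower) gives a surjection $f : \bolddelta{3} \twoheadrightarrow \alpha$ definable over $j^{R^0}(M^{-}_{2,\infty}(x))$ from $\{\bolddelta{3},x\}$; a $\Delta^1_4(x^{3\#})$ prewellordering of length $\alpha$ falls out of the level-3 sharp codes. Your complexity calculation via codes for functions $F_\alpha$ and quantification over clubs in $\mathbb{L}[T_3]$ also needs more care: a club $C\subseteq\bolddelta{3}$ is not directly coded by a real, so one has to route through Lemma~\ref{lem:LT2_LT3_agree_on_power_set_of_u_omega}(2) and the indiscernibility apparatus anyway, at which point you have essentially reproduced the paper's argument.

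For the direction $\bolddelta{4} \leq u^{(3)}_2$: you write $\bolddelta{4} \leq (\bolddelta{3})^+ \leq u^{(3)}_2$ with the \emph{true} cardinal successor $(\bolddelta{3})^+$. The second inequality does not follow from what you wrote. To represent $\alpha$ in $\ult(\mathbb{L}[T_3],\mu^{R^0})$ by a function $\xi\mapsto\ot(\pi''\xi)$, the collapsing map $\pi:\bolddelta{3}\to\alpha$ must lie in $\mathbb{L}[T_3]$, so the realization argument only reaches $((\bolddelta{3})^+)^{\mathbb{L}[T_3]}$, not $(\bolddelta{3})^+$. Closing this gap amounts to a coding lemma establishing $(\bolddelta{3})^+ = ((\bolddelta{3})^+)^{\mathbb{L}[T_3]}$, which is not available under $\boldpi{3}$-determinacy alone (it is an AD-level fact). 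The paper sidesteps the issue entirely: since a $\Sigma^1_4(x)$ wellfounded relation is $\bolddelta{3}$-Suslin via a tree in $L[T_3,x]$, Kunen--Martin gives the sharper bound $\rank(W) < ((\bolddelta{3})^+)^{L[T_3,x]}$, and then Lemma~\ref{lem:next-level-3-indis} shows $((\bolddelta{3})^+)^{L[T_3,x]} < \min(j^{R^0}(I^{(3)}_{x^{3\#}})\setminus(\bolddelta{3}+1)) < u^{(3)}_2$ because $\llbracket\emptyset\rrbracket_{R^0}=\omega=\widehat{1}$. Replacing your true successor by the $L[T_3,x]$-relativized successor throughout, and citing Lemma~\ref{lem:next-level-3-indis} for the boundedness, repairs the argument and brings it in line with the paper.
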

\begin{proof}
  If $W$ is a $\Sigma^1_4(x)$ wellfounded relation on $\mathbb{R}$, then $W$ is $\bolddelta{3}$-Suslin via a tree in $L[T_3,x]$, so by Kunen-Martin and Lemma~\ref{lem:next-level-3-indis}, $\rank(W) < ((\bolddelta{3})^{+})^{L[T_3,x]} < \min (j^{R^0} (I ^{(3)} _{x^{3\#}}) \setminus (\bolddelta{3}+1)) < u^{(3)}_2$ as $\llbracket \emptyset \rrbracket_{R^0} = \omega = \widehat{1}$. If $\alpha < u^{(3)}_2$, pick $x$ such that $\alpha <  \min (j^{R^0} (I ^{(3)} _{x}) \setminus (\bolddelta{3}+1))$. 
 Lemma~\ref{lem:next-level-3-indis} gives a surjection  $f: \bolddelta{3} \twoheadrightarrow \alpha $ which is definable over $j^{R^0}(M_{2,\infty}^{-}(x))$ from $\se{\bolddelta{3},x}$. From $f$ we can define a $\Delta^1_4(x^{3\#})$ prewellordering of length $\alpha$.
\end{proof}

\subsection{The level-4 Martin-Solovay tree}
\label{sec:level-4-Martin_Solovay_tree}

% Let $(R,k) \mapsto \corner{R,k}$ be an effective isomorphism between $\{$finite level-3 trees$\} \times \omega$ and $\omega$. 

Let $R^{\infty}$ be the unique (up to an isomorphism) level-3 tree such  that
\begin{enumerate}
\item  for any finite level-3 tree $Y$, there exists $\rho$ which minimally factors $(Y,R)$;
\item if $r \in \dom(R^{\infty})$ then there exist a finite $Y$ and $\rho$ which minimally factors $(Y,R)$ such that $r \in \dom(\rho)$.
\end{enumerate}
In other words, $R^{\infty}$ is the minimum $\Pi^1_3$-wellfounded level-3 tree that is universal for finite level-3 tree in terms of minimal factorings. 
We fix the following representation of $R^{\infty}$, whose domain consists of finite tuples of ordinals in $\omega^{\omega^{\omega}}$:
\begin{enumerate}
\item $(\xi_1) \in \dom(R^{\infty})$ iff $0<\xi_1< \omega^{\omega^{\omega}}$. $R^{\infty}((\xi_1))$ is the $Q^0$-partial level $\leq 2$ tree induced by $\widehat{\xi_1}$. 
\item If $r = (\xi_1,\dots,\xi_{k-1}) \in \dom(R^{\infty})$, then $r \concat (\xi_k) \in \dom(R^{\infty})$ iff ${\xi_k} < \omega^{\omega^{\omega}}$ and there exists a completion $Q^{+}$ of $R^{\infty}(r)$ such that 
the $Q^{+}$-approximation sequence of $\widehat{\xi_k}$ is $(\widehat{\xi_i})_{1 \leq i \leq k}$;  if $r \concat(\xi_k) \in \dom(R^{\infty})$ and $Q^{+}$ is the unique such completion, then $R^{\infty}(r \concat (\xi_k))$ is the $Q^{+}$-partial level $\leq 2$ tree induced by $\widehat{\xi_k}$. 
\end{enumerate}
Therefore, $\llbracket \emptyset \rrbracket_R = u_{\omega}$ and if $r=(\xi_1,\dots,\xi_k) \in \dom(R^{\infty})$, then $\llbracket r \rrbracket_R = \widehat{\xi_k}$. If $Y$ is a finite level-3 tree, then the map $y \mapsto r_y$ minimally factors $(Y,R)$, where if $(\llbracket y \res i \rrbracket_Y) _{1 \leq i \leq \lh(y)}=  (\widehat{\xi_1},\dots, \widehat{\xi_{\lh(y)}})$ then $r_y = (\xi_1,\dots,\xi_{\lh(y)})$. 
% \begin{displaymath}
%   j^{R^{\infty}} : \mathbb{L}[T_3] \to  \mathbb{L}[j^{R^{\infty}}(T_3)]
% \end{displaymath}
% is the direct limit of ultrapower maps. 
If $0 < \xi < \omega^{\omega^{\omega}}$, let $R^{\infty}_{\xi} = R^{\infty } \res (\xi)$.
By Lemma~\ref{lem:ordinal_in_below_seed_use_bounded_indis}, if $0<\xi \leq  \omega^{\omega^{\omega}}$, then the factoring map $j^{R^{\infty} _{\xi}, R^{\infty}}$ is the identity on $\mathbb{L}_{j^{R^{\infty}_{\xi}}(\bolddelta{3})}[j^{R^{\infty} _{\xi}}(T_3)]$ and $j^{R^{\infty} _{\xi}}(\bolddelta{3}) = \seed^{R^{\infty}}_{(\xi)} = u^{(3)}_{\xi}$. In particular, $j^{R^{\infty}}(\bolddelta{3}) = u \ooo$. 

$R^{\infty}$ will be the tree based on which level-3 sharp codes for ordinals below $u\ooo$ are defined. 

Recall Kunen's $\Delta^1_3$ coding of subsets of $u_{\omega}$. We fix a $\Delta^1_3$ surjection
\begin{displaymath}
v \mapsto X_v
\end{displaymath}
from $\mathbb{R}$ onto $\power((V_{\omega}\cup u_{\omega})^{<\omega})$.
$\LO^{(3)} $ is the set of $v \in \mathbb{R}$ such that $X_v$ is a linear ordering of $u_{\omega}$. % , where $v \mapsto X_v$ is the $\Delta^1_3$ surjection from $\mathbb{R}$ onto $\power((V_{\omega}\cup u_{\omega})^{<\omega})$, defined in Corollary~\ref{coro:Delta13_coding} and renamed in the beginning of Section~\ref{sec:Pi13_norm_vs_wellordering_on_u_omega}.
$\WO^{(3)} = \WO^{(3)}_0$ is the set of $v \in \LO^{(3)}$ such that $X_v$ is a wellordering of $u_{\omega}$. $\WO^{(3)}_0$ is $\Pi^1_3$. For $v \in \WO^{(3)}$, put $\wocode{v} = \ot(X_v)$. Every ordinal in $\bolddelta{3}$ is of the form $\wocode{v}$ for some $v \in \WO^{(3)}$.

A \emph{level-3 sharp code} is a pair $\corner{ \gcode{\tau}, x^{3\#}}$ where $\tau$ is an $\mathcal{L}^{{\underline{x}},R^{\infty}}$-Skolem term for an ordinal without free variables. For $0 < \xi \leq \omega^{\omega^{\omega}}$, $\WO^{(3)}_{\xi}$ is the set of level-3 sharp codes $\corner{ \gcode{\tau}, x^{3\#}}$ such that $\tau$ is an $\mathcal{L}^{\underline{x},R^{\infty} _{\xi}}$-Skolem term. 
 $\WO^{(3)}_{\xi}$ is $\Pi^1_4$ for $0 < \xi \leq \omega^{\omega^{\omega}}$.
The ordinal coded by $\corner{ \gcode{\tau}, x^{3\#}}$ is
\begin{displaymath}
  \sharpcode{\corner{\gcode{\tau}, x^{3\#}}} = \tau^{(j^{R^{\infty}}(M^{-}_{2,\infty}(x)); \seed^{R^{\infty}})}.
\end{displaymath}
For each $\xi$, $\WO^{(3)}_{\xi}$ is $\Pi^1_4$. By Lemma~\ref{lem:ordinal_in_below_seed_use_bounded_indis}, if $\corner{ \gcode{\tau}, x^{3\#}} \in \WO^{(3)}_{\xi}$ and $\tau = \sigma(\underline{x}, \underline{c_{r_1}},\dots)$, $\sigma$ is an $\mathcal{L}$-Skolem term, then 
\begin{displaymath}
  \sharpcode{\corner{\gcode{\tau}, x^{3\#}}} = \sigma^{j^{R^{\infty}_{\xi}}(M^{-}_{2,\infty})} (x, \seed^{R^{\infty}_{\xi}}_{r_1},\dots).
\end{displaymath}
\begin{mylemma}
  \label{lem:level_3_sharp_code_equivalence}
  The relations $v,w \in \WO\ooo \wedge \sharpcode{v} = \sharpcode{w}$ and $v,w \in \WO\ooo \wedge \sharpcode{v} < \sharpcode{w}$ are both $\Delta^1_5$.
\end{mylemma}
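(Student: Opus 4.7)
The plan is to reduce the equality and order relations on level-3 sharp codes to single-formula membership in $z^{3\#}$ (where $z = x \oplus y$), and then bound the complexity via the $\Pi^1_4$ characterization of $z^{3\#}$ given by Theorem~\ref{thm:unique_level_3_EM_blueprint}.

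Write $v = \langle \gcode{\tau_v}, x^{3\#}\rangle$ and $w = \langle \gcode{\tau_w}, y^{3\#}\rangle$, and let $Y$ be a finite subtree of $R^{\infty}$ containing every node occurring in $\tau_v$ or $\tau_w$. By Lemma~\ref{lem:ordinal_in_below_seed_use_bounded_indis}, the factor map $j^{Y, R^{\infty}}$ is the identity on the relevant ordinals, so $\sharpcode{v} = \tau_v^{j^Y(M_{2,\infty}^{-}(x))}(x, \seed^Y)$; applying \L{}os rewrites this as $\sharpcode{v} = [\vec{\gamma} \mapsto \tau_v^{M_{2,\infty}^{-}(x)}(x, \vec{\gamma})]_{\mu^Y}$, and analogously for $w$. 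Since $M_{2,\infty}^{-}(x)$ and $M_{2,\infty}^{-}(y)$ sit inside $M_{2,\infty}^{-}(z)$ as inner models with matching ordinals $< \bolddelta{3}$, the equality $\sharpcode{v} = \sharpcode{w}$ becomes a $\mu^Y$-measure-one assertion about evaluations inside $M_{2,\infty}^{-}(z)$, which by definition of the theory $z^{3\#}(Y)$ is precisely membership of a single uniformly-recursive-in-$(v,w)$ formula $\varphi_{v,w}$ in $z^{3\#}(Y)$; likewise for $<$.

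For the complexity count, Theorem~\ref{thm:unique_level_3_EM_blueprint} gives: $T = z^{3\#}$ iff $T$ is an iterable, remarkable, level $\leq 2$ correct level-3 EM blueprint over $z$. Remarkability (Definition~\ref{def:EM_remarkability}) and level $\leq 2$ correctness (Definition~\ref{def:remarkable}) are arithmetical in $T$, while iterability (Definition~\ref{def:iterable-EM}) demands that $\mathcal{M}_{T,R}$ be $\Pi^1_3$-iterable for every $\Pi^1_3$-wellfounded level-3 tree $R$, a $\Pi^1_4$ condition. Hence ``$T = z^{3\#}$'' is $\Pi^1_4$ in $(z,T)$, $z^{3\#}$ as a subset of $\omega$ is $\Delta^1_5$-in-$z$, and the membership test $\varphi_{v,w} \in z^{3\#}(Y)$ is $\Delta^1_5$-in-$(v,w)$. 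The side condition $v, w \in \WO\ooo$ is also $\Pi^1_4$ by the same iterability bound applied to $x^{3\#}$ and $y^{3\#}$, so the two target relations are $\Delta^1_5$. The main obstacle I foresee is carefully justifying the reduction step, since the direct limits $j^{R^{\infty}}(M_{2,\infty}^{-}(x))$ and $j^{R^{\infty}}(M_{2,\infty}^{-}(y))$ used in the definition of $\sharpcode{\cdot}$ are a priori distinct from the EM model built from $z^{3\#}$; I plan to align them via the coherency axiom of Definition~\ref{def:pre_level_3_EM_blueprint} together with Lemmas~\ref{lem:iterable_implies_mouse_order} and~\ref{lem:EM_model_ultrapower_equal}, which identify the $j^Y$-ultrapower of $M_{2,\infty}^{-}(z)$ with the appropriate iterate of $\mathcal{M}^{*}_{z^{3\#},R}$.
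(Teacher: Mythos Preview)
Your approach is correct and essentially the same as the paper's: reduce equality/order of $\sharpcode{v}$ and $\sharpcode{w}$ to a single formula in $(x\oplus y)^{3\#}(Y)$ for a finite subtree $Y$ of $R^{\infty}$, then invoke the $\Pi^1_4$ characterization of $z\mapsto z^{3\#}$ to get $\Delta^1_5$.

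One remark on your ``main obstacle.'' The alignment you worry about is handled more cheaply than via Lemmas~\ref{lem:iterable_implies_mouse_order} and~\ref{lem:EM_model_ultrapower_equal}. The point is that $M_{2,\infty}^{-}(x) = (L[\underline{S_3},(\underline{x})_{\mathrm{left}}])^{M_{2,\infty}^{-}(z)}$ is a uniformly definable inner model of $M_{2,\infty}^{-}(z)$, so the evaluation $\tau_v^{M_{2,\infty}^{-}(x)}(x,\vec{\gamma})$ is literally the value of a fixed $\mathcal{L}$-Skolem term applied to $(z,\vec{\gamma})$ inside $M_{2,\infty}^{-}(z)$. Thus the $\mu^Y$-a.e.\ equality you wrote down is already an $\mathcal{L}^{\underline{z},Y}$-sentence whose truth is decided by $z^{3\#}(Y)$; no separate identification of direct limits or EM models is needed. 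The paper's one-line proof just records this reduction, writing the resulting sentence as $\sigma((\underline{x})_{\mathrm{left}},\underline{c_{\rho^{-1}(r_1)}},\dots)=\sigma'((\underline{x})_{\mathrm{right}},\underline{c_{\rho^{-1}(r_1')}},\dots)$ in $(x\oplus x')^{3\#}(Y)$ for a suitable factoring $\rho:Y\to R^{\infty}$.
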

\begin{proof}
    $ \sharpcode{\corner{\gcode{\tau}, x^{3\#}}} =  \sharpcode{\corner{ \gcode{\tau'}, (x')^{3\#}}}$ iff $\tau=\sigma(\underline{x}, \underline{c_{r_1}},\dots, \underline{c_{r_n}})$, $\tau'=\sigma'(\underline{x}, \underline{c_{r_1'}},\dots, \underline{c_{r'_{n'}}})$, $\sigma,\sigma'$ are $\mathcal{L}$-Skolem terms, and for some finite level-3 tree $Y$ and some $\rho$ factoring $(Y, R^{\infty})$ such that $\vec{r} \concat \vec{r}' \subseteq \ran(\rho)$, ``$\sigma ( (\underline{x})_{\text{left}}, \underline{c_{\rho^{-1}(r_1)},}\dots) =  \sigma'( (\underline{x})_{\text{right}}, \underline{c_{\rho^{-1}(r'_1)}},\dots)$'' is true in $(x \oplus x')^{3\#} (Y)$.
\end{proof}
Recall that $\WO_{\omega}$ is the set of (level-1) sharp codes for ordinals below $u_{\omega}$. 
The connection between level-3 sharp codes and level-1 sharp codes or $\WO$ is also $\Delta^1_5$. For instance, the relation ``$v \in \WO\ooo \wedge w \in \WO_{\omega} \wedge \sharpcode{v} = \sharpcode{w}$'' is $\Delta^1_5$. 
% , because $\sharpcode{w}$ can be evaluated in $M_{2,\infty}^{-}(w)$. 
% $\set{u_n}{n<\omega} \in L_{\kappa_3}[T_2]$ and $T_2 \in M_{2, \infty}^{-}(x)$ for any real $x$.

If $\Gamma$ is a pointclass, say that $A \subseteq  u^{(3)}_{\omega^{\omega^{\omega}}} \times  \mathbb{R} $ is in $\Gamma$ iff $\set{(v, x)}{v \in \WO_{\omega^{\omega^{\omega}}}^{(3)} \wedge (\sharpcode{v},x) \in A}$ is in $\Gamma$. $\Gamma $ acting on subsets of product spaces is defined in the obvious way. 

\begin{mylemma}
  \label{lem:sharp_code_factor_complexity}
  \begin{enumerate}
  \item Suppose that $\xi\leq \eta < \omega^{\omega^{\omega}}$ and $\rho$ factors $(R^{\infty}_{\xi}, R^{\infty}_{\eta})$. Then $\rho^{R^{\infty}_{\eta}} \res u^{(3)}_{\xi}$ is $\Delta^1_5$, uniformly in $(\xi,\eta,\rho)$.
  \item Suppose that $\xi < \omega^{\omega^{\omega}}$ and $Q,Q'$ are finite level $\leq 2$ trees, $Q$ is a subtree of $Q'$. Then $j^{(R^{\infty}_{\xi}, (Q,Q'))} \res u^{(3)}_{\xi}$ is $\Delta^1_5$, uniformly in $(\xi,Q,Q')$. 
  \end{enumerate}
\end{mylemma}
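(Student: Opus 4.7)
The plan is to reduce both parts to syntactic manipulation of level-3 sharp codes, after which the $\Delta^1_5$ complexity follows from Lemma~\ref{lem:level_3_sharp_code_equivalence}. The starting observation is that every $\alpha < u^{(3)}_\xi$ admits a sharp code $v = \corner{\gcode{\sigma(\underline{x},\underline{c_{r_1}},\ldots,\underline{c_{r_n}})}, x^{3\#}} \in \WO^{(3)}_\xi$ in which $\sigma$ is an $\mathcal{L}$-Skolem term and each $r_i$ lies in $\dom(R^{\infty}_\xi)$, with
\[
  \sharpcode{v} = \sigma^{j^{R^{\infty}_\xi}(M^{-}_{2,\infty}(x))}(x, \seed^{R^{\infty}_\xi}_{r_1}, \ldots, \seed^{R^{\infty}_\xi}_{r_n}).
\]
This is Lemma~\ref{lem:ordinal_in_below_seed_use_bounded_indis} combined with the definition of $\WO^{(3)}_\xi$ and the fact that $j^{R^{\infty}_\xi, R^{\infty}}$ is the identity on $\mathbb{L}_{u^{(3)}_\xi}[j^{R^{\infty}_\xi}(T_3)]$, so that the seeds indexed by $\dom(R^{\infty}_\xi)$ inside $R^{\infty}$ agree with those inside $R^{\infty}_\xi$.

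For part (1) I will define an effective substitution on codes: given $v = \corner{\gcode{\sigma(\underline{x},\underline{c_{r_1}},\ldots)}, x^{3\#}}$ in $\WO^{(3)}_\xi$, set $v' = \corner{\gcode{\sigma(\underline{x},\underline{c_{\rho(r_1)}},\ldots)}, x^{3\#}} \in \WO^{(3)}_\eta$. Unwinding the definition of $\rho^{R^{\infty}_\eta}$ as the map on level-3 ultrapowers induced by the tuple operation $\vec{\delta} \mapsto (\delta_{\rho(r)})_{r}$, one checks directly that $\rho^{R^{\infty}_\eta}(\sharpcode{v}) = \sharpcode{v'}$. Hence the graph ``$\rho^{R^{\infty}_\eta}(\sharpcode{v}) = \sharpcode{w}$'' is equivalent to ``$\sharpcode{v'} = \sharpcode{w}$'', which is $\Delta^1_5$ by Lemma~\ref{lem:level_3_sharp_code_equivalence}, uniformly in $(\xi,\eta,\rho)$ because $v \mapsto v'$ is effective.

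For part (2) the relevant map unfolds, by the definition $j^{(Y,(T',T))} = \bigcup_{x} j^Y(j^{T',T} \res L[j^{T'}(T_3),x])$, to $j^{R^{\infty}_\xi}(j^{Q,Q'} \res \bolddelta{3})$ on $u^{(3)}_\xi$. By Lemma~\ref{lem:level_2_embedding_bounded_by_delta13}, $j^{Q,Q'} \res \bolddelta{3}$ is uniformly $\Delta_1$-definable over $M^{-}_{2,\infty}(x)$ from $\se{T_2,x,Q,Q'}$, so there is an $\mathcal{L}^{\underline{x}}$-Skolem term $\sigma_{Q,Q'}(\alpha)$ (uniformly effective in $(Q,Q')$) representing this function on ordinals below $\kappa_3^x$, for every $x$. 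By elementarity of $j^{R^{\infty}_\xi}$, if $\alpha$ has sharp code $v$ as above then $j^{(R^{\infty}_\xi,(Q,Q'))}(\alpha)$ has sharp code
\[
  v' = \corner{\gcode{\sigma_{Q,Q'}(\sigma(\underline{x},\underline{c_{r_1}},\ldots,\underline{c_{r_n}}))}, x^{3\#}},
\]
still in $\WO^{(3)}_\xi$, and one concludes by Lemma~\ref{lem:level_3_sharp_code_equivalence} as in part~(1).

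The main obstacle will be verifying that these syntactic substitutions compute the ultrapower-induced maps correctly: confirming that Lemma~\ref{lem:ordinal_in_below_seed_use_bounded_indis} really yields codes using only indiscernibles indexed by $R^{\infty}_\xi$, and identifying $\rho^{R^{\infty}_\eta}$ and $j^{(R^{\infty}_\xi,(Q,Q'))}$ on $u^{(3)}_\xi$ with the substitutions described under the direct-limit conventions of Section~\ref{sec:level-3-uniform-indis}. Once these identifications are in hand, the complexity calculation is mechanical, and the boundedness to $\Delta^1_5$ follows directly from Lemma~\ref{lem:level_3_sharp_code_equivalence}.
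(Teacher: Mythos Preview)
Your proposal is correct and follows essentially the same approach as the paper: reduce both maps to effective syntactic substitutions on level-3 sharp codes (replacing $\underline{c_{r_i}}$ by $\underline{c_{\rho(r_i)}}$ in part~(1), and composing with the definable term for $\underline{j^{Q,Q'}}$ in part~(2)), then invoke Lemma~\ref{lem:level_3_sharp_code_equivalence}. The paper states part~(2) using the informal symbol $\underline{j^{Q,Q'}}$ directly inside the term rather than naming a Skolem term $\sigma_{Q,Q'}$, but this is the same content, and your extra remarks about Lemma~\ref{lem:ordinal_in_below_seed_use_bounded_indis} justifying that codes for $\alpha<u^{(3)}_\xi$ need only indiscernibles from $R^{\infty}_\xi$ make explicit what the paper leaves implicit.
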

\begin{proof}
 1.  $\alpha < u^{(3)}_{\xi} \wedge \rho^{R^{\infty}_{\eta}} (\alpha) = \beta$ iff there exist $x \in \mathbb{R}$, an $\mathcal{L}$-Skolem term $\tau$ and $ r_1,\dots,r_n$ such that $r_i \in \dom(R^{\infty}_{\xi})$ for any $i$ and $\alpha = \corner{\gcode{\tau(\underline{x},\underline{c_{r_1}},\dots, \underline{c_{r_n}} )}, x^{3\#}}$,  $\beta = \corner{\gcode{\tau(\underline{x},\underline{c_{\rho(r_1)}},\dots, \underline{c_{\rho(r_n)}} )}, x^{3\#}}$.

2.  $\alpha < u^{(3)}_{\xi} \wedge j^{(R^{\infty}_{\xi}, (Q,Q'))} (\alpha) = \beta$ iff there exist $x \in \mathbb{R}$, an $\mathcal{L}$-Skolem term $\tau$ and $ r_1,\dots,r_n$ such that $r_i \in \dom(R^{\infty}_{\xi})$ for any $i$ and $\alpha = \corner{\gcode{\tau(\underline{x},\underline{c_{r_1}},\dots, \underline{c_{r_n}} )}, x^{3\#}}$,  $\beta = \corner{\gcode{\underline{j^{Q,Q'}}(\tau(\underline{x},\underline{c_{(r_1)}},\dots, \underline{c_{(r_n)}} ))}, x^{3\#}}$.
\end{proof}

By Lemma~\ref{lem:next-level-3-indis}, the set of uncountable $\mathbb{L}[j^{R^{\infty}}(T_3)]$-cardinals $\leq u\ooo$ is the closure of 
\begin{displaymath}
  \set{u_n}{1 \leq n < \omega} \cup \set{\seed^{R^{\infty}}_{\mathbf{A}}}{\mathbf{A} \in \exexdesc(R^{\infty})}.
\end{displaymath}
By Lemma~\ref{lem:RYT_factor_identity}, if $\mathbf{A} = (\mathbf{r},\pi, T) \in \exexdesc(R^{\infty})$, $\mathbf{r} = (r,Q,\overrightarrow{(d,q,P)})$, $r = (\xi_i)_{1 \leq i \leq k}$ and $\seed^{R^{\infty}}_{\mathbf{A}} > \bolddelta{3}$ is a successor cardinal in $\mathbb{L}[j^{R^{\infty}}(T_3)]$, then $r$ is of discontinuous type, $\xi_k$ is a successor ordinal,  and letting $r' = (\xi_i)_{1 \leq i < k} \concat (\xi_k-1)$, $\mathbf{r}' = (r',Q,\overrightarrow{(d,q,P)})$, $\mathbf{A}' = (\mathbf{r}',\pi,T)$, then
\begin{displaymath}
  \set{ \corner{x^{3\#},\gcode{\tau^{\underline{j^T}(V)} (\underline{x},  \underline{c_{\mathbf{A}'}})}} }{ x\in \mathbb{R},\tau\text{ is an }\mathcal{L}\text{-Skolem term for an ordinal}}
\end{displaymath}
is a cofinal subset of $\seed^{R^{\infty}}_{\mathbf{A}}$.

A level-3 EM blueprint over a real $\Gamma$ is completely decided by $\Gamma(R^{\infty})$. $\Gamma$ is coded into the real $z_{\Gamma} \in 2^{\omega}$ where $z (k) = 0 \eqiv k \in \Gamma(R^{\infty})$. We shall identify $\Gamma$ with $z_{\Gamma}$ when no confusion occurs. 
We define the level-4 Martin Solovay tree $T_4$ which projects to $\set{x^{3\#}}{ x \in \mathbb{R}}$. 
  $T_4$ will be $\Delta^1_5$ as a subset of $(\omega \times u\ooo )^{<\omega}$, the complexity based on Lemma~\ref{lem:sharp_code_factor_complexity}. 

Let $T$ be a recursive tree so that $z \in [T]$ iff $z$ is a level $\leq 2$-correct, remarkable level-3 EM blueprint over a real. Let $(r_i)_{1 \leq i < \omega}$ be an effective enumeration of $\dom(R^{\infty})$ and let $(\tau_{k})_{k<\omega}$ be an effective enumeration of all the $\mathcal{L}$-Skolem terms for an ordinal, where $\tau_k$ is $f(k)+1$-ary.   
 $T_4$ is the tree on $2 \times u^{(3)}_{\omega^{\omega^{\omega}}}$ where
\begin{displaymath}
  (t,\vec{\alpha}) \in T_4
\end{displaymath}
iff $t \in T$ and 
\begin{enumerate}
\item if $\xi \leq \eta < \omega^{\omega^{\omega}}$, $r_1,\dots,r_{f(k)} \in \dom(R^{\infty} _{\xi})$, $r_1,\dots,r_{f(l)} \in \dom(R^{\infty}_{\eta})$,  $\rho$ factors $(R^{\infty} _{\xi}, R^{\infty} _{\eta})$,
  \begin{enumerate}
  \item if
``$\tau_k ( \underline{x},\underline{c_{\rho(r_1)}},\dots, \underline{c_{\rho(r_{f(k)})}}) = \tau_l (\underline{x}, \underline{c_{r_1}},\dots, \underline{c_{r_{f(l)}}})$'' is true in $t$,  then $\rho^{R^{\infty}_{\eta}}(\alpha_k) = \alpha_l$;
  \item if
``$\tau_k ( \underline{x},\underline{c_{\rho(r_1)}},\dots, \underline{c_{\rho(r_{f(k)})}}) < \tau_l (\underline{x}, \underline{c_{r_1}},\dots, \underline{c_{r_{f(l)}}})$'' is true in $t$,  then $\rho^{R^{\infty}_{\eta}}(\alpha_k) < \alpha_l$;
\end{enumerate}
\item if $\xi < \omega^{\omega^{\omega}}$, $r_1,\dots,r_{\max(f(k),f(l))} \in \dom(R^{\infty} \res \xi)$, 
$Q,Q'$ are finite level $\leq 2$ trees, $Q$ is a subtree of $Q'$, ``$\underline{j^{Q,Q'}}(\tau_{k}(\underline{x},\underline{c_{r_1}},\dots, \underline{c_{r_{f(k)}}})) = \tau_{l}(\underline{x}, \underline{c_{r_1}},\dots, \underline{c_{r_{f(l)}}})$'' is true in $t$, then $j^{(R^{\infty}_{\xi}, (Q,Q'))} (\alpha_k)  = \alpha_l$. 
\end{enumerate}

\begin{mytheorem}
  \label{thm:level_4_MS_tree}
  $p[T_4] = \set{x^{3\#}}{x \in \mathbb{R}}$. Furthermore, for any $x \in \mathbb{R}$,  $(\tau_{k}^{(j^{R^{\infty}}(M_{2,\infty}^{-}(x)))}(x, \seed^{R^{\infty}}_{r_1},\dots,\seed^{R^{\infty}}_{r_{f(k)}}))_{k<\omega}$ is the honest leftmost branch of $(T_4)_{x^{3\#}}$. %, i.e., if $\vec{\alpha} = (\alpha_i)_{i < \omega}$ is a branch of $(T_4)_{x^{3\#}}$, then $\alpha_{\corner{R,k}} \leq \tau_{R,k}^{(j^R(M_{2,\infty}^{-}); \seed^R)}$.
\end{mytheorem}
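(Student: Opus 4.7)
The plan is to prove the two inclusions separately and then identify the honest leftmost branch. First I would show that $\{x^{3\#}:x\in\mathbb{R}\}\subseteq p[T_4]$ by producing an explicit branch. Fix $x\in\mathbb{R}$ and set
\begin{displaymath}
\alpha_k = \tau_k^{j^{R^{\infty}}(M_{2,\infty}^{-}(x))}\bigl(x,\seed^{R^{\infty}}_{r_1},\dots,\seed^{R^{\infty}}_{r_{f(k)}}\bigr).
\end{displaymath}
By Lemma~\ref{lem:pre_level_2_correct}, Lemma~\ref{lem:3sharp_is_weakly_remarkable}, Lemma~\ref{lem:EM_remarkability} and Lemma~\ref{lem:level_2_correct}, $x^{3\#}$ is a level-3 EM blueprint that is remarkable and level $\leq 2$ correct, so $x^{3\#}\in[T]$. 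For the factoring clause, if $\rho$ factors $(R^{\infty}_{\xi},R^{\infty}_{\eta})$ and the equality (resp.\ inequality) of the two Skolem terms lies in $x^{3\#}$, then by coherency of $x^{3\#}$ (Definition~\ref{def:pre_level_3_EM_blueprint}) and the fact that $j^{R^{\infty}_{\xi},R^{\infty}}$ is the identity on $\mathbb{L}_{u^{(3)}_{\xi}}[j^{R^{\infty}_{\xi}}(T_3)]$ from Lemma~\ref{lem:ordinal_in_below_seed_use_bounded_indis}, evaluating both sides in $j^{R^{\infty}}(M_{2,\infty}^{-}(x))$ along $\seed^{R^{\infty}}$ yields $\rho^{R^{\infty}_{\eta}}(\alpha_k)=\alpha_l$ (resp.\ $<$). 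The $\underline{j^{Q,Q'}}$ clause is similar, using Axiom~\ref{item:EM_commutativity} of Definition~\ref{def:pre_level_3_EM_blueprint} together with $j^{(R^{\infty}_{\xi},(Q,Q'))}\res u^{(3)}_{\xi}$ being induced by the level $\leq 2$ ultrapower on the outer model.

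For the reverse inclusion, fix $(t,\vec{\alpha})\in[T_4]$ and let $x$ be the real parameter read off from $t$. Since $t\in[T]$, $t$ is a remarkable, level $\leq 2$ correct level-3 EM blueprint over $x$. By Theorem~\ref{thm:unique_level_3_EM_blueprint} it suffices to establish iterability of $t$, since then $t=x^{3\#}$. The witness $\vec{\alpha}$ is used as follows: it gives an assignment $\tau_k\mapsto\alpha_k\in u^{(3)}_{\omega^{\omega^{\omega}}}\subseteq\ord$ that, by the two clauses defining $T_4$, is fully coherent with the factoring maps $\rho^{R^{\infty}_{\eta}}$ and the induced level $\leq 2$ ultrapower maps. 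This coherence produces an order-preserving map from the ordinals of $\mathcal{M}_{t,R^{\infty}}$ into $u^{(3)}_{\omega^{\omega^{\omega}}}$, and hence into the wellfounded ordinals; reading this across finite subtrees $Y$ of $R^{\infty}$ shows $\mathcal{M}_{t,Y}$ is wellfounded for every finite $\Pi^1_3$-wellfounded $Y$. The comparison of $\mathcal{M}_{t,Y}$ with $M_{2,\infty}^{-}(x)$ (or rather the check that $\mathcal{M}^*_{t,Y}$ is full in the sense of Lemma~\ref{lem:iterable_implies_mouse_order}) then gives $\Pi^1_3$-iterability. I expect that the main obstacle will be precisely this step: deducing full $\Pi^1_3$-iterability of $\mathcal{M}_{t,Y}$ from the mere existence of a branch through $(T_4)_t$, since one must check not just wellfoundedness but the Dodd--Jensen comparison fullness that arises from Axiom~\ref{item:level-2-embedding_invariance} and remarkability. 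The strategy is to use that the branch provides, uniformly, essentially-iteration-map witnesses for all $j^{T'}_{\mathcal{M}_{t,Y}}$ into the canonical direct limit $\mathcal{G}_{\mathcal{M}_{t,Y}}$ (Definition~\ref{def:g_N}), and then invoke the density of $\mathcal{G}_{\mathcal{N}}$ in $\mathcal{I}_{\mathcal{N}}$ together with full-background correctness of $M_2^{\#}(x)$-operators.

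Finally, for the honest-leftmost claim, I note that by part one the sequence $(\alpha_k)_{k<\omega}$ defined from $x^{3\#}$ lies in $(T_4)_{x^{3\#}}$, and by the uniqueness part of Theorem~\ref{thm:unique_level_3_EM_blueprint} any branch through $(T_4)_{x^{3\#}}$ must arise from evaluating the Skolem terms in $j^{R^{\infty}}(M_{2,\infty}^{-}(x))$ along $\seed^{R^{\infty}}$; comparison via the $\Delta^1_5$ ordering on level-3 sharp codes given by Lemma~\ref{lem:level_3_sharp_code_equivalence} shows this branch is leftmost. Honesty is automatic since the values $\alpha_k$ are actual ordinals of $j^{R^{\infty}}(M_{2,\infty}^{-}(x))$ below $u^{(3)}_{\omega^{\omega^{\omega}}}$.
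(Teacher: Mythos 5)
Your argument for the forward inclusion $\{x^{3\#}:x\in\mathbb{R}\}\subseteq p[T_4]$ is essentially right, though the paper treats this direction as nearly immediate ("by definition"): once one notes that $x^{3\#}\in[T]$ and that the two clauses of $T_4$ are just evaluated versions of coherency and of Lemma~\ref{lem:ordinal_in_below_seed_use_bounded_indis}, there is nothing more to check.

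For the reverse inclusion, there is a genuine gap. You correctly identify that the branch $\vec{\alpha}$ forces wellfoundedness of $\mathcal{M}_{t,Y}$, but your proposed upgrade from wellfoundedness to $\Pi^1_3$-iterability cannot work as written, for two reasons. First, you want to invoke Dodd--Jensen comparison of $\mathcal{M}_{t,Y}$ (or fullness of $\mathcal{M}^*_{t,Y}$) against $M_{2,\infty}^{-}(x)$, but comparison already presupposes iterability of the mouse being compared, so this is circular. Second, you want to use the density of $\mathcal{G}_{\mathcal{N}}$ in $\mathcal{I}_{\mathcal{N}}$, but Lemma~\ref{lem:EM_model_ultrapower_dense_in_iterates} is stated for $\mathcal{N}$ that are already $\Pi^1_3$-iterable, so it cannot bootstrap iterability. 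The argument the paper actually uses is different and crucial: fix $F_k\in\mathbb{L}[T_3]$ representing $\beta_k$ modulo $\mu^{Y_k}$, use $\mathbb{L}[T_3]$-countable completeness of the club filter to homogenize all the $T_4$-constraints on a single club $C$, and then build the embedding $\theta:\ord^{\mathcal{N}}\to\bolddelta{3}$ directly from the $F_k$'s evaluated at $\vec{\gamma}\in[D]^{S\uparrow}$. The key point you miss is showing that $\theta''(\underline{S_3})^{\mathcal{N}}\subseteq S_3$; this gives $p[(\underline{S_3})^{\mathcal{N}}]\subseteq p[S_3]$, hence $\Sigma^1_3$-correctness of $\mathcal{N}^{\coll(\omega,\xi)}$, which bootstraps the internal statement "$\mathcal{N}|\xi$ is $\Pi^1_3$-iterable" into a true external fact. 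That is the mechanism that upgrades wellfoundedness to iterability.

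Your leftmost-branch argument also has a gap. You claim that "any branch through $(T_4)_{x^{3\#}}$ must arise from evaluating the Skolem terms in $j^{R^{\infty}}(M^{-}_{2,\infty}(x))$ along $\seed^{R^{\infty}}$," but this is not forced by uniqueness of the EM blueprint: uniqueness pins down the theory $x^{3\#}$, not the branch. The paper instead shows directly, by another embedding built from the $F_k$'s, that $\tau_k^{(j^{R^{\infty}}(M^{-}_{2,\infty}(x));\seed^{R^{\infty}})}\leq\beta_k$ holds pointwise for any branch $\vec\beta$ through $(T_4)_{x^{3\#}}$; this is what establishes leftmostness, and Lemma~\ref{lem:level_3_sharp_code_equivalence} is not where the comparison comes from.
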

\begin{proof}
By definition, for any $x$,
\begin{displaymath}
(x^{3\#},(\tau_{k}^{(j^{R^{\infty}}(M_{2,\infty}^{-}(x)))}(x, \seed^{R^{\infty}}_{r_1},\dots,\seed^{R^{\infty}}_{r_{f(k)}}))_{k<\omega} ) \in [T_4].
\end{displaymath}

  Suppose now $(z , \vec{\beta}) \in p[T_4]$. Let $x$ be a real so that $z$ codes  a remarkable level-3 EM blueprint $\Gamma$ over $x$. We need to show that $z$ is iterable and for any  $k$,  $\tau_{k}^{(j^{R^{\infty}}(M_{2,\infty}^{-}(x)))}(z, \seed^{R^{\infty}}_{r_1},\dots,\seed^{R^{\infty}}_{r_{f(k)}}) \leq \beta_{k}$. 
For each $k$, pick a finite subtree $Y_k$ of $R^{\infty}$ and $F_{k} : [\bolddelta{3}]^{Y_k \uparrow} \to \bolddelta{3}$ such that $\se{r_1,\dots,r_{f(k)}} \subseteq \dom(Y_k)$, $F_{k} \in \mathbb{L}[T_3]$ and $j^{Y_k, R^{\infty}}([F_{k}]_{\mu^{Y_k}}) = \beta_{{k}}$. By $\mathbb{L}[T_3]$-countable completeness of the club filter on $\bolddelta{3}$, we can find a club $C$ in $\bolddelta{3}$ such that $C \in \mathbb{L}[T_3]$ and
\begin{enumerate}
\item if $\xi \leq \eta < \omega^{\omega^{\omega}}$, $Y_k$ is a subtree of $R^{\infty}_{\xi}$, $Y_l$ is a subtree of $R^{\infty}_{\eta}$,  $\rho$ factors $(R^{\infty}_{\xi}, R^{\infty}_{\eta})$, % $\rho'' \set{r_i}{1 \leq i \leq f(k)} \subseteq \set{r_i}{1 \leq i \leq f(l)}$,
  \begin{enumerate}
  \item if ``$\tau_k ( \underline{x},\underline{c_{\rho(r_1)}},\dots, \underline{c_{\rho(r_{f(k)})}}) = \tau_l (\underline{x}, \underline{c_{r_1}},\dots, \underline{c_{r_{f(l)}}})$'' is true in $\Gamma(R^{\infty})$, $\vec{\gamma} \in [C]^{R^{\infty}_{\eta} \uparrow}$,   then $F_k (\vec{\gamma}_{\rho} \res \dom(Y_k)) = F_l (\vec{\gamma} \res \dom(Y_l))$;
  \item if ``$\tau_k ( \underline{x},\underline{c_{\rho(r_1)}},\dots, \underline{c_{\rho(r_{f(k)})}}) < \tau_l (\underline{x}, \underline{c_{r_1}},\dots, \underline{c_{r_{f(l)}}})$'' is true in $\Gamma(R^{\infty})$, $\vec{\gamma} \in [C]^{R^{\infty}_{\eta} \uparrow}$,   then $F_k (\vec{\gamma}_{\rho} \res \dom(Y_k)) < F_l (\vec{\gamma} \res \dom(Y_l))$.
 \end{enumerate}
\item if $\xi < \omega^{\omega^{\omega}}$, $Y_k,Y_l$ are subtrees of $R^{\infty}_{\xi}$, $Q,Q'$ are finite level $\leq 2$ trees, $Q$ is a subtree of $Q'$,  ``$\underline{j^{Q,Q'}}(\tau_{k}(\underline{x},\underline{c_{r_1}},\dots, \underline{c_{r_{f(k)}}})) = \tau_{l}(\underline{x}, \underline{c_{r_1}},\dots, \underline{c_{r_{f(l)}}})$'' is true in $\Gamma(R^{\infty})$, $\vec{\gamma} \in [C]^{R^{\infty}_{\xi}\uparrow}$, 
 then $j^{Q,Q'} (F_k(\vec{\gamma} \res \dom(Y_k)))  = F_l (\vec{\gamma} \res \dom(Y_l))$. 
\end{enumerate}

Suppose $S$ is a $\Pi^1_3$-wellfounded level-3 tree. We show that $\mathcal{M}_{\Gamma,S}$ is a $\Pi^1_3$-iterable $x$-mouse. Put $\mathcal{N} = \mathcal{M}_{\Gamma,S}$. Put $\eta \in C_0$ iff $C \cap \eta$ has order type $\eta$, $\eta \in D$ iff $C_0 \cap \eta$ has order type $\eta$. Fix $\vec{\gamma} \in [D]^{S \uparrow}$. We define an embedding
\begin{displaymath}
  \theta : \ord^{\mathcal{N}} \to \bolddelta{3}
\end{displaymath}
as follows. %Every member of $\mathcal{N}$ is of the form $\tau^{\mathcal{N}}_{R,k}$ for some 
If $\sigma$ is an $\mathcal{L}$-Skolem term, $s_1,\dots,s_n \in \dom(R)$, $R$ is a finite subtree of $S$, $a = ( \sigma ( \underline{c_{s_1}},\dots, \underline{c_{s_n}} ))^{\mathcal{N}}$, 
$\rho$ factors $(R,Y_k)$, $\tau_k (\underline{c_{r_1}}, \dots, \underline{c_{r_{f(k)}}})$ is logically equivalent to $\sigma( \underline{c_{\rho(s_1)}},\dots, \underline{c_{\rho(s_n)}} )$, $\vec{\delta} \in [C_0]^{Y_k \uparrow}$, $\delta_{\rho(s)} = \gamma_s$ for any $s \in \dom(R)$, we put
\begin{displaymath}
  \theta(a) = F_k  ( \vec{\delta}).
\end{displaymath}
$\theta$ is well defined: Suppose $\sigma'$ is another $\mathcal{L}$-Skolem term, $s_1',\dots,s_{n'}' \in \dom(R')$, $R'$ is a finite subtree of $S$, ``$\sigma(\underline{x}, \underline{c_{s_1}},\dots, \underline{c_{s_n}}) = \sigma'( \underline{x},\underline{c_{s_1'}},\dots, \underline{c_{s_{n'}'}}) $'' is true in $\Gamma(S)$, 
$\rho'$ factors $(R', Y_{k'})$,  $\tau_{k'} ( \underline{x},\underline{c_{r_1}},\dots, \underline{c_{r_{f(k')}}})$ is logically equivalent to $\sigma' (\underline{x}, \underline{c_{\rho'(s_1')}},\dots, \underline{c_{\rho'(s_{n'}')}})$, $\vec{\delta}' \in [C_0]^{Y_{k'} \uparrow}$, $\delta_{\rho'(s)} = \gamma_s$ for any $s \in \dom(R')$. Pick $\xi,\xi' < \omega^{\omega^{\omega}}$ such that $Y_k$ is a subtree of $R^{\infty}_{\xi}$, $Y_{k'}$ is a subtree of $R^{\infty}_{\xi'}$. 
% Let $R^{*}$ be a finite subtree of $S$ that extends both $R$ and $R'$. 
Let $(Y^{*}, \psi,\psi', \vec{\epsilon})$ be the amalgamation of $(Y_k, \vec{\delta})$ and $(Y_{k'}, \vec{\delta}')$, obtained by Lemma~\ref{lem:level_3_amalgamation}. That is, $Y^{*}$ is a finite level-3 tree, $\psi$ factors $(Y_k, Y^{*})$, $\psi'$ factors $(Y_{k'},Y^{*})$, $\vec{\epsilon} \in [C_0]^{Y^{*}\uparrow}$, $\epsilon_{\psi(y)} = \delta_y$ for $y \in \dom(Y_k)$, $\epsilon_{\psi'(y)} = \delta'_y$ for  $y \in \dom(Y_{k'})$. So ``$\sigma(\underline{x},\underline{c_{\psi \circ \rho(s_1)}},\dots) = \sigma'(\underline{x}, \underline{c_{\psi' \circ \rho'(s_1')}},\dots)$'' is true in $\Gamma(Y^{*})$. 
Pick $\eta < \omega^{\omega^{\omega}}$ large enough so that there exist $\phi,\phi'$ factoring 
$(R^{\infty}_{\xi}, R^{\infty}_{\eta})$, $(R^{\infty}_{\xi'}, R^{\infty}_{\eta})$ respectively and $\phi^{*}$ factoring $(Y^{*}, R^{\infty}_{\eta})$ such that $\phi^{*} \circ \psi = \phi \res \dom(Y_k)$, $\phi^{*} \circ \psi' = \phi' \res \dom(Y_{k'})$. 
 So ``$\sigma(\underline{x},\underline{c_{\phi \circ \rho(s_1)}},\dots) = \sigma'(\underline{x}, \underline{c_{\phi' \circ \rho'(s_1')}},\dots)$'' is true in $\Gamma(R^{\infty})$. Let $\tau_l(\underline{x}, \underline{c_{r_1}},\dots, \underline{c_{r_{f(l)}}}) $ be logically equivalent to $\sigma (\underline{x}, \underline{c_{\phi \circ \rho (s_1)}},\dots)$. So ``$\tau_k (\underline{x}, \underline{c_{\phi(r_1)}},\dots) = \tau_l (\underline{x}, \underline{c_{r_1}},\dots)$'' and ``$\tau_{k'} (\underline{x}, \underline{c_{\phi'(r_1)}},\dots) = \tau_l (\underline{x}, \underline{c_{r_1}},\dots)$'' are both true in $\Gamma(R^{\infty})$. 
We can find $\vec{\alpha} \in [C]^{R^{\infty}_{\eta} \uparrow}$ so that $\alpha_{\phi^{*}(y)} = \epsilon_y$ for any $y \in \dom(Y^{*})$. 
 By assumption, $F_k (\vec{\delta}) = F_l (\vec{\alpha}) = F_{k'} (\vec{\delta}')$.

 Similar arguments show that $\theta$ is order preserving and $\theta'' ((\underline{S_3})^{\mathcal{N}}) \subseteq S_3$.  So $\mathcal{N}$ is wellfounded and $p[(\underline{S_3})^{\mathcal{N}}] \subseteq p[S_3]$. We then show that $\mathcal{N}$ is $\Pi^1_3$-iterable. By Lemma~\ref{lem:weakly_remarkable_strengthening}, $\mathcal{N}$ has cofinally many cardinal strong cutpoints. For each cardinal strong cutpoint $\xi$ of $\mathcal{N}$, by definition of $\underline{S_3}$, $\mathcal{N}^{\coll(\omega,\xi)} \models $``$\mathcal{N}|\xi$ is $\Pi^1_3$-iterable $\wedge ~\underline{S_3}$ projects to the set of $\Pi^1_3$-wellfounded level-3 towers''. The fact that $p[(\underline{S_3})^{\mathcal{N}}] \subseteq p[S_3]$ implies that $\mathcal{N}^{\coll(\omega,\xi)}$ is $\Sigma^1_3$-correct. So $\mathcal{N}|\xi$ is genuinely $\Pi^1_3$-iterable. By varying $\xi$, $\mathcal{N}$ is $\Pi^1_3$-iterable. 
 
Next, we show that for any $k$,   $ \tau_{k}^{(j^{R^{\infty}}(M_{2,\infty}^{-}(x)); \seed^{R^{\infty}})} \leq \beta_k$. We define an embedding
\begin{displaymath}
  \theta : \set{\tau_{k}^{(M_{2,\infty}^{-}(x); \vec{\gamma})}}{ \vec{\gamma} \in [D]^{R^{\infty} \uparrow}, k<\omega} \to \bolddelta{3}
\end{displaymath}
by $\theta (\tau_{k}^{(M_{2,\infty}^{-}(x);\vec{\gamma})}) = F_{k}(\vec{\gamma} \res \dom(Y_k))$. A similar argument shows that $\theta$ is well defined and order preserving. 
%
% $\theta$ is well defined: If $\tau_{R,k}^{(M_{2,\infty}^{-}(z);\vec{\gamma})} = \tau_{R',k'}^{(M_{2,\infty}^{-}(z);\vec{\gamma}')}$, let $(Y,\vec{\delta}, \rho,\rho')$ be given by Lemma~\ref{lem:level_3_amalgamation}, where $\vec{\delta} \in [C]^{Y \uparrow}$.  Let $l$ be such that $\tau_{R,k}^{(M_{2,\infty}^{-}(z);\vec{\gamma})} = \tau_{Y,l}^{(M_{2,\infty}^{-}(z); \vec{\delta})}$. So ``$(\tau_{R,k})^{\rho,Y} = \tau_{Y,l}$'' is true in $x^{3\#}(Y)$. Hence, $F_{R,k}(\vec{\gamma}) = F_{Y,l}(\vec{\delta})$. Similarly, $F_{R',k'}(\vec{\gamma}') = F_{Y,l}(\vec{\delta})$. A similar argument shows that $\theta$ is order preserving. 
In particular, for any $\vec{\gamma} \in [D]^{R^{\infty}\uparrow}$, $\tau_{k}^{(M_{2,\infty}^{-}(x);\vec{\gamma})}  \leq F_{k}(\vec{\gamma} \res \dom(Y_k))$. Hence,  $ \tau_{k}^{(j^{R^{\infty}}(M_{2,\infty}^{-}(x)); \seed^{R^{\infty}})} \leq \beta_k$.
\end{proof}

\section{The level-4 sharp}
\label{sec:level-4-sharp}

\subsection{The level-4 Kechris-Martin theorem}
\label{sec:level-4-kechris}

For a countable structure $\mathcal{P}$ in the language of premice that satisfies Axioms~\ref{item:EM_ZFC}-\ref{item:level-2-embedding_invariance} in Definition~\ref{def:pre_level_3_EM_blueprint} and the universality of level $\leq 2$ ultrapowers axiom,
the direct limit $\mathcal{P}_{\infty}$ is defined in Definition~\ref{def:g_N}. The wellfounded part $\mathcal{P}_{\infty}$ is always transitive. 
Recall that every ordinal in $\mathcal{P}_{\infty}$ is of the form $\pi_{\mathcal{P},X,\vec{\beta}, \infty}(a)$ where $a \in \ord^{\mathcal{N}}$, $X$ is a finite level $\leq 2$ tree, $\vec{\beta} \in [\omega_1]^{X \uparrow}$. The relation ``$v \in \LO^{(3)}$ codes the order type of $\ord^{\mathcal{P}_{\infty}}$'' is uniformly $\boldDelta{3}$ in the code of $\mathcal{P}$.

For $x \in \mathbb{R}$, a putative $x$-3-sharp is a level $\leq 2$ correct, remarkable level-3 EM blueprint over $x$ that satisfies the universality of level $\leq 2$ ultrapowers axiom. Suppose $x^{*}$ is a putative $x$-3-sharp.  
For any limit ordinal $\alpha < \bolddelta{3}$, we can build an EM model
\begin{displaymath}
\mathcal{M}^{*}_{x^{*},\alpha}
\end{displaymath}
as follows. Let $R$ be a level-3 tree such that $\llbracket \emptyset \rrbracket_R = \alpha$. Then $\mathcal{M}^{*}_{x^{*},\alpha} = (\mathcal{M}^{*}_{x^{*},R})_{\infty}$. This definition is independent of the choice of $R$: Suppose $R'$ is another level-3 tree and $\llbracket \emptyset \rrbracket_{R'} = \alpha$, and suppose without loss of generality that $\rho $ minimally factors $(R,R')$ by Theorem~\ref{thm:factor_tower_order_type_equivalent}. Then $\rho^{*, R'}_{x^{*}}: \mathcal{M}^{*}_{x^{*},R} \to \mathcal{M}^{*}_{x^{*},R'}$ induces a canonical embedding $\phi: (\mathcal{M}^{*}_{x^{*},R})_{\infty} \to (\mathcal{M}^{*}_{x^{*},R'})_{\infty} $. Let $T$ be $\Pi^1_2$-wellfounded and let $\psi$ minimally factor $(R',R \otimes T)$. Then $\psi^{*, R ,T }_{x^{*}}: \mathcal{M}^{*}_{x^{*},R'} \to \mathcal{M}^{*,T}_{x^{*},R}$ induces a canonical embedding $\phi' : (\mathcal{M}^{*}_{x^{*},R'})_{\infty} \to (\mathcal{M}^{*}_{x^{*},R})_{\infty} $. By coherency, $\phi' \circ \phi = \id$ and hence $\phi = \phi' = \id$. 
% For any $\Pi^1_3$-wellfounded level-3 tree $R$, $\mathcal{M}^{*}_{x^{*},R}$ is the EM model defined in Section~\ref{sec:syntactical_3sharp}, and $\mathcal{M}_{x^{*}, R}$. 
We say that $x^{*}$ is \emph{$\alpha$-iterable} iff $\alpha$ is in the wellfounded part of $\mathcal{M}^{*}_{x^{*},\alpha}$. 

A \emph{putative level-3 sharp code for an increasing function} is $w = \corner{\gcode{\tau}, x^{*}}$ such that $x^{*}$ is a putative $x$-3-sharp, $\tau$ is a unary $\mathcal{L}^{\underline{x}}$-Skolem term and
\begin{displaymath}
``\forall v,v'( (v , v'\in \ord \wedge v < v' )\to  (\tau(v) \in \ord  \wedge \tau(v) < \tau(v')))"
\end{displaymath}
is true in $x^{*}(\emptyset)$. In addition, when $x^{*} = x^{3\#}$, $\corner{\gcode{\tau}, x^{*}}$ is called a \emph{(true) level-3 sharp code for an increasing function}. 
If $x^{*}$ is a putative $x$-$3$-sharp and $\alpha < \bolddelta{3}$, $\llbracket \emptyset \rrbracket_R = \alpha$, there is a canonical bijection between $u_{\omega}$ and $\mathcal{M}^{*}_{x^{*},\alpha}$ which is  $\Delta_1$-definable over $L_{\kappa_3^{x,R}}[T_2,x,R]$ from $\se{T_2,x,R}$. 
 The statement `` $\corner{\gcode{\tau}, x^{*}}$ is a putative level-3 sharp code for an increasing function, $x^{*}$ is $\alpha$-iterable, $X_r$ codes the order type of $\tau^{\mathcal{M}_{x^{*}, \alpha}}(\alpha)$'' about  $(\corner{\gcode{\tau}, x^{*}},r)$  is $\boldsigma{3}$ in the code of $\alpha$.

\begin{mylemma}
  \label{lem:KM_bound}
Assume $\boldDelta{4}$-determinacy. Suppose $ \kappa \leq u \ooo$ is an uncountable cardinal in $\mathbb{L}[j^{R^{\infty}}(T_3)]$.
% $\kappa \in \set{u_{n+1}}{n< \omega} \cup \set{u^{(3)}_{\xi+1}}{\xi < \omega^{\omega^{\omega}}}$.
If $A$ is a $\Sigma^1_{5}(x)$ subset of $\kappa$ and $\sup A < \kappa$, then $\exists w \in \Delta^1_5(x) \cap \WO\ooo~( \sup A < \sharpcode{w} < \kappa)$. 
\end{mylemma}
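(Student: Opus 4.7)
The plan is to adapt the classical Kechris--Martin $\Sigma^1_3$-bounding theorem on $\WO$ to level~4, using the level-4 Martin--Solovay tree $T_4$ from Theorem~\ref{thm:level_4_MS_tree} and the machinery of putative $x$-$3$-sharps and their partial iterability developed in the paragraphs preceding the lemma. The target statement is the natural fourth-level analog of $\Sigma^1_3$-bounding: every bounded $\Sigma^1_5(x)$ set of ordinals below an $\mathbb{L}[j^{R^{\infty}}(T_3)]$-cardinal admits a $\Delta^1_5(x)$ strict upper bound realized by a genuine sharp code.

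First, I would reduce to the case where $\kappa$ is either $u_n$ for some $n<\omega$ or a successor level-$3$ uniform indiscernible $u^{(3)}_{\eta+1}$. If $\kappa$ is a limit of $\mathbb{L}[j^{R^{\infty}}(T_3)]$-cardinals, I pick a smaller such cardinal $\kappa'$ with $\sup A < \kappa' < \kappa$ and apply induction on cardinals. After this reduction, I replace $A$ by its sharp-code preimage $A^{*} = \set{w \in \WO\ooo}{\sharpcode{w} \in A}$; by Lemma~\ref{lem:level_3_sharp_code_equivalence} the set $A^{*}$ is $\Sigma^1_5(x)$, and the boundedness of $A$ in $\kappa$ translates, via the sharp-code order, to boundedness of $A^{*}$.

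Second, I would reformulate ``$\alpha \in A$'' using putative $x$-$3$-sharps and $T_4$ as follows: there exist a putative $x$-$3$-sharp $x^{*}$, an ordinal $\alpha^{*} < \bolddelta{3}$, and a putative level-$3$ sharp code $\corner{\gcode{\tau}, x^{*}}$ for an increasing function such that $x^{*}$ is $\alpha^{*}$-iterable and $\tau^{\mathcal{M}^{*}_{x^{*}, \alpha^{*}}}(\alpha^{*})$ equals $\alpha$, interpreted via the canonical $\Delta_1$ bijection between $u_{\omega}$ and $\mathcal{M}^{*}_{x^{*}, \alpha^{*}}$. This reformulation rests on the $\boldsigma{3}$-ness (in the code of $\alpha^{*}$) of the statement ``$X_r$ codes the order type of $\tau^{\mathcal{M}^{*}_{x^{*}, \alpha}}(\alpha)$'' noted in the discussion above the lemma, together with the honest leftmost-branch analysis of $T_4$. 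Absoluteness of this reformulation between genuine $x^{3\#}$ and sufficiently iterable putative sharps $x^{*}$ comes from the coherency and remarkability of level-$3$ EM blueprints (Definition~\ref{def:pre_level_3_EM_blueprint}, Lemma~\ref{lem:remarkable_general}).

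Third, to produce the $\Delta^1_5(x)$ bound, I would form the ordinal
\[ \beta = \sup\set{\tau^{\mathcal{M}^{*}_{x^{*}, \alpha^{*}}}(\alpha^{*})}{(\corner{\gcode{\tau}, x^{*}}, \alpha^{*}) \text{ witnesses some } \alpha \in A \text{ as above}}, \]
and then take $w$ to be a canonical sharp code for a level-$3$ uniform indiscernible strictly between $\beta$ and $\kappa$. Since every witnessing tuple computes an honest element of $A$, we have $\beta \leq \sup A < \kappa$; since $\kappa$ is an $\mathbb{L}[j^{R^{\infty}}(T_3)]$-cardinal and the uniform indiscernibles are cofinal, there is indeed a $u^{(3)}_\xi$ strictly between $\beta$ and $\kappa$, whose sharp code we read off using the syntactic representation from Section~\ref{sec:level-3-indisc}.

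The main obstacle will be verifying two interlocking properties of $\beta$: that the matrix defining it is uniformly $\Sigma^1_5(x)$, and that the passage from $\beta$ to a concrete $\Delta^1_5(x)$ sharp code for some indiscernible in the interval $(\beta, \kappa)$ is itself $\Delta^1_5(x)$. The first relies on the fact that evaluation of $\tau^{\mathcal{M}^{*}_{x^{*}, \alpha^{*}}}(\alpha^{*})$ is uniformly $\boldsigma{3}$ in $\alpha^{*}$, while $\alpha^{*}$-iterability of a putative sharp $x^{*}$ is $\Pi^1_4$ in $(x^{*}, \alpha^{*})$, so the whole matrix is $\Sigma^1_5(x)$. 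The second requires a careful bookkeeping between sharp codes computed internally in putative sharps versus externally in $x^{3\#}$, plus the scale property of $\boldpi{4}$ under $\boldDelta{4}$-determinacy to extract a specific $\Delta^1_5(x)$ representative from the $\Pi^1_5(x)$ set of would-be bounds. This generalizes, one level up, the Shoenfield-style coding behind the original Kechris--Martin argument, with $T_4$ playing the role that $T_2$ plays at level $2$.
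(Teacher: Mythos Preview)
Your reduction to successor cardinals is fine, and your Step~2 reformulation using putative sharps is in the right spirit, but Step~3 has a genuine gap that makes the argument fail in exactly the case that matters. You propose to take $\beta = \sup\{\ldots\} \leq \sup A$ and then pick ``a canonical sharp code for a level-3 uniform indiscernible strictly between $\beta$ and $\kappa$''. But when $\kappa$ is a successor cardinal in $\mathbb{L}[j^{R^{\infty}}(T_3)]$, say $\kappa = \seed^{R^{\infty}}_{\mathbf{A}}$ with immediate predecessor $\seed^{R^{\infty}}_{\mathbf{A}'}$, there is \emph{no} uniform indiscernible in the open interval $(\seed^{R^{\infty}}_{\mathbf{A}'}, \kappa)$, while $\sup A$ may well lie in that interval. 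So your bound simply does not exist. Your fallback, invoking the $\boldpi{4}$ scale property to extract a $\Delta^1_5(x)$ element from the $\Pi^1_5(x)$ set of upper bounds, is also not a valid move: scales give uniformization of $\boldpi{4}$ relations, not basis theorems for $\Pi^1_5$ sets of ordinals.

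The missing idea is Moschovakis third periodicity. The paper's proof runs a game in which Player~I produces a putative level-3 sharp code $\corner{\gcode{\tau}, a^{*}}$ for an increasing function, Player~II produces $(\corner{\gcode{\sigma}, b^{*}}, y)$, and II wins if she keeps pace with I's iterability while witnessing membership in $A$. The payoff is $\Sigma^1_4$ for Player~I; since $\sup A < \kappa$, Player~I wins, and by third periodicity (under $\boldDelta{4}$-determinacy) I has a $\Delta^1_5(x)$ winning strategy $f$. The bound $w$ is then \emph{written down explicitly from $f$} as a level-3 sharp code, namely $\corner{\gcode{\sigma}, (f^{3\#})^{3\#}}$ for a fixed term $\sigma$, and a boundedness argument on the $\boldsigma{3}$ sets $Z_{\eta}$ of partial plays shows $\sup A < \sharpcode{w}$. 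The point is that the $\Delta^1_5$ object is the strategy, not an ordinal extracted after the fact; this is what you are missing.
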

\begin{proof}
Let $x = 0$. The lemma is trivial if $\kappa$ is a limit cardinal in $\mathbb{L}[j^{R^{\infty}}(T_3)]$.  Suppose now $\kappa $ is a successor cardinal in $\mathbb{L}[j^{R^{\infty}}(T_3)]$. 
% . Let $B$ be $\Sigma^1_4$ such that $\sharpcode{w} \in A$ iff $\exists y ~ (w,y) \in B$.
Let $B$ be $\Pi^1_4$ such that $w \in \WO\ooo \wedge \sharpcode{w} \in A$ iff $\exists y~ (w,y) \in B$. 

Case 1: $\omega_1 \leq \kappa < u_{\omega}$. 

The lower level proof in \cite{Kechris_Martin_II} %for % $A$ in $\Sigma^1_3$ and $w \in \Delta^1_3$ 
carries over almost verbatim, except the game becomes $\Sigma^1_4$ for the winner and hence a $\Delta^1_5$ winning strategy can be found by Moschovakis third periodicity \cite{mos_dst}. 

Case 2: $\kappa = \bolddelta{3} = u^{(3)}_1$. 

Suppose $A \subseteq \bolddelta{3}$ is $\Sigma^1_5$ and $\sup A < \bolddelta{3}$. Let $B$ be $\Pi^1_4$ such that $w \in \WO\ooo \wedge \sharpcode{w} \in A$ iff $\exists y~ (w,y) \in B$. 
Consider the game in which I produces $v$, II produces $(w,y)$. II wins either $v \notin \WO^{(3)}$ or  $v,w \in \WO^{(3)}  \wedge \wocode{v} < \wocode{w}  \wedge (w,y) \in B $. I has a winning strategy, and so has a $\Delta^1_5$ winning strategy $\tau$ by Moschovakis third periodicity. By boundedness, $\set{\wocode{\tau*x}}{x \in \mathbb{R}}$ has a $\Delta^1_3(\tau)$ bound, hence has a $\Delta^1_5$ bound. 

% Case 3: $\kappa = u^{(3)}_2$.
Case 3:  $\kappa = \seed^{R^{\infty}}_{\mathbf{A}}> \bolddelta{3}$, $\mathbf{A} \in \exexdesc(R^{\infty})$. 

Put $\mathbf{A} = (\mathbf{r}, \pi, T)$, $\mathbf{r} = (r,Q,\overrightarrow{(d,q,P)})$, $r = (\xi_i)_{1 \leq i \leq k}$. Then $r$ is of discontinuous type and $\xi_k$ is a successor ordinal. Put $r' = (\xi_i)_{1 \leq i < k} \concat (\xi_k-1)$, $\mathbf{r}' = (r',Q,\overrightarrow{(d,q,P)})$, $\mathbf{A}' = (\mathbf{r}', \pi,T)$. 

Consider the game in which I produces  $\corner{\gcode{\tau}, a^{*}}$, II produces $(\corner{\gcode{\sigma},b^{*}}, y)$. II wins iff
\begin{enumerate}
\item If  $\corner{\gcode{\tau}, a^{*}}$ is a putative level-3 sharp code for an increasing function, then so is $\corner{\gcode{\sigma},b^{*}}$. Moreover, for any $\eta < \bolddelta{3}$, if
  \begin{displaymath}
    a^{*} \text{ is $\eta$-iterable }\wedge \tau^{\mathcal{M}_{a^{*},\eta}}(\eta) \in \wfp(\mathcal{M}_{a^{*},\eta})
  \end{displaymath}
then
\begin{displaymath}
      b^{*} \text{ is $\eta$-iterable }\wedge \sigma^{\mathcal{M}_{b^{*},\eta}}(\eta) \in \wfp(\mathcal{M}_{b^{*},\eta}) 
\wedge  \tau^{\mathcal{M}_{a^{*},\eta}}(\eta) <\sigma^{\mathcal{M}_{b^{*},\eta}}(\eta) .
\end{displaymath}
\item If $\corner{\gcode{\tau}, a^{*}} $ is a true level-3 sharp code for an increasing function, $a^{*} = a^{3\#}$,   
  then $( \corner{ \gcode{\sigma ^{\underline{j^T}(V)}(\underline{c_{\mathbf{A}'}})}, b^{*} } , y)  \in B$. % and $\sharpcode{w}  $
%   \begin{multline*}
%      \forall v \in \WO^{(3)}_{\xi+2}  ( \sharpcode{v} <  \tau^{L[j^{R^{\infty}}(T_3),a]}(u^{(3)}_{\xi+1})  \to \exists (\bar{w},x) \in L[v, y]  ~ \\
% (\bar{w} \in \WO_{n+1} \wedge \sharpcode{\bar{w}} = \sharpcode{v} \wedge (\bar{w},x) \in B \cup C)).
%   \end{multline*}  
\end{enumerate}
This game is $\Sigma^1_4$ for Player I. Player I has a winning strategy, and so has a $\Delta^1_5$ winning strategy $f$. Let $\sigma $ be the $\mathcal{L}^{\underline{x},R^{\infty}}$-Skolem term for $c^{(3)}_{y, T, \underline{\pi^T} (\underline{c_{r'}})+\omega}$ where $\underline{x}=y^{3\#}$. 
Let
\begin{displaymath}
 w= \corner{\gcode{\sigma}, (\tau^{3\#})^{3\#}}
\end{displaymath}
 So $w \in \WO\ooo$ is $\Delta^1_5$ and $\sharpcode{w} < \seed^R_{\mathbf{A}}$.
 % $\sharpcode{w} = c^{(3)}_{f, T, \seed^R_{\mathbf{A}'}+\omega} < \seed^R_{\mathbf{A}}$ where $(c^{(3)}_{f, T, \alpha})_{\alpha < j^{R^{\infty}}(\bolddelta{3}) \text{ limit}} \DEF j^{R^{\infty}} ((c^{(3)}_{f,T,\alpha})_{\alpha < \bolddelta{3} \text{ limit}})$. 
 We show that $\sup A < \sharpcode{w}$ using a boundedness argument. For each $\eta < \bolddelta{3}$, Let $Z_{\eta}$ be the set of $r \in \LO^{(3)}$ such that there are putative level-3 sharp codes for increasing function on ordinals $\corner{\gcode{\tau}, a^{*}}$, $\corner{\gcode{\sigma}, b^{*}}$ and an ordinal $\beta \leq \eta$ such that
 \begin{enumerate}
 \item $\corner{ \gcode{\tau}, a^{*} } = f * \corner{ \gcode{\sigma}, b^{*}}$;
 \item for any $\bar{\beta}<\beta$, $b^{*}$ is $\bar{\beta}$-iterable, $\sigma^{M_{b^{*},\eta}(\vec{\beta})} (\bar{\beta}) \in \wfp(\mathcal{M}_{b^{*},\eta})$, $\sigma^{\mathcal{M}_{b^{*},\eta}}(\bar{\beta}) \leq \eta$;
 \item $a^{*}$ is $\beta$-iterable,
 \item the order type of $\tau^{\mathcal{M}_{a^{*},\eta}}(\beta)$ is coded in $X_r$.
 \end{enumerate}
$Z_{\eta}$ is a $\boldsigma{3}$ set in the code of $\eta$. Since $f$ is a winning strategy for I, $Z_{\eta} \subseteq \WO^{(3)}$. By $\boldsigma{3}$-boundedness in \cite[Corollary 5.1]{sharpII}, $\set{ \wocode{r}}{r \in Z_{\eta}}$ is bounded by $c^{(3)}_{f,\eta + \omega}$.
Hence,  if $\corner{\gcode{\sigma}, b^{*}}$ is a true level-3 sharp code for an increasing function $g$ and $\corner{ \gcode{\tau}, a^{*} } = f * \corner{ \gcode{\sigma}, b^{*}}$, then $\corner{ \gcode{\tau}, a^{*} } $ is a true level-3 sharp code for an increasing function $h$ and 
for any $\eta < \bolddelta{3}$ such that $g''\eta \subseteq \eta$, $h(\eta) < c^{(3)}_{f,\eta+\omega}$. Let $\eta \in C$ iff $g''\eta \subseteq \eta$. 
By Lemma~\ref{lem:level_2_correct}, for any $\gamma \in j^T(C)$,  $j^T(h) ( \gamma) < c^{(3)}_{f, T, \gamma+\omega} $.
Hence, $\sup A < \sharpcode{w}$.
\end{proof}

Based on Lemma~\ref{lem:KM_bound}, the proof of the following theorem is completely in parallel to the level-2 Kechris-Martin theorem in \cite{Kechris_Martin_II} or \cite{km_neeman}. 
It is proved by induction on the $\mathbb{L}[j^{R^{\infty}}(T_3)]$-cardinality of $\sup(A)$. A key step uses the following observation: by Lemma~\ref{lem:next-level-3-indis},  if  $\sharpcode{w} < \seed^{R^{\infty}}_{\mathbf{A}}$ then there is a $\Delta^1_5(w)$ surjection from the $\mathbb{L}[j^{R^{\infty}}(T_3)]$-predecessor of $\seed_{\mathbf{A}}^{R^{\infty}}$ onto $\sharpcode{w}$.

\begin{mytheorem}
  \label{thm:KM_basis}
Assume $\boldDelta{4}$-determinacy. If $A$ is a nonempty $\Pi^1_5(x)$ subset of $u\ooo$, then $\exists w \in \Delta^1_5(x) ~ (\sharpcode{w} \in A)$. So the pointclass $\Pi^1_5$ is closed under quantification over $u^{(3)}_{\omega^{\omega^{\omega}}}$.
\end{mytheorem}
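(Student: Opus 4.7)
The plan is to prove the statement by induction on the $\mathbb{L}[j^{R^{\infty}}(T_3)]$-cardinality $\kappa$ of $\sup A$. This parallels the inductive proof of the Kechris-Martin theorem at level 2 given in \cite{Kechris_Martin_II} and \cite{km_neeman}, with Lemma~\ref{lem:KM_bound} playing the role of the original $\boldsigma{3}$-boundedness input and the $\Delta^1_5$ estimates on sharp-code comparisons (Lemma~\ref{lem:level_3_sharp_code_equivalence}) and on factor maps (Lemma~\ref{lem:sharp_code_factor_complexity}) supplying the uniformity needed for the transformations.

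First I would fix a nonempty $\Pi^1_5(x)$ subset $A$ of $u\ooo$ and reduce to the case where $\sup A = \kappa$ is itself an $\mathbb{L}[j^{R^{\infty}}(T_3)]$-cardinal. If $\sup A$ is not a cardinal, let $\kappa$ be the least $\mathbb{L}[j^{R^{\infty}}(T_3)]$-cardinal above $\sup A$; Lemma~\ref{lem:KM_bound} then produces a $\Delta^1_5(x)$ code $w$ with $\sup A < \sharpcode{w} < \kappa$, so that $A$ becomes a nonempty $\Pi^1_5(x,w)$ subset of $\sharpcode{w}$, and since $w \in \Delta^1_5(x)$, any $\Delta^1_5(x,w)$ witness is $\Delta^1_5(x)$. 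Consequently we may always assume that $\sup A$ is a cardinal of $\mathbb{L}[j^{R^{\infty}}(T_3)]$.

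The crucial inductive step is when $\kappa = \seed^{R^{\infty}}_{\mathbf{A}}$ is a successor cardinal in $\mathbb{L}[j^{R^{\infty}}(T_3)]$. Let $\lambda$ denote its $\mathbb{L}[j^{R^{\infty}}(T_3)]$-predecessor. As observed in the paragraph before the theorem, the set $\{\corner{x^{3\#},\gcode{\tau^{\underline{j^T}(V)}(\underline{x},\underline{c_{\mathbf{A}'}})}} : x \in \mathbb{R},\ \tau\text{ an }\mathcal{L}\text{-Skolem term}\}$ is cofinal in $\kappa$, and Lemma~\ref{lem:next-level-3-indis} (relativised) furnishes, uniformly in $w \in \WO\ooo$ with $\sharpcode{w} < \kappa$, a $\Delta^1_5(w)$ surjection $h_w : \lambda \twoheadrightarrow \sharpcode{w}$. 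I would then transform the $\Pi^1_5(x)$ statement ``$\sharpcode{v} \in A$'' into an equivalent $\Pi^1_5(x)$ statement on pairs $(w,\beta)$ with $\beta < \lambda$ and $h_w(\beta)$ coding the candidate element of $A$. The hypothesis that $A$ is nonempty together with cofinality yields a nonempty $\Pi^1_5(x)$ subset $B$ of $\lambda$; the inductive hypothesis produces a $\Delta^1_5(x)$ witness in $B$, which decodes to a $\Delta^1_5(x)$ sharp code in $A$.

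The main obstacle will be handling the case where $\kappa \le u\ooo$ is a limit cardinal of $\mathbb{L}[j^{R^{\infty}}(T_3)]$. Here I would split according to the $\mathbb{L}[j^{R^{\infty}}(T_3)]$-cofinality of $\kappa$, which by Definition~\ref{def:level_3_uniform_indiscernibles_final} and the structure of $\exexdesc(R^\infty)$ is one of $\omega$, $u_1$, $u_2$, or $\seed^{R^{\infty}}_{\mathbf{B}}$ for some $\mathbf{B} \prec^{R^{\infty}}_* \mathbf{A}$, and fix a correspondingly definable cofinal sequence of cardinals $\kappa_\alpha \nearrow \kappa$. Combining an extra outer quantification over $\alpha$ with the inductive hypothesis applied to the sets $A \cap \kappa_\alpha$ (after using Lemma~\ref{lem:KM_bound} to rule in or out each $\kappa_\alpha$) descends to a strictly smaller cardinal. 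The delicate bookkeeping needed to verify that each transformation preserves the $\Pi^1_5$ complexity of the relevant set and the $\Delta^1_5$ complexity of the witness—already the subtle part of the level-2 argument—is exactly where the uniformities supplied by Lemmas~\ref{lem:level_3_sharp_code_equivalence} and~\ref{lem:sharp_code_factor_complexity}, together with the closure of $\Pi^1_5$ under number quantifiers and real quantifiers (from $\boldDelta{4}$-determinacy), will have to be threaded carefully through the recursion.
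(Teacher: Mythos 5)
Your proposal follows essentially the same approach as the paper: the paper likewise proves this by induction on the $\mathbb{L}[j^{R^{\infty}}(T_3)]$-cardinality of $\sup A$, parallel to the level-2 Kechris--Martin argument, with Lemma~\ref{lem:KM_bound} as the boundedness input and Lemma~\ref{lem:next-level-3-indis} supplying the $\Delta^1_5(w)$ surjection from the predecessor cardinal onto $\sharpcode{w}$ in the key successor step. The paper leaves the proof at the level of this sketch, and you have reconstructed the intended outline correctly, including where the uniformities from Lemmas~\ref{lem:level_3_sharp_code_equivalence} and~\ref{lem:sharp_code_factor_complexity} must be threaded in.
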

% \begin{proof}
% Let $\kappa $ be the least $\mathbb{L}[j^{R^{\infty}}(T_3)]$-cardinal such that $A \cap \kappa \neq \emptyset$ and let $\alpha \in B$ iff $\alpha \cap A = \emptyset$. We prove by induction on $\kappa$ that $\exists w \in \Delta^1_5(x) ~ (\sharpcode{w} \in A)$.  % and the pointclass $\Pi^1_5$ is closed under quantification over $\kappa$.

% % The case when $\kappa$ is a limit ordinal is trivial, as that would imply $\kappa = \sup_{n<\omega} \kappa_n$ and each $A$
% % the existence of a $\Delta^1_5$ map from $\omega$ cofinally into $\kappa$.

% Case 1: $\kappa \leq \omega_1$.

% From any code of $\alpha$ we can define a surjection from $\omega$ onto $\alpha$. So $B$ is $\Sigma^1_5$. By Lemma~\ref{lem:KM_bound}, $B$ has a $\Delta^1_5$ bound $\alpha^{*}$. So every ordinal below $\alpha^{*}$ has a $\Delta^1_5$ code, and one of these ordinals belongs to $A$. 

% Case 2: $\kappa = u_{n+1}$, $1 \leq n < \omega$.

% From any code of $\alpha$ we can define a surjection from $u_n$ onto $\alpha$. By induction, $\Pi^1_5$ is closed under quantification over $u_n$. So $B$ is $\Sigma^1_5$. By Lemma~\ref{lem:KM_bound}, $B$ has a $\Delta^1_5$ bound $\alpha^{*}$. The $\Delta^1_5$ code of $\alpha^{*}$ defines a surjection $f : u_n \to \alpha$ and $f^{-1}(A)$. 

% \end{proof}

\begin{mydefinition}\label{def:kappa_5}
  $\kappa_5^x$ is the least $(T_4,x)$-admissible ordinal.
\end{mydefinition}

Using Theorem~\ref{thm:KM_basis}, we obtain the level-4 version of Becker-Kechris-Martin. The proof is parallel to \cite{Kechris_Martin_II} and \cite{becker_kechris_1984}, using Moschovakis set induction in one direction and the Becker-Kechris game in the other direction. 

\begin{mytheorem}
  \label{thm:level-4_KM}
    Assume $\boldDelta{4}$-determinacy. Then for each $A \subseteq u\ooo \times \mathbb{R}$, the following are equivalent.
  \begin{enumerate}
  \item $A$ is $\Pi^1_5$.
  \item There is a $\Sigma_1$ formula $\varphi$ such that $(\alpha,x) \in A$ iff $\admisfour{x}\models \varphi(T_4,\alpha,x)$.
  \end{enumerate}
\end{mytheorem}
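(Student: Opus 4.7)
The plan is to prove the two implications separately, following the scheme of Becker--Kechris \cite{becker_kechris_1984} and Kechris--Martin \cite{Kechris_Martin_II} at level 2, but feeding in the level-4 Martin--Solovay tree $T_4$ from Theorem \ref{thm:level_4_MS_tree}, the level-3 sharp codes from $\WO\ooo$, and the level-4 basis result Theorem \ref{thm:KM_basis}. The inputs I will rely on are: $T_4$ is $\Delta^1_5$ as a subset of $\omega \times u\ooo$ in the codes; the sharp-code evaluation function on $\WO\ooo$ is $\Delta^1_5$ by Lemma \ref{lem:level_3_sharp_code_equivalence}; and $\Pi^1_5$ is closed under quantification over $u\ooo$.

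For the direction $(2) \Rightarrow (1)$ I would run a Moschovakis-style set induction on the $L[T_4,x]$-hierarchy, coding each level $L_\xi[T_4,x]$ with $\xi < \kappa_5^x$ by a level-3 sharp code from $\WO\ooo$. Because $T_4$ is $\Delta^1_5$ and evaluation on $\WO\ooo$ is $\Delta^1_5$, the satisfaction predicate for $L_\xi[T_4,x]$ is uniformly $\Pi^1_5$ in any sharp code for $\xi$. The final $\Sigma_1$ clause unwinds to an existential over $\xi < \kappa_5^x$, which is a quantifier over $u\ooo$ and therefore is absorbed into $\Pi^1_5$ by Theorem \ref{thm:KM_basis}.

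For the direction $(1) \Rightarrow (2)$ I would play a Becker--Kechris game. Given a $\Pi^1_5(x)$ relation $A$, write $(\alpha,x) \in A$ as $\forall r\, B(\alpha,x,r)$ for $B$ of complexity $\Sigma^1_4$. Set up a two-player game in which Player II produces a challenge $r$ and Player I responds by producing a level-3 sharp code that, interpreted via $T_4$, witnesses $B(\alpha,x,r)$. Under $\boldDelta{4}$-determinacy, Moschovakis third periodicity delivers a $\Delta^1_5$ winning strategy whenever one exists, and I would verify that the condition ``Player I has a winning strategy'' is $\Sigma_1$ over $L_{\kappa_5^x}[T_4,x]$. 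The admissibility of $\kappa_5^x$ closes off the strategy construction under the recursive procedures needed to traverse $T_4$ and to evaluate level-3 sharp codes.

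The main obstacle will be the complexity bookkeeping in $(1) \Rightarrow (2)$: verifying that the strategy-existence assertion really reflects down to $L_{\kappa_5^x}[T_4,x]$ rather than to a larger admissible structure. At level 2 this step exploits that the strategy's moves are integers and can be read off inside the admissible set directly from the $T_2$-tree; here the moves range over ordinals below $u\ooo$, which forces a more elaborate coding through $T_4$ together with the honest leftmost branch characterization of Theorem \ref{thm:level_4_MS_tree}. Once this coding is pinned down, the classical argument transfers without further surprises, with the required bounding and basis steps supplied by Theorem \ref{thm:KM_basis}.
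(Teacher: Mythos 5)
You have the right high-level architecture — Moschovakis set induction for $(2)\Rightarrow(1)$ and a Becker--Kechris game for $(1)\Rightarrow(2)$ — which is exactly what the paper has in mind when it says the proof is ``parallel to \cite{Kechris_Martin_II} and \cite{becker_kechris_1984}.'' The ingredients you assemble ($\Delta^1_5$ complexity of $T_4$, $\Delta^1_5$ evaluation of $\WO\ooo$-codes, the honest leftmost branch, and Theorem~\ref{thm:KM_basis}) are the right ones, and your remark about the technical bottleneck in $(1)\Rightarrow(2)$ (moves ranging over ordinals below $u\ooo$ rather than integers) is accurate.

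However, there is a genuine slip in $(2)\Rightarrow(1)$. You write that ``the final $\Sigma_1$ clause unwinds to an existential over $\xi<\kappa_5^x$, which is a quantifier over $u\ooo$ and therefore is absorbed into $\Pi^1_5$ by Theorem~\ref{thm:KM_basis}.'' But $\kappa_5^x$, the least $(T_4,x)$-admissible ordinal, is strictly larger than $u\ooo$: $T_4$ is a tree on $2\times u\ooo$, so any $(T_4,x)$-admissible level of $L[T_4,x]$ must contain all ordinals appearing as labels, i.e.\ be of height at least $u\ooo$, and admissibility then pushes $\kappa_5^x$ well beyond. The unbounded ordinal quantifier over $\xi<\kappa_5^x$ is therefore not a $u\ooo$-quantifier and is not dispatched by Theorem~\ref{thm:KM_basis} directly. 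What actually closes this direction — and what the level-2 argument in Kechris--Martin does — is the Moschovakis set-induction / stage-comparison machinery: one shows that the class of relations $\Sigma_1$ over $L_{\kappa_5^x}[T_4,x]$ coincides with an appropriate class of inductive relations and lies in $\Pi^1_5$ because of the pointclass's closure properties. Theorem~\ref{thm:KM_basis} enters in certifying the \emph{inductive step} (the ordinal labels of $T_4$ live below $u\ooo$, so quantification over those labels stays within $\Pi^1_5$), not in bounding the outer quantifier over stages $\xi$.
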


The ordinal $\kappa_5^x$ is defined in a different way in \cite{Q_theory}:
\begin{align*}
  \lambda_5^x & = \sup \set{ \absvalue{W} }{W \text{ is a } \Delta^1_5(x) \text{ prewellordering on } \mathbb{R}} ,\\
    \kappa_5^x &= \sup \set{\lambda_5^{x,y}}{ M_3^{\#}(x) \nleq_{\Delta^1_5} (x,y)}.
\end{align*}
In parallel to \cite{Kechris_Martin_II}, these two definitions are equivalent, and in fact, 
\begin{align*}
  \lambda_5^x &=  \sup \set{ \xi < \kappa_5^x}{ \xi \text{ is } \Delta_1\text{-definable over } {L_{\kappa_5^x}[T_4,x]} \text{ from } \se{T_4,x}},\\
    \kappa_5^x &= \sup \set{\ot(W)}{W \text{ is a $\Delta^1_5(x,\lessthanshort{u\ooo})$ prewellordering on $\mathbb{R}$}}.
\end{align*}
Moreover,
\begin{displaymath}
  \forall \alpha <u\ooo ~\exists w \in \WO\ooo ~(\sharpcode{w} = \alpha \wedge \lambda_5^{x,w} < \kappa_5^x) .
\end{displaymath}
\subsection{The equivalence of $x^{4\#}$ and $M_3^{\#}(x)$}
\label{sec:level-4-equivalence}

Suppose $x$ is a real and $\beta \leq u\ooo$. 
A subset $A \subseteq \mathbb{R}$ is $\beta$-$\Pi^1_5(x)$ iff there is a $\Pi^1_5(x)$ set $B \subseteq u\ooo \times \mathbb{R}$ such that $A = \Diff B$. 
% $y \in A$ iff
% \begin{displaymath}
%   \exists \alpha < \beta (\alpha \text{ is odd} \wedge \forall \eta < \alpha ~ (\eta, y) \in B \wedge (\alpha, y) \notin B ).
% \end{displaymath}
$\beta$-$\Pi^1_5(x)$ acting on product spaces of $\omega$ and $\mathbb{R}$ is defined in the obvious way. Lightface $\beta$-$\Pi^1_5$ and boldface $\beta$-$\boldpi{5}$ have the obvious meanings. 

In parallel to \cite{sharpI}, we have
\begin{mylemma}
  \label{lem:diff_Pi_1_5_to_game}
  Assume $\boldDelta{4}$-determinacy. Suppose $ \xi < \omega^{\omega^{\omega}}$ and $m<\omega$. If $A$ is $(u^{(3)}_{\xi+1})^m$-$\Pi^1_5(x)$, then $A$ is $\game^2((\widehat{\xi+1})\text{-}\Pi^1_3(x))$. 
\end{mylemma}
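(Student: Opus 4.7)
The plan is to adapt the level-2 argument from \cite{sharpI} one level up, with $u_n$ replaced by $u^{(3)}_{\eta}$, the Martin-Solovay tree $T_2$ replaced by $T_4$, and the pointclasses $\Pi^1_1$, $\Pi^1_3$ each shifted up by two, so that the outer $\game$ unfolds $\Pi^1_5$ via $T_4$ and the inner $\game$ unfolds $\Pi^1_3$ via $T_2$. First I reduce to the case $m=1$: since $u^{(3)}_{\xi+1}$ is an infinite cardinal in $\mathbb{L}[j^{R^{\infty}}(T_3)]$, there is a bijection $\varphi : u^{(3)}_{\xi+1} \to (u^{(3)}_{\xi+1})^m$ definable over that model, and by Lemma~\ref{lem:sharp_code_factor_complexity} together with Theorem~\ref{thm:level_4_MS_tree} such a $\varphi$ is $\Delta^1_5(x)$ on the level-3 sharp codes. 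Composing with $\varphi$ converts a $(u^{(3)}_{\xi+1})^m$-$\Pi^1_5(x)$ set into a $u^{(3)}_{\xi+1}$-$\Pi^1_5(x')$ set for a real $x'$ recursive in $x$.

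Writing $A=\Diff_{\beta}B$ with $\beta<u^{(3)}_{\xi+1}$ and $B\subseteq u^{(3)}_{\xi+1}\times\mathbb{R}$ in $\Pi^1_5(x)$, Theorem~\ref{thm:level-4_KM} gives a $\Sigma_1$-formula $\varphi$ with $(\alpha,y)\in B$ iff $\admisfour{x,y}\models\varphi(T_4,\alpha,x,y)$. I will design a two-round game $G(y)$ in which Player I first produces (i) an attempted honest branch of $(T_4)_z$ for some $z$ he declares to equal $x^{3\#}$, together with a level-3 sharp code (based on $R^{\infty}_{\xi+1}$) for the candidate first ordinal $\alpha_0<\beta$ witnessing $y\in A$ via the $\Diff_{\beta}$-representation; Player II then challenges with a code for a putative $\alpha_1<\alpha_0$ flipping the Boolean value, and so on. Each ``round'' in this descending chain is then collapsed into a single pair of moves by encoding the chain as a single real in the usual $\Diff$-to-game manner of \cite{sharpI}. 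The outer application of $\game$ reduces the $\Pi^1_5$ payoff tree $T_4$ to a $\Pi^1_4$-payoff game; the inner application of $\game$ then reduces the resulting $\Pi^1_4$ payoff, via the $\Pi^1_3$-scale of Theorem~\ref{thm:level_4_MS_tree} and the corresponding level-3 Martin-Solovay representation, to a $\Pi^1_3$ payoff.

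The mechanism producing the $\widehat{\xi+1}$-$\Pi^1_3$ ceiling is the level-3 sharp coding of Section~\ref{sec:level-4-Martin_Solovay_tree}: an ordinal $\alpha<u^{(3)}_{\xi+1}$ is named by an $\mathcal{L}^{\underline{x},R^{\infty}_{\xi+1}}$-Skolem term, and the predicate ``$\corner{\gcode{\tau},x^{*}}\in\WO^{(3)}_{\xi+1}$ and the coded ordinal satisfies a given $\Sigma^1_3$ condition'' is $(\widehat{\xi+1})$-$\Pi^1_3$ once one unpacks it using Lemma~\ref{lem:sharp_code_factor_complexity}, the iterability characterisation supplied by Lemma~\ref{lem:KM_bound}, and Theorem~\ref{thm:level_4_MS_tree}. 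The difference hierarchy rank is read off from the ordinal assignment $\eta\mapsto\widehat{\eta}$ of Definition~\ref{def:ordinal_assignment}: an ordinal of level-3 rank $<u^{(3)}_{\xi+1}$ is named by a term using uniform indiscernibles whose signature lies in the ``$\widehat{\xi+1}$-stratification'', so a descending chain of such ordinals has length bounded by $\widehat{\xi+1}$ when measured against this stratification, and each individual verification in the chain costs a single layer of $\Pi^1_3$. Concatenating these through the difference operation yields exactly the $(\widehat{\xi+1})$-$\Pi^1_3$ payoff inside the double game quantifier.

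The hardest step, as in the level-2 predecessor, will be establishing that the natural game one writes down has payoff class precisely $\game^2((\widehat{\xi+1})\text{-}\Pi^1_3(x))$ rather than merely something in $\game^2(\Pi^1_3(x))$ or lying loosely in $\Sigma^1_5$. This requires two pieces of careful bookkeeping: first, that the transfer of difference rank from $u^{(3)}_{\xi+1}$ to $\widehat{\xi+1}$ is exact, which relies on the fact that the level-3 sharp codes based on $R^{\infty}_{\xi+1}$ are cofinal in $u^{(3)}_{\xi+1}$ and on the description machinery of Section~\ref{sec:level-3-description}; and second, that the $\Pi^1_3$-iterability ingredient inside ``putative $3$-sharp'' survives intact after two applications of $\game$, which follows from third periodicity and the closure of $\Pi^1_3$-iterability under the game quantifier. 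Given these two points, the remaining verification that winning strategies for I and II correspond respectively to membership and non-membership of $y$ in $A$ is a direct analogue of \cite{sharpI}, with no genuinely new obstacle beyond carrying the $R^{\infty}_{\xi+1}$-indexed description apparatus through the double game reduction.
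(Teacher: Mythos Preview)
The paper provides no proof beyond the prefatory remark ``In parallel to \cite{sharpI}''. Your high-level plan---reduce to $m=1$, represent the $\Pi^1_5$ set $B$ via Theorem~\ref{thm:level-4_KM}, code ordinals below $u^{(3)}_{\xi+1}$ by level-$3$ sharp codes over $R^{\infty}_{\xi+1}$, and track the difference rank through the correspondence $u^{(3)}_{\xi+1}\leftrightarrow\widehat{\xi+1}$---is exactly this intended transposition and correctly identifies the main ingredients.

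Several specifics in your sketch are garbled, though none fatally. Players in an integer game do not produce ``an attempted honest branch of $(T_4)_z$'' (such a branch is a sequence of ordinals below $u\ooo$); they play reals serving as putative level-$3$ sharp codes. Theorem~\ref{thm:level_4_MS_tree} constructs $T_4$ and its honest leftmost branch; it does not supply a ``$\Pi^1_3$-scale''. Your appeal to third periodicity for ``$\Pi^1_3$-iterability surviving two applications of $\game$'' misidentifies the tool: third periodicity concerns the definability of winning strategies, not preservation of iterability under $\game$, and nothing threatens iterability in this argument. Finally, the rationale ``a descending chain of such ordinals has length bounded by $\widehat{\xi+1}$'' is not how the rank arises; $\widehat{\xi+1}$ appears because the verification that a putative level-$3$ sharp code over $R^{\infty}_{\xi+1}$ is genuine and that the $\Pi^1_5$ predicate holds at the coded ordinal is, after unfolding via the level-$3$ sharp analysis of \cite{sharpII}, a $(\widehat{\xi+1})$-$\Pi^1_3$ condition---this is where $\llbracket\emptyset\rrbracket_{R^{\infty}_{\xi+1}}=\widehat{\xi+1}$ enters. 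These confusions suggest you should work through the actual game in \cite{sharpI} and transcribe it with the level-$3$ apparatus substituted in, rather than reconstructing the mechanics from general principles.
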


If $S$ is a finite regular level-3 tree, let $S^{+}$ be the level-3 tree extending $S$ where $\dom(S^{+}) = \dom(S) \cup \se{((1))}$ and $\cf(S((1))) = 0$. Thus, $\llbracket \emptyset \rrbracket_{S^{+}}= \llbracket \emptyset \rrbracket_S + \omega$. If $\vec{\xi} = (\xi_s)_{s \in \dom(S^{+})}$ respects $S^{+}$, put $\vec{\xi}^{-} = (\xi_s)_{s \in \dom(S)}$. 

If $x \in \mathbb{R}$ and $\alpha < \bolddelta{3}$, let $\mathcal{N}_{\alpha,\infty}(x) = \mathcal{P}_{\infty}$ where $\wocode{\mathcal{P}}_{<_{DJ(x)}} = \alpha $. In particular, $\mathcal{N}_{c^{(3)}_{x,\alpha}, \infty}(x) = \mathcal{M}^{-}_{2,\infty}(x) | c^{(3)}_{x,\alpha}$.

\begin{mylemma}
  \label{lem:game-to-diff-Pi-1-5}
Assume $\boldDelta{4}$-determinacy. 
Let $\xi<\omega^{\omega^{\omega}}$. If $A$ is $\game^2(\widehat{\xi} \textnormal{-}\Pi^1_3(x))$, then $A$ is $u^{(3)}_{\xi+2}\textnormal{-}\Pi^1_5(x)$.
\end{mylemma}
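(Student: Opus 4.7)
The plan is to mimic the classical Martin-style unfolding that, at the level-1 analog proved in \cite{sharpI}, shows $\game(\xi\text{-}\boldpi{1}(x)) \subseteq u_{\xi+2}\text{-}\boldpi{3}(x)$. Here we replace $\boldpi{1}$ by $\boldpi{3}$, $u_{\xi+2}$ by $u^{(3)}_{\xi+2}$, the Moschovakis tree by $T_3$, and the role of Kunen--Martin/level-2 Kechris--Martin by Lemma~\ref{lem:KM_bound} and Theorem~\ref{thm:level-4_KM}. Write $x \in A$ iff Player I wins $G(B_x)$ where $B_x = \Diff (C^x_\eta)_{\eta < \widehat{\xi}}$ is a decreasing difference of $\boldpi{3}(x)$ sets. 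Since $\widehat{\xi}$ is represented by the $\Pi^1_3$-wellfounded level-3 tree $R^{\infty}_\xi$ (whose seed ordinal is exactly $u^{(3)}_\xi$ by Lemma~\ref{lem:ordinal_in_below_seed_use_bounded_indis}), the indexing $\eta < \widehat{\xi}$ is coded uniformly by level-3 sharp codes lying below $u^{(3)}_{\xi+1}$, and the Suslin representation of each $C^x_\eta$ via $T_3$ can be read off definably in $T_3$ and such an $\eta$-code.

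Next I would unfold $G(B_x)$ into an auxiliary game $G^*_x$ in which, at each round, Player I plays $x(2n)\in\omega$ together with an ordinal $\alpha_n < u^{(3)}_{\xi+2}$, while Player II plays $x(2n+1)\in\omega$. The ordinals $(\alpha_n)$ are required to do two things: (i) isolate, as a partial $T_3$-branch descent, the unique $\eta^{*}\leq\widehat{\xi}$ with the outcome $y$ in $C^x_{\eta^{*}}\setminus\bigcup_{\eta<\eta^{*}}C^x_\eta$ (with the convention $\eta^{*}=\widehat{\xi}$ signaling $y\notin\bigcup_\eta C^x_\eta$), and (ii) declare the corresponding parity/win condition for Player I. The rules of $G^*_x$ are a finite-stage-checkable agreement between the $\alpha_n$'s and $T_3$-section data, so $G^*_x$ is closed for Player I. The equivalence "Player I wins $G(B_x)$ iff Player I wins $G^*_x$" is the standard unfolding argument: the forward direction builds the ordinal-side of the strategy from ranks in the relevant wellfounded trees, and the reverse direction forgets the ordinal side.

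To conclude, $G^*_x$ is a closed game on $\omega\times u^{(3)}_{\xi+2}$ whose rules are $\Delta^1_5(x)$ in the codes of ordinals below $u^{(3)}_{\xi+2}$, by the complexity of $T_4$ established in Theorem~\ref{thm:level_4_MS_tree} combined with Lemma~\ref{lem:sharp_code_factor_complexity}. Player I winning a closed game on $\omega\times\kappa$ with $\Delta^1_5(x)$ rules is exactly illfoundedness of the $\Delta^1_5(x)$ tree of non-losing positions along the $\kappa$-coordinate, which is a $\kappa$-$\Pi^1_5(x)$ statement about $x$. Thus $A$ is $u^{(3)}_{\xi+2}\text{-}\Pi^1_5(x)$, as desired.

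The main obstacle is the equivalence "I wins $G(B_x)$ iff I wins $G^*_x$", and within it, the boundedness of the ordinal side of the auxiliary strategy by $u^{(3)}_{\xi+2}$. An arbitrary winning strategy for I in $G(B_x)$ is $\Sigma^1_5(x)$-definable by Moschovakis' third periodicity applied in conjunction with Theorem~\ref{thm:level-4_KM}; the ranks in the $T_3$-sections that it forces are then a $\boldsigma{3}$-set of ordinals in the codes whose supremum can be bounded strictly below $u^{(3)}_{\xi+2}$ by the $\boldsigma{3}$-bounding principle (the same ingredient used in the proof of Lemma~\ref{lem:KM_bound}). A subsidiary subtlety is verifying that coding each $\eta<\widehat{\xi}$ uniformly via the $R^{\infty}_\xi$-indiscernibles, and the resulting $T_3$-sections, is genuinely $\Delta^1_5$ in the ordinal parameters; this reduces, via the identification $\seed^{R^{\infty}}_{(\xi)}=u^{(3)}_\xi$ and Lemma~\ref{lem:sharp_code_factor_complexity}, to the effective computability of the level-3 sharp codes established in Section~\ref{sec:syntactical_3sharp}.
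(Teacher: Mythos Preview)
Your proposal handles only one application of the game quantifier, but the lemma concerns $\game^2$. You write $x\in A$ iff Player~I wins $G(B_x)$ with $B_x$ a $\widehat{\xi}$-$\Pi^1_3$ difference set; that describes $A\in\game(\widehat{\xi}\text{-}\Pi^1_3(x))$, not $A\in\game^2(\widehat{\xi}\text{-}\Pi^1_3(x))$. The level-1 analogue you cite from \cite{sharpI}, namely $\game(\xi\text{-}\boldpi{1})\subseteq u_{\xi+2}\text{-}\boldpi{3}$, is likewise a single-$\game$ result and is not the analogue of the present lemma. A single Martin-style unfolding of $\game(\widehat{\xi}\text{-}\Pi^1_3)$ lands near the $\Pi^1_4$ level, not $u^{(3)}_{\xi+2}\text{-}\Pi^1_5$; your boundedness argument via third periodicity does not bridge that gap, since the ordinal side you produce lives below $\bolddelta{3}$, not below $u^{(3)}_{\xi+2}$.

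The paper's argument treats the two quantifiers asymmetrically. The inner $\game$ applied to $\widehat{\xi}\text{-}\Pi^1_3$ is absorbed into a first-order statement over the level-3 sharp: by \cite{sharpII} there is a formula $\varphi$ with $(y,r)\in B$ iff $\gcode{\varphi(y,r,(\underline{c_s})_{s\in\dom(S)})}\in(y,r)^{3\#}(S)$ for a fixed regular level-3 tree $S$ with $\llbracket\emptyset\rrbracket_S=\widehat{\xi}$. The outer $\game$ is then handled by the Kechris--Woodin determinacy-transfer machinery: one defines, for each $\vec{\xi}$ respecting $S^{+}$, the notion of a Kechris--Woodin non-determined set and of $(y,\vec{\xi})$-stable reals, shows stable reals exist, and defines a wellfounded relation $<^{\vec{\xi}}_y$ on them whose rank function $f^z_y$ is definable over $M_{2,\infty}^{-}(y,z)$. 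The key invariant $\beta^z_y=\tau^{j^{S^{+}}(M_{2,\infty}^{-}(y,z))}(y,z,\seed^{S^{+}})$ then lives below $u^{(3)}_{\xi+2}$, is $\Delta^1_5$ in the level-3 sharp codes, and the rest parallels \cite[Lemma~3.2]{sharpI}. Your unfolding bypasses this entire stability-rank layer, which is where the ordinal in $[\bolddelta{3},u^{(3)}_{\xi+2})$ actually comes from.
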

\begin{proof}
  Let $S$ be a regular level-3 tree such that $\llbracket \emptyset \rrbracket_S= \widehat{\xi}$. 
By \cite{sharpII}, if $(y,r) \in \mathbb{R}^2$, $C \subseteq \mathbb{R}$ is $\widehat{\xi}$-$\Pi^1_3(y,r)$, then we can effectively find a formula $\varphi$ such that Player I has a winning strategy in $G(C)$ iff
  \begin{displaymath}
     \gcode{\varphi(y,r)} \in (y,r)^{3\#}(S). 
  \end{displaymath}
Suppose $A = \game B$, where $B \subseteq \mathbb{R}^2$ is $\game(\widehat{\xi}\text{-}\Pi^1_3)$. Suppose $\varphi$ is an $\mathcal{L}$-formula such that
\begin{displaymath}
  (y,r) \in B \eqiv      \gcode{\varphi(y,r, (\underline{c_s})_{s \in \dom(S)})} \in (y,r)^{3\#}(S). 
\end{displaymath}
For ordinals $\vec{\xi}$ respecting $S^{+}$, say that $M$ is a Kechris-Woodin non-determined set with respect to $(y, \vec{\xi})$ iff 
\begin{enumerate}
\item $M$ is a countable subset of $\mathbb{R}$.
\item $M$ is closed under join and Turing reducibility.
\item $\forall \sigma \in M ~ \exists v \in M ~ \mathcal{N}_{\xi_{((1))},\infty}(y,\sigma\otimes v) \models \neg\varphi(y,\sigma\otimes  v,\vec{\xi}^{-})$.
\item $\forall \sigma \in M ~ \exists v \in M ~ \mathcal{N}_{\xi_{((1))},\infty}(y,v\otimes \sigma)  \models \varphi(y, v\otimes \sigma,\vec{\xi}^{-})$.
\end{enumerate}
Say that $z$ is $(y, \vec{\xi})$-stable iff  $z$ is not contained in any Kechris-Woodin non-determined set with respect to $(y,\vec{\xi})$.  $z$ is $y$-stable iff $z$ is $(y,\vec{\xi})$-stable for all $\vec{\xi}$ respecting $S^{+}$. The set of $(y,z)$ such that $z$ is $y$-stable is $\Pi^1_4$. By the proof of Kechris-Woodin \cite{KW_det_transfer}, for all $y\in \mathbb{R}$, there is $z \in \mathbb{R}$ which is $y$-stable.
Let $<^{\vec{\xi}}_y$ be the following wellfounded relation on the set of $z$ which is $(y,\vec{\xi})$-stable: 
\begin{align*}
  z' <^{\vec{\xi}} _{y} z \eqiv~&  z \text{ is $(y,\vec{\xi})$-stable}  \wedge z \leq_T z' \wedge \\
&  \forall \sigma \leq_T z~ \exists v \leq_T z' ~\mathcal{N}_{\xi_{((1))},\infty}(y,\sigma\otimes v) \models \neg\varphi(y,\sigma\otimes  v,\vec{\xi}^{-})\\
&  \forall \sigma \leq_T z~ \exists v \leq_T z' ~\mathcal{N}_{\xi_{((1))},\infty}(y,v\otimes \sigma) \models \varphi(y,v\otimes \sigma,\vec{\xi}^{-}).
\end{align*}
 % Wellfoundedness of $<^{\vec{\xi}}_{y}$ follows from the definition of $(y,\vec{\xi})$-stableness.  If $z $ is $(y,\vec{\xi})$-stable, then $<^{\vec{\xi}}_{y} \res \set{z'}{z'<^{\vec{\xi}} _{y}z}$ is a $\boldsigma{3}$ wellfounded relation in parameters $(y,z)$ and  the code of $\vec{\xi}$, hence has rank $\bolddelta{3}$ by Kunen-Martin. 
 If $z$ is $y$-stable,  let $f^z_y$ be the function that sends $\vec{\xi}$ to the rank of $z$ in $<^{\vec{\xi}}_{y}$. Then $f^z_y$ is a function into $\bolddelta{3}$. By $\Sigma^1_4$-absoluteness between $V$ and $\mathcal{N}^{\coll(\omega,\eta)}$, where $\mathcal{N} \in \mathcal{F}_{2,\infty}(y,z)$ and $\pi_{\mathcal{N},\infty}(\eta) = \xi_{((1))}$,  we can see $f^z_y\res \{\vec{\xi} \in [\bolddelta{3}]^{S \uparrow}:\xi_{((1))}$ is a cardinal cutpoint of $M_{2,\infty}^{-}(y,z)\}$ is definable over $M_{2,\infty}^{-}(y,z)$ in a uniform way, so there is a $\mathcal{L}^S$-Skolem term $\tau$ such that for all $(y,z) \in \mathbb{R}^2$, if $z$ is $y$-stable, $\vec{\xi}$ respects $S$, $\xi_{((1))}$ is a cardinal cutpoint of $M_{2,\infty}^{-}(y,z)$,  then
 \begin{displaymath}
f_y^z(\vec{\xi}) = \tau^{M_{2,\infty}^{-}(y,z)}(y,z,\vec{\xi}).
\end{displaymath}
Let
\begin{displaymath}
  \beta^z_y = \tau^{j^{S^{+}}(M_{2,\infty}^{-}(y,z))}(y,z, \seed^{R^{+}}).
\end{displaymath}
The function
\begin{displaymath}
  (y,z) \mapsto \beta^z_y
\end{displaymath}
 is $\Delta^1_5$ in the level-3 sharp codes. The rest is in parallel to the proof of \cite[Lemma 3.2]{sharpI}. 
\end{proof}

Lemma~\ref{lem:diff_Pi_1_5_to_game} and Lemma~\ref{lem:game-to-diff-Pi-1-5} are concluded in a simple equality between pointclasses.
\begin{mytheorem}
  \label{thm:pointclass_game_exchange_5}
  Assume $\boldDelta{4}$-determinacy. Then for $x \in \mathbb{R}$, 
  \begin{displaymath}
\game^2 ( <\! u_{\omega} \text{-}\Pi^1_3(x)) = \:<\!u\ooo \text{-}\Pi^1_5(x).
\end{displaymath}
Hence by \cite[Theorem 3.2]{sharpI},
\begin{displaymath}
  \game^4 ( <\! \omega^2 \text{-}\Pi^1_1(x)) = \:<\!u\ooo \text{-}\Pi^1_5(x).
\end{displaymath}
\end{mytheorem}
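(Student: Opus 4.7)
The theorem is a pointclass equality, and both inclusions will follow immediately from the two lemmas just proved, together with the bookkeeping observation that
\[
\{\widehat{\xi} : 0 < \xi < \omega^{\omega^{\omega}}\} = \{\llbracket d,\mathbf{q}\rrbracket_Q : Q \text{ finite level } \leq 2 \text{ tree},\ (d,\mathbf{q}) \in \exdesc(Q)\}
\]
is cofinal in $u_\omega$ (Definition~\ref{def:ordinal_assignment} and the discussion following it), while $\{u^{(3)}_{\xi+1} : \xi < \omega^{\omega^{\omega}}\}$ is cofinal in $u^{(3)}_{\omega^{\omega^{\omega}}}$ by Definition~\ref{def:level_3_uniform_indiscernibles_final}.

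For the inclusion $\lessthanshort{u\ooo}\text{-}\Pi^1_5(x) \subseteq \game^2(\lessthanshort{u_\omega}\text{-}\Pi^1_3(x))$: given $A$ that is $\alpha$-$\Pi^1_5(x)$ with $\alpha < u^{(3)}_{\omega^{\omega^{\omega}}}$, choose $\xi < \omega^{\omega^{\omega}}$ with $\alpha \leq u^{(3)}_{\xi+1}$. Then $A$ is in particular $u^{(3)}_{\xi+1}$-$\Pi^1_5(x)$, so by Lemma~\ref{lem:diff_Pi_1_5_to_game} (applied with $m=1$) $A$ is $\game^2((\widehat{\xi+1})\text{-}\Pi^1_3(x))$. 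Since $\xi+1 < \omega^{\omega^{\omega}}$ we have $\widehat{\xi+1} < u_\omega$, so $A \in \game^2(\lessthanshort{u_\omega}\text{-}\Pi^1_3(x))$.

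For the reverse inclusion $\game^2(\lessthanshort{u_\omega}\text{-}\Pi^1_3(x)) \subseteq \lessthanshort{u\ooo}\text{-}\Pi^1_5(x)$: given $A \in \game^2(\beta\text{-}\Pi^1_3(x))$ with $\beta < u_\omega$, pick $\xi < \omega^{\omega^{\omega}}$ with $\beta \leq \widehat{\xi}$. Then $A$ is $\game^2(\widehat{\xi}\text{-}\Pi^1_3(x))$, so Lemma~\ref{lem:game-to-diff-Pi-1-5} gives that $A$ is $u^{(3)}_{\xi+2}\text{-}\Pi^1_5(x)$, and $u^{(3)}_{\xi+2} < u^{(3)}_{\omega^{\omega^{\omega}}}$.

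The second displayed equality follows by substituting into the level-2 analog proved as \cite[Theorem 3.2]{sharpI}, namely $\lessthanshort{u_\omega}\text{-}\Pi^1_3(x) = \game^2(\lessthanshort{\omega^2}\text{-}\Pi^1_1(x))$, and applying $\game^2$ to both sides; monotonicity of $\game^2$ on the difference hierarchy then yields $\game^2(\lessthanshort{u_\omega}\text{-}\Pi^1_3(x)) = \game^4(\lessthanshort{\omega^2}\text{-}\Pi^1_1(x))$. There is no real obstacle in this theorem itself — all the substantive work is already contained in Lemmas~\ref{lem:diff_Pi_1_5_to_game} and~\ref{lem:game-to-diff-Pi-1-5}; the present statement is just packaging, so the only thing to verify carefully is that the boundaries $u_\omega$ and $u\ooo$ match up as above under the assignment $\xi \mapsto \widehat{\xi}$ and $\xi \mapsto u^{(3)}_{\xi+1}$.
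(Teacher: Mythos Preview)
Your proposal is correct and follows exactly the approach the paper intends: the paper explicitly states that this theorem is simply the conclusion of Lemmas~\ref{lem:diff_Pi_1_5_to_game} and~\ref{lem:game-to-diff-Pi-1-5} packaged as a pointclass equality, and your argument spells out precisely that bookkeeping. Your cofinality observations about $\widehat{\xi}$ in $u_\omega$ and $u^{(3)}_{\xi+1}$ in $u\ooo$ are the only things one needs to check, and you have them right.
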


The level-4 sharp is defined in parallel to \cite{sharpI}. %we define the relevant 
\begin{mydefinition}\label{def:OT4x}
  \begin{displaymath}
    \mathcal{O}^{T_4,x} = \{ (\gcode{\varphi},\alpha) : \varphi \text{ is a $\Sigma_1$-formula}, \alpha<u\ooo, \admisfour{x} \models \varphi(T_4,x,\alpha).\}
  \end{displaymath}
\end{mydefinition}

% $\mathcal{O}^{T_2,x}$ is the $u_{\omega}$-version of Kleene's $\mathcal{O}$ relative to $(T_2,x)$. It is called $\mathcal{P}_3^x$ in \cite{Kechris_Martin_II}.

\begin{mydefinition}
\begin{displaymath}
  x^{4\#}_{\xi} = \{(\gcode{\varphi},\gcode{\psi}) :  \exists \alpha<u^{(3)}_{\xi} ( (\gcode{\varphi},\alpha) \notin \mathcal{O}^{T_4,x}\wedge \forall \eta<\alpha (\gcode{\psi},\eta) \in \mathcal{O}^{T_4,x} )\}.
\end{displaymath}
\begin{displaymath}
  x^{4\#} = \{(n,\gcode{\varphi},\gcode{\psi}) :  n<\omega \wedge (\gcode{\varphi},\gcode{\psi}) \in x^{4\#}_{\omega^{\omega^n}}\}.
\end{displaymath}
\end{mydefinition}

Applying Theorem~\ref{thm:pointclass_game_exchange_5}  to the space $ \omega$, in combination with Theorem~\ref{thm:level-4_KM}, we reach the equivalence between $x^{4\#}$ and $M_3^{\#}(x)$. 

\begin{mytheorem}\label{thm:4sharp-equivalent-to-M3sharp}
 Assume $\boldDelta{4}$-determinacy.  Then $x^{4\#}$ is many-one equivalent to $M_3^{\#}(x)$, the many-one reductions being independent of $x$.
\end{mytheorem}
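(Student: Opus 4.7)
The plan is to reduce the level-4 equivalence to the pointclass identity already established (the second part of Theorem~\ref{thm:pointclass_game_exchange_5}), combined with the level-4 Kechris-Martin theorem and the standard Neeman-style correspondence between games of bounded $\omega^2$-$\boldpi{1}$ complexity with four integer quantifiers and the mouse $M_3^{\#}$. The proof is exactly parallel to the proofs for $n = 2, 3$ already carried out in \cite{sharpI} and in the present series; the real content has been absorbed into Theorem~\ref{thm:pointclass_game_exchange_5} and Theorem~\ref{thm:level-4_KM}.

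For the reduction $x^{4\#} \leq_m M_3^{\#}(x)$, I first observe that deciding $(n,\gcode{\varphi},\gcode{\psi}) \in x^{4\#}$ amounts to deciding the $\Sigma_1$ assertion
\begin{displaymath}
\exists \alpha < u^{(3)}_{\omega^{\omega^n}} \bigl((\gcode{\varphi},\alpha) \notin \mathcal{O}^{T_4,x} \wedge \forall \eta < \alpha~ (\gcode{\psi},\eta) \in \mathcal{O}^{T_4,x}\bigr)
\end{displaymath}
over $\admisfour{x}$ with parameters in $\{T_4,x\}$. By Theorem~\ref{thm:level-4_KM}, the set of $x$ satisfying this condition is $\Pi^1_5(x)$ with an ordinal quantifier bounded by $u^{(3)}_{\omega^{\omega^{n}}} < u^{(3)}_{\omega^{\omega^{\omega}}}$, hence lies in $\lessthanshort{u\ooo}\textnormal{-}\Pi^1_5(x)$. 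By the second displayed equation of Theorem~\ref{thm:pointclass_game_exchange_5}, this pointclass equals $\game^4(\lessthanshort{\omega^2}\textnormal{-}\Pi^1_1(x))$ on $\omega$. Four-quantifier games with $\lessthanshort{\omega^2}\textnormal{-}\Pi^1_1(x)$ payoff are determined under $\boldDelta{4}$-determinacy, and the standard Neeman-style analysis (exactly as applied at levels $n = 2, 3$ in \cite{sharpI}) gives a uniform-in-$x$ many-one reduction of their integer sections to $M_3^{\#}(x)$, since $M_3^{\#}(x)$ has enough Woodin cardinals to compute the relevant winning strategies via a genericity iteration.

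For the converse $M_3^{\#}(x) \leq_m x^{4\#}$, I use that the $\Sigma_1$ theory of $M_3^{\#}(x)$ (equivalently, the set of integers coding $M_3^{\#}(x)$) is itself $\game^4(\lessthanshort{\omega^2}\textnormal{-}\Pi^1_1(x))$, uniformly in $x$. This is the content of the level-$4$ instance of the correspondence between $M_n^{\#}$ and $n$-quantifier games over $\lessthanshort{\omega^2}\textnormal{-}\Pi^1_1$ payoff that appears in \cite{sharpI}. Applying Theorem~\ref{thm:pointclass_game_exchange_5} again, the set thus computed is in $\lessthanshort{u\ooo}\textnormal{-}\Pi^1_5(x)$, which by Theorem~\ref{thm:level-4_KM} is $\Sigma_1$-definable over $\admisfour{x}$ from $\{T_4,x\}$ with ordinal parameter below $u\ooo$. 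The coding machinery of $\mathcal{O}^{T_4,x}$ then yields a many-one reduction to $x^{4\#}_{\omega^{\omega^n}} \subseteq x^{4\#}$ for some $n$ depending only on the $\Sigma_1$ formula in question.

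Both reductions are uniform in $x$, which is immediate from the uniformity of Theorem~\ref{thm:pointclass_game_exchange_5}, the uniformity of the $\Sigma_1$-$\admisfour{x}$ definitions in Theorem~\ref{thm:level-4_KM}, and the uniformity of the mouse-game correspondence. The only potentially delicate step is the mouse-game correspondence at the fourth level, but this is not new content for the present paper; it is an instance of the pattern already rigorously carried out at $n = 2$ in \cite{sharpI} and extended inductively. Thus, no new combinatorial or inner-model-theoretic obstacle arises here—the theorem is a corollary of the machinery built in Sections~\ref{sec:level-2-analysis}--\ref{sec:level-4-kechris}.
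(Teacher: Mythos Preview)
Your proposal is correct and follows essentially the same approach as the paper: the paper's proof is a one-line reference stating that the result follows from applying Theorem~\ref{thm:pointclass_game_exchange_5} to the space $\omega$ together with Theorem~\ref{thm:level-4_KM}, exactly as you outline, with the mouse–game correspondence at level~4 inherited from \cite{sharpI}. Your write-up merely expands the details that the paper leaves implicit.
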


\section{Level $\leq 2$ indiscernibles for $M_{2,\infty}^{-}$}
\label{sec:coding-subsets}

In order to proceed with the induction into higher levels of the projective hierarchy, we will need a $\Delta^1_5$ coding of subsets of $u\ooo$. For this, we need to analyze all the $\mathbb{L}_{\bolddelta{5}}[T_4]$-measures on $u\ooo$.  The analysis of  level-3 indiscernibles for $M_{2,\infty}^{-}$ \emph{above} $u_{\omega}$ in Section~\ref{sec:level-3-indisc} will not be good enough for this purpose. We establish the level $\leq 2$ indiscernibles \emph{below} $u_{\omega}$ in this section. The analysis of $\mathbb{L}_{\bolddelta{5}}[T_4]$-measures on $u\ooo$ and $\Delta^1_5$ coding of subsets of $u\ooo$ will be in the next part of this series. 

% This section gives a $\Delta^1_5$ coding of subsets of $u\ooo$ under $\boldDelta{5}$-determinacy. It generalizes Kunen's $\Delta^1_3$ coding of subsets of $u_{\omega}$ in \cite{sol_delta13coding}. This ingredient will be a part of the coding system for ordinals in $\bolddelta{5}$. 
% For this, we need to analyze all the $\mathbb{L}_{\bolddelta{5}}[T_4]$-measures on $u\ooo$. 

% We will show:

% \begin{mytheorem}
%   \label{thm:measure_on_ooo}
%  Assume $\boldDelta{4}$-determinacy. Let $\nu$ be a nonprincipal $\mathbb{L}_{\bolddelta{5}}[T_4]$-measure on $u\ooo$. 
% \end{mytheorem}

% \begin{mytheorem}
%   \label{thm:Delta15_coding}
% Assume $\boldDelta{5}$-determinacy.  There is  $\Delta^1_5$ set $X \subseteq \mathbb{R} \times u\ooo$  such that $\set{X_v}{v \in \mathbb{R}} = \power(u\ooo) \cap \admistwobold$. Here $X_v = \set{\alpha < u\ooo}{ (x, \alpha) \in X}$. 
% \end{mytheorem}
% \begin{proof}[Proof of Theorem~\ref{thm:Delta15_coding} from Theorem~\ref{thm:measure_on_ooo}]
%   $A \subseteq u\ooo$ is said to be simple iff there are clubs $E \subseteq \omega_1$ and $C \subseteq \bolddelta{3}$ such that $E,C \in \mathbb{L}_{\bolddelta{3}}[T_3]$, a finite level $\leq 3$ tree $R$ and a function $f \in \mathbb{L}_{u\ooo}[T_3]$ such that $f$ is 1-1 on $[E,C]^{R\uparrow}$ and
%   \begin{displaymath}
%     A = f'' [E,C]^{R\uparrow}.
%   \end{displaymath}

% \end{proof}

% In the rest of this section, we prove Theorem~\ref{thm:measure_on_ooo}. We will need to analyze the indiscernibles of $M_{2,\infty}^{-}$ \emph{below} $u_{\omega}$.

In the rest of this section, we set
\begin{displaymath}
  \mathcal{N} = \mathcal{M}^{*}_{0^{3\#}, R^0}.
\end{displaymath}
Therefore, $\mathcal{N}_{\infty} = M_{2,\infty}^{-} | c^{(3)}_{\omega}$. We would like to investigate how ordinals below $u_{\omega}$ are generated in $\mathcal{N}_{\infty}$. 
%Suppose $\mathcal{N}$ satisfying axioms~\ref{item:EM_ZFC}-\ref{item:level-2-embedding_invariance} in Definition~\ref{def:pre_level_3_EM_blueprint}. 
Define: % Suppose $T$ is a $\Pi^1_2$-wellfounded level $\leq 2$ tree. Define: % and $(d,t) \in \dom(T)$, put
  \begin{enumerate}
\item If $T,Q$ are level $\leq 2$ trees, then $\mathcal{N}^{(T,Q)} = (\mathcal{N}^T)^Q$, $j^{(T,Q)}_{\mathcal{N}} = j^Q_{\mathcal{N}^T}$.  So $j^{(T,Q)}_{\mathcal{N}}: \mathcal{N}^T \to \mathcal{N}^{(T,Q)}$ is elementary.
\item If $T$ is a finite level $\leq 2$ tree, $U$ is a finite level-1 tree,  $\mathbf{D} \in \desc(T,U)$, then $b^{T,U}_{\mathbf{D},\mathcal{N}} = (\underline{\seed^{T,U}_{\mathbf{D}}})^{\mathcal{N}}$; if $T$ is a subtree of $T'$, then $b^{T',U}_{\mathbf{D},\mathcal{N}} = j^{T,T'}_{\mathcal{N}} ( b^{T,U}_{\mathbf{D},\mathcal{N}})$, which is an element of $\mathcal{N}^{T'}$.
% \item If $T,Q$ are finite level $\leq 2$ trees, $\mathbf{C} \in \desc(T,Q,*)$, then $b^{T,Q}_{\mathbf{C},\mathcal{N}} = (\underline{\seed^{T,Q}_{\mathbf{C}}})^{\mathcal{N}}$; if $T,Q$ are  subtrees of $T',Q'$ respectively, then $b^{T',Q'}_{\mathbf{C}, \mathcal{N}} = j^{(T,Q),(T',Q')}_{\mathcal{N}}( b^{T',Q'}_{\mathbf{C},\mathcal{N}})$, which is an element of $\mathcal{N}^{(T',Q')}$.
\item If $T$ is a level $\leq 2$ tree,
  \begin{enumerate}
  \item if $(1,t) \in \dom(T)$, then $b^{T}_{(1,t),\mathcal{N}} = b^{T,\emptyset}_{(1, t, \emptyset), \mathcal{N}}$;
  \item if $(2,t) \in \dom(T)$ and $\comp{2}{T}[t] = (S,\vec{s})$, then $b^{T}_{(2,t), \mathcal{N}} = b^{T,S}_{(2, (t,S,\vec{s}), \id_S), \mathcal{N}}$;
  % \item $b^T_{\mathcal{N}} = (b^T_{(d,t), \mathcal{N}})_{(d,t) \in \dom(T)}$;
  % \item $B^T_{\mathcal{N}} = \{ j^{T',T}_{\mathcal{N}} (\alpha) : T' $ finite subtree of $T$, $\alpha \in ( \underline{B_1^{T'}})^{\mathcal{N}}\}$; 
 %  \item $\mathcal{N}^T_{*}$ is isomorphic to the Skolem hull of $\ran(j^T_{\mathcal{N}}) \cup B^T_{\mathcal{N}}$ in $\mathcal{N}^T$ so that the wellfounded part of $\mathcal{N}^T_{*}$ is transitive, and $\psi^T_{\mathcal{N}} : \mathcal{N}^T_{*} \to \mathcal{N}^T$ is the associated elementary embedding (in particular, when $T$ is finite, $\mathcal{N}^T_{*} = (\underline{N_{1,*}^T})^{\mathcal{N}}$ and $\psi^T_{\mathcal{N}} =( \underline{\psi_1^T})^{\mathcal{N}}$); 
 % \item $j^T_{\mathcal{N},*} = (\psi^T_{\mathcal{N}})^{-1} \circ j^T_{\mathcal{N}}$;
% \item $b^{T,U}_{\mathbf{D},\mathcal{N},*} = (\psi^T_{\mathcal{N}})^{-1} ( b^{T,U}_{\mathbf{D}, \mathcal{N}})$, $b^T_{(d,t), \mathcal{N},*} =( \psi^T_{\mathcal{N}})^{-1} (b^T_{(d,t),\mathcal{N}})$.
 \end{enumerate}
\item If $T$ is a $\Pi^1_2$-wellfounded level $\leq 2$ tree and $(d,t) \in \dom(T)$, then $      b^T_{(d,t)}   = \pi_{\mathcal{N}^T, \infty} (b^T_{(d,t), \mathcal{N}}).$
\end{enumerate}

We claim that the value of $b^T_{(d,t)}$  depends only on $\llbracket d,t \rrbracket_T$, shown as follows. Note that $\llbracket d,t \rrbracket_T < \omega_1$ iff $d=1$. 
  The $d=1$ case is a simple variant of the $d=2$ case. 
 Let us assume $d=2$. Suppose $T'$ is another $\Pi^1_2$-wellfounded level $\leq 2$ tree and $\llbracket 2,t \rrbracket_T = \llbracket 2,t' \rrbracket_{T'}$. By Theorem~\ref{thm:factor_ordertype_embed_equivalent_lv2}, we can find a $\Pi^1_2$-wellfounded level tree $Q$ and a map $\phi$ minimally factoring $(T,T' \otimes Q)$ such that $\phi(2,t) = (2, \id_{T',*} (2,t'))$. The map $\phi^{T',Q} _{\mathcal{N}}: \mathcal{N}^T \to \mathcal{N}^{(T,Q)}$ is elementary. We have  argued in Lemma~\ref{lem:iterable_level_2_invariance_absolute_in_V} that $j^{(T',Q)}_{\mathcal{N}} : \mathcal{N}^{T'} \to \mathcal{N}^{(T',Q)}$ is essentially an iteration map, sending $b^{T'}_{(2,t'),\mathcal{N}}$ to $\phi^{T',Q}_{\mathcal{N}} (b^T_{(2,t), \mathcal{N}})$. By Dodd-Jensen, $b^T_{(2,t)} \leq b^{T'}_{(2,t)}$, and so $b^T_{(2,t)} = b^{T'}_{(2,t)}$ by symmetry. % Similarly, the maps $\phi^{T',Q}_{\mathcal{N},*}:  \mathcal{N}^T_{*} \to \mathcal{N}^{(T,Q)}_{**}$ and $j^{(T',Q)}_{\mathcal{N},**} : \mathcal{N}^{T'}_{*} \to \mathcal{N}^{(T',Q)}_{**}$ witness that $b^T_{(2,t),\mathcal{N},*,\infty} = b^{T'}_{(2,t), \mathcal{N},*, \infty}$. 

We call an ordinal $\beta$ \emph{level $\leq 2$ respecting} iff either $\beta$ is a countable limit ordinal or $\beta$ respects some partial level $\leq 1$ tower. 
We are thus safe to define
\begin{displaymath}
      b_{\beta}   = b^T_{(d,t)}.
\end{displaymath}
for $\beta = \llbracket d,t \rrbracket_T$ level $\leq 2$ respecting.  A similar argument shows that the map $\beta \mapsto b_{\beta}$ is order preserving. 
We let
\begin{displaymath}
  I^{(\leq 2)} = \set{b_{\beta}}{ \beta \text{ is level $\leq 2$ respecting}}
\end{displaymath}
be the set of level $\leq 2$ indiscernibles for $M_{2,\infty}^{-}$. % It is easy to see that $b_{u_n} = u_n$ for $1 \leq n < \omega$. 
By the level $\leq 2$ correctness of $0^{3\#}$, we have for a finite level-1 tree $P$ and $\beta = [f]_{\mu^P}<j^P(\omega_1)$,
\begin{displaymath}
   b_{\beta} =  [\vec{\alpha} \mapsto b_{f(\alpha)}]_{\mu^P}.  
\end{displaymath}
By remarkability of $0^{3\#}$, the function $\beta \mapsto b_{\beta}$ is continuous. Therefore, there is a club $E \subseteq \omega_1$ such that $E \in \mathbb{L}$ and $b_{\beta}$ is the $\beta$-th element of $j^P(E)$ when $P$ is a finite level-1 tree such that $\beta < j^P(\omega_1)$.

%  $I^{\leq 2}$ is a closed set, i.e., if $\beta$ is a limit of level $\leq 2$ respecting ordinals then
% \begin{displaymath}
%   b_{\beta} = \sup\set{ b_{\beta'}}{\beta'<\beta \text{ is level $\leq 2$ respecting}}.
% \end{displaymath}
% By the 

\begin{remark}
\label{rem:I2}
In fact, $E$ is just the critical sequence of the $\omega_1$-iteration of $M_2^{\#}$ at its bottom measurable cardinal.
\end{remark}

The level $\leq 3$ indiscernibles for $M_{2,\infty}^{-}$ is 
\begin{displaymath}
I^{(\leq 3)} =   I^{(\leq 2)} \cup \se{u_{\omega}} \cup I^{(3)}.
\end{displaymath}
$I^{(\leq 3)}$ is a club in $\bolddelta{3}$. 
Recall that for $\gamma_0,\dots,\gamma_n,\gamma_0',\dots \gamma_n' \in I^{(3)}$, 
 $\vec{\gamma}$ is said to be a shift of $\vec{\gamma}'$ iff there exist a level-3 tree $R$, nodes $r_0,\dots,r_n \in \dom(R)$, $\vec{\delta}, \vec{\delta}' $ both respecting $R$ such that $\gamma_i = c^{(3)}_{R_{\tree}(r_i), \delta_i}$, $\gamma_i '= c^{(3)}_{R_{\tree}(r_i), \delta'_i}$ for any $i \leq n$. Now for $\beta_0,\dots,\beta_m,\beta_0', \dots, \beta_m' \in I^{(\leq 2)}$, we say that $\vec{\beta}$ is a \emph{shift} of $\vec{\beta}'$ iff there is a level $\leq 2$ tree $Q$, nodes $(d_0,q_0),\dots,(d_m,q_m) \in \dom(Q)$, $\vec{\delta},\vec{\delta}'$ both respecting $Q$ such that 
$\beta_i = b_{\delta_i}$, $\beta_i' = b_{\delta_i' }$ for any $i \leq m$. % (To be completely parallel, we should have said that $\beta_i $ is the $\bar{\delta}_i$-th member of $I^{(\leq 2)}$ for $\delta_i= \omega\bar{\delta}_i$, but the current version is equivalent and good enough.)
 \begin{mylemma}[Level $\leq 3$ indiscernability]
   \label{lem:level_leq_3_indis}
   Suppose $\vec{\gamma} , \vec{\gamma}' \in I^{(3)}$, $\vec{\beta}, \vec{\beta}' \in I^{(\leq 2)}$, and $\vec{\gamma}$ is a shift of $\vec{\gamma}'$, $\vec{\beta}$ is a shift of $\vec{\beta}'$. Then for any $\mathcal{L}$-formula $\varphi$, 
   \begin{displaymath}
     M_{2,\infty}^{-}  \models \varphi(\vec{\gamma}, \vec{\beta}) \eqiv \varphi (\vec{\gamma}', \vec{\beta}').
   \end{displaymath}
 \end{mylemma}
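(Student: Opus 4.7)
The plan is to decouple the two shifts: vary $\vec{\gamma}$ first with $\vec{\beta}$ held fixed, and then vary $\vec{\beta}$ with $\vec{\gamma}'$ held fixed. By transitivity this reduces the claim to
\begin{enumerate}[(a)]
\item $\varphi^{M_{2,\infty}^{-}}(\vec{\gamma},\vec{\beta}) \Eqiv \varphi^{M_{2,\infty}^{-}}(\vec{\gamma}',\vec{\beta})$, and
\item $\varphi^{M_{2,\infty}^{-}}(\vec{\gamma}',\vec{\beta}) \Eqiv \varphi^{M_{2,\infty}^{-}}(\vec{\gamma}',\vec{\beta}')$.
\end{enumerate}
For (a), each $\beta_j \in I^{(\leq 2)}$ lies below $u_{\omega}$, and every ordinal below $u_{\omega}$ is $\mathcal{L}$-definable over $M_{2,\infty}^{-}$ from $\set{u_n}{n<\omega}$ through the constant symbols $\underline{u_n}$; hence each $\beta_j$ equals $\tau_j^{M_{2,\infty}^{-}}$ for a closed $\mathcal{L}$-Skolem term $\tau_j$. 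Substitution turns $\varphi(\vec{v},\vec{\beta})$ into a single $\mathcal{L}$-formula $\psi(\vec{v})$. Since the shift witnessing $\vec{\gamma}\to\vec{\gamma}'$ is generated above $u_{\omega}$ and therefore fixes every $u_n$ pointwise, the indiscernability of $0^{3\#}$ recorded in Lemma~\ref{lem:coherent_EM_blueprint_is_indiscernible_to_shifts}, translated through coherency to a statement about $M_{2,\infty}^{-}$, delivers $\psi^{M_{2,\infty}^{-}}(\vec{\gamma}) \Eqiv \psi^{M_{2,\infty}^{-}}(\vec{\gamma}')$.

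For (b), the plan is to lift the shift inside an auxiliary EM model. Using Lemma~\ref{lem:order_type_realizable}, pick a $\Pi^1_3$-wellfounded level-3 tree $R$ and nodes $\vec{r}\subseteq\dom(R)$ so that $\gamma'_i = \pi_{\mathcal{P},\infty}(c^{*}_{0^{3\#},R,r_i})$, where $\mathcal{P} = \mathcal{M}^{*}_{0^{3\#},R}$. Because $\mathcal{P}$ is a countable $\Pi^1_3$-iterable mouse modelling $V=K$, Axioms~\ref{item:EM_ZFC}--\ref{item:level-2-embedding_invariance} of Definition~\ref{def:pre_level_3_EM_blueprint}, and the universality of level $\leq 2$ ultrapowers axiom, the construction of the present section can be carried out inside $\mathcal{P}$: for every $\Pi^1_2$-wellfounded level $\leq 2$ tree $T$ and each $(d,t)\in\dom(T)$ we obtain $b^{T}_{(d,t),\mathcal{P}} \in \mathcal{P}^{T}$, and a Dodd--Jensen argument identifies a canonical $b^{\mathcal{P}}_{\eta}$ for every level $\leq 2$ respecting ordinal $\eta$. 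Coherency of $0^{3\#}$ under the inclusion $R^{0}\hookrightarrow R$, together with Lemma~\ref{lem:iterable_level_2_invariance_absolute_in_V}, should yield the key compatibility $\pi_{\mathcal{P},\infty}(b^{\mathcal{P}}_{\eta}) = b_{\eta}$. Granting this, the internal indiscernability scheme satisfied by the $b^{\mathcal{P}}_{\eta}$'s (provable inside $\mathcal{P}$ by the same argument used globally for the $b_{\eta}$'s) applied to the $\vec{\beta}\to\vec{\beta}'$ shift with the $\underline{c_{r_i}}$ as parameters gives $\mathcal{P}\models \varphi(\underline{c_{\vec{r}}}, b^{\mathcal{P}}_{\vec{\eta}}) \Eqiv \varphi(\underline{c_{\vec{r}}}, b^{\mathcal{P}}_{\vec{\eta}'})$, and pushing through $\pi_{\mathcal{P},\infty}$ completes (b).

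The main obstacle is the compatibility $\pi_{\mathcal{P},\infty}(b^{\mathcal{P}}_{\eta}) = b_{\eta}$ used in (b): the construction of $b^{\mathcal{P}}_{\eta}$ inside the various EM models $\mathcal{P} = \mathcal{M}^{*}_{0^{3\#},R}$ is a priori dependent on $R$, so checking that all of these internal sequences project to the single global sequence $(b_{\eta})$ in $M_{2,\infty}^{-}$ requires a diagram chase through the coherency of $0^{3\#}$, the essentially-an-iteration-map property recorded in Lemma~\ref{lem:iterable_level_2_invariance_absolute_in_V}, and the well-definedness of $b^{T}_{(d,t)}$ across different representatives of a single ordinal $\llbracket d,t\rrbracket_T$. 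Once this compatibility is in hand, the argument reduces cleanly to the indiscernability facts already established separately for $I^{(3)}$ in Section~\ref{sec:level-3-indisc} and for the level $\leq 2$ indiscernibles in the present section.
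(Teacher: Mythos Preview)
Your step~(a) contains a genuine error. You claim that each $\beta_j \in I^{(\leq 2)}$ equals $\tau_j^{M_{2,\infty}^{-}}$ for some closed $\mathcal{L}$-Skolem term $\tau_j$, on the grounds that ``every ordinal below $u_\omega$ is $\mathcal{L}$-definable over $M_{2,\infty}^{-}$ from $\set{u_n}{n<\omega}$''. This is false by cardinality: there are only countably many closed $\mathcal{L}$-terms, but $I^{(\leq 2)}$ has size $\omega_1$ (it contains $b_\eta$ for every countable limit $\eta$). Indeed the very definition of $b_\eta$ runs through $\mathcal{N}=\mathcal{M}^{*}_{0^{3\#},R^0}$ and the direct-limit map $\pi_{\mathcal{N}^T,\infty}$, so it depends on $0^{3\#}$, which is not in $M_{2,\infty}^{-}$; there is no reason to expect $b_\eta$ to be parameter-free definable there. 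Your step~(a) could be repaired by passing to $M_{2,\infty}^{-}(x)$ for a real $x$ coding the finitely many $\beta_j$'s as $\tau_j^{L[x]}(u_1,\dots,u_k)$, relativizing $\varphi$ through $L[\underline{S_3}]$, and invoking level-3 indiscernability for $x^{3\#}$; but as written the substitution you propose does not exist.

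The paper does not decouple the two shifts at all. It observes directly that, for $\vec{\gamma}=c_{\vec{\delta}}$ with $\vec{\delta}$ respecting $R$ and $\vec{\beta}=(b_{\epsilon_{(d,q)}})$ with $\vec{\epsilon}$ respecting $Q$, the truth of $\varphi(\vec{\gamma},\vec{\beta})$ in $M_{2,\infty}^{-}$ is equivalent to the single $\mathcal{L}^R$-sentence
\[
\underline{j^Q(V)}\models\varphi\bigl((\underline{c_r})_{r\in\dom(R)},\,(\underline{\seed^Q_{(d,q)}})_{(d,q)\in\dom(Q)}\bigr)
\]
belonging to $0^{3\#}(R)$. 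The point is that the $\underline{\seed^Q_{(d,q)}}$ are already closed $\mathcal{L}$-terms (each is some $\underline{u_n}$), and the passage to $\underline{j^Q(V)}$ is exactly what converts the external $b_{\epsilon_{(d,q)}}$'s into these internal seeds; the resulting sentence mentions only $R$, $Q$, the node labels, and $\varphi$, so both shifts become invisible simultaneously. This bypasses your step~(b) entirely and renders the compatibility diagram you identify as the ``main obstacle'' unnecessary.
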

 \begin{proof}
   Suppose without loss of generality that $\vec{\gamma} = (\gamma_r)_{r \in \dom(R)}$ respects a finite level-3 tree $R$ and $\vec{\beta}= (\beta_{(d,q)})_{(d,q) \in \dom(Q)}$ respects a finite level $\leq 2$ tree $Q$. Then
   \begin{displaymath}
     M_{2,\infty}^{-} \models \varphi (\vec{\gamma}, \vec{\beta})
    \end{displaymath}
iff $0^{3\#}(R)$ contains the formula
\begin{displaymath}
  \underline{j^Q(V)} \models \varphi ( (\underline{c_r})_{r \in \dom(R)}, (\underline{\seed^Q_{(d,q)}})_{(d,q) \in \dom(Q)}).
\end{displaymath}
 \end{proof}

 \begin{mylemma}
   \label{lem:generate_below_level_2_indis}
   Suppose that $\delta$ is  level $\leq 2$ respecting and $\alpha < b_{\delta}$. Then there is a level $\leq 3$ code $(R,\vec{\gamma}, X, \vec{\beta}, \gcode{\sigma})$ such that 
   \begin{displaymath}
     \sharpcode{(R,\vec{\gamma}, X, \vec{\beta}, \gcode{\sigma})} = \alpha
   \end{displaymath}
and each entry of $\vec{\beta}$ is $\leq \delta$. 
 \end{mylemma}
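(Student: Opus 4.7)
The plan is to realize $\alpha$ as the pushforward of an ordinal living inside a level $\leq 2$ ultrapower of $\mathcal{N}$ along a tree whose nodes are all bounded by $\delta$ in the $\llbracket\cdot\rrbracket$-ordering, and then to repackage this representation as a level $\leq 3$ code of the required form. Fix a $\Pi^1_2$-wellfounded finite level $\leq 2$ tree $T_0$ and $(d,t)\in\dom(T_0)$ with $\delta=\llbracket d,t\rrbracket_{T_0}$; by shrinking $T_0$ to its $\prec^{T_0}$-downward closure of $(d,t)$ (which remains a level $\leq 2$ tree because the closure conditions on the tree of uniform cofinalities respect $\prec^{T_0}$) I may assume every $(d',t')\in\dom(T_0)$ satisfies $\llbracket d',t'\rrbracket_{T_0}\leq\delta$. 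Let $\vec{\beta}_0=(\llbracket d',t'\rrbracket_{T_0})_{(d',t')\in\dom(T_0)}$, which respects $T_0$ and has entries bounded by $\delta$.

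Since the direct system $\mathcal{G}_{\mathcal{N}}$ is dense in $\mathcal{I}_{\mathcal{N}}$ (Lemma~\ref{lem:EM_model_ultrapower_dense_in_iterates}) and $b_{\delta}=\pi_{\mathcal{N}^{T_0},\infty}(b^{T_0}_{(d,t),\mathcal{N}})$, I fix a $\Pi^1_2$-wellfounded $T_1\supseteq T_0$ (as a subtree) and $\alpha^{*}\in\mathcal{N}^{T_1}$ with $\pi_{\mathcal{N}^{T_1},\infty}(\alpha^{*})=\alpha$ and $\alpha^{*}<j^{T_0,T_1}_{\mathcal{N}}(b^{T_0}_{(d,t),\mathcal{N}})$. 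Using the same density, I further arrange that no node of $\dom(T_1)\setminus\dom(T_0)$ is $\prec^{T_1}(d,t)$, so that the set of $(d'',t'')\in\dom(T_1)$ with $\llbracket d'',t''\rrbracket_{T_1}\leq\delta$ coincides with $\dom(T_0)$.

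The crucial step is to show that $\alpha^{*}\in\operatorname{ran}(j^{T_0,T_1}_{\mathcal{N}})$. Representing $\alpha^{*}=[F]_{\mu^{T_1}}$ by some $F\in\mathcal{N}$ with $F(\vec{\gamma})<\gamma_{(d,t)}$ for $\mu^{T_1}$-a.e.\ $\vec{\gamma}$, the Jackson-style signature analysis (Definition~\ref{def:level_2_function_sign_etc} together with Theorem~\ref{thm:jQ_LT2_regularity} and Lemma~\ref{lem:level_2_desc_order}) tells us that the signature of $F$ consists only of nodes $(d'',t'')\in\dom(T_1)$ with $\llbracket d'',t''\rrbracket_{T_1}\leq\delta$; by the arrangement in the previous paragraph, every such $(d'',t'')$ already lies in $\dom(T_0)$. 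Hence $F$ is $\mu^{T_1}$-a.e.\ constant on the fibres of $\vec{\gamma}\mapsto\vec{\gamma}\!\restriction\!\dom(T_0)$, so there is $G\in\mathcal{N}$ on $[\omega_1]^{T_0\uparrow}$ with $F(\vec{\gamma})=G(\vec{\gamma}\!\restriction\!\dom(T_0))$ almost everywhere. Setting $\alpha^{**}:=[G]_{\mu^{T_0}}$ gives $j^{T_0,T_1}_{\mathcal{N}}(\alpha^{**})=\alpha^{*}$ and consequently $\pi_{\mathcal{N}^{T_0},\infty}(\alpha^{**})=\alpha$.

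To finish, pick an $\mathcal{L}^{R^{0}}$-Skolem term $\sigma$ such that $\sigma^{\mathcal{N}}(\underline{c_{((0))}})$ evaluates, via the formal symbols $\underline{j^{T_0}}$ and the seeds $\underline{\seed^{T_0}}$, to $\alpha^{**}$ in $\mathcal{N}^{T_0}$; the quintuple $(R^{0},(c^{(3)}_{\omega}),T_0,\vec{\beta}_0,\gcode{\sigma})$ is then a level $\leq 3$ code whose decoding equals $\pi_{\mathcal{N},T_0,\vec{\beta}_0,\infty}(\sigma^{\mathcal{N}}(\underline{c_{((0))}}))=\pi_{\mathcal{N}^{T_0},\infty}(\alpha^{**})=\alpha$, with every entry of $\vec{\beta}_0$ bounded by $\delta$. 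The main obstacle is the factorization step confirming $\alpha^{*}\in\operatorname{ran}(j^{T_0,T_1}_{\mathcal{N}})$: the subtlety is that one must simultaneously control the choice of $T_1$ (to keep the ``initial segment below $(d,t)$'' equal to $T_0$) and invoke the level-2 signature analysis of Section~\ref{sec:level-2-analysis} to force the representing function of $\alpha^{*}$ to depend only on the $T_0$-coordinates, which is a level-2 analog of the cardinality computations behind Lemma~\ref{lem:level_3_indis_cardinal}.
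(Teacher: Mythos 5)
Your plan is the right one in spirit, but there is a genuine gap at the ``arrangement'' step, and it does not follow from the density lemma as you claim.

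You write: \emph{``Using the same density, I further arrange that no node of $\dom(T_1)\setminus\dom(T_0)$ is $\prec^{T_1}(d,t)$.''} Lemma~\ref{lem:EM_model_ultrapower_dense_in_iterates} only says that any iterate of $\mathcal{N}$ factors into some $\mathcal{N}^T$; it gives no control over the shape of $T$. In fact your arrangement cannot hold for general $\alpha$: if $T_1\supseteq T_0$ really introduces no node $\prec^{T_1}$-below $(d,t)$, then (by the seed analysis of Section~\ref{sec:level-2-description}) $\seed^{T_1}_{(d,t)}=\seed^{T_0}_{(d,t)}$, so $j^{T_0,T_1}_{\mathcal{N}}$ is already the identity below $\seed^{T_1}_{(d,t)}$, and every $\alpha^{*}<j^{T_0,T_1}_{\mathcal{N}}(b^{T_0}_{(d,t),\mathcal{N}})$ would lie in $\ran(j^{T_0,T_1}_{\mathcal{N}})$ for trivial reasons, forcing $\alpha$ to lie in $\ran(\pi_{\mathcal{N}^{T_0},\infty})$. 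That range is not all of $b_\delta$ --- it is just the image of a single iteration map --- so your argument would only cover a thin subset of the ordinals you are supposed to handle. The subsequent signature calculation is then circular: the conclusion that the signature of $F$ involves only nodes with $\llbracket\cdot\rrbracket_{T_1}\leq\delta$ is what you would need to \emph{justify} the arrangement, not something you may invoke once you have simply declared it.

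The correct repair, and what the paper's proof actually does, is to give up on keeping $T_0$ fixed. Take any $T_1$ into which $T_0$ minimally factors and in which $\alpha^{*}$ lives with $\alpha^{*}<\seed^{T_1}_{(d,t)}$ (here minimality guarantees $\llbracket d,t\rrbracket_{T_1}=\delta$). Now let $T_0'$ be the $\prec^{T_1}$-downward closure of $(d,t)$ inside $T_1$. This $T_0'$ will in general be a proper supertree of $T_0$, but every node $(d',t')\in\dom(T_0')$ satisfies $\llbracket d',t'\rrbracket_{T_1}\leq\delta$, and the inclusion $T_0'\subseteq T_1$ is itself minimal because $T_0'$ is $\prec^{T_1}$-closed. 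By Lemma~\ref{lem:factor_SQW}, interpreted inside $\mathcal{N}$, the map $j^{T_0',T_1}_{\mathcal{N}}$ is the identity below $\seed^{T_1}_{(d,t)}$, so $\alpha^{*}=j^{T_0',T_1}_{\mathcal{N}}(\alpha^{**})$ for some $\alpha^{**}\in\mathcal{N}^{T_0'}$, and the level $\leq3$ code is built with the pair $(T_0',\vec{\beta}')$, where $\vec{\beta}'=(\llbracket d',t'\rrbracket_{T_1})_{(d',t')\in\dom(T_0')}$. The lemma permits the level $\leq 2$ component of the code to be \emph{any} tree whose $\vec{\beta}$-entries are $\leq\delta$; it does not require you to use your initial $T_0$, and insisting on $T_0$ is precisely what breaks the argument.
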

 \begin{proof}
Let $T$ be a $\Pi^1_2$-wellfounded level $\leq 2$ tree and $(d,t ) \in \dom(T)$ such that  $\llbracket d,t \rrbracket_T = \delta$. Then $b_{\delta} = \pi_{\mathcal{N}^T,\infty} ( b_{(d,t),\mathcal{N}}^T)$. Pick a $\Pi^1_2$-wellfounded $Q$ and $\xi < j^{(T,Q)}_{\mathcal{N}} (b^T_{(d,t),\mathcal{N}})$ such that $\pi_{\mathcal{N}^{(T,Q)}, \infty} (\xi) = \alpha$. Pick a finite tree $X$, a map $\pi$ factoring $(X, T \otimes Q)$, a node $(d_0,x_0) \in \dom(X)$ and $\bar{\xi}   \in \mathcal{N}^X$ such that $ \pi^{T \otimes Q}_{\mathcal{N}} (\bar{\xi})  =  \xi$ and  $\pi(d_0,x_0) = \id_{T,*}(d,t) $. Pick a finite level-3 tree $R$ and an $\mathcal{L}^R$-Skolem term $\sigma$ such that
\begin{displaymath}
  \bar{\xi} = \sigma^{\mathcal{M}^{*}_{0^{3\#}, R}} (  (\underline{c_r})_{r \in \dom(R)} ).
\end{displaymath}
Then for any $\vec{\gamma}$ respecting $R$, 
   \begin{displaymath}
     \sharpcode{(R,\vec{\gamma}, X, \vec{\beta}, \gcode{\sigma})} = \alpha,
   \end{displaymath}
where $\vec{\beta} = (\llbracket \pi(d,x) \rrbracket_{T \otimes Q})_{(d,x) \in \dom(X)}$. 
If every entry of $\beta$ is $\leq\delta$, we are done. Otherwise, let $X'$ be the ``subtree'' of $X$ (modulo a rearrangement of the domain) whose domain consists of entries $(d,x)$ satisfying  $\llbracket \pi(d,x) \rrbracket_{T \otimes Q}\leq  \delta$. Equivalently, $(d,x) \in \dom(X)$ iff $\llbracket d,x \llbracket_X \leq \llbracket d_0,x_0 \llbracket_X $. Then the results in Section~\ref{sec:level-2-description} imply that $j^{X',X} _{\mathcal{N}}  \res  \seed_{(d_0,x_0)}^X$ is the identity. Then for any $\vec{\gamma}$ respecting $R$, 
   \begin{displaymath}
     \sharpcode{(R,\vec{\gamma}, X', \vec{\beta}', \gcode{\sigma})} = \alpha,
   \end{displaymath}
where $\vec{\beta} '= (\llbracket \pi(d,x) \rrbracket_{T \otimes Q})_{(d,x) \in \dom(X')}$. 
 \end{proof}

Lemma~\ref{lem:next-level-3-indis} has its analog for level $\leq 2$ indiscernibles. The proof is similar  but uses Lemma~\ref{lem:generate_below_level_2_indis}.
\begin{mylemma}
  \label{lem:next-level-3-indis-level-2}
Assume $\boldpi{3}$-determinacy. For any $\min I^{(\leq 2)} <\xi \in I^{(\leq 2)}$, there is an $\mathcal{L}$-Skolem term $\tau$ such that 
 $M_{2,\infty}^{-}(0^{3\#}) \models `` \tau(\sup(I^{(\leq 2)} \cap \xi) , \cdot)$ is a surjection from $ \sup (I^{(\leq 2)} \cap \xi)$ onto $\xi$''. For any $\xi < \min I^{(\leq 2)}$, there is an $\mathcal{L}$-Skolem term $\tau$ such that 
 $M_{2,\infty}^{-}(0^{3\#}) \models `` \tau( \cdot)$ is a surjection from $\omega$ onto $\xi$''. 
\end{mylemma}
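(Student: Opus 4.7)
The plan is to mirror the proof of Lemma~\ref{lem:next-level-3-indis} step-by-step, but feeding in Lemma~\ref{lem:generate_below_level_2_indis} (instead of the unrestricted generation of level $\leq 3$ codes) to keep the parameters below the relevant level $\leq 2$ indiscernible. First I would recall from Section~\ref{sec:syntactical_3sharp} that the evaluation $(R,\vec{\gamma},X,\vec{\beta},\gcode{\sigma}) \mapsto \sharpcode{(R,\vec{\gamma},X,\vec{\beta},\gcode{\sigma})}$ on level $\leq 3$ codes is $\Sigma^1_4(0^{3\#})$ in the codes, hence $\Delta_1$-definable over $M_{2,\infty}^{-}(0^{3\#})$ from $\{0^{3\#}\}$. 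This gives the $\mathcal{L}$-Skolem term machinery we need, and reduces the task to producing, uniformly in $\xi$, a parameterisation of all $\alpha<\xi$ by tuples drawn from $\sup(I^{(\leq 2)}\cap\xi)$ (respectively from $\omega$ when $\xi<\min I^{(\leq 2)}$).

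For the main clause fix $\min I^{(\leq 2)}<\xi=b_\delta\in I^{(\leq 2)}$ with $\delta$ level $\leq 2$ respecting. By Lemma~\ref{lem:generate_below_level_2_indis}, every $\alpha<\xi$ carries a level $\leq 3$ code $(R,\vec{\gamma},X,\vec{\beta},\gcode{\sigma})$ whose $\vec{\beta}$-entries are all $\leq\delta$; equivalently, each indiscernible $b_{\beta_{(d,x)}}$ lies in $I^{(\leq 2)}\cap\xi \cup\{\xi\}$. Next I would establish a level $\leq 2$ analog of the remarkability argument used in Lemma~\ref{lem:next-level-3-indis}: if $(R,\vec{\gamma},X,\vec{\beta},\gcode{\sigma})$ and $(R,\vec{\gamma},X,\vec{\beta}',\gcode{\sigma})$ both have $\vec{\beta}$-entries $\leq\delta$ and agree on all coordinates where $\beta_{(d,x)}<\delta$, and if the first code evaluates below $\xi$, then the two codes evaluate to the same ordinal. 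This will follow from Lemma~\ref{lem:level_leq_3_indis} (level $\leq 3$ indiscernability) together with the continuity of the assignment $\beta\mapsto b_\beta$ (furnished by the club $E\subseteq\omega_1$ in $\mathbb{L}$ whose $j^P(E)$-enumeration realises the $b$ values) and unboundedness inside $M_{2,\infty}^{-}(0^{3\#})$ in the style of Lemma~\ref{lem:remarkable_general}. The resulting quotient of codes is indexed by $(R,X,\gcode{\sigma})$, a finite subset of $\mathrm{dom}(X)$ (specifying which coordinates of $\vec{\beta}$ are $\delta$), and a tuple from $(I^{(\leq 2)}\cap\xi)^{<\omega}$; packing this data into $\sup(I^{(\leq 2)}\cap\xi)$ via a definable bijection yields the desired $\tau$. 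The case $\xi<\min I^{(\leq 2)}$ is the same argument, except that $(R,X,\vec{\beta},\gcode{\sigma})$ is already countable data, so a surjection from $\omega$ onto $\xi$ drops out directly from the definability of the evaluation function.

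The main obstacle is the remarkability clause for the level $\leq 2$ indiscernibles: while the level-3 remarkability in Lemma~\ref{lem:remarkable_general} was proved by a direct partition argument inside $\bolddelta{3}$, the analog needed here must be distilled from the continuity of $\beta\mapsto b_\beta$ and the indiscernability of $I^{(\leq 2)}$ under shifts. Concretely, the delicate point is verifying that an entry $\beta_{(d,x)}=\delta$ can be raised to any larger level $\leq 2$ respecting ordinal $\delta^*$ without altering $\sharpcode{(R,\vec{\gamma},X,\vec{\beta},\gcode{\sigma})}$ when this value lies in $\xi$; this in turn requires unfolding the definition of $\pi_{\mathcal{N},X,\vec{\beta},\infty}$ and checking that the $\delta$-valued coordinates contribute only an ``above the cut'' ultrapower step whose effect is invisible below $\xi$. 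Once this is isolated, the rest of the proof is essentially bookkeeping parallel to Lemma~\ref{lem:next-level-3-indis}.
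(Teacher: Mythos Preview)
Your proposal is correct and follows precisely the approach the paper indicates: the paper's entire proof is the sentence ``The proof is similar [to Lemma~\ref{lem:next-level-3-indis}] but uses Lemma~\ref{lem:generate_below_level_2_indis},'' and you have unpacked exactly what that entails, including correctly isolating the level~$\leq 2$ remarkability step as the only nontrivial piece not already on the shelf. The $\vec{\gamma}$-invariance you need is already established inside the proof of Lemma~\ref{lem:generate_below_level_2_indis} (``for any $\vec{\gamma}$ respecting $R$, $\sharpcode{(R,\vec{\gamma},X,\vec{\beta},\gcode{\sigma})}=\alpha$''), so only the $\vec{\beta}$-coordinates at value $\delta$ require the shift argument you outline via Lemma~\ref{lem:level_leq_3_indis}.
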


% \begin{mylemma}
%   \label{lem:measure_5_R}
%  Suppose that $R$ is a $\Pi^1_3$-wellfounded level-3 tree. 
% Suppose $\mu$ is a nonprincipal $\mathbb{L}_{\bolddelta{5}}[T_4]$-measure on $j^R(\bolddelta{3})$.
%  Then there is a function $f \in \mathbb{L}[j^R(T_3)]$, clubs $E \subseteq \omega_1$ and $C \subseteq \bolddelta{3}$ such that $E,C \in \mathbb{L}[T_3]$, $f$ is 1-1 on $[E,C]...$ such that .
% \end{mylemma} 

\section*{Acknowledgements}
The breakthrough ideas of this paper were obtained during the AIM workshop on Descriptive inner model theory, held in Palo Alto, and the Conference on Descriptive Inner Model Theory, held in Berkeley, both in June, 2014. 
The author greatly benefited from conversations with Rachid Atmai and Steve Jackson that took place in these two conferences. 
The final phase of this paper was completed whilst the author was a visiting fellow at the Isaac Newton Institute for Mathematical Sciences in the programme `Mathematical, Foundational and Computational Aspects of the Higher Infinite' (HIF) in August and September, 2015 funded by NSF Career grant DMS-1352034 and EPSRC grant EP/K032208/1.

Part of the computations in this paper were carried out by the author's C++11 program at \url{https://github.com/zhuyizheng/Higher-sharp}.

\bibliography{sharp}{}
\bibliographystyle{plain}

\end{document}